\titleformat*{\subsection}{\large\bfseries}
\numberwithin{equation}{section}
\pgfplotsset{compat=newest}
\pgfplotsset{plot coordinates/math parser=false}
\newlength\figureheight
\newlength\figurewidth
\DeclareMathOperator{\bA}{\mathbf{A}}
\DeclareMathOperator{\bC}{\mathbf{C}}
\DeclareMathOperator{\bD}{\mathbf{D}}
\DeclareMathOperator{\bS}{\mathbf{S}}
\DeclareMathOperator{\bv}{\mathbf{v}}
\DeclareMathOperator{\bw}{\mathbf{w}}
\DeclareMathOperator{\bbC}{\mathbb{C}}
\DeclareMathOperator{\bbE}{\mathbb{E}}
\DeclareMathOperator{\bbN}{\mathbb{N}}
\DeclareMathOperator{\bbP}{\mathbb{P}}
\DeclareMathOperator{\sI}{\mathcal{I}}
\DeclareMathOperator{\sG}{\mathcal{G}}
\DeclareMathOperator{\sW}{\mathcal{W}}
\DeclareMathOperator{\one}{\mathbf{1}}
\newcommand*{\conj}[1]{\overline{#1}}
\newcommand*{\wt}[1]{\widetilde{#1}}
\newtheorem{thm}{Theorem}[section]
\newtheorem{exam}[thm]{Example}
\newtheorem{prop}[thm]{Proposition}
\newtheorem{lem}[thm]{Lemma}
\newtheorem{assum}[thm]{Assumption}
\newtheorem{defn}[thm]{Definition}
\newtheorem{claim}[thm]{Claim}
\renewcommand{\Im}{{\rm{Im}}}
\renewcommand{\Re}{{\rm{Re}}}
\title{Local circular law for the product of a deterministic matrix with a random matrix}
\author[1]{Haokai Xi \thanks{E-mail: haokai@math.wisc.edu.}}
\author[1]{Fan Yang  \thanks{E-mail: fyang75@math.wisc.edu.}}
\author[1]{Jun Yin \thanks{E-mail: jyin@math.wisc.edu. Partially supported by NSF Career Grant DMS-1552192 and Sloan fellowship.}}
\affil[1]{Department of Mathematics, University of Wisconsin-Madison}
\begin{document}
\maketitle

\begin{abstract}
It is well known that the spectral measure of eigenvalues of a rescaled square non-Hermitian random matrix with independent entries satisfies the circular law. We consider the product $TX$, where $T$ is a deterministic $N\times M$ matrix and $X$ is a random $M\times N$ matrix with independent entries having zero mean and variance $(N\wedge M)^{-1}$. We prove a general local circular law for the empirical spectral distribution (ESD) of $TX$ at any point $z$ away from the unit circle under the assumptions that $N\sim M$, and the matrix entries $X_{ij}$ have sufficiently high moments. More precisely, if $z$ satisfies $||z|-1|\ge \tau$ for arbitrarily small $\tau>0$, the ESD of $TX$ converges to $\tilde \chi_{\mathbb D}(z) dA(z)$, where $\tilde \chi_{\mathbb D}$ is a rotation-invariant function determined by the singular values of $T$ and $dA$ denotes the Lebesgue measure on $\mathbb C$. The local circular law is valid around $z$ up to scale $(N\wedge M)^{-1/4+\epsilon}$ for any $\epsilon>0$. Moreover, if $|z|>1$ or the matrix entries of $X$ have vanishing third moments, the local circular law is valid around $z$ up to scale $(N\wedge M)^{-1/2+\epsilon}$ for any $\epsilon>0$. 
\end{abstract}

\begin{section}{Introduction}

\noindent{\bf Circular law for non-Hermitian random matrices.} The study of the eigenvalue spectral of non-Hermitian random matrices goes back to the celebrated paper \cite{Ginibre} by Ginibre, where he calculated the joint probability density for the eigenvalues of non-Hermitian random matrix with independent complex Gaussian entries. The joint density distribution is integrable with an explicit kernel (see \cite{Ginibre,Mehta}), which allowed him to derive the circular law for the eigenvalues. For the Gaussian random matrix with real entries, the joint distribution of the eigenvalues is more complicated but still integrable, which leads to a proof of the circular law as well \cite{Borodin2009,Edelman1997,Forrester2007,Sinclair2007}.

For the random matrix with non-Gaussian entries, there is no explicit formula for the joint distribution of the eigenvalues. However, in many cases the eigenvalue spectrum of the non-Gaussian random matrices behaves similarly to the Gaussian case as $N\to \infty$, known as the universality phenomena. A key step in this direction is made by Girko in \cite{Girko}, where he partially proved the circular law for non-Hermitian matrices with independent entries. The crucial insight of the paper is the {\it Hermitization technique}, which allowed Girko to translate the convergence of complex empirical measures of a non-Hermitian matrix into the convergence of logarithmic transforms for a family of Hermitian matrices, or, to be more precise,
\begin{equation}\label{intro_Herm_tech}
\text{Tr}\log[(X-z)^\dag(X-z)] = \log \left[\det((X-z)^\dag(X-z)) \right],
\end{equation}
 with $X$ being the random matrix and $z\in \mathbb C$. Due to the singularity of the log function at $0$, the small eigenvalues of $(X-z)^\dag(X-z)$ play a special role. The estimate on the smallest singular value of $X-z$ was not obtained in \cite{Girko}, but the gap was remedied later in a series of paper. Bai \cite{Bai1997,Bai2006} analyzed the ESD of $(X-z)^\dag(X-z)$ through its Stieltjes transform and handled the logarithmic singularity by assuming bounded density and bounded high moments for the entries of $X$. Lower bounds on the smallest singular values were given by Rudelson and Vershynin \cite{Rud_Annal,RudVersh_square}, and subsequently by Tao and Vu \cite{TaoVu_circular}, Pan and Zhou \cite{PanZhou_circular} and G\H otze and Tikhomirov \cite{gotze2010} under weakened moments and smoothness assumptions. The final result was presented in \cite{tao2010}, where the circular law is proved under the optimal $L^2$ assumption. These papers studied the circular law in the global regime, i.e. the convergence of ESD on subsets containing $\eta N$ eigenvalues for some small constant $\eta >0$. Later in a series of papers \cite{local_circular,local_circularII,local_circularIII}, Bourgade, Yau and Yin proved the {\it local} version of the circular law up to the optimal scale $N^{-1/2+\epsilon}$ under the assumption that the distributions of the matrix entries satisfy a uniform sub-exponential decay condition. In \cite{TaoVu_local}, the local universality was proved by Tao and Vu under the assumption of first four moments matching the moments of a Gaussian random variable.

\begin{figure}[htb]
\centering
\includegraphics[width=\columnwidth]{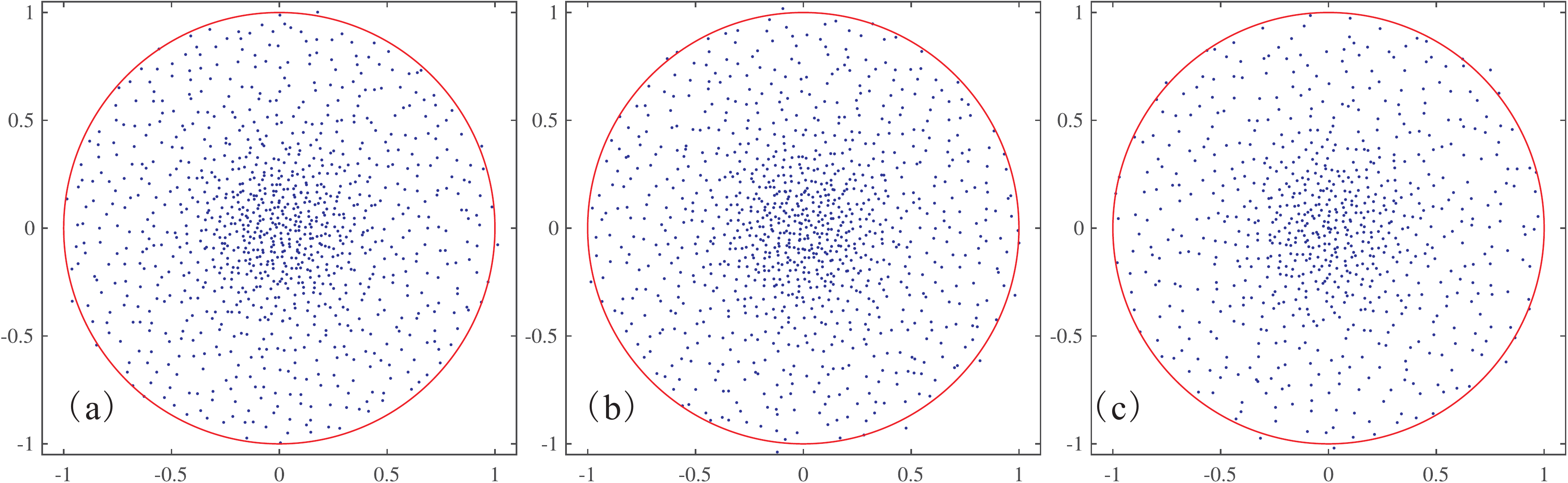}
\caption{The eigenvalue distribution of the product $TX$ of a deterministic $N\times M$ matrix $T$ with a Gaussian random $M\times N$ matrix $X$. The entries of $X$ have zero mean and variance $(N\wedge M)^{-1}$, and $TT^\dag$ has $0.5(N\wedge M)$ eigenvalues as $2/17$ and $0.5(N\wedge M)$ eigenvalues as $32/17$. (a) $N=M=1000$. (b) $N=1000$, $M=2000$. (c) $N=1500$, $M=750$.}
\label{fig_circular}
\end{figure}

In this paper, we study the ESD of the product of a deterministic $N\times M$ matrix $T$ with a random $M\times N$ matrix $X$, where we assume $N\sim M$. In Figure \ref{fig_circular}, we plot the eigenvalue distribution of $TX$ when $T$ have two distinct singular values (except the trivial zero singular values). The goal of this paper is to prove a local circular law for the ESD of $TX$ at any point $z$ away from the unit circle. Following the idea in \cite{local_circular}, the key ingredients for the proof are (a) the upper bound for the largest singular value of $TX-z$, (b) the lower bound for the least singular value of $TX-z$, and (c) rigidity of the singular values of $TX-z$. The upper bound for the largest singular value can be obtained by controlling the norm of $TX-z$ through a standard large deviation estimate (see e.g. \cite{Handbook_DS,Random_polytopes,RudVersh_rect} and (\ref{norm_upperbound})). The lower bound for the least singular value of $TX-z$ follows from the results in e.g. \cite{RudVersh_square} and \cite{TaoVu_circular} (see also Lemma \ref{lemm_leastsing}). Thus the bulk of this paper is devoted to establish (c).

\vspace{5pt}

\noindent{\bf Basic ideas.} To obtain the rigidity of the singular values of $Tx-z$, we study the ESD of $Q:=(TX-z)^\dag (TX-z)$ using Stieltjes transform as in \cite{local_circular}. We normalize $X$ so that its entries have variance $(N\wedge M)^{-1}$. Then $Q$ is an $N\times N$ Hermitian matrix with eigenvalues being typically of order 1. We denote its resolvent by $R(w):=(Q-w)^{-1}$, where $w=E+i\eta$ is a spectral parameter with positive imaginary part $\eta$. Then the Stieltjes transform of the ESD of $Q$ is equal to $N^{-1}\text{Tr}\, R(w)$, and we have the convergence estimate
\begin{equation}\label{intro_average_law}
N^{-1}\text{Tr}\, R(w) \approx m_c(w)
\end{equation}
with high probability for large $N$. Here $m_c$ is the Stieltjes transform of the asymptotic eigenvalue density, and the convergence in (\ref{intro_average_law}) is referred to as the {\it averaged law}. By taking the imaginary part of (\ref{intro_average_law}), it is easy to see that a control of the Stieltjes transform yields a control of the eigenvalue density on a small scale of order $\eta$ around $E$ (which contains an order $\eta N$ eigenvalues). A {\it local law} is an estimate of the form (\ref{intro_average_law}) for all $\eta \gg N^{-1}$. Such local laws have been a cornerstone of the modern random matrix theory. In \cite{local_law}, a local law was first derived for Wigner matrices. Subsequently in \cite{local_circular}, a local law for the resolvent of $(X-z)^\dag (X-z)$ was established to prove the local circular law.

In generalizing the proof in \cite{local_circular} to our setting, a main difficulty is that the entries of $TX$ are not independent. We will use a new comparison method proposed in \cite{Anisotropic}, which roughly states that if the local laws hold for $R(w)$ with Gaussian $X$, then they also hold in the case of a general $X$. For definiteness, we assume $N=M$ for now, and $T$ is a square matrix with singular decomposition $T=UDV$. For a Gaussian $X \equiv X^{Gauss}$, we have $V X^{Gauss}U \stackrel{d}{=} \tilde X^{Gauss} $, where $\tilde X$ is another Gaussian random matrix. Then for the determinant in (\ref{intro_Herm_tech}),
\begin{equation}
\det(TX^{Gauss}-z) {=} \det (DVX^{Gauss}U-z) \stackrel{d}{=} \det (D\tilde X^{Gauss}-z).\label{eqn_intro_idea}
\end{equation}
The problem is now reduced to the study of the singular values of $D\tilde X^{Gauss}-z$, which has independent entries. Notice the entries of $D\tilde X^{Gauss}$ are not identically distributed, which will make our proof much more complicated. However, this issue can be handled, e.g. as in \cite{Semicircle}, where a local law was obtained for generalized Wigner matrices with non-identically distributed entries.

To use the comparison method invented in \cite{Anisotropic}, it turns out the averaged local law from (\ref{intro_average_law}) is not sufficient. We have to control not only the trace of $R(w)$, but also the matrix $R(w)$ itself by showing that $R(w)$ is close to some deterministic matrix $\Pi(w)$, provided that $\eta \gg N^{-1}$. This closeness can be established in the sense of individual matrix entries $R_{ij}(w) \approx \Pi_{ij}(w)$ (see e.g. \cite{local_circular,Bulk_univ}). We call such an estimate an {\it entrywise local law}. More generally, in \cite{isotropic,isotropic_deform} the following closeness was established for {\it generalized matrix entries}:
\begin{equation}\label{intro_isotropic}
\langle \mathbf v, R(w) \mathbf u \rangle \approx \langle \mathbf v, \Pi(w) \mathbf u\rangle,\ \ \eta \gg N^{-1}, \ \ \forall \|\mathbf v\|_2,\|\mathbf u\|_2=1.
\end{equation}
We call the estimate in (\ref{intro_isotropic}) an {\it anisotropic local law}. (If $\Pi$ is a scalar matrix, (\ref{intro_isotropic}) is also referred to as an {\it isotropic local law}, in the sense that $R(w)$ is approximately isotropic for large $N$.) This kind of anisotropic local law is needed in applying the method in \cite{Anisotropic}.
Here we outline the three steps to establish the anisotropic local law for $Q=(TX-z)^\dag (TX-z)$: (A) the entrywise local law and averaged local law when $T$ is diagonal (Theorem \ref{law_squareD}); (B) the anisotropic local law when $T$ is diagonal (Theorem \ref{law_squareD}); (C) the anisotropic local law and averaged local law when $T$ is a general (rectangular) matrix (Theorem \ref{law_wideT}). 

In performing Step (A), our proof is basically based on the methods in \cite{local_circular}. However,  our multi-variable self-consistent equations and their solutions are much more complicated here. Thus a key part of the proof is to establish some basic properties of the asymptotic eigenvalue density and prove the stability of the self-consistent equations under small perturbations. These work need some new ideas and analytic techniques (see Appendix \ref{appendix1}). In performing Step (B), we applied and extended the polynomialization method developed in \cite[section 5]{isotropic}.  Finally, as remarked around (\ref{eqn_intro_idea}), (B) implies the anisotropic local law for a Gaussian $X$ and a general $T$. Based on this fact we perform Step (C) using a self-consistent comparison argument in \cite{Anisotropic}. With the averaged local law proved in Step (C), we can prove the local circular law for $TX$. In general, the averaged local law we get is up to the non-optimal scale $\eta \gg (N\wedge M)^{-1/2}$. As a result, we can only prove the local circular law for $TX$ up to the scale $(N\wedge M)^{-1/4 + \epsilon}$. A new observation is that the non-optimal averaged local law can lead to the optimal local circular law for $TX$ outside the unit circle (i.e. $|z|>1$) (see Section \ref{subsection_proofmain}). To prove the optimal local circular law inside the unit circle (i.e. $|z|<1$), we need the optimal averaged local law up to the scale $\eta \gg (N\wedge M)^{-1}$, which can be obtained under the extra assumption that the entries of $X$ have vanishing third moments.


\vspace{5pt}

\noindent{\bf Conventions.} The fundamental large parameter is $N$ and we assume that $M$ is comparable to $N$ (see (\ref{assm0})). All quantities that are not explicitly constant may depend on $N$, and we usually omit $N$ from our notation. We use $C$ to denote a generic large positive constant, which may depend on fixed parameters and whose value may change from one line to the next. Similarly, we use $c$ or $\epsilon$ to denote a generic small positive constant. If a constant depend on a quantity $a$, we use $C(a)$ or $C_a$ to indicate this dependence. We use $\tau>0$ in various assumptions to denote a small positive constant, and use $\zeta,\tau'$ to denote constants that depend on $\tau$ and may be chosen arbitrarily small. All constants $C$, $c$ and $\epsilon$ may depend on $\tau$; we neither indicate nor track this dependence.

For any (complex) matrix $A$, we use $A^\dag$ to denote its conjugate transpose, $A^T$ the transpose, $\|A\|$ the operator norm and $\|A\|_{HS}$ the Hilbert-Schmidt norm. We use the notation $\mathbf v=(v_i)_{i=1}^n$ for a vector in $\mathbb C^n$, and denote its Euclidean norm by $|\mathbf v|\equiv \|\mathbf v\|_2$. We usually write the $n\times n$ identity matrix $I_n$ as $1$ without causing any confusions.

For two quantities $A_N$ and $B_N>0$ depending on $N$, we use the notations $A_N = O(B_N)$ and $A_N \sim B_N$ to mean $|A_N| \le CB_N$ and $C^{-1}B_N \le |A_N| \le CB_N$, respectively, for some positive constant $C>0$. We use $A_N=o(B_N)$ to mean $|A_N| \le c_N B_N$ for some positive constant $c_N\to 0$ as $N\to \infty$. If $A_N$ is a matrix, we use the notations $A_N = O(B_N)$ and $A_N=o(B_N)$ to mean $\|A_N\| = O(B_N)$ and $\|A_N\|=o(B_N)$, respectively.

\vspace{10pt}

\noindent{\bf Acknowledgements.} The third author would like to thank Terence Tao, Mark Rudelson and Roman Vershynin for fruitful discussions and valuable suggestions.

\end{section}

\begin{section}{The main results}

In this section, we state and prove the main result of this paper.
In Section \ref{subsection_assumptions}, we define our model and list our main assumptions. In Section  \ref{subsection_maintheorem}, we first define the asymptotic eigenvalue density $\rho_{2c}$ of $Q=(TX-z)^\dag(TX-z)$, and then state the main theorem---Theorem \ref{main_theorem}---of this paper.
Its proof depends crucially on local estimates of the resolvent of $Q$, which are presented
in Section \ref{subsection_locallaws}. In Section \ref{subsection_proofmain}, we prove Theorems \ref{main_theorem} based on the local estimates stated in Section \ref{subsection_locallaws}.

\begin{subsection}{Definition of the model}\label{subsection_assumptions}

In this paper, we want to understand the local statistics of the eigenvalues of $TX-zI$, where $T$ is a deterministic $N\times M$ matrix, $X$ is a random $M\times N$ matrix, $z\in \mathbb C$ and $I$ is the identity operator.
We assume $M\sim N$, i.e.
\begin{equation}\label{assm0}
\tau\le \frac{M}{N} \le \tau^{-1}
\end{equation}
for some small $\tau>0$. We assume the entries $X_{i\mu}$ of $X$ are independent (not necessarily identically distributed) random variables satisfying
\begin{equation}\label{assm1}
 \bbE X_{i\mu}=0,\hskip 10pt \bbE|X_{i\mu}|^2=\frac{1}{N \wedge M}
\end{equation}
for all $1\le i \le M,1\le \mu \le N$. For definiteness, in this paper we only focus on the case where all matrix entries are real. However, our results and proofs also hold, after minor changes, in the complex case if we assume in addition $\mathbb EX_{i\mu}^2=0$ for $X_{i\mu} \in \mathbb C$.
We assume that for all $p\in\mathbb N$, there is an $N$-independent constant $C_p$ such that
\begin{equation}\label{assm2}
 \bbE|\sqrt {N\wedge M} X_{i\mu}|^p\le C_p
\end{equation}
for all $1\le i \le M,1\le \mu \le N$. We define $\Sigma := TT^\dag$, and assume the eigenvalues of $\Sigma$ satisfy that
\begin{equation}\label{assm3}
 \tau^{-1} \ge \sigma_1 \ge \sigma_{2} \ge \dots \ge \sigma_{N\wedge M} \ge \tau
\end{equation}
and all other eigenvalues are $0$. We can normalize $T$ by multiplying a scalar such that
\begin{equation}\label{assm4}
\frac{1}{N\wedge M}\sum_{i=1}^{N\wedge M} \sigma_i =1.
\end{equation}
We summarize our basic assumptions here for future reference.
\begin{assum}\label{main_assump}
We suppose that (\ref{assm0}), (\ref{assm1}), (\ref{assm2}), (\ref{assm3}) and (\ref{assm4}) hold.
\end{assum}

\end{subsection}

\begin{subsection}{The main theorem}\label{subsection_maintheorem}
Our main result is Theorem \ref{main_theorem}. To state it, we need to define the asymptotic eigenvalue density function for $Q$. We first introduce the self-consistent equations, and the asymptotic eigenvalue density will be closely related to their solutions.
Define
\begin{equation}\label{pi_measure}
\rho_{\Sigma} : = \frac{1}{N\wedge M}\sum_{i=1}^{N\wedge M} \delta_{\sigma_i}
\end{equation}
as the empirical spectral density of $\Sigma$. Let $n:=|\text{supp}\, \rho_{\Sigma}|$ be the number of distinct nonzero eigenvalues of $\Sigma$, which are denoted as
\begin{equation}\label{ordering_si}
\tau^{-1} \ge s_1 > s_{2} > \dots > s_n \ge \tau.
\end{equation}
Let $l_i$ be the multiplicity of $s_i$. By (\ref{assm4}), $l_i$ and $s_i$ satisfy the normalization conditions
\begin{equation}
\frac{1}{N\wedge M}\sum_{i=1} ^ n l_i =1, \ \ \frac{1}{N\wedge M}\sum_{i=1}^n l_i s_i =1.
\end{equation}
For each $w\in\mathbb C_+ :=\{ w\in \mathbb C: \text{Im}\, w >0 \}$, we define the self-consistent equations of $(m_1,m_2)$ as
\begin{align}
& \frac{1}{{m_2}} =  - w (1+m_1)  + \frac{{\left| z \right|^2 }}{{1 + m_1 }}, \label{eq_self1} \\
& m_1= \frac{1}{N}\sum_{i=1}^n l_i s_i \left[{-w\left( {1 + s_i m_2} \right)+ \frac{\left| z \right|^2}{1+m_1} }\right]^{-1}. \label{eq_self2}
\end{align}
If we plug (\ref{eq_self1}) into (\ref{eq_self2}), we get the self-consistent equation for $m_1$ only,
\begin{equation}\label{eq_self3}
m_1= \frac{1}{N}\sum_{i=1}^n l_i s_i \left[{-w\left( {1 + \frac{s_i }{- w (1 + m_1)  + \frac{\left| z \right|^2 }{1+m_1} }} \right) + \frac{\left| z \right|^2}{1+m_1} } \right]^{-1}.
\end{equation}
The next lemma states that the solution to (\ref{eq_self3}) in $\mathbb C_+$ is unique if $z$ is away from the unit circle. It is proved in Appendix \ref{subsection_append3}.

\begin{lem} \label{lemm_density}
Fix $z \in \mathbb C$ such that $|z|\ne 1$. For $w\in \mathbb C_+$, there exists at most one analytic function $m_{1c, z, \Sigma}(w): \mathbb C_+ \to \mathbb C_+$ such that (\ref{eq_self3}) holds and $wm_{1c, z, \Sigma}(w) \in \mathbb C_+$.
Moreover, $m_{1c, z, \Sigma, N}(w)$ is the Stieltjes transform of a positive integrable function $\rho_{1c}$ with compact support in $[0,\infty)$.
\end{lem}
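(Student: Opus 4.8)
The plan is to treat Lemma~\ref{lemm_density} as a purely analytic statement about the scalar equation (\ref{eq_self3}), proved in three stages: (a) a fixed point argument near $w=i\infty$ yielding a unique solution there with the asymptotics $m_1(w)=O(1/|w|)$; (b) propagation of uniqueness to all of $\mathbb{C}_+$ by analytic continuation; (c) identification of the solution with the Stieltjes transform of a compactly supported integrable density, using the Nevanlinna representation together with an a priori bound near the real axis.

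\textbf{Step (a).}\ For $w=i\eta$ with $\eta$ large, write (\ref{eq_self3}) as $m_1=\Phi_w(m_1)$, $\Phi_w$ being the right-hand side. On the disc $D_\eta:=\{|m_1|\le\eta^{-1/2}\}$ all denominators occurring in $\Phi_w$ stay within $O(1)$ of $-w$; in particular $1+m_1$ stays near $1$, the inner fraction $s_i\bigl(-w(1+m_1)+|z|^2(1+m_1)^{-1}\bigr)^{-1}$ is $O(\eta^{-1})$ uniformly in $i$ by (\ref{assm3}), hence $\Phi_w(D_\eta)\subset\{|m_1|\le C\eta^{-1}\}\subset D_\eta$ and $\sup_{D_\eta}|\Phi_w'|\le C\eta^{-1}<1$. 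The Banach fixed point theorem produces a unique solution $m_{1c}\in D_\eta$ with $m_{1c}(i\eta)=-\tfrac{1}{i\eta N}\sum_i l_i s_i+O(\eta^{-2})$, and then $m_{2c}$ from (\ref{eq_self1}) satisfies $m_{2c}(i\eta)=i\eta^{-1}+O(\eta^{-2})$. A short estimate using $m_{1c}\in\mathbb{C}_+$ and $wm_{1c}\in\mathbb{C}_+$ shows that \emph{any} solution enjoying these two properties must lie in $D_\eta$ once $\eta$ is large, so it coincides with the fixed point; this also yields local existence, and existence on all of $\mathbb{C}_+$ follows by a standard continuity argument once the a priori bound of Step~(c) is available.

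\textbf{Steps (b) and (c).}\ If $m_{1c}$ and $\widetilde m_{1c}$ are two functions as in the statement, then by Step~(a) they agree on the ray $\{i\eta:\eta>\eta_0\}$, which has accumulation points in the connected open set $\mathbb{C}_+$; by the identity theorem they coincide, which is the ``at most one'' claim. For the ``moreover'' part: by hypothesis $m_{1c}\colon\mathbb{C}_+\to\mathbb{C}_+$ is analytic and by Step~(a) $m_{1c}(i\eta)=O(\eta^{-1})$, so the Nevanlinna (Herglotz) representation theorem gives $m_{1c}(w)=\int(x-w)^{-1}\,d\mu_{1c}(x)$ for a finite positive Borel measure $\mu_{1c}$ on $\mathbb{R}$ with total mass $\tfrac1N\sum_i l_i s_i$. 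That $\operatorname{supp}\mu_{1c}\subset[0,\infty)$ and is bounded follows by showing $m_{1c}$ extends real-analytically across $(-\infty,0)$ and across $(R,\infty)$ for large $R$: on a neighbourhood of these intervals all relevant denominators in (\ref{eq_self1})--(\ref{eq_self3}) are bounded away from $0$ (by (\ref{assm3}) and $|z|\ne1$), so the fixed point argument of Step~(a) runs there and yields a real-analytic solution which, by the uniqueness of analytic extension, must equal $m_{1c}$; hence $\operatorname{supp}\mu_{1c}\subset[0,R]$. Finally the a priori bound $\sup\{\operatorname{Im}m_{1c}(E+i\eta):E\in\mathbb{R},\ 0<\eta\le1\}\le C$ and the Stieltjes inversion formula give $d\mu_{1c}(E)=\rho_{1c}(E)\,dE$ with $\rho_{1c}(E)=\pi^{-1}\lim_{\eta\downarrow0}\operatorname{Im}m_{1c}(E+i\eta)$ a bounded, nonnegative, compactly supported function, hence positive and integrable, completing the proof.

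\textbf{Main obstacle.}\ The heart of the matter is the a priori bound in Step~(c): showing that as $\eta\downarrow0$ the solution $m_{1c}$ (and the auxiliary $m_{2c}$) does not blow up, equivalently that $1+m_{1c}$ stays away from $0$ and $m_{2c}$ stays away from $0$ and $\infty$, uniformly on compact subsets of $E$. In contrast to the $\Sigma=I$ case of \cite{local_circular}, the system (\ref{eq_self1})--(\ref{eq_self2}) is genuinely coupled and has several branches, so one must exploit its structure carefully: taking imaginary parts yields a linear system for $(\operatorname{Im}m_1,\operatorname{Im}m_2)$ with coefficients depending on $m_1,m_2$, and one bootstraps, using (\ref{assm3}), (\ref{assm4}) and $|z|\ne1$, to a closed bound --- it is exactly the hypothesis $|z|\ne1$ that keeps $1+m_{1c}$ bounded below. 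This analysis, together with the attendant stability (perturbation) estimates for (\ref{eq_self1})--(\ref{eq_self3}) needed elsewhere, constitutes the analytic core of the argument and is carried out in Appendix~\ref{appendix1}.
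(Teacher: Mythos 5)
Your route via the Nevanlinna representation plus the identity theorem on the ray $\{i\eta:\eta>\eta_0\}$ is genuinely different from the paper's (the paper derives compact support from the partial-fraction and critical-point analysis of Lemmas~\ref{lemm_tech_f}--\ref{lemm_ordering} and proves uniqueness by citing the stability estimate of Lemma~\ref{lemm_stability}, Case~1, for $\eta\sim 1$), and your Steps~(a)--(b) are sound. The decisive gap is in Step~(c). The a~priori bound you rely on, $\sup\{\operatorname{Im}m_{1c}(E+i\eta):E\in\mathbb{R},\ 0<\eta\le 1\}\le C$, is \emph{false} whenever $|z|<1$: Proposition~\ref{prop_roughbound} and Lemma~\ref{lemm_edge0} show that for $|z|^2\le 1-\tau$ one has $m_{1c}(w)\sim w^{-1/2}$ and $m_{2c}(w)\sim w^{-1/2}$ as $w\to 0$ in $\overline{\mathbb C_+}$, so $\operatorname{Im}m_{1c}$ is unbounded near the origin and $\rho_{1c}(x)\sim x^{-1/2}$ as $x\searrow 0$. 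In particular $\rho_{1c}$ is not bounded, and your assertion that ``$m_{2c}$ stays away from $0$ and $\infty$'' also fails in this regime. The density is of course still integrable --- $x^{-1/2}$ is --- so the lemma's conclusion survives, but your reasoning does not establish it: proving the precise $x^{-1/2}$ blow-up (which is what gives integrability at the left edge) is exactly the nontrivial content of Lemma~\ref{lemm_edge0}, via the expansion of $f(\sqrt w,m)$ around $(0,i\sqrt{t})$ with $t$ determined by $g(-t)=0$; it cannot be replaced by a uniform $\operatorname{Im}m_{1c}$ bound.

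A secondary issue: the claimed real-analytic extension across $(-\infty,0)$ does not follow from ``the fixed point argument of Step~(a)'' as stated, because for $w<0$ near the origin the relevant real solution is $m_1\approx 1/(|z|^2-1)$ (set $w=0$ in (\ref{eq_self3}) and use the normalization (\ref{assm4})), which is not small, so the disc $D_\eta$ centred at $0$ is the wrong one and the contraction would have to be rerun around that value. In fact nothing of the sort is needed: the hypothesis $wm_{1c}(w)\in\mathbb{C}_+$ already forces $\operatorname{supp}\mu_{1c}\subset[0,\infty)$, since for $m_{1c}(w)=\int(x-w)^{-1}\,d\mu_{1c}(x)$ one computes $\operatorname{Im}\bigl(wm_{1c}(w)\bigr)=(\operatorname{Im}w)\int x\,|x-w|^{-2}\,d\mu_{1c}(x)$, which cannot remain nonnegative on all of $\mathbb{C}_+$ if $\mu_{1c}$ charges $(-\infty,0)$ --- this is precisely the observation recorded immediately after (\ref{eqn_rho1c}). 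Keeping your Nevanlinna framework but replacing the false a~priori bound with a quantitative analysis of the $w\to 0$ behaviour (distinguishing $|z|>1$, where $m_{1c}$ stays bounded and the left edge is strictly positive, from $|z|<1$, where $m_{1c}\sim w^{-1/2}$) would close the argument.
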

We shall abbreviate $m_{1c}(w):=m_{1c, z, \Sigma}(w)$. We also define $m_{2c}(w):=m_{2c, z, \Sigma}(w)$ by taking $m_1=m_{1c}(w)$ in (\ref{eq_self1}). Obviously, $m_{2c}$ is also an analytic function of $w$. Furthermore, for any $w\in \mathbb C_+$ we have $m_{2c}(w), wm_{2c}(w) \in \mathbb C_+$ by using (\ref{eq_self1}) and $m_{1c},wm_{1c}\in \mathbb C_+$. We define two functions on $\mathbb R$ as
\begin{equation}\label{eqn_rho1c}
\rho_{1,2c}(x)=\frac{1}{\pi}\lim_{\eta\searrow 0} \text{Im}\, m_{1,2c}(x+i\eta), \ \ x\in \mathbb R.
\end{equation}
It is easy to see that $\rho_{1,2c} \ge 0$ and $\text{supp}(\rho_{1,2c})\subseteq [0,\infty)$. Moreover, $\text{supp}\, \rho_{2c}=\text{supp}\, \rho_{1c}$ by (\ref{eq_self1}). We shall call $\rho_{2c}$ the asymptotic eigenvalue density of $Q=(TX-z)^\dag(TX-z)$ (for a reason that will be made clear during the proof in Section \ref{section_weaklaw}).
Since $\text{Im}(wm_{2c}) \ge 0$, we have
$$\text{Im} \left[-w\left( {1 + s_i m_{2c}} \right)+ \frac{\left| z \right|^2}{1+m_{1c}} \right] \le -\text{Im}\,w,$$
and (\ref{eq_self2}) gives $|m_{1c}| \le {1}/{\text{Im}\,w} \to 0$ as $\text{Im}\,w \to \infty$. Similarly, $|m_{2c}| \le {1}/{\text{Im}\,w} \to 0$ as $\text{Im}\,w \to \infty$. Thus $m_{1,2c}(w)$ is indeed the Stieltjes transform of $\rho_{1,2c}$,
\begin{equation}
m_{1,2c}(w)= \int_{\mathbb R} \frac{\rho_{1,2c}(x)}{x-w}dx.\label{stj_rho1}
\end{equation}

We now state the basic properties of $\rho_{1c}$ and $\rho_{2c}$, which can be obtained by studying the solutions $m_{1,2c}(w)$ to the self-consistent equations (\ref{eq_self1}) and (\ref{eq_self3}) when $w\in (0,\infty)$. Here we extend the definition of $m_{1,2c}$ continuously down to the real axis by setting
$$m_{1,2c}(x) = \lim_{\eta\searrow 0} m_{1,2c}(x+i\eta), \ \ x\in \mathbb R.$$
As a convention, for $w\in \overline{\mathbb C_+}$, we take $\sqrt{w}$ to be the branch with positive imaginary part. Define
$m:=\sqrt{w}(1+m_1)$ and $m_c:=\sqrt{w}(1+m_{1c}).$ Equation (\ref{eq_self3}) then becomes
\begin{equation}
f(\sqrt{w},m)=0,\label{selfm_reduced}
\end{equation}
where
\begin{align}\label{eqn_def_fwm}
f(\sqrt{w},m)= -\sqrt{w} + m + \frac{1}{N}\sum_{i=1}^n l_i s_i \frac{m(m^2 - |z|^2)}{\sqrt{w}m^3 - (s_i + |z|^2)m^2 -\sqrt{w}|z|^2 m + |z|^4}.
\end{align}
The following lemma gives the basic structure of $\text{supp}\, \rho_{1,2c}$. Its proof is given in Appendix \ref{subsection_append1}.

\begin{lem}\label{lemm_rho}
Fix $\tau \le \left||z|^2-1\right| \le \tau^{-1}$. The support of $\rho_{1,2c}$ is a union of connected components:
\begin{equation}\label{support_rho1c}
{\rm{supp}} \, \rho_{1,2c} \cap (0,+\infty) = \left( \bigcup_{1\le k\le L} [e_{2k}, e_{2k-1}] \right) \cap (0,\infty),
\end{equation}
where $L \equiv L(n)\in \mathbb N$ and $C_1\tau^{-1}\ge e_1 > e_2 > \ldots > e_{2L} \ge 0$ for some constant $C_1>0$ that does not depend on $\tau$. If $|z|^2\le 1-\tau$, we have $e_{2L}=0$; if $1+\tau \le |z|^2 \le 1+\tau^{-1}$, $e_{2L} \ge \epsilon(\tau)$ for some constant $\epsilon(\tau)>0$. 
Moreover, for every $e_i>0$, there exists a unique $m_c(e_i)$ such that
\begin{equation}
{\partial_m f} (\sqrt{e_i},m_c(e_i)) =0. \label{equationEm2}
\end{equation}
\end{lem}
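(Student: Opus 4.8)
The plan is to follow the standard route for locating the support of a self-consistent density (as in the analyses of deformed Marchenko--Pastur and single-ring type laws): reduce the question to the study of the real branches of the algebraic equation $f(\sqrt w,m)=0$, and identify the spectral edges with the critical points of the associated inverse function. Since $\text{supp}\,\rho_{2c}=\text{supp}\,\rho_{1c}$ (noted after (\ref{stj_rho1})) it suffices to handle $\rho_{1c}$. By Lemma \ref{lemm_density}, $m_{1c}$ is the Stieltjes transform of the positive density $\rho_{1c}$ with support in $[0,\infty)$, so $x_0\in\text{supp}\,\rho_{1c}$ iff $\Im\, m_{1c}(x_0)>0$, while on the open complement $\Omega:=(0,\infty)\setminus\text{supp}\,\rho_{1c}$ the functions $m_{1c}$ and $m_{2c}$ are real-analytic, finite, and strictly increasing by (\ref{stj_rho1}). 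Hence on each component of $\Omega$ the map $x\mapsto m_{1c}(x)$ is invertible, and via the change of variables $m=\sqrt w\,(1+m_1)$ (a local diffeomorphism in $m_1$ for each fixed $w>0$) the relation $f(\sqrt w,m_c(w))=0$ determines a real-analytic function $w(\cdot)$ of the real variable $m$ on a union $\mathcal M\subset\mathbb R$ of open intervals, with $\Omega=\{w(m):m\in\mathcal M\}$. Clearing denominators in (\ref{selfm_reduced}) rewrites $f(\sqrt w,m)=0$ as $P(\sqrt w,m)=0$ for a polynomial $P$ of $m$-degree bounded in terms of $n$; this displays $m_c$ as an algebraic function of $\sqrt w$, so $w(\cdot)$ has only finitely many critical points.

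The next step identifies the edges and yields the band structure. Away from the poles of $w(\cdot)$---which occur where the denominator $\sqrt w\, m^3-(s_i+|z|^2)m^2-\sqrt w\,|z|^2 m+|z|^4$ vanishes for some $i$, and which one checks are not attained at an edge when $\tau\le||z|^2-1|\le\tau^{-1}$---the implicit function theorem applied to $f(\sqrt w,m)=0$ gives $\frac{dw}{dm}=0$ iff $\partial_m f(\sqrt w,m)=0$. The endpoints $e_i$ of $\text{supp}\,\rho_{1c}$ are precisely the critical values of $w(\cdot)$; there are finitely many of them because $P$ is a polynomial, and at each $e_i>0$ the function $w(\cdot)$ has a turning point $m_c(e_i)$ with $\partial_m f(\sqrt{e_i},m_c(e_i))=0$---that this turning point is the unique such $m$ follows from the explicit rational form of $f$ together with the monotonicity of $m_{1c}$, giving the claimed uniqueness and (\ref{equationEm2}). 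Reading off the pattern of increasing and decreasing branches of $w(\cdot)$ then produces the decomposition $\text{supp}\,\rho_{1,2c}\cap(0,\infty)=\bigcup_{1\le k\le L}[e_{2k},e_{2k-1}]\cap(0,\infty)$ with finite $L=L(n)$. For the uniform upper bound $e_1\le C_1\tau^{-1}$: since $|m_{1c}(w)|\to0$ as $w\to\infty$, the outermost real branch of $m_c$ behaves like $\sqrt w$ for large $w$, and directly from $f(\sqrt w,m)=0$, using only $s_i\le\tau^{-1}$ and $|z|^2\le1+\tau^{-1}$ to bound every term, one shows $w(\cdot)$ has no critical point with $w>C_1\tau^{-1}$ for a large absolute constant $C_1$.

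For the behavior near $0$, evaluate (\ref{eq_self1})--(\ref{eq_self2}) at $w=0$: all the factors $-w(1+s_i m_2)+|z|^2/(1+m_1)$ collapse to $|z|^2/(1+m_1)$, so the equation for a \emph{bounded} value of $m_1$ becomes linear with a unique finite real solution. If $0\notin\text{supp}\,\rho_{1c}$, then $m_{1c},m_{2c}$ extend real-analytically across $0$, must equal these bounded values, and satisfy $m_{1c}(0)=\int\rho_{1c}(y)y^{-1}dy>0$ and $m_{2c}(0)=\int\rho_{2c}(y)y^{-1}dy>0$. When $|z|^2\le1-\tau$ this is incompatible (equivalently, locating the $w\to0$ limit of the physical branch among the roots of $f(0,\cdot)=0$ shows it does not remain bounded, and one reads off $\rho_{1,2c}(x)\sim c\,x^{-1/2}$ as $x\searrow0$ with $c>0$), forcing $0\in\text{supp}\,\rho_{1c}$, i.e. $e_{2L}=0$. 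When $1+\tau\le|z|^2\le1+\tau^{-1}$, the finite real solution at $w=0$ is nondegenerate; using $||z|^2-1|\ge\tau$ to bound $|\partial_m f|$ from below near the corresponding point, the implicit function theorem continues the real branch to a whole interval $[0,\epsilon(\tau)]$, so $e_{2L}\ge\epsilon(\tau)>0$, with the size of $\epsilon(\tau)$ tracked from the expansion $m_c(w)\approx\frac{|z|^2}{|z|^2-1}\sqrt w$ for small $w$. (For $M\ne N$ one argues analogously, with $M/N$-dependent constants.)

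I expect the principal obstacle to be the global bookkeeping of the second step: controlling all the critical points of $w(\cdot)$, verifying that the bands $[e_{2k},e_{2k-1}]$ are nondegenerate and correctly alternating, and proving the uniqueness of $m_c(e_i)$ in (\ref{equationEm2}). This rests on a somewhat delicate case analysis exploiting the explicit rational structure of $f$ in (\ref{eqn_def_fwm}) and the monotonicity and positivity of Stieltjes transforms, and is why the argument is carried out in the appendix; by contrast, once the relevant branch has been isolated, the edge estimates near $0$ and at $e_1$ are comparatively routine.
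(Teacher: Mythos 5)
Your overall route is the same as the paper's: pass to the algebraic relation $f(\sqrt w,m)=0$, identify spectral edges with points where $\partial_m f$ vanishes, obtain finiteness of the edge set from the degree bound $3n+1$ (the paper invokes B\'ezout's theorem for the system $f=\partial_m f=0$), and handle the limits $w\to0$ and $w\to\infty$ separately. That much matches. However, there are two concrete places where the paper does real work that your sketch waves at without resolving, and at least the second is a genuine gap rather than ``global bookkeeping.''

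First, the structure of the real branches of $f(\sqrt w,\cdot)$ does not follow from algebraicity alone. The paper first proves (Lemma \ref{lemm_tech_f}) a partial-fraction decomposition
\[
f(\sqrt w,m)=-\sqrt w+m+w^{-1/2}+\frac1N\sum_k\frac{C_k^+}{m-x_k}+\frac1N\sum_l\frac{C_l^-}{m+y_l}
\]
with \emph{positive} coefficients $C_k^\pm$ and quantitative control on the poles $x_k$, $-y_l$. The positivity of residues is what makes $f''$ monotone on each interval between consecutive poles, which is what gives the precise critical-point count $|\mathcal C\cap I_k|\in\{0,2\}$ for interior intervals and $=1$ on the two unbounded ones (Lemma \ref{lemm_app_cpt}). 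Without this, ``reading off the pattern of increasing and decreasing branches'' has no content: a generic degree-$(3n+1)$ rational function of $m$ could have arbitrarily nested level sets, and there is no a priori reason the preimage of $0$ under $f(\sqrt w,\cdot)$ produces an alternating band structure.

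Second, and more importantly, the uniform bound $e_1\le C_1\tau^{-1}$ with $C_1$ \emph{independent of} $\tau$ is established in the paper via Lemma \ref{lemm_ordering}, which proves that the critical values $h_1\ge h_2\ge\cdots\ge h_{2p}$ are totally ordered and that $h_1\le C_0(\tau^{-1}|w|^{-1/2}+|z|)-\sqrt w$ for an absolute constant $C_0$. The ordering itself requires a nontrivial argument: the sets $f(J_i)$ are shown to be pairwise disjoint using the degree bound $3n+1$ on level sets, and then the ordering is propagated from a degenerate configuration by a homotopy $f^t$ in which no $J_i^t$ can become empty as $t$ decreases. Your proposed route---``bound every term using $s_i\le\tau^{-1}$, $|z|^2\le1+\tau^{-1}$''---does not by itself explain why an \emph{absolute} $C_1$ exists (naively you would only get $\tau$-dependent constants from such bounds), nor why $0$ lies in $f(J_i(w))$ for a unique $i$, which is what makes the characterization of edges (and the uniqueness of $m_c(e_i)$ in (\ref{equationEm2})) clean. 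You flag exactly this as the ``principal obstacle'' and describe it as a delicate case analysis, but the paper's resolution via the ordered critical values is the specific missing key lemma, not just bookkeeping.

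Your treatment of the endpoint behavior at $w=0$ is a fair sketch of Lemma \ref{lemm_edge0} in both directions: for $|z|^2\le1-\tau$ the physical branch of $m_c$ tends to $i\sqrt t$ with $t>0$ (forcing $\Im m_{1c}\sim w^{-1/2}$, hence $0\in\mathrm{supp}\,\rho_{1c}$), and for $|z|^2\ge1+\tau$ the real branch $m_c\approx\sqrt w|z|^2/(|z|^2-1)$ persists on a $\tau$-dependent interval. One quibble: you say ``the $w\to0$ limit of the physical branch ... does not remain bounded,'' but $m_c(w)\to i\sqrt t$ is bounded; it is $m_{1c}=m_c/\sqrt w-1$ that blows up, and the real incompatibility is that $\Im m_{1c}(w)\to+\infty$ rather than the bounded real value that would be forced if $0\notin\mathrm{supp}\,\rho_{1c}$.
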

We shall call $e_i$'s the edges of $\rho_{1c}$.
%
For any $w\in (0,\infty)$ and $1\le i \le n$, the cubic polynomial $\sqrt{w}m^3 - (s_i + |z|^2)m^2 -\sqrt{w}|z|^2 m + |z|^4$ in (\ref{eqn_def_fwm}) has three distinct roots $a_i(w)>0$, $b_i(w)>0$ and $-c_i(w)<0$ (see Lemma \ref{lemm_tech_f}). Our next assumption on $\rho_{\Sigma}$ and $|z|$ takes the form of the following regularity conditions.

\begin{defn}\label{def_regular}
(Regularity) Fix $\tau \le \left||z|^2-1\right| \le \tau^{-1}$ and a small constant $\epsilon > 0$.

(i) We say that the edge $e_k\ne 0$, $k=1, \ldots, 2L$, is regular if
\begin{equation}
\min_{1\le i \le n} \{|m_c(e_k) - a_i(e_k)|,|m_c(e_k) - b_i(e_k)|, |m_c(e_k) + c_i(e_k)| \}\ge \epsilon , \label{regular1}
\end{equation}
and
\begin{equation}
\left|\partial_m^2 f(\sqrt{e_k},m_c(e_k))\right| \ge \epsilon . \label{regular2}
\end{equation}
In the case $|z|^2\le 1- \tau$, we always call $e_{2L}=0$ a regular edge.

(ii) We say that the bulk components $[e_{2k}, e_{2k-1}]$ is regular if for any fixed $\tau'>0$ there exists a constant $c(\tau,\tau')>0$ such that the density of $\rho_{1c}$ in $[e_{2k}+\tau', e_{2k-1}-\tau']$ is bounded from below by $c$.

\end{defn}

\noindent{\it{Remark 1:}} The edge regularity conditions (i) has previously appeared (may be in slightly different forms) in several works on sample covariance matrices and Wigner matrices \cite{Regularity2, Regularity1,Regularity5, Anisotropic,Regularity3,Regularity4}. The conditions (\ref{regular1}) and (\ref{regular2}) guarantees a regular square-root behavior of $\rho_{1c}$ near $e_k$ and ensures that the gap in the spectrum of $\rho_{1c}$ adjacent to $e_k$ does not close for large $N$ (Lemma \ref{lemm_edge_reg}),
\begin{equation}\label{edge_gap}
\min_{l\ne k} |e_{l}-e_k| \ge \epsilon
\end{equation}
for some constant $\epsilon>0$. The bulk regularity condition (ii) was introduced in \cite{Anisotropic}. It imposes a lower bound on the density of eigenvalues away from the edges. Without it, one can have points in the interior of $\text{supp}\, \rho_{1c}$ with an arbitrarily small density and our arguments would fail.

\vspace{5pt}

\noindent{\it{Remark 2:}} The regularity conditions in Definition \ref{def_regular} are stable under perturbations of $|z|$ and $\rho_{\Sigma}$. In particular, fix $\rho_{\Sigma}$, suppose the regularity conditions are satisfied at $z=z_0$ with $\tau  \le | {\left| z_0 \right|^2 - 1} | \le \tau ^{-1} $. Then for sufficiently small $c>0$, the regularity conditions hold uniformly in $z\in \left\{z: ||z|-|z_0||\le c\right\}$. For a detailed discussion, see the remark at the end of Section \ref{subsection_append3}.

\vspace{5pt}

We will use the following notion of stochastic domination, which was first introduced in \cite{Average_fluc} and subsequently used in many works on random matrix theory, such as \cite{isotropic,principal,local_circular,Delocal,Semicircle,Anisotropic}. It simplifies the presentation of the results and their proofs by systematizing statements of the form ``$\xi$ is bounded by $\zeta$ with high probability up to a small power of $N$".

\begin{defn}[Stochastic domination]
(i) Let
\[\xi=\left(\xi^{(N)}(u):N\in\bbN, u\in U^{(N)}\right),\hskip 10pt \zeta=\left(\zeta^{(N)}(u):N\in\bbN, u\in U^{(N)}\right)\]
be two families of nonnegative random variables, where $U^{(N)}$ is a possibly $N$-dependent parameter set. We say $\xi$ is stochastically dominated by $\zeta$, uniformly in $u$, if for any (small) $\epsilon>0$ and (large) $D>0$, 
\[\sup_{u\in U^{(N)}}\bbP\left[\xi^{(N)}(u)>N^\epsilon\zeta^{(N)}(u)\right]\le N^{-D}\]
for large enough $N\ge N_0(\epsilon, D)$, and we use the notation $\xi\prec\zeta$. Throughout this paper the stochastic domination will always be uniform in all parameters that are not explicitly fixed (such as matrix indices, and $w$ and $z$ that take values in some compact sets). Note that $N_0(\epsilon, D)$ may depend on quantities that are explicitly constant, such as $\tau$ and $C_p$ in (\ref{assm0}), (\ref{assm2}) and (\ref{assm3}).

(ii) If for some complex family $\xi$ we have $|\xi|\prec\zeta$, we also write $\xi \prec \zeta$ or $\xi=O_\prec(\zeta)$.
We also extend the definition of $O_\prec(\cdot)$ to matrices in the weak operator sense as follows. Let $A$ be a family of complex square random matrices and $\zeta$ a family of nonnegative random variables. Then we use $A=O_\prec(\zeta)$ to mean $\|A\|\prec\zeta$, where $\|A\|$ is the operator norm of $A$.

(iv) We say that an event $\Xi$ holds with high probability if $1-1(\Xi)\prec 0$.
\end{defn}

In the following, we denote the eigenvalues of $TX$ as $\mu_j$, $1\le j \le N$. We are now ready to state our main theorem, i.e. the general local circular law for $TX$.

\begin{thm}[Local circular law for $TX$] \label{main_theorem}
Suppose Assumption \ref{main_assump} holds, and $\tau  \le  | {\left| z_0 \right|^2 - 1} | \le \tau ^{ - 1} $ for any $N$ ($z_0$ can depend on $N$). Suppose $\rho_\Sigma$ (defined in (\ref{pi_measure})) and $|z_0|$ are such that all the edges and bulk components of $\rho_{1c}$ are regular in the sense of Definition \ref{def_regular}. We assume in addition that the entries of $X$ have a density bounded by $N^{C_2}$ for some $C_2>0$. Let $F$ be a smooth non-negative function which may depend on $N$, such that $\|F\|_\infty \le C_1$, $\|F'\|_\infty \le N^{C_1}$ and $F(z)=0$ for $|z|\ge C_1$, for some constant $C_1>0$ independent of $N$. Let $F_{z_0,a}(z) = K^{2a}F(K^a(z-z_0))$, where $K: = N\wedge M$. Then $TX$ has $(N-K)$ trivial zero eigenvalues, and for the other eigenvalues $\mu_j$, $1\le j \le K$, we have
\begin{equation}\label{eq_main0}
\frac{1}{K}\sum_{j=1}^{K} F_{z_0,a}(\mu_j) - \frac{1}{\pi} \int F_{z_0,a}(z)\tilde \chi_{\mathbb D}(z) dA(z) \prec K^{-1/2+2a} \|\Delta F\|_{L^1},
\end{equation}
for any $a\in (0,1/4]$. Here
\begin{equation}\label{def_chitilde}
\tilde \chi_{\mathbb D}(z):=\frac{1}{4}\int_0^\infty (\log x) \Delta_z \rho_{2c}(x,z)dx,
\end{equation}
where $\rho_{2c}\equiv \rho_{2c, z, \Sigma}$ is defined in (\ref{eqn_rho1c}). If $1+\tau  \le \left| z_0 \right|^2  \le 1+\tau ^{ - 1} $ or the entries of $X$ have vanishing third moments,
\begin{equation}
\mathbb EX_{i\mu}^3=0 , \label{assm_3rdmoment}
\end{equation}
for $1\le i \le M,1\le \mu \le N$, then we have the improved result
\begin{equation}\label{eq_main}
\frac{1}{K}\sum_{j=1}^{K} F_{z_0,a}(\mu_j) - \frac{1}{\pi} \int F_{z_0,a}(z)\tilde \chi_{\mathbb D}(z) dA(z) \prec K^{-1+2a} \|\Delta F\|_{L^1},
\end{equation}
for any $a\in (0,1/2]$. If $N=M$, the bounded density condition for the entries of $X$ is not necessary.
\end{thm}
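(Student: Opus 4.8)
The plan is to run Girko's Hermitization in quantitative form: convert the eigenvalue count into an integral of $\log\det Q_z$, with $Q_z:=(TX-z)^\dag(TX-z)$, and then control $N^{-1}\log\det Q_z$ by the averaged local law and rigidity furnished in Section~\ref{subsection_locallaws}. First I would set up the Hermitization. Since $\tfrac1{2\pi}\Delta_z\log|z|$ is the Dirac mass at the origin and $F_{z_0,a}$ is smooth and compactly supported, Green's formula expresses $\tfrac1K\sum_{j=1}^K F_{z_0,a}(\mu_j)$ as a fixed multiple of $\int_{\mathbb C}\Delta F_{z_0,a}(z)\,\log\det Q_z\,dA(z)$, up to the contribution of the $N-K$ zero eigenvalues of $TX$, which is harmonic on $\mathrm{supp}\,F_{z_0,a}$ --- hence inert --- once $N$ is large enough that $0\notin\mathrm{supp}\,F_{z_0,a}$. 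Because $\mathrm{supp}\,F_{z_0,a}\subset\{|z-z_0|\le C_1K^{-a}\}$ shrinks to $z_0$, for large $N$ every $z$ in the support satisfies $\tau/2\le||z|^2-1|\le2\tau^{-1}$, and by the stability of the regularity conditions (Remark~2 following Definition~\ref{def_regular}) all hypotheses of the theorem hold uniformly for such $z$; so it suffices to estimate $N^{-1}\log\det Q_z$ uniformly in $z\in\mathrm{supp}\,F_{z_0,a}$. The deterministic target is $L(z):=\int_0^\infty(\log x)\,\rho_{2c}(x,z)\,dx$, since integrating by parts twice in $z$ and using $\Delta_z L(z)=\int_0^\infty(\log x)\Delta_z\rho_{2c}(x,z)\,dx=4\tilde\chi_{\mathbb D}(z)$ from (\ref{def_chitilde}) shows that replacing $N^{-1}\log\det Q_z$ by $L(z)$ reproduces exactly the term $\tfrac1\pi\int F_{z_0,a}\tilde\chi_{\mathbb D}\,dA$; and since $\|\Delta F_{z_0,a}\|_{L^1}=K^{2a}\|\Delta F\|_{L^1}$, an error of order $\Theta$, uniform in $z$, in $N^{-1}\log\det Q_z$ yields, by a routine net argument, an error $O_\prec\!\big(\Theta\,K^{2a}\|\Delta F\|_{L^1}\big)$ in (\ref{eq_main0})--(\ref{eq_main}).

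Next I would estimate $N^{-1}\log\det Q_z-L(z)=\int_0^\infty\log x\,(d\varrho_{Q_z}-d\rho_{2c})(x)$, where $\varrho_{Q_z}$ is the empirical spectral measure of $Q_z$ and $m_{Q_z}(w):=N^{-1}\mathrm{Tr}\,(Q_z-w)^{-1}$ its Stieltjes transform, by splitting into three regimes. For moderate and large $x$ one writes $\log x$ via a contour integral and integrates $m_{Q_z}(w)-m_{2c}(w)$ --- bounded by the averaged local law of Section~\ref{subsection_locallaws} --- over a contour surrounding $\mathrm{supp}\,\rho_{2c}\cap[\eta_0,\|Q_z\|]$, where $\eta_0$ is the smallest scale at which the averaged law is valid; the upper endpoint is handled by $\|TX-z\|\le C$ from (\ref{norm_upperbound}). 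For small $x$ (below $\eta_0$; needed only inside the unit circle, where $e_{2L}=0$) one uses the lower bound $\lambda_{\min}(Q_z)=s_{\min}(TX-z)^2\ge K^{-C}$ from Lemma~\ref{lemm_leastsing}, so that $|\log\lambda_{\min}(Q_z)|\prec\log K$, together with rigidity and the concentration of $\sum_{\lambda_i\le\eta_0}\log\lambda_i$ about $N\int_0^{\eta_0}\log x\,\rho_{2c}\,dx$ coming from the local estimates, while the count of eigenvalues below $\eta_0$ is read off from $\mathrm{Im}\,m_{Q_z}$ at scale $\eta_0$. This yields $\Theta=O_\prec(K^{-1/2})$ when $\eta_0\sim K^{-1/2}$ and $\Theta=O_\prec(K^{-1})$ when $\eta_0\sim K^{-1}$.

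The case split then falls out. If $1+\tau\le|z_0|^2$, Lemma~\ref{lemm_rho} gives $e_{2L}\ge\epsilon(\tau)>0$, so $0$ lies in a macroscopic gap of $\mathrm{supp}\,\rho_{2c}$: the small-$x$ regime is absent, the contour may be taken at macroscopic distance from $\mathbb R$, where $m_{Q_z}-m_{2c}=O_\prec(K^{-1})$, hence $\Theta=O_\prec(K^{-1})$ and (\ref{eq_main}) holds for $a\in(0,1/2]$. The same conclusion holds when the third moments vanish (\ref{assm_3rdmoment}), since then the optimal averaged local law down to $\eta_0\sim K^{-1}$ is available from Section~\ref{subsection_locallaws} even when $e_{2L}=0$, and all three regimes can be pushed to scale $K^{-1}$. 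In the remaining case $|z_0|^2\le1-\tau$ without moment matching only $\eta_0\sim K^{-1/2}$ is available, giving $\Theta=O_\prec(K^{-1/2})$ and (\ref{eq_main0}) for $a\in(0,1/4]$. Finally, if $N=M$ then $T$ is invertible with $\|T^{-1}\|\le\tau^{-1/2}$, so $TX-z=T(X-T^{-1}z)$ and $s_{\min}(TX-z)\ge\tau^{1/2}\,s_{\min}(X-T^{-1}z)$; the least-singular-value bound for $X$ shifted by a fixed matrix needs only the moment condition (\ref{assm2}) and no density bound, which is why that hypothesis can be dropped when $N=M$.

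The hard part is the small-$x$ regime for $|z|<1$: reconciling the least-singular-value lower bound (the only place the bounded-density hypothesis enters when $N\ne M$) with the rigidity and concentration of the near-zero eigenvalues so that their total contribution to $N^{-1}\log\det Q_z$ is controlled at precision $K^{-1/2}$ (resp.\ $K^{-1}$), and tracking the $z$-integration so that the amplification by $K^{2a}$ from the scaling of $F_{z_0,a}$ produces exactly the stated bounds. Upstream of this --- but available from Section~\ref{subsection_locallaws}, hence citable here --- the genuinely difficult input is the improvement of the averaged local law from the generic scale $\eta\gtrsim K^{-1/2}$ to the optimal $\eta\gtrsim K^{-1}$ under the vanishing-third-moment assumption, the non-vanishing third moment being precisely what creates an irreducible bias at scale $K^{-1/2}$ in the self-consistent comparison of~\cite{Anisotropic}.
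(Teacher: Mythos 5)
Your overall strategy matches the paper's: Girko Hermitization, averaged local law, rigidity, norm bound, and least-singular-value bound, with the constant-tracking $\|\Delta F_{z_0,a}\|_{L^1}=K^{2a}\|\Delta F\|_{L^1}$ and the three cases (non-optimal vs.\ optimal law vs.\ $|z_0|>1$) all correctly identified. The one genuinely different technical choice is that you control $N^{-1}\log\det Q_z-L(z)$ by a contour integral of $m_{Q_z}-m_{2c}$, whereas the paper instead introduces the classical locations $\gamma_j$, invokes a dedicated rigidity lemma \eqref{eq_rigid20}--\eqref{eq_rigid1}, and bounds $\sum_j|\log\lambda_j-\log\gamma_j|$ term by term. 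The two routes are essentially interchangeable, and in fact the paper itself switches to an $\eta$-integral of the Stieltjes transform precisely in the $|z_0|>1$ case, so the difference is more organizational than substantive.

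However, your explanation of \emph{why} one obtains the optimal error $K^{-1}$ when $1+\tau\le|z_0|^2$ without vanishing third moments is wrong as stated, and this is the one step of the theorem where something nontrivial and case-specific happens. You write that the contour ``may be taken at macroscopic distance from $\mathbb R$, where $m_{Q_z}-m_{2c}=O_\prec(K^{-1})$, hence $\Theta=O_\prec(K^{-1})$.'' But the spectrum of $Q_z$ sits on the positive real axis, so any contour enclosing it must approach $\mathbb R$; keeping $|\mathop{\rm Im} w|\gtrsim 1$ everywhere would leave the eigenvalues outside. Moreover the averaged local law for a general $X$ (Theorem \ref{law_wideT}) is only available down to $\eta\gtrsim K^{-1/2}$ in this case, so there is no scale at which you simply read off an $O_\prec(K^{-1})$ error in $m_{Q_z}-m_{2c}$ and integrate it. The paper's actual argument is different and sharper: it integrates $\sum_j(\lambda_j-i\eta)^{-1}-\sum_j(\gamma_j-i\eta)^{-1}=O_\prec(1/\eta)$ over $\eta\in[N^{-1/2+\epsilon},N^{1/2}]$, which trades the weak pointwise bound for a telescoping gain; then it removes the residual shift $i N^{-1/2+\epsilon}$ by exploiting that when $0$ lies in a spectral gap, $\gamma_j\sim 1$, at most $O_\prec(1)$ eigenvalues lie below a fixed $c<e_{2L}$ (eq.\ \eqref{eq_rigid200}), and each of those contributes $O_\prec(1)$ by the least-singular-value bound; for $\lambda_j\ge c$ the shift costs $\sum_j|\lambda_j-\gamma_j|\,N^{-1/2+\epsilon}\prec N^\epsilon$ by rigidity at scale $K^{-1/2}$. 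This produces $b=0$ with only the weak averaged law in hand. You should replace the contour claim with this mechanism; as written, your argument would not close in the $|z_0|>1$, non-matched-moment case.

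Two smaller items. First, when $N>M$ your heuristic that the zero-eigenvalue contribution $(N-M)\log|z|$ is ``harmonic hence inert'' requires $0\notin\mathop{\rm supp}F_{z_0,a}$, which is not automatic since $z_0$ may equal $0$ inside the region $|z_0|^2\le 1-\tau$; the paper sidesteps this entirely by reducing $N>M$ to $N<M$ via the singular value decomposition identity \eqref{tall_reduce} (Claim \ref{law_tallT}) before ever invoking Hermitization, and you should do the same. Second, the claim $|{\log\gamma_j-\log x}|$-errors sum to $N^\delta$ (the analogue of \eqref{eigenvalue_law}) deserves a sentence: the key is that near a regular edge $\rho_{2c}$ has square-root vanishing while near the hard edge at $0$ (for $|z|<1$) it has an $x^{-1/2}$ singularity, both of which are integrable against $\log$, and this is exactly Proposition \ref{prop_rho1c}(iii)--(iv).
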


\noindent{\it{Remark 1:}} Note that $F_{z_0,a}(z) = K^{2a}F(K^a(z-z_0))$ is an approximate delta function obtained from rescaling $F$ to the size of order $K^{-a}$ around $z_0$. Thus (\ref{eq_main0}) gives the general circular law up to scale $K^{-1/4+\epsilon}$, while (\ref{eq_main}) gives the general circular law up to scale $K^{-1/2+\epsilon}$. The $\tilde \chi_{\mathbb D}$ in (\ref{def_chitilde}) gives the distribution of the eigenvalues of $TX$. It is rotationally symmetric, because $\rho_{2c}(x,z)$ only depends on $|z|$ (see (\ref{eq_self1}) and (\ref{eq_self2})). When $T$ is the identity matrix, $\tilde \chi_{\mathbb D}$ becomes the indicator function $\chi_{\mathbb D}$ on the unit disk $\mathbb D$, and we get the well-known local circular law for $X$ \cite{local_circular}. For a general $T$, we do not have much understanding of $\tilde \chi_{\mathbb D}$ so far. 
This will be one of the topics of our future study. Also, we have assumed that $z$ is strictly away from the unit circle. Our proof may be extended to the $|z - 1|=o(1)$ case if we have a better understanding of the solutions $m_{1,2c}$ to equations (\ref{eq_self1}) and (\ref{eq_self2}).

\vspace{5pt}

\noindent{\it{Remark 2:}} As explained in the Introduction, the basic strategy of this paper is first to prove the anisotropic local law for the resolvent of $Q$ when $X$ is Gaussian, and then to get the anisotropic local law for a general $X$ through comparison with the Gaussian case. Without (\ref{assm_3rdmoment}), our comparison arguments do not give the anisotropic local law up to the optimal scale, so we can only prove the weaker bound (\ref{eq_main0}). We will try to remove this assumption in future works.

\vspace{5pt}

\noindent{\it{Remark 3:}} In the statement of the theorem, we have included an extra bounded density condition. This is only used in Lemma \ref{lemm_leastsing} to give a lower bound for the smallest singular value of $TX-z$. Thus it can be removed if we have a stronger result about the smallest singular value.

\vspace{5pt}

We conclude this section with two examples verifying the regularity conditions of Definition \ref{def_regular}.

\begin{exam}[Bounded number of distinct eigenvalues] \label{example1}
We suppose that $n$ is fixed, and that $s_1,\ldots,s_n$ and $\rho_{\Sigma}(\{s_1\}),\ldots, \rho_{\Sigma}(\{s_n\})$ all converge as $N\to \infty$. We suppose that $\lim_{N} e_{k} > \lim_N e_{k+1}$ for all $k$, and furthermore for all $e_k$ we have $\partial_m^2 f(\sqrt{e_k},m_c(e_k))\ne 0$. Then it is easy to check that all the edges and bulk components are regular in the sense of Definition \ref{def_regular} for small enough $\epsilon$. 
\end{exam}

\begin{exam}[Continuous limit] \label{example2}
We suppose $\rho_{\Sigma}$ is supported in some interval $[a,b]\subset(0,\infty)$, and that $\rho_{\Sigma}$ converges in distribution to some measure $\rho_\infty$ that is absolutely continuous and whose density satisfies $\tau \le d\rho_\infty(E)/dE \le \tau^{-1}$ for $E\in [a,b]$. Then there are only a small number (which is independent of $n$) of connected components for $\text{supp}\, \rho_{1c}$, and all the edges and bulk components are regular. See the remark at the end of Section \ref{subsection_append1}.
\end{exam}

\end{subsection}

\begin{subsection}{Hermitization and local laws for resolvents}\label{subsection_locallaws}

In the following, we use the notation
\begin{equation}\label{def_Yz}
Y\equiv Y_z:= TX-zI,
\end{equation}
where $I$ is the identity matrix. Following Girko's Hermitization technique \cite{Girko}, the first step in proving the local circular law is to understand the local statistics of singular values of $Y$. In this subsection, we present the main local estimates concerning the resolvents $\left(YY^\dag - w\right)^{-1}$ and $\left(Y^\dag Y - w\right)^{-1}$.
These results will be used later to prove Theorem \ref{main_theorem}.

Our local laws can be formulated in a simple, unified fashion using a $2N\times 2N$ block matrix, which is a linear function of $X$.

\begin{defn}[Index sets] \label{def_indexsets} 
We define the index sets
 \[\sI_1:=\{1,...,N\},\hskip 10pt \mathcal \sI_1^{M}:=\{1,\ldots, M\}, \hskip 10pt\sI_2:=\{N+1,...,2N\}, \hskip 10pt\sI:=\sI_1\cup\sI_2,\hskip 10pt\sI^M:=\sI_1^M\cup\sI_2.\]
We will consistently use the latin letters $i,j\in\sI_1$ or $\sI_1^M$, greek letters $\mu,\nu\in\sI_2$, and $s,t\in\sI$. We label the indices of the matrices according to
 $$X= (X_{i\mu}:i\in \mathcal I_1^M, \mu \in \mathcal I_2),  \hskip 10pt T= (T_{ij}:i\in \mathcal I_1, j \in \mathcal I_1^M).$$
When $M=N$, we always identify $\mathcal I_1^M$ with $\mathcal I_1$. For $i\in \mathcal I_1$ and $\mu \in \mathcal I_2$, we introduce the notations $\bar i:=i+N \in \mathcal I_2$ and $\bar\mu:=\mu-N \in \mathcal I_1$.
\end{defn}

\begin{defn}[Groups]
For an $\mathcal I \times \mathcal I$ matrix $A$, we define the $2\times 2$ matrix $A_{[ij]}$ as
\begin{equation}\label{Aij_group}
A_{[ij]}=\left( {\begin{array}{*{20}c}
   {A_{ij} } & {A_{i\bar j} }  \\
   {A_{\bar i j} } & {A_{\bar i\bar j} }  \\
\end{array}} \right).
\end{equation}
We shall call $A_{[ij]}$ a diagonal group if $i=j$, and an off-diagonal group otherwise .
\end{defn}

\begin{defn}[Linearizing block matrix]\label{def_linearHG}
For $w:=E+i\eta\in \bbC_+$, we define the $\sI\times\sI$ matrix
 \begin{equation}
   H(w)\equiv H(T,X,z,w): = \left( {\begin{array}{*{20}c}
   { - w I} & w^{1/2}Y  \\
   {w^{1/2} Y^\dag} & { - wI}  \\
   \end{array}} \right),
 \end{equation}
where we take the branch of $\sqrt{w}$ with positive imaginary part. Define the $\mathcal I\times\mathcal I$ matrix
 \begin{equation}\label{eqn_defG}
   G(w)\equiv G(T,X,z,w):=H(w)^{-1},
 \end{equation}
 as well as the $\mathcal I_1 \times \mathcal I_1$ and $\mathcal I_2 \times \mathcal I_2$ matrices
\begin{equation}
G_L(w)={\left( {YY^\dag   - w} \right)^{ - 1} },\ \ \ G_R(w)={\left( {Y^\dag Y  - w} \right)^{ - 1} }.
\end{equation}
Throughout the following, we frequently omit the argument $w$ from our notations.
\end{defn}

By Schur's complement formula, it is easy to see that
\begin{equation}\label{eqn_schurmatrix}
G\left( w \right) = \left( {\begin{array}{*{20}c}
   {G_L} & {w^{-1/2}G_LY}  \\
   {w^{-1/2}Y^\dag  G_L} & {w^{ - 1} Y^\dag  G_LY - w^{ - 1} I}  \\
\end{array}} \right) = \left( {\begin{array}{*{20}c}
   {w^{-1}YG_RY^\dag   - w^{-1}I} & {w^{-1/2}YG_R}  \\
   {w^{-1/2}G_RY^\dag  } & {G_R}  \\
\end{array}} \right).
\end{equation}
Therefore a control of $G$ immediately yields controls of the resolvents $G_L$ and $G_R$.

In the following, we only consider the $N\le M$ case. The $N>M$ case, as we will see, will be built easily upon $N\le M$ case. We introduce a deterministic matrix $\Pi$, which will be proved to be close to $G$ with high probability.

\begin{defn}[Deterministic limit of $G$]
Suppose $N\le M$ and $T$ has a singular decomposition
\begin{equation}\label{singular_decompT}
T=U\bar DV, \ \ \bar D=(D,0),
\end{equation}
where $D=\text{diag}(d_1,d_2,\ldots,d_N)$ is a diagonal matrix. Define $\pi_{{[i]}c}$ to be the $2\times 2$ matrix such that
\begin{equation}\label{def_pi}
\left(\pi_{{[i]}c}\right)^{ - 1} = \left(\begin{matrix}-w(1+ |d_i|^2m_{2c}) & -w^{1/2}z\\
 -w^{1/2}\bar z & -w(1+m_{1c})\\
\end{matrix}\right).
\end{equation}
Let $\Pi_d$ be the $2N\times 2N$ matrix with $(\Pi_d)_{[ii]} =\pi_{{[i]}c}$ and all other entries being zero. Define
\begin{equation}\label{def_Pi}
\Pi\equiv \Pi(\Sigma,z,w):=\left( {\begin{array}{*{20}c}
   { U} & {0}  \\
   {0} & {U}  \\
   \end{array}} \right) \Pi_d\left( {\begin{array}{*{20}c}
   { U^\dag} & {0}  \\
   {0} & {U^\dag}  \\
   \end{array}} \right) = \left(\begin{matrix} -(1+m_{1c})A(\Sigma) & w^{-1/2}zA(\Sigma)\\
 w^{-1/2}\bar z A(\Sigma) & -(1+m_{2c}\Sigma)A(\Sigma)\\
\end{matrix}\right),
\end{equation}
where $\Sigma=TT^\dag$ and $A(\Sigma) = \left[ w(1+ m_{2c}\Sigma)(1+m_{1c}) - |z|^2\right]^{-1}.$
\end{defn}


\begin{defn}[Averaged variables]
Suppose $N\le M$. Define the averaged random variables
\begin{equation}
 m_1:=\frac{1} {N} \sum_{i\in \mathcal I_1} \left(\bar \Sigma G\right)_{ii}, \ \ m_2:=\frac{1} {N} \sum_{\mu\in \mathcal I_2} \left(\bar \Sigma G\right)_{\mu\mu}, \label{def_M}
\end{equation}
where
\begin{equation}
\bar \Sigma:=\left( {\begin{array}{*{20}c}
   { \Sigma} & {0}  \\
   {0} & {I}  \\
   \end{array}} \right).
\end{equation}
Define $\pi_{[i]}$ to be the $2\times 2$ matrix such that
\begin{equation}\label{def_pi_i}
\left(\pi_{[i]}\right)^{-1}  =  \left( {\begin{array}{*{20}c}
   { - w(1+ \left|d_i\right|^2 m_2) } & {-w^{1/2}z}  \\
   {-w^{1/2}\bar z} & { - w(1 + m_1)} \end{array}} \right) .
 \end{equation}
\end{defn}

\noindent{\it{Remark:}} Note that under the above definition we have
$$m_2=\frac{1}{N}\text{Tr}\, G_R=\frac{1}{N}\text{Tr}\, G_L,$$
which is the Stieltjes transform of the empirical eigenvalue density of $YY^\dag$ and $Y^\dag Y$. Moreover, we will see from the proof that $m_{1,2c}$ are the almost sure limits of $m_{1,2}$ as $N\to \infty$ with
\begin{equation}\label{eq_m1c}
 m_{1c}=\frac{1} {N} \sum_{i\in \mathcal I_1} \left(\bar \Sigma \Pi\right)_{ii}, \ \ m_{2c}=\frac{1} {N} \sum_{\mu\in \mathcal I_2} \left(\bar \Sigma \Pi\right)_{\mu\mu}.
\end{equation}

The following two propositions summarize the properties of $\rho_{1,2c}$ and $m_{1,2c}$ that are needed to understand the main results in this section. They are proved in Appendix \ref{appendix1}. In Fig.\,\ref{fig_rho2c} we plot $\rho_{2c}$ for the example from Fig.\,\ref{fig_circular} in the cases $|z|>1$ and $|z|<1$, respectively.

\begin{figure}[htb]
\centering
\includegraphics[width=\columnwidth]{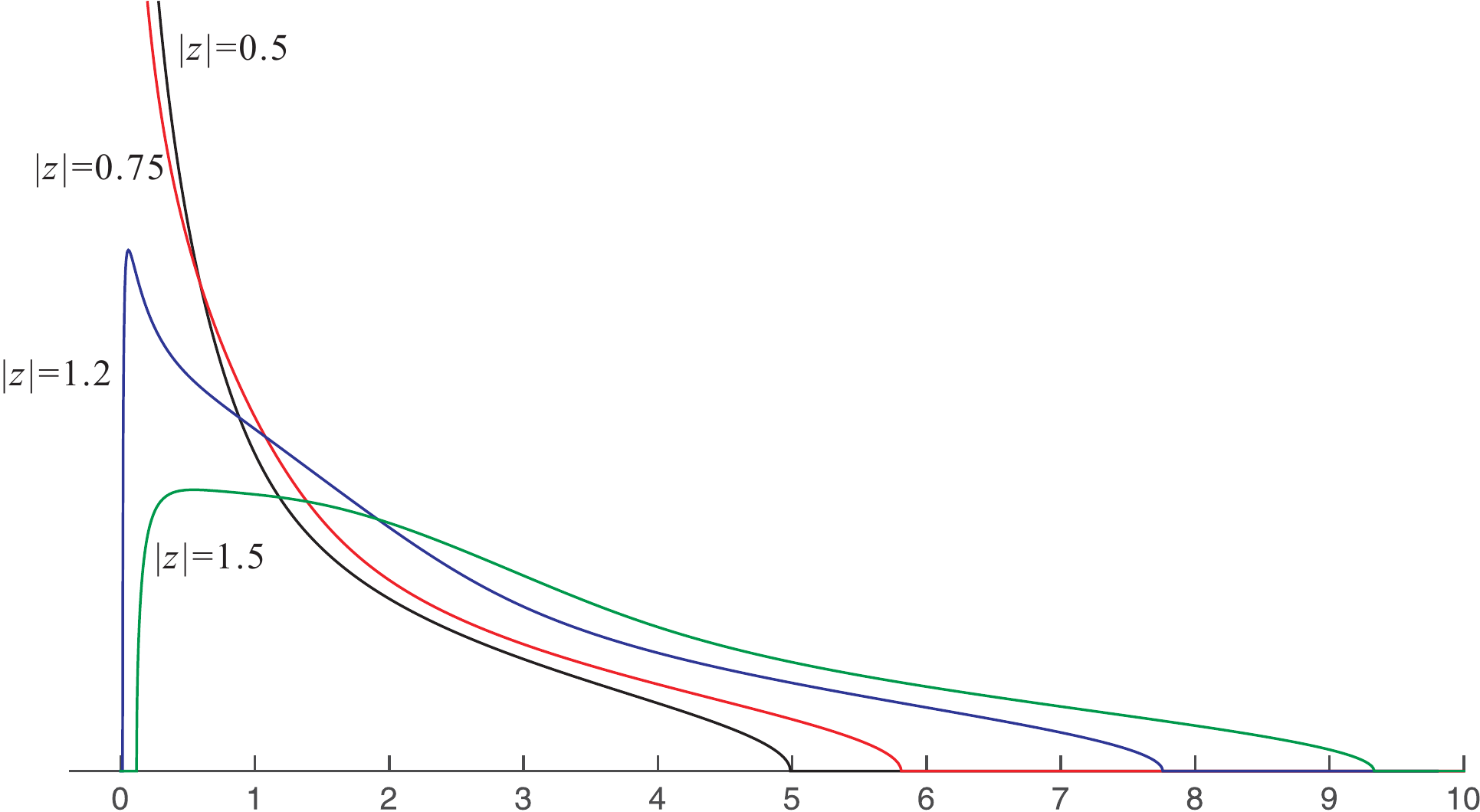}
\caption{The densities $\rho_{2c}(x,z)$ when $|z|=0.5$, $0.75,$ $1.2,$ $1.5$. Here $\rho_{\Sigma}= 0.5\delta_{\sqrt{2/17}} + 0.5\delta_{4\sqrt{2/17}}$. }
\label{fig_rho2c}
\end{figure}

\begin{prop}[Basic properties of $\rho_{1,2c}$]\label{prop_rho1c}
Fix $\epsilon>0$. The density $\rho_{1c}$ is compactly supported in $[0,\infty)$ and the following properties regarding $\rho_{1c}$ hold.

(i) The support of $\rho_{1c}$ is $\bigcup_{1\le k\le L(n)} [e_{2k}, e_{2k-1}] $ where $e_1 > e_2 > \ldots > e_{2L} \ge 0$. If $1+\tau \le |z|^2 \le 1+\tau^{-1}$, then $e_{2L} \ge \epsilon$; if $|z|^2\le 1-\tau$, then $e_1=0$.

(ii) Suppose $[e_{2k}, e_{2k-1}]$ is a regular bulk component. For any $\tau'>0$, if $x\in [e_{2k}+\tau', e_{2k-1}-\tau']$, then $\rho_{1c}(x) \sim 1$.

(iii) Suppose $e_j$ is a nonzero regular edge. If $j$ is even, then $\rho_{1c}(x)\sim \sqrt{x-e_{j}}$ as $x\to e_{j}$ from above. Otherwise if $j$ is odd, then $\rho_{1c}(x)\sim \sqrt{e_{j}-x}$ as $x\to e_{j}$ from below.

(iv) If $|z|^2 \le 1-\tau$, then $\rho_{1c}(x)\sim {x}^{-1/2}$ as $x \searrow e_{2L}=0$.

The same results also hold for $\rho_{2c}$. In addition, $\rho_{2c}$ is a probability density. 
\end{prop}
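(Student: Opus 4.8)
\noindent\emph{Proof strategy.} The idea is to read off every claim from a careful analysis of the scalar self-consistent equation $f(\sqrt w,m)=0$ of (\ref{selfm_reduced})--(\ref{eqn_def_fwm}), together with the substitution $m_c=\sqrt w(1+m_{1c})$ and the identity
\[
\rho_{1c}(x)=\frac1\pi\,\text{Im}\,m_{1c}(x+i0)=\frac{\text{Im}\,m_c(x+i0)}{\pi\sqrt x},\qquad x>0,
\]
which follows from (\ref{eqn_rho1c}) and $\sqrt w\to\sqrt x$ as $w=x+i\eta\searrow x$. All implicit constants below depend only on $\tau$, $\tau'$, and the regularity parameter $\epsilon$. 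Part (i) needs nothing new: the decomposition of $\text{supp}\,\rho_{1c}$ into finitely many intervals $[e_{2k},e_{2k-1}]$ with $e_1\le C_1\tau^{-1}$, the vanishing of the leftmost edge $e_{2L}$ exactly when $|z|^2\le 1-\tau$, and $e_{2L}\ge\epsilon$ when $1+\tau\le|z|^2\le 1+\tau^{-1}$, are all contained in Lemma \ref{lemm_rho}, which also gives compact support.

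\smallskip
\noindent For part (ii) I would first record the a priori bound $|m_c(w)|\le C$ for $w$ in a fixed compact subset of $\overline{\mathbb C_+}$: if $|m|$ is large, then every summand in (\ref{eqn_def_fwm}) equals $\sqrt w^{-1}\big(1+O(|m|^{-1})\big)$, so $f(\sqrt w,m)=m-\sqrt w+\sqrt w^{-1}+O(|m|^{-1})$ by $N^{-1}\sum_i l_is_i=1$, which cannot vanish when $|m|$ is large. In particular $\text{Im}\,m_c(x+i0)=O(1)$, so $\rho_{1c}(x)=O(x^{-1/2})=O(1)$ on $[e_{2k}+\tau',e_{2k-1}-\tau']$; the matching lower bound there is, by definition, the regularity of the bulk component (Definition \ref{def_regular}(ii)). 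For part (iii) I would fix a nonzero regular edge $e_k$ with its distinguished point $m_c(e_k)$ from (\ref{equationEm2}). Condition (\ref{regular1}) keeps $m_c(e_k)$ at distance $\ge\epsilon$ from the three roots of the cubic denominators in (\ref{eqn_def_fwm}) (Lemma \ref{lemm_tech_f}), so $f$ is analytic with uniformly bounded derivatives near $(\sqrt{e_k},m_c(e_k))$; using $\partial_m f=0$ there, $|\partial_m^2 f|\ge\epsilon$ (condition (\ref{regular2})), and a direct evaluation showing $|\partial_{\sqrt w}f|\sim 1$, Taylor expansion gives $f(\sqrt w,m)=\alpha(\sqrt w-\sqrt{e_k})+\tfrac12\beta(m-m_c(e_k))^2+(\text{higher order})$ with $|\alpha|\sim|\beta|\sim 1$ and the sign of $\alpha/\beta$ fixed by $m_c,\sqrt w\,m_c\in\mathbb C_+$ (Lemma \ref{lemm_density}); solving the quadratic yields $\text{Im}\,m_c(x+i0)\sim|x-e_k|^{1/2}$, hence $\rho_{1c}(x)\sim|x-e_k|^{1/2}$ with the one-sided orientation dictated by the parity of $j$. (This, with the gap bound (\ref{edge_gap}), is also the content of Lemma \ref{lemm_edge_reg}.)

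\smallskip
\noindent Part (iv) is of a different nature and I expect it to be the main obstacle. Assume $|z|^2\le 1-\tau$, so $e_{2L}=0$ is a \emph{hard} edge. The key point is that $m_c(0):=\lim_{w\to 0}m_c(w)$ is purely imaginary: writing $m_c(0)=ia$ with $a>0$, the equation $f(0,ia)=0$ reduces, using $N^{-1}\sum_i l_is_i=1$, to the scalar equation
\[
1=g(a):=\frac{a^2+|z|^2}{N}\sum_{i=1}^n\frac{l_is_i}{(s_i+|z|^2)a^2+|z|^4}.
\]
Since $g$ is strictly decreasing on $(0,\infty)$ with $g(0^+)=|z|^{-2}>1>g(+\infty)$, this has a unique root $a=a(z,\Sigma)$, and $a\sim 1$ uniformly in $|z|^2\in(0,1-\tau]$ because both $1-g(+\infty)$ and $g(a)-g(+\infty)$ (for $a\sim1$) scale like $|z|^2$; one also checks $\partial_m f(0,ia)=-a\,g'(a)\ne 0$, so by the implicit function theorem $m_c(\zeta)$ extends analytically in $\zeta=\sqrt w$ near $0$ with $m_c(0)=ia$. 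Then $m_{1c}(w)=m_c(w)/\sqrt w-1=ia/\sqrt w+O(1)$, whence $\rho_{1c}(x)=\pi^{-1}\text{Im}\,m_{1c}(x+i0)=a/(\pi\sqrt x)+O(1)\sim x^{-1/2}$ as $x\searrow 0$. The borderline $z=0$, where $f(0,\cdot)$ degenerates to the zero function, must be treated separately, but there $\rho_{2c}$ is a standard deformed Marchenko--Pastur law and its hard edge is classical.

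\smallskip
\noindent Finally I would transfer everything to $\rho_{2c}$ and check the normalization. By (\ref{eq_self1}), $m_{2c}=\big(-w(1+m_{1c})+|z|^2/(1+m_{1c})\big)^{-1}$ is an explicit analytic function of $m_{1c}$ whose denominator stays bounded away from $0$ and $\infty$ on the relevant compact sets (using $|m_c|\sim 1$, hence $|1+m_{1c}|\sim 1$, away from the hard edge) and whose derivative does not vanish at the regular edges; hence $\text{supp}\,\rho_{2c}=\text{supp}\,\rho_{1c}$ (as noted after (\ref{eqn_rho1c})) and $\rho_{2c}$ inherits the same local behaviour as $\rho_{1c}$ up to bounded factors, including the $x^{-1/2}$ blow-up at $0$. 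For the total mass, by (\ref{stj_rho1}) it suffices to compute $-\lim_{\eta\to\infty}i\eta\,m_{2c}(i\eta)$: as $w=i\eta\to i\infty$, (\ref{eq_self2}) gives $m_{1c}\to 0$ and $w\,m_{1c}\to -N^{-1}\sum_i l_is_i=-1$, and then (\ref{eq_self1}) gives $m_{2c}^{-1}=-w-w\,m_{1c}+|z|^2+o(1)=-w(1+o(1))$, so $w\,m_{2c}\to -1$ and $\int_{\mathbb R}\rho_{2c}(x)\,dx=1$. Together with $\rho_{2c}\ge 0$ and integrability (Lemma \ref{lemm_density} plus the integrable hard-edge singularity), this shows $\rho_{2c}$ is a probability density.
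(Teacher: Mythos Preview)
Your approach is essentially the same as the paper's: parts (i)--(iii) are indeed assembled from Lemma \ref{lemm_rho}, Lemma \ref{lemm_edge_reg}, and Definition \ref{def_regular}, and your Taylor expansion of $f$ at a regular edge mirrors the paper's Lemma \ref{lemm_edge_reg}. Your argument for $\int\rho_{2c}=1$ via $\lim_{\eta\to\infty}(-i\eta)m_{2c}(i\eta)=1$ is a clean self-contained alternative; the paper instead invokes that $m_{2c}$ is the a.s.\ limit of $m_2=N^{-1}\text{Tr}\,G_R$, which is already the Stieltjes transform of a probability measure. For the transfer to $\rho_{2c}$ near a regular edge, the paper makes explicit the point you leave implicit: one must check $|m_c^2-|z|^2|\ge\epsilon'$ there, so that the denominator in (\ref{eq_self1}) is harmless.

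There is one genuine gap in (iv). Your IFT step relies on $\partial_m f(0,ia)=-a\,g'(a)\ne 0$, but a direct computation (matching the paper's (\ref{partialm_t})) gives
\[
\partial_m f(0,ia)\;=\;\frac{2a^2|z|^2}{N}\sum_{i=1}^n \frac{l_is_i^2}{\big[(s_i+|z|^2)a^2+|z|^4\big]^{2}}\;=\;O(|z|^2)\quad\text{as }|z|\to 0.
\]
So while the IFT applies for each fixed $|z|>0$, the neighborhood of $\sqrt w=0$ on which $m_c(\sqrt w)=ia+O(\sqrt w)$ holds shrinks with $|z|$, and your $O(1)$ error in $m_{1c}=ia/\sqrt w+O(1)$ is not uniform for small $|z|$. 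Uniformity over $|z|^2\in[0,1-\tau]$ is exactly what Proposition \ref{prop_roughbound} and Case 4 of Lemma \ref{lemm_m1_4case} require. The paper's remedy is not to single out $z=0$ but to treat the whole regime $|z|^2<\epsilon$ by passing to $F(\sqrt w,m):=f(\sqrt w,m)/m$ and expanding to second order: both $\partial_m F(0,i\sqrt t)$ and $\partial_{\sqrt w}F(0,i\sqrt t)$ are $O(|z|^2+\sqrt{|w|})$, but their ratio and the second-order coefficients stay bounded, which yields $m_c-i\sqrt t=O(\sqrt w)$ uniformly (see (\ref{eqn_app_order2})--(\ref{eqn_app_smallz}) in the proof of Lemma \ref{lemm_edge0}). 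With this adjustment your outline goes through.
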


\begin{prop}\label{prop_roughbound}
The preceding proposition implies that, uniformly in $w$ in any compact set of $\mathbb C_+$,
\begin{equation}
|m_{1,2c}(w)|=O(|w|^{-1/2}).
\end{equation}
Moreover, if $1+\tau \le |z|^2 \le 1+\tau^{-1}$, then $|m_{1,2c}(w)| \sim 1$ for $w$ in any compact set of $\mathbb C_+$; if $|z|^2 \le 1-\tau$, then $|m_{1,2c}(w)| \sim |w|^{-1/2}$ for $w$ in any compact set of $\mathbb C_+$.
\end{prop}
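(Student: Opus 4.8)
The statement to prove is Proposition~\ref{prop_roughbound}, which extracts quantitative size bounds on $m_{1,2c}(w)$ from the structural information in Proposition~\ref{prop_rho1c}. Since Proposition~\ref{prop_rho1c} is assumed available, the plan is to feed its three qualitative inputs---compact support, square-root edge behavior, and the inverse-square-root blow-up at $0$ when $|z|^2\le 1-\tau$---into the Stieltjes-transform representation \eqref{stj_rho1} and read off the growth rate in $w$.

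\textbf{Overall approach.} The plan is to use $m_{1,2c}(w)=\int_{\mathbb R}\rho_{1,2c}(x)(x-w)^{-1}dx$ together with the fact that $\rho_{1,2c}$ is a finite (for $\rho_{2c}$, probability) measure supported in a fixed compact set $[0,C_1\tau^{-1}]$. The elementary bound $|m_{1,2c}(w)|\le \big(\operatorname{dist}(w,\operatorname{supp}\rho_{1,2c})\big)^{-1}$ already gives $|m_{1,2c}(w)|\le C$ on any compact subset of $\mathbb C_+$, but we need the sharper $O(|w|^{-1/2})$, which matters only as $|w|\to 0$, i.e.\ as $w$ approaches the spectral edge $0$. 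So the real content is a local analysis near $x=0$, split into the three cases of Proposition~\ref{prop_rho1c}(i). The three steps, in order:

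\textbf{Step 1 (the case $1+\tau\le|z|^2\le 1+\tau^{-1}$).} Here $\operatorname{supp}\rho_{1,2c}\subset[e_{2L},C_1\tau^{-1}]$ with $e_{2L}\ge\epsilon>0$, so for $w$ in any compact set of $\mathbb C_+$ one has $|x-w|\sim 1$ uniformly on the support, hence $|m_{1,2c}(w)|\le C\int\rho_{1,2c}=O(1)$; this also trivially gives the $O(|w|^{-1/2})$ bound since $|w|$ is bounded. For the matching lower bound $|m_{1,2c}(w)|\gtrsim 1$: write $m_{1c}(w)=\int\rho_{1c}(x)(x-w)^{-1}dx$ and note $\operatorname{Re}(-m_{1c}(w))\cdot(\text{something})$---more cleanly, use that $\operatorname{Im} w>0$ forces $\operatorname{Im} m_{1c}(w)=\int\rho_{1c}(x)\operatorname{Im} w\,|x-w|^{-2}dx$, and since $\rho_{1c}$ is a nonzero measure on a compact set with $|x-w|\le C$, this is $\ge c\,\operatorname{Im} w\int\rho_{1c}>0$; combined with the upper bound this gives $|m_{1c}(w)|\sim 1$ on compacts where $\operatorname{Im} w$ is bounded below. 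One must be slightly careful near the real axis, but $w$ ranges over a fixed compact subset of the \emph{open} half-plane, so $\operatorname{Im} w$ is bounded below there. The same works for $\rho_{2c}$.

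\textbf{Step 2 (the case $|z|^2\le 1-\tau$).} Now $e_1=0$ is in the support and Proposition~\ref{prop_rho1c}(iv) gives $\rho_{1c}(x)\sim x^{-1/2}$ as $x\searrow 0$, with $\rho_{1c}$ bounded on $[\delta,C_1\tau^{-1}]$ for any $\delta>0$. Split $m_{1c}(w)=\int_0^\delta+\int_\delta^{C_1\tau^{-1}}$. The second piece is $O(1)$ as before. For the first piece, substitute the model density $x^{-1/2}$ and compute $\int_0^\delta x^{-1/2}(x-w)^{-1}dx$ explicitly (it evaluates, via $u=\sqrt x$, to something of the form $c\,(-w)^{-1/2}\log(\cdots)$ up to bounded terms)---the key point is that the leading singular behavior as $w\to 0$ is exactly of order $|w|^{-1/2}$, and the error from replacing $\rho_{1c}(x)$ by $c\,x^{-1/2}$ on $[0,\delta]$ contributes a lower-order term. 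This gives both $|m_{1c}(w)|=O(|w|^{-1/2})$ and $|m_{1c}(w)|\gtrsim|w|^{-1/2}$ for $w$ in a compact set of $\mathbb C_+$ (the upper bound needs $|w|$ bounded above, which holds; the lower bound comes from the explicit leading term not being cancelled). Then transfer to $m_{2c}$ via \eqref{eq_self1}: given $m_{1c}=O(|w|^{-1/2})$, the relation $m_{2c}^{-1}=-w(1+m_{1c})+|z|^2(1+m_{1c})^{-1}$ shows $m_{2c}^{-1}$ is of order $\sqrt w\cdot O(1)+O(1)=O(1)$... one has to check $m_{2c}^{-1}\sim |w|^{1/2}$, using $wm_{1c}\in\mathbb C_+$ to control cancellations; alternatively just quote Proposition~\ref{prop_rho1c} for $\rho_{2c}$ directly (it states the same (iv) holds) and repeat the Stieltjes argument.

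\textbf{Step 3 (the remaining case $\tau\le|z|^2-1$, i.e.\ $e_1>0$, or more precisely $||z|^2-1|\ge\tau$ with $e_1>0$ not at the origin from below---actually the leftover is $|z|^2\ge 1+\tau$ already covered, so Step 3 is empty; the three-way split $|z|^2\le 1-\tau$ vs.\ $|z|^2\ge 1+\tau$ exhausts $\tau\le||z|^2-1|\le\tau^{-1}$).} Thus only Steps 1 and 2 are needed; in the $|z|^2\ge 1+\tau$ regime the $O(|w|^{-1/2})$ bound is subsumed by the $O(1)$ bound. Finally, assemble: in all cases $|m_{1,2c}(w)|=O(|w|^{-1/2})$, with the refined two-sided estimates in the two sub-regimes as stated.

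\textbf{Main obstacle.} The routine part is the Stieltjes-transform bookkeeping; the one genuinely delicate point is the transfer between $m_{1c}$ and $m_{2c}$ via \eqref{eq_self1} in the $|z|<1$ case, ensuring that the orders $|w|^{\pm 1/2}$ propagate correctly without hidden cancellation---this is where $wm_{1,2c}\in\mathbb C_+$ (established in Lemma~\ref{lemm_density} and the discussion after it) is used to pin down arguments/signs. The cleanest route is to avoid this transfer altogether and instead run the explicit near-origin Stieltjes computation separately for $\rho_{1c}$ and for $\rho_{2c}$, invoking Proposition~\ref{prop_rho1c}(iv) for each; the transfer via \eqref{eq_self1} then serves only as a consistency check.
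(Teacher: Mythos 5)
Your proof is correct and takes the natural route---feeding the density information from Proposition~\ref{prop_rho1c} into the Stieltjes-transform representation \eqref{stj_rho1}---which is exactly what the paper leaves implicit when it writes ``the preceding proposition implies that.'' One small inaccuracy worth noting in Step~2: the model integral $\int_0^\delta x^{-1/2}(x-w)^{-1}\,dx = 2\int_0^{\sqrt\delta}(u^2-w)^{-1}\,du$ evaluates to $\tfrac{1}{\sqrt w}\bigl[\log\tfrac{\sqrt\delta-\sqrt w}{\sqrt\delta+\sqrt w}-\log(-1)\bigr]=-\tfrac{i\pi}{\sqrt w}+O(1)$ as $w\to 0$, so the leading singularity is a clean $c\,w^{-1/2}$ with no logarithmic factor; this does not affect your conclusion that the order is $|w|^{-1/2}$. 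You are also right that the cleanest way to handle $m_{2c}$ is to invoke Proposition~\ref{prop_rho1c}(iv) directly for $\rho_{2c}$ rather than transferring through \eqref{eq_self1}.
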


We will consistently use the notation $E+i\eta$ for the spectral parameter $w$. In this paper, we regard the quantities $E(w)$ and $\eta(w)$ as functions of $w$ and usually omit the argument $w$. In the following we would like to define several spectral domains of $w$ that will be used in the proof. 

\begin{defn}[Spectral domains]
Fix a small constant $\zeta>0$ which may depend on $\tau$. The spectral parameter $w$ is always assumed to be in the fundamental domain
\begin{equation}\label{eq_domainD}
\mathbf D\equiv \mathbf D(\zeta,N) := \{w\in \mathbb C_+: 0\le E \le \zeta^{-1}, N^{-1+\zeta}|m_{2c}|^{-1}\le \eta\le \zeta^{-1}\}.
\end{equation}
unless otherwise indicated. Given a regular edge $e_k$, we define the subdomain
\begin{equation}
\mathbf D_{k}^e \equiv \mathbf D_{k}^e (\zeta, \tau',N):=\{w\in \mathbf D(\zeta,N): |E-e_k| \le \tau' , E\ge 0\}.
\end{equation}
Corresponding to a regular bulk component $[e_{2k},e_{2k-1}]$, we define the subdomain
\begin{equation}
\mathbf D_{k}^b\equiv \mathbf D_{k}^b(\zeta, \tau',N):=\{w\in \mathbf D(\zeta,N): E\in [e_{2k}+\tau',e_{2k-1}-\tau']\}.
\end{equation}
For the component outside ${\rm supp}\,\rho_{1c}$, we define the subdomain
\begin{equation}\label{def_domain_Do}
\mathbf D^o\equiv \mathbf D^o(\zeta, \tau',N):=\{w\in \mathbf D(\zeta,N): {\rm{dist}}(E,{\rm{supp}} \,\rho_{1c})\ge \tau'\}.
\end{equation}
We also need the following domain with large $\eta$,
\begin{equation}\label{eq_largedomainD}
 {\mathbf D}_L \equiv  {\mathbf D}_L(\zeta) := \{w\in \mathbb C_+: 0\le E \le \zeta^{-1}, \eta \ge \zeta^{-1} \},
\end{equation}
and the subdomain of $\mathbf D \cup \mathbf D_L$,
\begin{equation}\label{eq_subdomainD}
\widehat{\mathbf D} \equiv \widehat{\mathbf D}(\zeta, N) := \{w\in \mathbf D(\zeta,N) : \eta \ge N^{-1/2+\zeta}|m_{2c}|^{-1} \} \cup {\mathbf D}_L(\zeta).
\end{equation}
We call $\mathbf S$ a regular domain if it is a regular $\mathbf D_{k}^e$ or $\mathbf D_{k}^b$ domain, a $\mathbf D^o$ domain or a $\mathbf D_L$ domain.
\end{defn}

\noindent{\it{Remark:}} In the definition of $\mathbf D$, we have suppressed the explicit $w$-dependence. Notice that when $|z|^2<1-\tau$, since $|m_{2c}|\sim |w|^{-1/2}$ as $w\to 0$, we allow $\eta \sim |w| \sim N^{-2+2\zeta}$ in $\mathbf D$. In the definition of $\mathbf D^e_k$, the condition $E\ge 0$ is only for the edge at $0$ when $|z|^2\le 1-\tau$. 

\vspace{5pt}

Now we are prepared to state the various local laws satisfied by $G$ defined in (\ref{eqn_defG}). 
Let
\begin{equation}\label{eq_defpsi}
\Psi\equiv \Psi(w) := \sqrt {\frac{\Im \left(m_{1c} + m_{2c} \right) }{{N\eta }} } + \frac{1}{N\eta}
\end{equation}
be the deterministic control parameter.

\begin{defn}[Local laws]\label{def_local_laws}
Suppose $N\le M$. Recall $G\equiv G(T,X,z,w)$ defined in (\ref{eqn_defG}) and $\Pi \equiv \Pi(\Sigma,z,w)$ defined in (\ref{def_Pi}). Let $\mathbf S$ be a regular domain.

(i) We say that the entrywise local law holds with parameters $(T,X,z,\mathbf S)$ if
\begin{equation}
\left[G(T,X,z,w) - \Pi(\Sigma,z,w)\right]_{st} \prec \Psi(w)
\end{equation}
uniformly in $w\in \mathbf S$ and $s,t\in \mathcal I$. 

(ii) We say that the anisotropic local law holds with parameters $(T,X,z,\mathbf S)$ if
\begin{equation}
\left\|G(T,X,z,w) - \Pi(\Sigma,z,w)\right\| \prec \Psi(w)
\end{equation}
uniformly in $w\in \mathbf S$. 

(iii) We say that the averaged local law holds with parameters $(T,X,z,\mathbf S)$ if
\begin{equation}
\left|m_2(T,X,z,w) - m_{2c}(\Sigma,z,w)\right| \prec \frac{1}{N\eta}
\end{equation}
uniformly in $w\in \mathbf S$.

\end{defn}


The local laws for $G$ with a general $T$ will be built upon the following result with a diagonal $T$.

\begin{thm}[Local laws when $T$ is diagonal] \label{law_squareD}
Fix $\tau  \le  | {\left| z \right|^2 - 1} | \le \tau ^{ - 1} $. Suppose Assumption \ref{main_assump} holds, $N=M$, and $T\equiv D:=diag(d_1,...,d_N)$ is a diagonal matrix. Let $\mathbf S$ be a regular domain. Then the entrywise local law, anisotropic local law and averaged local law hold with parameters $(D,X,z,\mathbf S)$. 
\end{thm}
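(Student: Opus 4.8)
\textbf{Proof proposal for Theorem~\ref{law_squareD}.}
The plan is to follow the self-consistent equation strategy of \cite{local_circular}, adapted to the linearizing block matrix $H(w)$ and the multi-variable system \eqref{eq_self1}--\eqref{eq_self2}. The three conclusions (entrywise, anisotropic, averaged) are proved in that order, each one feeding into the next. Throughout we write $G=H^{-1}$ and use the minor/resolvent identities for $G$: removing a row--column pair $[i]$ (in the $2\times2$ block sense of \eqref{Aij_group}) gives a Schur-complement expansion of the diagonal block $G_{[ii]}$ in terms of the minor $G^{([i])}$ and the $i$-th rows of $X$. Since $T=D$ is diagonal, the $i$-th block of $H$ couples only the pair $(i,\bar i)$ to the randomness through the single row $X_{i\cdot}$ and column $X_{\cdot i}$, so these expansions are genuinely of large-deviation type with independent entries. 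Self-consistency then forces $G_{[ii]}$ to be close to $\pi_{[i]}$ of \eqref{def_pi_i}, i.e.\ the random control parameters $m_1,m_2$ from \eqref{def_M} nearly solve \eqref{eq_self1}--\eqref{eq_self2}.

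Step 1 (entrywise law on $\widehat{\mathbf D}$, then bootstrap down to $\mathbf D$). First I would establish the entrywise law on the large-$\eta$ regime $\mathbf D_L$, where everything is a perturbative expansion around $\eta=\infty$. Then I run the standard continuity/bootstrap argument in $\eta$: assuming the entrywise bound $|(G-\Pi)_{st}|\prec \Psi$ as an a priori input at scale $2\eta$, the large-deviation estimates (Lemma-type bounds on $\sum_\mu X_{i\mu} G^{([i])}_{\mu\nu}$, etc.) upgrade it to scale $\eta$, provided the self-consistent equation is \emph{stable}. The stability is exactly where Lemma~\ref{lemm_rho}, Proposition~\ref{prop_rho1c} and Proposition~\ref{prop_roughbound} enter: on a regular domain $\mathbf S$ the Jacobian of the map $f(\sqrt w,m)$ in \eqref{selfm_reduced} is bounded below (this is what regularity \eqref{regular1}--\eqref{regular2} buys us near edges, what bulk regularity buys in $\mathbf D^b_k$, and what distance-to-support buys in $\mathbf D^o$), so a small self-consistent error propagates to a small $\|G-\Pi\|$ error. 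The fluctuation-averaging mechanism of \cite{Average_fluc,local_circular} then improves the \emph{averaged} quantity $m_{1,2}-m_{1,2c}$ to order $(N\eta)^{-1}$, closing the loop. One wrinkle relative to \cite{local_circular}: the entries of $D X$ are not identically distributed, so the self-consistent vector equation is genuinely $i$-dependent; this is handled as in \cite{Semicircle,Anisotropic} by carrying the full vector of diagonal blocks and using that $\min_i d_i\ge\sqrt\tau$, $\max_i d_i\le\tau^{-1/2}$ keep all denominators under control.

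Step 2 (anisotropic law). Once the entrywise law holds on $\mathbf S$, I upgrade it to $\|G-\Pi\|\prec\Psi$ by the polynomialization method of \cite[Section 5]{isotropic} (cited in the Introduction as the tool for Step (B)). The idea is to control $\langle \mathbf v,(G-\Pi)\mathbf u\rangle$ for fixed deterministic unit vectors by expanding a high moment $\mathbb E|\langle\mathbf v,(G-\Pi)\mathbf u\rangle|^{2p}$ into a sum of monomials in matrix entries of $G$, using the resolvent expansion together with the entrywise control and the large-deviation bounds for the $X$-entries to show each monomial is $O_\prec(\Psi^{2p})$; the combinatorics of which terms are ``large'' is organized exactly as in the isotropic local law proofs for sample covariance matrices. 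Here the block structure of $H$ makes $G$ an analytic function of $X$ in a form directly amenable to that machinery. Finally, the Schur relation \eqref{eqn_schurmatrix} transfers the bound on $G$ to $G_L=(YY^\dag-w)^{-1}$ and $G_R=(Y^\dag Y-w)^{-1}$.

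The main obstacle I anticipate is not the probabilistic/combinatorial part—that is by now fairly mechanical once the right a priori bounds are in place—but the \emph{stability of the self-consistent equation} uniformly over all regular domains, including the delicate edge domains $\mathbf D^e_k$ (where $\Psi$ and $|m_{2c}|$ degenerate and one must track the square-root behavior of $\rho_{1c}$ from Proposition~\ref{prop_rho1c}(iii)) and, when $|z|^2\le 1-\tau$, the hard edge at $0$ where $|m_{2c}|\sim|w|^{-1/2}\to\infty$ and $\eta$ is allowed all the way down to $N^{-2+2\zeta}$. Establishing the requisite lower bound on the Jacobian of $f$ and the correct size of the stability constant in each of these regimes is the analytic heart of the argument; it is precisely the content deferred to Appendix~\ref{appendix1}, and the regularity conditions of Definition~\ref{def_regular} are tailored to make it go through.
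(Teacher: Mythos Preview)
Your outline matches the paper's strategy closely: Schur-complement expansion of $G_{[ii]}$ plus large-deviation bounds to derive an approximate self-consistent equation, a large-$\eta$ starting point, a continuity-in-$\eta$ bootstrap governed by the stability lemma, fluctuation averaging to reach the $(N\eta)^{-1}$ averaged bound, and then the polynomialization of \cite{isotropic} for the anisotropic statement. The identification of stability as the analytic crux is exactly right.

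There is, however, one genuine gap in your plan. You write that the hard edge at $e_{2L}=0$ (when $|z|^2\le 1-\tau$) is handled by ``establishing the requisite lower bound on the Jacobian of $f$'' and that this ``is precisely the content deferred to Appendix~\ref{appendix1}.'' In fact the single-variable stability of $f(\sqrt w,m)$ \emph{fails} when both $|w|$ and $|z|$ are small: a direct computation (see \eqref{partialm_smallmw}) gives $\partial_m f\sim |z|^2+\sqrt{|w|}$, so the Jacobian degenerates and Lemma~\ref{lemm_stability} Case~4 explicitly requires $|w|^{1/2}+|z|^2\ge\epsilon$. The paper therefore treats the regime $|w|^{1/2}+|z|^2\le\epsilon$ separately (Section~\ref{section_smallwz}): one introduces the auxiliary linearization
\[
\tilde H(w)=\begin{pmatrix}-w\Sigma^{-1}& w^{1/2}(X-D^{-1}z)\\ w^{1/2}(X-D^{-1}z)^\dag & -wI\end{pmatrix},
\]
whose resolvent gives a second exact identity for $m_2$ (namely $m_2=N^{-1}\mathrm{Tr}(\Sigma^{-1}\tilde G_L)$), and works with the \emph{two-variable} system $(f_1(m_1,m_2),f_2(m_1,m_2))=0$ in place of the scalar equation \eqref{eq_self3}. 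The Jacobian of this pair at $(m_{1c},m_{2c})$ equals $2+O(|w|^{1/2}+|z|^2)$, which is uniformly nondegenerate precisely where the scalar equation is not. Once this replacement is made, the rest of your argument (large deviations, bootstrap, fluctuation averaging, polynomialization) goes through unchanged in that regime. Without this extra ingredient your bootstrap would stall near $w=0$ for small $|z|$.
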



\vspace{5pt}

Now suppose that $N\le M$ and $T$ is an $N\times M$ matrix such that the eigenvalues of $\Sigma$ satisfy (\ref{assm3}) and (\ref{assm4}). Consider the singular decomposition $T=U\bar DV$, where $U$ is an $N\times N$ unitary matrix, $V$ is an $M\times M$ unitary matrix and $\bar D=(D,0)$ is an $N\times M$ matrix such that $D=\text{diag}(d_1,d_2,\ldots,d_N)$. Then we have
\begin{equation}
TX-z = UDV_1X-z, \label{TX1}
\end{equation}
where $V_1$ is an $N\times M$ matrix and $V_2$ is an $(M-N)\times M$ matrix defined through
$V = \left( {\begin{array}{*{20}c}
   { V_1 }  \\
   {V_2}  \\
\end{array}} \right).$ If $X=X^{Gauss}$ is Gaussian, then $V_1 X^{Gauss} \stackrel{d}{=} \tilde X^{Gauss} U^\dag$ with $\tilde X$ being an $N\times N$ Gaussian random matrix. Then by the definition of $G$ in (\ref{eqn_defG}),
\begin{equation}\label{eqn_comparison1}
G(T,X^{Gauss},z,w) \stackrel{d}{=}  \left( {\begin{array}{*{20}c}
   { U} & {0}  \\
   {0} & {U}  \\
   \end{array}} \right) G(D,\tilde X^{Gauss},z,w) \left( {\begin{array}{*{20}c}
   { U^\dag} & {0}  \\
   {0} & {U^\dag}  \\
   \end{array}} \right).
\end{equation}
Since the anisotropic local law holds for $G(D,\tilde X^{Gauss},z,w)$ by Theorem \ref{law_squareD}, we get immediately the anisotropic local law for $G(T,X^{Gauss},z,w)$. The next theorem states that the anisotropic local law holds for general $TX$ provided that the anisotropic local law holds for $TX^{Gauss}$.
----

\begin{thm}[Anisotropic local law when $N\le M$] \label{law_wideT}
Fix $\tau  \le  | {\left| z \right|^2 - 1} | \le \tau ^{ - 1} $. Suppose Assumption \ref{main_assump} holds and $N\le M$. Let $T=U\bar DV$ be a singular decomposition of $T$, where $\bar D=(D,0)$ with $D=\text{diag}(d_1,d_2,\ldots,d_N)$. Let $\mathbf S$ be a regular domain. Then the anisotropic local law and averaged local law hold with parameters $(T,X,z,\mathbf S \cap \widehat {\mathbf D})$. If in addition (\ref{assm_3rdmoment}) holds, then the anisotropic local law and averaged local law hold with parameters $(T,X,z,\mathbf S)$.
\end{thm}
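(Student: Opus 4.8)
The plan is to first dispose of the Gaussian case and then transfer to general entries by a comparison. As observed around (\ref{eqn_comparison1}), when $X=X^{Gauss}$ the rotational invariance of the Gaussian ensemble gives $V_1X^{Gauss}\stackrel{d}{=}\tilde X^{Gauss}U^\dag$ for an $N\times N$ Gaussian matrix $\tilde X^{Gauss}$, so that $Y_z=TX^{Gauss}-z\stackrel{d}{=}U(D\tilde X^{Gauss}-z)U^\dag$ and hence $H(T,X^{Gauss},z,w)$ is unitarily conjugate, by $\text{diag}(U,U)$, to $H(D,\tilde X^{Gauss},z,w)$; correspondingly $\Pi(\Sigma,z,w)=\text{diag}(U,U)\,\Pi_d\,\text{diag}(U^\dag,U^\dag)$ by (\ref{def_Pi}). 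Since the operator norm and the normalized trace are invariant under unitary conjugation, and since $\Psi$ and the regular domains depend only on $\Sigma=TT^\dag$, Theorem \ref{law_squareD} applied to the square diagonal matrix $D$ immediately yields the anisotropic and averaged local laws with parameters $(T,X^{Gauss},z,\mathbf S)$ on every regular domain $\mathbf S$. It therefore remains to show that these laws are inherited by a model whose entries merely satisfy Assumption \ref{main_assump}; for this I would run the self-consistent comparison argument of \cite{Anisotropic}.

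The comparison is a Lindeberg-type replacement coupled to the self-consistent equation. First, a continuity argument in $\eta$ --- starting from the trivial estimate on $\mathbf D_L$ (where $G\approx -w^{-1}I$), descending along a fine $w$-grid using a crude polynomial bound on $\|G\|$, and invoking the stability of (\ref{eq_self1})--(\ref{eq_self3}) from Appendix \ref{appendix1} to keep the error from jumping --- provides a weak a priori bound $\|G-\Pi\|\prec N^{-c_0}$ on the domain under consideration, which must be bootstrapped to $\|G-\Pi\|\prec\Psi$. Next, introduce the interpolating family $X^{(\gamma)}$, $\gamma=0,\dots,MN$, obtained by replacing the entries of $X$ one at a time by independent Gaussians with matching mean and variance, so $X^{(0)}=X$ and $X^{(MN)}\stackrel{d}{=}X^{Gauss}$. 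For a large even integer $p$ we track the polynomial functional $\mathbb E\,\text{Tr}\,\big[\big((G-\Pi)^\dag(G-\Pi)\big)^p\big]$ (and $\mathbb E|m_2-m_{2c}|^{2p}$ for the averaged law); the polynomialization method of \cite[Section 5]{isotropic}, already used in the proof of Theorem \ref{law_squareD}, lets one recover the operator-norm statement from such functionals by Markov's inequality with $p$ large. For each swap we Taylor-expand in the single replaced entry $X_{i\mu}$ to fourth order, using $\partial_{X_{i\mu}}G=-G(\partial_{X_{i\mu}}H)G$; each derivative brings an extra factor of $G$-entries, which the a priori bound and $X_{i\mu}=O_\prec((N\wedge M)^{-1/2})$ make small. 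The terms of orders $0,1,2$ coincide for $X^{(\gamma)}$ and $X^{(\gamma+1)}$ by (\ref{assm1}), and the terms of order $\ge 4$, summed over the $\sim N^2$ swaps, are negligible on the relevant domain. The third-order term is the pivot: it vanishes identically once (\ref{assm_3rdmoment}) is assumed, and then the comparison closes on all of $\mathbf S$; without (\ref{assm_3rdmoment}) it contributes an error which, after summation and after using the stability of (\ref{eq_self3}) to turn a bound on the defect of the self-consistent equation into a bound on $G-\Pi$, can be absorbed only when $\eta\ge N^{-1/2+\zeta}|m_{2c}|^{-1}$, i.e. on $\mathbf S\cap\widehat{\mathbf D}$.

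The step I expect to be the main obstacle is the bookkeeping that keeps the comparison \emph{self-consistent}. One must show that the low-order Taylor contributions not eliminated outright by moment matching do not accumulate freely over the $\sim N^2$ swaps but reorganize --- using the explicit form of $\Pi$ in (\ref{def_Pi}), the equations (\ref{eq_self1})--(\ref{eq_self2}) and the bounds on $m_{1,2c}$ from Proposition \ref{prop_roughbound} --- into the defect of an approximate fixed-point equation for $G-\Pi$, so that the stability estimates of Appendix \ref{appendix1} can convert them into a genuinely $\Psi$-sized bound rather than one off by powers of $N$. Obtaining the \emph{sharp} averaged error $(N\eta)^{-1}$, rather than merely $\Psi$, is the other delicate point: the extra gain comes from the averaging defining $m_2$, and can be extracted either by carrying the cancellation one order further in the comparison of $\mathbb E|m_2-m_{2c}|^{2p}$ or, once the anisotropic law is in hand for the general model, by a fluctuation-averaging estimate improving the error from $\Psi$ to $\Psi^2=O((N\eta)^{-1})$. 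Finally, the non-identically-distributed reference matrix $D\tilde X^{Gauss}$ is already handled inside Theorem \ref{law_squareD}, so it causes no extra difficulty at this stage.
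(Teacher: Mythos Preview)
Your overall architecture --- Gaussian case via (\ref{eqn_comparison1}) plus Theorem \ref{law_squareD}, then transfer to general $X$ by a comparison in which the third moment is the pivot --- matches the paper's Section \ref{section_comparison}. But two of the concrete mechanisms you describe are not the ones that actually make the argument close, and as written they would not.

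First, your a priori bound. You propose a continuity-in-$\eta$ argument for the \emph{general} $T$, invoking the stability of (\ref{eq_self1})--(\ref{eq_self3}). That route is what works in Section \ref{section_weaklaw} when $T$ is diagonal, because then the resolvent expansion (\ref{eq_res1}) produces the self-consistent equations directly. For general $T$ the entries of $TX$ are correlated and this derivation is unavailable; there is no independent weak law to bootstrap from. The paper instead runs the comparison \emph{itself} as a multi-scale bootstrap: one descends in $\eta$ by factors $N^{-\delta}$ via properties $(\mathbf A_{m-1})\Rightarrow(\mathbf C_m)$, the base case $(\mathbf A_0)$ being trivial at $\eta\sim 1$, and at each scale the Gaussian bound (which holds on all of $\mathbf S$ by Theorem \ref{law_squareD}) serves as the anchor for a Gr\"onwall inequality in the continuous interpolation parameter $\theta$. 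No separate weak law for general $T$ is ever established.

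Second, and more importantly, the way the Taylor remainders are controlled is not ``reorganize into a defect of the self-consistent equation and apply the stability of Appendix \ref{appendix1}.'' The stability of (\ref{eq_self3}) plays no role in Section \ref{section_comparison}. The actual mechanism is: track $\mathbb E\,|\langle\mathbf v,(\mathcal G-\tilde\Pi)\mathbf v\rangle|^p$ for a fixed unit $\mathbf v$ (polarization, not a trace functional and not the polynomialization of Section \ref{section_isotropiclaw}); expand the $n$-th derivative as a sum over ``words'' $A_{\mathbf v,i,\mu}(w_r)$; and bound the sum over $(i,\mu)$ using the Ward-type identities of Lemma \ref{lem_comp_gbound}, i.e.\ $N^{-1}\sum_s\mathcal R_s^2\prec\Phi^2$ with $\mathcal R_s=|\mathcal G_{\mathbf v\mathbf v_s}|+|\mathcal G_{\mathbf v_s\mathbf v}|$. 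This converts the double sum over $(i,\mu)$ into powers of $\Phi$, and the resulting inequality has the self-consistent form $O\big((N^{C_0\delta}\Phi)^p+\mathbb E F_{\mathbf v}^p\big)$, which closes by Gr\"onwall along the continuous interpolation. Without the third-moment assumption the $n=3$ term survives, and the paper handles it (on $\widehat{\mathbf D}$ only) by a further fixed-$w$ iteration with a self-improving parameter $\phi$ (Lemma \ref{lemm_comparison_big}); the restriction $\eta\ge N^{-1/2+\zeta}|m_{2c}|^{-1}$ enters because this is exactly where $\Phi\le N^{-1/4-\zeta/2}$, which is what makes that iteration contract. The averaged law in Section \ref{section_averageTX} is proved by rerunning the same comparison for $\tilde F=|w|^{1/2}|m_2-m_{2c}|$, not by a separate fluctuation-averaging lemma.
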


Finally we turn to the $N>M$ case. Suppose $T=U\bar DV$ is a singular decomposition of $T$, where $U$ is an $N\times N$ unitary matrix, $V$ is an $M\times M$ unitary matrix and $\bar D=\left( {\begin{array}{*{20}c}
   {D}  \\
   {0}  \\
\end{array}} \right)$ is an $N\times M$ matrix such that $D=\text{diag}(d_1,d_2,\ldots,d_M)$.
Let $U=(U_1,U_2)$, where $U_1$ has size $N\times M$ and $U_2$ has size $N\times (N-M)$. Following Girko's idea of Hermitization \cite{Girko}, to prove the local circular law in Theorem \ref{main_theorem} when $N>M$, it suffices to study $\det(TX-z)$ (see (\ref{eqn_hermitizatio}) below), for which we have
 \begin{align}
\det(TX-z)  & = \det \left( {\begin{array}{*{20}c}
   { DVXU_1-z} &  DVXU_2\\
   {0} & {-z} \\
\end{array}} \right) = \det(V^T D^T U_1^T X^T - z)(-z)^{N-M}. \label{tall_reduce}
\end{align}
Comparing with (\ref{TX1}), we see that this case is reduced to the $N\le M$ case, with the only difference being that the extra $(-z)^{N-M}$ term corresponds to the $N-M$ zero eigenvalues of $TX$. Thus we make the following claim.



\begin{claim} \label{law_tallT}
The $N < M$ case of Theorem \ref{main_theorem} implies the $N>M$ case of Theorem \ref{main_theorem}.
\end{claim}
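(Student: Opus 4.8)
The plan is to identify the non-trivial spectrum of $TX$ in the regime $N>M$ with the full spectrum of a product of the type already covered by the $N<M$ case, by means of the determinant factorization (\ref{tall_reduce}). With the singular decomposition $T=U\bar DV$ and the block decomposition $U=(U_1,U_2)$ introduced just above, I would set $T':=V^TD^TU_1^T$ (an $M\times N$ matrix) and $X':=X^T$ (an $N\times M$ matrix). By (\ref{tall_reduce}), $\det(TX-z)=\det(T'X'-z)\,(-z)^{N-M}$, so the $N$ eigenvalues of $TX$ consist of the $M$ eigenvalues of the $M\times M$ matrix $T'X'$ together with $N-M=N-K$ copies of $0$, where $K:=N\wedge M=M$. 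In particular the non-trivial eigenvalues $\mu_1,\ldots,\mu_K$ of $TX$ coincide, as a multiset, with the eigenvalues $\mu'_1,\ldots,\mu'_K$ of $T'X'$; and since $N\wedge M=M$ for both pairs, the rescaled test function $F_{z_0,a}$ is literally the same, so the statistic $\frac{1}{K}\sum_{j=1}^{K}F_{z_0,a}(\mu_j)$ for $TX$ equals the one for $T'X'$.

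Next I would verify that $(T',X')$, with the roles of $N$ and $M$ interchanged (so that its ``$N$'' equals $M<N$, i.e.\ it lies strictly in the $N<M$ regime), satisfies all hypotheses of Theorem \ref{main_theorem}. Taking transposes preserves independence and the moment conditions (\ref{assm1}) and (\ref{assm2}) (the variance $(N\wedge M)^{-1}=M^{-1}$ being the correct normalization for $X'$), the bounded-density assumption, and the vanishing-third-moment condition (\ref{assm_3rdmoment}); the bounds $\|F\|_\infty\le C_1$, $\|F'\|_\infty\le N^{C_1}$ and the density bound carry over because $N\sim M$. The substantive point is the singular-value data: using $U_1^\dag U_1=I_M$ and the unitarity of $V$, one checks that $\Sigma':=T'(T')^\dag$ has the same nonzero eigenvalues as $\text{diag}(|d_1|^2,\ldots,|d_M|^2)$, i.e.\ exactly the nonzero eigenvalues $\sigma_1,\ldots,\sigma_M$ of $\Sigma=TT^\dag$. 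Hence $\rho_{\Sigma'}=\rho_\Sigma$ (both normalized by $M=N\wedge M$), so (\ref{assm3}) and (\ref{assm4}) hold for $(T',X')$; moreover the self-consistent equations (\ref{eq_self1})--(\ref{eq_self2}), their solutions $m_{1,2c}$, the density $\rho_{2c}$, the edges $e_k$, and the regularity conditions of Definition \ref{def_regular} all depend on $T$ only through $\rho_\Sigma$ and $|z_0|$, hence are unchanged; in particular $\tilde\chi_{\mathbb D}$ of (\ref{def_chitilde}) is the same function for the two pairs.

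Finally, I would apply the (assumed) $N<M$ case of Theorem \ref{main_theorem} to $(T',X')$. It yields, for $a\in(0,1/4]$,
\[
\frac{1}{K}\sum_{j=1}^{K}F_{z_0,a}(\mu'_j)-\frac{1}{\pi}\int F_{z_0,a}(z)\,\tilde\chi_{\mathbb D}(z)\,dA(z)\prec K^{-1/2+2a}\|\Delta F\|_{L^1},
\]
together with the improved bound (\ref{eq_main}) for $a\in(0,1/2]$ when (\ref{assm_3rdmoment}) holds or $1+\tau\le|z_0|^2\le 1+\tau^{-1}$. Since $\{\mu'_j\}_{j=1}^{K}=\{\mu_j\}_{j=1}^{K}$, $\tilde\chi_{\mathbb D}$ is unchanged, and $TX$ has exactly $N-K$ trivial zero eigenvalues, this is precisely (\ref{eq_main0}) and (\ref{eq_main}) for $TX$, which is the $N>M$ case. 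I do not expect a genuine analytic obstacle: the reduction is an exact identity at the level of $\det(TX-z)$, and nothing is lost because Theorem \ref{main_theorem} constrains only the non-trivial eigenvalues. The one step I would write out carefully, being the only part that is not purely formal, is the identity $\rho_{\Sigma'}=\rho_\Sigma$, which is what makes the regularity hypotheses and the limiting density $\tilde\chi_{\mathbb D}$ transfer verbatim; after that the argument is bookkeeping. (I would also remark that $z_0=0$ is admissible, since only $\bigl|\,|z_0|^2-1\bigr|$ is controlled: there the $N-M$ trivial zero eigenvalues sit at the centre of the bump $F_{z_0,a}$, but being excluded from $\sum_{j=1}^{K}$ they cause no difficulty.)
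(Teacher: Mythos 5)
Your proposal is correct and is, in substance, the same reduction the paper indicates (but does not spell out): the block-triangular determinant identity (\ref{tall_reduce}) exhibits the spectrum of $TX$ ($N>M$) as the spectrum of $T'X'=V^TD^TU_1^TX^T$ together with $N-M$ extra zeros, and since $T'$ is $M\times N$ with $M<N$ one applies the already-proved case to $(T',X')$. The algebraic verifications you flag as worth writing out are the right ones: $T'=V^T(D,0)U^T$ is a genuine singular decomposition with $D$ unchanged, so $\Sigma'=T'(T')^\dagger$ is $V^TD^2\bar V$, which is similar to $D^2$, giving $\rho_{\Sigma'}=\rho_\Sigma$ (both normalized by $N\wedge M=M$); the test function $F_{z_0,a}$, the regularity data, and the moment/density hypotheses transfer verbatim.

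One imprecision worth repairing. You assert that the self-consistent equations (\ref{eq_self1})--(\ref{eq_self2}) ``depend on $T$ only through $\rho_\Sigma$ and $|z_0|$''. Read literally, (\ref{eq_self2}) carries the explicit prefactor $\tfrac{1}{N}$, while the normalization (2.5) gives $\sum_i l_i s_i = N\wedge M$; for $N>M$ these two roles of $N$ clash, so the equation as printed would yield a different $m_{1c}$ than the transposed case, and the claim would fail. The resolution (implicit in the paper, which derives (\ref{eq_self2}) only for $N\le M$ and handles $N>M$ purely by this reduction) is that the prefactor should be understood as $\tfrac{1}{N\wedge M}=\tfrac{1}{K}$, i.e.\ the dimension of the square matrix whose resolvent is being traced. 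Under that reading your sentence is correct and the rest of the argument is pure bookkeeping, exactly as you say. It would be worth stating this interpretive point explicitly when proving the Claim, since it is precisely what makes $\rho_{2c}$, and hence $\tilde\chi_{\mathbb D}$, invariant under the transposition.
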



\end{subsection}

\begin{subsection}{Proof of Theorem \ref{main_theorem}} \label{subsection_proofmain}

By Claim \ref{law_tallT}, it suffices to assume $N\le M$. Our main tool will be Theorem \ref{law_wideT}. A major part of the proof follows from \cite[Section 5]{local_circular}.
The following lemma collects basic properties of stochastic domination $\prec$, which will be used tacitly during the proof and throughout this paper.

\begin{lem}[Lemma 3.2 in \cite{isotropic}]\label{lem_stodomin}
(i) Suppose that $\xi (u,v)\prec \zeta(u,v)$ uniformly in $u\in U$ and $v\in V$. If $|V|\le N^C$ for some constant $C$, then
$$\sum_{v\in V} \xi(u,v) \prec \sum_{v\in V} \zeta(u,w)$$
uniformly in $u$.

(ii) If $\xi_1 (u)\prec \zeta_1(u)$ uniformly in $u\in U$ and $\xi_2 (u)\prec \zeta_2(u)$ uniformly in $u\in U$, then
$$\xi_1(u)\xi_2(u) \prec \zeta_1(u)\zeta_2(u)$$
uniformly in $u\in U$.

(iii) Suppose that $\Psi(u)\ge N^{-C}$ is deterministic and $\xi(u)$ is a nonnegative random variable such that $E\xi(u)^2 \le N^C$ for all $u$. Then if $\xi(u)\prec \Psi(u)$ uniformly in $u$, we have
$$\mathbb E\xi(u) \prec \Psi(u)$$
uniformly in $u$.
\end{lem}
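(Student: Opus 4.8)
The plan is, in each of the three parts, to unwind the definition of stochastic domination: all three assertions have the form ``for every $\epsilon>0$ and $D>0$, the relevant probability is at most $N^{-D}$ for $N$ large, uniformly in the free parameters'', so I would fix $\epsilon$ and $D$ at the outset and, after possibly replacing $\epsilon$ by $\epsilon/2$ or $D$ by $D+C$, verify the required bound. The common theme is a union bound; only Part (iii) needs an additional idea.

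For Part (i), I would invoke the hypothesis $\xi(u,v)\prec\zeta(u,v)$ with parameter pair $(\epsilon,\,D+C)$ and take a union bound over the at most $N^{C}$ indices $v\in V$. This shows that, uniformly in $u$, the event $\{\xi(u,v)\le N^\epsilon\zeta(u,v)\text{ for all }v\in V\}$ has probability at least $1-N^{C}\cdot N^{-D-C}=1-N^{-D}$, and on that event $\sum_{v}\xi(u,v)\le N^\epsilon\sum_{v}\zeta(u,v)$; since $\epsilon,D$ were arbitrary this is exactly $\sum_v\xi\prec\sum_v\zeta$ uniformly in $u$. For Part (ii), the elementary observation for nonnegative quantities is that $\xi_1\xi_2>N^\epsilon\zeta_1\zeta_2$ forces $\xi_1>N^{\epsilon/2}\zeta_1$ or $\xi_2>N^{\epsilon/2}\zeta_2$; applying the two hypotheses with parameter $\epsilon/2$ and $D$, and union-bounding over these two events, gives $\mathbb{P}[\xi_1\xi_2>N^\epsilon\zeta_1\zeta_2]\le 2N^{-D}$, which is the claim after harmlessly absorbing the factor $2$ into a slightly larger $D$.

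Part (iii) is the only step that needs a genuine idea, and I expect it to be the (mild) main obstacle. Since $\mathbb{E}\,\xi(u)$ and $\Psi(u)$ are both deterministic, the claim reduces to showing $\mathbb{E}\,\xi(u)\le N^\epsilon\Psi(u)$ for $N$ large, uniformly in $u$. I would split $\mathbb{E}\,\xi(u)=\mathbb{E}[\xi(u)\mathbf{1}(\xi(u)\le N^\epsilon\Psi(u))]+\mathbb{E}[\xi(u)\mathbf{1}(\xi(u)>N^\epsilon\Psi(u))]$. The first term is at most $N^\epsilon\Psi(u)$ trivially. For the tail term I would use Cauchy--Schwarz together with the second-moment hypothesis, bounding it by $(\mathbb{E}\,\xi(u)^2)^{1/2}\,\mathbb{P}[\xi(u)>N^\epsilon\Psi(u)]^{1/2}\le N^{C/2}\,\mathbb{P}[\xi(u)>N^\epsilon\Psi(u)]^{1/2}$. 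The delicate point is the choice of exponent: applying $\xi\prec\Psi$ with $D$ chosen large enough (depending only on $C$, e.g. $D\ge 3C$) gives $\mathbb{P}[\xi(u)>N^\epsilon\Psi(u)]\le N^{-D}$, so the tail term is at most $N^{(C-D)/2}\le N^{-C}\le\Psi(u)$, where the last inequality is precisely where the assumption $\Psi(u)\ge N^{-C}$ enters. Combining, $\mathbb{E}\,\xi(u)\le 2N^\epsilon\Psi(u)$ for $N$ large, and since $\epsilon$ was arbitrary this yields $\mathbb{E}\,\xi(u)\prec\Psi(u)$. The one thing to watch throughout is uniformity: in every estimate the threshold $N_0$ and the renamed parameters depend only on $\epsilon$, $D$, $C$ and never on the free parameter $u$, so the uniform versions of $\prec$ are preserved.
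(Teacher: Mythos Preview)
Your argument is correct and is the standard proof of these stochastic-domination properties. The paper does not supply its own proof of this lemma at all: it is stated as Lemma~3.2 of \cite{isotropic} and simply quoted, so there is nothing to compare against beyond noting that what you wrote is exactly the routine verification one finds in that reference.
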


The Girko's Hermitization technique \cite{Girko} can be reformulated as the following (see e.g. \cite{Single_ring}): for any smooth function $g$,
\begin{align}
\frac{1}{N}\sum_{i=1}^N g(\mu_j) & = \frac{1}{4\pi N} \int \Delta g(z) \sum_{j=1}^N \log(\mu_j-z)(\bar \mu_j - \bar z)dA(z) \nonumber\\
& = \frac{1}{4\pi N} \int \Delta g(z) \log \left|\det(Y(z)Y^\dag(z)) \right|dA(z) = \frac{1}{4\pi N} \int \Delta g(z) \sum_{j=1}^N \log \lambda_j(z) dA(z) , \label{eqn_hermitizatio}
\end{align}
where $0\le \lambda_1 \le \lambda_2 \le \ldots \le \lambda_N$ are the ordered eigenvalues of $Y(z)Y^\dag(z)$. For $g=F_{z_0,a}$, we use the new variable $\xi=N^a(z-z_0)$ to write the  above equation as
\begin{align}\label{eigenvalue_delta}
\frac{1}{N}\sum_{i=1}^N F_{z_0,a}(\mu_j) = \frac{N^{-1+2a}}{4\pi } \int (\Delta F)(\xi) \sum_{j=1}^N \log \lambda_j(z) dA(\xi) .
\end{align}
Define the classical location $\gamma_j(z)$ of the $j$-th eigenvalue of $Y(z)Y^\dag(z)$ by
\begin{equation}\label{defn_gammaj}
\int_0^{\gamma_j(z)}\rho_{2c}(x)dx = \frac{j}{N}, \ \ 1\le j \le N.
\end{equation}
By Proposition \ref{prop_rho1c}, we have that for any $\delta>0$
\begin{equation}\label{eigenvalue_law}
\left| \sum_{j=1}^N \log \gamma_j(z) - N\int_0^\infty (\log x) \rho_{2c}(x,z) dx \right| \le \sum_{j=1}^N N \int_{\gamma_{j-1}(z)}^{\gamma_j(z)} \left| \log \gamma_j(z) -\log x \right| \rho_{2c}(x,z)dx \le N^\delta
\end{equation}
for large enough $N$. Suppose we have the bound
\begin{equation}\label{eigen_rigidity}
\left| \sum_j \log \lambda_j - \sum_ j \log \gamma_j \right|\prec N^{b}.
\end{equation}
Plugging (\ref{eigenvalue_law}) and (\ref{eigen_rigidity}) into (\ref{eigenvalue_delta}), we get
\begin{align*}
\frac{1}{N}\sum_{i=1}^N F_{z_0}(\mu_j) & = \frac{N^{ 2a}}{4\pi } \int (\Delta F)(\xi) \int_0^\infty (\log x) \rho_{2c}(x,z) dx  dA(\xi) + O_\prec( N^{-1+b+2a} \|\Delta F\|_{L_1})\\
& = \frac{1}{4\pi } \int F(\xi) \int_0^\infty (\log x) \Delta_z \rho_{2c}(x,z) dx  dA(\xi)+ O_\prec(N^{-1+b+2a} \|\Delta F\|_{L_1}).
\end{align*}
Thus we obtain (\ref{eq_main0}) if we can prove (\ref{eigen_rigidity}) for $b=1/2$, and we obtain (\ref{eq_main}) if we can can prove (\ref{eigen_rigidity}) for $b=0$
when $1+ \tau  \le  {\left| z_0 \right|^2 } \le 1+\tau ^{-1}$ or the assumption (\ref{assm_3rdmoment}) holds.

We need the following lemma which is a consequence of Theorem \ref{law_wideT}. Recall (\ref{support_rho1c}) and (\ref{edge_gap}), the number of components $L$ has order $1$ and each component $[e_{2k}, e_{2k-1}]$ contains order $N$ of $\gamma_j$'s. We define the classical number of eigenvalues to the left of the edge $e_k$, $1\le k \le 2L$, as
\begin{equation}
N_k: = \left\lceil N\int_0^{e_k}  \rho_{2c}(x)\right\rceil.
\end{equation}
Note that $N_{2L}=0$, $N_{1}=N$ and $N_{2k+1}=N_{2k}$, $1 \le k \le L-1$. 

\begin{lem}[Singular value rigidity]
Fix a small $\epsilon>0$. 

(i) If the averaged local law holds with parameters $(T,X,z,\mathbf D(\zeta,N) \cap \widehat {\mathbf D}(\zeta,N))$ for arbitrarily small $\zeta$, then the following estimates hold. For any $e_{2k}>0$ and $N_{2k}+N^{1/2+\epsilon} \le j \le N_{2k-1}-N^{1/2+\epsilon}$,
\begin{equation}\label{eq_rigid20}
\frac{|\lambda_j-\gamma_j|}{\gamma_j} \prec {\left(\min\left\{\frac{j-N_{2k}}{N},\frac{N_{2k-1}-j}{N}\right\}\right)^{-1/3} N^{-1/2}}.
\end{equation}
In the case $|z|^2\le 1-\tau$ with $e_{2L}=0$, we have for any $N_{2L}+N^{1/2+\epsilon} \le j \le N_{2L-1}-N^{1/2+\epsilon}$, 
\begin{equation}\label{eq_rigid10}
\frac{|\lambda_j-\gamma_j|}{\gamma_j} \prec j^{-1} \left(\frac{N_{2L-1}-j}{N}\right)^{-1/3} N^{1/2}.
\end{equation}
Moreover, if $ 1+\tau \le |z|^2 \le 1+\tau^{-1}$, then for any fixed $0< c < e_{2L}$,
\begin{equation}\label{eq_rigid200}
\# \{ j: 0 < \lambda_j < c \}\prec 1.
\end{equation}

(ii) If the averaged local law holds with parameters $(T,X,z,\mathbf D(\zeta,N))$ for arbitrarily small $\zeta$, then the following estimates hold. For any $e_{2k}>0$ and $N_{2k}+N^{\epsilon} \le j \le N_{2k-1}-N^{\epsilon}$,
\begin{equation}\label{eq_rigid2}
\frac{|\lambda_j-\gamma_j|}{\gamma_j} \prec {\left(\min\left\{\frac{j-N_{2k}}{N},\frac{N_{2k-1}-j}{N}\right\}\right)^{-1/3} N^{-1}}.
\end{equation}
In the case $|z|^2\le 1-\tau$ with $e_{2L}=0$, we have for any $N_{2L}+N^{\epsilon} \le j \le N_{2L-1}-N^{\epsilon}$,
\begin{equation}\label{eq_rigid1}
\frac{|\lambda_j-\gamma_j|}{\gamma_j} \prec j^{-1} \left(\frac{N_{2L-1}-j}{N}\right)^{-1/3}.
\end{equation}
\end{lem}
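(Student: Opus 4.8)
The plan is to deduce the singular value rigidity directly from the averaged local law, following the standard argument that converts a local law on the Stieltjes transform into eigenvalue location estimates (as in \cite{local_circular}, and in the sample covariance matrix setting of \cite{Anisotropic,isotropic}). Recall that $m_2(w) = N^{-1}\sum_j (\lambda_j - w)^{-1}$ is the Stieltjes transform of the empirical spectral distribution of $YY^\dag$, and the averaged local law says $|m_2(w) - m_{2c}(w)| \prec (N\eta)^{-1}$ uniformly on the relevant domain. The first step is the standard conversion: for a fixed energy $E$, integrating $\Im m_2 - \Im m_{2c}$ against suitable test functions (a Helffer--Sj\"ostrand-type argument, or directly bounding the counting function via the resolvent as in \cite[Section 5]{local_circular}) yields, for any $E$ in a regular domain, a bound on the difference of counting functions
\[
\left| \#\{ j : \lambda_j \le E \} - N\int_0^E \rho_{2c}(x)\,dx \right| \prec N\eta_\ast(E) + (\text{boundary terms}),
\]
where $\eta_\ast(E)$ is the smallest allowed $\eta$ at energy $E$, namely $\eta_\ast(E) \sim N^{-1+\zeta}|m_{2c}(E+i\eta_\ast)|^{-1}$ in part (ii), and $\eta_\ast(E)\sim N^{-1/2+\zeta}|m_{2c}|^{-1}$ in part (i) (because there we only have the law on $\widehat{\mathbf D}$). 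Thus the weaker domain in (i) directly accounts for the $N^{-1/2}$ versus $N^{-1}$ dichotomy between the two parts.

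Next I would translate the counting-function bound into the pointwise estimate on $|\lambda_j - \gamma_j|$. Near a regular edge $e_{2k}$ (say $e_{2k}>0$), Proposition \ref{prop_rho1c}(iii) gives the square-root behavior $\rho_{2c}(x) \sim \sqrt{x - e_{2k}}$, so the classical location satisfies $\gamma_j - e_{2k} \sim ((j - N_{2k})/N)^{2/3}$, and correspondingly $|m_{2c}|$ and the local density near $\gamma_j$ are of order $((j-N_{2k})/N)^{1/3}$. Feeding this into the counting-function bound and inverting (a monotonicity/continuity argument: if $\#\{\lambda \le E\}$ is within $K$ of its classical value, then $\lambda_j$ is within a window where the classical count changes by $K$, whose width is controlled by the local density) gives $|\lambda_j - \gamma_j| \prec ((j-N_{2k})/N)^{-1/3} \cdot \eta_\ast$, which is exactly (\ref{eq_rigid20}) and (\ref{eq_rigid2}) up to the $\gamma_j \sim 1$ normalization; the symmetric estimate near $e_{2k-1}$ gives the $\min$ in the statement. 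For the special soft edge at $0$ when $|z|^2 \le 1-\tau$, Proposition \ref{prop_rho1c}(iv) gives $\rho_{2c}(x) \sim x^{-1/2}$, hence $\gamma_j \sim (j/N)^2$ and $|m_{2c}(\gamma_j)| \sim \sqrt{N/j}$; plugging this modified behavior into the same inversion produces the extra $j^{-1}N^{1/2}$ and $j^{-1}$ factors in (\ref{eq_rigid10}) and (\ref{eq_rigid1}). Finally, for (\ref{eq_rigid200}), when $1+\tau \le |z|^2 \le 1+\tau^{-1}$ there is a hard spectral gap $[0, e_{2L}]$ with $e_{2L} \ge \epsilon$; applying the averaged (or rather an entrywise/isotropic) local law at energies $E$ in this gap, where $\Im m_{2c} = 0$ and $|m_{2c}| \sim 1$, shows $\Im m_2(E + i\eta) \prec \eta^{-1}(N\eta)^{-1}$, and choosing $\eta$ of order a small constant forces the number of eigenvalues in $[0,c]$ to be $O_\prec(1)$.

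The main obstacle I anticipate is not the conversion machinery itself (which is by now routine) but handling the non-identically-distributed, multi-component structure carefully enough that the local density estimates are uniform across all edges and bulk components simultaneously. Concretely, the counting-function argument needs $|m_{2c}(E+i\eta)|$ and $\Im m_{2c}(E+i\eta)$ to behave like their classical square-root (or inverse-square-root) predictions uniformly for $\eta$ down to $\eta_\ast$ and for all $E$ in any regular $\mathbf D^e_k$, $\mathbf D^b_k$, or $\mathbf D^o$; these are precisely the stability and regularity statements packaged in Proposition \ref{prop_rho1c}, Lemma \ref{lemm_rho}, and the edge-regularity Lemma \ref{lemm_edge_reg} (with the gap bound (\ref{edge_gap})), so the task is to invoke them in the right combination rather than prove anything genuinely new. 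A secondary technical point is that the restriction to $j \ge N_{2k} + N^{1/2+\epsilon}$ (resp. $N^\epsilon$) is exactly what is needed to stay inside $\widehat{\mathbf D}$ (resp. $\mathbf D$) at the scale $\eta \sim \gamma_j - e_{2k}$, so one must check this compatibility at the boundary of the allowed index range; this is where the two parts of the lemma genuinely diverge and where I would be most careful in the write-up.
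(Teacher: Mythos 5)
The paper's own proof is a one-line citation to the rigidity lemmas of \cite{local_circular,isotropic,Semicircle}, and your proposal correctly identifies the machinery those references use: the Helffer--Sj\"ostrand conversion of the averaged local law into a counting-function bound, the inversion via local density, and the bookkeeping at the square-root edges (giving the $((j-N_{2k})/N)^{-1/3}$ factor) and at the inverse-square-root edge at $0$ (giving the $j^{-1}$ factor via $\gamma_j \sim (j/N)^2$). Your explanation of why the $\widehat{\mathbf D}$ restriction degrades $N^{-1}$ to $N^{-1/2}$ is also exactly right. So for (\ref{eq_rigid20}), (\ref{eq_rigid10}), (\ref{eq_rigid2}), (\ref{eq_rigid1}) the proposal is sound and takes the same route as the paper's cited references.

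The argument you offer for (\ref{eq_rigid200}), however, does not work as stated. The claim ``$\Im m_2(E+i\eta) \prec \eta^{-1}(N\eta)^{-1}$'' is not a consequence of the averaged local law: Lemma \ref{lemm_m1_4case} Case 2 gives $\Im m_{2c}(E+i\eta) \sim \eta$ in the gap (not zero at positive $\eta$), so the local law only yields $\Im m_2 \prec \eta + (N\eta)^{-1}$. ``Choosing $\eta$ of order a small constant'' then gives $N\eta\,\Im m_2 \prec N\eta^2 + 1 \sim N$, which is useless; and covering $[0,c]$ by intervals of width $\eta_\ast \sim N^{-1/2}$ (each of which does contain $\prec 1$ eigenvalues by the monotonicity trick $\eta\,\Im m_2(E+i\eta) \le \eta_\ast\,\Im m_2(E+i\eta_\ast) \prec \eta_\ast^2 + N^{-1} \sim N^{-1}$) only yields $\prec N^{1/2}$. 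The sharp $\prec 1$ bound needs the full Helffer--Sj\"ostrand argument with a fixed cutoff $f \ge \one_{[0,c]}$ supported in $[-1, c+\delta]$ for some constant $\delta < e_{2L} - c$. Since $\rho_{2c}$ vanishes on $[0, e_{2L})$, the classical term $N\int f\rho_{2c}$ is zero, and the HS error integral is localized near $\Re w \in [c, c+\delta]$ (not spread over the whole gap): the $\eta \ge \eta_\ast$ piece is $O(\delta^{-1})$, the $\eta < \eta_\ast$ piece is $O(N\delta^{-1}\eta_\ast^3) = o(1)$, and the $\chi'$ piece at $\eta\sim 1$ is $O(1)$, even with $\eta_\ast \sim N^{-1/2}$. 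In short, (\ref{eq_rigid200}) genuinely requires the HS localization you correctly invoke for the other bounds; the shortcut you took in its place is wrong.
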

\begin{proof}
The proof is similar to the proof of \cite[Lemma 5.1]{local_circular}. See also \cite[Theorem 2.10]{isotropic} or \cite[Theorem 7.6]{Semicircle}
\end{proof}

Using (\ref{eq_rigid20}) and (\ref{eq_rigid10}), we get that 
\begin{equation}\label{eqn_lambda_b1}
 \sum_{N_{2k}+N^{1/2+\epsilon} \le j \le N_{2k-1}-N^{1/2+\epsilon}} \left|\log \lambda_j - \log \gamma_j  \right| \prec \sum_{N_{2k}+N^{1/2+\epsilon} \le j \le N_{2k-1}-N^{1/2+\epsilon}} \frac{ \left| \lambda_j - \gamma_j  \right|}{\gamma_j}  \prec N^{1/2}.
\end{equation}
Through a standard large deviation estimate, we have the following bound (see e.g. \cite{Handbook_DS,Random_polytopes,RudVersh_rect}), 
\begin{equation}\label{norm_upperbound}
\mathbb P(\|X\|>t) \le e^{-c_0t^2N} \ \text{ for } \ t> C_0,
\end{equation}
where $c_0,C_0>0$ are constants. Thus we have 
\begin{equation}\label{largest_singular}
\lambda_j \le \|Y\|^2 \le  (\|T\|\|X\|+|z|)^2 \prec 1, \ \ 1\le j \le N.
\end{equation}
Together with Lemma \ref{lemm_leastsing} concerning the smallest singular value of $TX-z$, we get
\begin{equation}\label{eqn_lambda_b31}
\sum_{k=1}^{2L} \sum_{|j -e_k|< N^{1/2+\epsilon}} \left|\log \lambda_j \right| \prec N^{1/2+\epsilon}.
\end{equation}
Since $\left|\log \gamma_j\right| \prec 1$ by Proposition \ref{prop_rho1c}, we conclude
\begin{equation}\label{eqn_lambda_b2}
\sum_{k=1}^{2L} \sum_{|j -e_k|< N^{1/2+\epsilon}} \left|\log \lambda_j - \log \gamma_j  \right| \prec N^{1/2+\epsilon}.
\end{equation}
Combining (\ref{eqn_lambda_b1})-(\ref{eqn_lambda_b2}), we get for any $\epsilon>0$,
\begin{equation}
\sum_{1\le j \le N } \left|\log \lambda_j - \log \gamma_j  \right| \prec N^{1/2+\epsilon}
\end{equation}
for large enough $N$. This implies (\ref{eigen_rigidity}) for $b=1/2$. 
If in addition the assumption (\ref{assm_3rdmoment}) holds, the averaged local law holds with parameters $(T,X,z,\mathbf D(\zeta,N))$ for arbitrarily small $\zeta$ by Theorem \ref{law_wideT}. Then we can prove (\ref{eigen_rigidity}) for $b=0$ using the better bounds (\ref{eq_rigid2}) and (\ref{eq_rigid1}). 

Finally we prove that when $|z_0|^2 \ge 1+\tau$, with the bounds (\ref{eq_rigid20}) we can still prove the estimate (\ref{eigen_rigidity}) for $b=0$. By the averaged local law and the definition of $\gamma_j$ in (\ref{defn_gammaj}), we have
\begin{equation}\label{improve_bound}
\left| \sum_{j=1}^N \frac{1}{\lambda_j - i\eta} - \sum_{j=1}^N \frac{1}{\gamma_j - i\eta} \right|\prec \frac{1}{\eta}, 
\end{equation}
uniformly in $N^{-1/2+\epsilon} \le \eta \le N^{1/2}$. Taking integral of (\ref{improve_bound}) over $\eta$ from $N^{-1/2+\epsilon}$ to $N^{1/2}$, we get
\begin{equation} 
\left| \sum_{j=1}^N \log \left( \frac{\lambda_j - iN^{-1/2+\epsilon}} {\gamma_j - iN^{-1/2+\epsilon}} \right)-\sum_{j=1}^N \log \left( \frac{\lambda_j - iN^{1/2}} {\gamma_j - iN^{1/2}} \right)  \right| \prec 1.
\end{equation}
Then we use (\ref{eq_rigid20}) and the bound (\ref{largest_singular}) to estimate that
$$ \left| \sum_{j=1}^N \log \left( \frac{\lambda_j - iN^{1/2}} {\gamma_j - iN^{1/2}} \right) \right| \prec \sum_{j=1}^N \left| \left(\lambda_j - \gamma_j\right) N^{-1/2}\right| \prec N^\epsilon.$$
Thus we conclude 
\begin{equation} \label{improve1}
\left| \sum_{j=1}^N \log \left( \frac{\lambda_j - iN^{-1/2+\epsilon}} {\gamma_j - iN^{-1/2+\epsilon}} \right)  \right| \prec N^\epsilon.
\end{equation}
Using $\gamma_j \sim 1$, (\ref{eq_rigid200}) and (\ref{least_bound}), we get  
\begin{align} 
\left| \sum_{j=1}^N \log \left( \frac{\lambda_j - iN^{-1/2+\epsilon}} {\gamma_j - iN^{-1/2+\epsilon}} \right) - \sum_{j=1}^N \log \frac{\lambda_j  } {\gamma_j }  \right| & \prec 1+ \left| \sum_{\lambda_j \ge c} \log \left( \frac{\lambda_j - iN^{-1/2+\epsilon}} {\gamma_j - iN^{-1/2+\epsilon}} \right) - \sum_{\lambda_j \ge c} \log \frac{\lambda_j  } {\gamma_j }  \right| \nonumber\\
& \prec 1+\sum_{\lambda_j \ge c} \left| \left(\lambda_j - \gamma_j\right) N^{-1/2+\epsilon}\right|  \prec N^{2\epsilon}.\label{improve2}
\end{align}
Combing (\ref{improve1}) and (\ref{improve2}), we conclude (\ref{eigen_rigidity}) for $b=0$.

\begin{lem}[Lower bound on the smallest singular value]\label{lemm_leastsing}
 If $N<M$ and the entries of $X$ have a density bounded by $N^{C_3}$ for some $C_3>0$, then
\begin{equation}
|\log \lambda_1(z)| \prec 1 \label{least_bound}
\end{equation}
holds uniformly for $z$ in any fixed compact set. If $N=M$, the bounded density condition is not necessary.
\end{lem}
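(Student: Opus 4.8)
Since $\lambda_1(z)=\sigma_{\min}(TX-z)^2$, the estimate $\log\lambda_1(z)\prec1$ is immediate from $\lambda_1(z)\le\|Y\|^2\prec1$, which is (\ref{largest_singular}); so the content of the lemma is a polynomial lower bound on $\sigma_{\min}(TX-z)$. The plan is to prove that for every $D>0$ there is a constant $C=C(D)>0$ with $\sigma_{\min}(TX-z)\ge N^{-C}$ with probability at least $1-N^{-D}$; since $z\mapsto\sigma_{\min}(TX-z)$ is $1$-Lipschitz, a union bound over an $N^{-C-1}$-net of the compact set then makes this uniform in $z$, which together with the upper bound yields $|\log\lambda_1(z)|\prec1$ uniformly. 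For $N=M$ the matrix $T$ is invertible with $\|T^{-1}\|\le\tau^{-1/2}$ by (\ref{assm3}), so $TX-z=T(X-A)$ with $A:=zT^{-1}$ deterministic and $\|A\|\le|z|\tau^{-1/2}=O(1)$; hence $\sigma_{\min}(TX-z)\ge\sqrt\tau\,\sigma_{\min}(X-A)$, and one is reduced to the invertibility of an i.i.d.\ matrix perturbed by a bounded deterministic matrix. For this I would invoke the least-singular-value estimates of Rudelson--Vershynin \cite{RudVersh_square} and Tao--Vu \cite{TaoVu_circular}, which under the all-order moment bound (\ref{assm2}) give, for every $D$, a constant $B=B(D)$ with $\mathbb P(\sigma_{\min}(X-A)\le N^{-B})\le N^{-D}$ uniformly in $\|A\|\le C$. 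Since this uses only moment bounds, no density hypothesis enters when $N=M$.

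When $N<M$ the matrix $T$ has no left inverse and the above reduction fails; instead I would exploit that the columns of $TX-z$ are independent, the $\mu$-th being $TX_{\cdot\mu}-z\mathbf e_\mu$, $1\le\mu\le N$, where $X_{\cdot\mu}\in\mathbb R^M$ has i.i.d.\ entries of variance $(N\wedge M)^{-1}=N^{-1}$. For each $\mu$ fix a unit vector $\mathbf w_\mu$ orthogonal to the span of the other $N-1$ columns (which exists, that span having dimension $\le N-1$); it depends only on $\{X_{\cdot\nu}:\nu\ne\mu\}$, hence is independent of $X_{\cdot\mu}$. Then
\[
\operatorname{dist}\bigl(TX_{\cdot\mu}-z\mathbf e_\mu,\ \operatorname{span}\{(TX-z)_{\cdot\nu}:\nu\ne\mu\}\bigr)\ \ge\ \bigl|\langle T^\dag\mathbf w_\mu,X_{\cdot\mu}\rangle-z(\mathbf w_\mu)_\mu\bigr|,
\]
where $\|T^\dag\mathbf w_\mu\|^2=\mathbf w_\mu^\dag\Sigma\mathbf w_\mu\ge\sigma_{\min}(\Sigma)\ge\tau$ by (\ref{assm3}). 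Conditioning first on $\{X_{\cdot\nu}\}_{\nu\ne\mu}$ and then on all coordinates of $X_{\cdot\mu}$ except the one paired with a largest-modulus coordinate of $T^\dag\mathbf w_\mu$ (which has modulus at least $cN^{-1/2}$ for $c=c(\tau)>0$), the bounded-density hypothesis gives $\mathbb P(\operatorname{dist}_\mu\le N^{-B})\le CN^{C_3+1/2-B}$. A union bound over the $N$ values of $\mu$ together with the elementary inequality $\sigma_{\min}(TX-z)\ge N^{-1/2}\min_\mu\operatorname{dist}_\mu$ then yield $\sigma_{\min}(TX-z)\ge N^{-B-1/2}$ with probability $\ge1-N^{-D}$ once $B=B(D,C_3)$ is chosen large enough, which is exactly what is needed.

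The one genuinely deep input is the $N=M$ least-singular-value bound, which I would quote rather than reprove; it cannot be replaced by the elementary column argument of the previous paragraph because without a density hypothesis the vector $\mathbf w_\mu$ may be (nearly) sparse, in which case $\langle T^\dag\mathbf w_\mu,X_{\cdot\mu}\rangle$ need not be anti-concentrated at polynomial scale (e.g.\ for discrete entries), and dealing with this is precisely the role of the compressible/incompressible decomposition and inverse-Littlewood--Offord theory underlying \cite{RudVersh_square,TaoVu_circular}. Everything else --- the Lipschitz/net reduction to a fixed $z$, the column-distance inequality, and the conditioning step --- is routine.
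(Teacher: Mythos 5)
Your proposal is correct and follows the same overall strategy as the paper: for $N=M$, reduce to $\sigma_{\min}(X-T^{-1}z)$ and quote \cite{RudVersh_square,TaoVu_circular}; for $N<M$, pass to a square $N\times N$ matrix with independent columns, use the column-distance lower bound $\sigma_{\min}\ge N^{-1/2}\min_\mu\mathrm{dist}_\mu$, and exploit the bounded-density assumption for anticoncentration of the normal projection $\langle T^\dag\mathbf w_\mu, X_{\cdot\mu}\rangle$.

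The one place you diverge from the paper is the anticoncentration step. The paper first factors $TX-z=UD\tilde Y(z)$ with $\tilde Y(z)=V_1X-D^{-1}U^{-1}z$, so that the relevant inner product becomes $\langle V_1^\dag u_k,X_{\cdot k}\rangle$ with a unit coefficient vector, and then invokes Theorem~1.2 of \cite{RudVersh_smallball} to conclude that this marginal has density bounded by $CN^{C_3}$. You instead work directly with $TX-z$, note that $\|T^\dag\mathbf w_\mu\|^2=\mathbf w_\mu^\dag\Sigma\mathbf w_\mu\ge\tau$, pick a coordinate of $T^\dag\mathbf w_\mu$ of modulus $\gtrsim N^{-1/2}$ by pigeonhole, and condition on all other entries of $X_{\cdot\mu}$. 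This is more elementary (no appeal to the Rudelson--Vershynin density-projection theorem) at the price of an extra $N^{1/2}$ in the probability bound, which is immaterial here since only a polynomial bound is needed. Both versions give \eqref{least_pf}.

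Two small remarks. Your net/Lipschitz argument for simultaneity over $z$ is fine and a bit stronger than necessary: the stochastic-domination uniformity in the lemma is a supremum of probabilities over $z$, which the fixed-$z$ bound already delivers; what is actually used downstream is integration of $\prec$-estimates over $z$, handled via Lemma~\ref{lem_stodomin}(iii). And your closing observation about why the elementary column argument fails for $N=M$ without a density hypothesis --- that $\mathbf w_\mu$ can be nearly sparse, so that $\langle T^\dag\mathbf w_\mu,X_{\cdot\mu}\rangle$ inherits the atoms of a single entry --- correctly identifies the obstruction that compressible/incompressible decomposition and inverse Littlewood--Offord theory in \cite{RudVersh_square,TaoVu_circular} are designed to overcome.
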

\begin{proof}
To prove (\ref{least_bound}), we need to prove that
\begin{equation}
\mathbb P\left(\lambda_1(z)\le e^{-N^\epsilon}\right) \le N^{-C} \label{least_pf}
\end{equation}
for any $\epsilon,C>0$. In the case $N=M$ without the bounded density assumption, we have $\lambda_1(z) \ge \tau \lambda'_1(z),$
where $\lambda'_1(z)$ is the smallest singular values of $X-T^{-1}z$. Following \cite{RudVersh_square} or \cite[Theorem 2.1]{TaoVu_circular}, we have $|\log \lambda'_1(z)| \prec 1$, which further proves (\ref{least_bound}).

Now we turn to the case $N<M$ with the bounded density assumption. By (\ref{TX1}) we have that
$$TX-z= UD(V_1X-D^{-1}U^{-1}z)=:UD\tilde Y(z).$$
Hence it suffices to control the smallest singular value of $\tilde Y(z)$, call it $\tilde \lambda_1(z)$.
Notice the columns $\tilde Y_1, \ldots, \tilde Y_N$ of $\tilde Y(z)$ are independent vectors. From the variational characterization
$$\tilde \lambda_1(z)=\min_{|u|=1}\|\tilde Y(z)u\|^2,$$
we can easily get
\begin{align}
\tilde \lambda_1(z)^{1/2} \ge N^{-1/2} \min_{1\le k \le N} \text{dist}\left(\tilde Y_k,\text{span}\{\tilde Y_l,l\ne k\}\right) = N^{-1/2} \min_{1\le k \le N} \left|\langle \tilde Y_k,u_k\rangle\right|,
\end{align}
where $u_k$ is the unit normal vector of $\text{span}\{\tilde Y_l,l\ne k\}$ and hence is independent of $\tilde Y_k$. By conditioning on $u_k$, we get immediately
\begin{equation}
\mathbb P(\tilde \lambda_1(z)\le N^{-C_0}) \le C N^{-C_0/2 + C_3 + 3/2}, \label{least_density}
\end{equation}
which is a much stronger result than (\ref{least_pf}). Here we have used Theorem 1.2 of \cite{RudVersh_smallball} to conclude that $\langle \bar Y_k,u_k\rangle$ for fixed $u_k$ has density bounded by $CN^{C_3}$.
\end{proof}

\end{subsection}

\begin{subsection}{Outline of the paper}

The rest of this paper is devoted to the proof of Theorems \ref{law_squareD} and \ref{law_wideT}. In Section \ref{section_tools}, we collect the basics tools that we shall use throughout the proof.
In Section \ref{section_weaklaw}, we perform step (A) of the proof by proving the entrywise local law and averaged local law in Theorem \ref{law_squareD} under the assumption that $T$ is diagonal. We first prove a weak version of the entrywise local law in Sections \ref{subsection_selfeqn}-\ref{subsection_weak_proof}, and then improve the weak law to the strong entrywise local law and averaged local law in Sections \ref{subsection_proofstrong}-\ref{section_smallwz}. In Section \ref{section_isotropiclaw}, we perform step (B) of the proof by proving the anisotropic local law in Theorem \ref{law_squareD} using the entrywise local law proved in Section \ref{section_weaklaw}.
Finally in Section \ref{section_comparison} we finish the step (C) of the proof, where using Theorem \ref{law_squareD}, we prove Theorem \ref{law_wideT} with a self-consistent comparison method.

The first part of Appendix \ref{appendix1} establishes the basic properties of $\rho_{1,2c}$ stated in Lemma \ref{lemm_rho} and Proposition \ref{prop_rho1c}. In Sections \ref{subsection_append2} and \ref{subsection_append3}, we establish some key estimates on $m_{1,2c}$ and the stability of the self-consistent equation (\ref{eq_self3}) on regular domains.

\end{subsection}

\end{section}

\begin{section}{Basic tools}\label{section_tools}

In this preliminary section, we collect various identities and estimates that we shall use throughout the following. 

\begin{defn}[Minors]
For $J\subset \mathcal I$, we define the minor $H^{(J)}:=\{H_{st}:s,t \in \mathcal I\setminus J\}$, and correspondingly $G^{(J)}:=(H^{(J)})^{-1}$. Let $[J]:=\{s\in\mathcal I: s \in J \text{ or } \bar s \in J\}$. We also denote $H^{[J]}:=\{H_{st}:s,t \in \mathcal I\setminus [J]\}$ and $G^{[J]}:=(H^{[J]})^{-1}$. We abbreviate $(\{s\})\equiv (s)$, $(\{s, t\})\equiv (st)$, $[\{s\}]\equiv [s] $ and  $[\{s, t \}]=[st]$.
\end{defn}

Notice that by the definition, we have $H_{st}^{(J)}=0$ and $G_{st}^{(J)}=0$ if $s\in J$ or $t\in J$.

\begin{lem}{(Resolvent identities).}

 \begin{itemize}
\item[(i)]
For $i\in \mathcal I_1$ and $\mu\in \mathcal I_2$, we have
\begin{equation}
\frac{1}{{G_{ii} }} =  - w - w \left( {YG^{\left( i \right)} Y^\dag  } \right)_{ii} ,\ \frac{1}{{G_{\mu \mu } }} =  - w  - w \left( {Y^\dag  G^{\left( \mu  \right)} Y} \right)_{\mu \mu }.\label{resolvent2}
\end{equation}
For $i\ne j \in \mathcal I_1$ and $\mu \ne \nu \in \mathcal I_2$, we have
\begin{equation}
G_{ij}   = w G_{ii} G_{jj}^{\left( i \right)} \left( {YG^{\left( {ij} \right)} Y^\dag  } \right)_{ij},\ \ G_{\mu \nu }  = w G_{\mu \mu } G_{\nu \nu }^{\left( \mu  \right)} \left( {Y^\dag  G^{\left( {\mu \nu } \right)} Y} \right)_{\mu \nu }. \label{resolvent3}
\end{equation}

 \item[(ii)]
For $i\in \mathcal I_1$ and $\mu\in \mathcal I_2$, we have
\begin{align}
& G_{i\mu } = G_{ii} G_{\mu \mu }^{\left( i \right)} \left( { - w^{1/2}Y_{i\mu }  +  w{\left( {YG^{\left( {i\mu } \right)} Y} \right)_{i\mu } } } \right),\label{resolvent6} \\
& G_{\mu i}  = G_{\mu \mu } G_{ii}^{\left( \mu  \right)} \left( { - w^{1/2}Y_{\mu i}^\dag   + w\left( {Y^\dag  G^{\left( {\mu i} \right)} Y^\dag  } \right)_{\mu i} } \right).\label{resolvent7}
\end{align}

 \item[(iii)]
 For $r \in \mathcal I$ and $s,t \in \mathcal I \setminus \{r\}$,
\begin{equation}
G_{st}^{\left( r \right)}  = G_{st}  - \frac{{G_{sr} G_{rt} }}{{G_{rr} }}, \ \ \frac{1}{{G_{ss} }} = \frac{1}{{G_{ss}^{(r)} }} - \frac{{G_{sr} G_{rs} }}{{G_{ss} G_{ss}^{(r)} G_{rr} }}. \label{resolvent8}
\end{equation}

 \item[(iv)]
All of the above identities hold for $G^{(J)}$ instead of $G$ for $J\subset \mathcal I$.
\end{itemize}
\label{lemm_resolvent}
\end{lem}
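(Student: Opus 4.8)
The plan is to obtain every identity from Schur's complement formula applied to the block matrix $H$, exploiting its special structure: the $\mathcal I_1\times\mathcal I_1$ and $\mathcal I_2\times\mathcal I_2$ blocks of $H$ are the scalar matrices $-wI$, while the $\mathcal I_1\times\mathcal I_2$ block is $w^{1/2}Y$, so that in particular all $\mathcal I_1\times\mathcal I_1$ and $\mathcal I_2\times\mathcal I_2$ off-diagonal entries of $H$ vanish. Part (iii) holds for the inverse of an arbitrary invertible matrix and will be proved first; parts (i) and (ii) then follow by applying Schur's complement to a single index and to a pair of indices, respectively, and (iv) is immediate once one observes that each minor $H^{(J)}$ (or $H^{[J]}$) is again a matrix of the same shape with $Y$ replaced by a submatrix.

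For (iii), the first identity in (\ref{resolvent8}) is the standard minor expansion, i.e.\ the Schur complement of the scalar entry $H_{rr}$ inside $H$; setting $t=s$ and solving for $1/G_{ss}$ produces the second identity. Since $H^{(J)}$ retains the block structure described above, all of (i)--(iii) transfer verbatim to $G^{(J)}$, which gives (iv). For the diagonal identities in (\ref{resolvent2}), I would fix $i\in\mathcal I_1$ and apply Schur's complement to the single index $i$: because $H_{ii}=-w$ and $H_{ij}=0$ for $j\in\mathcal I_1\setminus\{i\}$, the quadratic form in the Schur complement runs only over $\mathcal I_2$ and collapses to $w\,(YG^{(i)}Y^\dag)_{ii}$; the $\mu\in\mathcal I_2$ case is identical with $Y\leftrightarrow Y^\dag$.

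For the off-diagonal identities (\ref{resolvent3}), (\ref{resolvent6}) and (\ref{resolvent7}), I would apply Schur's complement to a pair of indices and invert the resulting $2\times2$ matrix. For $i\ne j\in\mathcal I_1$: since $H_{ij}=0$, the $2\times2$ Schur matrix has off-diagonal entries $-w(YG^{(ij)}Y^\dag)_{ij}$ and $-w(YG^{(ij)}Y^\dag)_{ji}$, and its diagonal entries equal $1/G^{(j)}_{ii}$ and $1/G^{(i)}_{jj}$ by the diagonal formula just proved, applied to the minors $H^{(j)}$ and $H^{(i)}$; inverting, and identifying the reciprocal of the $2\times2$ determinant with $G_{ii}G^{(i)}_{jj}$, gives (\ref{resolvent3}), and the $\mathcal I_2$ analogue is the same. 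For (ii), I would apply Schur's complement to $\{i,\mu\}$ with $i\in\mathcal I_1$, $\mu\in\mathcal I_2$: the only change is that $H_{i\mu}=w^{1/2}Y_{i\mu}$ is now nonzero, so the off-diagonal entry of the $2\times2$ Schur matrix becomes $w^{1/2}Y_{i\mu}-w(YG^{(i\mu)}Y)_{i\mu}$ while its diagonal entries are $1/G^{(\mu)}_{ii}$ and $1/G^{(i)}_{\mu\mu}$; inverting and reading off the $(i,\mu)$ and $(\mu,i)$ entries yields (\ref{resolvent6}) and (\ref{resolvent7}).

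The argument is entirely algebraic and routine; the only point demanding care is the index bookkeeping — keeping track of which blocks of $H$ and of its minors vanish, so that the Schur quadratic forms reduce exactly to the stated products of $Y$, $Y^\dag$ and $G^{(\cdot)}$, and correctly tracking the powers of $w$ and $w^{1/2}$. No analytic estimate or probabilistic input is involved, so I do not expect a genuine obstacle beyond this bookkeeping.
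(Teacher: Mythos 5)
Your proposal is correct and takes exactly the route the paper indicates: the paper's own ``proof'' is the one-line remark that all identities follow from Schur's complement formula (with references to where they were derived previously), and your argument carries out precisely this plan — single-index Schur complement for (i), two-index Schur complement with $2\times 2$ inversion for (ii) and the off-diagonal parts of (i), and the general minor expansion for (iii), with (iv) following from the preservation of block structure under taking minors. The details you supply (identifying the Schur diagonal entries with $1/G^{(j)}_{ii}$ etc., and the reciprocal of the $2\times 2$ determinant with $G_{ii}G^{(i)}_{jj}$) are the correct bookkeeping steps and match what the cited references do.
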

\begin{proof}
All these identities can be proved using Schur's complement formula. They have been previously derived and summarized e.g. in \cite{Semicircle, EdgraphI, Bulk_univ}.
\end{proof}

\begin{lem}{(Resolvent identities for $G_{[ij]}$ groups).}\label{lemm_resolvent_group}
  \begin{itemize}
  \item[(i)] For $i\in \mathcal I_1$, we have
    \begin{equation}\label{eq_res1}
    G_{[ii]}^{ - 1}  = H_{[ii]}-\sum_{k,l \ne i} H_{[ik]}G_{[kl]}^{[i]}H_{[li]}.
    \end{equation}
    For $i\ne j \in \mathcal I_1$, we have
    \begin{align}
    G_{[ij]} & =  - G_{\left[ {ii} \right]} \sum_{k\ne i} H_{[ik]}G^{[i]}_{[kj]}  =  - \sum_{k\ne j} G^{[j]}_{[ik]} H_{[kj]}G_{\left[ {jj} \right]}  \label{eq_res2} \\
    & =  - G_{\left[ {ii} \right]} H_{[ij]}G_{[jj]}^{\left[ i \right]}  + G_{\left[ {ii} \right]}\sum_{k,l \notin \{i,j\}} H_{[ik]}G^{[ij]}_{[kl]}H_{[lj]}G_{[jj]}^{\left[ i \right]} . \label{eq_res2_2}
    \end{align}
  \item[(ii)] For $k \in \mathcal I_1$ and $i,j \in \mathcal I_1\setminus \{k\}$,
    \begin{equation}\label{eq_res3}
    G_{[ij]}^{\left[ k \right]}  = G_{[ij]}  - {G_{[ik]} G^{-1}_{[kk]} G_{[kj]} },
    \end{equation}
    and
    \begin{equation}\label{eq_res4}
    G_{\left[ {ii} \right]}^{ - 1}  = \left( {G_{\left[ {ii} \right]}^{\left[ k \right]} } \right)^{ - 1}  - G_{\left[ {ii} \right]}^{ - 1}G_{\left[ {ik} \right]} G_{\left[ {kk} \right]}^{ - 1} G_{\left[ {ki} \right]} \left( {G_{\left[ {ii} \right]}^{\left[ k \right]} }\right)^{ - 1} .
    \end{equation}
  \item[(iii)] All of the above identities hold for $G^{[J]}$ instead of $G$ for $J\subset \mathcal I$.
  \end{itemize}
\end{lem}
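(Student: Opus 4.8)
The final statement to prove is Lemma~\ref{lemm_resolvent_group}, the resolvent identities for the $2\times 2$ groups $G_{[ij]}$.

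\medskip

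\textbf{Plan of proof.} The strategy is to derive every identity from the corresponding block form of Schur's complement formula, exactly as the scalar identities in Lemma~\ref{lemm_resolvent} are derived, but now treating the $2\times 2$ blocks $H_{[ij]}$ and $G_{[ij]}$ as matrix-valued entries of a block $\mathcal{I}_1\times\mathcal{I}_1$ matrix. First I would observe that, after the obvious reindexing that pairs up $i$ with $\bar i = i+N$, the matrix $H(w)$ is an $N\times N$ matrix whose $(i,j)$ ``entry'' is the $2\times 2$ block $H_{[ij]}$; the off-diagonal blocks are $H_{[ij]} = \mathrm{diag}$-free and in fact have the form $w^{1/2}\,\mathrm{anti\text{-}diag}((TX)_{ij}\text{-type terms})$ because $Y=TX-z$ only contributes off-diagonal-in-the-$2\times2$-sense entries in the off-diagonal groups, but this explicit structure is not needed for the lemma — only the block-matrix viewpoint is. Then (i) the first identity~(\ref{eq_res1}) is the block Schur complement for the single block index $i$: if we partition the index set $\mathcal{I}_1$ into $\{i\}$ and its complement, Schur's formula gives $(G_{[ii]})^{-1} = H_{[ii]} - \sum_{k,l\ne i} H_{[ik]} (H^{[i]})^{-1}_{[kl]} H_{[li]}$, and $(H^{[i]})^{-1}_{[kl]} = G^{[i]}_{[kl]}$ by definition of the minor. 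The identities~(\ref{eq_res2}) and~(\ref{eq_res2_2}) for $G_{[ij]}$, $i\ne j$, come from the standard block-resolvent expansion: $G_{[ij]} = -G_{[ii]}\sum_{k\ne i} H_{[ik]} G^{[i]}_{[kj]}$ (expanding around removal of the $i$-th block), and symmetrically with the roles of $i$ and $j$ swapped and removal on the right; iterating once more, i.e.\ further expanding $G^{[i]}_{[kj]}$ around removal of the $j$-th block, yields~(\ref{eq_res2_2}). These are precisely the block analogues of~(\ref{resolvent3}) with scalars replaced by $2\times2$ matrices, and the only care needed is to keep the (now non-commuting) matrix factors in the correct left-to-right order.

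\medskip

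For part (ii), the identity~(\ref{eq_res3}) is the block version of the first equation in~(\ref{resolvent8}): removing block index $k$ from $G$ and using Schur's formula on the $2\times2$-block level gives $G^{[k]}_{[ij]} = G_{[ij]} - G_{[ik]} (G_{[kk]})^{-1} G_{[kj]}$, where again one must write $G^{-1}_{[kk]}$ between the two factors rather than moving it. The identity~(\ref{eq_res4}) follows by setting $i=j$ in a rearranged form of~(\ref{eq_res3}): from $G^{[k]}_{[ii]} = G_{[ii]} - G_{[ik]} G^{-1}_{[kk]} G_{[ki]}$ one solves for $G_{[ii]}$, or more directly multiplies on both sides by $(G^{[k]}_{[ii]})^{-1}$ and $(G_{[ii]})^{-1}$ and rearranges to get $(G_{[ii]})^{-1} = (G^{[k]}_{[ii]})^{-1} - (G_{[ii]})^{-1} G_{[ik]} G^{-1}_{[kk]} G_{[ki]} (G^{[k]}_{[ii]})^{-1}$; this is the block analogue of the second equation in~(\ref{resolvent8}). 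Finally, part (iii) is immediate: the minor $H^{[J]}$ is itself a matrix of the same block structure (with index set $\mathcal{I}_1\setminus[J]$ in the block picture), so every identity proved for $H$ applies verbatim to $H^{[J]}$, since $(H^{[J]})^{[K]} = H^{[J\cup K]}$.

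\medskip

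\textbf{Main obstacle.} There is no genuine analytic difficulty here; the whole lemma is bookkeeping. The one thing that requires care — and the only place an error is likely to creep in — is the non-commutativity of the $2\times 2$ blocks: unlike the scalar identities of Lemma~\ref{lemm_resolvent}, one cannot freely reorder the factors $G_{[ii]}$, $H_{[ij]}$, $G^{[i]}_{[kj]}$, etc., so one must track the left/right placement of each block and, in~(\ref{eq_res4}), the placement of the inverse $G^{-1}_{[kk]}$ strictly between $G_{[ik]}$ and $G_{[ki]}$ and of the outer factors $(G_{[ii]})^{-1}$ and $(G^{[k]}_{[ii]})^{-1}$. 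Once the block Schur complement is set up correctly, each line is a one-step consequence of it, and I would simply cite Schur's formula in the block form and record the identities, noting (as the authors do for Lemma~\ref{lemm_resolvent}) that analogous block identities appear in the literature.
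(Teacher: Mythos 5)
Your proof is correct and matches the paper's approach: the authors simply remark that the identities follow from Schur's complement formula and leave the details to the reader, and your elaboration---reindexing $H$ as an $N\times N$ matrix of $2\times 2$ blocks, applying the block Schur complement for~(\ref{eq_res1}), the block resolvent expansions for~(\ref{eq_res2})--(\ref{eq_res2_2}), and the block analogue of~(\ref{resolvent8}) for~(\ref{eq_res3})--(\ref{eq_res4})---is exactly the intended derivation. Your verification of~(\ref{eq_res4}) via the algebraic identity $A^{-1}=B^{-1}-A^{-1}(A-B)B^{-1}$ with $A=G_{[ii]}$ and $B=G_{[ii]}^{[k]}$ is sound, and the emphasis on non-commutativity of the $2\times2$ blocks is the right caveat.
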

\begin{proof}
These identities can be proved using Schur's complement formula. The details are left to the reader.
\end{proof}

Next we introduce the spectral decomposition of $G$. Let
$$Y = \sum\limits_{k = 1}^N {\sqrt {\lambda_k } \xi_k } \zeta _{\bar k}^\dag$$
be the singular decomposition of $Y$, where $\lambda_1\ge \lambda_2 \ge \ldots \ge \lambda_N \ge 0$ and $\{\xi_{k}\}_{k=1}^{N}$ and $\{\zeta_{\bar k}\}_{k=1}^{N}$ are orthonormal bases of $\mathbb C^{\mathcal I_1}$ and $\mathbb C^{\mathcal I_2}$ respectively. Then by (\ref{eqn_schurmatrix}), we have
\begin{equation}
G\left( w \right) = \sum\limits_{k = 1}^N \frac{1}{\lambda_k-w}\left( {\begin{array}{*{20}c}
   {{\xi _k \xi _k^\dag  }} & {w^{-1/2}\sqrt {\lambda _k } \xi _k \zeta _{\bar k}^\dag}  \\
   {w^{-1/2} \sqrt {\lambda _k } \zeta _{\bar k} \xi _k^\dag  } & {\zeta _{\bar k} \zeta _{\bar k}^\dag }  \\
\end{array}} \right). \label{singular_rep}
\end{equation}

\begin{defn}[Generalized entries]
For $\mathbf v,\mathbf w \in \mathbb C^{\mathcal I}$, $s\in \mathcal I$ and an $\mathcal I\times \mathcal I$ matrix $A$, we shall denote
\begin{equation}
A_{\mathbf{vw}}:=\langle \mathbf v,A\mathbf w\rangle, \ \ A_{\mathbf{v}s}:=\langle \mathbf v,A\mathbf e_s\rangle, \ \ A_{s\mathbf{w}}:=\langle \mathbf e_s,A\mathbf w\rangle,
\end{equation}
where $\mathbf e_s$ is the standard unit vector.
\end{defn}

Given vectors $\mathbf v\in \mathbb C^{\mathcal I_1}$ and $\mathbf w\in \mathbb C^{\mathcal I_2}$, we always identify them with their natural embeddings $\left( {\begin{array}{*{20}c}
   {\mathbf v}  \\
   0 \\
\end{array}} \right)$ and $\left( {\begin{array}{*{20}c}
   0  \\
   \mathbf w \\
\end{array}} \right)$ in $\mathbb C^{\mathcal I}$.
The exact meanings will be clear from the context.

\begin{lem}
Fix $\tau >0$. The following estimates hold uniformly for any $w\in \bD(\tau,N)$. We have
\begin{equation}
\left\| G \right\| \le C\eta ^{ - 1} ,\ \left\| {\partial _w G} \right\| \le C\eta ^{ - 2}. \label{eq_gbound}
\end{equation}
Let $\mathbf v \in \mathbb C^{\mathcal I_1}$ and $\mathbf w \in \mathbb C^{\mathcal I_2}$, we have the bounds
\begin{align}
& \sum\limits_{\mu  \in \mathcal I_2 } {\left| {G_{\mathbf w\mu } } \right|^2 } = \sum\limits_{\mu  \in \mathcal I_2 } {\left| {G_{\mu \mathbf w} } \right|^2 }  = \frac{{\Im \, G_{\mathbf w\mathbf w} }}{\eta },\label{eq_gsq1} \\
& \sum\limits_{i \in \mathcal I_1 }  \left| {G_{\mathbf v i} } \right|^2 = \sum\limits_{i \in \mathcal I_1 }  \left| {G_{i\mathbf v} } \right|^2  = \frac{\Im \, G_{\mathbf v\mathbf v}}{\eta} , \label{eq_gsq2} \\
& \sum\limits_{i \in \mathcal I_1 } {\left| {G_{\mathbf wi} } \right|^2 } = \sum\limits_{i \in \mathcal I_1 } {\left| {G_{i\mathbf w} } \right|^2 } = \left| w \right|^{ - 1} {G}_{\mathbf w\mathbf w}  + \bar w\left| w \right|^{ - 1} \frac{{\Im \, G_{\mathbf w\mathbf w} }}{\eta } , \label{eq_gsq3} \\
& \sum\limits_{\mu \in \mathcal I_2 } {\left| {G_{\mathbf v \mu} } \right|^2 } = \sum\limits_{\mu \in \mathcal I_2 } {\left| {G_{\mu \mathbf v} } \right|^2 } = \left| w \right|^{ - 1} {G}_{\mathbf v\mathbf v}  + \bar w\left| w \right|^{ - 1} \frac{{\Im \, G_{\mathbf v\mathbf v} }}{\eta } . \label{eq_gsq4}
 \end{align}
All of the above estimates remain true for $G^{(J)}$ instead of $G$ for $J\subset \mathcal I$. 
\label{lemma_Im}
\end{lem}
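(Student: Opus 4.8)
The plan is to derive all the stated identities from the spectral representation \eqref{singular_rep} together with the singular decomposition $Y = \sum_{k=1}^N \sqrt{\lambda_k}\,\xi_k\zeta_{\bar k}^\dag$, and to obtain the operator-norm bounds \eqref{eq_gbound} from the fact that $w$ stays at distance $\eta = \Im w$ from the real spectrum of the Hermitian matrix $H(w)$ (after the appropriate change of variables). For \eqref{eq_gbound}, I would first note that $H(w)$ is a Hermitian matrix whose eigenvalues are real, so $\|(H(w)-\zeta)^{-1}\|$ is controlled by the distance from $\zeta$ to the real axis; more concretely, by \eqref{eqn_schurmatrix} it suffices to bound $\|G_L\|$ and $\|G_R\|$, and since $YY^\dag$ and $Y^\dag Y$ are positive semidefinite Hermitian matrices, $\|(YY^\dag - w)^{-1}\| \le \eta^{-1}$ and likewise for $G_R$; the off-diagonal blocks $w^{-1/2}G_L Y$ etc.\ are bounded using $\|Y(Y^\dag Y - w)^{-1}\| \le C\eta^{-1/2}$ (again from the spectral calculus, since $\sqrt{\lambda}/|\lambda - w| \le C\eta^{-1/2}$ on $\lambda \ge 0$), together with $|w| \ge \eta$, giving the $C\eta^{-1}$ bound on all blocks and hence on $\|G\|$. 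The bound on $\partial_w G$ follows from $\partial_w G = -G(\partial_w H)H^{-1}\cdot(\text{stuff})$; more cleanly, differentiate \eqref{singular_rep} termwise: each term picks up at most a factor $(\lambda_k-w)^{-1}$ or a derivative of $w^{\pm 1/2}$, and on $\bD(\tau,N)$ one has $|w|\gtrsim \eta^2$ in the worst regime $|z|^2<1-\tau$ (as noted in the remark after the spectral-domain definition), so $\|\partial_w G\| \le C\eta^{-2}$.

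For the four sum rules, the key observation is that from \eqref{singular_rep} the blocks of $G$ are
\[
G_{\mathbf v\mathbf v} = \sum_k \frac{|\langle\xi_k,\mathbf v\rangle|^2}{\lambda_k - w}, \qquad
G_{\mathbf w\mathbf w} = \sum_k \frac{|\langle\zeta_{\bar k},\mathbf w\rangle|^2}{\lambda_k - w},
\]
for $\mathbf v\in\mathbb C^{\mathcal I_1}$, $\mathbf w\in\mathbb C^{\mathcal I_2}$, while the mixed block $G_{\mathbf v\mathbf w}$ (with $\mathbf v\in\mathcal I_1$, $\mathbf w\in\mathcal I_2$) equals $\sum_k w^{-1/2}\sqrt{\lambda_k}\langle\xi_k,\mathbf v\rangle\overline{\langle\zeta_{\bar k},\mathbf w\rangle}/(\lambda_k-w)$. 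Then $\sum_{\mu\in\mathcal I_2}|G_{\mathbf w\mu}|^2 = \|G\mathbf w\|_{\mathcal I_2}^2$ projected onto $\mathcal I_2$; using orthonormality of $\{\zeta_{\bar k}\}$ this telescopes to $\sum_k |\langle\zeta_{\bar k},\mathbf w\rangle|^2/|\lambda_k-w|^2$, and the elementary identity $|\lambda-w|^{-2} = \eta^{-1}\Im[(\lambda-w)^{-1}]$ gives exactly $\Im G_{\mathbf w\mathbf w}/\eta$, which is \eqref{eq_gsq1}; \eqref{eq_gsq2} is identical with the roles of $\xi$ and $\zeta$ swapped. For \eqref{eq_gsq3} one sums $|G_{\mathbf w i}|^2$ over $i\in\mathcal I_1$, which by the mixed-block formula equals $|w|^{-1}\sum_k \lambda_k|\langle\zeta_{\bar k},\mathbf w\rangle|^2/|\lambda_k-w|^2$; writing $\lambda_k = (\lambda_k - w) + w$ splits this into $|w|^{-1}\sum_k|\langle\zeta_{\bar k},\mathbf w\rangle|^2\,\Re[(\lambda_k-w)^{-1}]$... — more precisely one uses $\lambda/|\lambda-w|^2 = \Re[(\lambda-w)^{-1}] + |w|^2\cdot|\lambda-w|^{-2}/\text{(something)}$; the cleaner route is the algebraic identity
\[
\frac{\lambda}{|\lambda-w|^2} = \frac{1}{\lambda-w} + \frac{w}{|\lambda-w|^2} = \frac{1}{\lambda-w} + \frac{w}{\eta}\,\Im\frac{1}{\lambda-w},
\]
whose real part, after multiplying by $|w|^{-1}$ and summing, yields $|w|^{-1}G_{\mathbf w\mathbf w} + \bar w|w|^{-1}\Im G_{\mathbf w\mathbf w}/\eta$ once one tracks the factor $w^{-1/2}\overline{w^{-1/2}} = |w|^{-1}$ from the two mixed blocks. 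Identity \eqref{eq_gsq4} is the mirror image. The equalities between the two sums in each line (e.g.\ $\sum|G_{\mathbf w\mu}|^2 = \sum|G_{\mu\mathbf w}|^2$) follow from $G(\bar w)^\dag = $ the corresponding resolvent at $w$ — i.e.\ from the symmetry of the spectral representation under conjugating the roles — or directly from the fact that the same telescoping works reading the rows instead of the columns.

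Finally, every step above uses only the spectral decomposition and the Hermitian structure, both of which are inherited verbatim by the minors $H^{(J)}$ and their inverses $G^{(J)}$ (these are resolvents of the Hermitian minor $H^{(J)}$, which has its own singular decomposition $Y^{(J)} = \sum \sqrt{\lambda_k^{(J)}}\,\xi_k^{(J)}(\zeta_{\bar k}^{(J)})^\dag$), so the extension to $G^{(J)}$ is immediate and requires no new idea. I expect the only mildly delicate point to be bookkeeping the powers of $w^{\pm1/2}$ and the branch choice in the mixed blocks so that the coefficients $|w|^{-1}$ and $\bar w|w|^{-1}$ in \eqref{eq_gsq3}–\eqref{eq_gsq4} come out exactly as stated; everything else is a routine application of the identity $|\lambda-w|^{-2} = \eta^{-1}\Im(\lambda-w)^{-1}$ and Parseval in the $\xi_k$, $\zeta_{\bar k}$ bases.
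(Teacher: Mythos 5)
Your overall route is the same as the paper's: the spectral decomposition \eqref{singular_rep}, the elementary identity $|\lambda-w|^{-2}=\eta^{-1}\Im(\lambda-w)^{-1}$, and the scalar split $\lambda=(\lambda-\bar w)+\bar w$ (which the paper executes at the matrix level as $Y^\dag Y=(Y^\dag Y-\bar w)+\bar w$ and $G_R(Y^\dag Y-\bar w)G_R^\dag=G_R$). The extension to minors $G^{(J)}$ is also handled exactly as you say. So the strategy is fine. However, there are several wrong intermediate claims that would not survive if you tried to make the argument precise.

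First, $H(w)$ is \emph{not} Hermitian: its off-diagonal blocks are $w^{1/2}Y$ and $w^{1/2}Y^\dag$, and $(w^{1/2}Y)^\dag=\bar w^{1/2}Y^\dag\neq w^{1/2}Y^\dag$ when $\eta>0$. You don't actually use this claim (you fall back to block-by-block analysis via \eqref{eqn_schurmatrix}), but it should be deleted.

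Second, and more seriously, the bound $\sqrt\lambda/|\lambda-w|\le C\eta^{-1/2}$ for all $\lambda\ge 0$ is \emph{false}: at $\lambda=E$ it equals $\sqrt E/\eta$, which grows like $\eta^{-1}$ (not $\eta^{-1/2}$) when $E\sim 1$ and $\eta\to 0$. You cannot bound the two factors $|w|^{-1/2}$ and $\sqrt\lambda/|\lambda-w|$ separately. The correct statement — which the paper uses — is the combined bound
\[
\frac{|w|^{-1/2}\sqrt\lambda}{|\lambda-w|}\le\frac{1}{\eta}\quad\text{for all }\lambda\ge 0,
\]
which one can check by observing that the supremum over $\lambda\ge0$ of $\lambda/|\lambda-w|^2$ is attained at $\lambda=|w|$, where $|\lambda-w|^2=2|w|(|w|-E)=(|w|-E)^2+\eta^2\ge\eta^2$. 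This requires combining the two factors, and is not a consequence of the incorrect intermediate inequality you wrote. Relatedly, the remark that "$|w|\gtrsim\eta^2$ in the worst regime" is not the right thing to invoke for $\partial_w G$; what one actually uses is simply $|w|\ge\eta$ (always true) together with the same combined inequality applied to the differentiated kernel.

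Third, your displayed scalar identity for \eqref{eq_gsq3} has a conjugation error: the correct identity is
\[
\frac{\lambda}{|\lambda-w|^2}=\frac{1}{\lambda-w}+\frac{\bar w}{|\lambda-w|^2}=\frac{1}{\lambda-w}+\frac{\bar w}{\eta}\,\Im\frac{1}{\lambda-w},
\]
with $\bar w$, not $w$, in the middle term (using $\frac{1}{\lambda-w}=\frac{\lambda-\bar w}{|\lambda-w|^2}$ for $\lambda$ real). You reach the correct final expression $|w|^{-1}G_{\mathbf w\mathbf w}+\bar w|w|^{-1}\Im G_{\mathbf w\mathbf w}/\eta$, which suggests a typo, but as written the intermediate step is wrong and would lead to an imaginary contribution that fails to cancel. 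Once these points are fixed, the proof agrees with the paper's.
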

\begin{proof}
The estimates in (\ref{eq_gbound}) follow from (\ref{singular_rep}). For any unit vectors $\mathbf {x,y} \in \mathbb C^{\mathcal I_1}$, we have
$$\left|\left\langle \mathbf x, G\mathbf y\right\rangle \right| \le \sum\limits_{k = 1}^N {\frac{{\left|\langle \mathbf x, \xi _k\rangle\right| \left| \langle \xi _k^\dag, \mathbf y\rangle \right| }}{\left|\lambda _k  - w \right|}} \le \frac{1}{\eta}\left[ \sum\limits_{k = 1}^N \left|\langle \mathbf x, \xi _k\rangle\right|^2 \right]^{1/2} \left[\sum\limits_{k = 1}^N\left| \langle \xi _k^\dag, \mathbf y\rangle \right|^2 \right]^{1/2} =\frac{1}{\eta}.$$
For any unit vectors $\mathbf x\in \mathbb C^{\mathcal I_1}$ and $\mathbf y \in \mathbb C^{I_2}$, we have
\begin{align*}
\left|\langle \mathbf x, G\mathbf y\rangle \right| \le \left|w\right|^{-1/2}\sum\limits_{k = 1}^N \frac{{\sqrt {\lambda _k } \left|\langle \mathbf x,\xi _{ k}\rangle \right| \left|\langle \zeta _{\bar k}^\dag, \mathbf y\rangle  \right|}}{\left|\lambda _k  - w \right|} \le \sum\limits_{k = 1}^N \frac{1}{2\eta}\left(\left|\langle \mathbf x,\xi _{ k}\rangle \right|^2 + \left|\langle \zeta _{\bar k}^\dag, \mathbf y\rangle  \right|^2\right)=\frac{1}{\eta},
\end{align*}
where we have used that for $w=E+i\eta$, $\left|w\right|^{-1/2}{\sqrt {\lambda _k }}/{\left|\lambda _k  - w \right|} \le  {\eta}^{-1}.$
For the other two blocks of $G$, we can prove similar estimates. This implies (\ref{eq_gbound}).
It is trivial to generalize the proof to $\partial_w G$, where $\eta^{-2}$ comes from the $(\lambda_k-w)^{-2}$ factor of $\partial_w G$.
For (\ref{eq_gsq1}), we observe that
$$ \frac{{{\mathop{\rm Im}\nolimits} \, G_{\mathbf w\mathbf w} }}{\eta } = \frac{1}{\eta }{\mathop{\rm Im}\nolimits} \sum\limits_{k = 1}^N {\frac{{\left\langle {\mathbf w,\zeta _{\bar k} } \right\rangle \langle {\zeta _{\bar k}^\dag  ,\mathbf w} \rangle }}{{\lambda _k  - w}}}  = \sum\limits_{k = 1}^N {\frac{{\left| {\left\langle {\mathbf w,\zeta _k } \right\rangle } \right|^2 }}{{\left( {\lambda _k  - E} \right)^2  + \eta ^2 }}}$$
and by (\ref{eqn_schurmatrix})
\begin{align}\label{eqn_interstep}
\sum\limits_{\mu  \in \mathcal I_2 } {\left| {G_{\mathbf w\mu } } \right|^2 } = \sum\limits_{\mu  \in \mathcal I_2 } \left\langle \mathbf w, G _R{e_\mu  } \right\rangle \langle e_\mu, G_R^\dag \mathbf w \rangle  = {\left\langle \mathbf w ,G_RG_R^\dag  \mathbf w \right\rangle} = \sum\limits_{k = 1}^N {\frac{{\left| {\left\langle {\mathbf w,\zeta _k } \right\rangle } \right|^2  }}{{\left( {\lambda _k  - E} \right)^2  + \eta ^2 }} }  .
\end{align}
Similarly, we can prove the identity for $\sum\limits_{\mu  \in \mathcal I_2 } {\left| {G_{\mu \mathbf w} } \right|^2 }$ and (\ref{eq_gsq2}). For identity (\ref{eq_gsq3}), first we can prove $\sum\limits_{i \in \mathcal I_1 } {\left| {G_{\mathbf wi} } \right|^2 } = \sum\limits_{i \in \mathcal I_1 } {\left| {G_{i\mathbf w} } \right|^2 }$ using (\ref{singular_rep}). 
Then we use (\ref{eqn_schurmatrix}) and (\ref{eqn_interstep}) to get
\begin{align}
 \sum\limits_{i \in \mathcal I_1 } {\left| {G_{\mathbf wi} } \right|^2 } & = \left| w \right|^{ - 1} \left( {G_R Y^\dag  YG_R^\dag  } \right)_{\mathbf w\mathbf w}  = \left| w \right|^{ - 1} \left[ {G_R \left( {Y^\dag  Y - \bar w} \right)G_R^\dag  } \right]_{\mathbf w\mathbf w}  + \bar w\left| w \right|^{ - 1} \left( {G_R G_R^\dag  } \right)_{\mathbf w\mathbf w}  \nonumber\\
& = \left| w \right|^{ - 1}  {G}_{\mathbf w\mathbf w}  + \bar w\left| w \right|^{ - 1} \left( {G_R G_R^\dag  } \right)_{\mathbf w\mathbf w}  = \left| w \right|^{ - 1} {G}_{\mathbf w\mathbf w}  + \bar w\left| w \right|^{ - 1} \frac{{{\mathop{\rm Im}\nolimits} \, G_{\mathbf w\mathbf w} }}{\eta } .
 \end{align}
Identity (\ref{eq_gsq4}) can be proved in a similar way. 
\end{proof}

The following Lemma give useful large deviation bounds. See Theorem B.1 and Lemmas B.2-B.4 in \cite{Delocal} for the proof. See also Theorem C.1 of \cite{Semicircle}.
\begin{lem}\label{large_deviation}
(Large deviation bounds) Let $(X_i^{(N)})$, $(Y_i^{(N)})$ be independent families of random variables and $(a_{ij}^{(N)})$, $(b_{i}^{(N)})$ be deterministic. Suppose all entries $X_i^{(N)}$ and $Y_i^{(N)}$ are independent and satisfies (\ref{assm1}) and (\ref{assm2}). Then we have the following bounds:
\begin{equation}
\sum\limits_i {b_i X_i }  \prec \frac{\left( {\sum\limits_i {\left| {b_i^{} } \right|^2 } } \right)^{1/2}}{\sqrt{N}} ,\ \ \sum\limits_{i,j} {a_{ij} X_i } Y_j  \prec \frac{\left( {\sum\limits_{i,j} {\left| {a_{ij} } \right|^2 } } \right)^{1/2}}{N} ,\ \ \sum\limits_{i \ne j} {a_{ij} X_i } Y_j  \prec \frac{\left( {\sum\limits_{i \ne j} {\left| {a_{ij} } \right|^2 } } \right)^{1/2}}{N}.
\end{equation}
If the coefficients $(a_{ij}^{(N)})$ and $(b_{i}^{(N)})$ depend on some parameter $u$, then all of the above estimates are uniform in $u$.
\end{lem}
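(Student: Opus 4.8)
The plan is to establish all three bounds by the classical moment method, which is the standard route to stochastic domination: if $Z\ge 0$ is a random variable and $\Psi>0$ is deterministic with $\bbE Z^{2p}\le C_p\Psi^{2p}$ for every $p\in\bbN$ and some $N$-independent $C_p$, then Markov's inequality gives $\bbP(Z>N^\epsilon\Psi)\le C_p N^{-2p\epsilon}$, which is at most $N^{-D}$ once $p\ge p(\epsilon,D)$; hence $Z\prec\Psi$. Since the constants $C_p$ produced below will depend only on the moment bounds $C_q$ from (\ref{assm2}) and not on the coefficients, uniformity in any external parameter $u$ comes for free.

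First I would treat the linear form $S:=\sum_i b_iX_i$. After rescaling we may assume $\sum_i|b_i|^2=1$ and must show $\bbE|S|^{2p}\le C_p N^{-p}$. Expanding the $2p$-th power and using that the $X_i$ are independent with mean zero, a monomial $\bbE[X_{i_1}\cdots\overline{X_{i_{2p}}}]$ vanishes unless each index value occurring among $i_1,\dots,i_{2p}$ occurs at least twice; thus at most $p$ distinct values appear, organized into $r\le p$ blocks of multiplicities $m_1,\dots,m_r\ge2$ with $\sum_s m_s=2p$. By (\ref{assm2}) the contribution of a block of multiplicity $m_s$ is bounded by $C_{m_s}(N\wedge M)^{-m_s/2}$, so the total power of $N$ is $N^{-(\sum_s m_s)/2}=N^{-p}$; the accompanying coefficient sum is controlled, via $\sum_i|b_i|^{m}\le(\sum_i|b_i|^2)^{m/2}=1$ for $m\ge2$ together with Cauchy--Schwarz across the coupled blocks, by a constant; and the number of block patterns is an $N$-independent combinatorial factor. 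This yields $\bbE|S|^{2p}\le C_pN^{-p}$, hence $S\prec N^{-1/2}(\sum_i|b_i|^2)^{1/2}$.

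For the bilinear forms I would reduce to the linear case by conditioning. For $B:=\sum_{i\ne j}a_{ij}X_iY_j$, condition on $(Y_j)$; then $B=\sum_i b_iX_i$ with coefficients $b_i:=\sum_{j\ne i}a_{ij}Y_j$ that are deterministic given $(Y_j)$, and since $(X_i)$ remains an independent family independent of $(Y_j)$, the linear bound applies conditionally and gives $B\prec N^{-1/2}(\sum_i|b_i|^2)^{1/2}$. It then remains to show $\sum_i|b_i|^2\prec N^{-1}\sum_{i\ne j}|a_{ij}|^2$: expanding, $\sum_i|b_i|^2=\sum_i\sum_{j,j'\ne i}a_{ij}\overline{a_{ij'}}Y_j\overline{Y_{j'}}$ has expectation $(N\wedge M)^{-1}\sum_{i\ne j}|a_{ij}|^2$, and its fluctuation is again a (diagonal-free plus diagonal) bilinear form in the $Y$'s, handled by one more application of the same conditioning trick, which closes the estimate by Lemma~\ref{lem_stodomin}~(ii). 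The form $\sum_{i,j}a_{ij}X_iY_j$ with the diagonal included is identical, except that for the pairs $i=j$ one uses $\bbE|X_i|^2=(N\wedge M)^{-1}$ rather than independence, which does not change the order. If the conditioning bookkeeping becomes unwieldy, an equivalent alternative is to expand $\bbE|B|^{2p}$ directly and run the graph-counting argument as above: every $X$-index and every $Y$-index must appear at least twice, forcing at most $p$ distinct row-indices and at most $p$ distinct column-indices, while $\sum_{i\ne j}|a_{ij}|^m\le(\sum_{i\ne j}|a_{ij}|^2)^{m/2}$ for $m\ge2$ supplies the coefficient bound.

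The only real obstacle is bookkeeping: in the moment expansion one must organize the sum over index-partition patterns so that the powers of $N$ and the powers of $\sum_i|b_i|^2$ (respectively $\sum_{i,j}|a_{ij}|^2$) are matched correctly while keeping the combinatorial prefactor $N$-independent. The non-identical distributions of the $X_{i\mu}$ cause no trouble, since (\ref{assm2}) provides moment bounds that are uniform in $(i,\mu)$; and uniformity in an external parameter $u$ is inherited because only the quantities $\sum_i|b_i|^2$ and $\sum_{i,j}|a_{ij}|^2$ enter the final bounds.
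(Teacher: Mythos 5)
Your overall strategy, the moment method culminating in high-moment bounds and Markov's inequality, is exactly the route taken in the sources the paper cites for this lemma (Theorem~B.1 and Lemmas~B.2--B.4 of \cite{Delocal}, Theorem~C.1 of \cite{Semicircle}); the paper itself gives no proof. Your argument for the linear form $\sum_i b_iX_i$ is correct: after normalizing $\sum_i|b_i|^2=1$, vanishing odd moments force every repeated index to occur at least twice, each block of multiplicity $m_s$ contributes $C_{m_s}(N\wedge M)^{-m_s/2}$, the coefficient sums are controlled by $\sum_i|b_i|^{m}\le(\sum_i|b_i|^2)^{m/2}=1$ for $m\ge 2$, and since $N\wedge M\sim N$ by (\ref{assm0}) this gives $\bbE|S|^{2p}\le C_pN^{-p}$.

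The conditioning route for the bilinear forms, however, has a gap that needs to be repaired. After conditioning on $(Y_j)$ and applying the linear bound with $b_i=\sum_{j\ne i}a_{ij}Y_j$, you need $\sum_i|b_i|^2\prec N^{-1}\sum_{i\ne j}|a_{ij}|^2$. You correctly split the fluctuation of $\sum_i|b_i|^2$ into a diagonal piece (a linear form in the centered variables $|Y_j|^2-\bbE|Y_j|^2$, handled by another moment argument with the appropriate normalization) and an off-diagonal piece $\sum_{j\ne j'}\tilde a_{jj'}Y_j\overline{Y_{j'}}$ with $\tilde a_{jj'}=\sum_{i\ne j,j'}a_{ij}\overline{a_{ij'}}$. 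But that off-diagonal piece is a quadratic form in a \emph{single} independent family: both slots carry the same $(Y_j)$, so ``one more application of the same conditioning trick'' does not apply — there is no second independent family to condition on. To close this branch you would need either a decoupling argument (split the index set into independent halves and iterate) or a standalone single-family quadratic large-deviation bound, which is a separate statement in the cited sources and is itself proved by moment estimates. In short, the conditioning reduction does not bottom out; the self-contained path is the one you mention as a fallback — expand $\bbE|B|^{2p}$ directly, partitioning both row and column indices into blocks of size at least two and bounding the coupled coefficient sums by $(\sum_{i,j}|a_{ij}|^2)^{p}$ via Cauchy--Schwarz — and this should be presented as the main argument rather than a contingency.
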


We have stated some basic properties of $\rho_{1,2c}$ and $m_{1,2c}$ in Lemma \ref{lemm_rho} and Proposition \ref{prop_rho1c}. Now we collect more estimates for $m_{1,2c}$ that will be used in the proof. The next lemma is proved in Appendix \ref{subsection_append2}. For $w = E+i\eta \in \mathbf D$, we define the distance to the spectral edge through
\begin{equation}\label{eqn_def_kappa}
\kappa \equiv \kappa(E):= \min_{1\le k\le 2L, e_k >0} |E -e_k| .
\end{equation}
Notice in the $|z|<1$ case, we do not take into consideration the edge at $e_{2L}=0$.

\begin{lem}\label{lemm_m1_4case}

Fix $\tau>0$ and suppose $\tau \le ||z|^2-1|\le\tau^{-1}$. We denote $w = E+i\eta$.

\begin{itemize}

\item[Case 1]
Fix $\tau'>0$. Suppose the bulk component $[e_{2k},e_{2k-1}]$ is regular in the sense of Definition \ref{def_regular}. Then for $w\in \mathbf D_{k}^b(\zeta,\tau',N)$, we have
\begin{equation}
|1+m_{1c}| \sim {\rm{Im}}\, m_{1c} \sim 1, \ |m_{2c}|\sim {\rm{Im}}\, m_{2c} \sim 1.  \label{estimate1_bulk}
\end{equation}

\item[Case 2]
Fix $\tau'>0$. Then for $w\in \mathbf D^o(\zeta,\tau',N)$, we have
\begin{equation}
{\rm{Im}}\, m_{1,2c} \sim \eta, \ |1+m_{1c}| \sim 1,\ |m_{2c}|\sim 1.  \label{estimate1_out}
\end{equation}

\item[Case 3]
Suppose $e_k\ne 0$ is a regular edge. Then for $w\in \mathbf D_k^e(\zeta,\tau',N)$, if $\tau'>0$ is small enough,
\begin{equation}
{\rm{Im}}\, m_{1,2c} \sim
  \begin{cases}
      \sqrt{\kappa + \eta}    \hfill & \text{ if $E\in {\rm{supp}}\, \rho_{1,2c}$} \\
      \eta/\sqrt{\kappa+\eta} \hfill & \text{ if $E\notin {\rm{supp}}\, \rho_{1,2c}$} \\
  \end{cases}, \ |1+m_{1c}| \sim 1, \ |m_{2c}|\sim 1. \  \label{estimate1_edge}
\end{equation}

\item[Case 4]
Suppose $|z|^2\le 1-\tau$. We take $e_{2L}=0$ and $\tau'>0$ to be small enough. Then for $w\in \mathbf D_{2L}^e(\zeta,\tau',N)$, if $\Im\, w\ge \tau'$, we have
\begin{equation}
|1+m_{1c}| \sim {\rm{Im}}\, m_{1c} \sim 1, \ |m_{2c}|\sim {\rm{Im}}\, m_{2c} \sim 1; \label{estimate11_0}
\end{equation}
if $|w|\le 2\tau'$, we have
\begin{equation}
m_{1c} = i\frac{\sqrt{t}} {\sqrt{w}}+ O(1), \ m_{2c}= \frac{i\sqrt{t}}{\sqrt{w}(t+|z|^2)}+ O(1), \label{estimate12_0}
\end{equation}
for some constant $t >0$, and
\begin{equation}
{\rm{Im}}\,  m_{1,2c} \sim |w|^{-1/2}.  \label{estimate13_0}
\end{equation}

\item[Case 5]
For $w\in \mathbf D_L(\zeta)$, we have
\begin{equation}
|m_{1c}| \sim {\rm{Im}}\, m_{1c} \sim \frac{1}{\eta}, \ |m_{2c}|\sim {\rm{Im}}\, m_{2c} \sim \frac{1}{\eta}. \label{estimate1L}
\end{equation}
\end{itemize}

In Cases 1-4, we have
\begin{equation}
\left|{w\left( {1 + s_i m_{2c}} \right)(1+m_{1c}) - \left| z \right|^2 }\right| \ge c, \label{estimate2_bulk}
\end{equation}
where $c>0$ is some constant that may depend on $\tau$ and $\tau'$. In Case 5, we have
\begin{equation}
\left|{w\left( {1 + s_i m_{2c}} \right)(1+m_{1c}) - \left| z \right|^2 }\right| \ge \eta, \label{estimate2_large}
\end{equation}
\end{lem}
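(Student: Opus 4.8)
The strategy is to analyze the reduced self-consistent equation $f(\sqrt w, m) = 0$ from \eqref{selfm_reduced}--\eqref{eqn_def_fwm} directly, exploiting the fact that $m_{1,2c}$ are Stieltjes transforms of the explicitly structured densities $\rho_{1,2c}$ whose support was described in Lemma \ref{lemm_rho} and Proposition \ref{prop_rho1c}. For the quantities $\mathrm{Im}\,m_{1,2c}$ I will use the integral representation \eqref{stj_rho1}: on the real axis, $\mathrm{Im}\,m_{1,2c}(E+i\eta) = \int \eta\,\rho_{1,2c}(x)/((x-E)^2+\eta^2)\,dx$, so the size of $\mathrm{Im}\,m_{1,2c}$ is governed by how much mass $\rho_{1,2c}$ places within distance $\sim(\kappa+\eta)$ of $E$. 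Plugging in the square-root edge behavior (Proposition \ref{prop_rho1c}(iii)), the lower bulk bound (ii), and the $x^{-1/2}$ behavior near $0$ (iv), one reads off each of \eqref{estimate1_bulk}, \eqref{estimate1_out}, \eqref{estimate1_edge}, \eqref{estimate13_0} by a routine Stieltjes-transform-of-a-density-with-prescribed-local-behavior computation. The bounds $|1+m_{1c}|\sim 1$ and $|m_{2c}|\sim 1$ in Cases 1--3 follow from Proposition \ref{prop_roughbound} together with the fact that, since $\tau\le||z|^2-1|\le\tau^{-1}$ and $w$ stays in a compact set bounded away from $0$ in those cases, $m_{1c}$ cannot approach $-1$ (this would force a pole in \eqref{eq_self1}); a short contradiction argument using analyticity and \eqref{eq_self2} handles the details.

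For Case 5 (large $\eta$) the estimates \eqref{estimate1L} are the easiest: from \eqref{stj_rho1} and the fact that $\rho_{1,2c}$ is a fixed compactly supported probability(-like) density one gets $m_{1,2c}(w) = -w^{-1}\int\rho_{1,2c} + O(|w|^{-2})$ immediately, and $\mathrm{Im}\,m_{1,2c}\sim\eta^{-1}$ follows since the leading coefficient is a positive constant of order $1$ (here I use that $\int\rho_{2c}=1$ and, from \eqref{eq_self1}, that $\int\rho_{1c}$ is also of order $1$). For Case 4 with $|z|^2\le1-\tau$ and $|w|\le 2\tau'$ small, I would isolate the singular part of $m_{1,2c}$ near $w=0$: Proposition \ref{prop_rho1c}(iv) says $\rho_{1c}(x)\sim x^{-1/2}$ near $0$, and more precisely Lemma \ref{lemm_rho} (via the explicit cubic in \eqref{eqn_def_fwm} at $w=0$) pins down the constant, giving $\int_0^{c} \rho_{1c}(x)/(x-w)\,dx = i\sqrt t\,w^{-1/2} + O(1)$ for the appropriate $t>0$; the remaining part of the integral over $x$ bounded away from $0$ is $O(1)$. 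The corresponding formula for $m_{2c}$ then comes from substituting into \eqref{eq_self1} and expanding, using $1+m_{1c} = 1 + O(|w|^{-1/2})$ carefully — one checks the leading behavior $m_{2c}\sim i\sqrt t\,w^{-1/2}(t+|z|^2)^{-1}$ survives. The sub-case $\mathrm{Im}\,w\ge\tau'$ with $|w|\le2\tau'$ reduces to a compactness argument like Case 1.

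Finally, \eqref{estimate2_bulk} and \eqref{estimate2_large}: the quantity $w(1+s_i m_{2c})(1+m_{1c}) - |z|^2$ is, up to the sign conventions, exactly $A(\Sigma)^{-1}$ evaluated at $\sigma = s_i$ (compare \eqref{def_Pi}), equivalently the denominator appearing in \eqref{eq_self2}. Its imaginary part is $\mathrm{Im}\,w\,(\cdots)$ plus terms controlled by $\mathrm{Im}\,m_{1,2c}\ge 0$, and writing it as $-(1+s_im_{2c})\cdot(-w(1+m_{1c})) + \ldots$ and using $\mathrm{Im}(wm_{2c})\ge0$, $\mathrm{Im}(wm_{1c})\ge0$ (established right after Lemma \ref{lemm_density}) shows $|\cdot|\gtrsim \mathrm{Im}\,w = \eta$, which already gives \eqref{estimate2_large}. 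To upgrade this to a constant lower bound in Cases 1--4 one argues that if the expression were $o(1)$ at some $w$ in a regular domain, then by \eqref{eq_self2} some $l_i s_i/(\text{small})$ term would blow up unless there is cancellation, and a cancellation among these terms (each with nonnegative imaginary part by the sign facts above) is impossible; alternatively, the regularity hypothesis \eqref{regular1}--\eqref{regular2} is precisely designed so that $m_c(w)=\sqrt w(1+m_{1c})$ stays bounded away from the roots $a_i,b_i,-c_i$ of the cubic in \eqref{eqn_def_fwm}, which is equivalent to this denominator being bounded below. I expect the main obstacle to be Case 4: tracking the $O(1)$ error terms in \eqref{estimate12_0} honestly requires knowing the next-order behavior of $m_{1c}$ near $w=0$, not just its leading singularity, and one must verify that the subleading contribution to $1+m_{1c}$ does not conspire to cancel the claimed leading term of $m_{2c}$ — this is where the detailed analysis of the self-consistent equation \eqref{eq_self3} near the origin (carried out in Appendix \ref{subsection_append2}) is genuinely needed.
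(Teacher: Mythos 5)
Your outline for the $\Im\,m_{1,2c}$ estimates (Cases 1--3, 5) and for Case~4 tracks the paper's proof closely: the paper also reads off $\Im\,m_{1,2c}$ from the integral representation after Proposition~\ref{prop_rho1c} has pinned down the local behavior of the density, and for Case~4 it likewise falls back on the analysis of $m_c$ near $w=0$ carried out in Lemma~\ref{lemm_edge0} (note this lives in Section~\ref{subsection_append1}, not \ref{subsection_append2} as you wrote). Your identification of the factorization
\[
w\left(1+s_im_{2c}\right)(1+m_{1c})-|z|^2
 = \frac{\sqrt{w}\,(m_c-a_i)(m_c-b_i)(m_c+c_i)}{-m_c^2+|z|^2}
\]
as the key to (\ref{estimate2_bulk}) is also the right idea, and your sign argument does give both (\ref{estimate2_large}) in Case~5 and the constant bound in Case~1, since there $\Im(wm_{2c})\sim 1$.

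The genuine gap is in (\ref{estimate2_bulk}) for Case~2, the region outside $\mathrm{supp}\,\rho_{1c}$ with small $\eta$. There $\Im\,m_{1,2c}\sim\eta$, so the sign argument only yields $\gtrsim\eta$, and the proposed upgrade via a ``cancellation is impossible'' contradiction does not hold up. The terms in (\ref{eq_self2}) do all have nonnegative imaginary parts, but if the $i$-th denominator has magnitude $\delta=o(1)$ its imaginary part is $\sim\eta/\delta^2$ after inversion, and since the weight is $l_is_i/N$ (which can be small) the constraint $\Im\,m_{1c}\sim\eta$ only forces $\delta\gtrsim\sqrt{l_is_i/N}$, not $\delta\gtrsim 1$; the real parts, which carry the large magnitude, have no fixed sign and can cancel. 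Your ``alternative'' --- that regularity is ``precisely designed'' so $m_c$ stays away from the roots --- is the correct statement but is exactly the hard part: (\ref{regular1}) only controls root separation \emph{at the edges}, and extending it across the whole outer gap is where the paper does real work. The mechanism the paper uses is a monotonicity argument: for real $E$ outside the spectrum the cubic roots $a_i(E),b_i(E),-c_i(E)$ are decreasing in $E$ while $m_c(E)$ is increasing (being the Stieltjes transform of a positive measure), so the separations $|m_c(E)-a_i(E)|$, etc.\ can only grow as one moves away from the nearest edge, where (\ref{regular1}) supplies the initial $\epsilon$-gap; the bound is then propagated to complex $w$ using $|\partial_w m_{1c}|=O(1)$ away from the support. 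Without a substitute for this monotonicity step the Case~2 bound (\ref{estimate2_bulk}) is not established.

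A smaller remark: for Case~3 you propose deriving (\ref{estimate1_edge}) by plugging the square-root density from Proposition~\ref{prop_rho1c}(iii) into the Stieltjes integral, whereas the paper expands $f(\sqrt w,m)$ to second order around $(\sqrt{e_k},m_k)$ (using Lemma~\ref{lemm_edge_reg}) and reads off $\Im\,m_c$ from the branch choice in (\ref{exp_edge2}); both are valid, and yours is marginally more elementary, but the paper's version simultaneously controls the real part of $m_c-m_k$, which is then reused to verify $|1+m_{1c}|\sim 1$ and (\ref{estimate2_bulk}) at the edge.
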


Note that the uniform bounds (\ref{estimate2_bulk}) and (\ref{estimate2_large}) guarantee that the matrix entries of $\Pi(w)$ remain bounded. We have the following Lemma, which is prove in Appendix \ref{subsection_append2}.

\begin{lem}\label{cor_basicestimate}
In Cases 1-4 of Lemma \ref{lemm_m1_4case}, we have
\begin{equation}
\|\pi_{[i]c}\|\le C |w|^{-1/2}, \ \ \left\|\left(\pi_{[i]c}\right)^{-1} \right\| \le C|w|^{1/2}, \label{estimate_Piw12}
\end{equation}
and in Case 5 of Lemma \ref{lemm_m1_4case}, we have
\begin{equation}
\|\pi_{[i]c}\|\le {C}{\eta}^{-1}, \ \ \left\|\left(\pi_{[i]c}\right)^{-1} \right\| \le C\eta. \label{estimate_PiwL}
\end{equation}
For all the cases in Lemma \ref{lemm_m1_4case}, 
\begin{equation}
\Im\, \Pi_{\mathbf v\mathbf v} \le C \Im(m_{1c}+m_{2c}),  \label{estimate_PiImw}
\end{equation}
uniformly in $w$ and any deterministic unit vector $\mathbf v \in \mathbb C^{\mathcal I}$.
\end{lem}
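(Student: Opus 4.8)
The plan is to push every assertion down to the $2\times 2$ blocks $\pi_{[i]c}$ and then feed in the case-by-case control of $m_{1c},m_{2c}$ from Lemma~\ref{lemm_m1_4case}. For the norm bounds, recall that by (\ref{def_pi}) the entries of $(\pi_{[i]c})^{-1}$ are $-w(1+|d_i|^2m_{2c})$, $-w^{1/2}z$, $-w^{1/2}\bar z$ and $-w(1+m_{1c})$; since $|m_{1c}|,|m_{2c}|=O(|w|^{-1/2})$ (Proposition~\ref{prop_roughbound}), $|d_i|^2=\sigma_i\le\tau^{-1}$, $|z|^2\le 1+\tau^{-1}$, and $|w|\le C$ on $\mathbf D$, each entry is $O(|w|^{1/2})$ in Cases~1--4, whence $\|(\pi_{[i]c})^{-1}\|\le C|w|^{1/2}$; in Case~5 one has $|w|\sim\eta$ and $|m_{1c}|,|m_{2c}|\sim\eta^{-1}$, so each entry is $O(\eta)$ and $\|(\pi_{[i]c})^{-1}\|\le C\eta$. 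For $\pi_{[i]c}$ itself I would compute $\det (\pi_{[i]c})^{-1}=w\bigl[w(1+|d_i|^2m_{2c})(1+m_{1c})-|z|^2\bigr]$ and bound the bracket below by (\ref{estimate2_bulk}) (giving $|\det (\pi_{[i]c})^{-1}|\ge c|w|$ in Cases~1--4) and by (\ref{estimate2_large}) (giving $|\det (\pi_{[i]c})^{-1}|\ge\eta|w|\sim\eta^2$ in Case~5). For a $2\times 2$ matrix $A$ one has $A^{-1}=(\det A)^{-1}\mathrm{adj}(A)$ with $\|\mathrm{adj}(A)\|=\|A\|$, so $\|\pi_{[i]c}\|=\|(\pi_{[i]c})^{-1}\|/|\det (\pi_{[i]c})^{-1}|$, which gives (\ref{estimate_Piw12}) and (\ref{estimate_PiwL}).

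For (\ref{estimate_PiImw}) I would first reduce to the blocks: writing $\mathbf v'=\mathrm{diag}(U^\dag,U^\dag)\mathbf v$, again a unit vector, and using $\Pi=\mathrm{diag}(U,U)\Pi_d\,\mathrm{diag}(U^\dag,U^\dag)$ with $\Pi_d$ block-diagonal in the pairs $\{i,i+N\}$, one gets $\Im\Pi_{\mathbf v\mathbf v}=\sum_i\langle\mathbf v'_{[i]},(\Im\pi_{[i]c})\mathbf v'_{[i]}\rangle\le\max_i\lambda_{\max}(\Im\pi_{[i]c})$ since $\sum_i|\mathbf v'_{[i]}|^2=1$. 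Inverting the $2\times 2$ matrix gives
\begin{equation*}
\pi_{[i]c}=A(\sigma_i)\begin{pmatrix}-(1+m_{1c})&w^{-1/2}z\\ w^{-1/2}\bar z&-(1+\sigma_i m_{2c})\end{pmatrix},\quad A(\sigma_i)=\bigl[w(1+\sigma_i m_{2c})(1+m_{1c})-|z|^2\bigr]^{-1},
\end{equation*}
and its diagonal entries rewrite as $(\pi_{[i]c})_{11}=\bigl[-w(1+\sigma_i m_{2c})+|z|^2/(1+m_{1c})\bigr]^{-1}$ and $(\pi_{[i]c})_{22}=\bigl[-w(1+m_{1c})+|z|^2/(1+\sigma_i m_{2c})\bigr]^{-1}$ (the latter valid since $\Im(\sigma_i m_{2c})>0$). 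Taking imaginary parts of reciprocals shows $\Im(\pi_{[i]c})_{11},\Im(\pi_{[i]c})_{22}\ge0$ and writes each as an explicit numerator — a bounded-coefficient combination of $\eta$, $\Im(wm_{1c})=E\Im m_{1c}+\eta\Re m_{1c}$, $\Im(wm_{2c})$ and $\Im m_{1c},\Im m_{2c}$ — divided by the squared modulus of the respective denominator, while $[\Im\pi_{[i]c}]_{12}=z\,\Im(w^{-1/2}/h_i)$ with $h_i=w(1+\sigma_i m_{2c})(1+m_{1c})-|z|^2$.

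It then remains to run the case analysis of Lemma~\ref{lemm_m1_4case}. In the regimes where $|w|\gtrsim 1$ — the bulk and the nonzero regular edges (which sit at distance $\gtrsim 1$ from the origin), the part of $\mathbf D^o$ near the support, and $\mathbf D_L$ — the factors $w^{\pm1/2}$ are of size $1$ with $\Im(w^{\pm1/2})=O(\eta)$, and $|1+m_{1c}|\sim1$, $|m_{2c}|\sim1$ (or the $\eta^{-1}$-scalings of Case~5), $|h_i|\gtrsim1$; expanding $\Im h_i$ as a bounded combination of $\Im m_{1c},\Im m_{2c},\eta$ then makes every entry of $\Im\pi_{[i]c}$ of size $O(\eta+\Im m_{1c}+\Im m_{2c})$, and since $\eta\lesssim\Im m_{2c}$ in each of these regimes (square-root behaviour at a regular edge by (\ref{estimate1_edge}), order~$1$ in the bulk by (\ref{estimate1_bulk})) this yields $\lambda_{\max}(\Im\pi_{[i]c})\le C\,\Im(m_{1c}+m_{2c})$. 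The remaining, and only delicate, regime is $|w|\le 2\tau'$, which among the regular domains is the hard edge $e_{2L}=0$ with $|z|^2\le1-\tau$ (Case~4 with $|w|\le2\tau'$): there the off-diagonal block $A(\sigma_i)w^{-1/2}z$ forces $\|\Im\pi_{[i]c}\|$ up to size $|w|^{-1/2}$, so there is no margin; but (\ref{estimate13_0}) gives $\Im m_{1c},\Im m_{2c}\sim|w|^{-1/2}$, and carrying the leading-order expansions (\ref{estimate12_0}) of $m_{1c},m_{2c}$ (not merely their moduli) one checks that the $\Im(w^{\pm1/2})$-pieces of $[\Im\pi_{[i]c}]_{12}$ cancel the dangerous diagonal contributions to top order, leaving something genuinely comparable to $\Im(m_{1c}+m_{2c})$. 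This near-origin analysis, where $\Pi$ blows up and the off-diagonal block is of the same size as the whole target bound, is the part I expect to be the real obstacle; away from the origin the estimate is routine bookkeeping with reciprocals of complex numbers, driven by the bounds of Lemma~\ref{lemm_m1_4case}.
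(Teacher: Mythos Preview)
Your approach is essentially the same as the paper's: bound the entries of $(\pi_{[i]c})^{-1}$ directly, invert via the determinant and (\ref{estimate2_bulk})/(\ref{estimate2_large}), and then reduce (\ref{estimate_PiImw}) to the $2\times 2$ blocks via the unitary conjugation $\Pi=\mathrm{diag}(U,U)\Pi_d\,\mathrm{diag}(U^\dag,U^\dag)$. The paper likewise splits $\Im\langle u_{[i]},\pi_{[i]c}u_{[i]}\rangle$ into the two diagonal pieces and the off-diagonal cross term, and runs the case analysis of Lemma~\ref{lemm_m1_4case}.

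The one place where you diverge from the paper, and make your own life harder, is the small-$|w|$ regime (Case~4 with $|w|\le 2\tau'$). You anticipate a delicate cancellation between the $\Im(w^{\pm 1/2})$ contributions in the off-diagonal block and the diagonal ones. No such cancellation is needed: in this regime (\ref{estimate13_0}) gives $\Im(m_{1c}+m_{2c})\sim |w|^{-1/2}$, while the first part of the very lemma you are proving gives $\|\pi_{[i]c}\|\le C|w|^{-1/2}$. Hence trivially
\[
\lambda_{\max}(\Im\pi_{[i]c})\le \|\pi_{[i]c}\|\le C|w|^{-1/2}\le C'\,\Im(m_{1c}+m_{2c}),
\]
and you are done. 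The paper uses exactly this trivial argument in Case~5 (where $\|\pi_{[i]c}\|\le C\eta^{-1}\sim C\,\Im(m_{1c}+m_{2c})$ by (\ref{estimate1L}) and (\ref{estimate_PiwL})), and the same device disposes of the near-origin part of Case~4 in one line. The entry-by-entry bookkeeping with $\Im h_i$ is only needed in the regimes where $\Im(m_{1c}+m_{2c})$ is genuinely smaller than $\|\pi_{[i]c}\|$, i.e.\ where $|w|\sim 1$.
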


The self-consistent equation (\ref{eq_self3}) can be written as
\begin{equation}
\Upsilon(w,m_1)=0, \label{stab_Dm1}
\end{equation}
where
\begin{equation}\label{def_stabD}
\Upsilon(w,m_1) = m_1 + \frac{1}{N}\sum_{i=1}^n l_i s_i (1+m_1)\left[{w\left( {1 + s_i \frac{1+m_1}{- w (1 + m_1)^2  + \left| z \right|^2 }} \right)(1+m_1) - \left| z \right|^2 } \right]^{-1}.
\end{equation}
The stability of (\ref{stab_Dm1}) roughly says that if $\Upsilon(w,m_1)$ is small and $m_1(w')-m_{1c}(w')$ is small for $w':= w+ iN^{-10}$, then $m_1(w)-m_{1c}(w)$ is small. For an arbitrary $w\in \mathbf D$, we define the discrete set
\begin{align}\label{eqn_def_L}
L(w):=\{w\}\cup \{w'\in \mathbf D: \text{Re}\, w' = \text{Re}\, w, \text{Im}\, w'\in [\text{Im}\, w, 1]\cap (N^{-10}\mathbb N)\} ,
\end{align}
Thus, if $\text{Im}\, w \ge 1$ then $L(w)=\{w\}$, and if $\text{Im}\, w<1$ then $L(w)$ is a 1-dimensional lattice with spacing $N^{-10}$ plus the point $w$. Obviously, we have $|L(w)|\le N^{10}$.

\begin{defn}[Stability of (\ref{stab_Dm1})] \label{def_stability}
We say that (\ref{stab_Dm1}) is stable on $\mathbf D$ if the following holds. Suppose that $N^{-2}|m_{1c}| \le \delta(w) \le (\log N)^{-1}|m_{1c}|$ for $w\in \mathbf D$ and that $\delta$ is Lipschitz continuous with Lipschitz constant $\le N^4$. Suppose moreover that for each fixed $E$, the function $\eta \mapsto \delta(E+i\eta)$ is non-increasing for $\eta>0$. Suppose that $u_1:\mathbf D\to \mathbb C$ is the Stieltjes transform of a positive integrable function. Let $w\in \mathbf D$ and suppose that for all $w'\in L(w)$ we have 
\begin{equation}\label{Stability0}
\left|\Upsilon(w,u_1)\right| \le \delta(w).
\end{equation}
Then
\begin{equation}
\left|u_1(w)-m_{1c}(w)\right|\le \frac{C\delta}{\sqrt{\kappa+\eta+\delta}},\label{Stability1}
\end{equation}
for some constant $C>0$ independent of $w$ and $N$.

We say that (\ref{stab_Dm1}) is stable on $\mathbf D_L$ if for $0\le \delta(w) \le (\log N)^{-1}|m_{1c}|$, (\ref{Stability0}) implies 
\begin{equation}
\left|u_1(w)-m_{1c}(w)\right|\le {C\delta},\label{StabilityL}
\end{equation}
for some constant $C>0$ independent of $w$ and $N$.
\end{defn}

This stability condition has previously appeared in \cite{isotropic,local_circular,Anisotropic}. In \cite{Anisotropic}, for example, the stability condition was established under various regularity assumptions. In the following lemma, we establish the stability on each regular domain.
The proof is presented in Appendix \ref{subsection_append3}. This lemma leaves the case $|w|^{1/2}+|z|^2 = o(1)$ alone. We will handle this case in a different way in Section \ref{section_smallwz}.

\begin{lem}\label{lemm_stability} Fix $\tau>0$ and let $\tau'>0$ be sufficiently small depending on $\tau$. Let $\tau \le ||z|^2-1|\le\tau^{-1}$.
\begin{itemize}
\item[Case 1]
Suppose the bulk component $[e_{2k},e_{2k-1}]$ is regular in the sense of Definition \ref{def_regular}. Then (\ref{stab_Dm1}) is stable on $\mathbf D_{k}^b(\zeta,\tau',N)$ in the sense of Definition \ref{def_stability}.

\item[Case 2]
(\ref{stab_Dm1}) is stable on $ \mathbf D^o(\zeta,\tau',N)$ in the sense of Definition \ref{def_stability}.

\item[Case 3]
Suppose $e_k\ne 0$ is a regular edge in the sense of Definition \ref{def_regular}. Then (\ref{stab_Dm1}) is stable on $\mathbf D_k^e(\zeta,\tau',N)$ in the sense of Definition \ref{def_stability}.

\item[Case 4]
Suppose $|z|^2\le 1-\tau$ and $e_{2L}=0$. If $|w|^{1/2}+|z|^2 \ge \epsilon$ for some constant $\epsilon>0$, then (\ref{stab_Dm1}) is stable on $\mathbf D_{2L}^e(\zeta,\tau',N)$ in the sense of Definition \ref{def_stability}.

\item[Case 5]
(\ref{stab_Dm1}) is stable on $ \mathbf D_L(\zeta)$ in the sense of Definition \ref{def_stability}.

\end{itemize}
\label{lemma_bulk}
\end{lem}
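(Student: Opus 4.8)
The plan is to reduce the stability statement to a quantitative analysis of the scalar equation $\Upsilon(w, m_1) = 0$ via the ``self-improving'' continuity argument that is standard in this circle of ideas (see \cite{isotropic, local_circular, Anisotropic}). Recall that by Lemma \ref{lemm_rho} the reduced equation (\ref{selfm_reduced}) has the form $f(\sqrt w, m) = 0$ with $f$ as in (\ref{eqn_def_fwm}); the first step is to translate $\Upsilon$-smallness into $f$-smallness, using $m = \sqrt w(1+m_1)$, $m_c = \sqrt w(1+m_{1c})$ and the explicit factorization of the cubic denominator $\sqrt w m^3 - (s_i + |z|^2)m^2 - \sqrt w |z|^2 m + |z|^4$ into $\sqrt w(m - a_i)(m - b_i)(m + c_i)$ supplied by Lemma \ref{lemm_tech_f}. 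On each regular domain the regularity conditions (\ref{regular1})--(\ref{regular2}) of Definition \ref{def_regular} and the bounds of Lemma \ref{lemm_m1_4case} guarantee that $m_c$ stays a fixed distance away from the poles $a_i, b_i, -c_i$, so $f$ is smooth near $(\sqrt w, m_c)$ with controlled derivatives; this is exactly the content needed to invoke the standard cubic-stability machinery.

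Next I would carry out the four characteristic cases of behavior of $\partial_m f$ at $m_c$, as is done e.g.\ in \cite[Appendix]{local_circular} or \cite[Section 6]{Anisotropic}:
\begin{itemize}
\item \textbf{Bulk (Case 1) and edge-at-zero-with $|z|<1$ under $\Im w \ge \tau'$ (part of Case 4):} here (\ref{estimate1_bulk}), (\ref{estimate11_0}) give $\Im m_{1c} \sim 1$, so $\partial_m f(\sqrt w, m_c)$ is bounded away from $0$; a direct implicit-function/contraction argument on the continuity-in-$\eta$ chain $L(w)$ gives $|u_1 - m_{1c}| \le C\delta$, which is stronger than (\ref{Stability1}).
\item \textbf{Outside the spectrum (Case 2):} by (\ref{estimate1_out}) one has $\Im m_{1,2c} \sim \eta$ and $\kappa \sim \mathrm{dist}(E,\mathrm{supp}\,\rho_{1c}) \gtrsim \tau'$, so again $|\partial_m f| \gtrsim \sqrt{\kappa + \eta} \sim 1$ and the bound follows as above.
\item \textbf{Regular nonzero edge (Case 3):} this is the genuinely delicate case. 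Near $e_k$, $\partial_m f(\sqrt w, m_c) \to 0$ like $\sqrt{\kappa + \eta}$ while $\partial_m^2 f(\sqrt{e_k}, m_c(e_k))$ is bounded below by (\ref{regular2}); the square-root behavior of $\rho_{1c}$ (Proposition \ref{prop_rho1c}(iii)) reflects a fold singularity. The standard treatment expands $f(\sqrt w, u_1) - f(\sqrt w, m_{1c})$ to second order, solving the resulting quadratic perturbation $\alpha (u - m_c)^2 + \beta (u - m_c) = O(\delta)$ with $|\beta| \sim \sqrt{\kappa + \eta}$, $|\alpha| \sim 1$, giving the dichotomy $|u - m_c| \lesssim \delta/\sqrt{\kappa+\eta}$ or $|u - m_c| \lesssim \sqrt{\delta}$. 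One then uses the continuity of $u_1$ along $L(w)$ — starting from large $\eta$ where $|u_1 - m_{1c}| = o(1)$ by the $\Im w \to \infty$ normalization and the Stieltjes-transform hypothesis — together with the a priori assumption $\delta \le (\log N)^{-1}|m_{1c}|$, to rule out the ``far'' branch of the dichotomy and conclude (\ref{Stability1}).
\item \textbf{Large $\eta$ (Case 5):} by (\ref{estimate1L}), $|m_{1c}| \sim |m_{2c}| \sim \eta^{-1}$ and (\ref{estimate2_large}) keeps the denominators of order $\eta$; here one checks directly that $\Upsilon$ is a contraction in a ball of radius $\sim |m_{1c}|$ around $m_{1c}$, giving $|u_1 - m_{1c}| \le C\delta$, which is (\ref{StabilityL}).
\end{itemize}

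The uniqueness of $m_{1c}$ from Lemma \ref{lemm_density} and the sign/analyticity information ($u_1$ a Stieltjes transform, $\Im m_{1c} \ge 0$, $wm_{1c} \in \mathbb C_+$) are what pin down the correct root of the perturbed cubic throughout — these replace an abstract contraction with a genuine selection principle. The main obstacle, as usual, is Case 3: one must verify that the perturbative expansion of $f$ is uniformly valid on $\mathbf D_k^e$ (no hidden cancellation in $\alpha$ or $\beta$, controlled error terms), which rests precisely on the nondegeneracy conditions (\ref{regular1}), (\ref{regular2}) and the non-closing-gap estimate (\ref{edge_gap}) from Lemma \ref{lemm_edge_reg}; the remaining cases are comparatively routine once the estimates of Lemma \ref{lemm_m1_4case} and Lemma \ref{cor_basicestimate} are in hand. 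The case $|w|^{1/2} + |z|^2 = o(1)$ is deliberately excluded here and handled separately in Section \ref{section_smallwz}, where the $m_{1,2c} \sim |w|^{-1/2}$ blow-up of (\ref{estimate12_0})--(\ref{estimate13_0}) forces a different scaling.
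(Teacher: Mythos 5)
Your overall skeleton matches the paper's proof: reduce to the equation $\alpha(u-m_c)^2 + \beta(u-m_c) = R$ with $\alpha$ built from the partial-fraction decomposition of $f$ and $\beta = \partial_m f(\sqrt w, m_c)$, bound $\alpha$ and $\partial_u\alpha$ using (\ref{estimate2_bulk}), then get the required lower bound on $|\beta|$ and finish with a fixed-point/continuity argument. Your treatment of Case 3 (dichotomy from a quadratic with $|\alpha|\sim 1$, $|\beta|\sim\sqrt{\kappa+\eta}$, selection of the correct root by continuity along $L(w)$ and by $m_c$ carrying the Stieltjes-transform structure) is exactly the paper's route.

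However, there is a genuine gap in how you dispose of Cases 1 and 2. You assert that ``$\Im m_{1c}\sim 1$, so $\partial_m f(\sqrt w, m_c)$ is bounded away from $0$,'' and that outside the spectrum ``$|\partial_m f|\gtrsim \sqrt{\kappa+\eta}\sim 1$.'' Neither implication is free here. The self-consistent equation $f(\sqrt w,m)=0$ is equivalent to a degree-$(3n+1)$ polynomial; unlike the Wigner/Marchenko--Pastur quadratic, a lower bound on $\Im m_c$ does not automatically translate into a lower bound on $|\partial_m f(\sqrt w, m_c)|$. This lower bound is in fact one of the main technical burdens of the appendix: the paper devotes two dedicated lemmas to it. For $\Im w\sim 1$ the argument runs through the algebraic identity
\[
\frac{(\sqrt{w}m_c-|z|^2)|z|^2}{m_c}\,b \;-\; \frac{1}{2}(m_c^2-|z|^2)(m_c a-\sqrt{w}b) \;=\; \frac{(|z|^2-\sqrt{w}m_c)(m_c^2+|z|^2)}{m_c},
\]
where $a=\partial_m f$, $b=\partial_{\sqrt w}f$, together with a separate treatment of the region $|m_c - i|z||$ small (this is Lemma \ref{lowerbound_beta1}); for $\Im w$ small the argument instead exploits the normalization
\[
\frac{1}{N}\sum_i l_i s_i\Bigl(\frac{A_i}{|m_c-a_i|^2}+\frac{B_i}{|m_c-b_i|^2}+\frac{C_i}{|m_c+c_i|^2}\Bigr)=1
\]
obtained by taking the imaginary part of $f(\sqrt E, m_c(E))=0$ on the real axis, and then controls $\Re\,\partial_m f$ from below by a complex-phase argument (Lemma \ref{lowerbound_beta2}). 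Similarly, in Case 2 the lower bound is obtained not through $\sqrt{\kappa+\eta}$ but via $|\partial_{\sqrt w} f(\sqrt E, m_c(E))|\ge 1$ (manifest from (\ref{eqn_partialw}) when $E,m_c(E)\in\mathbb R$) combined with a bound on $|\partial m_c/\partial\sqrt w|$ using $\mathrm{dist}(E,\mathrm{supp}\,\rho_{1c})\ge \tau'$. Without these arguments, the step from ``the density/edge-distance is regular'' to ``$\beta$ has a lower bound'' is undemonstrated, and it is precisely this step that the regularity conditions of Definition \ref{def_regular} are designed to supply, not a free consequence of $\Im m_{1c}\sim 1$. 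The remaining cases (4 with $|w|^{1/2}+|z|^2\gtrsim 1$, and 5) you describe correctly, and your observation that the $|w|^{1/2}+|z|^2=o(1)$ regime needs a different scaling is accurate.
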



\end{section}

\begin{section}{Entrywise local law when $T$ is diagonal} \label{section_weaklaw}

In this section we prove the entrywise local law and averaged local law in Theorem \ref{law_squareD} when $T$ is diagonal. The proof is similar to the previous proofs of entrywise locals laws in e.g. \cite{isotropic,principal, local_circular, Anisotropic}. We basically follow the ideas in \cite{local_circular}, and we will provide necessary details for the parts that are different from the previous proofs.

The main novel observation of this section is that the self-consistent equations (\ref{eq_self1}) and (\ref{eq_self2}) can be ``derived" from the random matrix model by an application of Schur's complement formula. It is helpful to give a heuristic argument here. We introduce the conditional expectation
\[\bbE_{[i]}[\cdot]:=\bbE[\cdot\mid H^{[i]}],\]
i.e. the partial expectation in the randomness of the $i$ and $\bar i$-th rows and columns of $H$. For the diagonal $G_{[ii]}$ group, we ignore formally the random fluctuations in (\ref{eq_res1}) to get that
\begin{align}
G_{[ii]}^{ - 1}  & \approx \mathbb E_{[i]} H_{[ii]}-\sum_{k,l \ne i} \mathbb E_{[i]} \left(H_{[ik]}G_{[kl]}^{[i]}H_{[li]}\right) = \left(\begin{matrix}-w& -w^{1/2}z\\
 -w^{1/2}\bar z& -w\\
\end{matrix}\right)-\frac w N\sum_{k} \left(\begin{matrix} |d_i|^2G^{[i]}_{\bar k \bar k} & 0 \\
0& |d_k|^2G^{[i]}_{k k}\\
\end{matrix}\right) \nonumber\\
& = \left(\begin{matrix}-w& -w^{1/2}z\\
 -w^{1/2}\bar z & -w\\
\end{matrix}\right)-w \left(\begin{matrix} |d_i|^2 m_{2}& 0\\
0& m_{1}\\
\end{matrix}\right), \label{selfcons}
\end{align}
where we use the definition of $m_1$ and $m_2$ in (\ref{def_M}).
The $11$ entry of (\ref{selfcons}) gives the equation
\begin{align}\label{self_dervGii}
G_{ii}  \approx \frac{{ - 1 - m_{1} }}{{w\left( {1 + |d_i |^2 m_{2} } \right)\left( {1 + m_{1} } \right) - \left| z \right|^2 }} ,
\end{align}
from which we get that
$$ G_{ii} \left[-w\left(1 + |d_i |^2 m_2\right) + \frac{|z|^2}{1+m_1}\right] \approx 1.$$
Summing over $i$ and using that $N^{-1}\sum_i G_{ii} = N^{-1} \sum_\mu G_{\mu\mu}=m_2 $, the above equation becomes
$$ -w\left(m_2 + m_1 m_2\right) + \frac{|z|^2 m_2}{1+m_1} \approx 1,$$
which gives (\ref{eq_self1}). Multiplying (\ref{self_dervGii}) with $|d_i|^2$ and summing over $i$, we get the self-consistent equation (\ref{eq_self2}). In this section we give a justification of these approximations.

Before we start the proof, we make the following remark. In this section we mainly focus on the domain $\mathbf D$. On the domain $\mathbf D_L$, the proofs are much simpler and we only describe them briefly. The parameter $z$ can be either inside or outside of the unit circle. Recall Lemmas \ref{lemm_m1_4case} and \ref{lemm_stability}, the domain $\mathbf D$ of $w$ can be divided roughly into four cases: $w$ near a {\it nonzero} regular edge, $w\to 0$, $w$ in the bulk, or $w$ outside the spectrum. In this section we will only consider the case $|z|^2 \le 1-\tau$ since it covers all four different behaviors. Notice in this case $|m_{1,2c}(w)| \sim |w|^{-1/2}$ for $w$ in any compact set of $\mathbb C_+$ by Proposition \ref{prop_roughbound}. Also due to the remark above Lemma \ref{lemm_stability}, in Sections \ref{subsection_selfeqn}-\ref{subsection_proofstrong}, we assume $|w|^{1/2}+|z|^2 \ge c$ for some $c>0$. We will handle the $|w|^{1/2}+|z|^2=o(1)$ case in Section \ref{section_smallwz}.

\begin{subsection}{The self-consistent equations}\label{subsection_selfeqn}
To begin with, we prove the following weak version of the entrywise local law.
\begin{prop}[Weak entrywise law]\label{thm_weaklaw}
Fix $\left|z \right|^2\le 1-\tau$ and a small constant $c>0$. Suppose Assumption \ref{main_assump} holds, $N=M$ and $T\equiv D:=diag(d_1,...,d_N)$. Then for any regular domain $\bS\subset \bD$,
\begin{equation}
\mathop {\max }\limits_{i,j \in \sI_1} \left\| {\left( {G(w) - \Pi(w) } \right)_{[ij]} } \right\| \prec \frac{1}{|w|^{1/2}}\left(\frac{|w|^{1/2}}{N\eta}\right)^{1/4}
\end{equation}
for all $w\in \bS$ such that $|w|^{1/2}+|z|^2 \ge c$. For $w\in \bD_L$, we have
\begin{equation}\label{weak_lawL}
\mathop {\max }\limits_{i,j \in \sI_1} \left\| {\left( {G(w) - \Pi(w) } \right)_{[ij]} } \right\| \prec \frac{1}{\eta}\sqrt{\frac{1}{N}}.
\end{equation}
\end{prop}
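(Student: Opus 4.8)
\textbf{Proof proposal for Proposition \ref{thm_weaklaw}.}

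The plan is to follow the standard self-consistent equation bootstrap, adapted to the block structure $G_{[ij]}$ and to the fact that $T\equiv D$ is diagonal. First I would fix a regular domain $\mathbf S\subset\mathbf D$ and introduce the random control parameter $\Lambda\equiv\Lambda(w):=\max_{i,j}\|(G-\Pi)_{[ij]}\|$ together with its off-diagonal counterpart $\Lambda_o:=\max_{i\ne j}\|G_{[ij]}\|$ and the averaged quantities $\theta:=|m_1-m_{1c}|+|m_2-m_{2c}|$. The starting point is the group resolvent identity \eqref{eq_res1}: writing $G_{[ii]}^{-1}=H_{[ii]}-\sum_{k,l\ne i}H_{[ik]}G_{[kl]}^{[i]}H_{[li]}$, one splits the double sum into its $\mathbb E_{[i]}$-conditional expectation, which by the heuristic \eqref{selfcons} produces exactly $(\pi_{[i]})^{-1}$ up to replacing $G^{[i]}$-averages by $G$-averages (controlled by the minor expansion \eqref{eq_res3}--\eqref{eq_res4} and \eqref{resolvent8}), plus a fluctuation term $Z_{[i]}$. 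The fluctuation $Z_{[i]}$ consists of the diagonal large-deviation pieces $\sum_k(|d_i|^2Y^{(i)}\cdots)$ and the genuinely off-diagonal piece $\sum_{k\ne l}H_{[ik]}G^{[i]}_{[kl]}H_{[li]}$; both are estimated by Lemma \ref{large_deviation}, giving $\|Z_{[i]}\|\prec(\Im m_{2c}/(N\eta))^{1/2}+\dots$ via the Ward-identity sums \eqref{eq_gsq1}--\eqref{eq_gsq4}. This yields the \emph{approximate} self-consistent system $G_{[ii]}^{-1}=(\pi_{[i]})^{-1}+O_\prec(\Lambda_o^2+\Psi+\text{errors})$, and after inverting (using the uniform lower bounds \eqref{estimate2_bulk} on the relevant determinant, so $\pi_{[i]}$ and $\pi_{[i]c}$ are bounded by $|w|^{-1/2}$ per Lemma \ref{cor_basicestimate}), a self-consistent equation for $m_1$ of the form $|\Upsilon(w,m_1)|\prec \Psi+\text{(quadratic in }\Lambda)$.

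Next I would feed this into the stability statement Lemma \ref{lemm_stability}: on each regular domain, $|\Upsilon(w,u_1)|\le\delta$ (verified also on the discrete lattice $L(w)$ via the crude a-priori bound $\|G\|\le\eta^{-1}$ from \eqref{eq_gbound} at the top $\eta\sim 1$ scale, where $\Lambda$ is automatically small, then propagating down) implies $|m_1-m_{1c}|\le C\delta/\sqrt{\kappa+\eta+\delta}$, and likewise $\theta$ is controlled. Together with the off-diagonal bound — obtained from \eqref{eq_res2} by the same large-deviation estimates, giving $\Lambda_o\prec\Psi^{1/2}\cdot|w|^{-1/4}$ roughly, i.e. the square-root of the claimed rate — one closes a self-improving inequality of the schematic shape $\Lambda\prec |w|^{-1/2}(|w|^{1/2}/(N\eta))^{1/2}+\Lambda^2$, whose stable solution in the regime $N\eta\gg 1$ is the stated $\Lambda\prec |w|^{-1/2}(|w|^{1/2}/(N\eta))^{1/4}$. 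The continuity/bootstrap argument runs in $\eta$ from the trivial regime $\eta\sim\zeta^{-1}$ (or $\mathbf D_L$, where one simply uses $\|G\|\le\eta^{-1}$, $\|\Pi\|\lesssim\eta^{-1}$ from \eqref{estimate_PiwL}, and the much easier large-deviation bounds, giving \eqref{weak_lawL} directly) downward to the edge of $\mathbf D$, using a standard net in $w$ of polynomial size plus the Lipschitz bound $\|\partial_w G\|\le\eta^{-2}$ from \eqref{eq_gbound}.

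The main obstacle I anticipate is not the abstract structure but the bookkeeping of the \emph{non-identically-distributed} entries of $DX$ together with the singular scaling near $w\to 0$ and $|z|<1$: the bounds $|m_{1,2c}|\sim|w|^{-1/2}$ and $\|\pi_{[i]c}\|\lesssim|w|^{-1/2}$ mean that every factor of $G$ or $\Pi$ carries a $|w|^{-1/2}$, so the errors $\Lambda_o^2$ and the large-deviation terms must be tracked with the correct power of $|w|$ to land on the stated $|w|^{-1/2}(|w|^{1/2}/(N\eta))^{1/4}$ rather than something weaker; this is exactly why the proposition is phrased with the weight $|w|^{-1/2}$ out front and why the restriction $|w|^{1/2}+|z|^2\ge c$ appears (the genuinely degenerate corner $|w|^{1/2}+|z|^2=o(1)$, where Lemma \ref{lemm_stability} offers nothing, is deferred to Section \ref{section_smallwz}). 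A secondary technical point is verifying the a-priori entrywise bound needed to even start the minor expansions — one uses the deterministic operator-norm bound \eqref{eq_gbound} plus $\|Y\|\prec1$ from \eqref{norm_upperbound} to know $\Lambda\le\eta^{-1}$ crudely, then upgrades; and one must check the heuristic replacement of $G^{[i]}$-averages by $m_{1,2}$ costs only $O_\prec(\Lambda^2/\eta)$-type errors which are absorbed. I would organize the details exactly as in \cite[Section 5]{local_circular}, flagging only the diagonal-$D$ modifications.
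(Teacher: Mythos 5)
Your overall scaffolding — resolvent identity for $G_{[ii]}^{-1}$, splitting into $\mathbb E_{[i]}$-expectation plus fluctuation $Z_{[i]}$, large-deviation estimates via the Ward identities, the stability Lemma~\ref{lemm_stability}, continuity in $\eta$ down from the trivial large-$\eta$ scale, and the separate treatment of $\mathbf D_L$ — is correct and matches the paper's Sections~\ref{subsection_selfeqn}--\ref{subsection_weak_proof}. However, your explanation of where the $1/4$ exponent comes from is wrong in a way that matters.

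You assert that one closes a self-improving inequality of the schematic form $\Lambda\prec X+\Lambda^2$ with $X=|w|^{-1/2}(|w|^{1/2}/(N\eta))^{1/2}$, whose ``stable solution'' is $\Lambda\prec X^{1/2}$. That is not how such an inequality resolves: for $X=o(1)$ the stable fixed point of $\Lambda\le X+\Lambda^2$ is $\Lambda\lesssim X$, not $\Lambda\lesssim\sqrt X$. If the error really had that shape you would get the \emph{better} rate $(N\eta)^{-1/2}$ rather than $(N\eta)^{-1/4}$, and in fact such a quadratic self-improvement does appear in the paper, but only later, in the proof of the \emph{strong} entrywise law via the fluctuation-averaging Lemma~\ref{fluc_aver} (which upgrades $|\Upsilon|\prec\Psi_\theta$ to $|\Upsilon|\prec\Psi_\theta^2+\dots$). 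For the weak law one has only the linear estimate $|\Upsilon(w,m_1)|\prec\Psi_\theta$ from Lemma~\ref{selfcons_lemm2}.

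The actual source of the $1/4$ power is the degeneracy of the stability estimate near spectral edges. Lemma~\ref{lemm_stability} gives $|u_1-m_{1c}|\le C\delta/\sqrt{\kappa+\eta+\delta}$; when $\kappa+\eta\lesssim\delta$ this is $|u_1-m_{1c}|\le C\sqrt\delta$, and with $\delta\sim\Psi_\theta\sim(N\eta)^{-1/2}$ (up to $|w|$ weights) this is precisely the $(N\eta)^{-1/4}$ loss. The paper applies this first to the averaged quantity $\theta$ (via the function $v(w)$ and the gap/continuity argument in Section~\ref{subsection_weak_proof}), obtains $\theta\prec|w|^{-1/2}(|w|^{1/2}/(N\eta))^{1/4}$, and only then deduces the entrywise bound $\|(G-\Pi)_{[ij]}\|\prec\Psi_\theta+\theta$ — at which point the off-diagonal part also inherits the same $1/4$ rate, not a square-root-of-the-rate as you suggest. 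Relatedly, your claim $\Lambda_o\prec\Psi^{1/2}|w|^{-1/4}$ is not what comes out of the off-diagonal expansion: the paper's (\ref{est_offd}) shows $\Lambda_o\prec\Psi_\theta$, the same control parameter as the diagonal, and the $1/4$ exponent enters through $\theta$, not through any intrinsic square-root in $\Lambda_o$. So your proof sketch as written would not reproduce the stated rate; you would need to replace the quadratic-iteration heuristic with the correct edge-stability mechanism, prove the averaged bound on $\theta$ first, and only then read off the entrywise law.
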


For the purpose of proof, we define the following random control parameters.

\begin{defn}[Control parameters]
Suppose $N=M$ and $T\equiv D:=diag(d_1,...,d_N)$. We define
\begin{equation}\label{eqn_randomerror}
\Lambda : = \mathop {\max }\limits_{i,j \in \sI_1} \left\| {\left( {G - \Pi } \right)_{[ij]} } \right\|,\ \ \Lambda _o : = \mathop {\max }\limits_{i \ne j \in \sI_1} \left\| {\left( {G - \Pi } \right)_{[ij]} } \right\|.
\end{equation}
For $J\subseteq \mathcal I$, define the averaged variables $m^{(J)}_{1,2}$ ($m_{1,2}^{[J]}$) by replacing $G$ in (\ref{def_M}) with $G^{(J)}$ ($G^{[J]}$), i.e.
\begin{equation}
m^{(J)}_1:=\frac{1}{N}\sum_{i\notin J} |d_i|^2 G^{(J)}_{ii}, \ \ m^{(J)}_2:=\frac{1}{N}\sum_{\mu \notin J} G^{(J)}_{\mu\mu}. \label{def_M2}
\end{equation}
The averaged error and the random control parameter are defined as 
\begin{equation}\label{eq_defpsitheta}
\theta:=|m_1-m_{1c}|+|m_2-m_{2c}|, \ \ \Psi _\theta  : = \sqrt {\frac{{\Im \left(m_{1c} + m_{2c} \right)  + \theta }}{{N\eta }}} + \frac{1}{N\eta}.
\end{equation}
\end{defn}

\noindent{\it{Remark:}} By (\ref{assm3}), we immediately get that
\begin{equation}\label{eq_Imm12}
\tau \Im\, m^{(J)}_1 \le \Im\, m^{(J)}_2 \le \tau^{-1}\Im\, m^{(J)}_2,
\end{equation}
and $\theta=O(\Lambda)$, since $|m_1-m_{1c}|\le \tau^{-1}\Lambda, \ \ |m_2-m_{2c}|\le \Lambda.$

\vspace{5pt}

We introduce the $Z$ variables
\begin{equation*}
  Z_{[i]}^{[J]}:=(1-\bbE_{[i]})\left(G_{[ii]}^{[J]}\right)^{-1}.
\end{equation*}
By the identity (\ref{eq_res1}) we have
\begin{equation}
G_{[ii]}^{-1}  =  \mathbb E_{[i]} G_{[ii]}^{-1} + Z_{[i]} = \left( {\begin{array}{*{20}c}
   { - w - w\left|d_i\right|^2m^{[i]}_2 } & {-w^{1/2}z}  \\
   {-w^{1/2}\bar z} & { - w - wm^{[i]}_1} \end{array}} \right) + Z_{[i]}, \label{resolvent_Gii}
\end{equation}
where
\begin{equation}\label{eqn_Zdef}
Z_{[i]} = w \left( {\begin{array}{*{20}c}
   { |d_i|^2 m^{[i]}_2 - |d_i|^2\left( {XG^{\left[ {i} \right]} X^\dag  } \right)_{ii} } & {w^{-1/2} d_i X_{i\bar i} -   \left( {DXG^{\left[ {i} \right]} DX} \right)_{i\bar i} }  \\
   {w^{-1/2} \bar d_i X_{\bar ii}^\dag - \left( {X^\dag  D^\dag G^{\left[ {i} \right]} X^\dag  D^\dag } \right)_{\bar ii} } & { m^{[i]}_1 - \left( {X^\dag  D^\dag G^{\left[ {i} \right]} D X} \right)_{\bar i\bar i} }  \\
\end{array}} \right).
\end{equation}
\begin{lem}
For $J\subseteq \mathcal I_1$, the following crude bound on the difference between $m_a$ and $m_a^{[J]}$ ($a=1,2$) holds:
\begin{equation}
\left| {m_a  - m_a^{\left[ J \right]} } \right| \le \frac{{C\left| J \right|}}{{N\eta }},\ \ a= 1,2,\label{m_T}
\end{equation}
where $C=C(\tau)$ is a constant depending only on $\tau$.
\end{lem}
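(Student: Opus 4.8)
The plan is to reduce the whole statement to one deterministic estimate about how the trace of a weighted resolvent of a Hermitian matrix changes under deletion of a row and column. The first step is to Hermitize: writing $\widehat H := \left(\begin{smallmatrix} 0 & Y\\ Y^\dagger & 0\end{smallmatrix}\right)$, which is Hermitian, one has $H(w) = \sqrt w\,\widehat H - wI$, hence
\[
G(w) = H(w)^{-1} = w^{-1/2}\widehat R(\sqrt w), \qquad \widehat R(\zeta):=(\widehat H-\zeta)^{-1},
\]
a genuine Hermitian resolvent at the spectral parameter $\zeta=\sqrt w$ with $\Im \zeta>0$; the same identity holds after deleting the rows and columns in $[J]$, which for $\widehat H$ is an honest principal-submatrix operation, giving $G^{[J]}(w)=w^{-1/2}\widehat R^{[J]}(\sqrt w)$. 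Since in the diagonal case $m_1=\frac1N\sum_{i\in\mathcal I_1}|d_i|^2 G_{ii}$ and $m_2=\frac1N\sum_{\mu\in\mathcal I_2}G_{\mu\mu}$, and similarly for the minors (cf. (\ref{def_M2})), I can write $Nm_a=w^{-1/2}\,\text{Tr}(B_a\widehat R(\sqrt w))$ and $Nm_a^{[J]}=w^{-1/2}\,\text{Tr}(B_a^{[J]}\widehat R^{[J]}(\sqrt w))$, where $B_1$ is the $\mathcal I\times\mathcal I$ matrix carrying $\Sigma$ on the $\mathcal I_1$-block and $0$ elsewhere, $B_2=\mathrm{diag}(0,I)$, and $B_a^{[J]}$ is the corresponding principal submatrix; by (\ref{assm3}), $\|B_a\|\le\tau^{-1}$.

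Next I would prove the key deterministic lemma: for any Hermitian $M$, any $\zeta$ with $\Im\zeta>0$, $R=(M-\zeta)^{-1}$, any index $r$, and any matrix $B$,
\[
\bigl|\text{Tr}(BR)-\text{Tr}(B^{(r)}R^{(r)})\bigr|\le \frac{C\|B\|}{\Im\zeta}.
\]
This uses the Schur-complement identity $R^{(r)}_{st}=R_{st}-R_{sr}R_{rt}/R_{rr}$ (as in (\ref{resolvent8}), legitimate since $\Im R_{rr}=(\Im\zeta)\|Re_r\|^2>0$): expanding the trace difference yields $(BR)_{rr}+(RB)_{rr}-B_{rr}R_{rr}$, each summand bounded by $\|B\|\,\|R\|\le\|B\|/\Im\zeta$, plus $R_{rr}^{-1}\sum_{s,t\ne r}B_{ts}R_{sr}R_{rt}$. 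Writing the latter as a quadratic form and using that $R$ is normal, so $\sum_s|R_{sr}|^2=\sum_s|R_{rs}|^2=\Im R_{rr}/\Im\zeta$, together with $\Im R_{rr}\le|R_{rr}|$, bounds it by $\|B\|/\Im\zeta$. Applying this $2|J|$ times, peeling off one index of $[J]$ at a time (the intermediate matrices are again Hermitian principal submatrices, and the restricted weights still have operator norm $\le\tau^{-1}$), I obtain $|\text{Tr}(B_a\widehat R(\sqrt w))-\text{Tr}(B_a^{[J]}\widehat R^{[J]}(\sqrt w))|\le C\tau^{-1}|J|/\Im\sqrt w$.

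The final step is to convert $\Im\sqrt w$ back into $\eta$: since $\Re\sqrt w>0$ and $\eta=\Im w=2\,\Re(\sqrt w)\,\Im(\sqrt w)\le 2|w|^{1/2}\,\Im\sqrt w$, we get $|w|^{-1/2}/\Im\sqrt w\le 2/\eta$, and therefore $|m_a-m_a^{[J]}|\le \frac1N|w|^{-1/2}\cdot C\tau^{-1}|J|/\Im\sqrt w\le C'|J|/(N\eta)$, which is (\ref{m_T}).

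I expect the only genuine subtlety to be the choice of which resolvent to expand. Working directly with the non-Hermitian $G$ — e.g. expanding $\text{Tr}(\bar\Sigma G)-\text{Tr}(\bar\Sigma^{[i]}G^{[i]})$ via (\ref{resolvent8}) — leaves factors $1/G_{ii}$ that cannot be controlled at this point (this crude bound is itself one of the ingredients that eventually yields control of $G_{ii}$). Routing through the Hermitian resolvent $\widehat R$ removes this obstacle entirely, since for a Hermitian resolvent $\Im R_{rr}/|R_{rr}|\le 1$ holds unconditionally, so no a priori estimate on the diagonal entries is needed.
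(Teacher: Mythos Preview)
Your proof is correct. The Hermitization $G(w)=w^{-1/2}\widehat R(\sqrt w)$ is valid, the deterministic one-step deletion lemma for $\text{Tr}(BR)$ is proved correctly (the Ward identity $\sum_s|R_{sr}|^2=\Im R_{rr}/\Im\zeta$ for a Hermitian resolvent, together with $\Im R_{rr}\le|R_{rr}|$, does the job), and the conversion $|w|^{-1/2}/\Im\sqrt w\le 2/\eta$ is fine.

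The paper, however, does \emph{not} pass through the $2N\times 2N$ Hermitian matrix $\widehat H$. It expands $m_1-m_1^{(i)}$ directly via the resolvent identity (\ref{resolvent8}) for $G$ and then invokes the Ward-type identities (\ref{eq_gsq1})--(\ref{eq_gsq4}) of Lemma~\ref{lemma_Im}, which are tailored to the block structure of $G$: for $i\in\mathcal I_1$ one has $\sum_{k\in\mathcal I_1}|G_{ik}|^2=\Im G_{ii}/\eta$, so
\[
|m_1-m_1^{(i)}|\le \frac{\tau^{-1}}{N|G_{ii}|}\sum_{k\in\mathcal I_1}|G_{ik}|^2=\frac{\tau^{-1}}{N\eta}\,\frac{\Im G_{ii}}{|G_{ii}|}\le\frac{\tau^{-1}}{N\eta},
\]
and analogously for the removal of $\bar i$ using (\ref{eq_gsq3}). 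This is exactly the same mechanism you use (Ward identity plus $\Im R_{rr}/|R_{rr}|\le 1$), just applied to the blocks $G_L,G_R$ rather than to $\widehat R(\sqrt w)$; it also avoids the detour through $\Im\sqrt w$ and the conversion back to $\eta$.

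So your last paragraph contains a misconception: the ``factor $1/G_{ii}$ that cannot be controlled'' is in fact controlled unconditionally, because $G_{ii}=(G_L)_{ii}$ is a diagonal entry of a genuine Hermitian resolvent, and the Ward identity puts $\Im G_{ii}$ in the numerator. No a priori bound on $G_{ii}$ is needed in the paper's direct approach either.
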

\begin{proof}
For $i\in\sI_1$, we have
\begin{align}
|m_1-m_1^{(i)}|& =\frac{1}{N}\left|\sum_{k\in\sI_1}  |d_k|^2\frac{G_{ki}G_{ik}}{G_{ii}}\right| \le \frac{\tau^{-1}}{N|G_{ii}|} \sum_{k\in\sI_1} |G_{ik}|^2 = \frac{\tau^{-1}}{N\eta}\frac{\Im \, G_{ii}}{\left|G_{ii}\right|}\le \frac{\tau^{-1}}{N\eta}  \label{rough_boundmi}
\end{align}
where in the first step we use (\ref{resolvent8}), in the second and third steps the equality (\ref{eq_gsq2}). Similarly, using (\ref{resolvent8}) and (\ref{eq_gsq3}) we get
\begin{align*}
|m_1^{(i)}-m_1^{(i\bar i)}| & = \frac{1}{N}\left|\sum_{k\in\sI_1}|d_k|^2\frac{G^{(i)}_{k\bar i}G^{(i)}_{\bar ik}}{G^{(i)}_{\bar i\bar i}}\right| \le \frac{\tau^{-1}}{N|G^{(i)}_{\bar i\bar i}|} \left( \frac{{G}^{(i)}_{\bar i\bar i}}{|w|}  + \frac{\bar w}{\left| w \right|} \frac{{\Im \, G^{(i)}_{\bar i\bar i} }}{\eta }\right)   \le \frac{2\tau^{-1}}{N\eta}.
\end{align*}
By induction on the indices in $[J]$, we can prove (\ref{rough_boundmi}). The proof for $m_2$ is similar.
\end{proof}

\begin{lem}\label{Z_lemma}
Suppose $|z|^2 \le 1-\tau$. For $i\in \mathcal I_1$, we have
\begin{align}
& |\left(Z_{[i]}\right)_{11}| \prec  \left|w\right|\sqrt {\frac{{\Im \, m_2^{\left[i\right]} }}{{N\eta }}}, \ \  |\left(Z_{[i]}\right)_{22}| \prec  \left|w\right|\sqrt {\frac{{\Im\, m_1^{\left[i\right]} }}{{N\eta }}},\label{Z_lemma1}\\
& |\left(Z_{[i]}\right)_{st}| \prec \left|w\right|\left(\frac{\left|w\right|^{-1/2}}{\sqrt{N}}+ \sqrt {\frac{|m_1^{\left[ i \right]}|}{{N \left|w\right|}}}+\sqrt {\frac{{\Im\, m_1^{\left[ i \right]} }}{{N\eta }}}\right) \text{ for } s\ne t \in \{1,2\}, \label{Z_lemma2}
\end{align}
uniformly in $w\in \mathbf D \cup \mathbf D_L$. In particular, these imply that
\begin{equation}
Z_{[i]} \prec |w| \Psi_\theta, \label{Zestimate}
\end{equation}
uniformly in $w\in \mathbf D$, and
\begin{equation}
Z_{[i]} \prec |w| (N\eta)^{-1/2}, \label{ZestimateL}
\end{equation}
uniformly in $w\in \mathbf D_L$. 
\end{lem}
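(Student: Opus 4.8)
The starting point is the explicit formula \eqref{eqn_Zdef}, which exhibits each of the four entries of $Z_{[i]}$ as $w$ times the fluctuation of a quadratic form (on the diagonal) or of a linear\,$+$\,bilinear expression (off the diagonal) in the $i$-th row $X_{i\cdot}=(X_{i\mu})_{\mu\in\sI_2}$ and the $\bar i$-th column $X_{\cdot\bar i}=(X_{k\bar i})_{k\in\sI_1}$ of $X$, with ``coefficients'' given by blocks of $G^{[i]}$ (possibly conjugated by $D$). Since $H^{[i]}$, hence $G^{[i]}$, does not involve the $i$-th row or the $\bar i$-th column of $X$, the plan is to condition on $H^{[i]}$ (work under $\bbE_{[i]}$), so that $G^{[i]}$ becomes deterministic while $X_{i\cdot}$ and $X_{\cdot\bar i}$ are still vectors of independent mean-zero entries of variance $1/N$ by \eqref{assm1}. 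On this conditioning the large deviation bounds of Lemma \ref{large_deviation} apply, and since the resulting estimates hold with probability $\ge 1-N^{-D}$ for every realization of $H^{[i]}$, they hold unconditionally. The entries indexed $\bar i$ in $X_{i\cdot}$ and $i$ in $X_{\cdot\bar i}$ are harmless, the corresponding row/column of $G^{[i]}$ being zero; in particular the bilinear term in $(Z_{[i]})_{12},(Z_{[i]})_{21}$ runs over two genuinely disjoint, hence independent, families.

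For the diagonal entries, writing the quadratic forms as $X_{i\cdot}^\dag\,G^{[i]}\,X_{i\cdot}$ (restricted to the $\sI_2\times\sI_2$ block) and $X_{\cdot\bar i}^\dag\,D^\dag G^{[i]}D\,X_{\cdot\bar i}$ (restricted to the $\sI_1\times\sI_1$ block), the conditional expectations are exactly $m_2^{[i]}$ resp.\ $m_1^{[i]}$ by \eqref{assm1} and \eqref{def_M2}, and Lemma \ref{large_deviation} bounds the fluctuations by $N^{-1}$ times the Hilbert--Schmidt norm of the corresponding (weighted) block. For the off-diagonal entries one separates the explicit linear piece $w^{-1/2}d_i X_{i\bar i}$, bounded by $|w|^{-1/2}N^{-1/2}$ via \eqref{assm2}, from the bilinear piece, which Lemma \ref{large_deviation} again bounds by $N^{-1}$ times the Hilbert--Schmidt norm of the weighted $\sI_1\times\sI_2$ block of $G^{[i]}$. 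All prefactors $|d_i|$ are $\le\tau^{-1/2}$ by \eqref{assm3} and get absorbed into constants.

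The remaining input is the evaluation of those Hilbert--Schmidt sums through the deterministic identities of Lemma \ref{lemma_Im} applied to $G^{[i]}$: by \eqref{eq_gsq1}, $\sum_{\mu,\nu\in\sI_2}|G^{[i]}_{\mu\nu}|^2=N\eta^{-1}\Im m_2^{[i]}$; by \eqref{eq_gsq2} together with \eqref{assm3} and the positivity $\Im G^{[i]}_{jj}\ge 0$, $\sum_{j,k\in\sI_1}|d_j|^2|G^{[i]}_{jk}|^2\lesssim N\eta^{-1}\Im m_1^{[i]}$; and summing \eqref{eq_gsq3}--\eqref{eq_gsq4} over the standard basis with weights $|d_j|^2$ gives
\[
\sum_{j\in\sI_1,\,\mu\in\sI_2}|d_j|^2|G^{[i]}_{j\mu}|^2 = N|w|^{-1}m_1^{[i]} + \conj w\,|w|^{-1}N\eta^{-1}\Im m_1^{[i]}.
\]
Inserting these into the previous step yields \eqref{Z_lemma1} and \eqref{Z_lemma2} uniformly on $\mathbf D\cup\mathbf D_L$. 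To obtain the consolidated bounds, on $\mathbf D$ one uses \eqref{m_T} to replace $m_{1,2}^{[i]}$ by $m_{1,2}$ at cost $O((N\eta)^{-1})$ and then by $m_{1,2c}$ at cost $\theta$; since $\Im m_{1c}\sim\Im m_{2c}$ in every case of Lemma \ref{lemm_m1_4case}, the terms carrying $\Im m_{1,2}^{[i]}$ are at once $\prec|w|\Psi_\theta$, and for the two remaining terms of \eqref{Z_lemma2} a short case check over the subdomains of $\mathbf D$---using the standing hypothesis $|z|^2\le 1-\tau$ so that $|m_1^{[i]}|\sim|w|^{-1/2}$ (Proposition \ref{prop_roughbound}), the edge/bulk/origin estimates of Lemma \ref{lemm_m1_4case} (in particular $\Im m_{1c}\sim|w|^{-1/2}$ near $w=0$, and $\Im m_{1c}\gtrsim\eta|m_{1c}|^2$ since $m_{1c}$ is the Stieltjes transform of a measure of mass one), and $\eta\le|w|$ on $\mathbf D$---shows they too are $\prec|w|\Psi_\theta$, giving \eqref{Zestimate}. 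On $\mathbf D_L$, where $\eta\gtrsim 1$, $|w|\sim\eta$ and $\|G^{[i]}\|\lesssim\eta^{-1}$ by \eqref{eq_gbound}, every Hilbert--Schmidt sum above is $\lesssim N\eta^{-2}$ and the linear piece is $O(|w|^{1/2}N^{-1/2})$, so each entry is $\prec|w|N^{-1}(N\eta^{-2})^{1/2}\lesssim|w|(N\eta)^{-1/2}$, which is \eqref{ZestimateL}.

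The only step that is not pure bookkeeping is the last one: matching the two terms of \eqref{Z_lemma2} not proportional to $\Im m_1^{[i]}$ against $|w|\Psi_\theta$ in the small-$|w|$ regime $|w|\sim N^{-2+2\zeta}$, where one must spend the sharper lower bound $\Psi_\theta\gtrsim(\Im m_{1c}/(N\eta))^{1/2}$ with $\Im m_{1c}\sim|w|^{-1/2}$ together with $\eta\le|w|$. Everything else is a direct combination of the resolvent identity \eqref{eq_res1}/\eqref{resolvent_Gii}, the large deviation bounds of Lemma \ref{large_deviation}, and the contraction identities of Lemma \ref{lemma_Im}, so the write-up should be short once the relevant block of $G^{[i]}$ and its $|d_\cdot|^2$-weighting have been identified in each of the four entries of $Z_{[i]}$.
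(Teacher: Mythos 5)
Your proposal is correct and follows essentially the paper's own argument: condition on $H^{[i]}$, apply the large deviation bounds of Lemma \ref{large_deviation}, evaluate the resulting Hilbert--Schmidt norms with the contraction identities \eqref{eq_gsq1}--\eqref{eq_gsq4} of Lemma \ref{lemma_Im} applied to $G^{[i]}$, and then run the same $|w|\sim 1$ versus $|w|\to 0$ dichotomy (with $\eta\le|w|$, $\eta\lesssim\Im m_{1c}$, and the replacement of $m_{1,2}^{[i]}$ by $m_{1,2c}$ at cost absorbed into $\Psi_\theta$) to close \eqref{Zestimate}, and the cruder bound $\|G^{[i]}\|\lesssim\eta^{-1}$ on $\mathbf D_L$ for \eqref{ZestimateL}. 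The only small slip is that the HS sum controlling $(Z_{[i]})_{22}$ carries a weight $|d_j|^2|d_k|^2$ rather than $|d_j|^2$, but this is immaterial because of the two-sided bound \eqref{assm3}.
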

\begin{proof}
Apply the large deviation Lemma \ref{large_deviation} to $Z_{[i]}$ in (\ref{eqn_Zdef}), we get that
\begin{align*}
\left| \frac{\left(Z_{[i]}\right)_{11} }{w} \right| \prec & \frac{1 }{N}\left[ {\left( {\sum\limits_\mu {\left| {G_{\mu\mu}^{[i]} } \right|^2 } } \right)^{1/2}  + \left( {\sum\limits_{\mu \ne \nu} {\left| {G_{\mu\nu}^{[i]} } \right|^2 } } \right)^{1/2} } \right] \le \frac{C }{N}\left( {\sum\limits_{\mu,\nu} {\left| {G_{\mu\nu}^{[i]} } \right|^2 } } \right)^{1/2} \\
  = & \frac{C }{N}\left( {\sum\limits_\mu \frac{{\mathop{\rm Im}\nolimits}\, G_{\mu\mu}^{[i]} }{\eta} } \right)^{1/2} = C \sqrt { \frac{ \Im\, m_2^{[i]}  } {N\eta} } .
\end{align*}
where in the third step we use the equality (\ref{eq_gsq1}). Similarly we can prove the bound for $\left(Z_{[i]}\right)_{22}$ using Lemma \ref{large_deviation} and (\ref{eq_gsq2}). 
Now we consider $\left(Z_{[i]}\right)_{12}$. First, we have $X_{i\bar i} \prec N^{-1/2}$ by (\ref{assm2}). For the other part, we use Lemma \ref{large_deviation} and (\ref{eq_gsq4}) to get that
\begin{align}
\left| {\left( {DXG^{\left[ i \right]} DX} \right)_{i\bar i} } \right| & \prec \frac{1}{N}\left( {\sum\limits_{j,\mu } {\left|d_j\right|^2\left| {G_{\mu j}^{\left[ i \right]} } \right|^2 } } \right)^{1/2}   = \frac{1}{N}\left[ {\sum\limits_j {\left|d_j\right|^2\left( {\left| w \right|^{ - 1} G_{jj}^{\left[ i \right]}  + \frac{{\bar w}}{{\left| w \right|}}\frac{{\Im\, G_{jj}^{\left[ i \right]} }}{\eta }} \right)} } \right]^{1/2}  \nonumber\\
& \le \left[ {\frac{{ | {m_1^{\left[ i \right]} } |}}{{N\left| w \right|}} + \frac{{\Im\, m_1^{\left[ i \right]} }}{{N\eta }}} \right]^{1/2}  \le C\left( {\sqrt {\frac{{ | {m_1^{\left[ i \right]} } |}}{{N\left| w \right|}}}  + \sqrt {\frac{{\Im\, m_1^{\left[ i \right]} }}{{N\eta }}} } \right).
\end{align}
Similarly we can prove the estimate for $\left(Z_{[i]}\right)_{21}$.

Now we prove (\ref{Zestimate}). By the definitions (\ref{eq_defpsitheta}) and using (\ref{m_T}), we get that
\begin{align}
\left|\left(Z_{[i]}\right)_{11}\right| & \prec |w|\sqrt{\frac{{\Im\, m_2^{[i]} }}{N\eta} } = |w|\sqrt {\frac{{\Im\,m_{2c}  + \Im \left( {m_2^{[i]}  - m_2 } \right) + \Im \left( {m_2  - m_{2c} } \right)}}{{N\eta }}}  \le C|w| \Psi _\theta .
\end{align}
We can estimate $\left(Z_{[i]}\right)_{22}$ and the third term in (\ref{Z_lemma2}) in a similar way. For the Cases 1-4 in Lemma \ref{lemm_m1_4case}, we have $|m_{1c}| \sim 1$ for $|w|\sim 1$, $\Im\, m_{1c} \sim |w|^{-1/2} \sim |m_{1c}|$ for $|w| \to 0$, and $\eta \le C\Im\, m_{1c}$. Thus
$$\sqrt {\frac{{\left| {m_{1c} } \right|}}{{N\left| w \right|}}} \le \frac{C}{{\sqrt N }}\le C\Psi_\theta \text{ for } |w|\sim 1, \ \ \sqrt {\frac{{\left| {m_{1c} } \right|}}{{N\left| w \right|}}}  \le C\sqrt {\frac{{\Im\, m_{1c} }}{{N\eta }}}\le C\Psi_\theta \text{ for } |w|\to 0.$$ 
Then for the second term in (\ref{Z_lemma2}), we have that
\begin{align*}
 \sqrt {\frac{{| {m_1^{\left[ i \right]} } |}}{{N\left| w \right|}}}  & \le C\left( \frac{1}{N\eta}+\sqrt { \frac{\theta }{{N\eta }}}  + \sqrt {\frac{{\left| {m_{1c} } \right|}}{{N\left| w \right|}}}  \right)  \le C \Psi _\theta .
\end{align*}
This concludes (\ref{Zestimate}).
Finally, the estimate (\ref{ZestimateL}) follows directly from (\ref{Z_lemma1}), (\ref{Z_lemma2}) and (\ref{eq_gbound}).
\end{proof}

\begin{lem}\label{lemm_selfcons_weak}
Suppose $|z|^2 \le 1-\tau$. Define the $w$-dependent event $\Xi(w):=\{\theta\le |w|^{-1/2}(\log N)^{-1}\}$. Then we have that for $w\in \mathbf D$,
\begin{equation}
\one(\Xi)m_2= \one(\Xi) \left[\frac{1+m_1}{-w\left(1 + m_1 \right)^2 + |z|^2}+ O_\prec (\Psi_\theta)\right] , \ \ \one(\Xi)\Upsilon(w,m_1)\prec \one(\Xi)\Psi_\theta,\ \label{selfcons_lemm}
\end{equation}
where $\Upsilon$ is defined in (\ref{def_stabD}).
For $w\in \bD_L$, we have
\begin{equation}\label{selfcons_eq1L}
 m_2= \frac{1+m_1}{-w\left(1 + m_1 \right)^2 + |z|^2}+ O_\prec \left(\eta^{-1}(N\eta)^{-1/2}\right), \ \ \Upsilon(w,m_1)\prec \eta^{-1}\left(N\eta\right)^{-1/2}.
\end{equation}
\label{selfcons_lemm2}
\end{lem}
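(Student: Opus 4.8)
The plan is to start from the group resolvent identity (\ref{eq_res1}), in the explicit form (\ref{resolvent_Gii}), which writes $G_{[ii]}^{-1}$ as a deterministic-looking $2\times 2$ matrix assembled from the minor averaged quantities $m_1^{[i]},m_2^{[i]}$ plus the fluctuation matrix $Z_{[i]}$ of (\ref{eqn_Zdef}); we will invert this $2\times 2$ matrix, show that $G_{[ii]}$ is close to $\pi_{[i]}$ (defined in (\ref{def_pi_i}) with the \emph{full} averaged quantities $m_1,m_2$), read off the $(1,1)$ entry, and finally average over $i\in\mathcal I_1$. The sole role of the cutoff $\Xi(w)=\{\theta\le |w|^{-1/2}(\log N)^{-1}\}$ is to provide the a priori input for this: on $\Xi(w)$ the quantities $m_{1,2}$ lie within a $(\log N)^{-1}$-fraction of $m_{1,2c}$ (combining the definition of $\Xi$ with the crude bound (\ref{m_T}) to pass from $m_{1,2}^{[i]}$ to $m_{1,2}$), so the lower bound (\ref{estimate2_bulk}) of Lemma \ref{lemm_m1_4case} survives this perturbation and the denominator $w(1+|d_i|^2 m_2)(1+m_1)-|z|^2$ stays bounded below by a positive constant; hence $\pi_{[i]}$ is well defined and $\|\pi_{[i]}\|\le C|w|^{-1/2}$ on $\Xi(w)$, as in Lemma \ref{cor_basicestimate}.

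\textbf{Main step.} On $\Xi(w)$ we will combine this a priori bound with the large deviation estimate $Z_{[i]}\prec |w|\Psi_\theta$ of Lemma \ref{Z_lemma} (see (\ref{Zestimate})) and feed them into a Neumann expansion of $((\pi_{[i]})^{-1}+Z_{[i]}+O_\prec(|w|(N\eta)^{-1}))^{-1}$; since $\|\pi_{[i]}\|\,|w|\Psi_\theta\ll 1$ on $\mathbf D$, this yields $\one(\Xi)G_{[ii]}=\one(\Xi)\pi_{[i]}+O_\prec(\Psi_\theta)$, the factor $\|\pi_{[i]}\|^2\sim|w|^{-1}$ from the expansion exactly cancelling the $|w|$ carried by $Z_{[i]}$. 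Taking the $(1,1)$ entry and using the exact $2\times2$ inverse identity $(\pi_{[i]})_{11}[-w(1+|d_i|^2 m_2)+|z|^2/(1+m_1)]=1$, we multiply through by the bracket (of size $\sim|w|^{1/2}$ on $\Xi$) to get $G_{ii}[-w(1+|d_i|^2 m_2)+|z|^2/(1+m_1)]=1+O_\prec(|w|^{1/2}\Psi_\theta)$. Averaging over $i\in\mathcal I_1$ and using $N^{-1}\sum_i G_{ii}=m_2$ and $N^{-1}\sum_i|d_i|^2 G_{ii}=m_1$ (from (\ref{eqn_schurmatrix}) and the definitions (\ref{def_M}), with $\Sigma=DD^\dag$), the $i$-dependence collapses and we obtain $m_2[-w(1+m_1)+|z|^2/(1+m_1)]=1+O_\prec(|w|^{1/2}\Psi_\theta)$; dividing by the bracket (again $\sim|w|^{1/2}$) gives the first relation of (\ref{selfcons_lemm}). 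For the second relation, we instead multiply the per-$i$ identity by $|d_i|^2$ before averaging, group the indices with $|d_i|^2=s_j$ into multiplicities $l_j$, and substitute the expression $\frac{1+m_1}{-w(1+m_1)^2+|z|^2}$ just obtained for $m_2$ into those denominators — an $O_\prec(\Psi_\theta)$ change, since the denominators are bounded below and $m_2$ differs from the substituted expression by $O_\prec(\Psi_\theta)$. The two terms defining $\Upsilon(w,m_1)$ in (\ref{def_stabD}) then cancel up to $O_\prec(\Psi_\theta)$, giving $\one(\Xi)\Upsilon(w,m_1)\prec\one(\Xi)\Psi_\theta$.

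\textbf{The domain $\mathbf D_L$ and the main difficulty.} On $\mathbf D_L$ the condition $\eta\ge\zeta^{-1}$ makes all the a priori facts above hold deterministically and unconditionally ($m_{1,2},m_{1,2c}=O(\eta^{-1})$, $\|G\|\le C\eta^{-1}$, and the denominator $\ge\eta$ by (\ref{estimate2_large})), so no cutoff is needed; the same computation with $Z_{[i]}\prec|w|(N\eta)^{-1/2}$ (see (\ref{ZestimateL})) in place of $|w|\Psi_\theta$ gives (\ref{selfcons_eq1L}). The step I expect to be the real obstacle is the a priori stability assertion on $\Xi(w)$: one must verify that the $(\log N)^{-1}$-smallness of $\theta$ genuinely preserves the lower bound on $w(1+|d_i|^2 m_2)(1+m_1)-|z|^2$, which is delicate precisely in the regime $w\to0$ treated in this section, where $m_{1,2c}$ themselves blow up like $|w|^{-1/2}$ so a careless perturbation estimate would lose a power of $|w|$; relatedly, all powers of $|w|$ must be tracked closely enough that the final errors come out as $\Psi_\theta$ (respectively $\eta^{-1}(N\eta)^{-1/2}$) and not something larger. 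This bookkeeping is exactly what the estimates of Lemmas \ref{lemm_m1_4case}, \ref{cor_basicestimate} and \ref{Z_lemma} are designed to supply.
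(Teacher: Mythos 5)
Your proposal is correct and follows essentially the same route as the paper's own proof: use the Schur-complement identity (\ref{resolvent_Gii}) to write $G_{[ii]}^{-1}=\pi_{[i]}^{-1}+\epsilon_{[i]}$ with $\epsilon_{[i]}\prec|w|\Psi_\theta$ via (\ref{m_T}) and (\ref{Zestimate}), bound $\|\pi_{[i]}\|=O(|w|^{-1/2})$ on $\Xi$, do the Neumann expansion to obtain $\one(\Xi)G_{[ii]}=\one(\Xi)\pi_{[i]}+O_\prec(\Psi_\theta)$, take the $(1,1)$ entry, multiply by the bracket of size $|w|^{1/2}$, sum over $i$, and then re-weight by $|d_i|^2$ with the $m_2$-substitution to get $\Upsilon$. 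The only cosmetic difference is that the paper establishes $\|\pi_{[i]}\|=O(|w|^{-1/2})$ by a second Neumann expansion around $\pi_{[i]c}$ using $\|B_i\pi_{[i]c}\|\le C(\log N)^{-1}$ on $\Xi$, whereas you argue directly that the denominator $w(1+|d_i|^2m_2)(1+m_1)-|z|^2$ is an $O((\log N)^{-1})$ perturbation of the corresponding quantity for $m_{1,2c}$ and so remains bounded below by (\ref{estimate2_bulk}); these are the same computation. Two small imprecisions worth noting: the bound you invoke from Lemma \ref{cor_basicestimate} is stated for $\pi_{[i]c}$ (not $\pi_{[i]}$), so strictly you need the perturbation step you describe before appealing to it; and on $\mathbf D_L$ the lower bound on the denominator for the random $m_{1,2}$ does not follow from (\ref{estimate2_large}) directly (that estimate is for $m_{1,2c}$) but rather from $|m_{1,2}|=O(\eta^{-1})$, $\eta\ge\zeta^{-1}$, and $\text{Im}(wm_{1,2})\ge 0$, as the paper sketches. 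Neither is a real gap.
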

\begin{proof}
Using (\ref{resolvent_Gii}), we get
\begin{equation}
G_{[ii]}^{-1} = \pi_{[i]}^{-1} + \epsilon_{[i]}, \label{resolvent_G}
\end{equation}
where $\pi_{[i]}$ is defined in (\ref{def_pi_i}) and
$$\epsilon_{[i]}  =  w\left( {\begin{array}{*{20}c}
   { \left|d_i\right|^2 \left(m_2 - m^{[i]}_2\right) } & {0}  \\
   {0} & { m_1- m^{[i]}_1} \end{array}} \right)+Z_{[i]}.$$
By (\ref{m_T})
and (\ref{Zestimate}), we get that $\epsilon_{[i]} \prec |w|\Psi_\theta$.
Let $B_{i}=\pi_{[i]}^{-1}-\pi_{{[i]}c}^{-1}$, where $\pi_{{[i]}c}$ is defined in (\ref{def_pi}). By (\ref{estimate_Piw12}) and the definition of $\Xi$, we have
$\one(\Xi)\|B_{i}\pi_{{[i]}c}\| \le C(\log N)^{-1}.$ Thus we have the expansion
\begin{equation}
\one(\Xi) \pi_{[i]}= \one(\Xi) (\pi_{{[i]}c}^{-1} +B_{i})^{-1} = \one(\Xi) \pi_{[i]c} \left(1 - B_{i}\pi_{{[i]}c} + (B_{i}\pi_{{[i]}c})^2  + \ldots \right) = \one(\Xi) (\pi_{{[i]}c} + \epsilon_a),
\end{equation}
where $\epsilon_a$ can be estimated as $\one(\Xi)\|\epsilon_a\| \le \one(\Xi)C|w|^{-1/2}(\log N)^{-1}. $
This shows that $\one(\Xi)\|\pi_{[i]}\|=\one(\Xi)O(|w|^{-1/2})$, and so $\one(\Xi)\left\|\epsilon_{[i]} \pi_{[i]}\right\| \prec \one(\Xi)|w|^{1/2}\Psi_\theta \le \one\left( \Xi  \right)CN^{ - \zeta /2} $ by the definition of $\mathbf D$ in (\ref{eq_domainD}).
Again we do the expansion for (\ref{resolvent_G}),
\begin{equation}
\one(\Xi)G_{[ii]} = \one(\Xi)\left(\pi_{[i]}^{-1}+\epsilon_{[i]}\right)^{-1}  = \one(\Xi) \pi_{[i]}\left(1 + \sum_{l=1}^{\infty}\left(-\epsilon_{[i]}\pi_{[i]}\right)^l\right)=\one(\Xi)\left(\pi_{[i]}+\epsilon_b\right), \label{self_matrix}
\end{equation}
where $\one(\Xi)\|\epsilon_b \| \prec \one(\Xi) \Psi_\theta$. Now the $11$ entry of (\ref{self_matrix}) gives that
\begin{equation}
\one(\Xi)G_{ii} = \one(\Xi) \frac{{ - 1 - m_1 }}{{w\left( {1 + |d_i |^2 m_2 } \right)\left( {1 + m_1 } \right) - \left| z \right|^2 }} + \one(\Xi)O_\prec\left(\Psi_\theta\right) , \label{G11error}
\end{equation}
from which we get that
\begin{equation}\label{G11_temp}
\one(\Xi)G_{ii} \left[-w\left(1 + |d_i |^2 m_2\right) + \frac{|z|^2}{1+m_1}\right]= \one(\Xi) \left[1 + O_\prec\left(|w|^{1/2}\Psi_\theta\right)  \right].
\end{equation}
Here we use that
\[\one(\Xi)\left[-w\left(1 + |d_i |^2 m_2\right) + \frac{|z|^2}{1+m_1}\right]=O(|w|^{1/2}),\]
which follows from Proposition \ref{prop_roughbound} and the definition of $\Xi$. Summing (\ref{G11_temp}) over $i$,
$$\one(\Xi)\left[-w\left(m_2 + m_1 m_2\right) + \frac{|z|^2 m_2}{1+m_1}  \right]= \one(\Xi)\left[1+ O_\prec\left(|w|^{1/2}\Psi_\theta\right) \right] ,$$
which gives
\begin{equation}
\one(\Xi)m_2= \one(\Xi)\frac{1+m_1}{-w\left(1 + m_1 \right)^2 + |z|^2}+\one(\Xi) O_\prec\left(\Psi_\theta\right). \label{m12relation}
\end{equation}
Now plug (\ref{m12relation}) into (\ref{G11error}), multiply with $|d_i|^2$ and sum over $i$, we get
\begin{align}
\one(\Xi) m_1 = \one(\Xi) \left[\frac{1}{N}  \sum_{i=1}^n l_i s_i \frac{{ - 1 - m_1 }}{{w\left( 1 + s_i \frac{1+m_1}{-w\left(1 + m_1 \right)^2 + |z|^2} \right)\left( {1 + m_1 } \right) - \left| z \right|^2 }} + O_\prec\left(\Psi_\theta\right)\right],
\end{align}
where we use (\ref{estimate2_bulk}) and $\one(\Xi)(1+m_1)=\one(\Xi)O(|w|^{-1/2})$.
This concludes the proof.

Similarly, when $w\in \bD_L$, it is easy to prove (\ref{selfcons_eq1L}) using the estimates (\ref{ZestimateL}) and (\ref{eq_gbound}). Note that $|m_{1,2}| = O(\eta^{-1})$ by (\ref{eq_gbound}), which implies immediately the bounds $\|\pi_{[i]}\|=O(\eta^{-1})$ and $\| \left(\pi_{[i]}\right)^{-1} \| =O(\eta)$. Hence without introducing the event $\Xi$, we can obtain directly 
\begin{equation}
 G_{[ii]} = \pi_{[i]}+ O_\prec(\eta^{-1}(N\eta)^{-1/2}). \label{self_matrixL}
\end{equation}
The rest of the proof is essentially the same.
\end{proof}

Notice that applying Lemma \ref{lemm_stability} to (\ref{selfcons_eq1L}), we obtain $|m_{1,2}-m_{1,2c}|\prec \eta^{-1}(N\eta)^{-1/2}$. Plugging it into (\ref{self_matrixL}), we get immediately (\ref{weak_lawL}) for $w\in \mathbf D_L$. This proves the entrywise law on $\mathbf D_L$, since $\eta^{-1}N^{-1/2} \le C\Psi$ by the definition (\ref{eq_defpsi}) and the estimate (\ref{estimate1L}).

\end{subsection}

\begin{subsection}{The large $\eta$ case}

It remains prove Proposition \ref{thm_weaklaw} on domain $\mathbf D$. We would like to fix $E$ and then apply a continuity argument in $\eta$ by first showing that the rough bound $\Lambda\le |w|^{-1/2}(\log N)^{-1}$ in Lemma \ref{lemm_selfcons_weak} holds for large $\eta$. To start the argument, we first need to establish the estimates on $G$ when $\eta\sim 1$. The next lemma is a trivial consequence of (\ref{eq_gbound}). 

\begin{lem}
For any $w\in \bD$ and $\eta \ge c$ for fixed $c>0$, we have the bound
\begin{equation}
\mathop {\max }\limits_{s,t} \left| {G_{st} \left( w \right)} \right| \le C
\end{equation}
for some $C>0$. This estimate also holds if we replace $G$ with $G^{(J)}$ for $J\subset \mathcal I$.
\end{lem}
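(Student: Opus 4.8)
The plan is to read this off directly from the operator-norm bound in Lemma~\ref{lemma_Im}. The only point to note is the elementary fact that for any $\mathcal I\times\mathcal I$ matrix $A$ and any indices $s,t\in\mathcal I$ one has $|A_{st}| = |\langle \mathbf e_s, A\mathbf e_t\rangle| \le \|A\|$, since $\mathbf e_s,\mathbf e_t$ are unit coordinate vectors; thus entrywise bounds are always dominated by the operator-norm bound. Applying this to $A=G(w)$ together with the first estimate in (\ref{eq_gbound}), $\|G(w)\|\le C\eta^{-1}$, gives $\max_{s,t}|G_{st}(w)|\le C\eta^{-1}$ for every $w\in\mathbf D$.

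Restricting now to the regime $\eta\ge c$, the factor $\eta^{-1}$ is itself $O(1)$: $C\eta^{-1}\le C/c$. Hence the claimed bound holds with the constant $C$ replaced by $C/c$, which depends only on $\tau$ and the fixed constant $c$. For the minor $G^{(J)}$ I would simply invoke the last sentence of Lemma~\ref{lemma_Im}, which asserts that all its estimates—in particular $\|G^{(J)}(w)\|\le C\eta^{-1}$—remain valid for $G^{(J)}$; this is because $H^{(J)}$ is the linearizing block matrix of a reduced model and admits the same spectral representation as in (\ref{singular_rep}). The identical two-line argument then yields $\max_{s,t}|G^{(J)}_{st}(w)|\le C\eta^{-1}\le C/c$.

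There is no genuine obstacle here: the statement is an immediate corollary of (\ref{eq_gbound}), and the proof consists solely of (i) passing from the operator norm to individual entries and (ii) observing that $\eta^{-1}=O(1)$ on $\{\eta\ge c\}$.
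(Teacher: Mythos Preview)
Your proof is correct and matches the paper's approach exactly: the paper simply states that this lemma is a trivial consequence of (\ref{eq_gbound}), which is precisely what you do by bounding entries by the operator norm and using $\eta^{-1}\le c^{-1}$.
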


\begin{lem}
Fix $c>0$ and $|z|^2 \le 1-\tau$. We have the following estimate
\begin{equation}\label{lemm_largeeta}
\mathop {\max }\limits_{w \in \bD,\eta \ge c} \Lambda \left( w \right) \prec N^{ - 1/2 }.
\end{equation}
\end{lem}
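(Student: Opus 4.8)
The plan is to run a standard fixed-point/self-consistent argument at $\eta \sim 1$, where all resolvent entries are a priori bounded and the small parameter $\Psi_\theta \lesssim N^{-1/2}$ is uniformly under control. First I would fix $w = E+i\eta \in \mathbf D$ with $\eta \ge c$. Since $\eta \ge c$ and $0 \le E \le \zeta^{-1}$, we have $|w| \sim 1$, so the event $\Xi(w) = \{\theta \le |w|^{-1/2}(\log N)^{-1}\}$ is implied by $\theta \le c'(\log N)^{-1}$ for a suitable constant $c'$. Moreover, by the previous lemma, $\max_{s,t}|G_{st}| \le C$ deterministically (and the same for all minors $G^{(J)}$), hence $\Lambda \le C$ and $\theta = O(\Lambda) = O(1)$ always. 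The key point is that the deterministic control parameter satisfies $\Psi_\theta \le C((N\eta)^{-1/2} + (N\eta)^{-1}) \le CN^{-1/2}$ here, using $\eta \ge c$ and $\Im(m_{1c}+m_{2c}) = O(1)$ and $\theta = O(1)$.

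Next I would feed this into the self-consistent estimate of Lemma \ref{lemm_selfcons_weak}. On the event $\Xi$, equation (\ref{selfcons_lemm}) gives $\one(\Xi)\,m_2 = \one(\Xi)\big[\tfrac{1+m_1}{-w(1+m_1)^2+|z|^2} + O_\prec(\Psi_\theta)\big]$ and $\one(\Xi)\,\Upsilon(w,m_1) \prec \one(\Xi)\,\Psi_\theta \prec N^{-1/2}$. Then apply the stability of the self-consistent equation (Lemma \ref{lemm_stability}); for $\eta \ge c$ we are either in the bulk domain $\mathbf D_k^b$, the outside-spectrum domain $\mathbf D^o$, or (if $\eta$ is large enough) $\mathbf D_L$, but in any case $\kappa + \eta \gtrsim 1$, so (\ref{Stability1}) yields $\one(\Xi)|m_1 - m_{1c}| \prec \Psi_\theta \prec N^{-1/2}$, and then $\one(\Xi)|m_2 - m_{2c}| \prec N^{-1/2}$ as well, i.e. $\one(\Xi)\theta \prec N^{-1/2}$. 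Plugging this improved bound on $\theta$ back into $\Psi_\theta$ (which now is $\le CN^{-1/2}$ unconditionally on $\Xi$) and returning to the matrix expansion (\ref{self_matrix}), $\one(\Xi)G_{[ii]} = \one(\Xi)(\pi_{[i]} + \epsilon_b)$ with $\|\epsilon_b\| \prec \Psi_\theta \prec N^{-1/2}$, and $\one(\Xi)\|\pi_{[i]} - \pi_{[i]c}\| \prec \theta \prec N^{-1/2}$, gives $\one(\Xi)\,\Lambda \prec N^{-1/2}$ for the diagonal groups; the off-diagonal groups are handled by the resolvent identity (\ref{eq_res2}) and the large deviation Lemma \ref{large_deviation} together with the a priori bound $\|G\| \le C\eta^{-1} \le C/c$, giving $\one(\Xi)\Lambda_o \prec N^{-1/2}$ too.

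The only remaining point is to remove the indicator $\one(\Xi)$, i.e. to show $\Xi(w)$ holds with high probability. This is the usual bootstrap: we have shown that on $\Xi$, $\theta \prec N^{-1/2} \ll (\log N)^{-1}$, so $\Xi$ cannot "just barely" fail. Concretely, since $\theta = O(1)$ always and $\theta$ depends continuously on $\eta$, one fixes $E$ and runs a continuity argument down from $\eta = \zeta^{-1}$ (where one checks $\Xi$ directly, e.g. from the bound $|m_{1,2}| \le 1/\text{Im}\,w$ and the analogous bound on $m_{1,2c}$ in Case 5 / the large-$\eta$ regime, making $\theta$ small): the set of $\eta \ge c$ for which $\theta(E+i\eta) \le \tfrac{1}{2}c'(\log N)^{-1}$ is nonempty, and the estimate above shows it is both open and closed in $\{\eta \ge c\}$ up to a probability-$N^{-D}$ exceptional event, hence is all of $[c,\zeta^{-1}] \cup \mathbf D_L$-slice. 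A union bound over an $N^{-10}$-net in $w$ (using the Lipschitz bound $\|\partial_w G\| \le C\eta^{-2}$, and likewise for $m_{1,2c}$, to pass from the net to all $w$) then upgrades this to a statement uniform in $w$ with $\eta \ge c$. The main obstacle is this last bootstrapping step — making precise that the self-consistent estimate, which a priori only holds on $\Xi$, can be combined with the deterministic a priori bound $\theta = O(1)$ to conclude $\Xi$ holds with high probability — but this is exactly the continuity-argument mechanism used in \cite{local_circular} and the subsequent literature, and here it is especially easy because $\eta \sim 1$ forces the small parameter to be of size $N^{-1/2}$ from the start.
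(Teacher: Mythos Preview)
Your overall strategy—apply Lemma~\ref{lemm_selfcons_weak} on the event $\Xi$, invoke stability, and then remove the indicator by a continuity argument in $\eta$—is plausible, but the initialization of the bootstrap has a real gap. At $\eta=\zeta^{-1}$ (a fixed constant), the trivial bounds $|m_{1,2}|,|m_{1,2c}|\le C/\eta$ give only $\theta=O(\zeta)$, a constant independent of $N$; they do \emph{not} imply $\theta\le|w|^{-1/2}(\log N)^{-1}$, which is what $\Xi$ demands. So you cannot ``check $\Xi$ directly'' at the top of $\mathbf D$ from those bounds alone. (Your continuity argument could be rescued by feeding in the already-proved $\mathbf D_L$ estimate (\ref{weak_lawL}) at $\eta=\zeta^{-1}$, which does yield $\theta\prec N^{-1/2}$ there; but you do not invoke that, and the crude Stieltjes bound you cite is not enough.)

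The paper takes a different, bootstrap-free route. From the deterministic bound $G_{[ii]}=O(1)$ (valid for $\eta\ge c$), one gets $G_{[ii]}=\pi_{[i]}+O_\prec(N^{-1/2})$ \emph{without} assuming $\Xi$, which gives the approximate equations (\ref{sum_Gii1})--(\ref{sum_Gmu}). The substitute for $\Xi$ is a direct lower bound $\Im m_{1,2}\ge C(\log N)^{-1}$ with high probability, proved by contradiction: if $\Im m_2$ were small, the Stieltjes inequality $|\Re m_2|\le 2\sqrt{\Im m_2/\eta}$ would force $|m_2|$ small, and then the denominator in (\ref{sum_Gmu}) has imaginary part $\le -\eta$ (using $\Im(wm_1)\ge 0$ and $\Im[|z|^2/(1+|d_i|^2 m_2)]\le 0$) while its modulus is $O(1)$, forcing $\Im m_2\ge c$ from (\ref{sum_Gmu}) itself. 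This lower bound on $\Im m_1$ guarantees $|1+m_1|$ is bounded below, so one may derive $\Upsilon(w,m_1)\prec N^{-1/2}$ and apply stability directly—no descent in $\eta$ is needed. The paper's approach is thus more self-contained for this lemma; your approach would work too once the initialization is supplied, but as written it is missing that ingredient.
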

\begin{proof}
By the previous lemma, we have $ |m^{[i]}_{1,2}|= O(1)$. So by Lemma \ref{Z_lemma}, $\|Z_{[i]} \| \prec N^{-1/2}$ uniformly in $\eta \ge c$. Then as in (\ref{resolvent_G}),
\begin{equation}
G_{[ii]} = \left(\pi_{[i]}^{-1} + \epsilon_{[i]}\right)^{-1}, \label{large_eta}
\end{equation}
where $\|\pi_{[i]}^{-1} \|=O(1)$ and $\|\epsilon_{[i]}\|\prec N^{-1/2}$.
Notice since $G_{[ii]} = O(1)$, we have the estimate
$$\pi_i = \left(G_{[ii]}^{-1}-\epsilon_{[i]}\right)^{-1} = G_{[ii]}\left(1-\epsilon_{[i]} G_{[ii]}\right)^{-1} = O_\prec(1).$$
Then we can expand (\ref{large_eta}) to get that
\begin{equation}\label{eq_gii}
G_{[ii]} = \pi_i+O_{\prec} \left(N^{-1/2}\right).
\end{equation}
The $11$ and $22$ entries of (\ref{eq_gii}) leads to the equations
\begin{align}
& m_1 = \frac{1}{N}\sum_{i=1}^N |d_i|^2\left[{{-w\left( {1 + |d_i |^2 m_2 } \right) + \frac{\left| z \right|^2}{1+m_1} }}\right]^{-1} + O_\prec\left(N^{-1/2}\right) , \label{sum_Gii1} \\
& m_2 = \frac{1}{N}\sum_{i=1}^N \left[{{-w\left( {1 + m_1 } \right) + \frac{\left| z \right|^2}{1+|d_i|^2m_2} }}\right]^{-1} + O_\prec\left(N^{-1/2}\right) . \label{sum_Gmu}
\end{align}
Our goal is to prove that $\Im\, m_{1,2} \ge C(\log N)^{-1}$ with high probability for some $C>0$.

Using the spectral decomposition (\ref{singular_rep}), we note that for $l> 1$,
\begin{align*}
& \frac{1}{N}\sum\limits_{|\lambda_k - E| \ge l\eta} \frac{|E-\lambda_k|}{(\lambda _k  - E)^2 +\eta^2} \le \frac{1}{l\eta}, \\
& \frac{1}{N}\sum\limits_{|\lambda_k - E| \le l\eta} \frac{|E-\lambda_k|}{(\lambda _k  - E)^2 +\eta^2} \le \frac{1}{N}\sum\limits_{|\lambda_k - E| \le l\eta} \frac{l\eta}{(\lambda _k  - E)^2 +\eta^2} \le l \Im\, m_2.
\end{align*}
Summing up these two inequalities and optimizing $l$, we get
\begin{equation}
|\Re\, m_2| \le 2\sqrt{\frac{\Im\, m_2}{\eta}}. \label{ReImm2}
\end{equation}
Assume that $\Im\, m_2 \le C(\log N)^{-1}$, then by (\ref{eq_Imm12}) we also have $\Im\, m_1 \le C\tau^{-1}(\log N)^{-1} $. From (\ref{ReImm2}), we get $|m_{2}|\le C(\log N)^{-1/2}$. Together with $\Im\, w=\eta \ge c$ and ${\rm Im} [|z|^2/(1+m_1)] < 0$, (\ref{sum_Gii1}) gives
\begin{equation}
|m_1| \le \frac{1}{N}\sum_i |d_i|^2\left| {\rm Im} \left[ {{-w\left( {1 + |d_i |^2 m_2 } \right) + \frac{\left| z \right|^2}{1+m_1} }}\right]\right|^{-1} + o(1) \le C
\end{equation}
with high probability. Using the above estimate and $|m_{2}|\le C(\log N)^{-1/2}$ we get $$ \left| -w\left( {1 + m_1 } \right) + \frac{\left| z \right|^2}{1+|d_i|^2m_2} \right|\le C \text{ with high probability.}$$ On the other hand
\begin{equation}
{\rm Im} \left[-w\left( {1 + m_1 } \right) + \frac{\left| z \right|^2}{1+|d_i|^2m_2}\right] \le -\Im \, w = -\eta,\label{Imeta_large}
\end{equation}
where we use $\Im [|z|^2/(1+|d_i|^2m_2)] < 0$ and
$$\Im (wm_1) = \Im \left[ \frac{1}{N}\sum\limits_{k = 1}^N |d_i|^2|\xi _k(i)|^2  \left( -1 + \frac{\lambda_k}{{\lambda _k  - w}} \right) \right] \ge 0.$$
Hence (\ref{sum_Gmu}) implies $\Im\, m_2 \ge c$ with high probability for some $c>0$. This contradicts $\Im\, m_2 \le C(\log N)^{-1}$. Thus $\Im\, m_2 \ge C(\log N)^{-1}$ with high probability for some $C>0$ , which also implies $\Im\, m_1 \ge C (\log N)^{-1} $ by (\ref{eq_Imm12}).

Now we can proceed as in Lemma \ref{selfcons_lemm2} and get that
\begin{equation}
 m_2= \frac{1+m_1}{-w\left(1 + m_1 \right)^2 + |z|^2}+O_\prec\left(N^{-1/2}\right), \ \ \Upsilon(w,m_1)\prec N^{1/2}. \label{m12relation_large}
\end{equation}
We omit the details. Applying Lemma \ref{lemm_stability} to (\ref{m12relation_large}), we conclude $|m_{1,2}-m_{1,2c}| \prec N^{ - 1/2}$ uniformly in $\eta\ge c$. 
By (\ref{eq_gii}), we get $\|(G-\Pi)_{[ii]}\|\prec{N^{-1/2}}$ uniformly in $\eta\ge c$ and $i\in \mathcal I_1$. Finally using (\ref{eq_res2_2}) and Lemmas \ref{lemma_Im}-\ref{large_deviation}, we can prove the off-diagonal estimate (see (\ref{est_offd})).
\end{proof}
\end{subsection}

\begin{subsection}{Proof of the weak entrywise local law}\label{subsection_weak_proof}

In this subsection, we finish the proof of Proposition \ref{thm_weaklaw} on domain $\mathbf D$. We shall fix the real part $E$ of $w=E+i\eta$ and decrease the imaginary part $\eta$. Recall Lemma \ref{lemm_selfcons_weak} is based on the condition $\Lambda\le |w|^{-1/2}(\log N)^{-1}$ (i.e. event $\Xi$). So far this is only established for large $\eta$ in (\ref{lemm_largeeta}). We want to show this condition for small $\eta$ also by using a continuity argument.

It is convenient to introduce the random function
$$v(w)=\max_{w'\in L(w)}\theta(w')|w'|^{1/2}\left(\frac{N\Im\, w'}{|w'|^{1/2}}\right)^{1/4},$$
where $L(w)$ is defined in (\ref{eqn_def_L}). Fix a regular domain $\mathbf S$, an $\epsilon<{\zeta}/{4}$ and a large $D>0$. Our goal is to prove that with high probability there is a gap in the range of $v$, i.e.
\begin{equation}\label{eq_cont30}
\bbP\left(v(w)\le N^\epsilon, v(w)>N^{3\epsilon/4}\right)\le N^{-D+21}
\end{equation}
for all $w\in \mathbf S$ and large enough $N\ge N(\epsilon, D)$. 

Suppose $v(w)\le N^\epsilon$, then it is easy to verify
\begin{equation}\label{newevent}
\theta(w')\le C|w'|^{-1/2}(\log N)^{-1}
\end{equation}
for all $w'\in L(w)$. Hence $\{v(w)\le N^{\epsilon}\}\subset \Xi(w')$ for all $w'\in \mathbf S \cap L(w)$. Then by (\ref{selfcons_lemm}), we have that for all $w'\in \bS \cap L(w)$, there exists an $N_0\equiv N_0(\epsilon, D)$ such that
\begin{equation}\label{eq_cont1}
P\left( {v(w) \le {N^\epsilon},\Upsilon (w') > \frac{N^{\epsilon}}{|w'|^{1/2}}\sqrt {\frac{{{{\left| {w'} \right|}^{1/2}}}}{{N\Im\, w'}}} } \right) \le {N^{ - D}},
\end{equation}
for all $N>N_0$. Taking the union bound we get
\begin{equation}\label{maxupsilon}
P\left( {v(w) \le {N^\epsilon},\max\limits_{w' \in L\left( w \right)} \Upsilon (w') \sqrt {\frac{{N\Im \, w'}}{{{{\left| {w'} \right|}^{-1/2}}}}}  > {N^{\epsilon}}} \right) \le {N^{ - D + 10}}.\end{equation}
Now consider the event
\begin{equation}\label{assump_temp}
\Xi_1:=\left\{v(w)\le N^\epsilon, \max\limits_{w'\in L(w)}\Upsilon (w') \sqrt { \frac{N\Im\,w'}{{\left| {w'} \right|}^{-1/2}}}\le N^\epsilon \right\}.
\end{equation}
Then $1(\Xi_1)\Upsilon(w')\le \delta \left( {w'} \right)$ for all $w'\in L(w)$ with $ \delta \left( {w'} \right) = \frac{N^{\epsilon}}{|w'|^{1/2}} \sqrt {\frac{{{{\left| {w'} \right|}^{ 1/2}}}}{{N\Im\, w'}}}.$ We now apply Lemma \ref{lemm_stability}. If $\kappa \ll 1$  (recall (\ref{eqn_def_kappa})), then $|w|\sim 1$ and we have
$$1(\Xi_1)|m_1(w')-m_{1c}(w')| \le C \sqrt{\delta(w')} \le C {N^{\epsilon/2}} {{\left( {\frac{1}{N\Im\, w'}} \right)}^{1/4}}$$
for all $w'\in L(w)$; if $\kappa \ge c > 0$ for some constant $c>0$, then
$$1(\Xi_1)|m_1(w')-m_{1c}(w')| \le C {\delta(w')}\le C \frac{N^{\epsilon}}{|w'|^{1/2}} {{\left( {\frac{{\left| {w'} \right|}^{ 1/2}}{N\Im\, w'}} \right)}^{1/2}}$$
for all $w'\in L(w)$.
Combining these two cases we get
\begin{equation}
1(\Xi_1)|m_1(w')-m_{1c}(w')| \le C \frac{N^{\epsilon/2}}{\left|w'\right|^{1/2}}{{\left( {\frac{{\left| {w'} \right|}^{1/2}}{N\Im\, w'}} \right)}^{1/4}}\label{eq_6.4_1}
\end{equation}
for all $w'\in L(w)$.
By (\ref{selfcons_lemm}), we have
$$1(\Xi_1)|m_2(w')-m_{2c}(w')|\prec 1(\Xi_1)|m_1(w')-m_{1c}(w')|+1(\Xi_1)\Psi_{\theta}\prec \frac{N^{\epsilon/2}}{\left|w'\right|^{1/2}}{{\left( {\frac{{\left| {w'} \right|}^{1/2}}{N\Im\, w'}} \right)}^{1/4}},$$
for all $w'\in \bS \cap L(w)$.
Combining this bound with (\ref{eq_6.4_1}), we see there is $N_1\equiv N_1(\epsilon,D)$ such that
\begin{equation}
\bbP\left(v(w)\le N^\epsilon, \max\limits_{w'\in L(w)}\Upsilon (w') \sqrt { \frac{N\Im\, w'}{{\left| {w'} \right|}^{-1/2}}}\le N^\epsilon, \max\limits_{w'\in L(w)}\theta(w') |w'|^{1/2} {{\left( {\frac{N\Im\, w'}{{\left| {w'} \right|}^{1/2}} } \right)}^{1/4}} >N^{3\epsilon/4} \right)\le N^{-D}\label{eq_cont2}
\end{equation}
for $N\ge \max\{N_0,N_1\}$. Adding (\ref{maxupsilon}) and (\ref{eq_cont2}), we get
\begin{equation*}
\bbP\left(v(w)\le N^\epsilon,  \max\limits_{w'\in L(w)}\theta(w') |w'|^{1/2} {{\left( {\frac{N\Im\, w'}{{\left| {w'} \right|}^{1/2}} } \right)}^{1/4}} >N^{3\epsilon/4} \right)\le N^{-D+11}.
\end{equation*}
Taking the union bound over $L(w)$ we get (\ref{eq_cont30}) for all $N\ge\max\{N_0, N_1\}$.

Now we conclude the proof of Proposition \ref{thm_weaklaw} by combining (\ref{eq_cont30}) with the large $\eta$ estimate (\ref{lemm_largeeta}).
We choose a lattice $\Delta\subset \bS$ such that $|\Delta|\le N^{20}$ and for any $w\in\bS$ there is a $w'\in\Delta$ with $|w'-w|\le N^{-9}$.
Taking the union bound we get
\begin{equation}\label{eq_cont4}
\bbP\left(\exists w\in\Delta: v(w)\in (N^{3\epsilon/4}, N^\epsilon]\right)\le N^{-D+41}.
\end{equation}
Since $v$ has Lipshcitz constant bounded by, say, $N^6$, then we have
\begin{equation}\label{eq_cont4}
\bbP\left(\exists w\in\bS: v(w)\in (2N^{3\epsilon/4}, N^\epsilon/2]\right)\le N^{-D+41}.
\end{equation}
Combining with (\ref{lemm_largeeta}), we see that there exists $N_2\equiv N_2(\epsilon,D)$ such that for $N> N_2 $,
\begin{align*}
\bbP\left( {\forall w \in \bS:v(w) \le 2N^{3\epsilon/4}} \right) \ge 1 - 2{N^{ - D + 41}}.
\end{align*}
Since $\epsilon$ and $D$ are arbitrary, the above inequality shows that $v(w)\prec 1$ uniformly in $w\in \bS$, or
\begin{equation}\label{eq_weakaverage}
\theta(w) \prec \frac{1}{|w|^{1/2}}\left(\frac{|w|^{1/2}}{N\eta}\right)^{1/4}.
\end{equation}
In particular we see that for all $w\in \bS$, the event $\Xi$ holds with high-probability.

Now using (\ref{self_matrix}) and (\ref{eq_weakaverage}), we get
\begin{equation}
\left\| {{G_{[ii]}} - {\pi_{[i]c}}} \right\| \le \left\| {{G_{[ii]}} - \pi_{[i]}} \right\| + \left\| {\pi_{[i]} - \pi_{[i]c}} \right\| \prec {\Psi _\theta } + \theta \prec \frac{1}{|w|^{1/2}}\left(\frac{|w|^{1/2}}{N\eta}\right)^{1/4}.\label{diagonal_weak2}
\end{equation}
To conclude Proposition \ref{thm_weaklaw}, it remains to prove the estimate for the off-diagonal entries. By (\ref{m_T}), it is not hard to see that
\begin{equation}
\left\| G^{\left[ J \right]}_{[ii]}  - \pi_{[i]c} \right\| \prec \frac{1}{|w|^{1/2}}\left(\frac{|w|^{1/2}}{N\eta}\right)^{1/4}
\end{equation}
for any $|J|\le l$ with $l\in \mathbb N$ fixed. Thus we have 
$G^{\left[ J \right]}_{[ii]} = O\left(|w|^{-1/2}\right)$ and $\left(G^{\left[ J \right]}_{[ii]}\right)^{-1} = O\left(|w|^{1/2}\right)$ with high probability.
Let $i\ne j\in \mathcal I_1$, using (\ref{eq_res2_2}) and the above diagonal estimates, we get that
\begin{align}
\left\|G_{[ij]}\right\|  \prec |w|^{-1}\frac{|w|^{1/2}}{\sqrt{N}} + |w|^{-1} \left\| \sum_{k,l\notin \{i,j\}} H_{[ik]}G^{[ij]}_{[kl]} H_{[lj]}\right\| \prec \Psi_\theta \prec \frac{1}{|w|^{1/2}}\left(\frac{|w|^{1/2}}{N\eta}\right)^{1/4},\label{est_offd}
\end{align}
where, as in the proof of Lemma \ref{Z_lemma}, we use Lemmas \ref{lemma_Im} and \ref{large_deviation} to obtain that
\begin{equation}
|w|^{-1} \left\| \sum_{k,l\notin \{i,j\}} H_{[ik]}G^{[ij]}_{[kl]} H_{[lj]}\right\|=\left\| \left(\begin{matrix} \sum_{k,l\notin \{i,j\}} X_{i \bar k} G^{[ij]}_{\bar k \bar l}X^\dag_{\bar l j} & \sum_{k,l \notin \{i,j\}} X_{i \bar k} G^{[ij]}_{\bar k l} X_{l \bar j}\\
\sum_{k,l\notin \{i,j\}} X^\dag_{\bar i k} G^{[ij]}_{k \bar l}X^\dag_{\bar l j}& \sum_{k,l\notin \{i,j\}} X^\dag_{\bar i k} G^{[ij]}_{k l} X_{l \bar j}\\
\end{matrix}\right) \right\| \prec \Psi_\theta .
\end{equation}
\end{subsection}

%

\begin{subsection}{Proof of the strong enterywise local law}\label{subsection_proofstrong}

In this section, we finish the proof of the (strong) entrywise local law in Theorem \ref{law_squareD} on domain $\mathbf D$ and under the condition $|w|^{1/2}+|z|^2 \ge c$.
In Lemma \ref{selfcons_lemm2}, we have proved an error estimate of the self-consistent equations of $m_{1,2}$ linearly in $\Psi_\theta$. The core part of the proof is to improve this estimate to quadratic in $\Psi_\theta$. For the sequence of random variables $Z_{[i]}$, we define the averaged quantities
$$\left[ Z \right] = \frac{1}{N}\sum\limits_{i=1}^N {\pi_{[i]}Z_{[i]}\pi_{[i]} },\ \ \left\langle Z \right\rangle = \frac{1}{N}\sum\limits_{i=1}^N |d_i|^2{\pi_{[i]}Z_{[i]}\pi_{[i]} }.$$
The following Lemma is an improvement of Lemma \ref{selfcons_lemm2}.

\begin{lem}
Fix $\left| z\right|^2 \le 1 - \tau$. Then for $w\in \mathbf D$,
\begin{equation}\label{selfcons_avg1}
m_2= \frac{1+m_1}{-w\left(1 + m_1 \right)^2 + |z|^2}+O_\prec (|w|^{1/2}\Psi_\theta^2+\|[Z]\|+\|\langle Z \rangle\|),
\end{equation}
and
\begin{equation}
\Upsilon(w,m_1)\prec |w|^{1/2}\Psi_\theta^2+\|[Z]\|+\|\langle Z \rangle\|. \label{selfcons_avg2}
\end{equation}
For $w\in \bD_L$, 
\begin{equation}\label{selfcons_avg1L}
m_2= \frac{1+m_1}{-w\left(1 + m_1 \right)^2 + |z|^2}+O_\prec \left((N\eta)^{-1}+\|[Z]\|+\|\langle Z \rangle\| \right),
\end{equation}
and
\begin{equation}
\Upsilon(w,m_1)\prec (N\eta)^{-1}+\|[Z]\|+\|\langle Z \rangle\| . \label{selfcons_avg2L}
\end{equation}
\end{lem}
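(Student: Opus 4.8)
The plan is to sharpen the Neumann expansion of $G_{[ii]}$ around $\pi_{[i]}$ used in the proof of Lemma~\ref{lemm_selfcons_weak} so that all remainders are tracked to \emph{second} order in $\Psi_\theta$, and then to carry out the averaging over $i$. First I would note that by \eqref{eq_weakaverage} the event $\Xi=\{\theta\le|w|^{-1/2}(\log N)^{-1}\}$ holds with high probability on any regular $\mathbf S\subset\mathbf D$ with $|w|^{1/2}+|z|^2\ge c$, so $O_\prec$-bounds established on $\Xi$ are unconditional. Starting from \eqref{resolvent_Gii} we have $G_{[ii]}^{-1}=\pi_{[i]}^{-1}+\epsilon_{[i]}$ with $\epsilon_{[i]}=wD_i+Z_{[i]}$, where $D_i:=\mathrm{diag}\big(|d_i|^2(m_2-m_2^{[i]}),\,m_1-m_1^{[i]}\big)$. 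On $\Xi$ one has $\|\pi_{[i]}\|=O(|w|^{-1/2})$ (extending \eqref{estimate_Piw12} from $\pi_{[i]c}$ to $\pi_{[i]}$ as in the proof of Lemma~\ref{lemm_selfcons_weak}) and $\|\epsilon_{[i]}\|\prec|w|\Psi_\theta$ by \eqref{m_T} and \eqref{Zestimate}; expanding to second order,
\begin{equation*}
G_{[ii]}=\pi_{[i]}-\pi_{[i]}Z_{[i]}\pi_{[i]}-w\,\pi_{[i]}D_i\pi_{[i]}+O_\prec\!\big(|w|^{1/2}\Psi_\theta^2\big),
\end{equation*}
the third-order tail being $\|(\pi_{[i]}\epsilon_{[i]})^2\pi_{[i]}\|\prec|w|^{-3/2}(|w|\Psi_\theta)^2$.

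Next I would extract the $(1,1)$-entry and repeat the algebraic manipulation of the proof of Lemma~\ref{lemm_selfcons_weak}: multiply by $\big[-w(1+|d_i|^2m_2)+|z|^2(1+m_1)^{-1}\big]$, which turns the leading term $(\pi_{[i]})_{11}$ into exactly $1$ and which is $O(|w|^{1/2})$ by Proposition~\ref{prop_roughbound}, and then average $\tfrac1N\sum_i$ using $\tfrac1N\sum_iG_{ii}=m_2$, $\tfrac1N\sum_i|d_i|^2G_{ii}=m_1$. The $(\pi_{[i]})_{11}$-part reassembles into $m_2\big(-w(1+m_1)+|z|^2(1+m_1)^{-1}\big)=1$, i.e.\ into the relation $m_2=\frac{1+m_1}{-w(1+m_1)^2+|z|^2}$. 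Writing $-w(1+|d_i|^2m_2)+|z|^2(1+m_1)^{-1}$ as an $i$-independent term plus a term proportional to $|d_i|^2$, the $\pi_{[i]}Z_{[i]}\pi_{[i]}$-part becomes a combination of the $(1,1)$-entries of $\tfrac1N\sum_i\pi_{[i]}Z_{[i]}\pi_{[i]}=[Z]$ and $\tfrac1N\sum_i|d_i|^2\pi_{[i]}Z_{[i]}\pi_{[i]}=\langle Z\rangle$ with coefficients of size $O(|w|^{1/2})$, hence of size $O(|w|^{1/2}(\|[Z]\|+\|\langle Z\rangle\|))$. Substituting the relation for $m_2$ into the $|d_i|^2$-weighted average (the relevant denominators are bounded away from $0$ on $\Xi$ by \eqref{estimate2_bulk}) and using that $\{|d_i|^2\}$ realizes each $s_k$ with multiplicity $l_k$ recovers $\Upsilon(w,m_1)$ from \eqref{def_stabD} plus the collected errors.

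The main obstacle, and the only delicate step, is the $w\,\pi_{[i]}D_i\pi_{[i]}$-term: expanding its $(1,1)$-entry gives sums $\tfrac1N\sum_i b_i(m_a-m_a^{[i]})$ with $|wb_i|=O(|w|^{1/2})$, and $|m_a-m_a^{[i]}|\lesssim(N\eta)^{-1}$ alone is only linear. Instead I would expand $m_a-m_a^{[i]}$ via the resolvent identities \eqref{resolvent8} (e.g.\ $m_1-m_1^{(i)}=\tfrac1N\sum_k|d_k|^2 G_{ki}G_{ik}G_{ii}^{-1}$, and similarly for the $(i\bar i)$-step and for $m_2$), turning $\tfrac1N\sum_i b_i(m_a-m_a^{[i]})$ into a double sum. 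Summing the inner index with the $\ell^2$-identities \eqref{eq_gsq2}--\eqref{eq_gsq3} (using $\sum_k|d_k|^2|G_{ik}|^2\le\tau^{-1}\eta^{-1}\Im G_{ii}$) and $|G_{ii}|\sim|w|^{-1/2}$ on $\Xi$ bounds it by $\tfrac{C}{N\eta}\cdot\tfrac1N\sum_i\tfrac{\Im G_{ii}}{|G_{ii}|}\lesssim\tfrac{|w|^{1/2}}{N\eta}(\Im m_{2c}+\theta)$, where $\tfrac1N\sum_i\Im G_{ii}=\Im m_2\le\Im m_{2c}+\theta$; since $(\Im m_{1c}+\Im m_{2c}+\theta)(N\eta)^{-1}\le\Psi_\theta^2$ by \eqref{eq_defpsitheta} and $|w|$ is bounded, the full $D_i$-contribution is $O_\prec(|w|^{1/2}\Psi_\theta^2)$. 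Collecting the three pieces yields \eqref{selfcons_avg1} and \eqref{selfcons_avg2}.

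Finally, for $w\in\mathbf D_L$ the argument is identical but simpler: the a priori bounds $\|G\|\le C\eta^{-1}$ and $|m_{1,2}|=O(\eta^{-1})$ (from \eqref{eq_gbound}) replace $\Xi$, so that $\|\pi_{[i]}\|=O(\eta^{-1})$ and $\|\epsilon_{[i]}\|\prec\eta(N\eta)^{-1/2}$ hold unconditionally, the second-order tail is $O_\prec((N\eta)^{-1})$, and the $D_i$-averaging contributes the same order via \eqref{m_T} and the $\ell^2$-identities, giving \eqref{selfcons_avg1L}--\eqref{selfcons_avg2L}.
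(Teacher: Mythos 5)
Your overall strategy matches the paper's: expand $G_{[ii]}$ around $\pi_{[i]}$ to second order, keep the linear term $\pi_{[i]}Z_{[i]}\pi_{[i]}$ explicit so it assembles into $[Z]$ and $\langle Z\rangle$ with $O(|w|^{1/2})$ coefficients, and upgrade the estimate on the $D_i$-term (i.e.\ on $m_a-m_a^{[i]}$) from $(N\eta)^{-1}$ to quadratic order in $\Psi_\theta$. Where you genuinely diverge is in how this last upgrade is obtained. The paper uses the $2\times2$ group identity~\eqref{eq_res3} in the form of~\eqref{better_M}: for $k\ne i$, $G_{[kk]}-G_{[kk]}^{[i]}=G_{[ki]}G_{[ii]}^{-1}G_{[ik]}$, with both off-diagonal factors $\prec\Psi_\theta$ by the weak entrywise law and $\|G_{[ii]}^{-1}\|=O(|w|^{1/2})$, giving directly the \emph{pointwise} bound $|m_{1,2}^{[i]}-m_{1,2}|\prec|w|^{1/2}\Psi_\theta^2$ (equation~\eqref{better_estimate_m12}) for every $i$; the $D_i$-term is then simply absorbed into the $O_\prec(|w|^{1/2}\Psi_\theta^2)$ tail. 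You instead retain the scalar decomposition $m_a-m_a^{[i]}=(m_a-m_a^{(i)})+(m_a^{(i)}-m_a^{(i\bar i)})$ from~\eqref{resolvent8} and bound the \emph{average} $\tfrac1N\sum_i b_i(m_a-m_a^{[i]})$ via the Ward identities~\eqref{eq_gsq2}--\eqref{eq_gsq3}. Both routes work and land on the same bound; the group route is somewhat more economical because one estimate per $i$ suffices, whereas the scalar route needs a bit more care. Two points in your write-up you should tighten: (a) your claim $|wb_i|=O(|w|^{1/2})$ (so $|b_i|=O(|w|^{-1/2})$) looks off by a factor $|w|^{1/2}$ --- the $(1,1)$-entry of $w\pi_{[i]}D_i\pi_{[i]}$ has coefficient $O(|w|\cdot|w|^{-1})=O(1)$ in front of $m_a-m_a^{[i]}$, and this is also what your final display $\tfrac{|w|^{1/2}}{N\eta}(\Im m_{2c}+\theta)$ is consistent with; with $|b_i|=O(|w|^{-1/2})$ the bound you need would not close for small $|w|$; (b) the $(i)\to(i\bar i)$ step, where you must use~\eqref{eq_gsq3} rather than~\eqref{eq_gsq2}, produces an extra summand of size $(N|w|)^{-1}$ that does \emph{not} carry the $\Im G/|G|$ gain. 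You do not say why this is admissible; it is, because for $|z|^2\le 1-\tau$ one has $\Im(m_{1c}+m_{2c})\gtrsim\eta|m_{2c}|^2\sim\eta|w|^{-1}$ (Cauchy--Schwarz for Stieltjes transforms), hence $|w|^{1/2}\Psi_\theta^2\gtrsim(N\eta)^{-1}\ge(N|w|)^{-1}$ since $\eta\le|w|$ --- the same inequality that lets the paper absorb the lone $G_{[ii]}/N$ term in~\eqref{better_M}. With those two repairs, the argument is correct, and your treatment of the $\mathbf D_L$ case is the same as the paper's.
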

\begin{proof}
The proof is almost the same as the one in Lemma \ref{selfcons_lemm2}, we only lay out the difference. We first consider the case $w\in \mathbf D$. By Proposition \ref{thm_weaklaw}, the event $\Xi$ holds with high probability. Hence without loss of generality, we may assume $\Xi$ holds throughout the proof. Using (\ref{eq_res3}), we get
\begin{equation}\label{better_M}
  \frac{1}{N}\sum_{k\in \mathcal I_1} \left( {\begin{array}{*{20}c}
   { |d_k|^2} & {0}  \\
   {0} & {1}  \\
   \end{array}} \right) \left( G_{[kk]} -G_{[kk]}^{\left[i\right]} \right)= \left( {\begin{array}{*{20}c}
   { |d_i|^2} & {0}  \\
   {0} & {1}  \\
   \end{array}} \right) \frac{G_{[ii]}}{N}+\frac{1}{N}\sum_{k\ne i} \left( {\begin{array}{*{20}c}
   { |d_k|^2} & {0}  \\
   {0} & {1}  \\
   \end{array}} \right) {G_{[ki]} G^{-1}_{[ii]} G_{[ik]} }.
\end{equation}
By Proposition \ref{thm_weaklaw}, (\ref{estimate_Piw12}) and (\ref{est_offd}), we have
$$\left\|{G_{[ki]} G^{-1}_{[ii]} G_{[ik]} }\right\| \prec |w|^{1/2}\Psi_\theta^2.$$
By Lemma \ref{lemm_m1_4case}, it is easy to verify that $\left\|{G_{[ii]}}/{N}\right\| \le C |w|^{1/2}\Psi_\theta^2.$
Plug it into (\ref{better_M}), we get
\begin{equation}\label{better_estimate_m12}
\left| {m_{1,2}^{[i]}  - m_{1,2} } \right| \prec |w|^{1/2}\Psi_\theta^2.
\end{equation}
Using (\ref{Zestimate}) and (\ref{better_estimate_m12}), the error $\epsilon_b$ in $(\ref{self_matrix})$ is 
\begin{align*}
\epsilon_b= O_\prec(|w|^{1/2}\Psi^2_\theta)-{\pi_{[i]}Z_{[i]}\pi_{[i]} } \left[1+O_\prec (|w|^{1/2} \Psi_\theta)\right] =O_\prec (|w|^{1/2} \Psi_\theta^2)-{\pi_{[i]}Z_{[i]}\pi_{[i]} }.
\end{align*}
Then following the arguments in Lemma \ref{selfcons_lemm2}, we can obtain the desired result on $\Xi$. 
For $w\in \mathbf D_L$, the proof is similar by using (\ref{weak_lawL}).
\end{proof}

In the following lemma we prove stronger bounds on $[Z]$ and $\langle Z\rangle$ by keeping track of the cancellation effects due to the average over the index $i$. The proof is given in Appendix \ref{appendix3}.  
\begin{lem}\label{fluc_aver}
(Fluctuation averaging) Fix $|z|^2 \le 1-\tau$. Suppose $\Phi$ and $\Phi_o$ are positive, $N$-dependent deterministic functions satisfying $N^{-1/2} \le \Phi, \Phi_o \le N^{-c}$ for some constant $c>0$. Suppose moreover that $\Lambda \prec |w|^{-1/2}\Phi$ and $\Lambda_o \prec |w|^{-1/2}\Phi_o$. Then for $w\in \mathbf D$,
\begin{equation}\label{flucaver_ZZ}
\|[Z] \| + \|\langle Z\rangle \| \prec  \left|w\right|^{-1/2} {\Phi _o^2}.
\end{equation}
\end{lem}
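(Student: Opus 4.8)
The plan is to establish this by the fluctuation averaging (high moment) method, following \cite{isotropic,Semicircle,local_circular,Anisotropic}. Naively, each summand $\pi_{[i]}Z_{[i]}\pi_{[i]}$ in $[Z]$ and $\langle Z\rangle$ has norm $O_\prec(|w|^{-1}\|Z_{[i]}\|)=O_\prec(\Psi_\theta)$ by Lemmas~\ref{Z_lemma} and \ref{cor_basicestimate}, so the triangle inequality only gives $\|[Z]\|,\|\langle Z\rangle\|\prec\Psi_\theta$; the point is that averaging over $i$ gains an extra factor comparable to $\Lambda_o$. Concretely, I would prove that for every fixed $p\in\mathbb N$,
\[
\mathbb E\,\|[Z]\|^{2p}+\mathbb E\,\|\langle Z\rangle\|^{2p}\prec\bigl(|w|^{-1/2}\Phi_o^2\bigr)^{2p},
\]
and then conclude the stochastic domination by Chebyshev's inequality, together with a union bound over an $N^{-C}$-net of the bounded domain $\mathbf D$ and the trivial Lipschitz bound on $[Z],\langle Z\rangle$ in $w$ coming from (\ref{eq_gbound}). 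Since $[Z]$ and $\langle Z\rangle$ are $2\times 2$ matrices, $\|\cdot\|$ is comparable to the maximum of the four entries, so it suffices to bound $\mathbb E\,\bigl|\tfrac1N\sum_{i\in\mathcal I_1}\mathcal Z_i\bigr|^{2p}$ for a scalar family $\mathcal Z_i$ equal to an entry of $\pi_{[i]}Z_{[i]}\pi_{[i]}$ (or of $|d_i|^2\pi_{[i]}Z_{[i]}\pi_{[i]}$).

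The first preparatory step is to make the summands conditionally centered with respect to $\mathbb E_{[i]}$. The matrix $Z_{[i]}=(1-\mathbb E_{[i]})(G_{[ii]})^{-1}$ already satisfies $\mathbb E_{[i]}Z_{[i]}=0$, but the weight $\pi_{[i]}$ is a function of $m_{1,2}$, not of $m_{1,2}^{[i]}$, hence not $H^{[i]}$-measurable. Using (\ref{better_estimate_m12}) — legitimate here since the weak law of Proposition~\ref{thm_weaklaw} holds, so the event $\Xi$ holds with high probability — together with the bounds of Lemma~\ref{cor_basicestimate}, I would replace $\pi_{[i]}$ by the $H^{[i]}$-measurable matrix $\pi^{[i]}_{[i]}$ defined as in (\ref{def_pi_i}) with $m_{1,2}$ replaced by $m_{1,2}^{[i]}$; the resulting error in each summand is $O_\prec(|w|^{-1/2}\cdot|w|^{1/2}\Psi_\theta^2\cdot\|Z_{[i]}\|)=O_\prec(|w|^{1/2}\Psi_\theta^3)$, which is far below $|w|^{-1/2}\Phi_o^2$ under the hypotheses, both before and after averaging. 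After this substitution each $\mathcal Z_i$ satisfies $\mathbb E_{[i]}\mathcal Z_i=0$, and symmetric replacements of random by minor-measurable weights can be made wherever convenient.

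The main step is then the expansion
\[
\mathbb E\Bigl|\tfrac1N\sum_{i}\mathcal Z_i\Bigr|^{2p}=\frac1{N^{2p}}\sum_{i_1,\dots,i_{2p}}\mathbb E\Bigl[\prod_{k=1}^{p}\mathcal Z_{i_k}\prod_{k=p+1}^{2p}\overline{\mathcal Z_{i_k}}\Bigr],
\]
estimated according to the coincidence pattern of $(i_1,\dots,i_{2p})$. If an index, say $i_1$, is isolated (occurs exactly once), one wants to insert $\mathbb E_{[i_1]}$ and use $\mathbb E_{[i_1]}\mathcal Z_{i_1}=0$; the obstruction is that the remaining factors $\mathcal Z_{i_k}$ still see the $i_1$-th row and column through the minors $G^{[i_k]}$ inside $Z_{[i_k]}$. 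This is handled exactly as in the cited fluctuation-averaging arguments: one repeatedly applies the group resolvent identity (\ref{eq_res3}) (and its scalar form (\ref{resolvent8})) to peel the $i_1$-dependence off these factors, each peeling producing a leading term that is $i_1$-free and a correction carrying an off-diagonal resolvent factor of size $O_\prec(\Lambda_o)=O_\prec(|w|^{-1/2}\Phi_o)$. A careful bookkeeping of which peelings must be performed (driven by the isolated indices) shows that every summand with at least one isolated index gains enough such off-diagonal factors, while the summands in which all $2p$ indices occur in coinciding groups involve at most $p$ distinct indices, so there are only $O(N^{p})$ of them and the $N^{-2p}$ prefactor supplies the remaining $N^{-p}$ of smallness. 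Combining the two mechanisms yields the claimed $(|w|^{-1/2}\Phi_o^2)^{2p}$; the square $\Phi_o^2$ appears because each stopped expansion naturally pairs two off-diagonal factors. The bound for $\langle Z\rangle$ is identical, the extra weights $|d_i|^2\le\tau^{-1}$ being harmless, and the restriction $|z|^2\le 1-\tau$ enters only through the uniform estimates on $\pi_{[i]c}$ and $m_{1,2c}$ in Lemmas~\ref{lemm_m1_4case} and \ref{cor_basicestimate}, which keep every weight of size $O(|w|^{-1/2})$. I expect the combinatorial bookkeeping of these nested resolvent expansions — in particular verifying that \emph{every} coincidence pattern is accounted for at the right order — to be the main obstacle.
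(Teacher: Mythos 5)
Your proposal is correct and follows essentially the same route as the paper: reduce the matrix norm to scalars (you do it entry by entry, the paper via $\operatorname{Tr}(BB^\dag)^{p/2}$), pre-center by replacing the random weight $\pi_{[i]}$ with the $H^{[i]}$-measurable $\pi_{[i]}^{[i]}$ using (\ref{better_estimate_m12}), then run the standard high-moment fluctuation-averaging argument in which iterated applications of the group resolvent identities (\ref{eq_res3})--(\ref{eq_res4}) peel minor dependences off the summands and each peeling produces an extra off-diagonal factor $O_\prec(|w|^{-1/2}\Phi_o)$. The only cosmetic difference is that you classify index tuples by coincidence pattern (the older presentation in \cite{local_circular,isotropic}), while the paper uses the $P_J/Q_J$ decomposition of \cite{Semicircle}; both hinge on the same key estimate $\|Q_I B_k\|\prec\Phi_o^{|I|}$ and the same combinatorial bookkeeping that you (and the paper) leave to the cited references.
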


Now we finish the proof of the entrywise local law and averaged local law on the domain $\mathbf D$. By Proposition \ref{thm_weaklaw}, we can take in Lemma \ref{fluc_aver} 
$$\Phi_o = |w|^{1/2}\sqrt{\frac{\Im(m_{1c}+m_{2c})+|w|^{-3/8}(N\eta)^{-1/4}}{N\eta}},\ \ \ \Phi=\left(\frac{|w|^{1/2}}{N\eta}\right)^{1/4},$$
with $\Lambda_o\prec\Psi_\theta\prec |w|^{-1/2}\Phi_o$ and $\Lambda \prec \theta \prec |w|^{-1/2}\Phi$. 
Then (\ref{selfcons_avg2}) gives
$$\Upsilon(w,m_1)\prec\frac{|w|^{1/2}\Im(m_{1c}+m_{2c})+|w|^{1/4}(N\eta)^{-1/4}}{N\eta}.$$
Then using the stability Lemma \ref{lemm_stability},
\[|m_1-m_{1c}|\prec\frac{|w|^{1/2}\Im(m_{1c}+m_{2c})}{N\eta\sqrt{\kappa+\eta}}+\frac{|w|^{1/8}}{(N\eta)^{5/8}} \prec \frac{1}{N\eta}+\frac{|w|^{1/8}}{(N\eta)^{5/8}} \prec |w|^{-1/2}\left(\frac{|w|^{1/2}}{N\eta}\right)^{1/2+1/8}.\]
Here if $\sqrt{\kappa+\eta}\ge (\log N)^{-1}$, we use
\[\frac{|w|^{1/2}\Im(m_{1c}+m_{2c})}{N\eta\sqrt{\kappa+\eta}}\le \frac{C\log N}{N\eta}\prec \frac{1}{N\eta},\]
while if $\sqrt{\kappa+\eta}\le (\log N)^{-1}$, we have $\Im(m_{1c}+m_{2c})=O(\sqrt{\kappa+\eta})$, which also gives that
\[\frac{|w|^{1/2}\Im(m_{1c}+m_{2c})}{N\eta\sqrt{\kappa+\eta}}\prec \frac{1}{N\eta}.\]
We then use (\ref{selfcons_avg1}) to get that
\begin{equation}\label{estimatetheta_ind}
\theta\prec |m_1-m_{1c}|+\frac{|w|^{1/2}\Im(m_{1c}+m_{2c})+|w|^{1/4}(N\eta)^{-1/4}}{N\eta}\prec |w|^{-1/2}\left(\frac{|w|^{1/2}}{N\eta}\right)^{1/2+1/8}.
\end{equation}
Repeating the previous steps with the new estimate (\ref{estimatetheta_ind}), we get the bound
$$\theta\prec {|w|^{-1/2}\left(\frac{|w|^{1/2}}{N\eta}\right)^{\sum_{k=1}^l 1/2^k + 1/2^{l+2} }}$$
after $l$ iterations. This implies the averaged local law $\theta\prec(N\eta)^{-1}$ since $l$ can be arbitrarily large.
Finally as in (\ref{diagonal_weak2}) and (\ref{est_offd}), we have for $i\ne j$
\begin{align*}
 & \left\| {G_{[ii]}  - \pi_{[i]c} } \right\| + \left\| {G_{[ij]} } \right\| \prec \Psi _\theta   + \theta  \prec \sqrt {\frac{{{\mathop{\rm Im}\nolimits} (m_{1c}+m_{2c}) }}{{N\eta }}}  + \frac{1}{{N\eta }}.\end{align*}
This concludes the entrywise local law and averaged local law in Theorem \ref{law_squareD} when $|w|^{1/2}+|z|^2\sim 1$.

When $w\in \mathbf D_L$, we have proved the entrywise law (see the remark after (\ref{self_matrixL})). Also we can prove a similar result as Lemma \ref{fluc_aver}, which implies
\begin{equation} 
m_2= \frac{1+m_1}{-w\left(1 + m_1 \right)^2 + |z|^2}+O_\prec \left((N\eta)^{-1}\right), \ \ \Upsilon(w,m_1)\prec (N\eta)^{-1}.
\end{equation}
The averaged local law then follows from Lemma \ref{lemm_stability}.
We leave the details to the reader.
\end{subsection}

\begin{subsection}{Proof of Theorem \ref{law_squareD} when $|z|$ and $|w|$ are small}\label{section_smallwz}

In the previous proof, we did not include the case where $|w|^{1/2}+|z|^2\le \epsilon$ for some sufficiently small constant $\epsilon>0$. The only reason is that Lemma \ref{lemm_stability} does not apply in this case. In this section, we deal with this problem.

The main idea of this subsection is to use a different set of self-consistent equations, which has the desired stability when $|w|$ and $|z|$ are small. Multiplying (\ref{G11error}) with $|d_i|^2$ and summing over $i$, 
\begin{equation}
1(\Xi)m_1 = 1(\Xi) \left[ \frac{1}{N}\sum_{i=1}^n l_i s_i \frac{{ - 1 - m_1 }}{{w\left( {1 + s_i m_2 } \right)\left( {1 + m_1 } \right) - \left| z \right|^2 }} + O_\prec\left(\Psi_\theta\right) \right]. \label{selfeqn_m1}
\end{equation}
Recall that $\Sigma:=DD^\dag=D^\dag D$. We introduce a new matrix
\begin{equation}
   \tilde H(w) : = \left( {\begin{array}{*{20}c}
   { - w \Sigma^{-1}} & w^{1/2}(X-D^{-1}z)  \\
   {w^{1/2} (X-D^{-1}z)^\dag} & { - wI}  \\
   \end{array}} \right),
 \end{equation}
and define $\tilde G :=\tilde H^{-1}.$ By Schur's complement formula, the upper left block of $\tilde G$ is
$$\tilde G_L=\left[ (X-D^{-1}z)(X-D^{-1}z)^\dag - w\Sigma^{-1}\right]^{-1},$$
and the lower right block is equal to
$$\tilde G_R=\left[ (X-D^{-1}z)^\dag \Sigma (X-D^{-1}z) - w\right]^{-1}=\left[ (DX-z)^\dag (DX-z) - w\right]^{-1} =G_R.$$
Now we write $m_{1,2}$ in another way as
\begin{align}
m_1 & =\frac{1}{N}\text{Tr}\left[ D^\dag \left( {YY^\dag   - w} \right)^{ - 1} D\right] = \frac{1}{N}\text{Tr} \, \tilde G_L , \label{extram1}\\
m_2 & =\frac{1}{N} \text{Tr} \, \tilde G_R = \frac{1}{N}\text{Tr}\left[ (X-D^{-1}z)^\dag \Sigma (X-D^{-1}z) - w\right]^{-1} \nonumber\\
& = \frac{1}{N}\text{Tr} \left[(X-D^{-1}z)(X-D^{-1}z)^\dag \Sigma - w\right]^{-1} =\frac{1}{N}\text{Tr}\left(\Sigma^{-1}\tilde G_L \right) . \label{extram2}
\end{align}

We apply the arguments in the proof of Lemma \ref{lemm_selfcons_weak} to $\tilde H$, and get that
\begin{align}
\tilde G_{[ii]}^{ - 1}  = \left(\begin{matrix} -w|d_i|^{-2} - w m_2  &  -w^{1/2}zd_i^{-1}\\
  -w^{1/2}\bar z\bar d^{-1}_i &  -w-w m_1 \\
\end{matrix}\right) + O_{\prec}(|w| \Psi_\theta) ,
\end{align}
from which we get that
$$1(\Xi)\tilde G_{ii} =1(\Xi)\left[ \frac{-1-m_1}{w(|d_i|^{-2} + m_2)(1+ m_1)-|z|^2|d_i|^{-2}} + O_\prec(\Psi_\theta)\right].$$
Plugging this into (\ref{extram2}), we get
\begin{align}
1(\Xi)m_2 & = 1(\Xi)\left[\frac{1}{N}\sum_{i=1}^n \frac{l_i}{s_i} \frac{-1 - m_1}{w(s^{-1}_i+m_2)(1+ m_1)-|z|^2s_i^{-1}} + O_\prec(\Psi_\theta)\right]. \label{extram22}
\end{align}
We take the equations in (\ref{selfeqn_m1}) and (\ref{extram22}) as our new self-consistent equations, namely,
\begin{align}
1(\Xi)f_1(m_1, m_2) = 1(\Xi)O(\Psi_\theta) ,  \ \ 1(\Xi)f_2(m_1,m_2) = 1(\Xi)O(\Psi_\theta),\label{selfeqn_new}
\end{align}
where
\begin{align}
& f_1( m_1, m_2) := m_1 + \frac{1}{N}\sum_{i} l_i s_i \frac{{1 + m_{1} }}{{w\left( {1 + s_i m_{2} } \right)\left( {1 + m_{1} } \right) - \left| z \right|^2 }}  ,  \\
& f_2( m_1, m_2) := m_2 + \frac{1}{N}\sum_i l_i \frac{1 + m_1}{w(1+s_i m_2)(1 + m_1)-|z|^2 } .
\end{align}
According to the following lemma, this system of self-consistent equations are stable when $|w|$ and $|z|^2$ are small enough .

\begin{lem}
Suppose that $N^{-2}|w|^{-1/2} \le \delta(w) \le (\log N)^{-1}|w|^{-1/2}$ for $w\in \mathbf D$. Suppose $u_{1,2}:\mathbf D\to \mathbb C$ are Stieltjes transforms of positive integrable functions such that
$$\max\left\{\left|f_1(u_1,u_2)(w)\right|, \left|f_2(u_1,u_2)(w)\right|\right\}\le \delta(w).$$
Then there exists an $\epsilon>0$ such that if ${|w|}^{1/2}+|z|^2 \le \epsilon$, we have
\begin{equation}
 \left|u_1(w)-m_{1c}(w)\right|+\left|u_2(w)-m_{2c}(w)\right| \le {C\delta},
\end{equation}
for some constant $C>0$ independent of $w$, $z$ and $N$.
\end{lem}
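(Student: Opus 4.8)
The plan is to reduce the coupled system $\{f_1=f_2=0\}$ to a well-conditioned $2\times 2$ fixed-point problem in the rescaled unknowns $P:=\sqrt{w}(1+m_1)$ and $Q:=\sqrt{w}(1+m_2)$, which are of order one in the regime $|w|^{1/2}+|z|^2\le \epsilon$, and then to run a perturbative stability argument around the deterministic solution. First I would record, from Case 4 of Lemma \ref{lemm_m1_4case} (in particular (\ref{estimate12_0})--(\ref{estimate13_0})), that $|m_{1c}|,|m_{2c}|\sim |w|^{-1/2}$ and $\Im\, m_{1,2c}\sim |w|^{-1/2}$, so that $P_c:=\sqrt{w}(1+m_{1c})$ and $Q_c:=\sqrt{w}(1+m_{2c})$ satisfy $|P_c|,|Q_c|\sim 1$ and $P_c,Q_c\in\mathbb C_+$. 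Writing $D_i=w(1+s_im_2)(1+m_1)-|z|^2=PQ_i-|z|^2$ with $Q_i:=\sqrt{w}(1+s_im_2)=s_iQ+(1-s_i)\sqrt{w}$, and using $s_i\in[\tau,\tau^{-1}]$ together with the normalizations $\frac{1}{N}\sum_i l_i=\frac{1}{N}\sum_i l_is_i=1$, one checks that $P_cQ_{ic}-|z|^2=-s_i+O(\epsilon)$, so the denominators stay bounded away from $0$ on a fixed neighborhood of $(P_c,Q_c)$.

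Dividing $f_1$ and $f_2$ through by $1+u_1=\hat P/\sqrt{w}$, where $\hat P:=\sqrt{w}(1+u_1)$ and $\hat Q:=\sqrt{w}(1+u_2)$, and using $\sqrt{w}u_1=\hat P-\sqrt{w}$, $\sqrt{w}u_2=\hat Q-\sqrt{w}$, the hypothesis $\max\{|f_1(u_1,u_2)|,|f_2(u_1,u_2)|\}\le\delta$ becomes
\[
|\hat g_1(\hat P,\hat Q)|+|\hat g_2(\hat P,\hat Q)|\le C|w|^{1/2}\delta ,
\]
where $\hat g_1:=1-\sqrt{w}/\hat P+\frac{1}{N}\sum_i \frac{l_is_i}{\hat P\hat Q_i-|z|^2}$ and $\hat g_2:=(\hat Q-\sqrt{w})/\hat P+\frac{1}{N}\sum_i \frac{l_i}{\hat P\hat Q_i-|z|^2}$, with $\hat Q_i=s_i\hat Q+(1-s_i)\sqrt{w}$. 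Since $(m_{1c},m_{2c})$ solves $f_1=f_2=0$ exactly (this is precisely the content of the derivation of $f_1,f_2$ in this section), $(P_c,Q_c)$ is an exact common zero of $(\hat g_1,\hat g_2)$.

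Next I would establish the a priori bound $|u_j(w)-m_{jc}(w)|\le (\log N)^{-1}|w|^{-1/2}$, equivalently $|\hat P-P_c|+|\hat Q-Q_c|\le (\log N)^{-1}$. This is where the assumption that $u_{1,2}$ are Stieltjes transforms of positive integrable functions is used: one runs the continuity argument in $\eta=\Im\, w$ exactly as in the proof of Lemma \ref{lemm_selfcons_weak} and Section \ref{subsection_weak_proof}, starting from the trivial bound at large $\eta$ and propagating down the lattice $L(w)$ of (\ref{eqn_def_L}), using that the hypothesis on $f_{1,2}$ holds along the whole path and that $|\hat g_1|+|\hat g_2|$ being small forces $(\hat P,\hat Q)$ into a neighborhood of the zero set; the smallness assumption $\delta\le (\log N)^{-1}|w|^{-1/2}$ is what prevents the iteration from leaving the neighborhood on which the denominators $\hat P\hat Q_i-|z|^2$ are controlled.

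Finally, on this neighborhood I would Taylor-expand
\[
(\hat g_1,\hat g_2)(\hat P,\hat Q)=L\cdot(\hat P-P_c,\ \hat Q-Q_c)+O\!\left(|\hat P-P_c|^2+|\hat Q-Q_c|^2\right),
\]
where $L$ is the Jacobian of $(\hat g_1,\hat g_2)$ at $(P_c,Q_c)$. A direct computation, using $P_cQ_c=-1+O(\epsilon)$ and $Q_c^2=-\frac{1}{N}\sum_i l_i/s_i+O(\epsilon)$ (the latter killing $\partial_P\hat g_2(P_c,Q_c)$ to leading order), gives $\det L=2\,\frac{1}{N}\sum_i l_i/s_i+O(\epsilon)\sim 1$, so $L$ is invertible with $\|L^{-1}\|\le C$. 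Inverting, inserting $|\hat g_1|+|\hat g_2|\le C|w|^{1/2}\delta$, and absorbing the quadratic error via the a priori bound yields $|\hat P-P_c|+|\hat Q-Q_c|\le C|w|^{1/2}\delta$; dividing by $\sqrt{w}$ gives $|u_1-m_{1c}|+|u_2-m_{2c}|\le C\delta$, with $\epsilon$ chosen small (depending only on $\tau$) so that all the $O(\epsilon)$ errors above are harmless. I expect the main obstacle to be the uniform conditioning of the Jacobian $L$: one must show $\det L$ stays bounded away from $0$ simultaneously over all $z$ with $|z|^2\le\epsilon$, all $w$ with $|w|^{1/2}\le\epsilon$, and all admissible $\rho_\Sigma$, which requires carefully tracking how the $O(\epsilon)$ and $O(\sqrt{|w|})$ corrections to $P_c$, $Q_c$, $Q_{ic}$ propagate through the sums and leaning on the two normalization conditions on $(l_i,s_i)$ together with $s_i\in[\tau,\tau^{-1}]$; the a priori continuity argument, routine given the earlier sections, also needs a little care since $\eta$ can be as small as $N^{-2+2\zeta}$ here.
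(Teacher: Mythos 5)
Your proof is correct and takes essentially the same route as the paper: compute the $2\times 2$ Jacobian of the coupled system at $(m_{1c},m_{2c})$, use the asymptotics $m_{1c}\approx i\sqrt{t_0}/\sqrt{w}$, $m_{2c}\approx i t_0^{-1/2}/\sqrt{w}$ and the normalizations of $\rho_\Sigma$ to show the determinant is $\sim 1$ uniformly for $|w|^{1/2}+|z|^2$ small, and then conclude by a fixed-point argument (with an a priori bound propagated in $\eta$). Your change of coordinates to $(P,Q)=(\sqrt{w}(1+m_1),\sqrt{w}(1+m_2))$ and the division by $1+u_1$ are cosmetic — the paper computes the Jacobian of $(f_1,f_2)$ directly in $(m_1,m_2)$ and gets $\det=2+O(|w|^{1/2}+|z|^2)$, which agrees with your $\pm 2/t_0+O(\epsilon)$ after accounting for the rescaling.
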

\begin{proof}
The proof depends on the estimate of the Jacobian at $(m_{1c},m_{2c})$. By (\ref{estimate12_0}) and (\ref{defn_t}),
$$m_{1c}=\frac{i\sqrt{t_0}+O(|w|^{1/2}+|z|^2)}{\sqrt{w}},\ \ m_{2c}=\frac{it_0^{-1/2}+O(|w|^{1/2}+|z|^2)}{\sqrt{w}},$$
where $t_0=(N^{-1}\sum_{i=1}^n l_i/s_i )^{-1}$. Then we can calculate that
$$ \det\left(\begin{matrix} \partial_1 f_1  &  \partial_2 f_1 \\
  \partial_1 f_2 &  \partial_2 f_2 \\
\end{matrix}\right)_{u_{1,2}=m_{1,2c}} = \det \left(\begin{matrix} 1 + O(|z|^2)  &  t_0 + O(|w|^{1/2}+|z|^2)\\
  O(|z|^2) &  2+ O(|w|^{1/2}+|z|^2) \\
\end{matrix}\right) = 2+O(|w|^{1/2}+|z|^2).$$
We can conclude the stability by expanding $f_{1,2}(u_1,u_2)$ around $(m_{1c},m_{2c})$ and using a fixed point argument as in the proof of Lemma \ref{lemm_stability} in Section \ref{subsection_append3}.
\end{proof}

With this stability lemma, we can repeat all the arguments in the previous subsections to prove the entrywise local law and averaged local law when ${|w|}^{1/2}+|z|^2 \le \epsilon$.

\end{subsection}

\end{section}

\begin{section}{Anisotropic local law when $T$ is diagonal}\label{section_isotropiclaw}

In this section we prove the anisotropic local law in Theorem \ref{law_squareD} when $T$ is diagonal. The basic ideas of the proof follow from \cite[section 5]{isotropic}, and the core part of our proof is a novel way to perform the combinatorics.
By the Definition \ref{def_local_laws} (ii) and the definition of matrix norm, it suffices to prove the following proposition for generalized entries of $G$.

\begin{prop}\label{iso_prop}
Fix ${\left| z \right|^2 } \le 1 - \tau$ and suppose that the assumptions of Theorem \ref{law_squareD} hold. Then for any regular domain $\mathbf S$,
\begin{equation}
\left| {\left\langle {{\mathbf{u}},\left( {G(w) - \Pi(w) } \right){\mathbf v}} \right\rangle } \right| \prec \Psi
\end{equation}
uniformly in $w\in \bS$ and any deterministic unit vectors $\mathbf u,\mathbf v\in{\mathbb C}^{\mathcal I}$.
\end{prop}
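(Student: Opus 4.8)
The plan is to follow the polynomialization scheme of \cite[Section 5]{isotropic}, adapted to the block/group structure of $H$ and to the non-constant deterministic limit $\Pi$ from \eqref{def_Pi}. Since $\|G\|,\|\Pi\|\le N^{C}$ on any regular domain by \eqref{eq_gbound} and Lemma~\ref{cor_basicestimate}, and since $\Psi\ge N^{-C}$, by Markov's inequality and the standard properties of $\prec$ it suffices to show that for every fixed $p\in\mathbb N$ and $\epsilon>0$,
\begin{equation}\label{plan_moment}
\mathbb E\left|\langle\mathbf u,(G(w)-\Pi(w))\mathbf v\rangle\right|^{2p}\le N^{\epsilon}\Psi(w)^{2p}
\end{equation}
for all large $N$, uniformly in deterministic unit vectors $\mathbf u,\mathbf v\in\mathbb C^{\mathcal I}$ and in $w$ in the regular domain; uniformity in $w$ and $(\mathbf u,\mathbf v)$ is then promoted to the stated stochastic domination by a net in $w$ together with the Lipschitz bound $\|\partial_w G\|\le C\eta^{-2}$.

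For \eqref{plan_moment} I would first decompose the vectors into two-component pieces $\mathbf u_{[i]},\mathbf v_{[j]}$ and write
\[
\langle\mathbf u,(G-\Pi)\mathbf v\rangle=\sum_{i}\langle\mathbf u_{[i]},(G_{[ii]}-\pi_{[i]c})\mathbf v_{[i]}\rangle+\sum_{i\ne j}\langle\mathbf u_{[i]},G_{[ij]}\mathbf v_{[j]}\rangle .
\]
The core mechanism is the resolvent expansion: using the group identities \eqref{eq_res1}, \eqref{eq_res2}, \eqref{eq_res2_2} and the minor identity \eqref{eq_res3}, every off-diagonal block $G_{[ij]}$ and every centred diagonal block $G_{[ii]}-\pi_{[i]c}$ is rewritten in terms of (i) the deterministic blocks $\pi_{[k]c}$, (ii) diagonal minor blocks $G^{[S]}_{[kk]}$, which are controlled by the already-proved entrywise local law via $\|G^{[S]}_{[kk]}-\pi_{[k]c}\|\prec\Psi$ for $|S|$ fixed (cf.\ Section~\ref{subsection_proofstrong}), (iii) off-diagonal minor blocks $G^{[S]}_{[kl]}$ with $k\ne l$, and (iv) the linear-in-$X$ factors $H_{[kl]}$ together with the fluctuation matrices $Z_{[k]}$ of \eqref{eqn_Zdef}.

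The heart of the argument is then a combinatorial induction on the number of factors. I would assign to each monomial arising in such an expansion a deterministic weight recording the number of $H$-factors, the number of off-diagonal resolvent blocks, and the $\ell^2$-mass carried by the vector factors, normalised so that $\langle\mathbf u,(G-\Pi)\mathbf v\rangle$ itself has weight of order $\Psi$. One step of the induction consists of taking a partial expectation $\mathbb E_{[i]}$, expanding the $H$-factors attached to the group $i$, and estimating the resulting sums with the large-deviation bounds of Lemma~\ref{large_deviation} combined with the Ward-type identities \eqref{eq_gsq1}--\eqref{eq_gsq4} of Lemma~\ref{lemma_Im}, which convert $\sum_k|G^{[S]}_{\mathbf w k}|^2$ into $\eta^{-1}\Im G^{[S]}_{\mathbf w\mathbf w}$ and hence back into a single controlled generalized entry. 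Each step lowers the weight by a factor $O_\prec(N^{\epsilon}\Psi)$ while multiplying the number of terms by at most a bounded amount, so after $O(p)$ steps every surviving term is deterministic and bounded by $N^{\epsilon}\Psi^{2p}$, yielding \eqref{plan_moment}. I expect this bookkeeping — keeping the generated terms polynomially (rather than factorially) many, and tracking which index groups have been integrated out so that the minors remaining in a monomial are genuinely independent of the group being expanded — to be the main obstacle; this is exactly where the new way of organizing the combinatorics is needed.

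Finally, the diagonal sum $\sum_i\langle\mathbf u_{[i]},(G_{[ii]}-\pi_{[i]c})\mathbf v_{[i]}\rangle$ fits into the same scheme, or can alternatively be treated by a weighted fluctuation-averaging estimate in the spirit of Lemma~\ref{fluc_aver} applied to $\sum_i a_i(G_{[ii]}-\pi_{[i]c})$; the regime $|w|^{1/2}+|z|^2\le\epsilon$ is handled exactly as above but with the alternative self-consistent system of Section~\ref{section_smallwz}; the case $1+\tau\le|z|^2\le 1+\tau^{-1}$ is entirely analogous (and simpler, since then $|m_{1,2c}|\sim1$ and $\rho_{1,2c}$ has no edge at the origin); and the corresponding anisotropic bounds for $G_L$ and $G_R$ follow from those for $G$ through the Schur representation \eqref{eqn_schurmatrix}.
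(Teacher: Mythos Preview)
Your high-level plan---reduce to a $2p$-th moment bound by Markov and estimate the moment via the polynomialization scheme of \cite{isotropic}---is exactly what the paper does. Two points of divergence from the actual proof are worth flagging, though.

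First, the diagonal sum needs no separate argument at all: by the entrywise local law $\|G_{[ii]}-\pi_{[i]c}\|\prec\Psi$ and Cauchy--Schwarz,
\[
\Bigl|\sum_i\langle\mathbf u_{[i]},(G_{[ii]}-\pi_{[i]c})\mathbf v_{[i]}\rangle\Bigr|\prec\Psi\sum_i|\mathbf u_{[i]}|\,|\mathbf v_{[i]}|\le\Psi.
\]
No fluctuation averaging is needed; the entire content of the proposition is the off-diagonal sum $\sum_{i\ne j}\langle\mathbf u_{[i]},G_{[ij]}\mathbf v_{[j]}\rangle$.

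Second, and more seriously, your description of the mechanism---``take a partial expectation $\mathbb E_{[i]}$, use large deviation and Ward identities, each step gains a $\Psi$''---does not match how the polynomialization actually runs, and misses the real combinatorial obstacle. The paper expands the $2p$-th moment \emph{fully} first: write the product as a sum over partitions $\Gamma$ of the $2p$ outer indices, then for each summand repeatedly apply the resolvent identities \eqref{eq_diagR}--\eqref{iso_S12} (encoded as ``string operators'') until every $R$-symbol is maximally expanded, i.e.\ independent of all outer indices. Only then is the full expectation taken, and the resulting expression is organised as a graph with black nodes (the outer indices) and white nodes (the inner summation indices coming from the $S^R$ factors). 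No iterated partial expectations and no Ward identities enter this part of the argument. The central difficulty is that for each block of $\Gamma$ of size one (a ``lone'' outer index $b_l$), the sum $\sum_{b_l}|\mathbf u_{[b_l]}|$ costs $N^{1/2}$ rather than $O(1)$; the proof must therefore extract a compensating $N^{-1/2}$ from the expectation for each of the $h$ lone indices. This is precisely the content of the key combinatorial estimates \eqref{claim3_1}--\eqref{claim3_2}: a lone index either forces an extra $\tau_1$-operation in the expansion (hence an extra off-diagonal factor, worth at least $N^{-1/2}$), or it has odd $S^R$-degree, which forces a white block of size $\ge3$ and costs $N^{-1/2}$ in the white-node summation. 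Your weight/induction sketch does not isolate this lone-index mechanism, and the Ward identities you invoke act on \emph{intermediate} resolvent indices, not on the outer vector indices where the $N^{1/2}$ loss actually occurs.
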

It is equivalent to show that
\begin{equation}\label{eq_iso_1}
 \sum\limits_{i,j\in \mathcal I_1} u^\dag_{\left[ i \right]} \left( {G_{\left[ ij \right ]} - \Pi_{\left[ ij\right]}}\right ) v_{\left[ j \right]} \prec \Psi, \ \ u_{[i]}:=  \left( {\begin{array}{*{20}c}
   {u_i}   \\
   {u_{\bar i}} \\
   \end{array}} \right), \ \ v_{[j]}:=  \left( {\begin{array}{*{20}c}
   {v_j}   \\
   {v_{\bar j}} \\
   \end{array}} \right).
\end{equation}
By the entrywise local law, 
\begin{align*}
 \left| {\sum\limits_{i,j} { u_{\left[ i \right]}^\dag \left( {{G_{\left[ ij \right ]}} - \Pi_{\left[ ij\right]}} \right) v_{\left[ j \right]} } } \right| & \le  \sum_i \left\lVert{{G_{\left[ ii \right ]}} - \Pi_{\left[ ii\right]}} \right\rVert\left|u_{\left[ i \right]}\right| \left|v_{\left[ i \right]}\right| + \left| {\sum\limits_{i \ne j} {{{u}^\dag_{\left[ i \right]}} {G_{\left[ ij \right ]}}{{v}_{\left[ j \right]}}} } \right| \prec  \Psi  + \left| {\sum\limits_{i \ne j} {{{u}^\dag_{\left[ i \right]}} {G_{\left[ ij \right ]}}{{v}_{\left[ j \right]}}} } \right|.
\end{align*}
Thus to show (\ref{eq_iso_1}), it suffices to prove
\begin{equation} \label{eq_iso_11}
\left| {\sum\limits_{i \ne j} {{{u}^\dag_{\left[ i \right]}} {G_{\left[ ij \right ]}}{{v}_{\left[ j \right]}}} } \right|\prec \Psi .
\end{equation}
Notice from the entrywise law, we can only get
\begin{equation*}
\left| {\sum\limits_{i \ne j} {{{u}^\dag_{\left[ i \right]}} {G_{\left[ ij \right ]}}{{v}_{\left[ j \right]}}} } \right|\prec \Psi \|\mathbf u\|_1 \|\mathbf v\|_1 \le N\Psi,
\end{equation*}
using $\|\mathbf u\|_1 \le N^{1/2} \|\mathbf u\|_2$ and $\|\mathbf v\|_1 \le N^{1/2} \|\mathbf v\|_2$. In particular, this estimate of the $\ell^1$ norm is sharp when $\mathbf u,\mathbf v$ are delocalized, i.e. their entries have size of order $N^{-1/2}$.

The estimate (\ref{eq_iso_11}) follows from the Chebyshev's inequality if we can prove the following lemma.

\begin{lem}\label{iso_lemm_1}
Suppose the assumptions in Proposition \ref{iso_prop} hold. For any even $p\in 2\mathbb N$, there exists a constant $C_p$ which is independent of $N$ such that
\[\mathbb E{\left| {\sum\limits_{i \ne j} {{{u}^\dag_{\left[ i \right]}} {G_{\left[ ij \right ]}}{{v}_{\left[ j \right]}}} } \right|^p} \le {C_p}{\Psi ^p}.\]
\end{lem}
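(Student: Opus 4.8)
The plan is to bound the $p$-th moment by a high-moment (``$p$-th moment'') version of the fluctuation-averaging / polynomialization argument: expand the product, use resolvent identities to extract the leading deterministic part, and control the error terms by exploiting the independence of the entries of $X$ together with the already-established entrywise local law. Concretely, write $S:=\sum_{i\ne j}u_{[i]}^\dag G_{[ij]}v_{[j]}$ and study $\bbE|S|^p=\bbE S^{p/2}\bar S^{p/2}$; this is a sum over $p$ tuples of index pairs $(i_a,j_a)$. For each factor $G_{[i_a j_a]}$ with $i_a\ne j_a$ we apply the off-diagonal resolvent identity (\ref{eq_res2_2}), which expresses $G_{[ij]}$ in terms of $G_{[ii]}$, $H_{[ij]}$, $G_{[jj]}^{[i]}$ and a quadratic-in-$H$ remainder involving $G^{[ij]}$. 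The key point is that every appearance of $H_{[ij]}$ carries a factor $X_{i\bar j}$ or its conjugate, which has mean zero and size $O_\prec(N^{-1/2})$, and that resolvent entries with superscript $[ij]$ are independent of the $i$- and $\bar i$-, $j$- and $\bar j$-th rows and columns of $X$.

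The execution has three layers. First, a \emph{decoupling/expansion step}: iterate (\ref{eq_res2_2}) and the minor expansion (\ref{eq_res3}) (or (\ref{resolvent8})) a bounded number of times to write each $G_{[i_a j_a]}$ as a polynomial in the $X$-entries with coefficients that are generalized entries of suitable minors $G^{[J]}$, $|J|$ bounded; by the entrywise local law (Theorem~\ref{law_squareD}, already proved in Section~\ref{section_weaklaw}) together with Lemma~\ref{lemma_Im} these coefficients are $O_\prec(|w|^{-1/2})$ and the relevant $\ell^2$-sums of off-diagonal entries are controlled by $\Im G/\eta$, hence by $\Psi^2$-type quantities. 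Second, a \emph{counting step}: take the expectation and use independence of the $X$-entries; a monomial survives only if every $X$-factor is paired with another $X$-factor sharing the same index, which forces the $p$ index pairs $\{(i_a,j_a)\}$ to be ``matched'' in a graph-theoretic sense. Counting the number of free summation indices against the number of $N^{-1/2}$ gains (one per $X$-factor, with two $X$-factors needed per free index that is to be summed to size $N$) shows each matched monomial contributes at most $C_p\Psi^p$; here one uses the large-deviation Lemma~\ref{large_deviation} and the bookkeeping $\sum_\mu|G_{\mathbf w\mu}|^2=\Im G_{\mathbf w\mathbf w}/\eta\le N\eta\,\Psi^2$ from Lemma~\ref{lemma_Im}, exactly as in the proof of the weak law. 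Third, a \emph{stopping step}: the remainder terms from the iteration, which involve higher powers of $H$ times a minor resolvent, are bounded crudely using $\|G^{[J]}\|\le C\eta^{-1}$ from (\ref{eq_gbound}) and the fact that after enough iterations the accumulated $N^{-1/2}$ factors beat the $\eta^{-1}\le N$ loss; this is a standard truncation of the resolvent expansion at order depending on $p$ and $\zeta$.

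I would organize the combinatorics by assigning to each monomial a multigraph on the vertex set $\{i_1,\dots,i_{p},j_1,\dots,j_p\}$ (with identifications), edges recording which $X$-entries appear, and then the estimate reduces to the inequality: (number of independent summation vertices) $-\tfrac12$(number of $X$-edges) $\le$ (the power of $\Psi$ we want). The off-diagonal structure $i_a\ne j_a$ guarantees at least one $X$-edge per factor, and the key improvement over the trivial $\ell^1$ bound comes from the observation that $u,v$ being unit vectors forces, via Cauchy--Schwarz over each index, a gain of $\sqrt{\Im G_{\mathbf w\mathbf w}/(N\eta)}=\Psi$-order per pair rather than $O(1)$. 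The author's ``novel way to perform the combinatorics'' presumably streamlines precisely this graph-counting bookkeeping; I expect the main obstacle to be exactly this — organizing the expansion so that the non-leading terms are grouped into a manageable number of graph types and the arithmetic of free-index-count versus $X$-factor-count is verified uniformly in $p$, while correctly handling the non-identically-distributed entries of $DX$ (the factors $|d_i|^2$) and the regime $|w|^{1/2}+|z|^2$ small where $\Psi$-weights degenerate. Once Lemma~\ref{iso_lemm_1} is in place, (\ref{eq_iso_11}) and hence Proposition~\ref{iso_prop} follow from Markov's inequality applied with $p$ arbitrarily large.
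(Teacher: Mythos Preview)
Your overall strategy is the same as the paper's: expand the $p$-th moment, partition the outer indices $\{i_1,\dots,i_p,j_1,\dots,j_p\}$ into blocks, iterate the group resolvent identities (\ref{eq_res2}), (\ref{eq_res3}), (\ref{iso_S12}) until every resolvent factor is ``maximally expanded'' (independent of all outer indices), then take the expectation and count. The rescaling $R=w^{1/2}G$, $\Phi=|w|^{1/2}\Psi$ and the truncation of strings with $\ge l_0$ off-diagonal factors (your ``stopping step'') also match the paper.

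The genuine gap in your outline is the treatment of \emph{singleton blocks}. Your counting heuristic ``(free summation vertices) $-\tfrac12$($X$-edges) $\le$ (power of $\Psi$)'' works for the \emph{inner} summation indices (the paper's white nodes), but not for the outer indices $b_l$ that appear exactly once in $\{i_1,\dots,j_p\}$. For those, the only control is $\sum_{b_l}|u_{[b_l]}|\le N^{1/2}$, a \emph{loss} of $N^{1/2}$ per singleton; Cauchy--Schwarz alone cannot recover this. Let $h$ be the number of singleton blocks. The actual crux (and the paper's ``novel combinatorics'') is to extract a compensating factor $N^{-h/2}$ from the expectation. The paper does this by splitting singletons into $\mathcal V_1$ (those for which the expansion produced at least two $S^X$ factors at that index; each such factor is $O(N^{-1/2})$ and counts as an extra off-diagonal, so the gain is immediate) and $\mathcal V_0$ (no $S^X$ factors). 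For $b_k\in\mathcal V_0$ one shows, via the parity identity $\mathcal F_{\rm in}(b_k,\Omega_Q)\equiv 1\pmod 2$, that the white-node degree at $b_k$ or $\bar b_k$ is odd, which forces some white-node partition block to contain $\ge 3$ elements; this reduces the number $m(\gamma)$ of white-node blocks below the naive $\tfrac12\sum(\deg b_k+\deg\bar b_k)$ by essentially $|\mathcal V_0|/2$, yielding the missing $N^{-|\mathcal V_0|/2}$. The precise bookkeeping is the content of the two inequalities (\ref{claim3_1})--(\ref{claim3_2}). Without this odd-degree/three-node argument your counting does not close, and the naive estimate overshoots by $N^{h/2}$.
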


The proof of Lemma $\ref{iso_lemm_1}$ is based on the polynomialization method developed in \cite[section 5]{isotropic}. Again we only give the proof for $w\in \mathbf D$. When $w\in \mathbf D_L$, the proof is almost the same.

\subsection{Rescaling and partition of indices}
For our purpose, it is convenient to define the rescaled matrix
\begin{equation}\label{iso_intro_R}
{R^{(J)}}: = w^{1/2} G^{(J)},
\end{equation}
for any $J\subset\sI$ and $|J|\le l$ for some fixed $l$. Consequently we define the control parameter $\Phi$
\begin{equation}\Phi=\left|w\right|^{1/2}\Psi.\end{equation}
By the entrywise law, for $w\in \mathbf D$,
\begin{equation}\label{rescaled_Rorder}
R_{\left[ii\right]}^{(J)}=O_{\prec}(1), \ \ \left(R_{\left[ii\right]}^{(J)}\right)^{-1}=O_{\prec}(1), \ \ R_{\left[ij\right]}^{(J)}=O_\prec(\Phi) \text{ for } i\ne j
\end{equation}
under the above scaling. Now to prove Lemma \ref{iso_lemm_1}, it is equivalent to prove
\begin{equation}\label{eq_iso_2}
\mathbb E{\left| {\sum\limits_{i \ne j} {{{u}^\dag_{\left[ i \right]}} {R_{\left[ ij \right ]}}{{v}_{\left[ j \right]}} } } \right|^p} \le {C_p}{\Phi ^p}.
\end{equation}
We expand the product in (\ref{eq_iso_2}) as
$${ \left| {  \sum\limits_{i \ne j} {{{u}^\dag_{\left[ i \right]}} {R_{\left[ ij \right]}} {{v}_{\left[ j \right]}}} } \right|^p} = \sum\limits_{ {i_k \ne j_k} \in {{\cal I}_1}}\prod\limits_{k = 1}^{p/2} {{u}^\dag_{\left[ i_k \right]}R_{\left[{i_k}{j_k}\right]}{v}_{\left[ j_k \right]}}  \cdot \prod\limits_{k = p/2 + 1}^p \overline {{u}^\dag_{\left[ i_k \right]}R_{\left[{i_k}{j_k}\right]}{v}_{\left[ j_k \right]}} . $$
Formally, we regard $\{i_1,...,i_p, j_1,...,j_p\}$ as the set of $2p$ (index) variables that take values in $\mathcal I_1$. Let $\mathcal B_p$ be the collection of all partitions of $\{i_1,...,i_p, j_1,...,j_p\}$ such that $i_k,j_k$ are not in the same block for all $k=1,...,p$. For $\Gamma \in \mathcal B_p$, let $n(\Gamma)$ be the number of its blocks and define a set of $\mathcal I_1$-valued variables as
\begin{equation}\label{iso_defn_L}
L(\Gamma):=\{b_1,...,b_{n(\Gamma)}\}.
\end{equation}
Now it is convenient to regard $\Gamma$ as a symbol-to-symbol function
\begin{equation}\label{def_gammapart}
\Gamma: \{i_1,...,i_p,j_1,...,j_p\}\rightarrow L(\Gamma),
\end{equation}
such that each $\Gamma^{-1}\left(b_k\right)$ is a block of the partition. Then we can rewrite the sum as
\begin{equation}
\left| { \sum\limits_{i \ne j} {{{u}^\dag_{\left[ i \right]}} {R_{\left[ ij \right ]}}{{v}_{\left[ j \right]}} } } \right|^p =\sum\limits_{\Gamma  \in {{\cal B}_p}} {\sum\limits_{\scriptstyle b_l \in {{\cal I}_1},\atop
\scriptstyle l = 1,...,n(\Gamma)}^* {\prod\limits_{k = 1}^{p/2} {{u}^\dag_{\left[ \Gamma(i_k) \right]}R_{\left[{\Gamma(i_k)}{\Gamma(j_k)}\right]}{v}_{\left[ \Gamma(j_k) \right]}}  \cdot \prod\limits_{k = p/2 + 1}^p \overline {{u}^\dag_{\left[ \Gamma(i_k) \right]}R_{\left[{\Gamma(i_k)}{\Gamma(j_k)}\right]}{v}_{\left[ \Gamma(j_k) \right]}}} },\label{sum_1}
\end{equation}
where $\Sigma^*$ denote the summation subject to the condition that the values of $b_1,...b_n$ are ordered as $b_1 < b_2 <\ldots <b_n$.
We pick one term from the above summation and denote
\begin{equation}\label{iso_defn_Delta}
\Delta(\Gamma) := {\prod\limits_{k = 1}^{p/2} {{u}^\dag_{\left[ \Gamma(i_k) \right]}R_{\left[{\Gamma(i_k)}{\Gamma(j_k)}\right]}{v}_{\left[ \Gamma(j_k) \right]}}  \cdot \prod\limits_{k = p/2 + 1}^p \overline {{u}^\dag_{\left[ \Gamma(i_k) \right]}R_{\left[{\Gamma(i_k)}{\Gamma(j_k)}\right]}{v}_{\left[ \Gamma(j_k) \right]}}}.
\end{equation}

\noindent{\bf Notations:} For any $b_k\in L$, we can define a corresponding $\mathcal I_2$-valued variable $\bar b_k$ in the obvious way, and we denote
\begin{equation}\label{iso_defn_barL}
[L]:=\{b_1,...,b_{n},\conj {b_1},...,\conj {b_{n}}\}.
\end{equation}
For notational convenience, we will also use letters $i,j,k,l$ to denote the symbols in $L$.

\subsection{String and string operators}

During the proof we will frequently use the following resolvent identities for rescaled matrix $R$. They follows immediately from Lemma \ref{lemm_resolvent_group}.

\begin{lem}[Resolvent identities for $R_{[ij]}$ groups]\label{iso_lemm_2}

For $k\notin J$ and $i, j \in \mathcal I_1 \setminus J\cup\{k\}$, we have
\begin{align}
& R_{\left[ij\right]}^{ \left[J\right] } = R_{\left[ij\right]}^{ {\left[Jk\right]} } + {R_{\left[ik\right]}^{\left[ J \right]}\left({R_{\left[kk\right]}^{\left[ J \right]}}\right)^{-1} R_{\left[kj\right]}^{\left[ J \right]}}, \label{eq_diagR} \\
& \left({R_{\left[ii\right]}^{\left[ J \right]}}\right)^{-1} = \left({R_{\left[ii\right]}^{\left[ {Jk} \right]}}\right)^{-1} - {\left({R_{\left[ii\right]}^{\left[ J \right]}}\right)^{-1}R_{\left[ik\right]}^{\left[ J \right]}\left({R_{\left[kk\right]}^{\left[ {J} \right]}}\right)^{-1}R_{\left[ki\right]}^{\left[ J \right]}\left({R_{\left[ii\right]}^{\left[ Jk \right]}}\right)^{-1}},\label{eq_diagR2} \\
& \left({R_{\left[ii\right]}^{\left[ J \right]} }\right)^{-1} =  w^{-1/2}H_{\left[ii\right]}^{\left[J\right]}  - w^{-1}\sum\limits_{l,l' \notin J\cup \{i\}}H_{\left[il\right]}^{\left[J\right]}R_{\left[ll'\right]}^{\left[Ji\right]}H_{\left[l'i\right]}^{\left[J\right]} .\label{eq_diagR3}
\end{align}
Furthermore, for $i\ne j$ and $L$ defined in (\ref{iso_defn_L}), we have
\begin{equation}
\label{iso_S12}
R_{\left[ij\right]}^{ \left[L\backslash\{ij\}\right] } =  R_{\left[ii\right]}^{ \left[L\backslash\{ij\}\right] }S_{\left[ij\right]}R_{\left[jj\right]}^{ \left[L\backslash\{j\}\right] },\ \text{ with } \
 S_{\left[ij\right]} = - w^{-1/2}H_{\left[ij\right]}+w^{-1}\sum\limits_{k,l \notin L} H_{\left[ik\right]}R_{\left[kl\right]}^{\left[L\right]}H_{\left[lj\right]}.
\end{equation}
\end{lem}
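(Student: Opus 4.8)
The statement is purely algebraic: identities (\ref{eq_diagR})--(\ref{iso_S12}) are obtained from Lemma \ref{lemm_resolvent_group} by two mechanical operations, namely (a) replacing $G$ by a minor $G^{[\cdot]}$, which is legitimate by part (iii) of that lemma, and (b) rescaling every block according to the analogue of (\ref{iso_intro_R}), i.e. $R^{[J]}:=w^{1/2}G^{[J]}$, so that each $2\times2$ block satisfies $R^{[J]}_{[ij]}=w^{1/2}G^{[J]}_{[ij]}$ while each inverse block satisfies $\bigl(R^{[J]}_{[ii]}\bigr)^{-1}=w^{-1/2}\bigl(G^{[J]}_{[ii]}\bigr)^{-1}$. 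Crucially, the entries of $H$ are \emph{not} rescaled, which is what produces the explicit powers of $w^{-1/2}$ and $w^{-1}$ in (\ref{eq_diagR3}) and in the definition of $S_{[ij]}$. The plan is therefore to take each claimed identity, write down the corresponding $G$-identity from Lemma \ref{lemm_resolvent_group} with $G$ replaced by the relevant minor, and multiply through by the appropriate power of $w^{1/2}$, checking that the exponents balance.

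Concretely, (\ref{eq_diagR}) is (\ref{eq_res3}) applied to $G^{[J]}$ in place of $G$ and to the index $k$, i.e. $G^{[Jk]}_{[ij]}=G^{[J]}_{[ij]}-G^{[J]}_{[ik]}\bigl(G^{[J]}_{[kk]}\bigr)^{-1}G^{[J]}_{[kj]}$; multiplying by $w^{1/2}$ and noting that the cubic term carries $w^{-1/2}\cdot w^{1/2}\cdot w^{-1/2}=w^{-1/2}$ gives (\ref{eq_diagR}). Identity (\ref{eq_diagR2}) is (\ref{eq_res4}) applied to $G^{[J]}$: the second term on the right is a product of five blocks, two of them inverses, and after multiplying through by $w^{-1/2}$ one checks the $w$-exponents $-\tfrac{1}{2}+(\tfrac{1}{2}-\tfrac{1}{2}+\tfrac{1}{2}-\tfrac{1}{2}+\tfrac{1}{2})=0$ cancel. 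Identity (\ref{eq_diagR3}) is (\ref{eq_res1}) applied to $G^{[J]}$, namely $(G^{[J]}_{[ii]})^{-1}=H^{[J]}_{[ii]}-\sum_{l,l'\notin J\cup\{i\}}H^{[J]}_{[il]}G^{[Ji]}_{[ll']}H^{[J]}_{[l'i]}$; multiplying by $w^{-1/2}$ and using $G^{[Ji]}_{[ll']}=w^{-1/2}R^{[Ji]}_{[ll']}$ together with the fact that the $H$-blocks are unscaled yields the coefficients $w^{-1/2}$ and $w^{-1}$ displayed in (\ref{eq_diagR3}).

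For (\ref{iso_S12}) I would start from (\ref{eq_res2_2}) applied to $G^{[L\setminus\{ij\}]}$, observing that the superscripts $[i]$ and $[ij]$ appearing there become $[L\setminus\{j\}]$ and $[L]$ after composition with the ambient minor $[L\setminus\{ij\}]$, and that the internal sum, originally over $k,l\notin\{i,j\}$, becomes a sum over $k,l\notin L$. Factoring $G^{[L\setminus\{ij\}]}_{[ii]}$ out on the left and $G^{[L\setminus\{j\}]}_{[jj]}$ on the right gives $G^{[L\setminus\{ij\}]}_{[ij]}=G^{[L\setminus\{ij\}]}_{[ii]}\bigl(-H_{[ij]}+\sum_{k,l\notin L}H_{[ik]}G^{[L]}_{[kl]}H_{[lj]}\bigr)G^{[L\setminus\{j\}]}_{[jj]}$; multiplying by $w^{1/2}$ and distributing the $w^{-1/2}$ from each of $G^{[L\setminus\{ij\}]}_{[ii]}$, $G^{[L\setminus\{j\}]}_{[jj]}$ and $G^{[L]}_{[kl]}$ gives exactly $R^{[L\setminus\{ij\}]}_{[ij]}=R^{[L\setminus\{ij\}]}_{[ii]}S_{[ij]}R^{[L\setminus\{j\}]}_{[jj]}$ with $S_{[ij]}$ as stated.

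None of this is genuinely hard; the one place where care is needed — what I would regard as the only real \emph{obstacle} — is the bookkeeping of the minor index sets under composition (that taking the $[k]$-minor of $G^{[J]}$ is $G^{[Jk]}$, that the $[ij]$-minor of $G^{[L\setminus\{ij\}]}$ is $G^{[L]}$, and that the ranges of the internal summations transport correctly), together with a uniform verification that every power of $w^{1/2}$ cancels so that the rescaled identities carry $w$-free (or $w^{-1/2}$, $w^{-1}$) coefficients, consistent with the size bounds $R^{[J]}_{[ii]}=O_\prec(1)$ and $R^{[J]}_{[ij]}=O_\prec(\Phi)$ from (\ref{rescaled_Rorder}). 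I would carry out these checks once in detail and then note that the remaining identities follow in exactly the same way.
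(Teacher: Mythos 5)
Your proposal is correct and follows exactly the argument the paper has in mind: the paper proves these identities by invoking Lemma \ref{lemm_resolvent_group} directly (part (iii) extends those identities to minors $G^{[J]}$), then rescales via $R^{[J]}=w^{1/2}G^{[J]}$, which is precisely what you carry out. Your bookkeeping of the composed minor superscripts and the $w^{\pm 1/2}$ exponents is accurate throughout; you have simply written out the cancellations that the paper leaves implicit.
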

In this section, we expand the $R$ variables in $\Delta(\Gamma)$ using the identities in Lemma \ref{iso_lemm_2}. During the expansion, we need to distinguish carefully between an algebraic expression and its values as a random variable.

\begin{defn}[Strings]
Let $\mathfrak A$ be an alphabet containing all symbols that may appear during the expansion, such as $R_{[ij]}^{[J]}$, $\left({R_{[ij]}^{[J]}}\right)^{-1}$, $S_{[ij]}$, ${u}^\dag_{[i]}$ and ${v}_{[j]}$ for $i, j, J\subset L(\Gamma)$. We define a string $\mathbf s$ to be a formal expression consisting of the symbols from $\mathfrak A$, and denote by $\left\llbracket\bf s\right\rrbracket$ the random variable represented by it. Let $\mathfrak M$ be the collection of all possible strings. We denote an empty string by $\emptyset$. 
\end{defn}

Given a string $\mathbf s$, after an expansion of $R$'s in it, we will get a different string $\mathbf s'$. However they represent the same random variable $\left\llbracket\bf s\right\rrbracket=\left\llbracket\bf s'\right\rrbracket$. During the proof, we will identify more elements of $\mathfrak A$ (see the symbols in (\ref{iso_decomp_1})).

To perform the expansions in a systematical way, we define the following operators acting on strings. We call the symbols $R_{\left[ij\right]}^{ [J] }$,  $\left({R_{\left[ij\right]}^{ [J] }}\right)^{-1}$ to be {\it maximally expanded} if $J\cup \{i,j\}=L$. We call a string $\mathbf s$ to be {\it maximally expanded} if all the $R$ symbols in $\mathbf s$ is maximally expanded.

\begin{defn}[String operators]\label{defn_stringoperator}

(i) Define an operator $\tau_0^{(k)}$ for $\Omega\in\mathfrak M$, in the following sense. Find the first $R_{[ij]}^{[J]}$ in $\Omega$ such that $k\notin J\cup \{i,j\}$, or the first $\left({R_{[ii]}^{[J]}}\right)^{-1}$ such that $k\notin J\cup \{i\}$. If $R_{[ij]}^{[J]}$
is found, replace it with $R_{[ij]}^{[Jk]}$; if  $\left({R_{[ii]}^{[J]}}\right)^{-1}$ is found, replace it with $\left({R_{[ii]}^{[Jk]}}\right)^{-1}$; if neither is found, $\tau_0^{(k)}(\Omega) = \Omega$ and we say that $\tau_0^{(k)}$ is trivial for $\Omega$.

(ii) Define an operator $\tau_1^{(k)}$ for $\Omega\in\mathfrak M$, in the following sense. Find the first $R_{[ij]}^{[J]}$ in $\Omega$ such that $k\notin J\cup \{i,j\}$, or the first $\left({R_{[ii]}^{[J]}}\right)^{-1}$ such that $k\notin T\cup \{i\}$. If $R_{[ij]}^{[J]}$
is found, replace it with ${R_{\left[ik\right]}^{\left[ J \right]}\left({R_{\left[kk\right]}^{\left[ J \right]}}\right)^{-1} R_{\left[kj\right]}^{\left[ J \right]}}$; if  $\left({R_{[ii]}^{[J]}}\right)^{-1}$ is found, replace it with ${-\left({R_{\left[ii\right]}^{\left[ J \right]}}\right)^{-1}R_{\left[ik\right]}^{\left[ J \right]}\left({R_{\left[kk\right]}^{\left[ {J} \right]}}\right)^{-1}R_{\left[ki\right]}^{\left[ J \right]}\left({R_{\left[ii\right]}^{\left[ Jk \right]}}\right)^{-1}}$;
if neither is found, $\tau_1^{(k)}(\Omega)=\emptyset$ and we say that $\tau_1^{(k)}$ is null for $\Omega$.

(iii) Define an operator $\rho$ for $\Omega\in\mathfrak M$, in the following sense. Find each maximally expanded $R_{\left[ij\right]}^{ \left[L\backslash\{ij\}\right] }$ in $\Omega$ and replace it with $R_{\left[ii\right]}^{ \left[L\backslash\{ij\}\right] }S_{[ij]}R_{\left[jj\right]}^{ \left[L\backslash\{j\}\right] }$. If nothing is found,  $\rho(\Omega)=\Omega$.
\end{defn}

According to Lemma \ref{iso_lemm_2}, for any $\Omega\in\mathfrak M$ we have
\begin{equation}\label{operator_decompose}
\left\llbracket \left(\tau_0^{(k)}+\tau_1^{(k)}\right)(\Omega)\right\rrbracket = \left\llbracket\Omega\right\rrbracket, \ \
\left\llbracket\rho(\Omega)\right\rrbracket=\left\llbracket\Omega\right\rrbracket
\end{equation}

\begin{defn}
Define the function ${\cal F}_{\rm{d-max}}:\mathfrak M\rightarrow \mathbb N$ (where the subscript ``d-max" stands for ``distance to being maximally expanded") through
$${\cal F}_{\rm{d-max}}\left({R_{\left[ij\right]}^{ \left[J\right] }}^*\right)=\left|L\backslash\left(J\cup\{i,j\}\right)\right|,$$
where $*$ could be $1$ or $-1$, and
$${\cal F}_{\rm{d-max}}(\Omega) = \sum\limits_{R\text{ variables in }\Omega}{\cal F}_{\rm{d-max}}(R).$$
Define another function ${\cal F}_{\rm{off}}:\mathfrak M\rightarrow \mathbb N$ with ${\cal F}_{\rm{off}}(\Omega)$ being the number of off-diagonal symbols in $\Omega$.
\end{defn}

By off-diagonal symbols, we mean the terms of the form $A_{st}$ with $s\notin \{t,\bar t\}$ or $A_{[ij]}$ with $i\ne j$, e.g. $R_{[ij]}^{[J]}$ and $S_{[ij]}$ with $i\ne j$. Later we will define other types of off-diagonal symbols (see (\ref{iso_decomp_1})). Note that a $R$ symbol is maximally expanded if and only if ${\cal F}_{\rm{d-max}}(R)=0$ and a string $\Omega$  is maximally expanded if and only if ${\cal F}_{\rm{d-max}}(\Omega)=0$. The next two lemmas are almost trivial by Definition \ref{defn_stringoperator}.

\begin{lem}
If $\tau_0^{(k)}(\Omega) = \Omega$ and $\tau_1^{(k)}(\Omega) = \emptyset$,
\begin{equation}\label{RNum_1}
 {\cal F}_{\rm{d-max}}\left(\tau_0^{(k)}(\Omega)\right) = {\cal F}_{\rm{d-max}}(\Omega), \ \
 {\cal F}_{\rm{d-max}}\left(\tau_1^{(k)}(\Omega)\right) = 0;
\end{equation}
otherwise,
\begin{equation}\label{RNum_11}
 {\cal F}_{\rm{d-max}}\left(\tau_0^{(k)}(\Omega)\right) = {\cal F}_{\rm{d-max}}(\Omega) - 1,\ \
 {\cal F}_{\rm{d-max}}\left(\tau_1^{(k)}(\Omega)\right) \le {\cal F}_{\rm{d-max}}(\Omega) + 4n(\Gamma).
\end{equation}
For $\rho$, we have
\begin{equation}\label{RNum_3}
 {\cal F}_{\rm{d-max}}\left(\rho(\Omega)\right) = {\cal F}_{\rm{d-max}}(\Omega) + a,
\end{equation}
where $a$ is the number of maximally expanded off-diagonal $R$'s in $\Omega$.
\end{lem}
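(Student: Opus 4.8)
The plan is to prove this lemma by pure bookkeeping: unwind Definition~\ref{defn_stringoperator} together with the definition of ${\cal F}_{\rm{d-max}}$ and track, symbol by symbol, how ${\cal F}_{\rm{d-max}}$ changes under each operator. Since ${\cal F}_{\rm{d-max}}$ is additive over the $R$-symbols appearing in a string, it suffices to examine only the symbols that a given operator actually rewrites. Two elementary remarks will be used throughout: the crude bound ${\cal F}_{\rm{d-max}}(R_{[ij]}^{[J]})=|L\setminus(J\cup\{i,j\})|\le|L|=n(\Gamma)$ for any single $R$-symbol, and the fact that in the expansion $\tau_0^{(k)},\tau_1^{(k)}$ are applied only for $k\in L(\Gamma)$, so that the defining condition $k\notin J\cup\{i,j\}$ forces $k\in L\setminus(J\cup\{i,j\})$. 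Note also that $\tau_0^{(k)}$ is trivial for $\Omega$ if and only if $\tau_1^{(k)}$ is null for $\Omega$, so the two cases in the statement are exhaustive.

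For the trivial case $\tau_0^{(k)}(\Omega)=\Omega$, $\tau_1^{(k)}(\Omega)=\emptyset$ both claims are immediate, the second because the empty string contains no $R$-symbols and hence ${\cal F}_{\rm{d-max}}(\emptyset)=0$. In the non-trivial case I would argue as follows. The operator $\tau_0^{(k)}$ rewrites exactly one symbol, the first $R_{[ij]}^{[J]}$ with $k\notin J\cup\{i,j\}$ (or the first $(R_{[ii]}^{[J]})^{-1}$ with $k\notin J\cup\{i\}$), by adjoining $k$ to its minor index set; since $k$ lies in $L\setminus(J\cup\{i,j\})$, this removes exactly one element from the set whose cardinality defines ${\cal F}_{\rm{d-max}}$ of that symbol, so its value drops by exactly $1$ and nothing else changes, giving ${\cal F}_{\rm{d-max}}(\tau_0^{(k)}(\Omega))={\cal F}_{\rm{d-max}}(\Omega)-1$.

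The operator $\tau_1^{(k)}$ replaces the same symbol by a short product of $R$-symbols, so the net change of ${\cal F}_{\rm{d-max}}$ equals the total ${\cal F}_{\rm{d-max}}$ of the replacement minus ${\cal F}_{\rm{d-max}}$ of the removed symbol, which is $\ge 0$. When the removed symbol is $R_{[ij]}^{[J]}$, the replacement $R_{[ik]}^{[J]}(R_{[kk]}^{[J]})^{-1}R_{[kj]}^{[J]}$ carries three $R$-symbols, so the net change is $\le 3n(\Gamma)\le 4n(\Gamma)$. When it is $(R_{[ii]}^{[J]})^{-1}$, the replacement $-(R_{[ii]}^{[J]})^{-1}R_{[ik]}^{[J]}(R_{[kk]}^{[J]})^{-1}R_{[ki]}^{[J]}(R_{[ii]}^{[Jk]})^{-1}$ carries five $R$-symbols, but its first factor is precisely the removed symbol $(R_{[ii]}^{[J]})^{-1}$ itself; hence only four genuinely new $R$-symbols are introduced, each of ${\cal F}_{\rm{d-max}}\le n(\Gamma)$, and the net change is $\le 4n(\Gamma)$. (Tracking the exact index sets of the new symbols even gives the sharper $4d-3$ with $d={\cal F}_{\rm{d-max}}((R_{[ii]}^{[J]})^{-1})$, but this is not needed.) This establishes (\ref{RNum_11}).

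For $\rho$ I would first observe that every maximally expanded off-diagonal symbol is necessarily of the form $R_{[ij]}^{[L\setminus\{i,j\}]}$ with $i\ne j$, since $J\cup\{i,j\}=L$ together with $J\subseteq L\setminus\{i,j\}$ force $J=L\setminus\{i,j\}$, and such a symbol has ${\cal F}_{\rm{d-max}}=0$. It is replaced by $R_{[ii]}^{[L\setminus\{i,j\}]}S_{[ij]}R_{[jj]}^{[L\setminus\{j\}]}$, which contributes $0$ from the atomic symbol $S_{[ij]}$, $0$ from the maximally expanded $R_{[jj]}^{[L\setminus\{j\}]}$, and ${\cal F}_{\rm{d-max}}=|L\setminus((L\setminus\{i,j\})\cup\{i\})|=|\{j\}|=1$ from $R_{[ii]}^{[L\setminus\{i,j\}]}$; moreover the two new $R$-symbols are diagonal and so are not re-expanded within the same application of $\rho$. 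Hence $\rho$ performs exactly $a$ such replacements, each raising ${\cal F}_{\rm{d-max}}$ by $1$, which is (\ref{RNum_3}). The only point requiring more than bare unwinding is the $4n(\Gamma)$ bound for $\tau_1^{(k)}$ in the inverse case: a naive count of five new symbols would yield only $5n(\Gamma)$, so one must notice that the removed symbol recurs verbatim in the replacement, leaving four genuinely new contributions; everything else is mechanical.
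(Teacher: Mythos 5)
The paper gives no proof of this lemma (it is dismissed as ``almost trivial by Definition \ref{defn_stringoperator}''), so there is nothing to compare against; your argument simply supplies the bookkeeping the authors omitted. Your proof is correct. In particular you correctly handle the one nonobvious point: in the diagonal/inverse case of $\tau_1^{(k)}$, the replacement $-(R_{[ii]}^{[J]})^{-1}R_{[ik]}^{[J]}(R_{[kk]}^{[J]})^{-1}R_{[ki]}^{[J]}(R_{[ii]}^{[Jk]})^{-1}$ contains five $R$-symbols, but the first is literally the symbol being removed, so only four new contributions enter and the bound is $4n(\Gamma)$ rather than the naive $5n(\Gamma)$. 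Your treatment of $\rho$ (each maximally expanded off-diagonal $R_{[ij]}^{[L\setminus\{ij\}]}$ has ${\cal F}_{\rm{d-max}}=0$; the new $R_{[ii]}^{[L\setminus\{ij\}]}$ contributes exactly $1$, $R_{[jj]}^{[L\setminus\{j\}]}$ and $S_{[ij]}$ contribute $0$, and the new diagonal $R$'s are not themselves acted on in the same pass) is also exactly right.
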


\begin{lem}
For any $\Omega\in\mathfrak M$, we have
\begin{equation}\label{offNum_1}
 {\cal F}_{\rm{off}}\left(\tau_0^{(k)}(\Omega)\right) = {\cal F}_{\rm{off}}(\Omega), \ \  {\cal F}_{\rm{off}}\left(\rho(\Omega)\right) = {\cal F}_{\rm{off}}(\Omega),
\end{equation}
and
\begin{equation}\label{offNum_2}
 {\cal F}_{\rm{off}}(\Omega) + 1 \le {\cal F}_{\rm{off}}\left(\tau_1^{(k)}(\Omega)\right) \le {\cal F}_{\rm{off}}(\Omega) + 2 \ \text{ if }\ \tau_1^{(k)}(\Omega)\neq \emptyset.
\end{equation}
\end{lem}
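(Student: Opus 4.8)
The statement is purely combinatorial bookkeeping, so the plan is to treat the three string operators of Definition \ref{defn_stringoperator} one at a time, read off from their defining substitution rules exactly which symbols are deleted and which are inserted, and classify each symbol as diagonal or off-diagonal using the convention (recalled above) that an $R$-type symbol $A_{[ij]}^{[J]}$ or an $S$-symbol $S_{[ij]}$ is off-diagonal precisely when $i\neq j$.

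First I would dispatch $\tau_0^{(k)}$. By definition it replaces a single symbol $R_{[ij]}^{[J]}$ by $R_{[ij]}^{[Jk]}$, or a single symbol $(R_{[ii]}^{[J]})^{-1}$ by $(R_{[ii]}^{[Jk]})^{-1}$, or does nothing; in every case only the upper index set changes, so the lower pair $[ij]$, hence the diagonal/off-diagonal type of the symbol, is untouched, giving ${\cal F}_{\rm{off}}(\tau_0^{(k)}(\Omega))={\cal F}_{\rm{off}}(\Omega)$. Next I would handle $\rho$: each substitution it performs replaces one maximally expanded off-diagonal symbol $R_{[ij]}^{[L\backslash\{ij\}]}$ (with $i\neq j$, as in (\ref{iso_S12})) by $R_{[ii]}^{[L\backslash\{ij\}]}S_{[ij]}R_{[jj]}^{[L\backslash\{j\}]}$, that is, by two diagonal $R$-factors and one off-diagonal $S$-factor. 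Thus one off-diagonal symbol leaves and one enters, a net change of zero, and summing over the finitely many such substitutions yields ${\cal F}_{\rm{off}}(\rho(\Omega))={\cal F}_{\rm{off}}(\Omega)$.

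The only step requiring a genuine case split is $\tau_1^{(k)}$, assumed non-null. The key observation I would record first is that the selection rule defining $\tau_1^{(k)}$ (act on the first $R_{[ij]}^{[J]}$ with $k\notin J\cup\{i,j\}$, or the first $(R_{[ii]}^{[J]})^{-1}$ with $k\notin J\cup\{i\}$) forces $k\neq i$ and $k\neq j$, so every newly created $R$-factor carrying $k$ as one of its two lower indices is genuinely off-diagonal. Given this: if the selected symbol is the off-diagonal $R_{[ij]}^{[J]}$ with $i\neq j$, it becomes $R_{[ik]}^{[J]}(R_{[kk]}^{[J]})^{-1}R_{[kj]}^{[J]}$, which inserts two off-diagonal and one diagonal symbol and deletes one off-diagonal symbol, a net $+1$; if the selected symbol is diagonal, being either $R_{[ii]}^{[J]}$, which becomes $R_{[ik]}^{[J]}(R_{[kk]}^{[J]})^{-1}R_{[ki]}^{[J]}$, or $(R_{[ii]}^{[J]})^{-1}$, which becomes $-(R_{[ii]}^{[J]})^{-1}R_{[ik]}^{[J]}(R_{[kk]}^{[J]})^{-1}R_{[ki]}^{[J]}(R_{[ii]}^{[Jk]})^{-1}$, then no off-diagonal symbol is deleted while exactly the two factors $R_{[ik]}^{[J]}$ and $R_{[ki]}^{[J]}$ are inserted as off-diagonal, a net $+2$. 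Collecting the three cases gives $(\ref{offNum_2})$.

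As for the main obstacle: there is essentially none, analytically. The one point to get right is the bookkeeping remark that the index-selection conditions built into Definition \ref{defn_stringoperator} guarantee $k\notin\{i,j\}$; overlooking this is the only way one could mistakenly let the lower bound in $(\ref{offNum_2})$ degrade to ${\cal F}_{\rm{off}}(\Omega)$, since then one of the newly created factors could be diagonal. Everything else is a direct transcription of Definition \ref{defn_stringoperator}.
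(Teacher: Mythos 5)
Your proof is correct, and since the paper itself gives no argument — it merely states that this lemma is ``almost trivial by Definition \ref{defn_stringoperator}'' — your write-up is exactly the case check that remark implicitly invokes. The one nontrivial observation, that the selection rules in Definition \ref{defn_stringoperator} force $k\notin\{i,j\}$ so that every newly inserted $R$-factor carrying $k$ as a lower index is off-diagonal, is correctly identified and used.
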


\subsection{Expansion of the strings}

For simplicity of notations, throughout the rest of this section we omit the complex conjugates on the right hand side of (\ref{iso_defn_Delta}) (if we keep the complex conjugates, the proof is the same but with slightly heavier notations). Suppose the right hand side of (\ref{iso_defn_Delta}) is represented by a string $\Omega_\Delta$. Given a binary word ${\bf w} = a_1a_2...a_m$ with $a_i\in\{0,1\}$, we define the operation
\begin{equation}\label{eq_oper}
(\Omega_\Delta)_{\mathbf w} = \rho\tau_{a_{m}}^{(b_{m})}\cdots \rho\tau_{a_2}^{(b_2)}\rho\tau_{a_1}^{(b_1)}\left(\Omega_\Delta\right)
\end{equation}
where $b_{qn+r} := b_{r}$ (recall (\ref{iso_defn_L})) for any $1\le r \le n$ and $q\in\mathbb N$. So a binary words $\bw$ uniquely determines an operator composition.
By (\ref{operator_decompose}),
$\left\llbracket (\Omega_\Delta)_{\mathbf w0}\right\rrbracket +\left\llbracket (\Omega_\Delta)_{\mathbf w1}\right\rrbracket = \left\llbracket(\Omega_\Delta)_{\bw}\right\rrbracket$
and so we get
$$\sum\limits_{|\mathbf w|=m}\left\llbracket(\Omega_\Delta)_{\bw}\right\rrbracket = \left\llbracket\Omega_\Delta\right\rrbracket$$
for any $m\ge 1$, where $|\mathbf w|$ is the length of $\bw$.

\begin{lem}\label{iso_lem_3} 
Given any $\bw$ such that $|\mathbf w|=(n^2+1)(p+6l_0)$ and $(\Omega_\Delta)_{\bw}\neq \emptyset$, either ${\cal F}_{\rm{off}}((\Omega_\Delta)_{\bw})\ge l_0:= \left({8}/{\zeta} + 2\right)p,$ or $(\Omega_\Delta)_{\bw}$ is maximally expanded.
\end{lem}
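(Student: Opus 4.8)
The plan is to argue by a potential-function / pigeonhole argument on the two quantities $\mathcal F_{\rm d\text{-}max}$ and $\mathcal F_{\rm off}$ along the sequence of operator applications encoded by $\mathbf w$. First I would record the initial values: in $\Omega_\Delta$ the only $R$ symbols are the off-diagonal $R_{[\Gamma(i_k)\Gamma(j_k)]}$, none of them expanded with respect to any index of $L(\Gamma)$, so $\mathcal F_{\rm d\text{-}max}(\Omega_\Delta) \le (p/2)\cdot n(\Gamma) \le p n/2$ and $\mathcal F_{\rm off}(\Omega_\Delta) = p/2$ (or $p$ if the conjugates are kept). The word $\mathbf w$ has length $(n^2+1)(p+6l_0)$, and because of the cyclic choice $b_{qn+r}=b_r$, the indices $b_1,\ldots,b_n$ are fed in repeatedly, each index appearing in at least $(n^2+1)(p+6l_0)/n$ consecutive-block positions.

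**Key steps.**
The first step is the "absorbing" observation: once $(\Omega_\Delta)_{\mathbf w}$ contains no off-diagonal $R$ symbol that is not maximally expanded with respect to $b_k$, the operator $\tau_1^{(b_k)}$ acting on it is either trivial ($\tau_0$ does nothing, $\tau_1$ produces $\emptyset$) or acts only on $\left(R_{[ii]}^{[J]}\right)^{-1}$ symbols. Second, I would track how a single "round" of $n$ operators $\rho\tau_{a_n}^{(b_n)}\cdots\rho\tau_{a_1}^{(b_1)}$ changes the potentials, using the bookkeeping lemmas already stated: $\tau_0^{(k)}$ either leaves $\mathcal F_{\rm d\text{-}max}$ fixed (when trivial) or drops it by $1$; $\tau_1^{(k)}$ when nontrivial increases $\mathcal F_{\rm d\text{-}max}$ by at most $4n(\Gamma)$ but increases $\mathcal F_{\rm off}$ by at least $1$; and $\rho$ increases $\mathcal F_{\rm d\text{-}max}$ by the number $a$ of maximally expanded off-diagonal $R$'s but leaves $\mathcal F_{\rm off}$ fixed. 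The crucial point is that if the string is \emph{not} maximally expanded, then within one full round of $n$ operators at least one $\tau_1$ must fire a genuinely new off-diagonal factor (because eventually the relevant index $b_k$ must be the one with respect to which some non-maximal $R$ is not yet expanded, and for that $k$ the branch $a_k=1$ gives $\tau_1^{(b_k)}$ nontrivial), so $\mathcal F_{\rm off}$ strictly increases across that round. Hence after $\sim (n^2+1)(p+6l_0)/n$ rounds either the string has become maximally expanded at some point (and then, once maximally expanded, applying $\rho$ followed by further $\tau$'s cannot reduce $\mathcal F_{\rm off}$, so it stays $\ge$ its value then, which one checks is $\ge l_0$ because reaching maximal expansion through $\rho$-steps has itself spawned enough off-diagonal $S_{[ij]}$ factors), or $\mathcal F_{\rm off}$ has been incremented in each round and therefore $\mathcal F_{\rm off}((\Omega_\Delta)_{\mathbf w}) \ge (\text{number of rounds}) \ge l_0$. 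The arithmetic is arranged precisely so that $(n^2+1)(p+6l_0)$ divided by $n$ comfortably exceeds $l_0 = (8/\zeta+2)p$ after subtracting the at most $p n/2$ "free" non-increasing rounds coming from the initial budget of $\mathcal F_{\rm d\text{-}max}$.

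**Main obstacle.**
The delicate part is the dichotomy in the middle of the argument: showing that a string which is \emph{not} maximally expanded must, within the next full cyclic round of the $n$ operators, either become maximally expanded or else produce a strictly new off-diagonal symbol. This requires a careful case analysis of which symbol $\tau_1^{(b_k)}$ (or $\tau_0^{(b_k)}$) picks — "the first $R_{[ij]}^{[J]}$ with $k\notin J\cup\{i,j\}$" — and arguing that, since $\mathbf w$ cycles through every index, any non-maximal $R$ symbol eventually gets hit; the subtlety is that $\tau_1$ itself introduces fresh diagonal inverse symbols $\left(R_{[kk]}^{[J]}\right)^{-1}$ which can re-activate $\tau_1$ on a \emph{diagonal} symbol without increasing $\mathcal F_{\rm off}$, so one must argue these "diagonal-only" reductions can happen at most $O(n)$ times per round and hence cannot absorb all the room in the word. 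This is exactly the combinatorial heart of the lemma, and I expect it to be where the bulk of the careful case-checking lives; the potential-function accounting around it is routine given the two bookkeeping lemmas.
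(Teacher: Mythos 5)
Your proposal correctly identifies the two potentials $\mathcal F_{\rm d\text{-}max}$ and $\mathcal F_{\rm off}$, the cyclic structure of the word, and the need for a pigeonhole-type count, but the central dichotomy you propose is wrong and the subsequent accounting has gaps.

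You write that if the string is not maximally expanded then ``within one full round of $n$ operators at least one $\tau_1$ must fire,'' so $\mathcal F_{\rm off}$ strictly increases across that round. This is false: the word $\mathbf w$ is given, not adaptively chosen, so when the relevant index $b_k$ comes around, the letter $a_k$ in $\mathbf w$ determines whether $\tau_0^{(b_k)}$ or $\tau_1^{(b_k)}$ fires. If $a_k=0$, a \emph{non-trivial $\tau_0$} fires, which leaves $\mathcal F_{\rm off}$ unchanged and merely decreases $\mathcal F_{\rm d\text{-}max}$ by $1$. Thus $\mathcal F_{\rm off}$ can stay constant across many rounds even when the string is far from maximally expanded. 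Your later claim that once the string is maximally expanded ``$\mathcal F_{\rm off}$ stays $\ge$ its value then, which one checks is $\ge l_0$'' is also wrong: $\mathcal F_{\rm off}$ at the moment of maximal expansion can be as small as $p=\mathcal F_{\rm off}(\Omega_\Delta)$; the conclusion one wants there is the \emph{second} alternative of the lemma (that $(\Omega_\Delta)_{\mathbf w}$ is maximally expanded), not a lower bound on $\mathcal F_{\rm off}$. Finally, your budget bookkeeping (``at most $pn/2$ free non-increasing rounds from the initial $\mathcal F_{\rm d\text{-}max}$'') misses the point that the $\mathcal F_{\rm d\text{-}max}$ budget is constantly replenished: each non-null $\tau_1$ can add up to $4n$, and the $\rho$'s add a total that must itself be bounded (by $l_0$, via the off-diagonal $S_{[ij]}$'s they spawn).

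The paper's proof is a direct global count by contradiction, not a per-round dichotomy. Assuming $\mathcal F_{\rm off}((\Omega_\Delta)_{\mathbf w}) < l_0$ and non-maximal expansion: first, $m_1\le l_0-p$ since each non-null $\tau_1$ increases $\mathcal F_{\rm off}$ by at least $1$; second, balancing $\mathcal F_{\rm d\text{-}max}\ge 0$ against its initial value $\le np$, its total decrease $m_0^{(1)}$ from non-trivial $\tau_0$'s, and its total increase $\le l_0 + 4nm_1$ from $\rho$'s and $\tau_1$'s, yields $m_0^{(1)}\le np + l_0 + 4nm_1$ and hence $m_0^{(1)}+m_1 < n(p+6l_0)$; third, since the word has length $(n^2+1)(p+6l_0)$, a pigeonhole argument produces $n$ consecutive trivial $\tau_0$'s. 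The final step, which your proposal does not have, is the observation that immediately after any application of $\rho$ the string contains \emph{no} maximally expanded off-diagonal $R$ symbols, so $\rho$ is trivial throughout the $n$-block, the string is literally unchanged across it, and triviality of $\tau_0^{(b_k)}$ for all $k$ forces the string to be maximally expanded; being maximally expanded with no off-diagonal $R$'s, it is then a fixed point of every subsequent $\tau_0$ and $\rho$ (and any $\tau_1$ would give $\emptyset$), so $(\Omega_\Delta)_{\mathbf w}$ is maximally expanded — the contradiction.
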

\begin{proof}
We use $m_0$ to denote the number of $0$'s in $\bw$, and $m_1$ to denote the number of $1$'s.
Furthermore, we use $m_0^{(0)}$ to denote the number of $0$'s corresponding to the trivial $\tau_0$'s, and $m_0^{(1)}$ to denote the number of $0$'s corresponding to the non-trivial $\tau_0$'s. Assume ${\cal F}_{\rm{off}}((\Omega_\Delta)_{\bw})< l_0$ and $(\Omega_\Delta)_{\bw}$ is not maximally expanded. By (\ref{offNum_1})-(\ref{offNum_2}), $m_1\le l_0 - p\le l_0$. By (\ref{RNum_1})-(\ref{RNum_3}),
 $${\cal F}_{\rm{d-max}}((\Omega_\Delta)_{\bw})\le {\cal F}_{\rm{d-max}}(\Omega_\Delta)+l_0+4nm_1-m_{0}^{(1)}.$$
Using ${\cal F}_{\rm{d-max}}(\Omega_\Delta)=np$, we get a rough estimate $m_{0}^{(1)}+m_1 < n(p+6l_0)$. By pigeonhole principle, there are at least $n$ $0$'s in a row in $\bw$ that correspond to trivial $\tau_0$'s. This indicates that $(\Omega_\Delta)_{\bw}$ is maximally expanded, which gives a contradiction.
\end{proof}

\begin{lem}\label{iso_lem_trivial}
There exists constants $C_{p,l_0}, C_{p,\zeta}>0$ such that
\begin{equation}
 \sum\limits_{\Gamma  \in {{\cal B}_p}} \sum\limits_{\scriptstyle b_l \in {{\cal I}_1},\atop
\scriptstyle l = 1,...,n(\Gamma)}^*\left|\mathbb E\sum\limits_{\scriptstyle |\mathbf w|=(n^2+1)(p+6l_0), \atop\scriptstyle {\cal F}_{\rm{off}}((\Omega_{\Delta(\Gamma)})_{\bw})\ge l_0}\left\llbracket(\Omega_{\Delta(\Gamma)})_{\bw}\right\rrbracket\right|\le C_{p,l_0} N^{2p} \Phi^{l_0}\le C_{p,\zeta}\Phi^p.
\end{equation}
\end{lem}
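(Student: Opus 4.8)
The plan is to bound each expanded string $(\Omega_{\Delta(\Gamma)})_{\mathbf w}$ occurring in the sum by the power $\Phi^{\mathcal F_{\mathrm{off}}}$ dictated by its number of off-diagonal symbols, pass to expectations, absorb the remaining combinatorial summations (over $\Gamma$, over the ordered indices $b_l$, and over the admissible words $\mathbf w$) into $N$-independent constants, and finally trade the polynomial factor $N^{2p}$ against the surplus power $\Phi^{l_0-p}$ using the lower bound on $\eta$ built into $\mathbf D$.

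\emph{Inputs and the per-string bound.} First I would record that, by the rescaled entrywise local law of Theorem \ref{law_squareD} (cf.\ (\ref{rescaled_Rorder})), for every $J\subset\mathcal I$ of bounded size we have $R_{[ii]}^{[J]}=O_\prec(1)$, $(R_{[ii]}^{[J]})^{-1}=O_\prec(1)$ and $R_{[ij]}^{[J]}=O_\prec(\Phi)$ for $i\ne j$, uniformly in $w\in\mathbf D$; combining (\ref{iso_S12}) with these bounds (or with Lemma \ref{large_deviation} directly) also gives $S_{[ij]}=O_\prec(\Phi)$ for $i\ne j$. Two structural observations then organize the bookkeeping: the operators $\tau_0,\tau_1,\rho$ act only on $R$-symbols and only one at a time, so each $(\Omega_{\Delta(\Gamma)})_{\mathbf w}$ still contains exactly the $p$ vector factors $u^\dag_{[\Gamma(i_k)]}$ and $p$ vector factors $v_{[\Gamma(j_k)]}$ of $\Omega_{\Delta(\Gamma)}$, arranged so that $\llbracket(\Omega_{\Delta(\Gamma)})_{\mathbf w}\rrbracket$ factors as a product over $k$ of scalars $u^\dag_{[\Gamma(i_k)]}(\cdots)v_{[\Gamma(j_k)]}$; and each application of $\tau_1$ or $\rho$ replaces one $R$-symbol by at most four symbols, so after the fixed number $|\mathbf w|=(n^2+1)(p+6l_0)$ of applications the string contains at most $4^{(n^2+1)(p+6l_0)}=:K_{p,l_0}$ symbols, each being a diagonal $R$- or $R^{-1}$-symbol ($O_\prec(1)$) or an off-diagonal $R$- or $S$-symbol ($O_\prec(\Phi)$). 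Since $\Phi\le1$ on $\mathbf D$, these facts yield, whenever $\mathcal F_{\mathrm{off}}((\Omega_{\Delta(\Gamma)})_{\mathbf w})\ge l_0$,
\[
\bigl|\llbracket(\Omega_{\Delta(\Gamma)})_{\mathbf w}\rrbracket\bigr|\ \le\ \xi\,\prod_{k=1}^{p}\bigl|u_{[\Gamma(i_k)]}\bigr|\,\bigl|v_{[\Gamma(j_k)]}\bigr|,\qquad \xi\prec\Phi^{l_0},
\]
where $\xi$ denotes the product of the operator norms of the $R$- and $S$-symbols in the string (here one uses Lemma \ref{lem_stodomin}(ii) to multiply the $O_\prec$ bounds).

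\emph{Expectation and summation.} Next I would pass to expectations: on $\mathbf D$ with $|z|^2\le1-\tau$ one has $\eta\ge cN^{-2}$, hence $\|R^{[J]}\|\le|w|^{1/2}\|G^{[J]}\|\le CN^{2}$ and $\xi\le N^{C}$ deterministically, while $\Phi\ge\sqrt{\zeta}\,N^{-1}$ so $\Phi^{l_0}\ge N^{-C}$; therefore Lemma \ref{lem_stodomin}(iii) gives $\mathbb E\xi\le N^{\epsilon}\Phi^{l_0}$ for any $\epsilon>0$ and $N\ge N_0(\epsilon)$, uniformly in $\Gamma,\mathbf w$ and the $b_l$'s, so that $\mathbb E\bigl|\llbracket(\Omega_{\Delta(\Gamma)})_{\mathbf w}\rrbracket\bigr|\le N^{\epsilon}\Phi^{l_0}\prod_k|u_{[\Gamma(i_k)]}||v_{[\Gamma(j_k)]}|$. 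Summing over the ordered $b_l\in\mathcal I_1$ with $|u_{[i]}|,|v_{[i]}|\le1$ gives $\sum^{*}_{b_l}\prod_k|u_{[\Gamma(i_k)]}||v_{[\Gamma(j_k)]}|\le N^{n(\Gamma)}\le N^{2p}$, while the number of words $\mathbf w$ of the prescribed length is at most $2^{(4p^2+1)(p+6l_0)}$ and $|\mathcal B_p|\le(2p)^{2p}$ (using $n(\Gamma)\le2p$). Therefore
\[
\sum_{\Gamma\in\mathcal B_p}\ \sum^{*}_{\substack{b_l\in\mathcal I_1\\ l=1,\dots,n(\Gamma)}}\ \Bigl|\,\mathbb E\!\!\!\sum_{\substack{|\mathbf w|=(n^2+1)(p+6l_0)\\ \mathcal F_{\mathrm{off}}((\Omega_{\Delta(\Gamma)})_{\mathbf w})\ge l_0}}\!\!\!\llbracket(\Omega_{\Delta(\Gamma)})_{\mathbf w}\rrbracket\,\Bigr|\ \le\ C_{p,l_0}\,N^{2p+\epsilon}\,\Phi^{l_0}.
\]

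\emph{Trading $N^{2p}$ against $\Phi^{l_0-p}$.} Finally, from (\ref{eq_domainD}) we have $N\eta\,|m_{2c}|\ge N^{\zeta}$, and since $|m_{2c}|\le C|w|^{-1/2}$ and $\Im(m_{1c}+m_{2c})\le C|w|^{-1/2}$ by Proposition \ref{prop_roughbound}, this gives $\Phi^{2}=|w|\Psi^{2}\le CN^{-\zeta}$, i.e.\ $\Phi\le CN^{-\zeta/2}$ on $\mathbf D$. Since $l_0=(8/\zeta+2)p$, we get $\Phi^{l_0-p}=\Phi^{(8/\zeta+1)p}\le C^{p}N^{-(\zeta/2)(8/\zeta+1)p}\le C^{p}N^{-4p}$, so $C_{p,l_0}N^{2p+\epsilon}\Phi^{l_0}\le C_{p,l_0}C^{p}N^{-2p+\epsilon}\Phi^{p}\le\Phi^{p}$ for $\epsilon<1$ and $N$ large; absorbing the finitely many small-$N$ cases into a constant $C_{p,\zeta}$ gives the claim. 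The only obstacle here is bookkeeping---checking that the number of partitions $\Gamma$, of admissible words $\mathbf w$ of the prescribed length, and of symbols created during the expansion are all bounded by constants depending only on $p$ and $l_0$; the one genuinely delicate combinatorial input, namely that $|\mathbf w|=(n^2+1)(p+6l_0)$ forces either $\mathcal F_{\mathrm{off}}\ge l_0$ or a maximally expanded string, has already been supplied by Lemma \ref{iso_lem_3}, and the complementary estimate for maximally expanded strings is treated separately.
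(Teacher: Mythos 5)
Your proof is correct and follows essentially the same route as the paper's one-line argument: bound each string stochastically by $\Phi^{l_0}$ using the entrywise law, pass to expectations via Lemma \ref{lem_stodomin}(iii), count the at-most-$N^{2p}$ index choices (and $N$-independent numbers of partitions, words, and symbols), and trade $N^{2p}$ against $\Phi^{l_0-p}$ using $\Phi\le CN^{-\zeta/2}$ and the choice $l_0=(8/\zeta+2)p$. The only differences are expository — you fill in details the paper leaves implicit, such as the deterministic polynomial bound on the symbols needed for Lemma \ref{lem_stodomin}(iii) and the explicit derivation of $\Phi\le CN^{-\zeta/2}$ from the definition of $\mathbf D$ — and a negligible arithmetic slip in the symbol-count bound ($\rho$ can triple the number of symbols and $\tau_1$ can add up to four, so the base $4$ in $4^{(n^2+1)(p+6l_0)}$ is not quite right, but any constant depending only on $p$ and $l_0$ suffices).
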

\begin{proof}
The first bound is due to the fact that each summand is bounded by $C\Phi^{l_0}$ and there are at most $N^{2p}$ of them. For the second bound, we used $\Phi\le CN^{-\zeta/2}.$
\end{proof}

This lemma shows that all the strings with sufficiently many off-diagonal symbols contributes at most $\Phi^p$. It only remains to handle the maximally expanded strings. Define a diagonal symbol as
\begin{equation}
S_{\left[ii\right]} := -\begin{pmatrix}0 & d_iX_{i\bar{i}}\\
\bar d_iX_{\bar{ii}}^{\dagger} & 0
\end{pmatrix}
 + w^{-1}\sum\limits_{k,l \notin L} H_{\left[ik\right]}R_{\left[kl\right]}^{\left[L\right]}H_{\left[li\right]},
\end{equation}
such that
\begin{equation}\label{diag_exp_1}
\left(R_{\left[ii \right]}^{\left[L\backslash\{i\}\right]}\right)^{-1} = \begin{pmatrix}-w^{1/2} & -z\\
-\bar{z} & -w^{1/2}
\end{pmatrix}
 - S_{\left[ii\right]}.
\end{equation}
Notice all the $R$ symbols in a maximally expanded string is diagonal. We taylor expand $R_{\left[ii \right]}^{\left[L\backslash\{i\}\right]}$ as
\begin{align}
R_{\left[ii \right]}^{\left[L\backslash\{i\}\right]}=&\left[w^{-1/2}\pi_{[i]c}^{-1}+\left(S_{\left[ii\right]}-B_i\right)\right]^{-1} =\sum\limits_{k=0}^{l_0-1} \tilde\pi_{ic}\left[\left(S_{[ii]}-B_i\right)\tilde\pi_{ic}\right]^{k}+O_{\prec}\left(\Phi^{l_0}\right),\label{diag_exp_2}
\end{align}
where $\tilde\pi_{[i]c}=w^{1/2}\pi_{[i]c}$, $B_i=\begin{pmatrix}w^{1/2}|d_i|^2m_{2c} & 0\\
0 & w^{1/2}m_{1c}
\end{pmatrix}$, and for the error term,
\[ S_{[ii]}-B_i=w^{-1/2}Z_{[i]}^{[L\setminus \{i\}]}+w^{1/2}\begin{pmatrix}|d_i|^2(m_{2c}-m_{2}^{[L]}) & 0\\
0 & m_{1c}-m_{1}^{[L]}
\end{pmatrix} \prec \Phi \]
by (\ref{Zestimate}) and the averaged local law. Now for all maximally expanded $(\Omega_\Delta)_{\bw}$ with  $|\mathbf w|=(n^2+1)(p+6l_0)$, denote by $\sigma\left\llbracket(\Omega_\Delta)_{\bw}\right\rrbracket$ the expression after plugging in (\ref{diag_exp_1}) and (\ref{diag_exp_2}) without the tail terms. Similar to Lemma \ref{iso_lem_trivial}, we have
\[\sum\limits_{\Gamma  \in {{\cal B}_p}} {\sum\limits_{\scriptstyle b_l \in {{\cal I}_1},\atop
\scriptstyle l = 1,...,n(\Gamma)}^* \left|\mathbb E{\sum\limits_{\scriptstyle |\mathbf w|=(n^2+1)(p+6l_0), \atop
\scriptstyle (\Omega_\Delta)_{\mathbf w}\text{ maximally expanded}}\left( \left\llbracket(\Omega_{\Delta(\Gamma)})_{\bw}\right\rrbracket-\sigma\left\llbracket(\Omega_{\Delta(\Gamma)})_{\bw}\right\rrbracket \right)}\right| } \le {C_{p,\zeta }}{\Phi ^p}.\]
From the above bound and Lemmas \ref{iso_lem_3}, \ref{iso_lem_trivial}, we see that to prove (\ref{eq_iso_2}), it suffices to show
\begin{equation}\label{eq_iso_goal2}
\sum\limits_{\Gamma  \in {{\cal B}_p}} {\sum\limits_{\scriptstyle b_l \in {{\cal I}_1},\atop
\scriptstyle l = 1,...,n(\Gamma)}^* {\left|\mathbb E\sum\limits_{\scriptstyle |\mathbf w|=(n^2+1)(p+6l_0), \atop
\scriptstyle (\Omega_\Delta)_{\bw}\text{ maximally expanded}}\sigma \left\llbracket(\Omega_{\Delta(\Gamma)})_{\bw}\right\rrbracket \right|} } \le {C_{p,\zeta}}{\Phi ^p}.\end{equation}

We write $\sigma\left\llbracket(\Omega_\Delta)_{\bw}\right\rrbracket$ as a sum of monomials in terms of $S_{[ij]}$,
\begin{equation}\label{eq_iso_mono1}
\sigma\left\llbracket(\Omega_\Delta)_{\bw}\right\rrbracket=\sum\limits_{i}M(\bw, \Delta(\Gamma),i),
\end{equation}
where $i$ is an index to label these monomials. Notice that after plugging ($\ref{eq_iso_mono1}$) into (\ref{eq_iso_goal2}), the number of summands $M(\bw, \Delta(\Gamma),i)$ inside the expectation only depends on $p$ and $\zeta$. Thus to show (\ref{eq_iso_goal2}), it suffices to prove the following lemma.
\begin{lem}
Fix any $\Gamma  \in {{\cal B}_p}$ and binary word $\mathbf w$ with $|\mathbf w|=(n^2+1)(p+6l_0)$. Suppose $(\Omega_\Delta)_{\bw}$ is maximally expanded. Let $M(\bw, \Delta(\Gamma))$ be an monomial in $\sigma\left\llbracket(\Omega_{\Delta(\Gamma)})_{\bw}\right\rrbracket$. We have
\begin{equation}\label{eq_iso_goal3} {\sum\limits_{\scriptstyle b_l \in {{\cal I}_1},  l = 1,...,n(\Gamma)}^* \left|\mathbb E{M(\bw, \Delta(\Gamma))} \right|} \le {C_{p,\zeta }}{\Phi ^p}\end{equation}
for some constant $C_{p,\zeta}$ that only depends on $p$ and $\zeta$.
\end{lem}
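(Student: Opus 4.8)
�The plan is to reduce the monomial bound \eqref{eq_iso_goal3} to a purely combinatorial counting problem by carefully tracking three quantities for each monomial $M(\mathbf w,\Delta(\Gamma))$: (a) the number of free summation indices in $L(\Gamma)$ that remain after using the constraints imposed by the entries of $X$, (b) the number of off-diagonal factors (each contributing a power of $\Phi$), and (c) the number of delocalized vector factors $u^\dagger_{[i]},v_{[j]}$ (each contributing a factor bounded by $N^{-1/2}$ in $\ell^\infty$, but whose $\ell^1$ sums over a free index give only $O(N^{1/2})$). The key structural input is that each $S_{[ij]}$ with $i\neq j$ (and likewise the off-diagonal part of $S_{[ii]}$, i.e.\ the $d_iX_{i\bar i}$ term, and each diagonal block $w^{-1}\sum_{k,l}H_{[ik]}R^{[L]}_{[kl]}H_{[li]}$ when expanded) carries explicit $X$-entries of the form $X_{i\bar k}$. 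The point is that every symbol in $L(\Gamma)$ must appear in at least two such $X$-factors for the monomial's expectation to be nonzero, since $\mathbb E X_{i\mu}=0$; this ``each index appears $\geq 2$ times'' principle is exactly what converts the naive bound into the sharp one.

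First I would classify the $X$-factors in $M(\mathbf w,\Delta(\Gamma))$: those coming from off-diagonal $S_{[ij]}$ (the leading $-w^{-1/2}H_{[ij]}$ term and the quadratic tail), those from the off-diagonal piece of a diagonal $S_{[ii]}$, and the ``internal'' sums $\sum_{k,l\notin L}H_{[ik]}R^{[L]}_{[kl]}H_{[li]}$ which run over indices \emph{outside} $L(\Gamma)$ and hence are handled by the large deviation Lemma~\ref{large_deviation} — these contribute an extra $O_\prec(\Phi)$ or $O_\prec(1)$ each and no new free indices. After taking $\mathbb E$ and using independence of distinct $X_{i\mu}$ together with the moment bound \eqref{assm2}, every pairing (or higher grouping) of the $X$-factors forces identifications among the symbols in $[L(\Gamma)]$. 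I would then show: if the monomial has $q$ off-diagonal symbols among the $R$-group structure of $\Delta(\Gamma)$ (so $q\ge p$ since $\Gamma$ keeps $i_k,j_k$ in different blocks for each of the $p$ factors, hence each of the $p$ original $R_{[i_kj_k]}$ is off-diagonal), then the number of truly free indices left is at most $n(\Gamma)-q/2+$ (corrections), and each free index summed against the vector entries and the $1/N$ normalizations in the large-deviation estimates yields a net gain. Carefully bookkeeping, the product of: (number of choices for the free indices) $\times$ ($\Phi$ per off-diagonal symbol) $\times$ ($N^{-1/2}$ per delocalized vector factor) $\times$ (the $\ell^2$-normalization identities \eqref{eq_gsq1}--\eqref{eq_gsq4} and $|u|_2=|v|_2=1$) collapses to $C_{p,\zeta}\Phi^p$.

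The main obstacle — and where the ``novel combinatorics'' promised in the text really lives — is the bookkeeping in the case where the $X$-factors are grouped in clusters of size $>2$ (higher moments) and where the internal sums $\sum_{k,l\notin L}$ interact with the external structure through shared $X$-randomness. A single index $b_r\in L(\Gamma)$ may appear in many $S_{[\cdot\cdot]}$ symbols across the $(n^2+1)(p+6l_0)$ operations, and one must argue that the resulting over-counting of $X$-moments does not produce more free-index summations than $\Phi$-gains. The clean way to organize this is to build, for each monomial, a multigraph on vertex set $L(\Gamma)$ whose edges record co-occurrence in a common $X$-cluster, observe that every vertex has degree $\ge 2$ (else the expectation vanishes) and hence the number of independent cycles is at least $|E|-|L(\Gamma)|+(\text{components})$; then each off-diagonal symbol not ``used up'' by a tree edge of this graph yields a genuine $\Phi$-saving over the trivial $N$ per summation index. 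The delocalization of $u,v$ must be threaded through this: a vector factor attached to a leaf-like index converts a would-be $O(N)$ sum into $O(N^{1/2})$ via Cauchy--Schwarz against $|u|_2=|v|_2=1$, exactly compensating the missing $\Phi$. Once this graph-theoretic inequality is set up, \eqref{eq_iso_goal3} follows by multiplying the estimates and using $\Phi\le CN^{-\zeta/2}$ to absorb all polynomial-in-$N$ prefactors (the $C_{p,l_0}N^{2p}$ type losses) into a fixed power of $\Phi$; I would do the power-counting once for the ``generic'' pairing and then note monotonically that higher clusterings and more expansions only improve the bound.
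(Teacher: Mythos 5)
Your high-level intuition is sound: the proof is indeed a graph-theoretic power-counting game in which factors of $\Phi$ from off-diagonal symbols and factors of $N^{-1/2}$ from delocalized $u,v$-entries must together beat the entropy of free index summations, and the vanishing of $\mathbb E X_{i\mu}$ forces a ``each $X$-entry appears at least twice'' constraint. You also correctly observe that $q\ge p$ off-diagonal factors survive because $\Gamma$ separates $i_k$ from $j_k$.

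However, there is a genuine gap in how you propose to handle the internal summation indices $k,l\notin L$ inside each $S^{R}$-symbol. You want to collapse each internal sum $\sum_{k,l\notin L}H_{[ik]}R^{[L]}_{[kl]}H_{[li]}$ to an $O_\prec(\Phi)$ or $O_\prec(1)$ factor via Lemma~\ref{large_deviation} and then build a multigraph on $L(\Gamma)$ alone, arguing every vertex has degree $\ge 2$. This loses the estimate. The point is that the $X$-factors inside $S^{R}_{\cdot\cdot}$ are of the form $X_{b_l\,\bar k}$, coupling a \emph{black} index $b_l\in L$ to a \emph{white} (internal) index $k\notin L$; the moment constraint $\mathbb E X=0$ therefore identifies black--white pairs, not black--black pairs, and the savings that produce the crucial factor $N^{-h/2}$ in \eqref{eq_iso_3} come from counting how the white-node partition blocks interact with those black indices in $\mathcal V$ (equivalently, those $b_l$ with $\mathcal F_{\rm in}(b_l,\Omega_\Delta)=1$). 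The paper therefore does \emph{not} collapse the internal sums: it introduces white nodes $\mathfrak G_{Q1}$, classifies them as diamond or normal, and proves the inequalities \eqref{claim3_1}--\eqref{claim3_2} on the number of white-node blocks $m(\gamma)$ versus the degree sequence $\deg(b_k)$, $n_R^{(d)}$, $|\mathcal V_{00}|$, $|\mathcal V_{01}|$, $|\mathcal V_1|$. Your degree-$\ge 2$ multigraph on $L(\Gamma)$ is structurally not the right object; without the white nodes there is no way to track the diamond/normal distinction that drives \eqref{claim3_1}, and collapsing the internal sums early leaves you with no free indices over which to run the partition argument -- so the $N^{-h/2}$ gain, and consequently the Cauchy--Schwarz step converting $\sum_{b_l}|u_{[b_l]}|^{I_l}|v_{[b_l]}|^{J_l}$ to a bounded quantity, cannot be recovered.

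A secondary issue is that replacing a random quantity inside $|\mathbb E(\cdots)|$ by an $O_\prec$-bound is not directly justified: $O_\prec$ is a high-probability statement, not an almost-sure one, and the bound you need is on the exact expectation. The paper sidesteps this by first integrating the $X$-variables exactly (using \eqref{assm2}) before doing any power-counting; the deterministic control parameters $\Phi$ enter only afterwards via \eqref{rescaled_Rorder} and the averaged local law. Your proposal should either keep the full moment calculation or explicitly invoke a version of Lemma~\ref{lem_stodomin}(iii) at each step, which is harder to organize if the internal sums have already been collapsed.
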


For the rest of this section, we fix a $\Gamma\in{{\cal B}_p}$ and a maximally expanded $(\Omega_{\Delta(\Gamma)})_{\bw}$ with $|\mathbf w|=(n^2+1)(p+6l_0)$. Then we fix a monomial $M(\bw, \Delta(\Gamma))$ in $\sigma\left\llbracket(\Omega_{\Delta(\Gamma)})_{\bw}\right\rrbracket$. Let $\Omega_M$ be the string form of $M(\bw,\Delta(\Gamma))$ in terms of $S_{[ij]}$.
It is not hard to see that
\begin{equation}\label{Foff1}
{\cal F}_{\rm{off}}\left(\Omega_M\right) = {\cal F}_{\rm{off}}\left((\Omega_\Delta)_{\bw}\right).
\end{equation}
Now we decompose $S_{[ij]}$ as
\begin{equation}\label{iso_decomp_1}
S_{\left[ij\right]}=S^X_{i\bar j}+S^X_{\bar i j}+S^{R}_{i\bar j}+S^{R}_{i j}+S^{R}_{\bar i\bar j}+S^{R}_{\bar i j},
\end{equation}
where we define the following symbols in $\mathfrak A$:
\begin{equation}\label{SX}
S^X_{i\bar j}:=d_iX_{i\bar{j}}\begin{pmatrix}0 & 1\\
0 & 0
\end{pmatrix},\ \ S^X_{\bar i j}:=\bar{d_i}X_{\bar{i}j}^{\dagger}\begin{pmatrix}0 & 0\\
1 & 0
\end{pmatrix},
\end{equation}
\begin{equation}\label{SR1}
S^{R}_{i\bar j}:=\sum\limits _{k,l \notin L} d_id_lX_{i\bar{k}}X_{l\bar{j}}\begin{pmatrix}0 & R_{\bar k l}^{\left[L\right]}\\
0 & 0
\end{pmatrix},\ \ S^{R}_{i j}:=\sum\limits _{k,l \notin L} d_i\bar{d_l}X_{i\bar{k}}X_{\bar{l}j}^{\dagger}\begin{pmatrix}R_{\bar k \bar l}^{\left[L\right]} & 0\\
0 & 0
\end{pmatrix},
\end{equation}
\begin{equation}\label{SR2}
S^{R}_{\bar i\bar j}:=\sum\limits _{k,l\notin L} \bar{d_i}d_lX_{\bar{i}k}^{\dagger}X_{l\bar{j}}\begin{pmatrix}0 & 0\\0 & R_{kl}^{\left[L\right]}
\end{pmatrix},\ \ S^{R}_{\bar i j}:=\sum\limits _{k,l\notin L} \bar{d_i}\bar{d_l}X_{\bar{i}k}^{\dagger}X_{\bar{l}j}^{\dagger}\begin{pmatrix}0 & 0\\
R_{k\bar l }^{\left[L\right]} & 0
\end{pmatrix}.
\end{equation}
We expand $S_{[ij]}$'s of $M(\bw,\Delta(\Gamma))$ as in (\ref{iso_decomp_1}), and write $M(\bw,\Delta(\Gamma))$ as a sum of monomials in terms of $S_{st}^X$ and $S_{st}^R$,
\begin{equation}\label{eqn_monomial2}
M(\bw,\Delta(\Gamma))=\sum\limits_{i}Q(\bw,\Delta(\Gamma),i),
\end{equation}
where $i$ is an index to label these monomials. Again it is not hard to see that
\begin{equation}\label{Foff2}
{\cal F}_{\rm{off}}\left(\Omega_Q\right)={\cal F}_{\rm{off}}\left(\Omega_M\right) = {\cal F}_{\rm{off}}\left((\Omega_\Delta)_{\bw}\right).
\end{equation}
Since the number of summands in (\ref{eqn_monomial2}) is independent of $N$, to prove (\ref{eq_iso_goal3}) it suffices to show
\begin{equation}\label{eq_iso_goal4}
{\sum\limits_{\scriptstyle b_l \in {{\cal I}_1}, l = 1,...,n(\Gamma)}^* \left|\mathbb E{Q(\bw, \Delta(\Gamma))} \right|} \le {C_{p,\zeta}}{\Phi ^p}
\end{equation}
for any monomial $Q(\bw,\Delta(\Gamma))$ in (\ref{eqn_monomial2}). Throughout the following, we fix a $Q(\bw,\Delta(\Gamma))$ with nonzero expectation, and denote by $\Omega_Q$ the string form of $Q(\bw,\Delta(\Gamma))$ in terms of $S_{st}^X$ and $S_{st}^R$. Notice the $R$ variables in $S^R_{st}$ are maximally expanded. As a result, the $S^X_{st}$ variables are independent of $S^R_{st}$ variables in $Q(\bw, \Delta(\Gamma))$.
Therefore we make the following observation: if $S_{st}^X$ appears as a symbol in $\Omega_Q$, then $\Omega_Q$ contains at least two of them.

\begin{defn}\label{def_iso_5}
Recall $\Gamma$ defined in (\ref{def_gammapart}). Let $h$ be the number of blocks of $\Gamma$ whose size is $1$, i.e.
\begin{equation}
h:=\sum\limits_{l=1}^{n(\Gamma)}\one\left(\left|\Gamma^{-1}(b_l)\right|=1\right).
\end{equation}
For $l=1,...,n$, define
$$I_l:=\left|\{i_1, \ldots, i_p\} \cap \Gamma^{-1}(b_l)\right|, \ \ J_l:=\left|\{j_1, \ldots, j_p\} \cap \Gamma^{-1}(b_l)\right|.$$
\end{defn}

\begin{lem}
Suppose for any $b_1,...,b_n$ taking distinct values in $\sI_1$,
\begin{equation}\label{eq_iso_3}
|\bbE Q(\bw,\Delta(\Gamma))|\le CN^{-h/2}\Phi^p\prod_{l=1}^n\left|u_{[b_l]}\right|^{I_l}\left|v_{[b_l]}\right|^{J_l}
\end{equation}
holds for some constant $C$ independent of $N$. Then the estimate (\ref{eq_iso_goal4}) holds.
\end{lem}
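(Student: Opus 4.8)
The plan is to reduce the estimate \eqref{eq_iso_goal4} to the pointwise bound \eqref{eq_iso_3} by summing the latter over the constrained index set, so the content of the final lemma is really a bookkeeping computation on sums of products of entries of the unit vectors $\mathbf u$ and $\mathbf v$. First I would drop the ordering constraint in $\sum^*$ at the cost of a combinatorial factor $C_p$ (the number of orderings of $n\le 2p$ symbols is bounded by $(2p)!$, which is a constant independent of $N$), so it suffices to bound $\sum_{b_1,\ldots,b_n\in\sI_1} N^{-h/2}\Phi^p\prod_{l=1}^n |u_{[b_l]}|^{I_l}|v_{[b_l]}|^{J_l}$. Since $\Phi^p$ and $N^{-h/2}$ are deterministic and can be pulled out, the remaining task is to estimate $\sum_{b_1,\ldots,b_n} \prod_{l=1}^n |u_{[b_l]}|^{I_l}|v_{[b_l]}|^{J_l}$, and because the summation variables are not required to be distinct here (dropping the distinctness only increases the sum) this sum factorizes as $\prod_{l=1}^n \left(\sum_{b\in\sI_1} |u_{[b]}|^{I_l}|v_{[b]}|^{J_l}\right)$.

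The key step is then to bound each single-index factor $\sum_{b\in\sI_1}|u_{[b]}|^{I_l}|v_{[b]}|^{J_l}$ in terms of $N$ to an appropriate power, using $\|\mathbf u\|_2=\|\mathbf v\|_2=1$. For a block of size one ($I_l+J_l=1$), the factor is either $\sum_b |u_{[b]}|\le \sqrt{2N}\|\mathbf u\|_2 = \sqrt{2N}$ or the analogous bound with $\mathbf v$, by Cauchy--Schwarz; there are exactly $h$ such blocks, contributing $(2N)^{h/2}$, which is precisely cancelled by the prefactor $N^{-h/2}$. For a block with $I_l+J_l\ge 2$, I would use that $|u_{[b]}|,|v_{[b]}|\le 1$ together with $\sum_b |u_{[b]}|^2 \le 2$ and $\sum_b |v_{[b]}|^2\le 2$, so that $\sum_b |u_{[b]}|^{I_l}|v_{[b]}|^{J_l}\le \sum_b |u_{[b]}|^{2}+|v_{[b]}|^2 \le 4$ when $I_l+J_l=2$, and in general $\sum_b |u_{[b]}|^{I_l}|v_{[b]}|^{J_l}\le \left(\sum_b|u_{[b]}|^2\right)^{\lceil I_l/2\rceil}\cdots$ — more cleanly, since each $|u_{[b]}|\le 1$, one has $|u_{[b]}|^{I_l}\le |u_{[b]}|^{\min(I_l,2)}$ (the same for $v$), hence the factor is bounded by a constant $C$ depending only on $p$. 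Multiplying the $n$ factors, the product over all blocks of size one gives $(2N)^{h/2}$, and the product over the $\le 2p$ remaining blocks gives a constant $C_p$; combined with the $N^{-h/2}\Phi^p$ in front, we land on $C_{p,\zeta}\Phi^p$ as required.

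The main obstacle — and the reason this lemma is stated separately rather than folded into the previous one — is that the bound must distinguish the size-one blocks (where one genuinely loses a factor $\sqrt N$ per block and must extract the compensating $N^{-1/2}$ from the combinatorial/probabilistic structure of $Q$, i.e. from the fact recorded just before Definition \ref{def_iso_5} that an isolated $S^X$ symbol forces a second one, killing the expectation unless it is paired) from the size-$\ge 2$ blocks (where summing against $\|\mathbf u\|_2^2$ costs nothing). In other words, the analytic input of this step is trivial Cauchy--Schwarz; the real work was already done upstream in producing the exponent $N^{-h/2}$ in \eqref{eq_iso_3}. So in writing the proof I would: (i) invoke \eqref{eq_iso_3}; (ii) remove the ordering $\sum^*$ and relax distinctness, paying $C_p$; (iii) factorize the sum over $b_1,\ldots,b_n$; (iv) bound each factor as above, tracking that the $h$ size-one blocks produce exactly $(2N)^{h/2}$; (v) conclude that the total is $\le C_p (2N)^{h/2} N^{-h/2}\Phi^p \le C_{p,\zeta}\Phi^p$, which is \eqref{eq_iso_goal4}. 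I would also note that summing \eqref{eq_iso_goal4} over the finitely many (in $N$) monomials $Q$, then over the finitely many $M$, then over $\Gamma\in\mathcal B_p$, yields \eqref{eq_iso_2} and hence Lemma \ref{iso_lemm_1}.
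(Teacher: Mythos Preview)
Your proposal is correct and follows essentially the same approach as the paper: bound each single-index sum $\sum_{b}|u_{[b]}|^{I_l}|v_{[b]}|^{J_l}$ by $N^{1/2}$ when $I_l+J_l=1$ (via Cauchy--Schwarz) and by a constant when $I_l+J_l\ge 2$, so the product over blocks yields $N^{h/2}$, which is exactly cancelled by the prefactor $N^{-h/2}$. The paper's proof is just a two-line version of your argument, omitting the explicit relaxation of the ordering/distinctness constraint that you spell out.
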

\begin{proof}
By Cauchy-Schwarz inequality,
\[\sum\limits_{k=1}^N {\left|{u}_{\left[k\right]}\right|^{a}\left|{v}_{\left[k\right]}\right|^{b} \le }
\begin{cases}
N^{1/2} &\text{if } a+b=1\\
1 &\text{if } a+b\geq 2
\end{cases}.\]
Then using $h = \sum\limits_{l=1}^{n}\one\left(I_l+J_l=1\right),$ we get
\[\sum\limits_{\scriptstyle b_l \in {{\cal I}_1}, l = 1,...,n(\Gamma)}^* \left|\mathbb E{Q(\bw, \Delta(\Gamma))}\right| \le C \Phi^pN^{-h/2}\prod_{l=1}^n\sum\limits_{b_l\in\mathcal I_1}\left|{u}_{\left[b_l\right]}\right|^{I_l}\left|{v}_{\left[b_l\right]}\right|^{J_l}\le C \Phi^p.\]
\end{proof}

Hence it suffices to prove (\ref{eq_iso_3}). The key is to extract the $N^{-h/2}$ factor from $\bbE Q(\bw,\Delta(\Gamma))$. For this purpose, we need to keep track of the indices in $L$ during the expansion.

\begin{defn}
 Define a function ${\cal F}_{\rm{in}}:L\times\mathfrak M\rightarrow \mathbb N$ with
 ${\cal F}_{\rm{in}}(l,\Omega)$ giving the number of times $l$ or $\bar l$ appears as an index of off-diagonal $R$ or $S$ in $\Omega$.
\end{defn}

The following lemma follows immediately from Definition \ref{defn_stringoperator} and the expansions we have done to obtain $\Omega_Q$ from $(\Omega_{\Delta})_{\bw}$.
\begin{lem}\label{lem_Fin_change}
(1) For any string $\Omega$, if $\tau_0^{(k)}$ is not trivial for $\Omega$, then
\begin{equation}\label{iNum_1}
 {\cal F}_{\rm{in}}\left(l, \tau_0^{(k)}(\Omega)\right) = {\cal F}_{\rm{in}}(l, \Omega), \ \
 {\cal F}_{\rm{in}}\left(l, \tau_1^{(k)}(\Omega)\right) = {\cal F}_{\rm{in}}(l, \Omega) + 2\delta_{kl}.
\end{equation}
(2) For any string $\Omega$,
\begin{equation}\label{iNum_3}
 {\cal F}_{\rm{in}}\left(l, \rho(\Omega)\right) = {\cal F}_{\rm{in}}(l, \Omega).
\end{equation}
(3) For any maximally expanded $(\Omega_\Delta)_{\mathbf w}$,
\begin{equation}\label{iNum_4}
{\cal F}_{\rm{in}}(l, \Omega_Q)={\cal F}_{\rm{in}}(l,(\Omega_{\Delta})_{\bw}).
\end{equation}
\end{lem}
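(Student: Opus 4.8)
The plan is to prove all three parts by directly inspecting Definition~\ref{defn_stringoperator} and the two further decompositions---Taylor expansion of the diagonal resolvents and the splitting of $S_{[ij]}$---by which $\Omega_Q$ is obtained from $(\Omega_\Delta)_{\bw}$, while monitoring only the off-diagonal $R$- and $S$-symbols and the $L$- (resp.\ $[L]$-) subscripts that they carry. The governing observation is that ${\cal F}_{\rm{in}}(l,\cdot)$ ignores minor (upper) index sets and ignores diagonal symbols entirely, so one needs only count how many off-diagonal symbols are created or deleted by each move, and with which subscripts.

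First I would treat $\tau_0^{(k)}$: it replaces a single $R_{[ij]}^{[J]}$ by $R_{[ij]}^{[Jk]}$ (or $(R_{[ii]}^{[J]})^{-1}$ by $(R_{[ii]}^{[Jk]})^{-1}$), altering only a minor label, leaving the subscripts $i,j$ and the off-diagonal/diagonal type of the symbol unchanged; hence ${\cal F}_{\rm{in}}(l,\tau_0^{(k)}(\Omega))={\cal F}_{\rm{in}}(l,\Omega)$. For $\tau_1^{(k)}$, by the identities~(\ref{eq_diagR})--(\ref{eq_diagR2}) of Lemma~\ref{iso_lemm_2} the selected symbol is replaced, apart from diagonal factors of the form $(R_{[kk]}^{[\cdot]})^{-1}$ and $(R_{[ii]}^{[\cdot]})^{\pm 1}$ (invisible to ${\cal F}_{\rm{in}}$), by exactly two new off-diagonal $R$-symbols with subscript multisets $\{i,k\}$ and $\{k,j\}$---off-diagonal because the selection rule $k\notin J\cup\{i,j\}$ forces $k\ne i,j$. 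In the situation relevant to~(\ref{iNum_1}), where $\tau_0^{(k)}$ is non-trivial and the selected $R$ is off-diagonal with $i\ne j$, the deleted symbol contributed $\delta_{il}+\delta_{jl}$ and the two created symbols contribute $(\delta_{il}+\delta_{kl})+(\delta_{kl}+\delta_{jl})$, a net change of $2\delta_{kl}$; the remaining sub-cases (selected $R$ diagonal, or an inverse $R$) are settled by the same enumeration of the two freshly created off-diagonal $R$-symbols, and this is the only step that calls for genuine bookkeeping. Part~(2) is immediate: by~(\ref{iso_S12}) the operator $\rho$ turns each maximally-expanded off-diagonal $R_{[ij]}^{[L\setminus\{ij\}]}$ into $R_{[ii]}^{[L\setminus\{ij\}]}\,S_{[ij]}\,R_{[jj]}^{[L\setminus\{j\}]}$, trading one off-diagonal $R$ of subscript multiset $\{i,j\}$ for one off-diagonal $S$ of the same multiset plus two diagonal $R$'s, so ${\cal F}_{\rm{in}}(l,\cdot)$ is unchanged.

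Finally, for part~(3) I would use that $(\Omega_\Delta)_{\bw}$ is maximally expanded, so every $R$-symbol in it is already diagonal and invisible to ${\cal F}_{\rm{in}}$; thus ${\cal F}_{\rm{in}}(l,(\Omega_\Delta)_{\bw})$ counts exactly the occurrences of $l$ or $\bar l$ among its off-diagonal $S$-symbols. The passage to $\Omega_Q$ consists of two moves. The Taylor expansion~(\ref{diag_exp_1})--(\ref{diag_exp_2}) rewrites each diagonal $R_{[ii]}^{[L\setminus\{i\}]}$ as a polynomial in the diagonal symbols $S_{[ii]}$, $B_i$ and $\tilde\pi_{[i]c}$ (up to a negligible tail), introducing no off-diagonal symbol and hence not changing ${\cal F}_{\rm{in}}$; and the splitting~(\ref{iso_decomp_1})--(\ref{SR2}) replaces each off-diagonal $S_{[ij]}$ by one of $S^X_{i\bar j}, S^X_{\bar i j}, S^R_{i\bar j}, S^R_{ij}, S^R_{\bar i\bar j}, S^R_{\bar i j}$, each off-diagonal and carrying, apart from internal summation indices outside $L$, precisely the two $L$-indices $\{i,j\}$ up to conjugation. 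Since ${\cal F}_{\rm{in}}(l,\cdot)$ counts occurrences of $l$ \emph{or} $\bar l$, replacing an index by its conjugate does not change the count, so ${\cal F}_{\rm{in}}(l,\Omega_Q)={\cal F}_{\rm{in}}(l,(\Omega_\Delta)_{\bw})$. The main obstacle is therefore not conceptual but combinatorial bookkeeping: getting the sub-case analysis of $\tau_1^{(k)}$ right and tracking the conjugation of indices in the $S$-splitting; the rest follows in one line from the definitions, which is why the lemma was stated as following immediately.
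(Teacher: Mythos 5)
Your overall strategy---inspecting each operator $\tau_0^{(k)}$, $\tau_1^{(k)}$, $\rho$ and the two further decompositions directly, counting only subscripts of off-diagonal symbols---is exactly what the paper intends, since the paper states the lemma without proof as an immediate consequence of the definitions. Parts (2) and (3) are fine, including the observation that the Taylor expansion of diagonal $R$'s introduces only diagonal $S$-symbols and that the splitting of $S_{[ij]}$ produces off-diagonal $S^X$/$S^R$ whose subscripts are in $\{i,\bar i,j,\bar j\}$, so ${\cal F}_{\rm{in}}$ is unchanged.

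The place where your proposal goes wrong is the parenthetical dismissal of the ``remaining sub-cases'' of $\tau_1^{(k)}$ for part (1). You assert these are ``settled by the same enumeration of the two freshly created off-diagonal $R$-symbols,'' but the enumeration actually gives a different answer. When $\tau_1^{(k)}$ hits an inverse $(R_{[ii]}^{[J]})^{-1}$, the replacement is $-(R_{[ii]}^{[J]})^{-1}R_{[ik]}^{[J]}(R_{[kk]}^{[J]})^{-1}R_{[ki]}^{[J]}(R_{[ii]}^{[Jk]})^{-1}$: the deleted symbol was diagonal and contributed nothing, while the two created off-diagonal symbols $R_{[ik]}^{[J]}$ and $R_{[ki]}^{[J]}$ each contribute $\delta_{il}+\delta_{kl}$, for a net change $2\delta_{il}+2\delta_{kl}$, not $2\delta_{kl}$. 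The same happens when $\tau_1^{(k)}$ hits a diagonal positive power $R_{[ii]}^{[J]}$ (these do arise once $\rho$ is applied: $\rho$ produces $R_{[ii]}^{[L\setminus\{ij\}]}$, which has ${\cal F}_{\rm{d-max}}=1$ and can subsequently be chosen by $\tau_0^{(j)}$ or $\tau_1^{(j)}$). So the identity in (\ref{iNum_1}) as written fails for $l=i$ in these sub-cases, and your proof does not notice this. Since the paper itself supplies no argument and only says the lemma is immediate, this is arguably an imprecision in the original statement rather than in your work; but an honest proof must either isolate the off-diagonal case as the one actually needed downstream (namely, for the implication that an increase of ${\cal F}_{\rm{in}}(l,\cdot)$ entails that $\tau_1^{(l)}$ appears in $\bw$), or replace the equality by the weaker monotonicity statement it is really being used for, and then check that the exceptional sub-cases do not interfere with the counting in the proof of (\ref{claim3_2}). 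As it stands, the ``same enumeration'' claim is concretely false and should not be waved through.
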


Let $\Omega_Q^X$ be the substring of $\Omega_Q$ containing only $S^X$ symbols, and $\Omega_Q^R$ be the substring of $\Omega_Q$ containing only $S^R$ symbols. Define
\begin{equation}\label{Vnode}
\mathcal V := \{l\in L|\ {\cal F}_{\rm{in}}(l,\Omega_{\Delta}) = 1\},
\end{equation}
and
\begin{equation}\label{Vnode1}
\mathcal V_0 := \{l\in L|\ {\cal F}_{\rm{in}}(l,\Omega_{\Delta}) = 1\text{ and }{\cal F}_{\rm{in}}(l, \Omega_Q^X)=0\},
\end{equation}
\begin{equation}\label{Vnode2}
\mathcal V_1 := \{l\in L|\ {\cal F}_{\rm{in}}(l,\Omega_{\Delta}) = 1\text{ and }{\cal F}_{\rm{in}}(l, \Omega_Q^X)\ge 2\}.
\end{equation}
Recall the observation above Definition \ref{def_iso_5}, $\mathcal V= \mathcal V_0 \cup \mathcal V_1$ and
\[h=|\mathcal V|=|\mathcal V_0|+|\mathcal V_1|.\]
Let $n_X$ be the number of off-diagonal $S^X$ symbols in $\Omega_Q^X$ and $n_R$ be the number of off-diagonal $S^R$ symbols in $\Omega_Q^R$. Notice that $n_{o}:=n_X+n_R$ is the total number of off-diagonal symbols in $\Omega_Q$.

\subsection{Introduction of graphs and conclusion of the proof}

We introduce the graphs to conclude the proof of (\ref{eq_iso_3}). We use a connected graph to represent the string $\Omega_Q$, call it by $\mathfrak G_{Q0}$. The indices in $[L]$ are represented by black nodes in $\mathfrak G_{Q0}$. The $S_{st}^X$ or $S_{st}^R$ symbols in $\Omega_Q$ are represented by edges connecting the nodes $s$ and $t$. We also define colors for the nodes and edges, where the color set for nodes is $\{black, white\}$ and the color set for edges is $\{S^X, S^R, X, R\}$. In $\mathfrak G_{Q0}$, all the nodes are black, all $S^X$ edges are assigned $S^X$ color and all $S^R$ edges are assigned $S^R$ color. We show a possible graph in Fig. \ref{M_0}. In this subsection, we identify an index with its node representation, and a symbol with its edge representation.

\begin{defn}
Define function $\deg$ on the nodes set $[L]$, where $\deg(l)$ is the number of $S^R$ edges connecting to the node $l$.
\end{defn}

By Lemma \ref{lem_Fin_change}, we see that for any $l\in\mathcal V_0$,
\begin{equation}\label{eq_eo}
{\cal F}_{\rm{in}}(l, \Omega_Q) \equiv \deg(l) + \deg(\bar l) \equiv 1 \pmod 2.
\end{equation}
Hence
\begin{align}
|\mathcal V_0| = \sum\limits_{l \in\mathcal V_0} \left[{\cal F}_{\rm{in}}\left(l, \Omega_Q\right) \mod 2\right] \le \sum\limits_{l \in \mathcal V_0}\left[\left(\deg(l\right) \mod 2)+\left(\deg(\bar l)\mod 2\right)\right].\label{eq_eo2}
\end{align}

Now we expand the $S^R$ edges. Take the $S^{R}_{i\bar j}$ edge as an example (recall (\ref{SR1})). We replace the $S^{R}_{i\bar j}$ edge with an $R$-group, defined as following. We add two white colored nodes to represent the summation indices $\bar k, l \notin [L]$, two $X$-colored edges to represent $X_{i\bar{k}}$ and $X_{l\bar{j}}$, and a $R$-colored edge connecting $\bar k$ and $l$ to represent
$\begin{pmatrix}0 & R_{\bar k l}^{\left[L\right]}\\
0 & 0
\end{pmatrix} .$ We call the subgraph consisting of the three new edges and their nodes an $R$-group. If $i=j$, we call it a diagonal $R$-group; otherwise, call it an off-diagonal $R$-group. We expand all $S^{R}$ edges in $\mathfrak G_{Q0}$ into $R$-groups and call the resulting graph $\mathfrak G_{Q1}$. For example, after expanding the $S^R$ edges in Fig.\,\ref{M_0}, we get the graph in Fig.\,\ref{M_1}. In the graph $\mathfrak G_{Q1}$, the $R$ edges, $X$ edges and $S^X$ edges are mutually independent, since the $R$ symbols are maximally expanded, and the white nodes are different from the black nodes.

\begin{figure}[h]
\centering
\begin{tikzpicture}
 \tikzset{every loop/.style={min distance=20mm,in=60,out=120,looseness=-1}}
 \tikzset{dot/.style={circle,fill=#1,inner sep=3,minimum size=0.5pt}}
 \tikzset{zig/.style={decoration={
    zigzag,
    segment length=4,
    amplitude=.9,post=lineto,
    post length=2pt}}}
 \node (a2) [label=above:{$\overline b_1$}, dot] {} edge [loop] (a2);
 \node (a3) [label=below:{$b_1$}, below of=a2, xshift=0mm, yshift = -15mm, dot] {}; 
 \node (a4) [label=above:{$\overline b_2$}, right of=a2, xshift = 20mm, dot] {} edge  (a2) edge [bend left, decorate, zig]  (a3) edge [bend right, decorate, zig]  (a3);
 \node (a5) [label=below:{$b_2$}, right of=a3, xshift = 20mm, dot] {} edge  (a3) ;
 \node (a6) [label=above:{$\overline b_3$}, right of=a4, xshift = 20mm, dot] {} edge (a4);
 \node (a7) [label=below:{$b_3$}, right of=a5, xshift = 20mm, dot] {} edge [bend right] (a6) {} edge (a4) edge  (a5);
 \node (l) [draw=black,thick,rounded corners=2pt,below left=10mm, minimum width = 80pt, minimum height = 55pt, right of=a6,xshift = 25mm, yshift = -5mm]  {};
 \node (l1) [dot, above of=l, xshift = -30pt, yshift = -15pt] {};\node (l2) [dot, right of= l1, xshift = 5mm, label=right:{$S^R$}] {} edge (l1) ;
 \node (l3) [dot, below of = l1] {};\node (l4) [dot, right of= l3, xshift = 5mm] {} edge [decorate, zig, label=right:{$S^X$}] (l3) ;
\end{tikzpicture}
\caption{An example of the graph $\mathfrak G_{Q0}$.}
\label{M_0}
\end{figure}


\begin{figure}[htb]
\centering
\begin{tikzpicture}
 \tikzset{dot/.style={circle,fill=#1,inner sep=3,minimum size=0.5pt}}
 \tikzset{wdot/.style={circle,draw,inner sep=3,minimum size=0.5pt}}
 \tikzset{zig/.style={decoration={
    zigzag,
    segment length=4,
    amplitude=.9,post=lineto,
    post length=2pt}}}
 \node (a2) [label=right:{$\overline b_1$}, dot] {};
 \node (c1) [left of=a2, xshift=-6mm, yshift = 6mm, wdot] {} edge [decorate,zig]  (a2);
 \node (c2) [left of=a2, xshift=-7mm, yshift = -4mm, wdot] {} edge[decorate,zig] (a2) edge (c1);
 \node (b1) [above of=a2, xshift=3mm, yshift = 8mm, wdot] {} edge[decorate,zig] (a2);
 \node (b2) [above of=a4, xshift=-3mm, yshift = 8mm, wdot] {} edge[decorate,zig] (a4) edge (b1);
 \node (a3) [label=left:{$b_1$}, below of=a2, xshift=0mm, yshift = -15mm, dot] {}; 
 \node (a4) [label=below:{$\overline b_2$}, right of=a2, xshift = 20mm, dot] {} edge [bend left, decorate,zig] (a3) edge [bend right,decorate, zig] (a3);
 \node (a5) [label=left:{$b_2$}, right of=a3, xshift = 20mm, dot] {};
 \node (a6) [label=below:{$\overline b_3$}, right of=a4, xshift = 20mm, dot] {};
 \node (a7) [label=left:{$b_3$}, right of=a5, xshift = 20mm, dot] {};
 \node (b9) [above of=a4, xshift=3mm, yshift = 8mm, wdot] {} edge [decorate,zig] (a4);
 \node (b0) [above of=a6, xshift=-3mm, yshift = 8mm, wdot] {} edge [decorate,zig] (a6) edge  (b9);
 \node (b3) [right of=a6, xshift=8mm, yshift = -3mm, wdot] {} edge[decorate,zig] (a6);
 \node (b4) [right of=a7, xshift=8mm, yshift = 3mm, wdot] {} edge[decorate,zig] (a7) edge (b3);
 \node (b5) [below of=a3, xshift=3mm, yshift = -8mm, wdot] {} edge[decorate,zig] (a3);
 \node (b6) [below of=a5, xshift=-3mm, yshift = -8mm, wdot] {} edge[decorate,zig] (a5) edge (b5);
 \node (b7) [below of=a5, xshift=3mm, yshift = -8mm, wdot] {} edge[decorate,zig]  (a5);
 \node (b8) [below of=a7, xshift=-3mm, yshift = -8mm, wdot] {} edge[decorate,zig] (a7) edge (b7);
 \node (c3) [right of=a4, xshift=3mm, yshift=-4mm, wdot] {} edge[decorate,zig] (a4);
 \node (c4) [above of=a7, xshift=-5mm, yshift=3mm, wdot] {} edge[decorate,zig] (a7) edge (c3);
 \node (l) [draw=black,thick,rounded corners=2pt,below left=10mm, minimum width = 75pt, minimum height = 90pt, right of=a6,xshift = 40mm, yshift = -5mm]  {};
 \node (l1) [wdot, above of=l, xshift = -30pt, yshift = 0pt] {};\node (l2) [wdot, right of= l1, xshift = 5mm, label=right:{$R$}] {} edge (l1) ;
 \node (l3) [dot, below of = l1] {};\node (l4) [wdot, right of= l3, xshift = 5mm] {} edge [decorate, zig, label=right:{$X$}] (l3) ;
 \node (l5) [dot, below of = l3] {};\node (l6) [dot, right of= l5, xshift = 5mm] {} edge [decorate, zig, label=right:{$S^X$}] (l5) ;
\end{tikzpicture}
\caption{The resulting graph $\mathfrak G_{Q1}$ after expanding each $S^{R}$ in Fig.\,\ref{M_0} into $R$-groups.}
\label{M_1}
\end{figure}

Notice that each white node represents a summation index. As we have done for the black nodes, we first partition the white nodes into blocks and then assign values to the blocks when doing the summation. Let $W$ be the set of all white nodes in $\mathfrak G_{Q1}$, and let $\mathcal W$ be the collection of all partitions of $W$. Fix a partition $\gamma \in\mathcal W$ and denote its blocks by $W_1,...,W_{m(\gamma)}$. If two white nodes of some off-diagonal $R$-group happen to lie in the same block, then we merge the two nodes into one diamond white node (Fig.\,\ref{diamond_node}). All the other white nodes are called normal (Fig.\,\ref{normal_node}). Let $n_R^{(d)}$ be the number of diamond nodes ($\le$ the number of diagonal $R$-edges in $\mathfrak G_{Q1}$). Then we trivially have
\begin{equation}\label{number_white_node}
\text{\# of white nodes} = -n_R^{(d)}+\sum_{k = 1}^{n} \left[{\deg \left( {{b_k}} \right) + \deg ({{\bar b}_k})} \right].
\end{equation}

\begin{figure}[htb]
\centering
 \begin{subfigure}[h]{4cm}\centering
  \begin{tikzpicture}
 \tikzset{every loop/.style={min distance=17mm,in=60,out=120,looseness=-1}}
 \tikzset{dot/.style={circle,fill=#1,inner sep=3,minimum size=0.5pt}}
 \tikzset{wdot/.style={diamond,draw,inner sep=2,minimum size=0.5pt}}
 \tikzset{zig/.style={decoration={
    zigzag,
    segment length=4,
    amplitude=.9,post=lineto,
    post length=2pt}}}
 \node (a2) [label=below:{$ $}, dot] {};
 \node (a4) [label=below:{$ $}, right of=a2, xshift = 17mm, dot] {};
 \node (b1) [above of=a2, xshift=12mm, yshift = 6mm, wdot] {} edge [decorate, zig] node[right] {} (a2) edge [decorate, zig] node[left] {} (a4) edge [loop] node[above] {} (b1);
  \end{tikzpicture}
  \caption{Diamond white node.}
  \label{diamond_node}
 \end{subfigure}
 \begin{subfigure}[h]{10cm}\centering
  \begin{tikzpicture}
 \tikzset{every loop/.style={min distance=20mm,in=60,out=120,looseness=-1}}
 \tikzset{dot/.style={circle,fill=#1,inner sep=3,minimum size=0.5pt}}
 \tikzset{wdot/.style={circle,draw,inner sep=3,minimum size=0.5pt}}
 \tikzset{zig/.style={decoration={
    zigzag,
    segment length=4,
    amplitude=.9,post=lineto,
    post length=2pt}}}
 \node (a2) [label=right:{$ $}, dot] {};
 \node (c1) [above of=a2, xshift=-4mm, yshift = 8mm, wdot] {} edge [decorate, zig] node[left] {} (a2);
 \node (c2) [above of=a2, xshift=4mm, yshift = 8mm, wdot] {} edge [decorate, zig] node[right] {} (a2) edge node[above] {} (c1);
 \node (a4) [label=below:{$ $}, right of=a2, xshift = 20mm, dot] {};
 \node (a6) [label=below:{$ $}, right of=a4, xshift = 20mm, dot] {};
 \node (b9) [above of=a4, xshift=3mm, yshift = 8mm, wdot] {} edge [decorate, zig] node[right] {} (a4);
 \node (b0) [above of=a6, xshift=-3mm, yshift = 8mm, wdot] {} edge [decorate, zig] node[left] {} (a6) edge node[above] {} (b9);
  \end{tikzpicture}
  \caption{Normal white nodes.}
  \label{normal_node}
 \end{subfigure}
 \caption{Two types of white nodes}
 \label{whitenodestype}
\end{figure}

By (\ref{eq_eo2}), there are $|\mathcal V_0|$ black nodes with odd $\deg$ in $[\mathcal V_0]$ (where $[\mathcal V_0]$ is defined in the obvious way). WLOG, we assume these nodes are $b_1,...,b_{|\mathcal V_0|}$. To have nonzero expectation, each white block must contain at least two white nodes. Therefore for each $k=1,...,|\mathcal V_0|$, there exists a block connecting to $b_k$ which contains at least $3$ white nodes. Call such a block $W(b_k)$, and denote by $A(b_k)$ the set of the adjacent white nodes to $b_k$ in $W(b_k)$. (Note that the $W(b_k)$'s or $A(b_k)$'s are not necessarily distinct.) WLOG, let $W_{1},...,W_{d}$ be the distinct
blocks among all $W(b_k)$'s. Define
\[\mathcal V_{00} := \{b_k|\ A(b_k)\text{ has no normal white nodes,} \ 1\le k\le |\mathcal V_0|\},\]
and
\[\mathcal V_{01} := \{b_k|\ A(b_k)\text{ has at least one normal white node,}\ 1\le k\le |\mathcal V_0|\}.\]
The following lemma gives the key estimates we need.

\begin{lem}
For any partition $\gamma \in\mathcal W$,
\begin{equation}\label{claim3_1}
m(\gamma) \le \frac{-|\mathcal V_{00}|-|\mathcal V_{01}|/2- n_R^{(d)}+\sum_{k = 1}^{n} \left[{\deg \left( {{b_k}} \right) + \deg ({{\bar b}_k})} \right]}{2},
\end{equation}
and
\begin{equation}\label{claim3_2}
 n_X+n_R \ge p + |\mathcal V_1| + |\mathcal V_{00}|, \ \ n_X \ge |\mathcal V_{1}|, \ \  n_R^{(d)}\ge |\mathcal V_{00}|.
\end{equation}
\end{lem}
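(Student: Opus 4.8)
The plan is to read off both families of inequalities directly from the combinatorial structure of the graph $\mathfrak{G}_{Q1}$, using as the only analytic input the vanishing first moment $\bbE X_{i\mu}=0$. This constraint forces every block of the white partition $\gamma$ to contain at least two white nodes, since a block carrying a single white node produces an isolated mean-zero $X$-factor, so that $\bbE Q(\bw,\Delta(\Gamma))=0$, contrary to our standing assumption that $Q$ has nonzero expectation.

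For $(\ref{claim3_1})$ I would first record the crude bound: by $(\ref{number_white_node})$ the total number of white nodes in $\mathfrak{G}_{Q1}$ is $-n_R^{(d)}+\sum_{k=1}^{n}[\deg(b_k)+\deg(\bar b_k)]$, and the ``at least two white nodes per block'' constraint immediately gives $m(\gamma)\le \tfrac12\big(-n_R^{(d)}+\sum_k[\deg(b_k)+\deg(\bar b_k)]\big)$. To gain the extra $|\mathcal V_{00}|+|\mathcal V_{01}|/2$ I would invoke the parity identity $(\ref{eq_eo})$–$(\ref{eq_eo2})$: for each $l\in\mathcal V_0$ the number of $X$-edges incident to the node $l$ is odd, which, combined with the mean-zero constraint, forces $l$ to lie in a block carrying at least three white nodes, namely the block $W(b_l)$ with its nonempty adjacent set $A(b_l)$. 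On the distinct such blocks $W_1,\dots,W_d$ I would then run a weighted handshake count over the incidences between $[\mathcal V_0]$ and $\{W_1,\dots,W_d\}$: a block singled out by some $b_k\in\mathcal V_{01}$ owns a \emph{normal} adjacent white node, contributing a surplus of at least one white node over pairs but possibly shared by up to two odd-degree endpoints, which yields the $-|\mathcal V_{01}|/2$; a block singled out by $b_k\in\mathcal V_{00}$ has only \emph{diamond} adjacent white nodes, each of which already represents two merged white nodes of an off-diagonal $R$-group, so it forfeits an entire node and contributes the $-|\mathcal V_{00}|$; the $-n_R^{(d)}$ is already present in $(\ref{number_white_node})$. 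Adding the surpluses and halving gives $(\ref{claim3_1})$.

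For $(\ref{claim3_2})$ the ingredients are more direct. We have $n_X+n_R={\cal F}_{\rm off}(\Omega_Q)$, which by $(\ref{Foff2})$ together with $(\ref{offNum_1})$–$(\ref{offNum_2})$ is at least ${\cal F}_{\rm off}(\Omega_\Delta)=p$, since every non-null $\tau_1^{(k)}$ strictly raises the off-diagonal count and $\Omega_\Delta$ already contains the $p$ off-diagonal symbols $R_{[\Gamma(i_k)\Gamma(j_k)]}$ (recall $i_k,j_k$ always lie in different blocks of $\Gamma$). Next, $n_X\ge|\mathcal V_1|$: each $l\in\mathcal V_1$ satisfies ${\cal F}_{\rm in}(l,\Omega_Q^X)\ge 2$ by $(\ref{Vnode2})$, while $\sum_{l}{\cal F}_{\rm in}(l,\Omega_Q^X)=2n_X$ because every off-diagonal $S^X$ edge carries exactly two indices; hence $2|\mathcal V_1|\le 2n_X$. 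These $S^X$ edges are disjoint from the $p$ base off-diagonal $R$-symbols, so the two lower bounds add. Finally $n_R^{(d)}\ge|\mathcal V_{00}|$: each $b_k\in\mathcal V_{00}$ has only diamond white neighbours, and the same parity/mean-zero mechanism forces at least one of the off-diagonal $R$-groups incident to $b_k$ to collapse into a distinct diamond node; counting these $|\mathcal V_{00}|$ diamond-producing off-diagonal $S^R$'s, the $|\mathcal V_1|$ extra off-diagonal $S^X$'s, and the $p$ base symbols separately gives $n_X+n_R\ge p+|\mathcal V_1|+|\mathcal V_{00}|$.

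The main obstacle is the bookkeeping behind $(\ref{claim3_1})$: a single white block can be ``claimed'' simultaneously by several odd-degree black nodes of $[\mathcal V_0]$ and can mix diamond and normal white nodes, so the handshake count must be arranged so that the surpluses attributed to $\mathcal V_{00}$ and $\mathcal V_{01}$ are neither lost nor double-counted, and so that the $-n_R^{(d)}$ correction coming from merged off-diagonal $R$-groups is not absorbed twice. By contrast, once $(\ref{claim3_1})$–$(\ref{claim3_2})$ are established, they feed directly into the power counting that yields $(\ref{eq_iso_3})$ and hence $(\ref{eq_iso_goal4})$.
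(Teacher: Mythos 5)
Your proof of (\ref{claim3_2}) is essentially the paper's: the bound $n_X\ge|\mathcal V_1|$ is the same degree count, $n_R^{(d)}\ge|\mathcal V_{00}|$ follows from each $b_k\in\mathcal V_{00}$ being adjacent to $\ge 3$ diamonds while each diamond is adjacent to $\le 2$ black nodes, and the first inequality comes from counting the distinct $\tau_1^{(b_k)}$ operations required to raise $\mathcal F_{\rm in}(b_k,\cdot)$ for $b_k\in\mathcal V_1\cup\mathcal V_{00}$ (your ``counting separately'' is really the observation that each such $\tau_1$ adds $\ge 1$ new off-diagonal symbol beyond the base $p$).

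However, your argument for (\ref{claim3_1}) has a genuine gap, and it stems from taking the displayed numerator $-|\mathcal V_{00}|-|\mathcal V_{01}|/2$ at face value. First, a normal white node is attached to exactly one black node and hence lies in exactly one set $A(b_k)$; it cannot be ``shared by up to two odd-degree endpoints'' as you claim, and this mis-assignment gives you only a $1/2$ credit per $\mathcal V_{01}$-node when in fact the unshared normal node earns a full credit of $1$. Second, and symmetrically, a $b_k\in\mathcal V_{00}$ is surrounded only by diamonds, each shared between two $A$-sets and therefore worth only $1/2$; three of them give a weighted surplus of $3/2-1=1/2$, not $1$, so ``forfeits an entire node'' over-credits $\mathcal V_{00}$. (The $-n_R^{(d)}$ correction you invoke for the diamond merge is already spent in (\ref{number_white_node}); you cannot draw on it a second time.) The correct inequality, used in the paper's own proof and again in the final power-counting step, has the two coefficients interchanged: $2m(\gamma)+|\mathcal V_{01}|+\tfrac12|\mathcal V_{00}|\le -n_R^{(d)}+\sum_k[\deg(b_k)+\deg(\bar b_k)]$. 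The displayed lemma is almost certainly a typo, and the version you set out to prove is actually false: take $b_1,b_2\in\mathcal V_{00}$ with $A(b_1)=A(b_2)$ consisting of the same three diamonds, so $\deg(b_1)+\deg(\bar b_1)=\deg(b_2)+\deg(\bar b_2)=3$, $n_R^{(d)}=3$, the total white-node count is $3$, and $m(\gamma)=1$; then $2+|\mathcal V_{00}|+\tfrac12|\mathcal V_{01}|=4>3$ while the paper's $2+|\mathcal V_{01}|+\tfrac12|\mathcal V_{00}|=3\le 3$ holds. The clean way to organize this is the paper's weighted handshake: assign weight $1$ to a normal white node and weight $1/2$ to a diamond, then bound $|W_i|$ from below by the total weight of the $A(b_k)\subseteq W_i$, treating the one-$A$ and multi-$A$ cases separately.
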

\begin{proof}
The second inequality of (\ref{claim3_2}) can be proved easily through
\[|\mathcal V_1| \le \left|\{k\in L|{\cal F}_{\rm{in}}(k, \Omega_Q^X)\ge 2\}\right| \le n_X.\]
Notice for $b_k \in \mathcal V_0$, $A(b_k)$ contains at least three diamond white nodes, while each of the white node is share by another $b_l$. Thus we trivially have $|\mathcal V_{00}| \le n_R^{(d)}.$

Now we prove (\ref{claim3_1}). A diamond white node is connected to two black nodes and a normal white node is connected to one black node. Hence a diamond white node belongs to two sets $A(b_{k_1}), A(b_{k_2})$, and a normal white node belongs to exactly one set $A(b_k)$. Therefore for each $i=1,...,d$, if $ W_{i}$ contains exactly one $A(b_k)$ then
 \[\left|W_{i}\right|\ge 3\ge 2+\mathbf 1_{\mathcal V_{01}}(b_k)+\frac{\mathbf 1_{\mathcal V_{00}}(b_k)}{2}.\]
Otherwise if $ W_{i}$ contains more than one $A(b_k)$, then
 \[\left|W_{i}\right|\ge \sum\limits_{b_k: A(b_k)\subseteq W_{i}}\left(2 \cdot \mathbf 1_{\mathcal V_{01}}(b_k)+\frac{3}{2}\cdot\mathbf 1_{\mathcal V_{00}}(b_k)\right)\ge 2+\sum\limits_{b_k: A(b_k)\subseteq W_{i}}\left(\mathbf 1_{\mathcal V_{01}}(b_k)+\frac{\mathbf 1_{\mathcal V_{00}}(b_k)}{2}\right).\]
Here the first inequality can be understood as following. For each black node $b_k$ with $A(b_k)\subseteq W_i$, we count the number of white nodes in $A(b_k)$ and add them together. During the counting, we assign weight-1 to a normal white node and weight-$1/2$ to a diamond white node (since it is shared by two different black nodes). If $b_k\in \mathcal V_{00}$, there are at least three diamond white nodes in $A(b_k)$ with total weight $\ge 3/2$; if $b_k\in \mathcal V_{01}$, there are at least one normal white node and two other white nodes in $A(b_k)$ with total weight $\ge 2$. Thus $\sum_{b_k: A(b_k)\subseteq W_{i}}\left(2 \cdot \mathbf 1_{\mathcal V_{01}}(b_k)+\frac{3}{2}\cdot\mathbf 1_{\mathcal V_{00}}(b_k)\right)$ is smaller than the number of white nodes in $W_i$. Then summing $|W_i|$ over $i$, we get
\[\sum\limits_{i=1}^{d}\left|W_{i}\right|\ge 2d+|\mathcal V_{01}|+\frac{|\mathcal V_{00}|}{2}.\]
For the other $m-d$ blocks, each of them contains at least two white nodes. Therefore
\[2m+|\mathcal V_{01}|+ \frac{|\mathcal V_{00}|}{2} \le\sum\limits_{i=1}^{d}\left|W_{i}\right|+2(m-d)\le -n_R^{(d)}+\sum_{k = 1}^{n} \left[{\deg \left( {{b_k}} \right) + \deg ({{\bar b}_k})} \right],\]
where we use (\ref{number_white_node}) in the last step. This proves (\ref{claim3_1}).

For $b_k \in \mathcal V_{00}$, $A(b_k)$ contains at least three white nodes from off-diagonal $R$-groups,
 \begin{align*}
 \mathcal V_{00}  \subseteq& \{b_k\in L|\ {\cal F}_{\rm{in}}(b_k,\Omega_\Delta) = 1\text{ and }{\cal F}_{\rm{in}}(b_k, \Omega_Q^R)\geq 3\}=:\mathcal V_2.
 \end{align*}
Recall (\ref{iNum_1})-(\ref{iNum_3}), only $\tau_1^{(k)}$ may increase ${\cal F}_{\rm{in}}$. Thus $\bw$ contains $\tau_1^{(b_k)}$ for each $b_k\in \mathcal V_1 \cup \mathcal V_2$ (recall the definition of $\mathcal V_1$ in (\ref{Vnode2})).
Therefore by (\ref{offNum_2}), (\ref{Foff2}) and the fact that $\mathcal V_{00}$ and $\mathcal V_1$ are disjoint,
\begin{align*}
n_X+n_R = {\cal F}_{\rm{off}}((\Omega_\Delta)_{\bw})\ge & {\cal F}_{\rm{off}}(\Omega_\Delta)+\left|\mathcal V_1\cup\mathcal V_{2}\right| \ge p+|\mathcal V_1|+|\mathcal V_{00}|.
\end{align*}
This proves the first inequality of (\ref{claim3_2}).
\end{proof}

By (\ref{assm2}) and (\ref{rescaled_Rorder}), a diagonal $R$ edge contributes $1$, an off-diagonal $R$ edge contributes $\Phi$, and $S^X$ or $X$ edge contributes $N^{-1/2}$. Denote
$$\mathcal  U = \prod_{l=1}^n\left|u_{[b_l]}\right|^{I_l}\left|v_{[b_l]}\right|^{J_l}.$$
Then using Lemma \ref{lem_stodomin}, we get
\begin{align*}
\left|\mathbb E Q(\bw,\Delta(\Gamma))\right| & \le C \mathcal U{\left( {{N^{ - 1/2}}} \right)}^{n_X}\sum\limits_{\gamma  \in \mathcal W} {\sum\limits_{\gamma(W_1), \ldots ,\gamma(W_{m})\in\sI\setminus L}^* {{\Phi ^{{n_R - n_R^{(d)}}}}\prod\limits_{k = 1}^n {{{\left( {{N^{ - 1/2}}} \right)}^{\deg \left( {{b_k}} \right) + \deg \left( {{{\bar b}_k}} \right)}}} } }\nonumber\\
& \le C \mathcal U N^{-n_X/2}\sum\limits_{\gamma  \in \mathcal W}N^{m-{\frac{{\sum\limits_{k = 1}^n {\deg \left( {{b_k}} \right) + \deg ({{\bar b}_k})} }}{2}}}\Phi^{n_R - n_R^{(d)}}\\
& \le C\mathcal U{N^{ - n_X/2}} \sum\limits_{\gamma  \in \mathcal W} {N^{\frac{{-|\mathcal V_{01}|-|\mathcal V_{00}|/2 - n_R^{(d)}}}{2}}} {\Phi ^{{n_R-n_R^{(d)}}}}\nonumber\\
& \le C \mathcal UN^{-h/2}\sum\limits_{\gamma  \in \mathcal W} {N^{-(n_X-|\mathcal V_1|)/2}} {N^{ - (n_R^{(d)}-|\mathcal V_{00}|)/2}}{\Phi ^{{n_R-n_R^{(d)}}}} \nonumber\\
& \le C \mathcal U {N^{ - h/2}}\sum_{\gamma \in \mathcal W} {\Phi ^{ n_X+n_R - |\mathcal V_1| - |\mathcal V_{00}| }}   \le C \mathcal U{N^{ - h/2}}{\Phi ^p},
\end{align*}
where in the third step we used (\ref{claim3_1}), in the fourth step $h=|\mathcal V|=|\mathcal V_1|+|\mathcal V_{00}|+|\mathcal V_{01}|$, in the fifth step ${N^{ - 1/2}} \leq \Phi$ and (\ref{claim3_2}), and in the last step (\ref{claim3_2}).
Thus we have proved (\ref{eq_iso_3}), which concludes the proof of Proposition \ref{iso_prop}.

\end{section}

\begin{section}{Anisotropic local law: self-consistent comparison}\label{section_comparison}
In this section we prove Theorem \ref{law_wideT}. We first prove the anisotropic and averaged local laws under the vanishing third moment assumption (\ref{assm_3rdmoment}). When $\eta \ge N^{-1/2+\zeta}|m_{2c}|^{-1}$, the anisotropic and averaged local laws can be established without assuming (\ref{assm_3rdmoment}). For convenience, we only consider the case $w\in \mathbf D$ and $|z|^2 \le 1-\tau$ in this section.
The proof for other cases is almost the same.

Following the notations in the arguments between Theorems \ref{law_squareD} and \ref{law_wideT},
\begin{align}
H(TX-z,w) = \overline T \left( {\begin{array}{*{20}c}
   { - w (D^\dag D)^{-1}} & w^{1/2}(V_1X-(UD)^{-1}z) \\
   {w^{1/2} (V_1X-(UD)^{-1}z)^\dag } & { - wI}  \\
   \end{array}} \right)\overline T^\dag, \ \  \overline T:=\left( {\begin{array}{*{20}c}
   { UD} & 0 \\
   0& I  \\
   \end{array}} \right).
 \end{align}
Now we define
 \begin{equation}\label{def_mathcalg}
 \mathcal G(w):=|w|^{1/2}\left( {\begin{array}{*{20}c}
   { - w (D^\dag D)^{-1}} & w^{1/2}\left(V_1X-(UD)^{-1}z\right) \\
   { w^{1/2}\left(V_1X-(UD)^{-1}z\right)^\dag } & { - w I}  \\
   \end{array}} \right)^{-1} = |w|^{1/2}\overline T^\dag G\overline T.
 \end{equation}
Since $T$ is invertible and $\|T\|+ \|T^{-1}\| \le \tau^{-1}$ by (\ref{assm3}), to prove the anisotropic law in Theorem \ref{law_wideT}, it suffices to show
 \begin{equation}\label{goal_ani}
 \| \sG(w)-\wt\Pi(w) \|\prec \Phi(w)
 \end{equation}
 where
 \begin{equation}\label{def_PiPhi}
 \wt\Pi(w):=|w|^{1/2}\overline T^\dag \Pi(w)\overline T ,\ \ \Phi(w):=|w|^{1/2}\Psi(w).
 \end{equation}
Notice we have $\|\wt\Pi\|=O(1)$ by (\ref{estimate_Piw12}). By the remark around (\ref{eqn_comparison1}), if $X=X^{Gauss}$ is Gaussian, then (\ref{goal_ani}) holds. Hence for a general $X$, it suffices to prove that
 \begin{equation}\label{Gaussian_starting}
\|\sG(X,w) - \sG(X^{Gauss},w)\| \prec \Phi(w).
 \end{equation}
Similar to Lemma \ref{lemma_Im}, it is easy to prove the following estimates for $\mathcal G$.

\begin{lem}\label{lem_comp_gbound}
For $i\in \mathcal I^M_1$, we define $\mathbf v_i=V_1 \mathbf e_i \in \mathbb R^{\mathcal I_1}$, i.e. $\mathbf v_i$ is the $i$-th column vector of $V_1$. Let $\mathbf u \in \mathbb R^{\mathcal I_1}$ and $\mathbf w \in \mathbb R^{\mathcal I_2}$, then we have for some constant $C>0$,
  \begin{align}
& \sum\limits_{\mu  \in \mathcal I_2 } {\left| {\sG_{\mathbf w\mu } } \right|^2 }  = |w|^{1/2}\frac{{\Im \sG_{\mathbf w\mathbf w} }}{\eta }, \label{eq_sgsq1}\\ 
& \sum\limits_{i \in \mathcal I_1^M }  \left| {\sG_{\mathbf u \mathbf v_i} } \right|^2  \le C|w|^{1/2} \frac{\Im \sG_{\mathbf u\mathbf u}}{\eta} , \label{eq_sgsq2} \\
& \sum\limits_{i \in \mathcal I_1^M } {\left| {\sG_{\mathbf w \mathbf v_i} } \right|^2 } \le C\left(\left| w \right|^{ - 1/2} {\sG}_{\mathbf w\mathbf w}  + \bar w\left| w \right|^{ - 1/2} \frac{{\Im \sG_{\mathbf w\mathbf w} }}{\eta }\right) , \label{eq_sgsq3} \\
& \sum\limits_{\mu \in \mathcal I_2 } {\left| {\sG_{\mathbf u \mu} } \right|^2 } \le C \left(\left| w \right|^{ - 1/2} {\sG}_{\mathbf u\mathbf u}  + \bar w\left| w \right|^{ - 1/2} \frac{{\Im \sG_{\mathbf u\mathbf u} }}{\eta } \right),\label{eq_sgsq4}
 \end{align}
\end{lem}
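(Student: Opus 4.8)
The plan is to deduce all four estimates from the corresponding identities (\ref{eq_gsq1})--(\ref{eq_gsq4}) for $G$ in Lemma \ref{lemma_Im}, using the relation $\sG = |w|^{1/2}\overline T^\dag G\overline T$ from (\ref{def_mathcalg}). Two structural facts will be used repeatedly: first, $\overline T = \text{diag}(UD, I)$ acts as the identity on the $\mathcal I_2$-block and is bounded, $\|\overline T\| \le C$, since $\|D\|^2 = \|\Sigma\| \le \tau^{-1}$ by (\ref{assm3}); second, $V$ is unitary, so $V_1 V_1^\dag = I_N$ and hence $V_1^\dag$ is an isometry from $\mathbb C^{\mathcal I_1}$ into $\mathbb C^{\mathcal I_1^M}$, while $T^\dag = V_1^\dag D^\dag U^\dag$ and $\overline T\mathbf v_i = UDV_1\mathbf e_i = T\mathbf e_i$ (natural embedding) by (\ref{TX1}).

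First I would record that $\sG_{\mathbf a\mathbf b} = |w|^{1/2}\langle\overline T\mathbf a, G\overline T\mathbf b\rangle$ for all $\mathbf a,\mathbf b\in\mathbb C^{\mathcal I}$, and set $\mathbf u' := \overline T\mathbf u$, which has $\|\mathbf u'\|\le C$. Using $\overline T\mathbf w = \mathbf w$ and $\overline T\mathbf e_\mu = \mathbf e_\mu$ for $\mathbf w\in\mathbb R^{\mathcal I_2}$ and $\mu\in\mathcal I_2$, this gives $\sG_{\mathbf w\mu} = |w|^{1/2}G_{\mathbf w\mu}$, $\sG_{\mathbf u\mu} = |w|^{1/2}G_{\mathbf u'\mu}$, $\sG_{\mathbf w\mathbf w} = |w|^{1/2}G_{\mathbf w\mathbf w}$ and $\sG_{\mathbf u\mathbf u} = |w|^{1/2}G_{\mathbf u'\mathbf u'}$. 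Then (\ref{eq_sgsq1}) and (\ref{eq_sgsq4}) are immediate: squaring and summing over $\mu\in\mathcal I_2$ produces $|w|$ times the left-hand sides of (\ref{eq_gsq1}) (applied to $\mathbf w$) and (\ref{eq_gsq4}) (applied to $\mathbf u'$; those identities are homogeneous of degree two, so $\mathbf u'$ need not be a unit vector), and the right-hand sides are rewritten in terms of $\sG$ via $\Im\sG_{\mathbf w\mathbf w} = |w|^{1/2}\Im G_{\mathbf w\mathbf w}$ etc. together with the scalar relations $|w|\cdot|w|^{-1/2} = |w|^{1/2}$ and $|w|\cdot|w|^{-1} = 1$.

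For (\ref{eq_sgsq2}) and (\ref{eq_sgsq3}) the extra input is the isometry property. Since $\overline T\mathbf v_i = T\mathbf e_i$ is supported on $\mathcal I_1$, for $\mathbf u\in\mathbb R^{\mathcal I_1}$ one has $\sG_{\mathbf u\mathbf v_i} = |w|^{1/2}\langle\mathbf u', G_L T\mathbf e_i\rangle$, where $G_L$ is the $\mathcal I_1\times\mathcal I_1$ block of $G$, so that $\sum_{i\in\mathcal I_1^M}|\sG_{\mathbf u\mathbf v_i}|^2 = |w|\,\|T^\dag G_L^\dag\mathbf u'\|^2$. Writing $T^\dag = V_1^\dag D^\dag U^\dag$ and using that $V_1^\dag$ is an isometry and $\|D\|\le C$ gives $\|T^\dag G_L^\dag\mathbf u'\|\le C\|G_L^\dag\mathbf u'\|$, and since $|(G_L^\dag\mathbf u')_j| = |\langle G_L\mathbf e_j,\mathbf u'\rangle| = |G_{\mathbf u'j}|$ for $j\in\mathcal I_1$, identity (\ref{eq_gsq2}) yields $\|G_L^\dag\mathbf u'\|^2 = \sum_{j\in\mathcal I_1}|G_{\mathbf u'j}|^2 = \Im G_{\mathbf u'\mathbf u'}/\eta$; rewriting in terms of $\sG$ as above gives (\ref{eq_sgsq2}). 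The bound (\ref{eq_sgsq3}) follows by the same computation with $\mathbf u$ replaced by $\mathbf w\in\mathbb R^{\mathcal I_2}$: now $\sum_i|\sG_{\mathbf w\mathbf v_i}|^2 = |w|\,\|T^\dag\mathbf z\|^2$ with $\mathbf z$ the restriction of $G^\dag\mathbf w$ to $\mathcal I_1$, one bounds $\|T^\dag\mathbf z\|\le C\|\mathbf z\|$, and $\|\mathbf z\|^2 = \sum_{j\in\mathcal I_1}|G_{\mathbf wj}|^2$ is controlled using (\ref{eq_gsq3}) instead of (\ref{eq_gsq2}).

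The computations are entirely routine; the only step that is not a mere rewriting is the passage, in (\ref{eq_sgsq2})--(\ref{eq_sgsq3}), from the sum over the overcomplete family $\{\mathbf v_i\}_{i\in\mathcal I_1^M}$ back to a sum over the standard basis of $\mathcal I_1$, which is exactly where the unitarity of $V$ (equivalently $V_1V_1^\dag = I_N$) enters and accounts for the constant $C$ in the statement. I do not anticipate any genuine obstacle.
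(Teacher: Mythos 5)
Your reduction $\sG=|w|^{1/2}\,\overline T^\dag G\,\overline T$ together with $\overline T\mathbf v_i=T\mathbf e_i$, the isometry $V_1V_1^\dag=I_N$, and $\|D\|\le\tau^{-1/2}$ correctly transports each Ward-type identity (\ref{eq_gsq1})--(\ref{eq_gsq4}) of Lemma \ref{lemma_Im} into the stated bounds, and this is precisely the computation the paper's remark ``similar to Lemma \ref{lemma_Im}'' has in mind. The argument is correct; the only thing worth flagging is that (\ref{eq_sgsq4}) (and (\ref{eq_sgsq2})--(\ref{eq_sgsq3}) up to the factor $\|D\|^2$) actually comes out as an identity, slightly stronger than the ``$\le C$'' stated.
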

\begin{subsection}{Self-consistent comparison}\label{subsection_selfcomp}
Our proof basically follows the arguments in \cite[Section 7]{Anisotropic} with some minor modifications. Thus we will not write down all the details for the proof. By polarization, it suffices to show the following proposition. 

\begin{prop}\label{comparison_prop}
Fix ${\left| z \right|^2 } \le 1 - \tau$ and suppose that the assumptions of Theorem \ref{law_wideT} hold. If (\ref{assm_3rdmoment}) holds or $\eta \ge N^{-1/2+\zeta}|m_{2c}|^{-1}$, then for any regular domain $\mathbf S \subseteq \mathbf D$,
 \begin{equation}\label{goal_ani2}
\left\langle \mathbf v, \left(\sG(w)-\wt\Pi(w)\right) \mathbf v \right\rangle \prec \Phi(w)
\end{equation}
uniformly in $w\in \bS$ and any deterministic unit vectors $ \mathbf v\in{\mathbb C}^{\mathcal I}$.
\end{prop}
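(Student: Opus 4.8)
The key point is that the anisotropic bound (\ref{goal_ani}) already holds when $X=X^{Gauss}$: by the invariance identity (\ref{eqn_comparison1}) the matrix $G(T,X^{Gauss},z,w)$ is a unitary conjugate of $G(D,\widetilde X^{Gauss},z,w)$, for which Theorem \ref{law_squareD} gives the anisotropic law (the diagonal $D$ has singular values in $[\tau,\tau^{-1}]$ by (\ref{assm3})); conjugating by $\overline T$ and rescaling as in (\ref{def_mathcalg})--(\ref{def_PiPhi}) gives $\|\mathcal G(X^{Gauss},w)-\widetilde\Pi(w)\|\prec\Phi(w)$. Hence, writing $\mathfrak F(X):=\langle\mathbf v,(\mathcal G(X,w)-\widetilde\Pi(w))\mathbf v\rangle$, Proposition \ref{comparison_prop} reduces to the comparison estimate $\mathfrak F(X)-\mathfrak F(X^{Gauss})\prec\Phi(w)$. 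I would prove this by a Lindeberg-type swapping argument combined with a self-consistent bootstrap on high moments, following the scheme of \cite[Section 7]{Anisotropic}.

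\textbf{Set-up of the bootstrap.} Set $\Lambda:=\max_{\mathbf v}|\mathfrak F(X)|$ (max over a polynomially fine net of unit vectors, extended by continuity). From $\|G\|\le C\eta^{-1}$ (Lemma \ref{lemma_Im}) and $\|\widetilde\Pi\|=O(1)$ (a consequence of (\ref{estimate_Piw12}) and (\ref{def_PiPhi})) one has the trivial a priori bound $\Lambda\le C|w|^{1/2}\eta^{-1}$. The natural control parameter, analogous to $\Psi_\theta$ in Section \ref{section_weaklaw}, is $\widehat\Phi:=|w|^{1/2}\big(\sqrt{(\Im(m_{1c}+m_{2c})+\Lambda)/(N\eta)}+(N\eta)^{-1}\big)$; this appears because $\Im\mathcal G_{\mathbf v\mathbf v}\le\Im\widetilde\Pi_{\mathbf v\mathbf v}+|\mathfrak F|\le C|w|^{1/2}\Im(m_{1c}+m_{2c})+\Lambda$ by the analogue of (\ref{estimate_PiImw}). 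The plan is to show $\mathbb E|\mathfrak F(X)|^p\le C_p\,\widehat\Phi^{\,p}$ for all even $p$, whence $\Lambda\prec\widehat\Phi$ by Markov; since $N\eta\gg1$ and $|w|$ is bounded on $\mathbf D$, this closes to $\Lambda\prec|w|^{1/2}\Psi=\Phi$.

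\textbf{Swapping and resolvent expansion.} Enumerate the $MN$ entries of $X$ and let $X^{(0)}=X^{Gauss},\dots,X^{(MN)}=X$ interpolate, changing the single entry $X_{i\mu}$ at step $k$, and telescope $\mathbb E[\mathfrak F(X^{(MN)})^p]-\mathbb E[\mathfrak F(X^{(0)})^p]=\sum_k\big(\mathbb E[\mathfrak F(X^{(k)})^p]-\mathbb E[\mathfrak F(X^{(k-1)})^p]\big)$. Changing $X_{i\mu}$ perturbs $V_1X-(UD)^{-1}z$ by the rank-one matrix $\delta\,\mathbf v_i\mathbf e_\mu^\dag$ with $\mathbf v_i=V_1\mathbf e_i$, hence perturbs the Hermitization by a matrix of norm $\lesssim|\delta|$ supported on the indices $\{i,\bar\mu\}$. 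Taylor-expanding $\mathcal G$ around the matrix with that entry zeroed, via the resolvent identity, to some order $R=R(p,\zeta)$, each order $j$ produces a factor $\delta^{\,j}$ times a product of $j+1$ generalized entries of $\mathcal G$ among $\mathcal G_{\mathbf v\mathbf v_i},\mathcal G_{\mathbf v\bar\mu},\mathcal G_{\mathbf v_i\mathbf v_i},\mathcal G_{\bar\mu\bar\mu},\mathcal G_{\mathbf v_i\bar\mu}$. Taking expectations: since $\mathbb E X_{i\mu}=0$ and $\mathbb E|X_{i\mu}|^2=(N\wedge M)^{-1}$ match the Gaussian, the orders $j\le2$ cancel in the difference; the order-$3$ terms carry $\mathbb E X_{i\mu}^3$, which vanishes under (\ref{assm_3rdmoment}), and otherwise are $O(N^{-3/2})$ times products of resolvent entries, which after summing over $(i,\mu)$, over the $\le p$ positions where the derivative acts, and using the $\ell^2$ sum-rules of Lemma \ref{lem_comp_gbound} (to convert $\sum_i|\mathcal G_{\mathbf v\mathbf v_i}|^2$ etc.\ into $|w|^{1/2}\Im\mathcal G_{\mathbf v\mathbf v}/\eta$), are bounded by $\le\widehat\Phi^{\,p}$ precisely when $\eta\ge N^{-1/2+\zeta}|m_{2c}|^{-1}$, i.e.\ on $\mathbf S\cap\widehat{\mathbf D}$; the orders $j\ge4$ carry $N^{-j/2}C_j$ by (\ref{assm2}) and, together with the $R$-th order remainder, are $\le\widehat\Phi^{\,p}$ for $R$ large. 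Summing over $k$ and adding the Gaussian bound gives $\mathbb E|\mathfrak F(X)|^p\le C_p\widehat\Phi^{\,p}$, and the bootstrap concludes as above.

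\textbf{Main obstacle.} The crux is organizing the self-consistent iteration so that the a priori bound on $\Lambda$ re-enters, through $\Im\mathcal G_{\mathbf v\mathbf v}$, without loss: in the resolvent expansion one must show, order by order, that every generalized entry of $\mathcal G$ is either of diagonal type (hence $O(1)$) or is paired so that the $\ell^2$ sum-rules of Lemma \ref{lem_comp_gbound} apply, thereby gaining the decisive factor $\Im\mathcal G_{\mathbf v\mathbf v}/\eta\lesssim(|w|^{1/2}\Im(m_{1c}+m_{2c})+\Lambda)/\eta$ instead of a lossy $\eta^{-2}$; this is exactly what makes the bound self-closing. A secondary technical difficulty, absent in the i.i.d.\ Wigner/sample-covariance setting, is the rank-one (rather than single-entry) nature of the perturbation caused by the unitary factor $V_1$, which forces one to work throughout with the generalized entries $\mathcal G_{\mathbf v_i\mathbf v_j}$ and their sum-rules rather than with bare matrix entries; the deterministic shift $(UD)^{-1}z$, being linear, causes no additional trouble.
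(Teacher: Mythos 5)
Your opening reduction is correct and matches the paper: the Gaussian case follows from Theorem \ref{law_squareD} via (\ref{eqn_comparison1}), and the task reduces to a comparison with the Gaussian case. The discrete Lindeberg swap is also a reasonable substitute for the paper's continuous interpolation in $\theta$ (followed by Gr\"onwall) --- these are technically interchangeable for such comparisons. However, there is a genuine gap in the bootstrap set-up.

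The gap is the absence of the scale-in-$\eta$ bootstrap. You propose to iterate directly on the control parameter $\widehat\Phi$ starting from the trivial a priori bound $\Lambda\le C|w|^{1/2}\eta^{-1}$. This cannot work once $\eta\ll N^{-1/2}$, for a reason that is prior to any moment estimate: the resolvent Taylor expansion itself is not controlled. Swapping a single entry $X_{i\mu}\to X_{i\mu}+\delta$ with $|\delta|\prec N^{-1/2}$ perturbs $\mathcal G$ by a series whose $k$-th term has size roughly $(|\delta|\,\|\mathcal G\|)^k\|\mathcal G\|$. With only the trivial bound $\|\mathcal G\|\lesssim|w|^{1/2}\eta^{-1}$, the expansion parameter is $N^{-1/2}\eta^{-1}$, which is $\gg1$ throughout the relevant range $\eta\lesssim N^{-1/2+\zeta}$, so the Taylor expansion (and hence the starting moment bound for your iteration) is unavailable. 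The paper resolves this by a bootstrap in the scale $\eta$: property $(\mathbf A_{m-1})$ on the coarser grid $\widehat{\mathbf S}_{m-1}$, combined with the monotonicity/interpolation Lemmas \ref{lemm_comp_1}--\ref{lemm_comp_2}, yields the crucial a priori bound $\mathcal G-\widetilde\Pi=O_\prec(N^{2\delta})$ (Lemma \ref{lemm_comp_6}) valid uniformly over the swapped matrices. Only with this does the Taylor expansion truncate and the self-consistent estimate (\ref{eq_comp_selfest}) make sense. Your $\widehat\Phi$-iteration does not supply an a priori $O(N^{2\delta})$ bound on $\|\mathcal G\|$ at small $\eta$; it would have to run at a single scale, where it never gets off the ground.

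Two secondary points. First, your proposal sketches the third-order estimate in the $\eta\ge N^{-1/2+\zeta}|m_{2c}|^{-1}$ case as a one-shot bound, but this still requires a further self-improving iteration at a fixed $w$ (the bound (\ref{comp_geX_self-improving-bound}), where $\phi$ is reduced by a factor $N^{-\zeta/4}$ at each pass); the paper's Lemma \ref{lemm_comparison_big} supplies this, and it is not optional. Second, in a discrete Lindeberg swap you would also have to verify that the uniform a priori bound holds for every intermediate matrix $X^{(k)}$, which is precisely what the paper's interpolating law $\rho_{i\mu}^\theta$ and the quantifier ``for all $X$ satisfying (\ref{assm1})--(\ref{assm2})'' in $(\mathbf A_m)$, $(\mathbf C_m)$ are designed to handle cleanly; your telescoping argument glosses over this.
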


We first assume that (\ref{assm_3rdmoment}) holds. Then we will show how to modify the arguments to prove the $\eta \ge N^{-1/2+\zeta}|m_{2c}|^{-1}$ case.
The proof consists of a bootstrap argument from larger scales to smaller scales in multiplicative increments of $N^{-\delta}$, where
\begin{equation}
 \delta \in\left(0,\frac{\zeta}{2C_0}\right), \label{assm_comp_delta}
\end{equation}
with $C_0> 0$ being a universal constant that will be chosen large enough in the proof. For any $\eta\ge \left|m_{1c}\right|^{-1}N^{-1+\zeta}$, we define
\begin{equation}\label{eq_comp_eta}
\eta_l:=\eta N^{\delta l} \text{ for } \ l=0,...,L-1,\ \ \ \eta_L:=1.
\end{equation}
where
$L\equiv L(\eta):=\max\left\{l\in\mathbb N|\ \eta N^{\delta(l-1)}<1\right\}.$
Note that $L\le2\delta^{-1}$.

By (\ref{eq_gbound}), the function $w\mapsto \sG(w)-\widetilde\Pi(w)$ is Lipschitz continuous in $\mathbf S$ with Lipschitz constant bounded by $CN^3$.
Thus to prove (\ref{goal_ani2}) for all $w\in \mathbf S$, it suffices to show (\ref{goal_ani2}) holds for all $w$ in some discrete but sufficiently dense subset $\widehat {\mathbf S} \subset \mathbf S$. We will use the following discretized domain $\widehat\bS$.
\begin{defn}
Let $\widehat{\mathbf S}$ be an $N^{-10}$-net of $\mathbf S$ such that $ |\widehat{\mathbf S} |\le N^{20}$ and
\[E+i\eta\in\widehat{\mathbf S}\Rightarrow E+i\eta_l\in\widehat{\mathbf S}\text{ for }l=1,...,L(\eta).\]
\end{defn}

The bootstrapping is formulated in terms of two scale-dependent properties ($\bA_m$) and ($\bC_m$) defined on the subsets
\[\widehat{\mathbf S}_m:=\left\{w\in\widehat{\mathbf S}\mid\text{Im} \, w\ge N^{-\delta m}\right\}.\]
${(\bA_m)}$ For all $w\in\widehat \bS_m$, all deterministic unit vector $\mathbf v$, and all $X$ satisfying (\ref{assm1})-(\ref{assm2}), we have
\begin{equation}\label{eq_comp_Am}
 \text{Im}\sG_{\mathbf v\mathbf v}(w)\prec|w|^{1/2}\text{Im} \left[m_{1c}(w)+m_{2c}(w)\right]+N^{C_0\delta}\Phi(w).
\end{equation}
${(\bC_m)}$ For all $w\in\widehat \bS_m$, all deterministic unit vector $\mathbf v$, and all $X$ satisfying (\ref{assm1})-(\ref{assm2}), we have
\begin{equation}\label{eq_comp_Cm}
 \left|\sG_{\mathbf v\mathbf v}(w)-\widetilde\Pi_{\mathbf v\mathbf v}(w)\right|\prec N^{C_0\delta}\Phi(w).
\end{equation}
It is trivial to see that property ${(\mathbf A_0)}$ holds. Moreover, it is easy to observe the following result.

\begin{lem}\label{lemm_boot2}
For any $m$, property ${(\mathbf C_m)}$ implies property $(\mathbf A_m)$.
\end{lem}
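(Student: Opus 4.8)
The plan is to obtain $(\mathbf C_m)\Rightarrow(\mathbf A_m)$ essentially for free, by combining the closeness of $\sG_{\mathbf v\mathbf v}$ to $\wt\Pi_{\mathbf v\mathbf v}$ provided by $(\mathbf C_m)$ with the deterministic bound (\ref{estimate_PiImw}) on the imaginary part of $\wt\Pi$. Concretely, fix $m$, fix $w\in\widehat{\mathbf S}_m$, a deterministic unit vector $\mathbf v\in\mathbb C^{\mathcal I}$, and any $X$ satisfying (\ref{assm1})--(\ref{assm2}). From $(\mathbf C_m)$, which gives $\left|\sG_{\mathbf v\mathbf v}(w)-\wt\Pi_{\mathbf v\mathbf v}(w)\right|\prec N^{C_0\delta}\Phi(w)$, and the elementary inequality $\left|\text{Im}\,a-\text{Im}\,b\right|\le|a-b|$, I would conclude $\left|\text{Im}\,\sG_{\mathbf v\mathbf v}(w)-\text{Im}\,\wt\Pi_{\mathbf v\mathbf v}(w)\right|\prec N^{C_0\delta}\Phi(w)$, so that everything reduces to controlling the deterministic term $\text{Im}\,\wt\Pi_{\mathbf v\mathbf v}(w)$.

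For that, I would use the definition (\ref{def_PiPhi}): $\wt\Pi=|w|^{1/2}\overline T^\dag\Pi\,\overline T$ with $\overline T=\text{diag}(UD,I)$, so that $\wt\Pi_{\mathbf v\mathbf v}=|w|^{1/2}\langle\overline T\mathbf v,\Pi\,\overline T\mathbf v\rangle$. Since $U$ is unitary and $\|D\|^2=\sigma_1\le\tau^{-1}$ by (\ref{assm3}), one has $\|\overline T\mathbf v\|\le\tau^{-1/2}$; writing $\widehat{\mathbf u}:=\overline T\mathbf v/\|\overline T\mathbf v\|$ (the case $\overline T\mathbf v=0$ being trivial) and applying (\ref{estimate_PiImw}) of Lemma \ref{cor_basicestimate} to the unit vector $\widehat{\mathbf u}$ gives
\[
\text{Im}\,\wt\Pi_{\mathbf v\mathbf v}(w)=|w|^{1/2}\|\overline T\mathbf v\|^2\,\text{Im}\,\Pi_{\widehat{\mathbf u}\widehat{\mathbf u}}(w)\le C\tau^{-1}|w|^{1/2}\,\text{Im}\!\left(m_{1c}(w)+m_{2c}(w)\right).
\]
Combining this with the previous bound yields $\text{Im}\,\sG_{\mathbf v\mathbf v}(w)\le\text{Im}\,\wt\Pi_{\mathbf v\mathbf v}(w)+\left|\text{Im}\,\sG_{\mathbf v\mathbf v}(w)-\text{Im}\,\wt\Pi_{\mathbf v\mathbf v}(w)\right|\prec|w|^{1/2}\,\text{Im}(m_{1c}(w)+m_{2c}(w))+N^{C_0\delta}\Phi(w)$, which is precisely (\ref{eq_comp_Am}); uniformity in $w\in\widehat{\mathbf S}_m$, in $\mathbf v$, and in $X$ is inherited from $(\mathbf C_m)$ and from (\ref{estimate_PiImw}). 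If one reads $(\mathbf A_m)$ as a two-sided estimate on $|\text{Im}\,\sG_{\mathbf v\mathbf v}|$, the matching lower bound on $\text{Im}\,\wt\Pi_{\mathbf v\mathbf v}$ is obtained in the same way; it is in any case vacuous in the regime where $(\mathbf A_m)$ is used, since there $\text{Im}\,\sG_{\mathbf v\mathbf v}\ge0$ by (\ref{eq_sgsq1}) and Lemma \ref{lem_comp_gbound}.

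I do not expect any genuine obstacle in this step: it is a one-line consequence of $(\mathbf C_m)$, the operator-norm bound $\|\overline T\|\le\tau^{-1/2}$, and the already-established deterministic estimate (\ref{estimate_PiImw}). The substantive content of the self-consistent comparison scheme lies entirely in the reverse direction — upgrading $(\mathbf A_m)$ and $(\mathbf C_m)$ at scale $N^{-\delta m}$ to $(\mathbf C_{m+1})$ at scale $N^{-\delta(m+1)}$ via comparison with the Gaussian case (\ref{Gaussian_starting}) and a continuity/iteration argument in the scale parameter (\ref{eq_comp_eta}) — which is carried out in the following subsections. The present lemma is simply the bookkeeping device that lets the a priori bound $\text{Im}\,\sG_{\mathbf v\mathbf v}\lesssim|w|^{1/2}\text{Im}(m_{1c}+m_{2c})$, needed as input to those arguments (e.g.\ through the identities in Lemma \ref{lem_comp_gbound}), be recovered from a closeness statement about the diagonal generalized entries.
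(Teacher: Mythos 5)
Your argument is correct and is exactly the argument the paper intends: the paper's proof is the single sentence ``This result follows from (\ref{estimate_PiImw})'', and your derivation---split $\mathrm{Im}\,\mathcal G_{\mathbf v\mathbf v}$ into $\mathrm{Im}\,\widetilde\Pi_{\mathbf v\mathbf v}$ plus the error controlled by $(\mathbf C_m)$, then bound $\mathrm{Im}\,\widetilde\Pi_{\mathbf v\mathbf v}=|w|^{1/2}\mathrm{Im}\,\langle\overline T\mathbf v,\Pi\,\overline T\mathbf v\rangle$ via $\|\overline T\|\le\tau^{-1/2}$ and (\ref{estimate_PiImw})---is precisely the bookkeeping that sentence suppresses. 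Nothing is missing or different.
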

\begin{proof}
This result follows from (\ref{estimate_PiImw}).
\end{proof}

The key step is the following induction result.
\begin{lem}\label{lemm_boot}
For any $1\le m\le2\delta^{-1}$, property $(\mathbf A_{m-1})$ implies property $(\mathbf C_m)$.
\end{lem}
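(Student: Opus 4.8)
\textbf{Proposal for the proof of Lemma \ref{lemm_boot}.}

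The plan is to run a self-consistent comparison between a general $X$ and the Gaussian $X^{Gauss}$, following \cite[Section 7]{Anisotropic}, using the inductive hypothesis $(\mathbf A_{m-1})$ as the a priori bound that makes the comparison work on the finer scale. First I would fix $w\in\widehat{\mathbf S}_m$ and introduce the interpolating family of matrices $X^{(\gamma)}$, $\gamma\in[0,1]$, obtained by replacing the entries of $X$ one at a time (in the linearized variable $V_1X$, equivalently in the $\mathcal G$ picture) by independent Gaussian entries with the same first two moments; under (\ref{assm_3rdmoment}) the first three moments match. Writing $\mathcal G^{(\gamma)}$ for the corresponding resolvent and letting $P(X):=\langle \mathbf v,(\mathcal G-\widetilde\Pi)\mathbf v\rangle$, the goal is to bound $\mathbb E|P(X)|^{2q}$ for every fixed $q$ by $(N^{C_0\delta}\Phi)^{2q}$ up to $N^{\epsilon}$ factors; Chebyshev and a union bound over $\widehat{\mathbf S}$ then give $(\mathbf C_m)$. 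The interpolation identity expresses $\mathbb E|P(X)|^{2q}-\mathbb E|P(X^{Gauss})|^{2q}$ as a sum over single-entry replacements of $\frac{d}{d\gamma}\mathbb E|P(X^{(\gamma)})|^{2q}$; since $\mathbb E|P(X^{Gauss})|^{2q}$ is already controlled by the Gaussian anisotropic law (Theorem \ref{law_squareD} via (\ref{eqn_comparison1}), giving $\|\mathcal G(X^{Gauss})-\widetilde\Pi\|\prec\Phi$), it remains to estimate each derivative term.

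The core computation is the standard resolvent expansion in a single entry: for each pair $(i,\mu)$ one Taylor-expands $\mathcal G^{(\gamma)}$ around the matrix with that entry removed, using $\partial_{X_{i\mu}}\mathcal G = -\mathcal G\,(\partial_{X_{i\mu}}H^{\mathcal G})\,\mathcal G$ and the large-deviation/$\ell^2$-bounds of Lemma \ref{lem_comp_gbound}. The terms up to third order in the entry cancel between $X$ and $X^{Gauss}$ because the first three moments agree (this is exactly where (\ref{assm_3rdmoment}) enters), so one is left with fourth-and-higher order terms, each carrying a factor $N^{-2}$ from $\mathbb E|X_{i\mu}|^4=O(N^{-2})$ and, crucially, extra smallness from off-diagonal entries of $\mathcal G$ and from $\mathrm{Im}\,\mathcal G$. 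Here the inductive hypothesis $(\mathbf A_{m-1})$ is used through the identity $\sum_\mu|\mathcal G_{\mathbf w\mu}|^2 = |w|^{1/2}\mathrm{Im}\,\mathcal G_{\mathbf w\mathbf w}/\eta$ (and its analogues in Lemma \ref{lem_comp_gbound}): at scale $\eta_l = \eta N^{\delta l}$ one has $\eta_l\ge N^{-\delta(m-1)}$, so $(\mathbf A_{m-1})$ applies at $w_l:=E+i\eta_l$ and converts factors of $\mathrm{Im}\,\mathcal G$ into $|w|^{1/2}\mathrm{Im}(m_{1c}+m_{2c})+N^{C_0\delta}\Phi$, i.e. essentially into $\Phi^2\cdot N\eta$. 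Combined with the trivial deterministic bound $\|\mathcal G\|\le C|w|^{1/2}\eta^{-1}$ from (\ref{eq_gbound}) used only as a last resort, the bookkeeping shows each replacement contributes $O_\prec(N^{-2}\cdot(N\Phi^2/\eta)\cdot(\cdots))$, and summing over the $O(N^2)$ entries and over the derivative expansion yields a total $O_\prec((N^{C_0\delta}\Phi)^{2q})$ for $C_0$ chosen large enough depending on $q$ but not on $\delta$; this is the place where the constraint $\delta<\zeta/(2C_0)$ in (\ref{assm_comp_delta}) is dictated.

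The main obstacle, and the part requiring the most care, is the bookkeeping of the "extra smallness" in the higher-order terms: one must show that each unmatched term in the single-entry expansion genuinely gains a power of $\Phi$ (or of $N^{-1/2}$) beyond the naive count, by pairing off the off-diagonal $\mathcal G$-entries and using the Ward-type identities of Lemma \ref{lem_comp_gbound} together with $(\mathbf A_{m-1})$, rather than the crude $\eta^{-1}$ bound which would only be affordable finitely many times. This is precisely the scheme carried out in \cite[Section 7]{Anisotropic}; our only modifications are (i) the rescaling by $|w|^{1/2}$ and the $|w|$-dependence of $\widetilde\Pi$ and $\Phi$, which are harmless since $|w|^{1/2}+|z|^2\ge c$ is in force in this section and Lemmas \ref{lemm_m1_4case}--\ref{cor_basicestimate} keep all relevant quantities bounded, and (ii) the non-identically-distributed entries of $D\tilde X$, handled exactly as in \cite{Semicircle}. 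Finally, to treat the case $\eta\ge N^{-1/2+\zeta}|m_{2c}|^{-1}$ without assuming (\ref{assm_3rdmoment}): there the third-order terms no longer cancel, but each such term already carries a factor $N^{-3/2}$ together with off-diagonal/$\mathrm{Im}$ gains, and on this larger range of $\eta$ the parameter $\Phi\gtrsim N^{-1/2+\zeta/2}/(N\eta)^{1/2}$ is large enough that $N^{-3/2}\cdot(\text{gains})\lesssim (N^{C_0\delta}\Phi)^{2q}$ still holds; so the same induction goes through with $\widehat{\mathbf S}$ replaced by $\widehat{\mathbf S}\cap\widehat{\mathbf D}$. I would then close the induction exactly as in \cite{Anisotropic}: iterating Lemmas \ref{lemm_boot2} and \ref{lemm_boot} from $m=0$ up to $m\ge 2\delta^{-1}$ establishes $(\mathbf C_m)$ down to $\eta\ge N^{-1+\zeta}|m_{1c}|^{-1}$, which is (\ref{goal_ani2}) and hence, via (\ref{def_mathcalg})--(\ref{Gaussian_starting}), the anisotropic local law of Theorem \ref{law_wideT}; the averaged local law follows by the same argument applied to $m_2-m_{2c}$ together with the stability Lemma \ref{lemm_stability}, exploiting the extra cancellation in the fluctuation averaging as in the proof of Theorem \ref{law_squareD}.
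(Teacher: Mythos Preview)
Your proposal is correct and follows essentially the same route as the paper: reduce $(\mathbf C_m)$ to a $p$-th moment bound on $F_{\mathbf v}$ (the paper's Lemma \ref{lemm_comp_0}), interpolate between $X$ and $X^{Gauss}$, Taylor-expand in single entries so that the first three moments cancel under (\ref{assm_3rdmoment}), and control the remaining terms via the Ward-type identities of Lemma \ref{lem_comp_gbound} together with the a priori bound from $(\mathbf A_{m-1})$ transferred to scale $\eta$ through the dyadic ladder $\eta_l$ (the paper's Lemmas \ref{lemm_comp_1}--\ref{lemm_comp_2}); the resulting self-consistent inequality is then closed by a Gr\"onwall argument (which you leave implicit). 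One small correction: in your final sentence, the averaged local law in Section \ref{section_averageTX} is obtained by rerunning the same comparison argument directly on $\widetilde F=|w|^{1/2}|m_2-m_{2c}|$ using the already-established anisotropic law as input, not via the stability Lemma \ref{lemm_stability} or a separate fluctuation-averaging step.
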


Combining Lemmas \ref{lemm_boot2} and \ref{lemm_boot}, we conclude that (\ref{eq_comp_Cm}) holds for all $w\in\widehat{\mathbf S}$. Since $\delta$ can be chosen arbitrarily small under the condition (\ref{assm_comp_delta}), we conclude that (\ref{goal_ani2}) holds for all $w\in\widehat{\mathbf S}$, and Proposition \ref{comparison_prop} follows. What remains now is the proof of Lemma \ref{lemm_boot}. Denote
\begin{equation}\label{eq_comp_F(X)}
 F_{\mathbf v}(X,w)=\left|\sG_{\mathbf{vv}}(X,w)-\widetilde\Pi_{\mathbf {vv}}(w)\right|.
\end{equation}
By Markov's inequality, it suffices to prove the following lemma.
\begin{lem}\label{lemm_comp_0}
 Fix $p\in 2\mathbb N$ and $m\le 2\delta^{-1}$. Suppose that the assumptions of Proposition \ref{comparison_prop},  (\ref{assm_3rdmoment}) and property $(\mathbf A_{m-1})$ hold. Then we have
 \begin{equation}
  \mathbb EF_{\mathbf v}^p(X,w)\le\left( N^{C_0\delta}\Phi(w)\right)^p
 \end{equation}
 for all $w\in{\widehat{\mathbf S}}_m$ and all deterministic unit vector $\mathbf v$.
\end{lem}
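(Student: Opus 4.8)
\textbf{Proof strategy for Lemma \ref{lemm_comp_0}.}
The plan is to run a Lindeberg-type resolvent comparison between $X$ and a Gaussian matrix $X^{Gauss}$, replacing the entries one at a time while controlling the change in $\mathbb E F_{\mathbf v}^p$. Since the anisotropic local law is already known for $\sG(X^{Gauss},w)$ (as explained around (\ref{eqn_comparison1}) via Theorem \ref{law_squareD}), it suffices to show that $\mathbb E F_{\mathbf v}^p(X,w)$ differs from $\mathbb E F_{\mathbf v}^p(X^{Gauss},w)$ by at most $(N^{C_0\delta}\Phi)^p$. First I would set up the interpolation: order the $MN$ entries of $X$, and let $X^{(k)}$ denote the matrix whose first $k$ entries are from $X$ and the rest from $X^{Gauss}$. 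Writing $F_{\mathbf v}^p$ as a function of a single entry $X_{i\mu}$ (all else fixed), I would Taylor-expand to fourth order in $X_{i\mu}$ around $X_{i\mu}=0$, using the resolvent expansion $\sG = \sG^{(i\mu\text{-removed})} + (\text{corrections in }X_{i\mu})$ — equivalently, differentiating $\sG$ repeatedly in $X_{i\mu}$ produces products of $\sG$ entries along with the deterministic $\overline T$ factors from (\ref{def_mathcalg}). Because $\mathbb E X_{i\mu}=\mathbb E X^{Gauss}_{i\mu}=0$, $\mathbb E X_{i\mu}^2 = \mathbb E (X^{Gauss}_{i\mu})^2 = (N\wedge M)^{-1}$, and — crucially — the third moments also match by assumption (\ref{assm_3rdmoment}) and the Gaussian has vanishing third moment, the zeroth through third order terms cancel exactly in the telescoping sum. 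Only the fourth-order term and the higher-order remainder survive.

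The core estimate is then to bound the fourth-order derivative term. By (\ref{assm2}) we have $\mathbb E|X_{i\mu}|^4 \le C_4 (N\wedge M)^{-2}$, so each fourth-order contribution carries a prefactor $N^{-2}$. The fourth derivative of $F_{\mathbf v}^p$ in $X_{i\mu}$ is a sum of $O(1)$ (i.e. $p$-dependent but $N$-independent) many monomials, each a product of entries of $\sG$ (and $\partial_w\sG$ is not needed here — only $X$-derivatives) of the forms $\sG_{\mathbf v s}$, $\sG_{s\mathbf v}$, $\sG_{st}$ contracted against the fixed index $i$ or $\mu$ and against $\mathbf v$, times powers of $(\sG_{\mathbf v\mathbf v}-\widetilde\Pi_{\mathbf v\mathbf v})$ and its conjugate. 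The key mechanism is the Ward identity (Lemma \ref{lem_comp_gbound}): summing $|\sG_{\mathbf v s}|^2$ over $s$ produces $\eta^{-1}\Im\sG_{\mathbf v\mathbf v}$, and the induction hypothesis $(\mathbf A_{m-1})$ controls $\Im\sG_{\mathbf v\mathbf v}$ at scale $\eta_{l+1} = \eta N^\delta$ by $|w|^{1/2}\Im(m_{1c}+m_{2c}) + N^{C_0\delta}\Phi$, which is $\lesssim N^{C_0\delta}(N\eta)\Phi^2$ roughly (since $\Phi^2 \gtrsim \Im(m_{1c}+m_{2c})/(N\eta)$ up to $|w|^{1/2}$ factors). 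One would sum over the free index $i$ or $\mu$ (whichever is not fixed), pick up one factor of $\eta^{-1}\Im\sG$ per such summation (there are exactly two "off-diagonal" $\sG$-legs attached to the fixed index in each relevant monomial), and combine with the $N^{-2}$ prefactor to obtain a net bound of the form $N^{-2}\cdot N^{C_0\delta}\cdot (N\Phi^2) \cdot (\cdots) \le N^{-1+C_0\delta}\Phi^2 \cdot F^{p-2}$-type terms. After the telescoping sum over all $MN\sim N^2$ entries, the $N^{-2}$ and $N^2$ cancel, leaving $\mathbb E F^p \le \mathbb E F^p(X^{Gauss}) + N^{C_0\delta}\Phi^p + (\text{lower order}) \le (N^{C_0\delta}\Phi)^p$ after a self-improving (Young's inequality / absorption) step to handle the mixed $F^{p-j}$ terms. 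The higher-order remainder (order $\ge 5$) is handled by the cutoff: on the high-probability event where all $|\sG_{st}|$ are bounded, the remainder is $\prec N^{-5/2+\epsilon}$ per entry, hence negligible after summation; off that event one uses the crude deterministic bound $\|\sG\|\le C|w|^{1/2}\eta^{-1}\le N^C$ together with the probability decay.

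\textbf{Main obstacle.}
The hard part will be the bookkeeping of the fourth-order term: one must show that \emph{every} monomial arising from $\partial_{X_{i\mu}}^4 F_{\mathbf v}^p$ has at least two $\sG$-legs carrying the summation index, so that two Ward identities are available to absorb the single free-index summation with an $\eta^{-1}$ to spare. This is the analogue of the combinatorial count in \cite[Section 7]{Anisotropic}, and it requires care because $F_{\mathbf v}^p$ mixes $\sG_{\mathbf v\mathbf v}$-type "diagonal" factors (which, being already $O(1)$, contribute no decay) with genuinely off-diagonal ones. The point is that each differentiation in $X_{i\mu}$ inserts a factor $(\overline T^\dag G \cdots)_{\cdot i}$ or $(\cdots G\overline T)_{\mu\cdot}$, creating two new off-diagonal legs; a parity/degree argument then shows that after four differentiations the monomial has enough off-diagonal structure. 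A secondary subtlety is that $\Phi$ and $\Im(m_{1c}+m_{2c})$ depend on $w$ and degenerate as $|w|\to 0$ (since $|z|^2\le 1-\tau$), so the bounds must be tracked with the correct powers of $|w|^{1/2}$ as in (\ref{def_PiPhi}); Lemma \ref{lemm_m1_4case} and Lemma \ref{cor_basicestimate} supply exactly the estimates needed to keep these factors under control. Finally, for the case without (\ref{assm_3rdmoment}) but with $\eta\ge N^{-1/2+\zeta}|m_{2c}|^{-1}$, the third-order terms no longer cancel, but on this restricted range one has $N^{-3/2}\cdot(\text{two Ward factors}) \lesssim N^{-3/2}(N\eta)\cdot(\cdots)$, and the extra $\eta\ge N^{-1/2+\zeta}|m_{2c}|^{-1}$ is precisely what makes $N^{1/2}\cdot(\text{per-entry third order bound})$ summable to $o(\Phi^p)$; this is why the weaker scale appears in the statement.
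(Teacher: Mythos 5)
Your outline captures the right ingredients---third-moment matching to kill orders $0$ through $3$, a Taylor expansion in $X_{i\mu}$ around $0$, and Ward identities from Lemma \ref{lem_comp_gbound} fed by $(\mathbf A_{m-1})$---but two steps of your plan as written would not close, and the paper's structure is set up precisely to avoid both pitfalls.

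First, the closure of the ``absorption'' step is not just Young's/H\"older's inequality. In a discrete Lindeberg swap you produce, after the telescope, terms of the form $c\,N^{-2}\sum_k\big[(N^{C_0\delta}\Phi)^p+\mathbb E F^p(X^{(k)})\big]$ with intermediate matrices $X^{(k)}$, and Young's inequality on the mixed $F^{p-q}\Phi^q$ monomials leaves you with $\mathbb E F^p(X^{(k)})$ of every intermediate ensemble on the right-hand side; these are not a priori controlled by $\mathbb E F^p(X)$. You need a Gr\"onwall-type argument to absorb them. The paper avoids this bookkeeping by using the continuous mixture interpolation $X^\theta$ (Definition of $\rho^\theta_{i\mu}=(1-\theta)\rho^0_{i\mu}+\theta\rho^1_{i\mu}$, Lemma \ref{lemm_comp_3}), which converts the problem into a genuine differential inequality $\frac{d}{d\theta}\mathbb E F^p_{\mathbf v}(X^\theta)\le C\big[(N^{C_0\delta}\Phi)^p+\mathbb E F^p_{\mathbf v}(X^\theta)\big]$ (Lemma \ref{lemm_comp_4}), which closes by Gr\"onwall against the $\theta=0$ Gaussian anchor (Lemma \ref{Gaussian_case}). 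One can make your discrete route work with a discrete Gr\"onwall, but as stated ``self-improving / Young'' is a genuine gap.

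Second, and more serious: the terms of order $n\ge 5$ are \emph{not} negligible by the crude-cutoff argument you describe. On the good event $|f^{(5)}_{(i\mu)}|\prec 1$ and $\mathbb E|X_{i\mu}|^5\prec N^{-5/2}$, so the per-entry fifth-order contribution is $\prec N^{-5/2}$; summed over $\sim N^2$ entries this is $\prec N^{-1/2}$, which for $p\ge 2$ is \emph{larger} than $\Phi^p\ge c\,N^{-p/2}$ (in fact for $\eta\sim 1$, $\Phi^p\sim N^{-p/2}\le N^{-1}$). The crude bound therefore fails by a wide margin. This is exactly why the paper expands all the way to order $4p$ in (\ref{eq_comp_taylor}) and treats every $n=4,\ldots,4p$ via the same Ward-identity/word machinery, using the elementary but essential conversion $\Phi\ge cN^{-1/2}$ to turn surplus $N^{-1/2}$ factors into $\Phi$ factors: the final bound at (\ref{eq_comp_goal3}) is $N^{-n/2+2}\cdot(\mathbf 1(n\ge 2q-1)(N^{C_0\delta/2}\Phi)^2+\mathbf 1(n\le 2q-2)(N^{C_0\delta/2}\Phi)^4)\cdot N^{2\delta(n+q+2)}\mathbb E F^{p-q}$, and then $N^{-(n-4)/2}$ is replaced by $\Phi^{n-4}$ to produce $(N^{C_0\delta}\Phi)^q$ after checking $n\ge q+2$. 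Without this, the contributions from $n\ge 5$ swamp the target. Your ``parity argument shows there are enough off-diagonal legs'' instinct is correct for $n=4$, but the higher $n$ require the extra $\Phi\ge N^{-1/2}$ substitution, which your proposal does not include.

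A minor structural remark: the only-to-fourth-order expansion is also inconsistent with your own bound $|f^{(n)}_{(i\mu)}(y)|\prec N^{2\delta(n+p)}$ (needed once derivatives of $F^p$, not just of $\sG_{\mathbf v\mathbf v}$, are involved): the $\eta$-dependent $N^{2\delta}$ factors from the a priori bound (\ref{eq_comp_apbound}) accumulate, so one cannot stop the Taylor series at a $p$-independent order and still absorb the tail into $\Phi^p$; the order-$4p$ cutoff in (\ref{eq_comp_taylor}) is what makes the Lagrange remainder, with the factor $|y|^{4p+1}\prec N^{-(4p+1)/2}$, beat both the $N^2$ entry count and the $N^{2\delta(5p+1)}$ derivative growth for $\delta$ small.
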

In the following, we prove Lemma \ref{lemm_comp_0}. 
First, in order to make use of the assumption $(\mathbf A_{m-1})$, which has spectral parameters in $\widehat{\mathbf S}_{m-1}$, to get some estimates for spectral parameters in $\widehat{\mathbf S}_{m}$, we shall use the following rough bounds for $\mathcal G_{\mathbf{xy}}$.

\begin{lem}\label{lemm_comp_1}
For any $w=E+i\eta\in\mathbf S$ and $\mathbf x,\mathbf y\in \mathbb C^{\mathcal I}$,  we have
\begin{align*}
\left|\sG_{\mathbf x\mathbf y}(w)-\wt\Pi_{\mathbf x\mathbf y}(w)\right|\prec & N^{2\delta}\sum\limits_{l=1}^{L(\eta)} \left[\textnormal{Im}\sG_{\mathbf x_1\mathbf x_1}(E+i\eta_l)+\textnormal{Im}\sG_{\mathbf x_2\mathbf x_2}(E+i\eta_l) \right.\\
& \left. +\textnormal{Im}\sG_{\mathbf y_1\mathbf y_1}(E+i\eta_l)+\textnormal{Im}\sG_{\mathbf y_2\mathbf y_2}(E+i\eta_l)\right]+|\mathbf x||\mathbf y|,
\end{align*}
where $\mathbf x=\left( {\begin{array}{*{20}c}
   {\mathbf x}_1   \\
   {\mathbf x}_2 \\
   \end{array}} \right)$ and $\mathbf y=\left( {\begin{array}{*{20}c}
   {\mathbf y}_1   \\
   {\mathbf y}_2 \\
   \end{array}} \right)$ for ${\mathbf x}_1,{\mathbf y}_1\in\mathbb C^{\mathcal I_1}$ and ${\mathbf x}_2,{\mathbf y}_2\in\mathbb C^{\mathcal I_2}$.
\end{lem}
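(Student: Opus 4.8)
The plan is to derive this as a resolvent-continuity estimate, exploiting the monotonicity of $\eta\mapsto\eta\,\Im\langle\mathbf x,\sG(E+i\eta)\mathbf x\rangle$ together with the spectral representation. First I would recall that $\sG(w)=|w|^{1/2}\overline T^\dag G(w)\overline T$, so each generalized entry $\sG_{\mathbf x\mathbf y}$ is a linear combination (with $|w|^{1/2}$-bounded coefficients, since $\|\overline T\|+\|\overline T^{-1}\|\le\tau^{-1}$) of entries $G_{\mathbf x'\mathbf y'}$; and by the spectral decomposition (\ref{singular_rep}), each such entry is, up to the harmless $w^{\pm 1/2}$ prefactors attached to the off-diagonal blocks, of the form $\sum_k (\lambda_k-w)^{-1}\langle\mathbf x',\phi_k\rangle\langle\phi_k,\mathbf y'\rangle$ where $\{\phi_k\}$ runs over the relevant singular vectors. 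The standard device is: for $\eta'\ge\eta$, $|(\lambda-E-i\eta)^{-1}|\le (\eta'/\eta)\,|(\lambda-E-i\eta')^{-1}|$ is false in general, but $\Im(\lambda-E-i\eta)^{-1}=\eta/((\lambda-E)^2+\eta^2)$ satisfies $\eta'\Im(\lambda-E-i\eta)^{-1}\ge$ (const)$\,\eta\,\Im(\lambda-E-i\eta')^{-1}$ whenever $\eta'\le 2\eta$; iterating this along the geometric sequence $\eta_1<\eta_2<\cdots<\eta_{L(\eta)}$ (which has $L(\eta)\le 2\delta^{-1}=O(1)$ terms and consecutive ratio $N^\delta$, so one interpolates with $O(N^\delta)$ intermediate points between consecutive $\eta_l$, producing the $N^{2\delta}$ factor) lets one bound $|(\lambda-w)^{-1}|$ at scale $\eta$ by $N^{2\delta}\sum_{l=1}^{L(\eta)}\Im(\lambda-E-i\eta_l)^{-1}$ plus a boundary term from $\eta_{L(\eta)}\sim 1$ where $\|\sG\|=O(1)$ by (\ref{eq_gbound}).

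Concretely, the key steps in order would be: (i) reduce $\sG_{\mathbf x\mathbf y}$ and $\wt\Pi_{\mathbf x\mathbf y}$ to the $G$-level using (\ref{def_mathcalg})–(\ref{def_PiPhi}), and split $\mathbf x=(\mathbf x_1,\mathbf x_2)^T$, $\mathbf y=(\mathbf y_1,\mathbf y_2)^T$ so that only the four ``diagonal'' quadratic forms $\Im\sG_{\mathbf x_a\mathbf x_a}$, $\Im\sG_{\mathbf y_a\mathbf y_a}$ ($a=1,2$) appear; (ii) apply Cauchy–Schwarz in the spectral sum, $|\langle\mathbf x',\phi_k\rangle\langle\phi_k,\mathbf y'\rangle|\le\frac12(|\langle\mathbf x',\phi_k\rangle|^2+|\langle\mathbf y',\phi_k\rangle|^2)$, which turns the off-diagonal form into diagonal ones; (iii) for each diagonal form $\sum_k|\langle\mathbf x',\phi_k\rangle|^2|\lambda_k-w|^{-1}$, use the elementary inequality $|\lambda-w|^{-1}\le \eta^{-1}\Im(\lambda-w)^{-1}\cdot$(1) at a single scale is too lossy, so instead use the dyadic/geometric interpolation described above to write $|\lambda_k-E-i\eta|^{-1}\le C N^{2\delta}\sum_{l}\Im(\lambda_k-E-i\eta_l)^{-1}+C$, using that the $\eta_l$ form a net of $[\eta,1]$ with $O(N^{2\delta})$ refinement; (iv) sum over $k$ to produce $CN^{2\delta}\sum_l\Im\sG_{\mathbf x'\mathbf x'}(E+i\eta_l)+C|\mathbf x'|^2$ (the $|\mathbf x'|^2$ and analogous $|\mathbf x||\mathbf y|$ terms coming from the $\eta_L\sim 1$ endpoint and the normalization $\|\overline T\|\le\tau^{-1}$, absorbing also the $w^{\pm1/2}$ prefactors which are $O(1)$ on $\mathbf D$); (v) subtract $\wt\Pi_{\mathbf x\mathbf y}$, which is $O(|\mathbf x||\mathbf y|)$ by (\ref{estimate_Piw12}) as noted after (\ref{def_PiPhi}), hence absorbed into the last term.

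The main obstacle is bookkeeping the $w$-dependent prefactors cleanly: the off-diagonal blocks of $G$ in (\ref{singular_rep}) carry $w^{-1/2}\sqrt{\lambda_k}$, which is not bounded by $\eta^{-1}$ uniformly but does satisfy $|w|^{-1/2}\sqrt{\lambda_k}/|\lambda_k-w|\le\eta^{-1}$ (as used in Lemma \ref{lemma_Im}); one must verify that the same bound survives the dyadic interpolation so that the off-diagonal blocks also get controlled by the four listed $\Im$-terms without extra $|w|^{-1/2}$ losses. This is exactly the kind of estimate that appears (in the Wigner/sample-covariance setting) in \cite[Section 7]{Anisotropic}, so I would follow that template, checking only that the rectangular structure ($\mathbf v_i=V_1\mathbf e_i$ for $i\in\mathcal I_1^M$) and the $\Sigma^{-1}$-block in $\tilde H$ introduce no new difficulty — they do not, because all the relevant norms $\|D\|,\|D^{-1}\|,\|\Sigma^{\pm1}\|$ are $O(1)$ by (\ref{assm3}). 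Since $L(\eta)=O(1)$ and the refinement factor is $N^{2\delta}$, no further powers of $N$ are lost, which is what the statement requires.
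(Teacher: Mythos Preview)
Your proposal is correct and matches the paper's approach: the paper gives no independent proof but simply cites \cite[Lemma 7.12]{Anisotropic}, and your plan is precisely the adaptation of that argument to the present block structure. The five steps you outline (reduction to $G$ via $\overline T$, splitting into $\mathbf x_1,\mathbf x_2,\mathbf y_1,\mathbf y_2$, Cauchy--Schwarz in the spectral sum, the dyadic pigeonhole on $|\lambda_k-E|$ against the levels $\eta_l$, and absorbing $\wt\Pi_{\mathbf x\mathbf y}=O(|\mathbf x||\mathbf y|)$) are exactly what that reference does, so this is the intended route.

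One small correction worth noting: your explanation of the $N^{2\delta}$ factor as coming from ``$O(N^\delta)$ intermediate points between consecutive $\eta_l$'' is not quite the mechanism. No intermediate refinement is needed; the point is simply that if $\eta_{l-1}\le|\lambda_k-E|\le\eta_l$ (with $\eta_l/\eta_{l-1}=N^\delta$), then $|\lambda_k-w|^{-1}\le\eta_{l-1}^{-1}=N^\delta\eta_l^{-1}$, while $\Im(\lambda_k-E-i\eta_l)^{-1}\ge(2\eta_l)^{-1}$, so one factor $N^\delta$ arises here; the second $N^\delta$ (making $N^{2\delta}$) is just slack to handle the off-diagonal block prefactor $\sqrt{\lambda_k}\le C|E+i\eta_l|^{1/2}$ and the shift from $\eta_0=\eta$ to $\eta_1$ in the boundary case. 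This does not affect the validity of your argument, only the narrative.
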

\begin{proof} The proof is similar to the one for \cite[Lemma 7.12]{Anisotropic}.\end{proof}
\begin{lem}\label{lemm_comp_2}
Suppose $(\mathbf A_{m-1})$ holds, then
 \begin{equation}\label{eq_comp_apbound}
  \sG(w)-\wt\Pi(w)=O_{\prec}(N^{2\delta})
 \end{equation}
 and
\begin{equation}\label{eq_comp_apbound2}
\textnormal{Im}\sG_{\mathbf v\mathbf v}\le N^{2\delta}\left[|w|^{1/2}\textnormal{Im}\left(m_{1c}(w)+m_{2c}(w)\right)+N^{C_0\delta}\Phi(w)\right]
\end{equation}
 for all $w\in \widehat{\mathbf S}_m$ and all deterministic unit vector $\mathbf v$
\end{lem}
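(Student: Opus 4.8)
The goal is Lemma \ref{lemm_comp_2}, which upgrades the \emph{averaged} bound in property $(\mathbf A_{m-1})$ (valid only on the larger-$\eta$ domain $\widehat{\mathbf S}_{m-1}$) into a \emph{crude a priori} bound for the individual entries of $\sG-\wt\Pi$ at the smaller scale $w\in\widehat{\mathbf S}_m$, together with an improved bound on $\Im\,\sG_{\mathbf v\mathbf v}$ at that scale. The whole point is to have \emph{some} deterministic polynomial control available before running the moment estimate in Lemma \ref{lemm_comp_0}. The two ingredients are the monotonicity/interpolation inequality of Lemma \ref{lemm_comp_1} and property $(\mathbf A_{m-1})$ itself.

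\textbf{Step 1: the entrywise a priori bound (\ref{eq_comp_apbound}).} Fix $w=E+i\eta\in\widehat{\mathbf S}_m$ and unit vectors $\mathbf x,\mathbf y$. Apply Lemma \ref{lemm_comp_1}: the left side $|\sG_{\mathbf x\mathbf y}(w)-\wt\Pi_{\mathbf x\mathbf y}(w)|$ is controlled, up to $N^{2\delta}$, by a sum over $l=1,\dots,L(\eta)$ of $\Im\,\sG_{\mathbf x_a\mathbf x_a}(E+i\eta_l)$ and $\Im\,\sG_{\mathbf y_a\mathbf y_a}(E+i\eta_l)$, $a=1,2$, plus $|\mathbf x||\mathbf y|=O(1)$. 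For each $l\ge1$ we have $\eta_l=\eta N^{\delta l}\ge N^{-\delta(m-1)}$, so $E+i\eta_l\in\widehat{\mathbf S}_{m-1}$ and property $(\mathbf A_{m-1})$ applies to each of those imaginary parts, giving $\Im\,\sG_{\mathbf x_a\mathbf x_a}(E+i\eta_l)\prec|w|^{1/2}\Im(m_{1c}+m_{2c})(E+i\eta_l)+N^{C_0\delta}\Phi(E+i\eta_l)$ and similarly for $\mathbf y$. Now bound each term crudely: $|m_{1,2c}|=O(|w|^{-1/2})$ by Proposition \ref{prop_roughbound}, so $|w|^{1/2}\Im\,m_{1,2c}=O(1)$; and $\Phi=|w|^{1/2}\Psi\le C|w|^{1/2}(N\eta_l)^{-1/2}\cdot(\ldots)\le C$ on $\mathbf S$, so each summand is $O_\prec(N^{C_0\delta})$. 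Since $L(\eta)\le2\delta^{-1}$ is bounded and $\delta<\zeta/(2C_0)$, the whole sum is $O_\prec(N^{2\delta}\cdot N^{C_0\delta})$. Reabsorbing constants and using $\delta$ small, this is $O_\prec(N^{C\delta})$ for an explicit $C$; choosing the universal constant $C_0$ in the definitions large enough (larger than this $C$) we obtain (\ref{eq_comp_apbound}) in the stated form $\sG-\wt\Pi=O_\prec(N^{2\delta})$ — more precisely one states it as $O_\prec(N^{2\delta})$ after verifying the exponent bookkeeping, exactly as in \cite[Lemma 7.12--7.13]{Anisotropic}. Since $\|\wt\Pi\|=O(1)$ by (\ref{estimate_Piw12}), this also yields $\|\sG(w)\|=O_\prec(N^{2\delta})$.

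\textbf{Step 2: the improved bound (\ref{eq_comp_apbound2}) on $\Im\,\sG_{\mathbf v\mathbf v}$.} This is gotten from the same interpolation scheme but keeping track of the fact that one is estimating an imaginary part at $w\in\widehat{\mathbf S}_m$ directly, rather than an off-diagonal entry. The mechanism is the standard monotonicity of $\eta\mapsto\eta\,\Im\,\sG_{\mathbf v\mathbf v}(E+i\eta)$ (a consequence of the spectral representation (\ref{singular_rep}), which expresses $\Im\,\sG_{\mathbf v\mathbf v}$ as $\eta\sum_k|\langle\mathbf v,\cdot\rangle|^2/((\lambda_k-E)^2+\eta^2)$ up to the $|w|^{1/2}$ prefactor): for $\eta\le\eta'$ one has $\Im\,\sG_{\mathbf v\mathbf v}(E+i\eta)\le C(\eta'/\eta)\,\Im\,\sG_{\mathbf v\mathbf v}(E+i\eta')$. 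Take $\eta'=\eta_1=\eta N^{\delta}$, the smallest of the interpolation scales, which lies in $\widehat{\mathbf S}_{m-1}$; apply $(\mathbf A_{m-1})$ there and use that $\Im(m_{1c}+m_{2c})$ and $\Phi$ are, up to the $N^{\delta}$ factor coming from $\eta'/\eta$, comparable between $E+i\eta$ and $E+i\eta_1$ (this comparison is where the basic estimates of Lemma \ref{lemm_m1_4case} and the definition of $\Phi$ enter). Collecting factors gives the $N^{2\delta}$ in front of $|w|^{1/2}\Im(m_{1c}+m_{2c})(w)+N^{C_0\delta}\Phi(w)$, which is exactly (\ref{eq_comp_apbound2}).

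\textbf{Main obstacle.} The conceptual content is minimal — everything is a soft interpolation/monotonicity argument — so the real work is the \emph{bookkeeping of exponents}: one must check that every application of Lemma \ref{lemm_comp_1}, of monotonicity in $\eta$, and of the comparison of $\Phi$ and $\Im\,m_{1,2c}$ across the dyadic scales $\eta_1,\dots,\eta_L$ costs only a bounded power $N^{O(\delta)}$, and that the bounded number $L(\eta)\le2\delta^{-1}$ of scales does not accumulate into something worse than $N^{C_0\delta}$. This forces the choice ``$C_0$ universal and large, $\delta<\zeta/(2C_0)$'' to be made consistently, and it is the only place where one has to be careful. Since the argument is structurally identical to \cite[Section 7]{Anisotropic}, I would organize the proof to mirror theirs and simply point out the (minor) modifications forced by the extra $|w|^{1/2}$ weights and the $\mathcal G=|w|^{1/2}\overline T^\dag G\,\overline T$ conjugation.
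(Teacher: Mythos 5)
Your proposal follows the same two-step route as the paper: for \eqref{eq_comp_apbound}, bound the imaginary parts appearing in Lemma~\ref{lemm_comp_1} via $(\mathbf A_{m-1})$ at the scales $\eta_l$, $l\ge1$; for \eqref{eq_comp_apbound2}, use the monotonicity of $\eta\mapsto\eta\,|w|^{-1/2}\Im\,\sG_{\mathbf v\mathbf v}(E+i\eta)$ and $\eta\mapsto\eta^{-1}\Im\,m_{1,2c}(E+i\eta)$ together with the nonincreasing property of $\eta\mapsto\Psi(E+i\eta)$, comparing $w$ with $w_1=E+i\eta_1\in\widehat{\mathbf S}_{m-1}$. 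Step~2 is correct and is exactly what the paper does.

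In Step~1, however, there is a concrete exponent slip that your ``choose $C_0$ large'' remark does not repair: you arrive at $O_\prec(N^{(C_0+2)\delta})$, whereas the lemma asserts $O_\prec(N^{2\delta})$, and enlarging $C_0$ only shrinks $\delta$ without changing the ratio between the two. The observation you are missing is that on $\mathbf D$ one has $\Phi\le CN^{-\zeta/2}$ (from $N\eta\ge N^{\zeta}|m_{2c}|^{-1}$ in \eqref{eq_domainD} together with $|m_{1,2c}|=O(|w|^{-1/2})$; this is in fact recorded around \eqref{eq_comp_boundPhi}), while $N^{C_0\delta}\le N^{\zeta/2}$ by \eqref{assm_comp_delta}; hence $N^{C_0\delta}\Phi\le C$. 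Therefore each application of $(\mathbf A_{m-1})$ gives $\Im\,\sG(E+i\eta_l)\prec1$, not $O_\prec(N^{C_0\delta})$, and the factor $N^{2\delta}$ from Lemma~\ref{lemm_comp_1} is the only power of $N$ that appears, yielding precisely $\sG-\wt\Pi=O_\prec(N^{2\delta})$ as stated. Once this is fixed, the proposal is correct and matches the paper's proof.
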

\begin{proof}
Let $w=E+i\eta \in \widehat{\mathbf S}_m$. Then $E+i\eta_l \in \widehat{\mathbf S}_{m-1}$ for $l=1,\ldots, L(\eta)$, and (\ref{eq_comp_Am}) gives $\textnormal{Im}\sG_{\mathbf v\mathbf v}(w)\prec 1.$ The estimate (\ref{eq_comp_apbound}) now follows immediately from Lemma \ref{lemm_comp_1}. To prove (\ref{eq_comp_apbound2}), we remark that if $s(w)$ is the Stieltjes transform of any positive integrable function on $\mathbb R$, the map $\eta \mapsto \eta\Im\, s(E+i\eta)$ is nondecreasing and the map $\eta \mapsto \eta^{-1} \Im\, s(E+i\eta)$ is nonincreasing. We apply them to $|w|^{-1/2}\Im\,\sG_{\mathbf v\mathbf v}(E+i\eta)$ and $\Im\, m_{1,2c}(E+i\eta)$ to get for $w_1=E+i\eta_1\in \widehat{\mathbf S}_{m-1}$,
\begin{align*}
\textnormal{Im}\sG_{\mathbf v\mathbf v}(w) & \le N^{\delta}\frac{|w|^{1/2}}{|w_1|^{1/2}}\textnormal{Im}\sG_{\mathbf v\mathbf v}(w_1)\prec N^{\delta}\left[|w|^{1/2}\textnormal{Im}\left(m_{1c}(w_1)+m_{2c}(w_1)\right)+N^{C_0\delta}\frac{|w|^{1/2}}{|w_1|^{1/2}}\Phi(w_1)\right] \\
& \le N^{2\delta}\left[|w|^{1/2}\textnormal{Im}\left(m_{1c}(w)+m_{2c}(w)\right)+N^{C_0\delta}\Phi(w)\right],
\end{align*}
where we use $\Phi(w):=|w|^{1/2}\Psi(w)$ and the fact that $\eta \mapsto \Psi(E+i\eta)$ is nonincreasing, which is clear from the definition (\ref{eq_defpsi}).
\end{proof}

Now we apply the self-consistent comparison method presented in \cite[Section 7]{Anisotropic} to prove Lemma \ref{lemm_comp_0}. To organize the proof, we divide it into two small subsections.

\begin{subsubsection}{Interpolation and expansion}
\begin{defn}[Interpolating matrices]
Introduce the notation $X^0:=X^{Gauss}$ and $X^1:=X$. Let $\rho_{i\mu}^0$ and $\rho_{i\mu}^1$ be the laws of $X_{i\mu}^0$ and $X_{i\mu}^1$, respectively, for $i\in \mathcal I_1^M$ and $\mu\in \mathcal I_2$. For $\theta\in [0,1]$, we define the interpolated law
$$\rho_{i\mu}^\theta := (1-\theta)\rho_{i\mu}^0+\theta\rho_{i\mu}^1.$$
We shall work on the probability space consisting of triples $(X^0,X^\theta, X^1)$ of independent $\mathcal I_1^M\times \mathcal I_2$ random matrices, where the matrix $X^\theta=(X_{i\mu}^\theta)$ has law
\begin{equation}\label{law_interpol}
\prod_{i\in \mathcal I_1^M}\prod_{\mu\in \mathcal I_2} \rho_{i\mu}^\theta(dX_{i\mu}^\theta).
\end{equation}
For $\lambda \in \mathbb R$, $i\in \mathcal I_1^M$ and $\mu\in \mathcal I_2$, we define the matrix $X_{(i\mu)}^{\theta,\lambda}$ through
\[\left(X_{(i\mu)}^{\theta,\lambda}\right)_{j\nu}:=\begin{cases}X_{i\mu}^{\theta}&\text{ if }(j,\nu)\ne (i,\mu)\\\lambda&\text{ if }(j,\nu)=(i,\mu)\end{cases}.\]
We also introduce the matrices \[\sG^{\theta}(w):=\sG\left(X^{\theta},w\right),\ \ \ \sG^{\theta, \lambda}_{(i\mu)}(w):=\sG\left(X_{(i\mu)}^{\theta,\lambda},w\right),\]
according to (\ref{def_mathcalg}) and the Definition \ref{def_linearHG}.
\end{defn}

We shall prove Lemma \ref{lemm_comp_0} through interpolation matrices $X^\theta$ between $X^0$ and $X^1$. It holds for $X^0$ by the the anisotropic law (\ref{goal_ani}) (see the remark above (\ref{Gaussian_starting})).
\begin{lem}\label{Gaussian_case}
Lemma \ref{lemm_comp_0} holds if $X=X^0$.
\end{lem}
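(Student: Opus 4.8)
The plan is to observe that Lemma~\ref{Gaussian_case} is essentially immediate from the anisotropic local law for a Gaussian $X$, which is already available \emph{unconditionally}: it requires neither property $(\mathbf A_{m-1})$ nor the vanishing third moment assumption (\ref{assm_3rdmoment}). Indeed, by the distributional identity (\ref{eqn_comparison1}) together with $V_1 X^{Gauss}\stackrel{d}{=}\wt X^{Gauss}U^{\dagger}$, the matrix $\sG(X^{0},w)$ is a unitary conjugate of $\sG(D,\wt X^{Gauss},w)$, so Theorem~\ref{law_squareD} gives $\|\sG(X^{0},w)-\wt\Pi(w)\|\prec\Phi(w)$ uniformly in $w\in\mathbf S$; this is exactly the statement that (\ref{goal_ani}) holds for $X=X^{0}$, as recorded in the remark above (\ref{Gaussian_starting}). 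Since $\mathbf v$ is a unit vector, $F_{\mathbf v}(X^{0},w)=|\langle\mathbf v,(\sG(X^{0},w)-\wt\Pi(w))\mathbf v\rangle|\le\|\sG(X^{0},w)-\wt\Pi(w)\|$, so $F_{\mathbf v}(X^{0},w)\prec\Phi(w)$ uniformly in $w\in\mathbf S$ and in the unit vector $\mathbf v$.

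The remaining (routine) step is to upgrade this high-probability bound to the $p$-th moment bound with the explicit constant $N^{C_0\delta}$. First I would record the crude deterministic bound: on $\widehat{\mathbf S}_m$ one has $\eta\ge N^{-\delta m}\ge N^{-2}$, so by (\ref{eq_gbound}) and $\|\overline T\|+\|\overline T^{-1}\|\le C$ we get $\|\sG(X^{0},w)\|\le C\eta^{-1}\le CN^{2}$, while by (\ref{estimate_Piw12}) transported through (\ref{def_PiPhi}) $\|\wt\Pi(w)\|=O(1)$; hence $F_{\mathbf v}(X^{0},w)\le CN^{2}$ pointwise. Likewise $\Phi(w)\ge N^{-C}$ for some fixed $C$ on $\mathbf D$. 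Now split $\mathbb E F_{\mathbf v}^{p}(X^{0},w)$ over the event $\{F_{\mathbf v}(X^{0},w)\le N^{\epsilon}\Phi(w)\}$ and its complement, with $\epsilon:=C_0\delta/2$. The first contribution is at most $(N^{\epsilon}\Phi(w))^{p}=N^{-C_0\delta p/2}(N^{C_0\delta}\Phi(w))^{p}\le\tfrac12(N^{C_0\delta}\Phi(w))^{p}$ for large $N$; the second is at most $(CN^{2})^{p}\,\mathbb P(F_{\mathbf v}(X^{0},w)>N^{\epsilon}\Phi(w))\le(CN^{2})^{p}N^{-D}$, which, choosing $D$ large enough depending only on $p$ and the fixed constant $C$, is also $\le\tfrac12(N^{C_0\delta}\Phi(w))^{p}$. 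Adding the two yields the claim, uniformly in $w\in\widehat{\mathbf S}_m$ and in $\mathbf v$.

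The only point to watch is pure bookkeeping: stochastic domination controls $F_{\mathbf v}(X^{0},w)$ only up to an arbitrary fixed power $N^{\epsilon}$, whereas Lemma~\ref{lemm_comp_0} asks for the specific prefactor $N^{C_0\delta}$ — this is fine precisely because $C_0\delta>0$, so there is room to take $\epsilon$ strictly below $C_0\delta$, and because $\Phi$ is bounded below by a negative power of $N$, so the tail term from the complementary event is dominated once $D$ is chosen large. I expect no genuine obstacle here: this lemma is merely the base case of the interpolation argument, and all the real difficulty lies in the inductive step for general $X$ (Lemma~\ref{lemm_boot}, via the general-$X$ version of Lemma~\ref{lemm_comp_0}).
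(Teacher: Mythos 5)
Your proof is correct and follows the same route the paper takes: the paper records this lemma without a separate argument, just the remark that it follows from the anisotropic law (\ref{goal_ani}) established for the Gaussian case via (\ref{eqn_comparison1}) and Theorem \ref{law_squareD}. Your spelled-out moment upgrade is the standard step (it is exactly Lemma \ref{lem_stodomin}(iii) applied with $\xi=F_{\mathbf v}^p$ and the crude bound $F_{\mathbf v}\le CN^2$), so there is no substantive difference from the paper's reasoning.
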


Using (\ref{law_interpol}) and fundamental calculus, we get the following basic interpolation formula.
\begin{lem}\label{lemm_comp_3}
 For $F:\mathbb R^{\mathcal I_1^M \times\mathcal I_2}\rightarrow \mathbb C$ we have
\begin{equation}\label{basic_interp}
\frac{d}{d\theta}\mathbb E F(X^\theta)=\sum\limits_{i\in\mathcal I_1^M}\sum\limits_{\mu\in\mathcal I_2}\left[\mathbb E F\left(X^{\theta,X_{i\mu}^1}_{(i\mu)}\right)-\mathbb E F\left(X^{\theta,X_{i\mu}^0}_{(i\mu)}\right)\right]
\end{equation}
 provided all the expectations exists.
\end{lem}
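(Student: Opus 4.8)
\textbf{Proof plan for Lemma \ref{lemm_comp_3}.} The plan is to write $\mathbb{E}F(X^\theta)$ as an honest finite-dimensional integral and differentiate in $\theta$. By the definition (\ref{law_interpol}),
\begin{equation}\label{eq_interp_integral}
\mathbb{E}F(X^\theta)=\int_{\mathbb R^{\mathcal I_1^M\times\mathcal I_2}}F(x)\,\bigotimes_{i\in\mathcal I_1^M}\bigotimes_{\mu\in\mathcal I_2}\rho^\theta_{i\mu}(dx_{i\mu}).
\end{equation}
The key structural observation is that each factor $\rho^\theta_{i\mu}=(1-\theta)\rho^0_{i\mu}+\theta\rho^1_{i\mu}$ depends \emph{affinely} on $\theta$, with $\theta$-derivative $\rho^1_{i\mu}-\rho^0_{i\mu}$ (a signed measure of total variation at most $2$, independent of $\theta$). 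Consequently, for any bounded measurable $F$ the right-hand side of (\ref{eq_interp_integral}) is a polynomial in $\theta$ of degree at most $|\mathcal I_1^M|\,|\mathcal I_2|=MN$, so differentiation under the integral sign is elementary and no dominated-convergence estimate beyond finiteness of the integrals is required. In our application $F=F_{\mathbf v}^p$, and by (\ref{eq_gbound}) together with $\sG=|w|^{1/2}\overline T^\dagger G\overline T$ and $\|\overline T\|+\|\overline T^{-1}\|\le\tau^{-1}$ (from (\ref{assm3})), one has the deterministic bound $|F(X)|\le N^{C}$ uniformly in $X$, so all the integrals appearing below are finite and the hypothesis ``all the expectations exist'' is met.

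Next I would apply the Leibniz product rule to the product of measures in (\ref{eq_interp_integral}). Since the integrand does not depend on $\theta$ and the product depends on $\theta$ only through its affine factors,
\begin{equation}\label{eq_interp_leibniz}
\frac{d}{d\theta}\,\mathbb{E}F(X^\theta)=\sum_{i\in\mathcal I_1^M}\sum_{\mu\in\mathcal I_2}\int F(x)\,\bigl(\rho^1_{i\mu}-\rho^0_{i\mu}\bigr)(dx_{i\mu})\!\!\bigotimes_{(j,\nu)\neq(i,\mu)}\!\!\rho^\theta_{j\nu}(dx_{j\nu}).
\end{equation}
It then remains to identify each summand with the quantities in (\ref{basic_interp}). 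Fix $(i,\mu)$ and split the $(i,\mu)$-integral. Integrating $F$ in the single variable $x_{i\mu}$ against $\rho^1_{i\mu}$ while the remaining variables carry the law $\bigotimes_{(j,\nu)\neq(i,\mu)}\rho^\theta_{j\nu}$ is, by Fubini and the very definition of the matrix $X^{\theta,\lambda}_{(i\mu)}$ (all entries from $X^\theta$ except the $(i,\mu)$ entry set to $\lambda$), exactly $\mathbb{E}F\bigl(X^{\theta,X^1_{i\mu}}_{(i\mu)}\bigr)$, where the outer expectation also averages $\lambda=X^1_{i\mu}$ over $\rho^1_{i\mu}$ independently of $X^\theta$. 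The same computation with $\rho^0_{i\mu}$ gives $\mathbb{E}F\bigl(X^{\theta,X^0_{i\mu}}_{(i\mu)}\bigr)$. Subtracting yields precisely the summand in (\ref{basic_interp}), completing the proof.

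There is essentially no hard step here: the only point requiring care is the interchange of $d/d\theta$ with the integral in (\ref{eq_interp_integral}), which is immediate from boundedness of $F$ and the polynomial (hence $C^\infty$) dependence of $\theta\mapsto\mathbb{E}F(X^\theta)$ noted above; joint measurability of $(X,w)\mapsto\sG(X,w)$ and of the partially resampled functionals $F\bigl(X^{\theta,\lambda}_{(i\mu)}\bigr)$ in $(\lambda,X^\theta)$ is routine and ensures Fubini applies. All remaining manipulations are bookkeeping.
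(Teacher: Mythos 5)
Your proof is correct and matches the (implicit) argument the paper has in mind: the paper states this lemma without a written proof, remarking only that it follows from the definition of the interpolated law in (\ref{law_interpol}) and ``fundamental calculus'', and your expansion of the product measure, application of the Leibniz rule to the affine factors $\rho^\theta_{i\mu}=(1-\theta)\rho^0_{i\mu}+\theta\rho^1_{i\mu}$, and identification of the two resulting integrals with $\mathbb E F\bigl(X^{\theta,X^1_{i\mu}}_{(i\mu)}\bigr)$ and $\mathbb E F\bigl(X^{\theta,X^0_{i\mu}}_{(i\mu)}\bigr)$ via Fubini is exactly that argument. The observation that $\theta\mapsto\mathbb E F(X^\theta)$ is a polynomial of degree at most $MN$ (so differentiation under the integral is automatic once the relevant integrals are finite) is a clean way to discharge the only analytic point.
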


We shall apply Lemma \ref{lemm_comp_3} with $F(X)=F_{\mathbf v}^p(X,w)$ for $F_{\mathbf v}(X,w)$ defined in (\ref{eq_comp_F(X)}). The main work is devoted to prove the following self-consistent estimate for the right-hand side of (\ref{basic_interp}).

\begin{lem}\label{lemm_comp_4}
 Fix $p\in 2\mathbb N$ and $m\le 2\delta^{-1}$. Suppose (\ref{assm_3rdmoment}) and $\mathbf{(A_{m-1})}$ holds, then we have
 \begin{equation}
  \sum\limits_{i\in\mathcal I_1^M}\sum\limits_{\mu\in\mathcal I_2}\left[\mathbb EF_{\mathbf v}^p\left(X^{\theta,X_{i\mu}^1}_{(i\mu)}\right)-\mathbb EF_{\mathbf v}^p\left(X^{\theta,X_{i\mu}^0}_{(i\mu)}\right)\right]=
  O\left((N^{C_0\delta}\Phi)^p+\mathbb EF_{\mathbf v}^p(X^\theta,w)\right)
 \end{equation}
 for all $\theta\in[0,1]$, all $w\in\widehat{\mathbf{S}}_m$, and all deterministic unit vector $\mathbf v$.
\end{lem}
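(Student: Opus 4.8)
The plan is to prove Lemma \ref{lemm_comp_4} by a Lindeberg-type resolvent expansion at each entry $(i,\mu)$, following the scheme of \cite[Section 7]{Anisotropic}. The core observation is that $\sG^{\theta,\lambda}_{(i\mu)}$ depends on $\lambda$ only through the rank-one (in the linearized block sense) perturbation obtained by changing the single entry $X_{i\mu}$, so one can expand $F_{\mathbf v}^p(X^{\theta,\lambda}_{(i\mu)})$ in a Taylor series in $\lambda$ around $\lambda=0$ using the resolvent expansion
\[
\sG^{\theta,\lambda}_{(i\mu)}=\sG^{\theta,0}_{(i\mu)}+\sum_{k\ge1}(-\lambda)^k\,\sG^{\theta,0}_{(i\mu)}\bigl(\Delta_{i\mu}\sG^{\theta,0}_{(i\mu)}\bigr)^k,
\]
where $\Delta_{i\mu}$ is the matrix encoding the derivative of $\sG^{-1}$ (equivalently of $H$) with respect to the $(i,\mu)$ entry. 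First I would truncate this expansion at order $k=K$ for $K=K(p)$ large enough, controlling the tail by the crude a priori bound $\sG^{\theta,\lambda}_{(i\mu)}-\wt\Pi=O_\prec(N^{2\delta})$ from Lemma \ref{lemm_comp_2} (extended to the interpolating and perturbed matrices, exactly as in \cite{Anisotropic}) together with $\|X\|\prec1$; since each extra factor of $\Delta_{i\mu}\sG$ costs a power of $N^{-1/2}$ up to $N^{O(\delta)}$ corrections, choosing $K\ge C(p)/\delta$ makes the tail negligible.

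Next I would take the difference between the $\lambda=X^1_{i\mu}$ and $\lambda=X^0_{i\mu}$ expansions and sum over $(i,\mu)$. The $k=0$ term cancels exactly. The $k=1$ and $k=2$ terms are the dangerous ones: for $k=1$ the expectation in $X_{i\mu}$ vanishes because $\mathbb E X^1_{i\mu}=\mathbb E X^0_{i\mu}=0$, and for $k=2$ it vanishes because $\mathbb E (X^1_{i\mu})^2=\mathbb E(X^0_{i\mu})^2=(N\wedge M)^{-1}$ (and, in the complex case, $\mathbb E(X_{i\mu})^2=0$). This is where the vanishing third moment hypothesis (\ref{assm_3rdmoment}) enters: the $k=3$ term would otherwise contribute a term of order $N\cdot N^{-3/2}\cdot(\text{size of }\sG\text{ factors})$ which is too large at small $\eta$, but under $\mathbb E(X^1_{i\mu})^3=\mathbb E(X^0_{i\mu})^3=0$ this term also cancels. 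Hence only terms with $k\ge4$ survive, each carrying a prefactor $N^{-k/2}\le N^{-2}$; summing over the $\le N^2$ pairs $(i,\mu)$ and using the power-counting bounds for chains of $\sG$ entries gives a bound of the claimed form $O((N^{C_0\delta}\Phi)^p+\mathbb E F_{\mathbf v}^p(X^\theta,w))$, where the $\mathbb E F_{\mathbf v}^p$ term appears because some of the $\sG$ factors are estimated by $F_{\mathbf v}$ itself (this is the self-consistent nature of the estimate).

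The technical heart, and the main obstacle, is the power counting for the surviving $k\ge4$ terms. Each term is a sum over internal indices of a product of entries of $\sG^{\theta,0}_{(i\mu)}$ (diagonal and off-diagonal) contracted against $\mathbf v$, $\mathbf e_i$, $\mathbf e_\mu$, and rows/columns of $V_1$; one must show that after summing the free internal indices the total gain beats the $N^2$ loss from the $(i,\mu)$ sum by at least $N^{-O(\delta)p}$ per factor of $F_{\mathbf v}$ extracted. Here one uses repeatedly the Ward-type identities of Lemma \ref{lem_comp_gbound} (the analogues of $\sum_\mu|G_{\mathbf w\mu}|^2=\eta^{-1}\Im G_{\mathbf{ww}}$), the a priori bound (\ref{eq_comp_apbound}), and the improved imaginary-part bound (\ref{eq_comp_apbound2}) from property $(\mathbf A_{m-1})$, which is precisely what converts naive powers of $N^{2\delta}$ into powers of $\Phi=|w|^{1/2}\Psi$ plus the self-consistent $N^{C_0\delta}\Phi$ term. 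The combinatorial bookkeeping is essentially identical to \cite[Section 7]{Anisotropic}; the only genuinely new feature is the presence of the deterministic factors $V_1$, $D$, $U$ inside $\sG=|w|^{1/2}\overline T^\dag G\overline T$, but since $\|T\|+\|T^{-1}\|\le\tau^{-1}$ these do not affect any of the estimates, and the vectors $\mathbf v_i=V_1\mathbf e_i$ obey the same $\ell^2$-summation bounds as coordinate vectors thanks to (\ref{eq_sgsq2})--(\ref{eq_sgsq4}). I would therefore present the $k=1,2,3$ cancellations and the $k\ge4$ power counting for one representative term in detail, and refer to \cite{Anisotropic} for the remaining routine cases. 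Finally, for the statement without (\ref{assm_3rdmoment}) but with $\eta\ge N^{-1/2+\zeta}|m_{2c}|^{-1}$, the $k=3$ term need not cancel but is now directly of order $N\cdot N^{-3/2}\cdot N^{O(\delta)}\eta^{-O(1)}\prec\Phi$ on this restricted domain, so the same argument goes through with the $k=3$ term absorbed into the error.
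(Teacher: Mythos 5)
Your proposal is correct and follows essentially the same route as the paper: comparison through a common $X^{\theta,0}_{(i\mu)}$, Taylor expansion to high order with the tail controlled by the a priori bound of Lemma~\ref{lemm_comp_6}, cancellation of the zeroth/first/second/third moment terms (the third cancellation being exactly where (\ref{assm_3rdmoment}) enters), and Ward-identity power counting for the $n\ge4$ terms using Lemma~\ref{lem_comp_gbound}, the a priori bound (\ref{eq_comp_apbound}), and the improved imaginary-part bound (\ref{eq_comp_apbound2}) from $(\mathbf A_{m-1})$. The pieces you compress into a reference to \cite{Anisotropic}---namely the replacement of $X^{\theta,0}_{(i\mu)}$ by $X^{\theta}$ via (\ref{eq_comp_taylor2}) and the combinatorial bookkeeping encoded in the words of Definition~\ref{def_comp_words}---are precisely what the paper also imports from there, so your summary captures all the substance.
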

Combining Lemmas \ref{Gaussian_case}, \ref{lemm_comp_3} and \ref{lemm_comp_4} with a Gr\"onwall argument, we can conclude the proof of Lemma \ref{lemm_comp_0} and hence Proposition \ref{comparison_prop}.

In order to prove Lemma \ref{lemm_comp_4}, we compare $X^{\theta,X_{i\mu}^0}_{(i\mu)}$ and $X^{\theta,X_{i\mu}^1}_{(i\mu)}$ via a common $X^{\theta,0}_{(i\mu)}$, i.e. under the assumptions of Lemma \ref{lemm_comp_4}, we will prove
\begin{equation}\label{lemm_comp_5}
\sum\limits_{i\in\mathcal I^M_1}\sum\limits_{\mu\in\mathcal I_2}\left[\mathbb EF_{\mathbf v}^p\left(X^{\theta,X_{i\mu}^u}_{(i\mu)}\right)-\mathbb EF_{\mathbf v}^p\left(X^{\theta,0}_{(i\mu)}\right)\right]=
  O\left((N^{C_0\delta}\Phi)^p+\mathbb EF_{\mathbf v}^p(X^\theta,w)\right)
 \end{equation}
for all $u\in \{0,1\}$, all $\theta\in[0,1]$, all $w\in\widehat{\mathbf{S}}_m$, and all deterministic unit vector $\mathbf v$.

Underlying the proof of (\ref{lemm_comp_5}) is an expansion approach which we will describe below. Throughout the rest of the proof, we suppose that $(\mathbf A_{m-1})$ holds. Also the rest of the proof is performed at a single $w\in \widehat{\mathbf{S}}_m$. Define the $\mathcal I\times \mathcal I$ matrix $\Delta_{(i\mu)}^\lambda$ through
\begin{equation}
 \left(\Delta_{(i\mu)}^{\lambda}\right)_{st}:=\lambda\delta_{is}\delta_{\mu t}+\lambda\delta_{it}\delta_{\mu s}.
\end{equation}
Then we have for any $\lambda,\lambda'\in \mathbb R$ and $K\in \mathbb N$,
\begin{equation}\label{eq_comp_expansion}
\sG_{(i \mu)}^{\theta,\lambda'} = \sG_{(i\mu)}^{\theta,\lambda}+\sum\limits_{k=1}^{K}\alpha^k \sG_{(i\mu)}^{\theta,\lambda}\left(\overline V\Delta_{(i\mu)}^{\lambda-\lambda'}\overline V^\dag \sG_{(i\mu)}^{\theta,\lambda}\right)^k+\alpha^{K+1}\sG_{(i\mu)}^{\theta,\lambda'}\left(\overline V\Delta_{(i\mu)}^{\lambda-\lambda'}\overline V^\dag \sG_{(i\mu)}^{\theta,\lambda}\right)^{K+1},
\end{equation}
where
$\overline V:=\begin{pmatrix}V_1 & 0\\ 0 & I\end{pmatrix}$ and $\alpha:=\frac{w^{1/2}}{|w|^{1/2}}.$
The following result provides a priori bounds for the entries of $\sG_{(i\mu)}^{\theta,\lambda}$.
\begin{lem}\label{lemm_comp_6}
 Suppose that $y$ is a random variable satisfying $|y|\prec N^{-1/2}$. Then
 \begin{equation}\label{comp_eq_apriori}
   \sG_{(i\mu)}^{\theta,y}-\wt\Pi=O_{\prec}(N^{2\delta})
 \end{equation}
 for all $i\in\sI^M_1$ and $\mu\in\sI_2$.
\end{lem}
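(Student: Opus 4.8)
The plan is to prove Lemma~\ref{lemm_comp_6} by a finite resolvent expansion of $\sG_{(i\mu)}^{\theta,y}$ around $\sG^\theta:=\sG(X^\theta,w)$, feeding in the a priori bound of Lemma~\ref{lemm_comp_2}. Since $\|\wt\Pi\|=O(1)$ by (\ref{estimate_Piw12}) and (\ref{def_PiPhi}), it suffices to show $\sG_{(i\mu)}^{\theta,y}=O_\prec(N^{2\delta})$ in operator norm, uniformly in $i\in\mathcal I_1^M$ and $\mu\in\mathcal I_2$. First I would collect three inputs, all valid for $w\in\widehat{\mathbf S}_m$ under $(\mathbf A_{m-1})$: (i) $\sG^\theta-\wt\Pi=O_\prec(N^{2\delta})$, hence $\|\sG^\theta\|\prec N^{2\delta}$, which is exactly (\ref{eq_comp_apbound}); (ii) the crude deterministic bound $\|\sG_{(i\mu)}^{\theta,y}\|\le C\eta^{-1}\le CN^2$, coming from $\sG=|w|^{1/2}\overline T^\dag G\overline T$, $\|\overline T\|+\|\overline T^{-1}\|\le C$ by (\ref{assm3}), the resolvent bound $\|G\|\le C\eta^{-1}$ in (\ref{eq_gbound}), and $\eta\ge N^{-\delta m}\ge N^{-2}$ on $\widehat{\mathbf S}_m$ (using $\delta m\le 2$); (iii) $|X_{i\mu}^\theta-y|\prec N^{-1/2}$, which follows from $|X_{i\mu}^\theta|\prec N^{-1/2}$ (the interpolated law still obeys (\ref{assm2})) together with the hypothesis $|y|\prec N^{-1/2}$, so that with $\|\overline V\|\le 1$ one gets $\|\overline V\Delta_{(i\mu)}^{X_{i\mu}^\theta-y}\overline V^\dag\|\prec N^{-1/2}$.

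Next I would apply the expansion (\ref{eq_comp_expansion}) with $\lambda=X_{i\mu}^\theta$ (so that $X_{(i\mu)}^{\theta,\lambda}=X^\theta$ and $\sG_{(i\mu)}^{\theta,\lambda}=\sG^\theta$) and $\lambda'=y$, for a truncation length $K$ to be chosen. Using $|\alpha|=1$ together with (i)--(iii), the $k$-th term of the finite sum is $O_\prec\!\big(N^{2\delta}(N^{-1/2+2\delta})^{k}\big)$ and the remainder term is $O_\prec\!\big(N^{2}(N^{-1/2+2\delta})^{K+1}\big)$. Since (\ref{assm_comp_delta}) forces $\delta<1/4$ (as $\zeta$ is small and $C_0$ is a large universal constant), we have $-1/2+2\delta<0$, so every term of the sum is bounded by $N^{2\delta}$ up to a negligible factor, and choosing $K=K(\delta)$ large makes the remainder $O_\prec(N^{-1})$. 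Summing gives $\sG_{(i\mu)}^{\theta,y}-\wt\Pi=(\sG^\theta-\wt\Pi)+O_\prec(N^{2\delta})=O_\prec(N^{2\delta})$ for each fixed $(i,\mu)$; uniformity over the at most $N^2$ pairs then follows from the union-bound property of $\prec$ in Lemma~\ref{lem_stodomin}.

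This argument is essentially routine and mirrors the a priori bound established in \cite[Section~7]{Anisotropic}; I do not expect a genuine obstacle, only bookkeeping. The one point that must be handled with care is keeping all estimates uniform --- in $i$, in $\mu$, and in the auxiliary parameter $y$, which in the later application (Lemma~\ref{lemm_comp_4}) will be instantiated as $X_{i\mu}^0$, $X_{i\mu}^1$, or $0$ --- and making sure the crude deterministic bound $\|\sG_{(i\mu)}^{\theta,y}\|\le N^{C}$ appearing in the remainder is overpowered by the decay $(N^{-1/2+2\delta})^{K+1}$, which is precisely why the truncation length $K$ must be selected only after $\delta$ has been fixed.
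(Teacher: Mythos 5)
Your proof is correct and fills in exactly the omitted details: the paper's "proof" is the single line "See [Anisotropic, Lemma 7.14]," and the argument there is indeed the finite resolvent expansion you spell out. All the ingredients are in place — the a priori bound from Lemma \ref{lemm_comp_2} applied to $\sG^\theta$ (which is legitimate because $X^\theta$ inherits (\ref{assm1})--(\ref{assm2}) from the convexity of moments of the interpolated law), the crude deterministic $\|\sG_{(i\mu)}^{\theta,y}\|\le C\eta^{-1}\le CN^2$ to control the remainder, the stochastic bound $\|\overline V\Delta_{(i\mu)}^{X_{i\mu}^\theta-y}\overline V^\dag\|\prec N^{-1/2}$, and the observation that (\ref{assm_comp_delta}) keeps $-1/2+2\delta<0$ so a finite truncation $K=K(\delta)$ wins. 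You are also right that the only delicate point is choosing $K$ after $\delta$ is fixed; this matches the structure of the cited proof. One small remark worth making explicit in a write-up: the interpolated matrix with one entry replaced, $X_{(i\mu)}^{\theta,y}$, need not satisfy (\ref{assm1})--(\ref{assm2}), so one cannot apply $(\mathbf A_{m-1})$ or Lemma \ref{lemm_comp_2} to $\sG_{(i\mu)}^{\theta,y}$ directly; your expansion sidesteps this by only invoking those bounds on $\sG^\theta$ in the main terms and falling back to the deterministic $\eta^{-1}$ bound for the single $\sG_{(i\mu)}^{\theta,y}$ factor that survives in the remainder, which is exactly the right way to handle it.
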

\begin{proof} See \cite[Lemma 7.14]{Anisotropic}. \end{proof}

In the following, for simplicity of notations we introduce $f_{(i\mu)}(\lambda):=F_{\mathbf v}^p(X_{(i\mu)}^{\theta, \lambda})$. We use $f_{(i\mu)}^{(n)}$ to denote the $n$-th derivative of $f_{(i\mu)}$. By Lemma \ref{lemm_comp_6} and expansion (\ref{eq_comp_expansion}) we get the following result.
\begin{lem}
Suppose that $y$ is a random variable satisfying $|y|\prec N^{-1/2}$. Then for fixed $n\in\bbN$,
  \begin{equation}
  \left|f_{(i\mu)}^{(n)}(y)\right|\prec N^{2\delta(n+p)}.
 \end{equation}
\end{lem}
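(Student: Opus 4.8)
The plan is to differentiate the resolvent $\sG^{\theta,\lambda}_{(i\mu)}$ in $\lambda$, expand $f_{(i\mu)}^{(n)}$ by the Leibniz rule, and estimate every resulting factor by the a priori bound of Lemma \ref{lemm_comp_6}. First I would extract the derivative of the resolvent from the expansion (\ref{eq_comp_expansion}): setting $\lambda'=\lambda+h$, $K=1$ there, and noting that $\Delta^{-h}_{(i\mu)}=-h(E_{i\mu}+E_{\mu i})$ is linear in $h$ while the $k=2$ remainder is $O(h^2)$ (since the two resolvents are bounded by Lemma \ref{lemm_comp_6} and $N$ is fixed), one gets
\[
\partial_\lambda \sG^{\theta,\lambda}_{(i\mu)} \;=\; -\alpha\,\sG^{\theta,\lambda}_{(i\mu)}\,M_{(i\mu)}\,\sG^{\theta,\lambda}_{(i\mu)},\qquad M_{(i\mu)}:=\overline V(E_{i\mu}+E_{\mu i})\overline V^\dag=\mathbf v_i\mathbf e_\mu^\dag+\mathbf e_\mu\mathbf v_i^\dag,
\]
using $\overline V\mathbf e_i=\mathbf v_i\in\bbC^{\mathcal I_1}$ and $\overline V\mathbf e_\mu=\mathbf e_\mu\in\bbC^{\mathcal I_2}$ (natural embeddings). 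Here $|\alpha|=1$, $\mathrm{rank}\,M_{(i\mu)}\le 2$, and $|\mathbf v_i|=|V_1\mathbf e_i|\le1$. Iterating this, for each fixed $d$ the quantity $\partial_\lambda^d\sG^{\theta,\lambda}_{\mathbf{vv}}$ is a sum of at most $C_d$ terms, each a product of exactly $d+1$ generalized entries $\sG^{\theta,\lambda}_{\mathbf a\mathbf b}$ with $\mathbf a,\mathbf b$ ranging over the fixed three-element set $\{\mathbf v,\mathbf v_i,\mathbf e_\mu\}$ (times a unimodular constant).

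Next I would apply the Leibniz rule. Since $p$ is even, $f_{(i\mu)}(\lambda)=\prod_{r=1}^{p/2}(\sG^{\theta,\lambda}_{\mathbf{vv}}-\wt\Pi_{\mathbf{vv}})\prod_{r=1}^{p/2}\overline{(\sG^{\theta,\lambda}_{\mathbf{vv}}-\wt\Pi_{\mathbf{vv}})}$, and $\wt\Pi$ does not depend on $\lambda$. The general Leibniz rule writes $f_{(i\mu)}^{(n)}(\lambda)$ as a sum over compositions $d_1+\cdots+d_p=n$ of multinomial-weighted products; in the term indexed by $(d_1,\dots,d_p)$ the $r$-th factor contributes $(\sG^{\theta,\lambda}_{\mathbf{vv}}-\wt\Pi_{\mathbf{vv}})$ (up to conjugation) if $d_r=0$, and $\partial_\lambda^{d_r}\sG^{\theta,\lambda}_{\mathbf{vv}}$ if $d_r\ge1$. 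Invoking the previous step for the latter, $f_{(i\mu)}^{(n)}(\lambda)$ is a sum of at most $C_{n,p}$ products, and in each product the number of ``atoms'' -- an atom being a factor $(\sG^{\theta,\lambda}_{\mathbf{vv}}-\wt\Pi_{\mathbf{vv}})$ or a generalized entry $\sG^{\theta,\lambda}_{\mathbf a\mathbf b}$ (with $\mathbf a,\mathbf b\in\{\mathbf v,\mathbf v_i,\mathbf e_\mu\}$, possibly conjugated) -- equals exactly $n+p$: each $r$ with $d_r\ge1$ yields $d_r+1$ atoms, each $r$ with $d_r=0$ yields one atom, and $\sum_{d_r\ge1}(d_r+1)+\#\{r:d_r=0\}=n+p$. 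In particular the atom count does not grow multiplicatively in $n$.

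Finally I would evaluate at $\lambda=y$ and bound each atom. By Lemma \ref{lemm_comp_6}, $\|\sG^{\theta,y}_{(i\mu)}-\wt\Pi\|\prec N^{2\delta}$; since $\|\wt\Pi\|=O(1)$ (cf.\ (\ref{def_PiPhi}) and (\ref{estimate_Piw12})) this gives $\|\sG^{\theta,y}_{(i\mu)}\|\prec N^{2\delta}$, hence $|\sG^{\theta,y}_{\mathbf a\mathbf b}|\le\|\sG^{\theta,y}_{(i\mu)}\|\prec N^{2\delta}$ for all vectors $\mathbf a,\mathbf b$ of norm $\le1$, and also $|\sG^{\theta,y}_{\mathbf{vv}}-\wt\Pi_{\mathbf{vv}}|\prec N^{2\delta}$. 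Thus every atom is $O_\prec(N^{2\delta})$; multiplying the $n+p$ atoms of a single product (using that products of finitely many $O_\prec$ quantities are $O_\prec$ of the product) and summing over the $O_{n,p}(1)$ products yields $|f_{(i\mu)}^{(n)}(y)|\prec N^{2\delta(n+p)}$, as claimed.

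There is no genuine obstacle here; this is a routine differentiation-and-counting argument. The only point needing care is the combinatorics of the second step -- checking that $n$ differentiations produce products of exactly $n+p$ resolvent atoms rather than a number growing like $2^n$ -- which hinges on the fact that one application of $\partial_\lambda$ to a product of resolvent entries replaces a single entry by two (the product rule together with $\partial_\lambda\sG^{\theta,\lambda}_{(i\mu)}=-\alpha\,\sG^{\theta,\lambda}_{(i\mu)}M_{(i\mu)}\sG^{\theta,\lambda}_{(i\mu)}$), so the atom count goes up by exactly one per derivative.
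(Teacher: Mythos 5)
Your proposal is correct and follows essentially the same route the paper intends. The paper itself dispenses with this lemma in a single sentence (``By Lemma \ref{lemm_comp_6} and expansion (\ref{eq_comp_expansion}) we get the following result''), relying on the reader to supply exactly the bookkeeping you carried out: extract $\partial_\lambda\sG^{\theta,\lambda}_{(i\mu)}=-\alpha\,\sG^{\theta,\lambda}_{(i\mu)}\overline V(E_{i\mu}+E_{\mu i})\overline V^\dag\sG^{\theta,\lambda}_{(i\mu)}$ from (\ref{eq_comp_expansion}), note that $p$ even makes $F_{\mathbf v}^p$ a polynomial in $\sG_{\mathbf{vv}}-\wt\Pi_{\mathbf{vv}}$ and its conjugate, apply Leibniz, observe that each differentiation raises the atom count by exactly one (so $f^{(n)}_{(i\mu)}$ is a sum of $O_{n,p}(1)$ products of $n+p$ generalized entries over the fixed index set $\{\mathbf v,\mathbf v_i,\mathbf e_\mu\}$), and bound each atom by $N^{2\delta}$ via Lemma \ref{lemm_comp_6} together with $\|\wt\Pi\|=O(1)$. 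The atom-counting identity $\sum_{d_r\ge1}(d_r+1)+\#\{r:d_r=0\}=n+p$, which you isolate as the crux, is exactly why the exponent is $2\delta(n+p)$ and not something growing faster in $n$.
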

By this lemma, the Taylor expansion of $f_{(i\mu)}$ gives
\begin{equation}\label{eq_comp_taylor}
f_{(i\mu)}(y)=\sum\limits_{n=0}^{4p}\frac{y^n}{n!}f^{(n)}_{(i\mu)}(0)+O_\prec(\Phi^p),
\end{equation}
provided $C_0$ is chosen large enough in (\ref{assm_comp_delta}). Therefore we have for $u\in\{0,1\}$,
\begin{align*}
\mathbb EF_{\mathbf v}^p\left(X^{\theta,X_{i\mu}^u}_{(i\mu)}\right)-\mathbb EF_{\mathbf v}^p\left(X^{\theta,0}_{(i\mu)}\right)=&\bbE\left[f_{(i\mu)}\left(X_{i\mu}^u\right)-f_{(i\mu)}(0)\right]\\=&\bbE f_{(i\mu)}(0)+\frac{1}{2N}\bbE f_{(i\mu)}^{(2)}(0)+\sum\limits_{n=4}^{4p}\frac{1}{n!}\bbE f^{(n)}_{(i\mu)}(0)\bbE\left(X_{i\mu}^u\right)^n+O_\prec(\Phi^p),
\end{align*}
where we used that $X_{i\mu}^u$ has vanishing first and third moments and its variance is $1/N$. Thus to show (\ref{lemm_comp_5}), we only need to prove for $n=4,5,...,4p$,
\begin{equation}\label{eq_comp_est}
N^{-n/2}\sum\limits_{i\in\mathcal I^M_1}\sum\limits_{\mu\in\mathcal I_2}\left|\bbE f^{(n)}_{(i\mu)}(0)\right|=O\left((N^{C_0\delta}\Phi)^p+\mathbb EF_{\mathbf v}^p(X^\theta,w)\right),\end{equation}
where we have used (\ref{assm2}). In order to get a self-consistent estimate in terms of the matrix $X^\theta$ on the right-hand side of (\ref{eq_comp_est}), we want to replace $X^{\theta,0}_{(i\mu)}$ in $f_{(i\mu)}(0):=F_{\mathbf v}^p(X_{(i\mu)}^{\theta, 0})$ with $X^\theta = X_{(i\mu)}^{\theta, X_{(i\mu)}^\theta}$. 
\begin{lem}
Suppose that
\begin{equation}\label{eq_comp_selfest}
N^{-n/2}\sum\limits_{i\in\mathcal I_1^M}\sum\limits_{\mu\in\mathcal I_2}\left|\bbE f^{(n)}_{(i\mu)}(X_{i\mu}^\theta)\right|=O\left((N^{C_0\delta}\Phi)^p+\mathbb EF_{\mathbf v}^p(X^\theta,w)\right)
\end{equation}
holds for $n=4,...,4p$, Then (\ref{eq_comp_est}) holds for $n=4,...,4p$.
\end{lem}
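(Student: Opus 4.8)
The plan is to argue by downward induction on $n$, from $n=4p$ to $n=4$: at each $n$ the target is (\ref{eq_comp_est}), and throughout the induction all the estimates (\ref{eq_comp_selfest}), $n=4,\dots,4p$, are available by hypothesis. The mechanism is a Taylor expansion opposite to the one used to pass from (\ref{eq_comp_taylor}) to (\ref{eq_comp_est}). Fix $n$ in the stated range. Since $\lambda\mapsto f_{(i\mu)}(\lambda)=F_{\mathbf v}^p(X_{(i\mu)}^{\theta,\lambda})$ is $C^\infty$ on $\mathbb R$ (the resolvent $\mathcal G_{(i\mu)}^{\theta,\lambda}$ is rational in $\lambda$ with no real pole because $\eta>0$), I would expand $f_{(i\mu)}^{(n)}(X_{i\mu}^\theta)$ around $\lambda=0$ to order $4p$, writing $f_{(i\mu)}^{(n)}(X_{i\mu}^\theta)=\sum_{k=0}^{4p-n}(X_{i\mu}^\theta)^k f_{(i\mu)}^{(n+k)}(0)/k!+R_{(i\mu)}$ with $R_{(i\mu)}$ the integral remainder of order $f_{(i\mu)}^{(4p+1)}$. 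The decisive structural fact is that $f_{(i\mu)}^{(m)}(0)$ depends only on $X_{(i\mu)}^{\theta,0}$, hence is independent of $X_{i\mu}^\theta$, so after taking expectations the products factor: $\bbE\big[(X_{i\mu}^\theta)^k f_{(i\mu)}^{(n+k)}(0)\big]=\bbE[(X_{i\mu}^\theta)^k]\,\bbE f_{(i\mu)}^{(n+k)}(0)$.

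Using $\bbE X_{i\mu}^\theta=0$ (which kills the $k=1$ term), $\bbE(X_{i\mu}^\theta)^2=1/N$ and $|\bbE(X_{i\mu}^\theta)^k|\le C_kN^{-k/2}$ from (\ref{assm2}), I would take expectations and solve for $\bbE f_{(i\mu)}^{(n)}(0)$, obtaining an identity expressing it through $\bbE f_{(i\mu)}^{(n)}(X_{i\mu}^\theta)$, the higher-order quantities $\bbE f_{(i\mu)}^{(n+k)}(0)$ for $2\le k\le 4p-n$, each carrying a coefficient of size $\le C_kN^{-k/2}$, and $\bbE R_{(i\mu)}$. Applying $N^{-n/2}\sum_{i,\mu}|\cdot|$: the first term is exactly the left side of (\ref{eq_comp_selfest}) at index $n$; each higher-order term becomes $C_kN^{-(n+k)/2}\sum_{i,\mu}|\bbE f_{(i\mu)}^{(n+k)}(0)|$, where the factor $N^{-k/2}$ from the moment precisely converts the normalization $N^{-n/2}$ into $N^{-(n+k)/2}$, so it is controlled by the induction hypothesis at the strictly larger index $n+k\le 4p$; and the remainder is handled by the a priori derivative bound $|f_{(i\mu)}^{(4p+1)}(t)|\prec N^{2\delta(5p+1)}$ valid for $|t|\le|X_{i\mu}^\theta|\prec N^{-1/2}$ (the estimate stated just before (\ref{eq_comp_taylor})), giving $N^{-n/2}\sum_{i,\mu}|\bbE R_{(i\mu)}|\prec N^{3/2-2p+2\delta(5p+1)}$. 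Since $\Phi\ge cN^{-1/2}$ on $\mathbf D$ in the regime $|z|^2\le1-\tau$ (from $\Phi=|w|^{1/2}\Psi$ together with the bounds of Lemma \ref{lemm_m1_4case}), one has $(N^{C_0\delta}\Phi)^p\ge c^pN^{-p/2}$, and for $\delta$ small enough (consistent with (\ref{assm_comp_delta})) and every $p\ge2$ the exponent $3/2-2p+2\delta(5p+1)$ is $<-p/2$, so the remainder is $\le(N^{C_0\delta}\Phi)^p$ for large $N$. Summing the $O(p)$ contributions yields (\ref{eq_comp_est}) at index $n$, with the same right-hand side $O\big((N^{C_0\delta}\Phi)^p+\bbE F_{\mathbf v}^p(X^\theta,w)\big)$; the base case $n=4p$ is identical, with only the $k=0$ term and the remainder present.

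I expect no serious obstacle: the content is almost entirely bookkeeping, resting on inputs already in place (the derivative bounds of the lemma before (\ref{eq_comp_taylor}), $|X_{i\mu}^\theta|\prec N^{-1/2}$, the moment bounds (\ref{assm2}), and the elementary fact that $\prec$ converts to bounds in expectation since every quantity here is deterministically $O(N^C)$). The single place needing a short verification is the a priori lower bound $\Phi\ge cN^{-1/2}$ uniformly over $\mathbf D$ — in particular near the edge at $0$, where $|w|$ can be as small as $N^{-2+2\zeta}$ and one must check that the compensating growth of $\Psi$ keeps $\Phi$ above $N^{-1/2}$ — but this is immediate from the explicit form of $\Psi$ and Lemma \ref{lemm_m1_4case}. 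The only other point to watch is that the expansion order $4p$ is large enough: it is chosen precisely so that the leftover term sits at scale $N^{-2p+O(1)+O(\delta p)}$, comfortably below the target $\Phi^p$.
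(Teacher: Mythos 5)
Your proposal is correct and takes essentially the same route as the paper: you rearrange the Taylor expansion of $f_{(i\mu)}^{(n)}$ about $\lambda=0$, exploit the independence of $f_{(i\mu)}^{(m)}(0)$ from $X_{i\mu}^\theta$ to factor expectations, and run a downward induction from $n=4p$ to $n=4$, which is exactly what the paper's identity (\ref{eq_comp_taylor2}) and the instruction ``repeatedly apply'' encode. Your write-up is somewhat more explicit than the paper's, which simply states (\ref{eq_comp_taylor2}) and defers the bookkeeping to \cite[Lemma 7.16]{Anisotropic}; in particular, you correctly verify that the remainder term is dominated by $(N^{C_0\delta}\Phi)^p$ and that $\Phi \ge cN^{-1/2}$ holds on $\mathbf D$ even near $w=0$, where $\Im\, m_{1,2c}\sim |w|^{-1/2}$ compensates the smallness of $|w|^{1/2}$.
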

\begin{proof}
From (\ref{eq_comp_taylor}) we can get
\begin{equation}\label{eq_comp_taylor2}
f_{(i\mu)}^{(l)}(0)=f_{(i\mu)}^{(l)}(y)-\sum\limits_{n=1}^{4p-l}\frac{y^n}{n!}f^{(l+n)}_{(i\mu)}(0)+O_\prec(N^{l/2}\Phi^p).
\end{equation}
The result follows by repeatedly applying (\ref{eq_comp_taylor2}). The details can be found in \cite[Lemma 7.16]{Anisotropic}.
\end{proof}
\end{subsubsection}

\begin{subsubsection}{Conclusion of the proof with words}\label{section_words}
What remains now is to prove (\ref{eq_comp_selfest}). In order to exploit the detailed structure of the derivatives on the left-hand side of (\ref{eq_comp_selfest}), we introduce the following algebraic objects.

\begin{defn}[Words]\label{def_comp_words}
Given $i\in \mathcal I^M_1$ and $\mu\in \mathcal I_2$. Let $\sW$ be the set of words of even length in two letters $\{\mathbf i, \bm{\mu}\}$. We denote the length of a word $w\in\sW$ by $2n(w)$ with $n(w)\in \mathbb N$. We use bold symbols to denote the letters of words. For instance, $w=\mathbf t_1\mathbf s_2\mathbf t_2\mathbf s_3\cdots\mathbf t_n\mathbf s_{n+1}$ denotes a word of length $2n$.
Define $\sW_n:=\{w\in \mathcal W: n(w)=n\}$ to be the set of words of length $2n$.
We require that each word $w\in \sW_n$ satisfies that $\mathbf t_l\mathbf s_{l+1}\in\{\mathbf i\bm{\mu},\bm{\mu}\mathbf i\}$ for all $1\le l\le n$.

Next we assign each letter $*$ its value $[*]$ through $[\mathbf i]:=\bv_i$, $[\mathbf {\mu}]:=\mu,$ where $\mathbf v_i\in \mathbb C^{\mathcal I_1}$ is defined in Lemma \ref{lem_comp_gbound} and is regarded as a summation index. Note that it is important to distinguish the abstract letter from its value, which is a summation index. Finally, to each word $w$ we assign a random variable $A_{\mathbf v, i, \mu}(w)$ as follows. If $n(w)=0$ we define
 $$A_{\mathbf v, i, \mu}(W):=\sG_{\mathbf v\mathbf v}-\wt\Pi_{\mathbf v\mathbf v}.$$
 If $n(w)\ge 1$, say $w=\mathbf t_1\mathbf s_2\mathbf t_2\mathbf s_3\cdots\mathbf t_n\mathbf s_{n+1}$, we define
 \begin{equation}\label{eq_comp_A(W)}
 A_{\mathbf v, i, \mu}(W):=\sG_{\bv[\mathbf t_1]}\sG_{[\mathbf s_2][\mathbf t_2]}\cdots \sG_{[\mathbf s_n][\mathbf t_n]}\sG_{[\mathbf s_{n+1}]\bv}.
 \end{equation}
\end{defn}

Notice the words are constructed such that, by (\ref{eq_comp_expansion}),
\[\left(\frac{\partial}{\partial X_{i\mu}}\right)^n \left(\mathcal G_{\mathbf v\mathbf v}-\widetilde\Pi_{\mathbf v\mathbf v}\right)=(-\alpha)^n n!\sum_{w\in \mathcal W_n} A_{\mathbf v, i, \mu}(w)\]
for $n=0,1,2,\ldots$, which gives that
\begin{align*}
\left(\frac{\partial}{\partial X_{i\mu}}\right)^n F_{\bv}^p(X)=(-\alpha)^n n! & \sum_{n_1+\cdots+n_p=n}\prod_{r=1}^{p/2}\frac{1}{n_r! n_{r+p/2}!} \\
& \times \left(\sum_{w_r\in\sW_{n_r}}\sum_{w_{r+p/2}\in\sW_{n_{r+p/2}}}A_{\mathbf v, i, \mu}(w_r)\overline{A_{\mathbf v, i, \mu}(w_{r+p/2})}\right).
\end{align*}
Then to prove (\ref{eq_comp_selfest}), it suffices to show that
\begin{equation}
N^{-n/2}\sum\limits_{i\in\mathcal I^M_1}\sum\limits_{\mu\in\mathcal I_2}\left|\bbE\prod_{r=1}^{p/2}A_{\mathbf v, i, \mu}(w_r)\overline{A_{\mathbf v, i, \mu}(w_{r+p/2})}\right|=O\left((N^{C_0\delta}\Phi)^p+\mathbb EF_{\mathbf v}^p(X^\theta,w)\right)\label{eq_comp_goal1}
\end{equation}
for  $4\le n\le 4p$ and all words $w_1,...,w_p\in \sW$ satisfying $n(w_1)+\cdots+n(w_p)=n$.
To avoid the unimportant notational complications coming from the complex conjugates, we in fact prove that
\begin{equation}\label{eq_comp_goal2}
N^{-n/2}\sum\limits_{i\in\mathcal I_1^M}\sum\limits_{\mu\in\mathcal I_2}\left|\bbE\prod_{r=1}^{p}A_{\mathbf v, i, \mu}(w_r)\right|=O\left((N^{C_0\delta}\Phi)^p+\mathbb EF_{\mathbf v}^p(X^\theta,w)\right),
\end{equation}
and the proof of $(\ref{eq_comp_goal1})$ is essentially the same but with slightly heavier notations. Treating empty words separately, we find it suffices to prove
\begin{equation}
\label{eq_comp_goal3}N^{-n/2}\sum\limits_{i\in\mathcal I_1^M}\sum\limits_{\mu\in\mathcal I_2}\bbE\left|A^{p-q}_{\mathbf v, i, \mu}(w_0)\prod_{r=1}^{q}A_{\mathbf v, i, \mu}(w_r)\right|=O\left((N^{C_0\delta}\Phi)^p+\mathbb EF_{\mathbf v}^p(X^\theta,w)\right)
\end{equation}
for  $4\le n\le 4p$, $1\le q \le p$, and $w_r$ such that $n(w_0)=0$, $\sum_r n(w_r)=n$ and $n(w_r)\ge 1$ for $r\ge 1$.

To estimate (\ref{eq_comp_goal3}) we introduce the quantity
\begin{equation}\label{eq_comp_Rs}
\mathcal R_s:=|\mathcal G_{\mathbf v \mathbf v_s}|+|\mathcal G_{\mathbf v_s \mathbf v}|.
\end{equation}
for $s\in \sI$, where as a convention we let $\mathbf v_\mu=e_\mu$ for $\mu\in\sI_2$.

\begin{lem}\label{lem_comp_A}
  For $w\in\sW$ we have the rough bound
  \begin{equation}
  |A_{\mathbf v, i, \mu}(w)|\prec N^{2\delta(n(w)+1)}.\label{eq_comp_A1}
  \end{equation}
  Furthermore, for $n(w)\ge 1$ we have
  \begin{equation}
  |A_{\mathbf v, i, \mu}(w)|\prec(\mathcal R_i^2+\mathcal R_\mu^2)N^{2\delta(n(w)-1)}.\label{eq_comp_A2}
  \end{equation}
  For $n(w)=1$ we have better bound
  \begin{equation}
  |A_{\mathbf v, i, \mu}(w)|\prec \mathcal R_i\mathcal R_\mu.\label{eq_comp_A3}
  \end{equation}
\end{lem}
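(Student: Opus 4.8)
The plan is to prove the three bounds by unwinding the definition of $A_{\mathbf v,i,\mu}(w)$ in (\ref{eq_comp_A(W)}) and applying the a priori estimates from Lemma \ref{lemm_comp_2} together with the summation identities of Lemma \ref{lem_comp_gbound}. First I would record the elementary consequences of $(\mathbf A_{m-1})$: by (\ref{eq_comp_apbound}) every entry $\sG_{st}-\wt\Pi_{st}$ is $O_\prec(N^{2\delta})$, and since $\|\wt\Pi\|=O(1)$ this gives $\sG_{st}=O_\prec(N^{2\delta})$ for all $s,t\in\mathcal I$; in particular each $\mathcal R_s\prec N^{2\delta}$. For $n(w)=0$ the claim $|A_{\mathbf v,i,\mu}(w)|=|\sG_{\mathbf v\mathbf v}-\wt\Pi_{\mathbf v\mathbf v}|\prec N^{2\delta}$ is exactly (\ref{eq_comp_apbound}), so (\ref{eq_comp_A1}) holds in that case. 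For $n(w)\ge 1$, write $A_{\mathbf v,i,\mu}(w)=\sG_{\mathbf v[\mathbf t_1]}\sG_{[\mathbf s_2][\mathbf t_2]}\cdots\sG_{[\mathbf s_{n+1}]\mathbf v}$, where by construction the letters alternate between $\mathbf i$ and $\bm\mu$ and the values are $[\mathbf i]=\mathbf v_i$, $[\bm\mu]=\mu$. The $n-1$ ``interior'' factors $\sG_{[\mathbf s_l][\mathbf t_l]}$ are each bounded crudely by $N^{2\delta}$, which accounts for the factor $N^{2\delta(n(w)-1)}$ in (\ref{eq_comp_A2}); it then remains only to control the two ``boundary'' factors $\sG_{\mathbf v[\mathbf t_1]}$ and $\sG_{[\mathbf s_{n+1}]\mathbf v}$. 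Each of these is either $\sG_{\mathbf v\mathbf v_i}$, $\sG_{\mathbf v_i\mathbf v}$, $\sG_{\mathbf v\mu}$, or $\sG_{\mu\mathbf v}$, hence bounded by $\mathcal R_i$ or $\mathcal R_\mu$; using $\mathcal R_i\mathcal R_\mu\le\mathcal R_i^2+\mathcal R_\mu^2$ (or just $\mathcal R_i\le N^{2\delta}$, $\mathcal R_\mu\le N^{2\delta}$ for the very rough bound) gives both (\ref{eq_comp_A1}) for $n(w)\ge1$ and (\ref{eq_comp_A2}).

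For the sharp bound (\ref{eq_comp_A3}) when $n(w)=1$, the word has the form $\mathbf t_1\mathbf s_2$ with $\mathbf t_1\mathbf s_2\in\{\mathbf i\bm\mu,\bm\mu\mathbf i\}$, so $A_{\mathbf v,i,\mu}(w)=\sG_{\mathbf v[\mathbf t_1]}\sG_{[\mathbf s_2]\mathbf v}$ with no interior factors at all; one boundary factor involves $\mathbf v_i$ and the other involves $\mu$ (in the appropriate order), so directly $|A_{\mathbf v,i,\mu}(w)|\le\mathcal R_i\mathcal R_\mu$ by the definition (\ref{eq_comp_Rs}) of $\mathcal R_s$ and the convention $\mathbf v_\mu=e_\mu$. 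This is purely a bookkeeping step once the alternation structure of words in $\sW_n$ (the constraint $\mathbf t_l\mathbf s_{l+1}\in\{\mathbf i\bm\mu,\bm\mu\mathbf i\}$) is made explicit, together with the fact that $\sG_{[\mathbf s]\mathbf v}$ and $\sG_{\mathbf v[\mathbf t]}$ fall into exactly the four cases covered by $\mathcal R_i,\mathcal R_\mu$.

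The only mild subtlety — the part I would be most careful about — is making sure the two boundary factors genuinely pick up one index $i$ and one index $\mu$ (not two copies of the same), which is where the even-length and strict-alternation constraints on words matter: since $n(w)$ is $\ge 1$ and the letters strictly alternate, the first inner letter $\mathbf t_1$ and the last inner letter $\mathbf s_{n+1}$ are of \emph{different} types when $n(w)$ is odd and the \emph{same} type when $n(w)$ is even; in the even case one gets $\mathcal R_i^2$ or $\mathcal R_\mu^2$ rather than $\mathcal R_i\mathcal R_\mu$, which is precisely why (\ref{eq_comp_A2}) is stated with $\mathcal R_i^2+\mathcal R_\mu^2$ and why the improved form (\ref{eq_comp_A3}) is claimed only for $n(w)=1$. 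Apart from tracking this parity, everything reduces to the uniform entry bound from Lemma \ref{lemm_comp_2} and the definitions, so I expect no real obstacle; the lemma is essentially a structural observation packaging the inputs needed for the subsequent summation over $i,\mu$ in (\ref{eq_comp_goal3}).
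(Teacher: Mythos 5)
Your proof is correct and follows essentially the same route as the paper: bound the $n(w)-1$ interior factors crudely by $N^{2\delta}$ each via (\ref{eq_comp_apbound}), recognize that the two boundary factors $\sG_{\mathbf v[\mathbf t_1]}$, $\sG_{[\mathbf s_{n+1}]\mathbf v}$ are among the four quantities controlled by $\mathcal R_i$ and $\mathcal R_\mu$, and conclude. The paper phrases (\ref{eq_comp_A2}) as ``split $A$ into two pieces and apply Cauchy–Schwarz'', which is just the inequality $|xy|\le \frac12(|x|^2+|y|^2)$ applied to the two boundary factors after the interior product is shared between them; your version, which bounds the boundary product directly by $\mathcal R_i\mathcal R_\mu$ or $\mathcal R_i^2$ or $\mathcal R_\mu^2$ and then by $\mathcal R_i^2+\mathcal R_\mu^2$, is the same estimate written out. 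The argument for (\ref{eq_comp_A3}) via $\mathbf t_1\ne\mathbf s_2$ is also exactly the paper's.

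One inaccuracy in your exposition, though it does not invalidate the proof: you assert that the letters of a word ``strictly alternate'', so that the parity of $n(w)$ determines whether $\mathbf t_1$ and $\mathbf s_{n+1}$ are of the same or different type. In fact the constraint in Definition \ref{def_comp_words} is only $\mathbf t_l\mathbf s_{l+1}\in\{\mathbf i\bm\mu,\bm\mu\mathbf i\}$ for $1\le l\le n$; the pairs $(\mathbf s_l,\mathbf t_l)$ indexing the interior factors $\sG_{[\mathbf s_l][\mathbf t_l]}$ are entirely unconstrained, so for instance both $\mathbf i\bm\mu\mathbf i\bm\mu$ and $\mathbf i\bm\mu\bm\mu\mathbf i$ are valid words of length $4$, with the two boundary letters differing in the first and agreeing in the second. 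So the relationship between $\mathbf t_1$ and $\mathbf s_{n+1}$ is not determined by the parity of $n(w)$ for $n(w)\ge 2$. The correct reason (\ref{eq_comp_A3}) is claimed only for $n(w)=1$ is simply that this is the only case with no interior factors, in combination with the fact that $\mathbf t_1\ne\mathbf s_2$ forces one boundary factor to carry $i$ and the other $\mu$; for $n(w)\ge 2$ the boundary factors can be of the same type. Since your bound (\ref{eq_comp_A2}) is stated with $\mathcal R_i^2+\mathcal R_\mu^2$, which dominates all three of $\mathcal R_i\mathcal R_\mu$, $\mathcal R_i^2$, $\mathcal R_\mu^2$, your conclusion is unaffected — only the explanatory remark about alternation needs correcting.
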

\begin{proof}
 (\ref{eq_comp_A1}) follows immediately from the rough bound (\ref{eq_comp_apbound}) and definition (\ref{eq_comp_A(W)}).  For (\ref{eq_comp_A2}) we break $A_{\mathbf v, i, \mu}(w)$ into $\sG_{\bv[\mathbf t_1]}(\sG_{[\mathbf s_2][\mathbf t_2]}\cdots \sG_{[\mathbf s_n][\mathbf t_n]})^{1/2}$ times $(\sG_{[\mathbf s_2][\mathbf t_2]}\cdots \sG_{[\mathbf s_n][\mathbf t_n]})^{1/2}\sG_{[\mathbf s_{n+1}]\bv}$ and use Cauchy-Schwarz inequality. (\ref{eq_comp_A3}) follows from the constraint $\mathbf t_1\ne\mathbf s_2$ in the definition (\ref{eq_comp_A(W)}).
\end{proof}
By pigeonhole principle, if $n\le 2q-2$ there exists at least two words $w_r$ with $n(w_r)=1$. Therefore by Lemma \ref{lem_comp_A} we have
\begin{equation}\label{eq_comp_r1}
 \left|A^{p-q}_{\mathbf v, i, \mu}(w_0)\prod_{r=1}^{q}A_{\mathbf v, i, \mu}(w_r)\right|\prec N^{2\delta(n+q)}F_{\bv}^{p-q}(X)\left(\one(n\ge 2q-1)(\mathcal R_i^2+\mathcal R_\mu^2)+\one(n\le 2q-2)\mathcal R_i^2\mathcal R_\mu^2\right).
\end{equation}
Then by Lemma \ref{lem_comp_gbound},
\begin{align}
 \frac{1}{N}\sum_{i\in\sI_1^M}\mathcal R_i^2+ \frac{1}{N}\sum_{\mu\in\sI_2}\mathcal R_{\mu}^2 & \prec\frac{|w|^{1/2}\Im \sG_{\mathbf v\mathbf v}+\eta|w|^{-1/2}\sG_{\mathbf v\mathbf v}}{N\eta} \nonumber\\
& \prec N^{2\delta}\frac{|w|\Im (m_{1c}+m_{2c})+|w|^{1/2}N^{C_0\delta}\Phi}{N\eta}\prec N^{(C_0+2)\delta}\Phi^2, \label{eq_comp_r2}
\end{align}
where in the second step we used the two bounds in Lemma \ref{lemm_comp_2}, $|w|^{-1/2} \eta =O(|w| \Im\, m_{1c})$ by Lemma \ref{lemm_m1_4case}, and in the last step the definition of $\Phi$. Using the same method we can get
\begin{equation}\label{eq_comp_r3}
\frac{1}{N^2}\sum_{i\in\sI_1^M}\sum_{\mu\in\sI_2}\mathcal R_i^2\mathcal R_\mu^2\prec \left(N^{(C_0+2)\delta}\Phi^2\right)^2.
\end{equation}
Plugging (\ref{eq_comp_r2}) and (\ref{eq_comp_r3}) into (\ref{eq_comp_r1}), we get that the left-hand side of
(\ref{eq_comp_goal3}) is bounded by
\[N^{-n/2+2}N^{2\delta(n+q+2)}\bbE F_{\bv}^{p-q}(X)\left(\one(n\ge 2q-1)\left(N^{C_0\delta/2}\Phi\right)^2+\one(n\le 2q-2)\left(N^{C_0\delta/2}\Phi\right)^4\right).\]
Using $\Phi \ge cN^{-1/2}$, we find that the left hand side of (\ref{eq_comp_goal3}) is bounded by
\begin{align*}
 & N^{2\delta(n+q+2)} \bbE F_{\bv}^{p-q}(X)\left(\one(n\ge 2q-1)\left(N^{C_0\delta/2}\Phi\right)^{n-2}+\one(n\le 2q-2)\left(N^{C_0\delta/2}\Phi\right)^n\right)\\
 &\le \bbE F_{\bv}^{p-q}(X)\left(\one(n\ge 2q-1)\left(N^{C_0\delta/2+12\delta}\Phi\right)^{n-2}+\one(n\le 2q-2)\left(N^{C_0\delta/2+12\delta}\Phi\right)^n\right)
\end{align*}
where we used that $q\le n$ and $n\ge 4$. Choose $C_0\ge 25$, then by (\ref{assm_comp_delta}) we have $N^{C_0\delta/2+12\delta} \le N^{\zeta/2}$ and
hence $N^{C_0\delta/2+12\delta}\Phi\le 1$. Moreover, if $n\ge 4$ and $n\ge 2q-1$, then $n\ge q+2$. Therefore we conclude that the left-hand side of $(\ref{eq_comp_goal3})$ is bounded by
\begin{equation}
\bbE F_{\bv}^{p-q}(X)\left(N^{C_0\delta}\Phi\right)^q.
\end{equation}
Now (\ref{eq_comp_goal3}) follows from Holder's inequality. This concludes the proof of (\ref{eq_comp_selfest}), and hence of (\ref{lemm_comp_5}), and then of Lemma \ref{lemm_boot}. This finishes the proof of Proposition \ref{comparison_prop} under the assumption (\ref{assm_3rdmoment}).

\vspace{5pt}

In the rest of this section, we prove Proposition \ref{comparison_prop} when $\eta \ge N^{-1/2+\zeta}|m_{2c}|^{-1}$. In this case, we can verify that
\begin{equation}\label{eq_comp_boundPhi}
\Phi \le N^{-1/4-\zeta/2}.
\end{equation}
Following the previous arguments, we see that it suffices to prove the estimate ($\ref{eq_comp_selfest}$) for $n=3$. In other words, we need to prove the following lemma. 
\begin{lem}\label{lemm_comparison_big}
Fix $1\le m \le 2\delta^{-1}$ and $p\in 2\mathbb N$. Let $w\in \widehat{\mathbf S}_m \cap \widehat {\mathbf D}$ (recall (\ref{eq_subdomainD})) and suppose $(\mathbf A_{m-1})$ holds. Then we have
\begin{equation}\label{eq_comp_selfest_generalX}
N^{-3/2}\sum\limits_{i\in\mathcal I_1^M}\sum\limits_{\mu\in\mathcal I_2}\left|\bbE f^{(3)}_{(i\mu)}(X_{i\mu}^\theta)\right|=O\left((N^{C_0\delta}\Phi)^p+\mathbb EF_{\mathbf v}^p(X^\theta,w)\right).
\end{equation}
\end{lem}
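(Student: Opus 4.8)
\textbf{Proof plan for Lemma \ref{lemm_comparison_big}.}

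The plan is to reprise the word expansion from Section \ref{section_words}, but now the key point is that the order-$3$ term does \emph{not} vanish term-by-term (we no longer have $\mathbb E(X_{i\mu}^\theta)^3=0$), so we cannot simply drop it; instead we must extract enough powers of $\Phi$ and of $N^{-1/2}$ from the three factors of $A_{\mathbf v,i,\mu}(w_r)$ to beat the prefactor $N^{-3/2}\sum_{i\mu} = N^{-3/2}\cdot O(N^2) = O(N^{1/2})$. Since $n=3$ and by (\ref{assm2}) each occurrence of $X_{i\mu}^\theta$ inside $f_{(i\mu)}^{(3)}$ already carries a factor $N^{-1/2}$ from $\mathbb E|X_{i\mu}^\theta|^3 \prec N^{-3/2}$, I would first reduce, exactly as in (\ref{eq_comp_goal1})--(\ref{eq_comp_goal3}), to proving
\[
N^{-3/2}\sum_{i\in\mathcal I_1^M}\sum_{\mu\in\mathcal I_2}\mathbb E\left|A^{p-q}_{\mathbf v, i, \mu}(w_0)\prod_{r=1}^{q}A_{\mathbf v, i, \mu}(w_r)\right|=O\left((N^{C_0\delta}\Phi)^p+\mathbb EF_{\mathbf v}^p(X^\theta,w)\right)
\]
for $n(w_1)+\cdots+n(w_q)=3$, $n(w_r)\ge 1$. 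Since the total length is $3$ and there are $q$ words each of length $\ge 1$, we have $q\le 3$, and when $q=3$ all three words have $n(w_r)=1$, while when $q=2$ one word has length $1$ and the other length $2$, and when $q=1$ the single word has length $3$.

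The heart of the matter is the quantitative bound from Lemma \ref{lem_comp_A} combined with the averaged bounds (\ref{eq_comp_r2})--(\ref{eq_comp_r3}). For $q=3$, applying (\ref{eq_comp_A3}) to all three words gives $\prod_r|A_{\mathbf v,i,\mu}(w_r)| \prec (\mathcal R_i\mathcal R_\mu)^3$, so after summing we need to control $N^{-3/2}\sum_{i\mu}\mathcal R_i^3\mathcal R_\mu^3$. Using $\mathcal R_s\prec N^{2\delta}$ (from (\ref{eq_comp_apbound})) to trade two of the six factors, this is $\prec N^{4\delta}\cdot N^{-3/2}\sum_{i\mu}\mathcal R_i^2\mathcal R_\mu^2 \prec N^{4\delta}\cdot N^{1/2}\cdot(N^{(C_0+2)\delta}\Phi^2)^2$ by (\ref{eq_comp_r3}), and since $\Phi \le N^{-1/4-\zeta/2}$ by (\ref{eq_comp_boundPhi}) we get $N^{1/2}\Phi^4 = \Phi^2\cdot (N^{1/2}\Phi^2) \le \Phi^2 N^{-\zeta}$, which (after absorbing the $N^{C\delta}$ factors using $C_0\delta<\zeta/2$) is $\le (N^{C_0\delta}\Phi)^2$; folding in the remaining $F_{\mathbf v}^{p-q}$ factor from $A(w_0)^{p-q}$ and Hölder's inequality closes this case. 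For $q=2$, one word of length $2$ is bounded by (\ref{eq_comp_A2}) giving a factor $(\mathcal R_i^2+\mathcal R_\mu^2)N^{2\delta}$ and the length-$1$ word by (\ref{eq_comp_A3}) giving $\mathcal R_i\mathcal R_\mu$, so $\prod \prec N^{2\delta}(\mathcal R_i^2+\mathcal R_\mu^2)\mathcal R_i\mathcal R_\mu \prec N^{2\delta}(\mathcal R_i^3\mathcal R_\mu+\mathcal R_i\mathcal R_\mu^3)$; again trading via $\mathcal R_s\prec N^{2\delta}$ down to $\mathcal R_i^2\mathcal R_\mu^2$ (at the cost of using one leftover factor $N^{2\delta}$), then $N^{-3/2}\sum_{i\mu}$ of this is $\prec N^{C\delta}N^{1/2}(N^{(C_0+2)\delta}\Phi^2)^2$, handled exactly as before. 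For $q=1$, the single length-$3$ word gives $\prec(\mathcal R_i^2+\mathcal R_\mu^2)N^{4\delta}$ by (\ref{eq_comp_A2}), and $N^{-3/2}\sum_{i\mu}(\mathcal R_i^2+\mathcal R_\mu^2)N^{4\delta} \prec N^{4\delta}N^{-1/2}\sum_{i\mu}(\mathcal R_i^2+\mathcal R_\mu^2)/N \prec N^{C\delta}N^{1/2}\cdot N^{(C_0+2)\delta}\Phi^2$ by (\ref{eq_comp_r2}), and $N^{1/2}\Phi^2 = \Phi\cdot(N^{1/2}\Phi) \le \Phi\cdot N^{-\zeta/2}$ again using (\ref{eq_comp_boundPhi}), giving $\le N^{C_0\delta}\Phi$; combined with $F_{\mathbf v}^{p-1}$ and Hölder this closes the last case.

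The main obstacle I anticipate is \emph{not} the combinatorics of the word bounds — those are mechanical once one has Lemmas \ref{lem_comp_A} and the averaged estimates — but rather verifying that the a priori bound (\ref{eq_comp_apbound}) and the averaged sums (\ref{eq_comp_r2})--(\ref{eq_comp_r3}) hold in the regime $\eta \ge N^{-1/2+\zeta}|m_{2c}|^{-1}$ with the \emph{restriction} to $w\in\widehat{\mathbf D}$, i.e. that $(\mathbf A_{m-1})$ propagates correctly on $\widehat{\mathbf S}_m\cap\widehat{\mathbf D}$ and that the extra room $\Phi \le N^{-1/4-\zeta/2}$ from (\ref{eq_comp_boundPhi}) is genuinely available at every step; in particular one must check that $|w|^{-1/2}\eta = O(|w|\,\mathrm{Im}\,m_{1c})$ still holds (used in deriving (\ref{eq_comp_r2})) and that the Lipschitz/discretization setup of the bootstrap restricted to $\widehat{\mathbf D}$ goes through verbatim. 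Once that bookkeeping is in place, the gain $N^{1/2}\Phi^2 \le N^{-\zeta/2}\Phi$ is exactly what compensates the one ``missing'' factor of $N^{-1/2}$ relative to the $n\ge 4$ case, and the proof of Lemma \ref{lemm_comparison_big} — hence of Proposition \ref{comparison_prop} without assuming (\ref{assm_3rdmoment}) on the subdomain $\widehat{\mathbf D}$ — is complete.
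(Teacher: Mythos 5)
Your proof proposal takes a genuinely different route from the paper's, and the route has a gap precisely at the $q=3$ case.

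The paper's proof of Lemma \ref{lemm_comparison_big} introduces a second, $w$-fixed iteration loop: assume the a priori bound $\mathcal G-\widetilde\Pi=O_\prec(N^{2\delta}\phi)$ for some $\phi\le 1$, prove the self-improving estimate (\ref{comp_geX_self-improving-bound}) with the extra error term $(N^{-\zeta/4}\phi)^p$, then feed the improved bound back in and iterate $O(\zeta^{-1})$ times until the $\phi$-term is swallowed by $N^{C_0\delta}\Phi$. You attempt instead to prove (\ref{eq_comp_selfest_generalX}) in a single shot by direct bookkeeping of the $\mathcal R_i,\mathcal R_\mu$ factors, and you dismiss the iteration entirely. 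That omission is fatal for $q=3$.

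Concretely, for $q=3$ (three length-one words) your chain gives
\[
N^{-3/2}\sum_{i,\mu}(\mathcal R_i\mathcal R_\mu)^3
\prec N^{4\delta}\,N^{-3/2}\sum_{i,\mu}\mathcal R_i^2\mathcal R_\mu^2
\prec N^{(2C_0+8)\delta}\,N^{1/2}\Phi^4
\le N^{(2C_0+8)\delta-\zeta}\,\Phi^2,
\]
and you then assert this is ``$\le(N^{C_0\delta}\Phi)^2$'' and that H\"older closes the case. But the H\"older/Young step in (\ref{eq_comp_goal3}) needs the product of the $q$ nonempty-word factors (with the $F^{p-q}$ factor removed) to be bounded by $(N^{C_0\delta}\Phi)^q$; writing $F^{p-3}Y\le \tfrac{p-3}{p}F^p+\tfrac{3}{p}Y^{p/3}$, one needs $Y\le(N^{C_0\delta}\Phi)^3$, i.e. $N^{(2C_0+8)\delta-\zeta}\Phi^2\le N^{3C_0\delta}\Phi^3$, equivalently $N^{(8-C_0)\delta-\zeta}\le\Phi$. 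Since $\Phi$ can be as small as $cN^{-1/2}$ (the paper uses $\Phi\ge cN^{-1/2}$ explicitly in the $n\ge4$ argument) while $(8-C_0)\delta-\zeta$ is only of size $O(\zeta)$, this inequality fails whenever $\zeta<1/2$. So your $q=3$ estimate is strictly weaker than what the exponent bookkeeping requires, by a full factor of $\Phi$ up to $N^{o(1)}$ corrections. Your $q=1$ and $q=2$ computations are fine, but $q=3$ is exactly the case that forces the paper to iterate: the first pass yields only the weaker bound with the $N^{-\zeta/2}\phi$ contamination (cf.\ (\ref{comp_geX_words2})), and the factor $\Phi^3$ is recovered only after repeatedly feeding the improved a priori bound back into the estimate. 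The obstruction you flag at the end (Lipschitz/discretization bookkeeping on $\widehat{\mathbf D}$, monotonicity of $\Psi$, the bound $|w|^{-1/2}\eta=O(|w|\,{\rm Im}\,m_{1c})$) is real but routine; the missing idea is the iterative improvement of $\phi$, which is the ``main new ingredient'' the paper announces at the start of its proof.
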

\begin{proof}
The main new ingredient of the proof is a further iteration step at a fixed $w$. Suppose
\begin{equation}\label{comp_geX_iteration}
\mathcal G-\tilde\Pi=O_\prec(N^{2\delta}\phi)
\end{equation}
for some $\phi\le 1$. By the a priori bound (\ref{eq_comp_apbound}), (\ref{comp_geX_iteration}) holds for $\phi=1$. Assuming (\ref{comp_geX_iteration}), we shall prove a self-improving bound of the form
\begin{equation}\label{comp_geX_self-improving-bound}
N^{-3/2}\sum\limits_{i\in\mathcal I_1^M}\sum\limits_{\mu\in\mathcal I_2}\left|\bbE f^{(3)}_{(i\mu)}(X_{i\mu}^\theta)\right|=O\left((N^{C_0\delta}\Phi)^p+(N^{-\zeta/4}\phi)^p+\mathbb EF_{\mathbf v}^p(X^\theta,w)\right).
\end{equation}
Once (\ref{comp_geX_self-improving-bound}) is proved, we can use it iteratively to get an increasingly accurate bound for the left hand side of (\ref{eq_comp_Cm}). After each step, we obtain a better a priori bound (\ref{comp_geX_iteration}) where $\phi$ is reduced by $N^{-\zeta/4}$. Hence after $O(\zeta^{-1})$ iterations we can get (\ref{eq_comp_selfest_generalX}).

As in Section \ref{section_words}, to prove (\ref{comp_geX_self-improving-bound}) it suffice to show 
\begin{equation}\label{comp_geX_words}
N^{-3/2}\left|\sum\limits_{i\in\mathcal I_1^M}\sum\limits_{\mu\in\mathcal I_2}A^{p-q}_{\mathbf v, i, \mu}(w_0)\prod_{r=1}^{q}A_{\mathbf v, i, \mu}(w_r)\right|\prec F_{\bv}^{p-q}(X)(N^{(C_0-1)\delta}\Phi + N^{-\zeta/2}\phi)^q,
\end{equation}
which follows from
\begin{equation}\label{comp_geX_words2}
N^{-3/2}\left|\sum\limits_{i\in\mathcal I_1^M}\sum\limits_{\mu\in\mathcal I_2}\prod_{r=1}^{q}A_{\mathbf v, i, \mu}(w_r)\right|\prec (N^{(C_0-1)\delta}\Phi + N^{-\zeta/2}\phi)^q.
\end{equation}
Each of the three cases $q=1,\, 2,\, 3$ can be proved as in \cite[Lemma 12.7]{Anisotropic}, and we leave the details to the reader. This concludes Lemma \ref{lemm_comparison_big}.
\end{proof}
 
\end{subsubsection}
\end{subsection}

\begin{subsection}{Averaged local law for $TX$}\label{section_averageTX}
In this section we prove the averaged local law in Theorem \ref{law_wideT}. Again for convenience, we only consider the case $w\in \mathbf D$ and $|z|^2 \le 1-\tau$. First we assume (\ref{assm_3rdmoment}) holds.
The anisotropic local law proved in the previous section gives a good a priori bound. In analogy to (\ref{eq_comp_F(X)}), we define
\begin{align*}
\wt F(X,w) : &=|w|^{1/2} |m_2(w)-m_{2c}(w)| =\left|\frac{1}{N}\sum\limits_{\nu\in\sI_2}\sG_{\nu\nu}(w)-|w|^{1/2}m_{2c}(w)\right|.
\end{align*}
Since $\Phi^2 =O(|w|^{1/2}/{(N\eta)})$, it suffices to show that $\wt F\prec \Phi^2$.
Following the argument in Section \ref{subsection_selfcomp}, analogous to (\ref{eq_comp_selfest}), we only need to prove that
\begin{equation}\label{eq_comp_selfestAvg}
N^{-n/2}\sum\limits_{i\in\mathcal I_1^M}\sum\limits_{\mu\in\mathcal I_2}\left|\bbE \left(\frac{\partial}{\partial X_{i\mu}}\right)^n\wt F^p(X)\right|=O\left((N^{\delta}\Phi^2)^p+\mathbb E\wt F^p(X)\right)
\end{equation}
for all $n=4,...,4p$. Here $\delta>0$ is an arbitrary positive constant. Analogously to (\ref{eq_comp_goal2}), it suffices to prove that for $n=4,...,4p$,
\begin{equation}\label{eq_comp_goalAvg}
N^{-n/2}\sum\limits_{i\in\mathcal I_1^M}\sum\limits_{\mu\in\mathcal I_2}\left|\bbE\prod_{r=1}^{p}\left(\frac{1}{N}\sum_{\nu\in\sI_2}A_{ \mathbf e_\nu, i, \mu}(w_r)\right)\right|=O\left((N^{\delta}\Phi^2)^p+\mathbb E\wt F^p(X)\right)
\end{equation}
for $\sum_r n(w_r)=n$. The only difference in the definition of $A_{\mathbf v, i, \mu}(w)$ is that when $n(w)=0$, we define
\[A_{\mathbf v, i, \mu}(w):=\sG_{\mathbf v\mathbf v}-|w|^{1/2}m_{2c}.\]

Similar to (\ref{eq_comp_Rs}) we define
\begin{equation}\label{eq_comp_RsAvg}
\mathcal R_{\nu, s}:=|\mathcal G_{\nu \mathbf v_s}|+|\mathcal G_{\mathbf v_s \nu}|.
\end{equation}
By the anisotropic local law, $\sG-\wt\Pi=O_{\prec}(\Phi)$. Hence combining with Lemma \ref{lem_comp_gbound} and (\ref{estimate_PiImw}), we get
\begin{equation}\label{eq_comp_r22}
 \frac{1}{N}\sum_{\nu\in\sI_2}\mathcal R_{\nu,s}^2\prec\frac{|w|^{1/2}\Im \sG_{\mathbf v_s\mathbf v_s}}{N\eta}\prec \frac{|w|\Im (m_{1c}+m_{2c})+|w|^{1/2}\Phi}{N\eta}=O(\Phi^2).
\end{equation}
Using the anisotropic local law again, we get $\sG=O_\prec(1)$. Then we have 
\begin{equation}\label{average_bound}
\left|\frac{1}{N}\sum_{\nu\in\sI_2}A_{ \mathbf e_\nu, i, \mu}(w)\right|\prec \frac{1}{N}\sum_{\nu\in\sI_2}\left(\mathcal R_{\nu,i}^2+\mathcal R_{\nu,\mu}^2\right)\prec \Phi^2\text{ for }n(w)\ge 1.
\end{equation}
Following (\ref{average_bound}), for $n\ge 4$, the left-hand side of (\ref{eq_comp_goalAvg}) is bounded by
\[\bbE\wt F^{p-q}(X)(\Phi^2)^q.\]
Applying Holder's inequality, we conclude the proof.



Then we prove the averaged local law when $\eta \ge N^{-1/2+\zeta}|m_{2c}|^{-1}$. It suffices to prove 
\begin{equation}\label{comp_avg_geX_self-improving-bound}
N^{-3/2}\left|\sum\limits_{i\in\mathcal I_1^M}\sum\limits_{\mu\in\mathcal I_2}\bbE \left(\frac{\partial}{\partial X_{i\mu}}\right)^3\wt F^p(X)\right|=O\left(\left(\frac{|w|^{1/2}}{N\eta}\right)^p+\mathbb E\wt F^p(X)\right).
\end{equation}
Analogous to (\ref{eq_comp_goalAvg}), it is reduced to show that
\begin{equation}\label{eq_comp_goalAvg_genX}
N^{-3/2}\left|\sum\limits_{i\in\mathcal I_1^M}\sum\limits_{\mu\in\mathcal I_2}\bbE\prod_{r=1}^{q}\left(\frac{1}{N}\sum_{\nu\in\sI_2}A_{ \mathbf e_\nu, i, \mu}(w_r)\right)\right|=O\left(\left(\frac{|w|^{1/2}}{N\eta}\right)^q+\mathbb E\wt F^q(X)\right)
\end{equation}
where $q$ is the number of words with nonzero length. Again we can prove the three cases $q=1,\, 2,\, 3$ as in \cite[Lemma 12.8]{Anisotropic}, and we leave the details to the reader. This concludes the averaged law.

\end{subsection}

\end{section}

\begin{appendix}

\section{Properties of $\rho_{1,2c}$ and Stability of (\ref{eq_self3}) }
\label{appendix1}


\subsection{Proof of Lemma \ref{lemm_rho} and Proposition \ref{prop_rho1c}}\label{subsection_append1}

We now prove Lemma \ref{lemm_rho}. First is a technical lemma for $f$ defined in (\ref{eqn_def_fwm}).
\begin{lem}\label{lemm_tech_f}
For $w>0$ and $|z|>0$, $f$ can be written as
\begin{equation}
f(\sqrt{w},m) = -\sqrt{w} + m + w^{-1/2} + \frac{1}{N}\sum_{i=1}^{n}l_i s_i \left( \frac{A_i}{m - a_i} + \frac{B_i}{m - b_i} + \frac{C_i}{m + c_i} \right), \label{PFD_f}
\end{equation}
where we have the following estimates for the poles and the coefficients,
\begin{align}
& \max\left(|z|,\frac{s_i+|z|^2}{\sqrt{w}}\right) < a_i < \frac{s_i + |z|^2}{\sqrt{w}} + |z|, \ \ a_n < a_{n-1} < \ldots < a_1 ,\label{bounda}\\
&  0< b_1 < b_2 < \ldots < b_n < \min\left(|z|,\frac{|z|^2}{\sqrt{w}}\right), \label{boundb}\\
& \frac{-(s_i + |z|^2)+ \sqrt{(s_i + |z|^2)^2+4w|z|^2} }{2\sqrt{w}} < c_i < |z|, \ \ c_1 < c_2 < \ldots < c_n , \label{boundc}
\end{align}
and
\begin{equation}
0 < A_i \le 2\frac{s_i+|z|^2 + \sqrt{w}|z|}{w}, \ 0 < B_i \le 2\frac{s_i+|z|^2 + \sqrt{w}|z|}{w}, \ 0 < C_i \le \frac{s_i+|z|^2 + \sqrt{w}|z|}{w}. \label{boundABC}
\end{equation}
\end{lem}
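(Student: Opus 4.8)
The statement is a purely algebraic fact about the rational function $f(\sqrt w,m)$ for fixed real $w>0$ and $|z|>0$: each cubic denominator $\sqrt w\,m^3-(s_i+|z|^2)m^2-\sqrt w|z|^2m+|z|^4$ factors over $\mathbb R$ as $\sqrt w(m-a_i)(m-b_i)(m+c_i)$ with $a_i,b_i,c_i>0$, and then a partial-fraction decomposition of $s_im(m^2-|z|^2)/[\text{cubic}]$ produces the residues $A_i,B_i,C_i$. So the first step is to analyze the cubic $p_i(m):=\sqrt w\,m^3-(s_i+|z|^2)m^2-\sqrt w|z|^2m+|z|^4$. I would locate its roots by evaluating $p_i$ at a few test points: $p_i(0)=|z|^4>0$, $p_i(|z|)=\sqrt w|z|^3-(s_i+|z|^2)|z|^2-\sqrt w|z|^3+|z|^4=-s_i|z|^2<0$, and $p_i(+\infty)=+\infty$, while $p_i(-\infty)=-\infty$ and $p_i(-|z|\cdot\text{something})$ can be checked to be positive for a suitable negative argument. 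This sign pattern forces one root in $(0,|z|)$ (call it $b_i$), one root $>|z|$ (call it $a_i$), and one negative root (call it $-c_i$). To get the sharper two-sided bounds I would evaluate $p_i$ at the claimed endpoints: e.g. $p_i\big((s_i+|z|^2)/\sqrt w\big)$ and $p_i\big((s_i+|z|^2)/\sqrt w+|z|\big)$ to pin down $a_i$, $p_i\big(|z|^2/\sqrt w\big)$ to separate $b_i$ from $\min(|z|,|z|^2/\sqrt w)$, and $p_i$ at $\frac{-(s_i+|z|^2)+\sqrt{(s_i+|z|^2)^2+4w|z|^2}}{2\sqrt w}$ (which is exactly the positive root of $\sqrt w m^2+(s_i+|z|^2)m-|z|^2$, a natural lower comparison) to bound $c_i$ from below. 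This is just a sequence of sign checks on explicit polynomials.

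\textbf{Monotonicity in $i$.} The orderings $a_n<\dots<a_1$, $b_1<\dots<b_n$, $c_1<\dots<c_n$ follow from the fact that $s_1>s_2>\dots>s_n$ and the dependence of $p_i$ on $s_i$ is linear: $p_i(m)=p_0(m)-s_i m^2$ where $p_0(m):=\sqrt w m^3-|z|^2m^2-\sqrt w|z|^2m+|z|^4$ is independent of $i$. So increasing $s_i$ shifts the graph downward by $s_i m^2$ at every $m\neq 0$. A standard argument (implicit function theorem along the family, or directly: at a root $m_*$ of $p_i$ with $p_i'(m_*)\neq 0$, $\partial m_*/\partial s_i=m_*^2/p_i'(m_*)$, and the sign of $p_i'$ at the three roots is $+,-,+$ reading left to right) gives that $a_i$ decreases, $b_i$ increases, and $-c_i$ increases as $s_i$ increases; hence the stated orderings. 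I should double-check that the roots stay simple throughout, which the strict sign changes guarantee.

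\textbf{The residues.} With $p_i(m)=\sqrt w(m-a_i)(m-b_i)(m+c_i)$, the coefficient of $1/(m-a_i)$ in $\frac1N\sum l_i s_i\, m(m^2-|z|^2)/p_i(m)$ is $A_i=\frac{s_i}{N}\cdot\frac{a_i(a_i^2-|z|^2)}{\sqrt w(a_i-b_i)(a_i+c_i)}$ — wait, I need to be careful about the overall normalization; reading (\ref{eqn_def_fwm}), the term is $\frac1N l_i s_i\, m(m^2-|z|^2)/p_i(m)$, so the residue at $m=a_i$ is $l_is_i a_i(a_i^2-|z|^2)/[N\sqrt w(a_i-b_i)(a_i+c_i)]$; but (\ref{PFD_f}) factors out $l_is_i$, so $A_i=a_i(a_i^2-|z|^2)/[\sqrt w(a_i-b_i)(a_i+c_i)]$, and similarly $B_i=b_i(b_i^2-|z|^2)/[\sqrt w(b_i-a_i)(b_i+c_i)]$, $C_i=(-c_i)(c_i^2-|z|^2)/[\sqrt w(-c_i-a_i)(-c_i-b_i)]=c_i(c_i^2-|z|^2)/[\sqrt w(c_i+a_i)(c_i+b_i)]$. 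Positivity: $a_i>|z|$ makes $a_i^2-|z|^2>0$ and $a_i-b_i>0$, $a_i+c_i>0$, so $A_i>0$; $b_i<|z|$ makes $b_i^2-|z|^2<0$ and $b_i-a_i<0$, so $B_i>0$; for $C_i$, $c_i^2-|z|^2$ has the same sign issue — I would need to verify $c_i<|z|$ (which is in (\ref{boundc})) so that $c_i^2-|z|^2<0$, making $C_i>0$. The need to re-derive the quotient $-\sqrt w+m+w^{-1/2}+\sum(\dots)$ means checking that the polynomial part of $m(m^2-|z|^2)/p_i(m)$ is $w^{-1/2}$ (degree $3$ over degree $3$ with leading ratio $1/\sqrt w$) and that the constant $-\sqrt w+m$ from the original $f$ is unchanged; a short long-division confirms $m(m^2-|z|^2)/p_i(m)=\frac1{\sqrt w}+O(1/m)$, and summing $\frac1N\sum l_is_i\cdot\frac1{\sqrt w}=\frac1{\sqrt w}$ by the normalization $\frac1{N\wedge M}\sum l_is_i=1$ (with $N\wedge M=N$ here) gives the $+w^{-1/2}$ term. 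Finally the upper bounds $A_i,B_i\le 2(s_i+|z|^2+\sqrt w|z|)/w$ and $C_i\le (s_i+|z|^2+\sqrt w|z|)/w$ follow by plugging the two-sided bounds on $a_i,b_i,c_i$ from (\ref{bounda})--(\ref{boundc}) into the residue formulas and estimating crudely (e.g. bounding $a_i^2-|z|^2\le a_i^2$, $a_i-b_i\ge a_i-|z|$, etc.).

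\textbf{Main obstacle.} The genuinely delicate part is obtaining the \emph{sharp} two-sided bounds on the roots — especially the lower bound on $a_i$ by $\max(|z|,(s_i+|z|^2)/\sqrt w)$ and the lower bound on $c_i$ by the root of the quadratic — since these require choosing exactly the right comparison points and checking the sign of $p_i$ there, and the upper bounds on the residues depend on these being tight enough. Everything else (positivity, orderings, the partial-fraction identity) is routine calculus once the root locations are established; I would organize the write-up so that the root analysis is done first and cleanly, then feed the bounds mechanically into the residue estimates.
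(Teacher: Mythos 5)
Your strategy is the same as the paper's: factor the cubic $p_i(m)=\sqrt w\,m^3-(s_i+|z|^2)m^2-\sqrt w|z|^2m+|z|^4$ over $\mathbb R$, locate its three roots by comparison/sign checks, perform partial fractions, and bound the residues from the root bounds. The paper uses the discriminant to get three distinct real roots and compares $p_i$ with the auxiliary polynomials $p_0$ (set $s_i=0$), $p_i'$ (drop $|z|^4$), $p_i''$ (drop $|z|^4$ and $-\sqrt w|z|^2m$), while you evaluate $p_i$ at the same comparison points; these are interchangeable. Your monotonicity-in-$s_i$ argument via $p_i=p_0-s_im^2$ is likewise the paper's argument in a slightly different language.

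Two concrete corrections before this would compile into a proof. First, your residue at $m=-c_i$ has a sign slip: $(-c_i-a_i)(-c_i-b_i)=(c_i+a_i)(c_i+b_i)$ already (a product of two negatives), so the residue is $\frac{-c_i(c_i^2-|z|^2)}{\sqrt w(c_i+a_i)(c_i+b_i)}=\frac{c_i(|z|^2-c_i^2)}{\sqrt w(c_i+a_i)(c_i+b_i)}$, i.e.\ $C_i=\frac{c_i(|z|^2-c_i^2)}{\sqrt w(c_i+a_i)(c_i+b_i)}$; your displayed formula has $c_i^2-|z|^2$, which contradicts your own observation that $c_i<|z|$ should make $C_i>0$. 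Second, the residue bound cannot be obtained by the crude step ``$a_i^2-|z|^2\le a_i^2$'': since $a_i$ can approach $|z|$ (e.g.\ $w$ large, so $(s_i+|z|^2)/\sqrt w+|z|\to|z|$), the quantity $\frac{a_i^2}{\sqrt w(a_i-|z|)(a_i+c_i)}$ blows up. You must instead keep the factorization $a_i^2-|z|^2=(a_i-|z|)(a_i+|z|)$ and use exactly your other observation $a_i-b_i\ge a_i-|z|$ (valid because $b_i<|z|$) to cancel the small factor, giving $A_i'\le \frac{a_i+|z|}{\sqrt w(a_i+c_i)}\le 2/\sqrt w$ and then $A_i=A_i'a_i\le 2\big(\frac{s_i+|z|^2}{\sqrt w}+|z|\big)/\sqrt w$. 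The analogous cancellations $|z|-b_i\le c_i+b_i$ (using $c_i>-(s_i+|z|^2)+\sqrt{\cdots})/2\sqrt w$, wait, actually more simply $b_i+c_i \ge b_i$ and the factor $|z|-b_i$ against $a_i-b_i$) need the same care for $B_i$ and $C_i$. Once these are fixed the argument goes through.
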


\begin{proof}
The proof is based on basic algebraic arguments. Let $$p_i = \sqrt{w}m^3 - (s_i + |z|^2)m^2 -\sqrt{w}|z|^2 m + |z|^4.$$
It is easy to verify that
$$\Delta=18(s_i + |z|^2)w|z|^6 + 4(s_i + |z|^2)^3 |z|^4 + (s_i + |z|^2)^2 w|z|^4 + 4w^2 |z|^6 - 27w|z|^8 > 0.$$
Thus $p_i$ has three distinct real roots. By the form of $p_i$, we see that there are two positive roots and one negative root, call them $a_i> b_i > 0 > -c_i$.
Now we perform the partial fraction expansion for the rational functions in (\ref{eqn_def_fwm}),
\begin{equation}
\frac{m^2 - |z|^2}{\sqrt{w}m^3 - (s_i + |z|^2)m^2 -\sqrt{w}|z|^2 m + |z|^4} = \frac{A'_i}{m-a_i}  + \frac{B'_i}{m-b_i} - \frac{C'_i}{m+c_i}, \label{PFD}
\end{equation}
where
\begin{equation}
A'_i=\frac{ a_i^2 - |z|^2}{\sqrt{w}(a_i-b_i)(a_i+c_i)}, \ B'_i=\frac{ b_i^2 - |z|^2}{\sqrt{w}(b_i-a_i)(b_i+c_i)}, \ C'_i=\frac{ - c_i^2 + |z|^2}{\sqrt{w}(c_i+a_i)(c_i+b_i)}. \label{ABC}
\end{equation}
We take $s_i=0$ in $p_i$ and call the resulting polynomial as
$$p_0 = \sqrt{w}m^3 - |z|^2m^2 -\sqrt{w}|z|^2 m + |z|^4 = \sqrt{w}\left(m-\frac{|z|^2}{\sqrt{w}}\right)\left(m^2-|z|^2\right),$$
which has roots $m=\pm |z|, |z|^2/\sqrt{w}$. By (\ref{ordering_si}), we have $p_1 < p_2 < \ldots < p_n < p_0$ for all $m\ne 0$. Comparing the graphs of $p_i$'s (as cubic functions of $m$) for $0\le i \le n$, we get that
\begin{equation}\label{bound1_ab}
\max\left(|z|,\frac{|z|^2}{\sqrt{w}}\right) < a_n < a_{n-1} < \ldots < a_1, \ 0<b_1 < b_2 < \ldots < b_n < \min\left(|z|,\frac{|z|^2}{\sqrt{w}}\right),
\end{equation}
and
\begin{equation}\label{bound1_c}
0 < c_1 < c_2 < \ldots < c_n < |z|.
\end{equation}
Thus we get (\ref{boundb}). By these bounds, we see that $a_i^2 - |z|^2>0$, $b_i^2 - |z|^2<0$ and $-c_i^2+|z|^2>0$, which, by (\ref{ABC}), give that $A'_i>0$, $B'_i>0$ and $C'_i>0$. Plugging (\ref{PFD}) into $f$, we get immediately (\ref{PFD_f})
for $A_i=A'_i a_i$, $B_i=B'_i b_i$ and $C_i=C'_i c_i$. 

Now we compare $p_i$ with $p_i':= \sqrt{w}m^3 - (s_i + |z|^2)m^2 -\sqrt{w}|z|^2 m,$ which has roots $$m=0,\ \frac{(s_i + |z|^2) \pm \sqrt{(s_i + |z|^2)^2+4w|z|^2} }{2\sqrt{w}}.$$ Since $p_i'<p_i$ for all $m$, we get
\begin{equation}\label{bound2_a}
a_i < \frac{(s_i + |z|^2) + \sqrt{(s_i + |z|^2)^2+4w|z|^2} }{2\sqrt{w}} < \frac{s_i + |z|^2}{\sqrt{w}} + |z|,
\end{equation}
and
\begin{equation}\label{bound2_c}
c_i>\frac{-(s_i + |z|^2)+ \sqrt{(s_i + |z|^2)^2+4w|z|^2} }{2\sqrt{w}}.
\end{equation}
From (\ref{bound1_c}) and (\ref{bound2_c}), we get (\ref{boundc}). Then we compare $p_i$ with $p_i'':= \sqrt{w}m^3 - (s_i + |z|^2)m^2,$ which has roots $w=0$, $(s_i + |z|^2)/\sqrt{w}$. Notice $p_i''>p_i$ for $m>|z|^2/\sqrt{w}$ and $a_i > |z|^2/\sqrt{w}$, so we get $a_i> (s_i + |z|^2)/{\sqrt{w}}.$ Combining this bound with (\ref{bound1_ab}) and (\ref{bound2_a}), we get (\ref{bounda}).

Finally we estimate the coefficients $A_i$, $B_i$ and $C_i$. Using (\ref{ABC}) and (\ref{bounda})-(\ref{boundc}), we first can estimate that
\begin{align*}
& A'_i = \frac{ (a_i - |z|)(a_i+|z|)}{\sqrt{w}(a_i-b_i)(a_i+c_i)} \le \frac{ a_i+|z|}{\sqrt{w}(a_i+c_i)}\le \frac{2}{\sqrt{w}}, \\
& B'_i = \frac{ (|z|+b_i)(|z|-b_i)}{\sqrt{w}(a_i-b_i)(b_i+c_i)} \le \frac{ |z|+b_i}{\sqrt{w}(b_i+c_i)} \le 2\frac{s_i + |z|^2+\sqrt{w}|z|}{w|z|}, \\
& C_i' = \frac{ (|z|-c_i)(c_i+|z|)}{\sqrt{w}(c_i+a_i)(c_i+b_i)} \le \frac{ |z| -c_i}{\sqrt{w}(c_i+b_i)} \le \frac{s_i + |z|^2+\sqrt{w}|z|}{w|z|},
\end{align*}
from which we get that
\begin{align}
& A_i=A'_i a_i \le \frac{2}{\sqrt{w}} \left(\frac{s_i + |z|^2}{\sqrt{w}} + |z|\right) = 2\frac{s_i + |z|^2 +\sqrt{w}|z|}{w},\label{eq_app_A} \\
& B_i = B'_i b_i \le 2 \frac{s_i + |z|^2+\sqrt{w}|z|}{w|z|} |z| = 2\frac{s_i + |z|^2+\sqrt{w}|z|}{w}, \label{eq_app_B} \\
& C_i = C'_i c_i \le  \frac{s_i + |z|^2+\sqrt{w}|z|}{w|z|} |z| = \frac{s_i + |z|^2+\sqrt{w}|z|}{w}.\label{eq_app_C}
\end{align}
\end{proof}

In (\ref{PFD_f}), it is sometimes convenient to reorder the terms and rename the constants to write $f$ as
\begin{equation}\label{PFD_fanother}
f(m) = -\sqrt{w} + m + w^{-1/2} + \frac{1}{N}\sum_{k=1}^{2n} \frac{C^{+}_k}{m - x_k} + \frac{1}{N}\sum_{l=1}^{n} \frac{C^{-}_l}{m + y_l}.
\end{equation}
where all the constants $C^{+}_k$ and $C^{-}_l$ are positive, and we choose the order such that
\begin{equation}
0<x_1 < x_2 < \ldots < x_{2n}, \ 0<y_1 < y_2 < \ldots <y_{n} .
\end{equation}
Clearly, $f$ is smooth on the $3n+1$ open intervals of ${\mathbb R}$ defined by
$$I_{-n}:=(-\infty,-y_{n}), \ I_{-k}:=(-y_{k+1},-y_{k})\ (k=1, \ldots, n-1),\ I_0:=(-y_1,x_1),$$
$$I_k:=(x_{k},x_{k+1}) \ (k=1, \ldots, 2n-1),\ I_{2n} :=(x_{2n},+\infty).$$
Next, we introduce the multiset $\mathcal C$ of critical points of $f$ (as a function of $m$), using the conventions that a nondegenerate critical point is counted once and a degenerated critical point twice. First we will prove the following elementary lemma about the structure of $\mathcal C$ (see Fig.\,\ref{figf_z15} and \ref{figf_z05}).

\begin{lem}\label{lemm_app_cpt}
(Critical points) We have $|\mathcal C \cap I_{-n}|=|\mathcal C \cap I_{2n}|=1$ and $|\mathcal C \cap I_k|\in \{0,2\}$ for $k=-n+1,\ldots, 2n-1$.
\end{lem}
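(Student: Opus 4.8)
\textbf{Proof proposal for Lemma \ref{lemm_app_cpt}.}

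The plan is to count critical points of $f$ (as a function of $m$ on the real line) by combining two ingredients: the sign/monotonicity behavior of $f$ near its poles and at $\pm\infty$, which forces a minimal number of critical points in each interval $I_k$ by the intermediate value theorem applied to $f'$, together with an upper bound on the total number of real critical points obtained from the degree of the polynomial equation $f'(m)=0$. First I would analyze $f$ on each $I_k$ using the representation \eqref{PFD_fanother}. On $I_{2n}=(x_{2n},\infty)$, as $m\searrow x_{2n}$ we have $C^+_{2n}/(m-x_{2n})\to+\infty$ so $f\to+\infty$, while as $m\to+\infty$, $f\sim m\to+\infty$; since $f$ is smooth there and $f'\to 1>0$ at $+\infty$ but $f\to+\infty$ at the left end, $f$ must have at least one interior critical point, i.e. $|\mathcal C\cap I_{2n}|\ge 1$. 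Symmetrically, on $I_{-n}=(-\infty,-y_n)$, as $m\to-\infty$, $f\sim m\to-\infty$, and as $m\nearrow -y_n$, $C^-_n/(m+y_n)\to-\infty$ so $f\to-\infty$; hence $f$ has at least one critical point there, $|\mathcal C\cap I_{-n}|\ge 1$. On each bounded interval $I_k$ ($-n+1\le k\le 2n-1$) between two consecutive poles (or between a pole and $x_1$/$-y_1$), $f$ tends to $\pm\infty$ at each endpoint with signs dictated by which $C^\pm/(m-\text{pole})$ term dominates; if the two limits have opposite signs then $f$ is (generically) monotone and may have $0$ critical points, whereas if they have the same sign then $f$ has at least one local extremum inside, and I would argue it then has an even number $\ge 2$ counted with multiplicity (a single interior extremum of a function going to $+\infty$, say, at both ends gives one critical point, but here I must recount — actually a function $\to+\infty$ at both ends has at least one interior minimum, contributing $1$; to get parity right I will instead invoke the global degree count below to pin the count in each $I_k$ to exactly $0$ or $2$).

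The key upper bound comes from clearing denominators in $f'(m)=0$. Writing $f(m)=-\sqrt w + m + w^{-1/2} + N^{-1}\sum_{k=1}^{2n} C^+_k/(m-x_k) + N^{-1}\sum_{l=1}^n C^-_l/(m+y_l)$, multiply $f'(m)=1 - N^{-1}\sum_k C^+_k/(m-x_k)^2 - N^{-1}\sum_l C^-_l/(m+y_l)^2=0$ through by $\prod_k (m-x_k)^2 \prod_l (m+y_l)^2$. The result is a polynomial equation of degree $2(2n)+2(n)=6n$ whose leading coefficient is that of $\prod(m-x_k)^2\prod(m+y_l)^2$, namely $1$, so it is genuinely degree $6n$; hence $f$ has at most $6n$ real critical points counted with multiplicity. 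On the other hand, the lower bounds from the previous paragraph give: $1$ in $I_{-n}$, $1$ in $I_{2n}$, and in each of the remaining $3n-1$ intervals $I_k$ ($-n+1\le k\le 2n-1$) at least a contribution that is either $0$ or, when the endpoint limits of $f$ agree in sign, at least $2$ (counted with multiplicity — a generic local extremum plus the observation, via the polynomial having even-order contact or via perturbation, that the count in a "U-shaped" interval is even). Summing, the minimum forced total is $2 + 2\cdot(\#\{k: \text{limits agree}\})$, and since the limits must alternate appropriately there are exactly $3n-1$ bounded intervals with a fixed parity pattern forcing the agreement count to be exactly $(3n-1)$ in the worst case — at which point the total $2 + 2(3n-1) = 6n$ matches the upper bound exactly. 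Equality in the count then forces $|\mathcal C\cap I_{-n}|=|\mathcal C\cap I_{2n}|=1$ and $|\mathcal C\cap I_k|\in\{0,2\}$ for every bounded $I_k$, with no critical points "lost" to multiplicity elsewhere.

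The step I expect to be the main obstacle is making the sign-of-limits bookkeeping on the bounded intervals $I_k$ precise and showing it is consistent with the degree-$6n$ count, i.e. verifying that the number of "U-shaped" intervals (where $f$ tends to $+\infty$ or to $-\infty$ at both ends) is exactly what is needed so that the lower bound meets the upper bound $6n$. Concretely, near a pole $x_k$ the sign of $f$ just to the right is $+$ and just to the left is $-$ (since $C^+_k>0$), so at an interval $I_k=(x_k,x_{k+1})$ the left-end limit is $+\infty$ and the right-end limit is $-\infty$: these disagree, which would suggest $0$ critical points generically. I would need to recheck this: in fact for the consecutive-pole interval $(x_k,x_{k+1})$ the limit as $m\searrow x_k$ is $+\infty$ and as $m\nearrow x_{k+1}$ is $-\infty$, so $f$ could be monotone decreasing with $0$ critical points, OR could wiggle and pick up an even number $\ge 2$; similarly the interval $I_0=(-y_1,x_1)$ has both endpoint limits equal to $+\infty$ (since near $-y_1$ from the right $C^-_1/(m+y_1)\to+\infty$... no, $m+y_1\to 0^+$ so $\to+\infty$; near $x_1$ from the left $C^+_1/(m-x_1)\to -\infty$) — so I must carefully tabulate these. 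Once the table of endpoint-limit signs is fixed, the proof is a clean pigeonhole between the forced lower bounds and the rigid degree-$6n$ upper bound; I would present the table as the technical heart of the argument and let the counting conclusion follow, with the degenerate-critical-point convention ensuring the multiplicity bookkeeping is exactly additive.
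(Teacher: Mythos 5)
Your proposal has a genuine gap, and the step you flag at the end as ``the main obstacle'' is in fact where the argument breaks. Clearing denominators in $f'(m)=0$ does give a polynomial of degree $6n$, so the total number of critical points counted with multiplicity is at most $6n$. But this is a \emph{global} bound, and the paper's lemma is a \emph{per-interval} statement. Your attempt to convert the global bound into a per-interval one runs through a pigeonhole between a lower bound and $6n$, and the lower bound you actually establish is only $2$: the two unbounded intervals each force one critical point (since $f\to\pm\infty$ to the same sign at both ends), but as you yourself work out in the final paragraph, on every bounded interval $f$ tends to $+\infty$ at the left endpoint and $-\infty$ at the right, so no critical point is forced there. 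With a lower bound of $2$ and an upper bound of $6n$, the degree count cannot exclude, say, one bounded interval having $6n-2$ critical points and all others having $0$, which would violate the lemma. The parity observation ($f'\to-\infty$ at both ends of a bounded interval, hence an even number of zeros of $f'$ there) only yields $|\mathcal C\cap I_k|\in\{0,2,4,\ldots\}$, not $\{0,2\}$.

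The missing idea is a per-interval upper bound of $2$, which the paper gets from a simple monotonicity fact that your proposal never uses: $f'''(m)=-\tfrac1N\sum_k \tfrac{6C_k^+}{(m-x_k)^4}-\tfrac1N\sum_l\tfrac{6C_l^-}{(m+y_l)^4}<0$ everywhere between poles, so $f''$ is strictly decreasing on each $I_k$. Hence $f''$ vanishes at most once in each bounded interval, $f'$ is unimodal there, and since $f'\to-\infty$ at both endpoints, $f'$ has either $0$ or $2$ zeros. On the unbounded intervals $f''$ has a fixed sign ($f''<0$ on $I_{-n}$, $f''>0$ on $I_{2n}$), so $f'$ is strictly monotone between its limit $1$ at $\mp\infty$ and $-\infty$ at the pole, giving exactly one zero. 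This local argument is both necessary and sufficient; the degree bound, while correct, is not strong enough to substitute for it.
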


\begin{proof}
We omit the dependence of $f$ on $w$ for now. By (\ref{PFD_fanother}) we have
\begin{align*}
&f'(m) = 1 - \frac{1}{N}\sum_{k=1}^{2n} \frac{C^{+}_k}{\left(m - x_k\right)^2} - \frac{1}{N}\sum_{l=1}^{n} \frac{C^{-}_l}{\left(m + y_l\right)^2}, \ f''(m) = \frac{1}{N}\sum_{k=1}^{2n} \frac{2C^{+}_k}{\left(m - x_k\right)^3} + \frac{1}{N}\sum_{l=1}^{n} \frac{2C^{-}_l}{\left(m + y_l\right)^3}.
\end{align*}
We see that $f''$ is decreasing on all the intervals $I_k$ for $k=-n+1,\ldots,2n-1$. Thus there is at most one point $m\in I_k$ such that $f''(m)=0$. We conclude that $f$ has at most two critical points on $I_k$. By the boundary conditions of $f'$ on $\partial I_k$, we get $|\mathcal C \cap I_k|\in \{0,2\}$ for $k=-n+1,\ldots, 2n-1$. For $m<-y_{n}$, we have $f''(m)<0$, while for $m>x_{2n}$, we have $f''(m)>0$. By the boundary conditions of $f'$ on $\partial I_{-n}$ and $\partial I_{2n}$, we see that $f'$ decreases from $1$ to $-\infty$ when $m$ increases from $-\infty$ to $-y_n$, while $f'$ increases from $-\infty$ to $1$ when $m$ increases from $x_{2n}$ to $+\infty$. Hence we conclude that each of the intervals $(-\infty,-y_n)$ and $(x_{2n},+\infty)$ contains a unique critical point in it, i.e. $|\mathcal C \cap I_{-n}|=|\mathcal C \cap I_{2n}|=1$.
\end{proof}

From this lemma, we deduce that $|\mathcal C|=2p$ is even. We denote by $z_{2p}$ the critical point in $I_{-n}$, $z_{1}$ the critical point in $I_{2n}$, and $z_2 \ge \ldots \ge z_{2p-1}$ the $2p-2$ critical points in $I_{-n+1} \cup \ldots \cup I_{2n-1}$. For $k=1, \ldots, 2p$, we define the critical values $h_k:=f(z_k)$. The next lemma is crucial in establishing the basic properties of $\rho_{1c}$ (see e.g. Fig.\,\ref{figf_z15}).

\begin{lem} \label{lemm_ordering}
(Orderings of the critical values) The critical values are ordered as $h_1 \ge h_2 \ge \ldots \ge h_{2p}.$ Furthermore, there is an absolute constant $C_0>0$ independent of $\tau$ such that $h_k \in [-C_0(\tau^{-1}|w|^{-1/2} + |z|)-\sqrt{w},C_0(\tau^{-1}|w|^{-1/2} + |z|)-\sqrt{w}]$ for $k=1, \ldots, 2p$.
\end{lem}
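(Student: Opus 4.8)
\textbf{Proof proposal for Lemma \ref{lemm_ordering}.}

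The plan is to exploit the monotonicity of $f'$ established in Lemma \ref{lemm_app_cpt} together with a careful analysis of how $f$ behaves as $m$ sweeps through the $3n+1$ intervals. First I would establish the ordering $h_1\ge h_2\ge\dots\ge h_{2p}$ by tracking sign changes of $f'$. Recall from the proof of Lemma \ref{lemm_app_cpt} that on $I_{2n}=(x_{2n},+\infty)$ the function $f'$ increases from $-\infty$ to $1$, so $z_1$ is the unique critical point there and $f$ attains a local minimum at $z_1$ when approached from the left within $I_{2n}$; more to the point, $f$ decreases then increases on $I_{2n}$. The key structural fact is that between consecutive poles $f$ runs from $+\infty$ (or $-\infty$) at one endpoint to $-\infty$ (or $+\infty$) at the other, so the pattern of critical points alternates in a controlled way. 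I would argue that, reading the critical points in the order $z_1>z_2>\dots>z_{2p}$ (i.e. from right to left along $\mathbb R$, but jumping across poles), the critical values are non-increasing. Concretely, on each interval $I_k$ with $|\mathcal C\cap I_k|=2$, $f$ has a local max at the left critical point (the larger one, since within a bounded interval $f$ goes $-\infty\to\text{max}\to\text{min}\to+\infty$ reading left to right past simple poles — one must check the sign pattern of the poles via (\ref{PFD_fanother})) and a local min at the right one; comparing the local max value on $I_k$ with the local min value on the adjacent interval to its right (which sits across a pole where $f\to\pm\infty$) forces the required inequality. This is essentially a one-dimensional Morse-theoretic bookkeeping argument, and I expect it is where the bulk of the careful case analysis lies: one must be sure which endpoint-limits are $+\infty$ and which are $-\infty$, and that is dictated by the signs of the residues $C_k^+,C_l^-$ in (\ref{PFD_fanother}), all of which are positive.

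Second, for the quantitative bound on the $h_k$, I would use the explicit formula (\ref{PFD_fanother}) and the estimates (\ref{bounda})--(\ref{boundABC}) from Lemma \ref{lemm_tech_f}. At a critical point $z_k$ we have $f'(z_k)=0$, which by the expression for $f'$ gives
\[
\frac1N\sum_{k=1}^{2n}\frac{C_k^+}{(z_k-x_k)^2}+\frac1N\sum_{l=1}^n\frac{C_l^-}{(z_k+y_l)^2}=1,
\]
so in particular each term is $\le 1$, hence $|z_k-x_k|\ge N^{-1/2}(C_k^+)^{1/2}$ and similarly for the $y_l$-terms — i.e. $z_k$ is bounded away from every pole by a quantity controlled below in terms of the residues. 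Plugging this distance-from-poles lower bound back into (\ref{PFD_fanother}) gives
\[
|h_k+\sqrt w-z_k-w^{-1/2}|\le \frac1N\sum_{k}\frac{C_k^+}{|z_k-x_k|}+\frac1N\sum_l\frac{C_l^-}{|z_k+y_l|}\le C,
\]
using that $C_k^+/|z_k-x_k|\le (C_k^+)^{1/2}N^{1/2}$ and then that $\sum_k C_k^+ + \sum_l C_l^-$ is $O(\tau^{-1})$ by (\ref{boundABC}) and the normalization (\ref{assm4}) of the $s_i$, together with $|z_k|=O(\tau^{-1}|w|^{-1/2}+|z|)$ coming from the pole bounds (\ref{bounda})--(\ref{boundc}) (every critical point lies between the extreme poles or in the two unbounded intervals, and in the unbounded intervals $z_1$ and $z_{2p}$ one gets the same bound because $f'(z)=0$ with $|z|$ large forces $z$ comparable to the largest pole). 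Assembling these gives $|h_k+\sqrt w|\le C_0(\tau^{-1}|w|^{-1/2}+|z|)$ for an absolute constant $C_0$, which is the claimed two-sided bound.

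The main obstacle I anticipate is not the quantitative estimate — that is a fairly mechanical consequence of Lemma \ref{lemm_tech_f} — but rather nailing down the combinatorial sign pattern in the first part rigorously: one needs a clean statement of the form ``as $m$ increases past a simple pole of $f$ with positive residue, $f$ jumps from $+\infty$ to $-\infty$,'' and then a bookkeeping of which intervals contain two critical points and the relative heights of the resulting local extrema, all in a way that is uniform in $n$ (the number of distinct singular values, which may grow with $N$). I would handle the uniformity by phrasing the monotonicity $h_1\ge\dots\ge h_{2p}$ purely in terms of intermediate-value-theorem comparisons between a local max and the next local min to its right, never needing to enumerate cases by $n$. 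A secondary subtlety is the two unbounded intervals $I_{-n}$ and $I_{2n}$: there $f'$ is monotone (shown in Lemma \ref{lemm_app_cpt}) so each has a unique critical point, and one checks directly from $f''$'s sign that $z_1$ gives the global-type behavior consistent with $h_1$ being the largest critical value and $z_{2p}$ with $h_{2p}$ the smallest.
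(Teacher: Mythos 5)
Your ordering argument and your quantitative bound both contain genuine gaps.

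\textbf{The ordering.} On each bounded interval $I_k$ the function $f$ runs from $+\infty$ at $x_k^+$ to $-\infty$ at $x_{k+1}^-$ (positive residues), so its range on $I_k$ is all of $\mathbb R$ — and the same on $I_{k+1}$. Thus there is no purely local ``IVT comparison between a local max on $I_k$ and the local min on $I_{k+1}$'': a priori, the local max on $I_k$ could be greater or smaller than the local min on $I_{k+1}$. The inequality $h_{\max}^{(k)}\le h_{\min}^{(k+1)}$ is a global fact that requires a count of preimages. The paper's actual argument has two ingredients you are missing: (a) the disjointness $f(J_i)\cap f(J_j)=\emptyset$ for $i\ne j$, where $J_i:=\{m\in I_i:\partial_m f>0\}$, which follows because $f(\sqrt w,m)=E$ is a degree $3n+1$ polynomial equation in $m$ and a value $E$ in two distinct $f(J_i)$'s would force more than $3n+1$ roots; and (b) a continuity argument on the family $f^t$ (scaling the residues by $t\in(0,1]$) to show the disjoint intervals $f(J_i)$ never cross — proving the ordering for small $t$ is easy because each $J_i^t$ then sits near a pole and moving $t$ from small to $1$ cannot permute the disjoint $f^t(J_i^t)$'s. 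Your proposed ``Morse bookkeeping'' could not work uniformly in $n$ because it is asking for a fact that is not visible interval-by-interval.

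\textbf{The bound.} The chain $C_k^+/|z_j-x_k|\le (NC_k^+)^{1/2}$ applied to every summand, followed by summation over $k=1,\dots,2n$, gives
\[
\frac1N\sum_k\frac{C_k^+}{|z_j-x_k|}\le\frac{1}{\sqrt N}\sum_k(C_k^+)^{1/2}\le\sqrt{\frac{2n}{N}\sum_k C_k^+}=O\!\left(\sqrt{n}\,\sqrt{\frac{\tau^{-1}+|z|^2+\sqrt w|z|}{w}}\right),
\]
picking up a spurious $\sqrt n$ that is not uniform in the number $n$ of distinct singular values. The paper avoids this by first establishing the ordering and then bounding only $h_1=f(z_1)$ and $h_{2p}=f(z_{2p})$: at $z_1>x_{2n}$ (resp.\ $z_{2p}<-y_n$), every summand in $f'(z_1)=0$ has the same sign, so one can write $\frac{C_k^+}{z_1-x_k}\le (z_1+y_n)\frac{C_k^+}{(z_1-x_k)^2}$ term by term and sum the squared-denominator terms to exactly $N$ using $f'(z_1)=0$, obtaining $f(z_1)+\sqrt w\le 2(z_1+y_n)+w^{-1/2}$. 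Then $z_1-x_{2n}\le\sqrt{N^{-1}\sum C_k^++N^{-1}\sum C_l^-}$ and (\ref{bounda})--(\ref{boundABC}) close the estimate without any factor of $n$. In short: you must prove the ordering first and you must exploit the sign coherence at the extreme critical points — your proposal does neither.
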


\begin{proof}
Notice for the equation (\ref{selfm_reduced}), if we multiply both sides with the product of all denominators in $f$, we get a polynomial equation $P_{w}(m)=0$ with $P_{w}$ being a polynomial of degree $3n+1$. An immediate consequence is that for any fixed $w>0$ and $E \in \mathbb R$, $f(\sqrt{w},m)=E$ can have at most $3n+1$ roots in $m$. This fact is useful in the proof of this lemma and Lemma \ref{lemm_rho}.

For $i=-n, \ldots, 2n$, define the subset $J_i(w):=\{m\in I_i: \partial_m f(\sqrt{w}, m)>0\}$.
From Lemma \ref{lemm_app_cpt}, we deduce that if $i=-n+1,\ldots, 2n-1$, then $J_i \ne \emptyset$ if and only if $I_i$ contains two distinct critical points of $f$, in which case $J_i$ is an interval. Moreover, we have $J_{-n}=(-\infty,z_{2p})$ and $J_{2n}=(z_1,+\infty)$. Next, we observe that for any $-n\le i < j \le 2n$, we have $f(J_i) \cap f(J_j) =\emptyset$. Otherwise if there were $E\in f(J_i)\cap f(J_j)$, we would have $|\{x:f(x)=E\}|> 3n+1$. We hence conclude that the sets $f(J_i)$, $-n \le i \le 2n$ can be strictly ordered. The claim $h_1 \ge h_2 \ge \ldots \ge h_{2p}$ is now reformulated as
\begin{equation}
f(J_i) < f(J_j) \text{ whenever } i < j \text{ and } J_i,J_j \ne \emptyset. \label{order1}
\end{equation}

To prove (\ref{order1}), we use a continuity argument. Let $t\in (0,1]$ and introduce
\begin{equation*}
f^t(m) = -\sqrt{w} + m + w^{-1/2} + \frac{t}{N}\sum_{k=1}^{2n} \frac{C^{+}_k}{m - x_k} + \frac{t}{N}\sum_{l=1}^{n} \frac{C^{-}_l}{m + y_l}.
\end{equation*}
It is easy to check (\ref{order1}) holds for small enough $t>0$. We claim that
\begin{equation}
J_i\ne \emptyset \Rightarrow J_i^t \ne \emptyset \text{ for all } t\in (0,1]. \label{order2}
\end{equation}
This is trivial for $i=-n,2n$. Recall that for $-n+1 \le i \le 2n-1$, $J_i^t \ne \emptyset$ is equivalent to $I_i$ containing two distinct critical points. Moreover, $\partial_t \partial_m f^t(m) <0$ in $I_{-n+1}\cup \ldots \cup I_{2n-1}$, from which we deduce that the number of distinct critical points in each $I_i$, $i=-n+1, \ldots, 2n-1$, does not decreases as $t$ decreases. This proves (\ref{order2}).

Next, suppose that there exist $i<j$ such that $J_i , J_j \ne \emptyset$ and $f(J_i) > f(J_j)$. From (\ref{order2}), we deduce that $J_i^t , J_j^t \ne \emptyset$ for all $t\in (0,1]$. By a simple continuity argument, we get that $f^t(J_i^t) > f^t(J_j^t)$ for all $t\in (0,1]$. However, this is impossible for small enough $t$ as explained before (\ref{order2}). This concludes the proof of (\ref{order1}).

To prove the second statement of Lemma \ref{lemm_ordering}, we only need to show that $h_1 \le C_0(\tau^{-1}|w|^{-1/2} + |z|) - \sqrt{w}$ and $h_{2p}\ge -C_0(\tau^{-1}|w|^{-1/2} + |z|)-\sqrt{w}$ for some absolute constant $C_0$. We only give the proof for $h_1$; the proof for $h_{2p}$ is similar. At $z_1$, we have
\begin{equation*}
f(z_1) + \sqrt{w} \le (z_1 + y_n) \left[1 + \frac{1}{N}\sum_{k=1}^{2n} \frac{C^{+}_k}{\left(z_1- x_k\right)^2} + \frac{1}{N}\sum_{l=1}^{n} \frac{C^{-}_l}{\left(z_1 + y_l\right)^2}\right]+w^{-1/2} =2(z_1 + y_n)+w^{-1/2},
\end{equation*}
where we use
\begin{equation}\label{eqn_derf}
0=f'(z_1)= 1 - \frac{1}{N}\sum_{k=1}^{2n} \frac{C^{+}_k}{\left(z_1 - x_k\right)^2} - \frac{1}{N}\sum_{l=1}^{n} \frac{C^{-}_l}{\left(z_1 + y_l\right)^2}.
\end{equation}
Now we would like to estimate $z_1 + y_n$. Again using (\ref{eqn_derf}), we have that
$$\frac{1}{N}\sum_{k=1}^{2n} \frac{C^{+}_k}{\left(z_1 - x_{2n}\right)^2} + \frac{1}{N}\sum_{l=1}^{n} \frac{C^{-}_l}{\left(z_1 - x_{2n}\right)^2} \ge 1.$$
Then by (\ref{boundABC}) we get
$$z_1 - x_{2n} \le \sqrt{\frac{1}{N}\sum_{k=1}^{2n} C^{+}_k + \frac{1}{N}\sum_{l=1}^{n} C^{-}_l} \le \sqrt{5\frac{\tau^{-1}+|z|^2 + \sqrt{w}|z|}{w}}.$$
Using the above estimates and (\ref{bounda})-(\ref{boundc}), we obtain that
\begin{align*}
f(z_1) \le 2\left(\sqrt{5\frac{\tau^{-1}+|z|^2 + \sqrt{w}|z|}{w}}+\frac{s_1+|z|^2}{\sqrt{w}}+2|z|\right)+w^{-1/2} - \sqrt{w} \le C_0(\tau^{-1}|w|^{-1/2} + |z|)- \sqrt{w}.
\end{align*}
for some constant $C_0>0$ that does not depend on $\tau$.
\end{proof}

\begin{figure}[htb]
\centering
\begin{subfigure}{\textwidth}
\includegraphics[height=6cm]{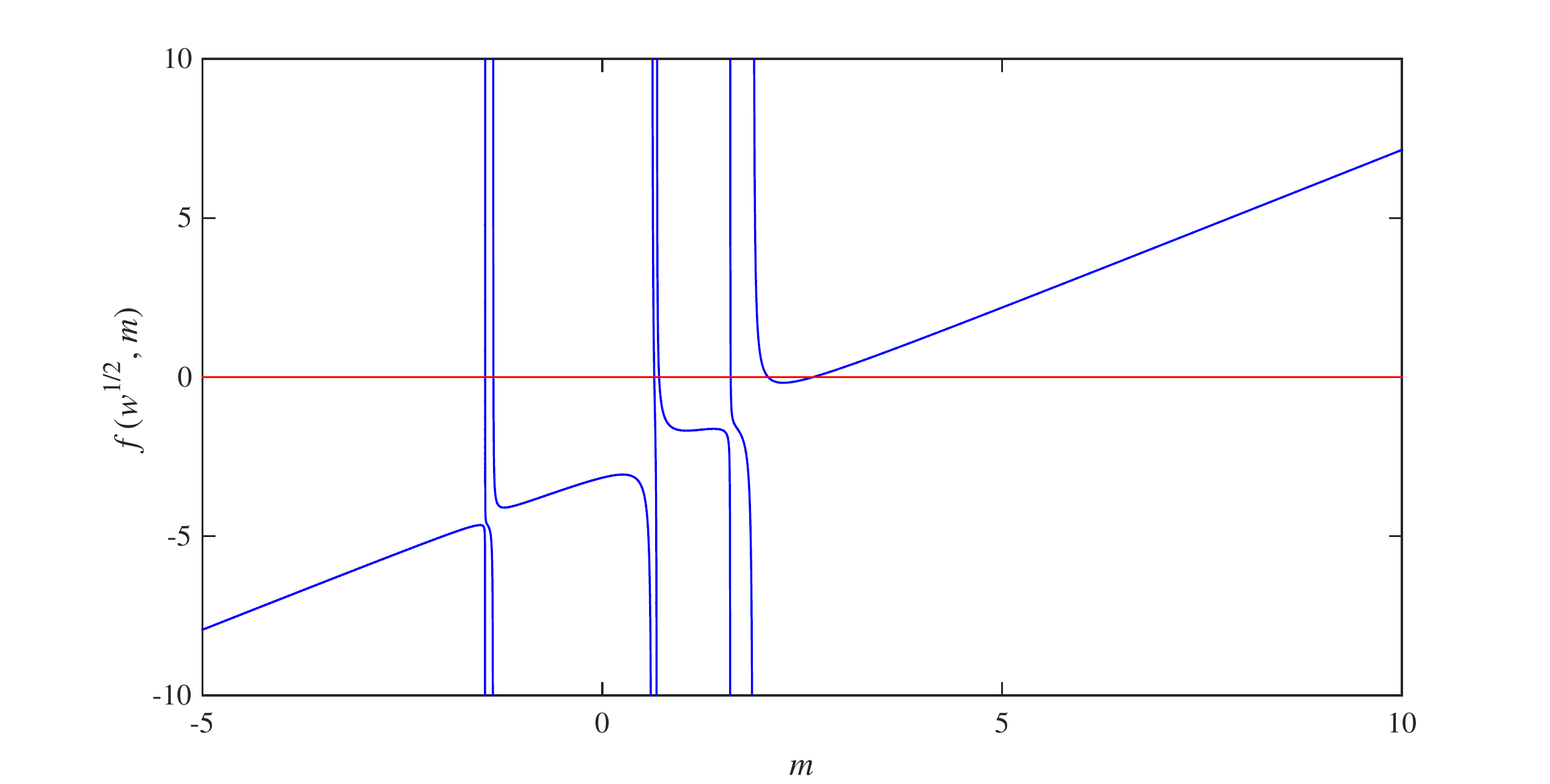}
\end{subfigure}
\begin{subfigure}{\textwidth}
\includegraphics[height=6cm]{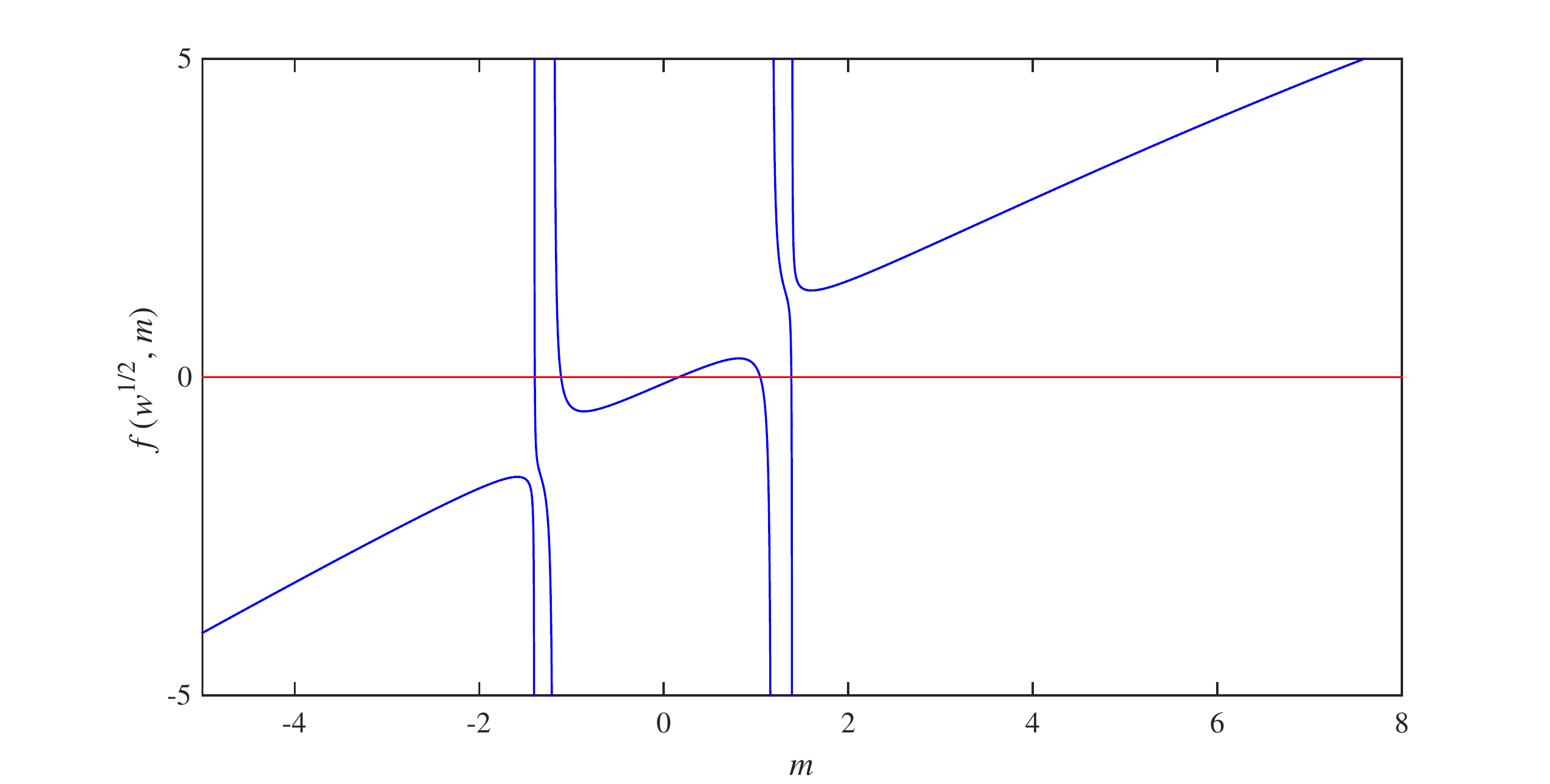}
\end{subfigure}
\caption{The graphs of $f(\sqrt{w},m)$ for the example from Figure \ref{fig_circular}, i.e. $\rho_{\Sigma}= 0.5\delta_{\sqrt{2/17}} + 0.5\delta_{4\sqrt{2/17}}$. We take $|z|=1.5$, and $w=10$ and $0.01$ in the upper and lower graphs, respectively. In the lower graph, we only plot the five branches near $m=0$. The remaining two branches are far away.}
\label{figf_z15}
\end{figure}

\begin{figure}[htb]
\centering
\begin{subfigure}{\textwidth}
\includegraphics[height=6cm]{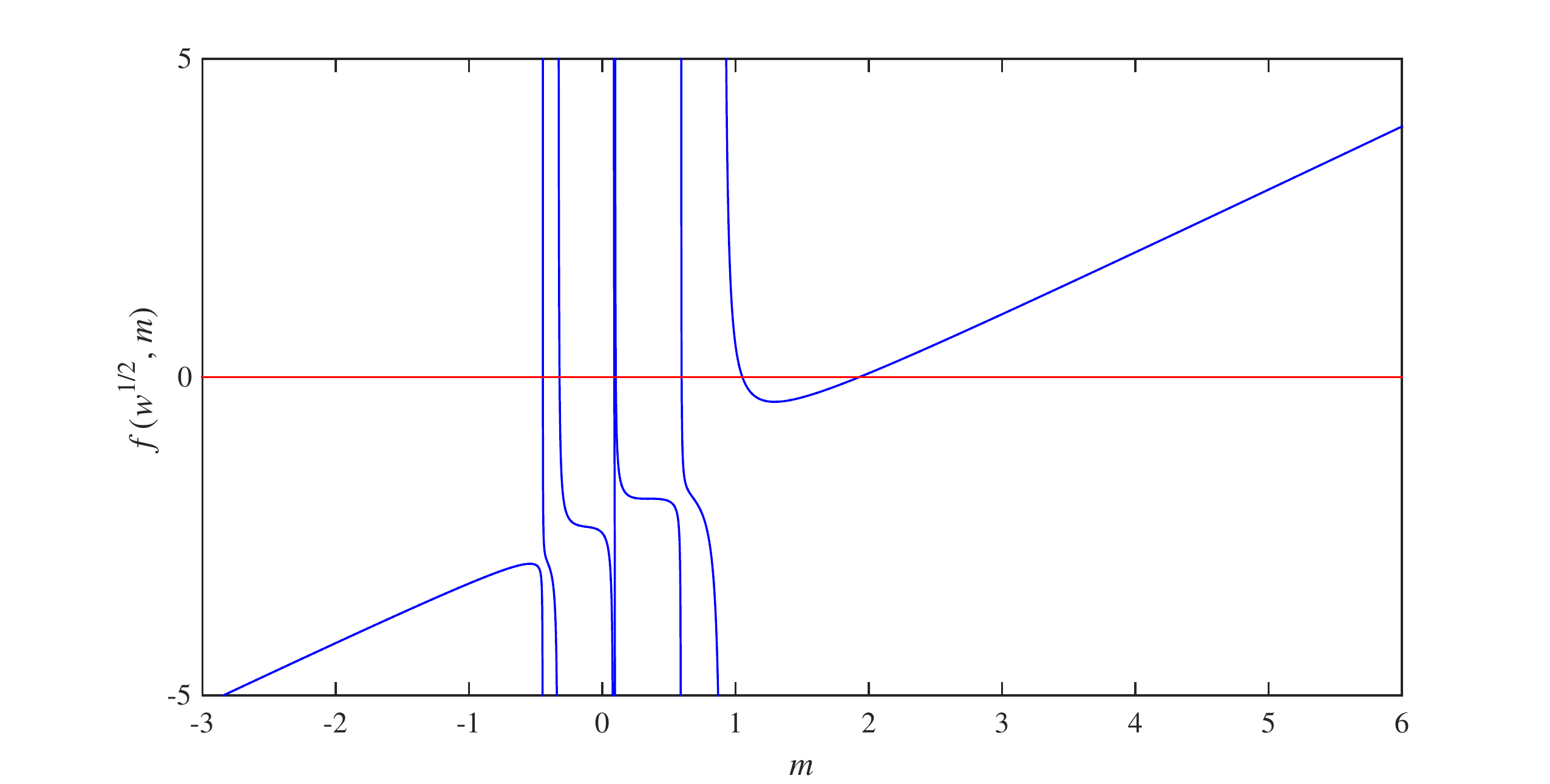}
\end{subfigure}
\begin{subfigure}{\textwidth}
\includegraphics[height=6cm]{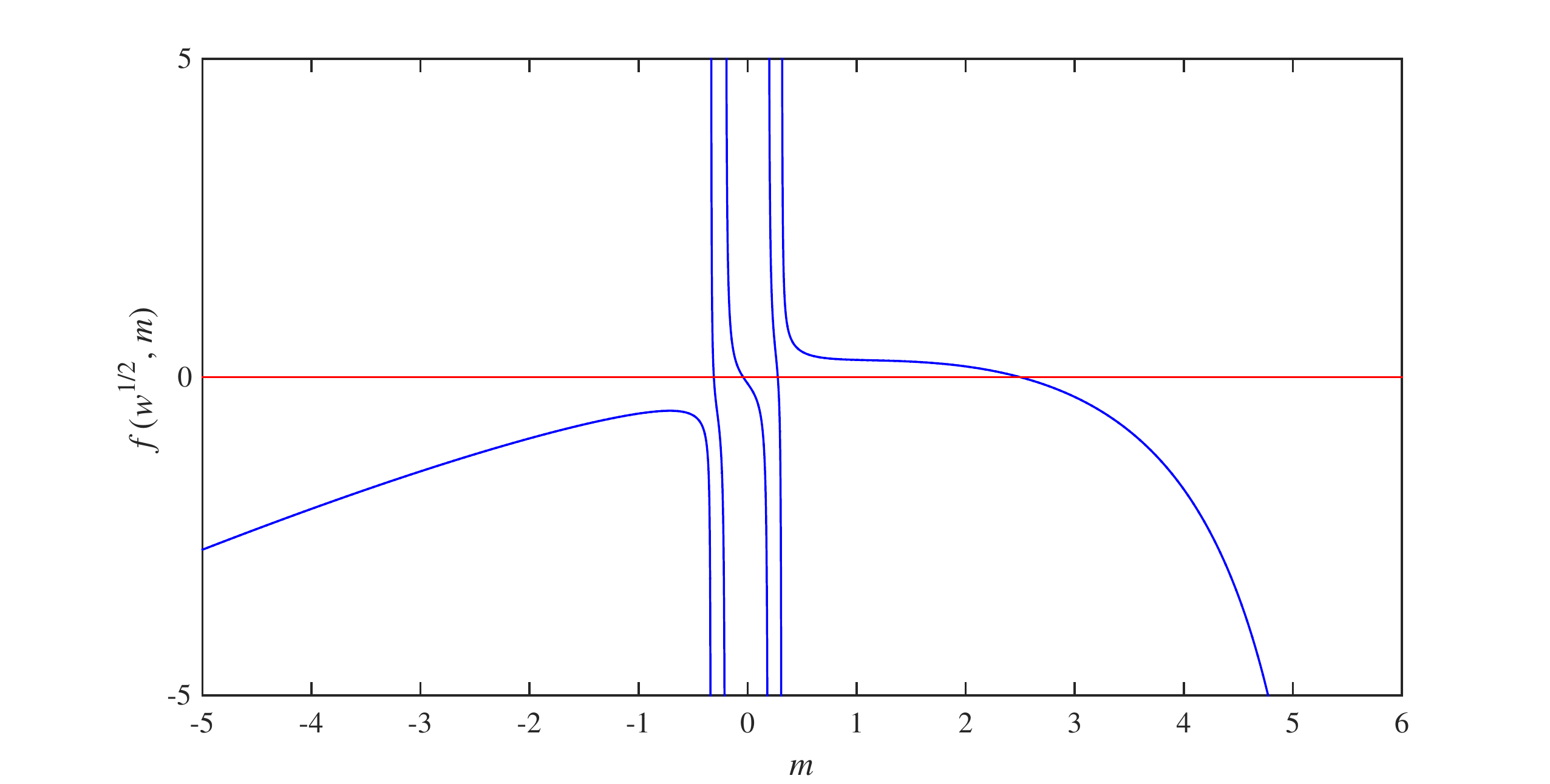}
\end{subfigure}
\caption{The graphs of $f(\sqrt{w},m)$ for the example from Figure \ref{fig_circular}, i.e. $\rho_{\Sigma}= 0.5\delta_{\sqrt{2/17}} + 0.5\delta_{4\sqrt{2/17}}$. We take $|z|=0.5$, and $w=6$ and $0.01$ in the upper and lower graphs, respectively. In the lower graph, we only plot the five branches near $m=0$. The remaining two branches are far away.}
\label{figf_z05}
\end{figure}

\begin{proof}[Proof of Lemma \ref{lemm_rho}]

Let $J(w):= \bigcup_{i=-n}^{2n} J_i(w)$. Given $w > 0$ such that $0 \in f(J(w))$, 
then the set $\{m\in \mathbb R: f(\sqrt{w}, m) = 0\}$ has $3n+1$ points. Since $f(\sqrt{w}, m) = 0$ has at most $3n+1$ solutions in $m$, we deduce that $m_c(w)$ is real and hence $m_{1c}(w)$ is also real. Since $m_{1c}$ is the Stieltjes transform of $\rho_{1c}$, we conclude that $w \notin \text{supp}\, \rho_{1c}$. On the other hand, suppose $w> 0$ and $0\notin f\left(J(w)\right)$. Then the set of preimages $\{ m \in \mathbb R: f(\sqrt{w}, m)=0 \} = \{ m \in \mathbb R: P_w(m)=0 \}$ has $3n-1$ points. Since $P_w(m)$ is a degree $3n+1$ polynomial with real coefficients, we conclude that $P_w$ has a unique root with positive imaginary part. By the uniqueness of the solution of $P_{w+ i \eta} $ in $\mathbb C_+$ (Lemma \ref{lemm_density}) and the continuity of the roots of $P_{w+ i \eta} $ in $\eta$, we conclude that $\text{Im}\, m_c(w)>0$ and $\text{Im}\, m_{1c}(w)>0$ by taking $\eta \searrow 0$, i.e. $w \in \text{supp}\, \rho_{1c}$. In sum, we get
\begin{equation}\label{eqn_supp}
\text{supp}\, \rho_{1c} = \overline{\left\{w> 0: 0 \notin f\left(J(w)\right) \right\}}.
\end{equation}

From Lemma \ref{lemm_ordering}, we see that there exists an absolute constant $C_1>0$ such that if $w \ge C_1 \tau^{-1}$, then $h_1(\omega) \le C_0(\tau^{-1}|w|^{-1/2} + |z|)-\sqrt{w} < 0$. Hence fix $w \ge C_1\tau^{-1}$,  we have $0 \in f(J_{2n}(w))$ and $w\notin \text{supp}\, \rho_{1c}$ (see the upper graphs in Fig.\,\ref{figf_z15} and \ref{figf_z05}). This shows that $\rho_{1c}$ is compactly supported in $[0, C_1 \tau^{-1}]$. Now we decrease $w$ so that $w< s_1 + |z|^2 +1$, then using (\ref{bounda}),
$$h_1 (w) > z_1 + w^{-1/2} - \sqrt{w} >\frac{s_1 + |z|^2 + 1- w}{\sqrt{w}}>0.$$
By continuity, there must be some $ 0< w < C\tau^{-1}$ such that $0 \notin f\left(J(w)\right)$. Thus $\text{supp}\, \rho_{1c} \ne \emptyset$. By (\ref{eqn_supp}), it is not hard to see that $\text{supp}\,\rho_{1c}$ is a disjoint union of (countably many) closed intervals,
\begin{equation}
{\rm{supp}} \, \rho_{1c} = \bigcup_{k} [e_{2k}, e_{2k-1}] ,
\end{equation}
where $C_1\tau^{-1}\ge e_1 \ge e_2 \ge \ldots $. Furthermore, for $e_i$ to be a boundary point, we must have that $0$ is a critical value of $f(\sqrt{e_i},m)$, i.e. there is a unique critical point $m=m_c(e_i)$ such that
\begin{equation}\label{equationEm}
f(\sqrt{e_i},m_c(e_i))=0, \ \ {\partial_m f} (\sqrt{e_i},m_c(e_i)) =0.
\end{equation}
Notice the two equations in (\ref{equationEm}) are equivalent to two polynomial equations in $(\sqrt{w},m)$ with order $3n+1$ and $6n$, respectively. By B\'ezout's theorem, there are at most finitely many solutions to (\ref{equationEm}). Hence there are finitely many $e_i$'s, call them $e_1 \ge e_2 \ge \ldots \ge e_{2L}$, where $L\equiv L(n)\in\mathbb N$. To prove the statement about $e_{2L}$, we use Lemma \ref{lemm_edge0} below. This concludes Lemma \ref{lemm_rho}.
\end{proof}

\begin{lem} \label{lemm_edge0}
If $1+\tau \le |z|^2 \le 1+\tau^{-1}$, there is a constant $\epsilon(\tau)>0$ so that $e_{2L} \ge \epsilon(\tau)$. If $|z|^2\le 1-\tau$, $e_{2L}=0$ and $\rho_{1c}(x) \sim x^{-1/2} $ when $x\searrow 0$.
\end{lem}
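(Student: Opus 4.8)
The plan is to reduce Lemma~\ref{lemm_edge0} to the study of a scalar function of one variable governing the $w\to0$ limit of the self-consistent equation, and then push the resulting information back to $m_{1c}$ (hence to $\rho_{1c}$) via the uniqueness in Lemma~\ref{lemm_density}. First I would substitute $m_1=\phi\,w^{-1/2}$ into (\ref{eq_self3}); writing $\psi:=\phi+\sqrt w$ (so that $1+m_1=\psi/\sqrt w$), a direct manipulation turns (\ref{eq_self3}) into the exact identity
\begin{equation}\label{plan_exact}
\phi=\frac1N\sum_{i=1}^n l_i s_i\,\frac{\psi\bigl(|z|^2-\psi^2\bigr)}{\bigl(|z|^2-\sqrt w\,\psi\bigr)\bigl(|z|^2-\psi^2\bigr)-s_i\psi^2}.
\end{equation}
Setting $w=0$ (so $\psi=\phi$) and using $N^{-1}\sum_i l_i s_i=1$, the right-hand side becomes $\phi\,F(\phi^2)$, where $F(u):=N^{-1}\sum_i l_i s_i\bigl(|z|^2-u\bigr)\bigl(|z|^4-(|z|^2+s_i)u\bigr)^{-1}$, so the limiting equation is $\phi\bigl(1-F(\phi^2)\bigr)=0$. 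The facts I would record about $F$ on $(-\infty,0]$: $F(0)=|z|^{-2}$; $F(-\infty)=N^{-1}\sum_i l_i s_i/(s_i+|z|^2)<1$; and $F'(u)=|z|^2N^{-1}\sum_i l_i s_i^2\bigl(|z|^4-(|z|^2+s_i)u\bigr)^{-2}>0$, so $F$ is strictly increasing and all denominators stay positive there.

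If $|z|^2\le 1-\tau$, then $F(0)=|z|^{-2}>1>F(-\infty)$, so there is a unique $u_\ast\in(-\infty,0)$ with $F(u_\ast)=1$, and $F'(u_\ast)>0$. I would apply the implicit function theorem to (\ref{plan_exact}) at $(\phi,\sqrt w)=(\phi_\ast,0)$ with $\phi_\ast:=i\sqrt{-u_\ast}\in\mathbb C_+$: the denominators there equal $|z|^4+(|z|^2+s_i)|u_\ast|\neq0$, and if $\Theta(\phi,\zeta)$ denotes $\phi$ minus the right-hand side of (\ref{plan_exact}) with $\sqrt w=\zeta$, one computes $\partial_\phi\Theta(\phi_\ast,0)=-2u_\ast F'(u_\ast)\neq0$. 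This gives an analytic $\phi(\zeta)$ near $\zeta=0$ with $\phi(0)=\phi_\ast$, hence $\widehat m_1(w):=\phi(\sqrt w)/\sqrt w$ solves (\ref{eq_self3}) on a small disk slit along $(-\infty,0]$. Along the segment $\{i\eta:0<\eta<\varepsilon\}$ the leading term $i\sqrt{-u_\ast}/\sqrt w$ (and likewise $w\widehat m_1(w)\sim i\sqrt{-u_\ast}\sqrt w$) lies in $\mathbb C_+$, so $\widehat m_1$ meets all requirements of Lemma~\ref{lemm_density}; uniqueness forces $\widehat m_1=m_{1c}$ on that segment, and the identity theorem extends this to the slit disk. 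Hence $m_{1c}(x)=i\sqrt{-u_\ast}\,x^{-1/2}+O(1)$ as $x\searrow0$, so by (\ref{eqn_rho1c}) $\rho_{1c}(x)=\tfrac1\pi\sqrt{-u_\ast}\,x^{-1/2}(1+o(1))\sim x^{-1/2}$; in particular $\rho_{1c}>0$ on some $(0,\varepsilon)$, which forces $e_{2L}=0$.

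If $|z|^2\ge 1+\tau$, then $F<1$ on $(-\infty,0]$, so $1-F(\phi^2)$ has no zero with $\phi^2\le0$; the surviving branch of the limiting equation is $\phi=0$, i.e.\ $m_1$ remains bounded. Here it is cleaner to apply the implicit function theorem directly to (\ref{eq_self3}), written as $G(w,m_1)=0$, at $(w,m_1)=(0,(|z|^2-1)^{-1})$: one has $G(0,(|z|^2-1)^{-1})=0$, $G$ rational hence analytic near this point, and $\partial_{m_1}G(0,\,\cdot\,)=1-|z|^{-2}\ge\tau/(1+\tau)>0$. This produces an analytic $\widetilde m_1(w)=(|z|^2-1)^{-1}+c_1w+O(w^2)$ with real Taylor coefficients, and a short computation gives $c_1=-\partial_wG/\partial_{m_1}G>0$ (the numerator $\partial_wG(0,(|z|^2-1)^{-1})$ is a negative multiple of $N^{-1}\sum_i l_i s_i\bigl(1+s_i/(|z|^2-1)\bigr)$). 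Therefore $\widetilde m_1(i\eta)\in\mathbb C_+$ and $i\eta\,\widetilde m_1(i\eta)\in\mathbb C_+$ for small $\eta>0$, so $\widetilde m_1=m_{1c}$ near $0$ by Lemma~\ref{lemm_density} and the identity theorem. Thus $m_{1c}$ extends analytically across $0$ and is real on $(-\varepsilon,\varepsilon)$, hence $\rho_{1c}\equiv0$ on $(0,\varepsilon)$; since $\mathrm{supp}\,\rho_{1c}$ is already known to be a finite union of closed intervals in $[0,C_1\tau^{-1}]$, this yields $e_{2L}\ge\varepsilon(\tau)$.

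The routine parts are the algebra behind (\ref{plan_exact}), the monotonicity analysis of $F$, and the extraction of the $x^{-1/2}$ asymptotics. The main obstacle is the transfer step: ensuring that the solution produced by the implicit function theorem is genuinely $m_{1c}$ rather than a spurious branch of the self-consistent equation. This is precisely what the two sign conditions from Lemma~\ref{lemm_density} (positivity of $\Im m_{1c}$ and of $\Im(wm_{1c})$ on the imaginary axis near $0$) are for; in the $|z|^2>1$ case it is what forces the explicit sign computation $c_1>0$, which is the one place where a genuine calculation rather than a soft argument is required.
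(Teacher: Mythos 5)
Your proposal follows essentially the same route as the paper: reduce to the $w\to 0$ limit of the self-consistent equation, locate the relevant branch (a purely imaginary $m_c=i\sqrt t$ when $|z|^2<1$, a finite real $m_{1c}$ when $|z|^2>1$), and transfer the analytic solution back to $m_{1c}$ by the uniqueness in Lemma~\ref{lemm_density}. In fact your $F$ is exactly $1-g$ in the paper's notation (compare (\ref{g_anotherform})), your $u_\ast$ is the paper's $-t$, and your implicit function argument plays the role of the paper's Taylor expansion of $f$ (or $F=f/m$) around $(0,i\sqrt t)$. The $|z|^2\ge 1+\tau$ branch is correct, including the sign verification $c_1>0$, and it does yield a radius of analyticity uniform in $z$ because $\partial_{m_1}G(0,\cdot)=1-|z|^{-2}\ge\tau/(1+\tau)$.

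There is, however, a genuine gap in the $|z|^2\le 1-\tau$ branch. The paper's identity (\ref{g_anotherform}) reads
\[
g(x)=1-F(x)=\frac{|z|^2}{N}\sum_{i=1}^n l_i\,\frac{-x+|z|^2-s_i}{-(s_i+|z|^2)x+|z|^4},
\]
so $1-F$ carries an overall factor $|z|^2$. You do not cancel this factor, with two consequences. First, at $|z|=0$ one has $F\equiv 1$, so your equation $F(u_\ast)=1$ is satisfied by every $u$ and does not determine $u_\ast$; the formula $F(0)=|z|^{-2}$ that drives your intermediate-value argument is itself obtained by cancelling $|z|^2$ and is indeterminate at $|z|=0$. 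Second, for $0<|z|\ll 1$ the implicit function derivative you computed, $\partial_\phi\Theta(\phi_\ast,0)=-2u_\ast F'(u_\ast)$, is of order $|z|^2$ (since $F'(u)=\tfrac{|z|^2}{N}\sum_i l_is_i^2\bigl(|z|^4-(|z|^2+s_i)u\bigr)^{-2}$ and $u_\ast\to -t_0\neq 0$), so the implicit function theorem applies for each fixed $z$ but its domain of validity shrinks as $|z|\to 0$; this fails to produce the constants that are implicitly uniform in $z$ over the range $|z|^2\le 1-\tau$, which includes $|z|=0$. The paper avoids both problems: it divides out $|z|^2$ to get equation (\ref{equationt}) for $t$, which is nondegenerate down to $|z|=0$ and gives $c\tau^4\le t\le\tau^{-1}$; and for $|z|^2<\epsilon$ it replaces the first-order expansion by the second-order expansion (\ref{eqn_app_order2}) of $F=f/m$ around $(0,i\sqrt t)$, tracking the simultaneous degeneracy of $\partial_m F$ and $\partial_{\sqrt w}F$ at rate $|z|^2$ (see (\ref{partialm_smallmw})--(\ref{partialw_smallmw}) and (\ref{eqn_app_smallz})). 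You need this extra step to cover the full range $|z|^2\le 1-\tau$.

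A minor correction in the transfer step: the uniqueness of Lemma~\ref{lemm_density} lets you conclude $\widehat m_1=m_{1c}$ only on $D(0,\varepsilon)\cap\mathbb C_+$, not on the whole slit disk, since $m_{1c}$ is defined only for $w\in\mathbb C_+$; this is harmless because $\rho_{1c}$ is defined via boundary limits from $\mathbb C_+$, but the phrasing should be adjusted.
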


\begin{proof}
By this lemma, the behavior of the leftmost edge $e_{2L}$ changes essentially when $z$ crosses the unit circle. From the following proof, we see that the singularity happens at $|z|^2=N^{-1}\sum_{i=1}^n l_i s_i.$
Thus the fact that the singular circle has radius $1$ comes from our normalization (\ref{assm4}) for $T$.

We first study equation (\ref{selfm_reduced}) when $w \searrow 0$ in the case $1+\tau\le |z|^2 \le 1+\tau^{-1}$. We calculate the derivative of $f$ as
\begin{align}
\partial_m f(\sqrt{w}, m) = 1 & + \frac{1}{N}\sum_{i=1}^n l_i s_i \frac{ m^2 - |z|^2}{\sqrt{w}m^3 - (s_i + |z|^2)m^2 -\sqrt{w}|z|^2 m + |z|^4} \nonumber\\
& - \frac{m}{N}\sum_{i=1}^n l_i s_i \frac{ \sqrt{w}\left(m^2 - |z|^2\right)^2 +2s_i |z|^2 m}{\left[\sqrt{w}m^3 - (s_i + |z|^2)m^2 -\sqrt{w}|z|^2 m + |z|^4\right]^2} . \label{eqn_partialm}
\end{align}
It is easy to see that $J_0 \ne \emptyset$ for all $w>0$, since $\partial_m f(\sqrt{w}, 0)=1-|z|^{-2}>0$ (see the lower graph in Fig.\,\ref{figf_z15}). Call the end points of $J_0$ as $z_k(w)>0$ and $z_{k+1}(w)<0$. By the definition of $I_0$, we have $z_{k} < b_1 < |z|$.
Suppose $z_{k}=o(|z|)$ as $w\to 0$, then (\ref{eqn_partialm}) gives that $0 = 1 - |z|^{-2} + o(1) ,$ which gives a contradiction. Thus $z_{k}\sim |z|$ as $w\to 0$. Now using $\partial_m f(\sqrt{w},z_{k})=0$, we can estimate that
\begin{align}
f(\sqrt{w},z_{k}) & = -\sqrt{w}+ \frac{z_{k}^2}{N}\sum_{i=1}^n l_i s_i \frac{ \sqrt{w}\left(z_{k}^2 - |z|^2\right)^2 +2s_i |z|^2 z_{k}}{\left[\sqrt{w}z_{k}^3 - (s_i + |z|^2)z_{k}^2 - \sqrt{w}|z|^2 z_{k} + |z|^4\right]^2} \nonumber\\
& \ge -\sqrt{w}+\frac{1}{N}\sum_{i=1}^n l_i s_i \frac{ 2s_i |z|^2 z_{k}^3}{|z|^8} \ge c -\sqrt{w} \label{estmate_fI0}
\end{align}
for some $C>0$ independent of $w$, where in the second step we use that
$$\sqrt{w}z_{k}^3 - (s_i + |z|^2)z_{k}^2 - \sqrt{w}|z|^2 z_{k} + |z|^4 >0, \text{ and } \sqrt{w}z_{k}^3 - (s_i + |z|^2)z_{k}^2 - \sqrt{w}|z|^2 z_{k}<0$$
which come from that $0< z_{k} < b_i $ for all $1\le i \le n$. By (\ref{estmate_fI0}), we can find $\epsilon$ small enough such that $f(\sqrt{w},z_{k})>0$ for all $0<w\le \epsilon$. In this case $0\in f(J_0({w}))$ and hence $w\notin \text{supp}\,\rho_{1c}$. In fact, it is not hard to see that there is a solution $m_0 = \sqrt{w}|z|^2/(|z|^2-1) + o(\sqrt{w}) \in I_0$ such that $f(\sqrt{w},m_0)=0$ and $\partial_m f(\sqrt{w},m_0)>0$. This proves the first statement of Lemma \ref{lemm_edge0}.

Now we study equation (\ref{selfm_reduced}) when $|z|^2\le 1-\tau$ and $w \to 0$. For later purpose, we allow $w$ to be complex and prove a more general result than what we need for this lemma. Let $w=0$ in the equation (\ref{selfm_reduced}), we get
$m=0$ or
\begin{align}
0= 1 + \frac{1}{N}\sum_{i=1}^n l_i s_i \frac{m^2 - |z|^2}{- (s_i + |z|^2)m^2+|z|^4}.
\end{align}
We define
\begin{align}\label{g_anotherform}
g(x) = 1 + \frac{1}{N}\sum_{i=1}^n l_i s_i \frac{ x - |z|^2}{- (s_i + |z|^2)x+|z|^4} = \frac{|z|^2}{N}\sum_{i=1}^n l_i \frac{ -x + |z|^2 - s_i}{- (s_i + |z|^2)x+|z|^4}.
\end{align}
It is easy to see that
$g$ is smooth and decreasing on the intervals defined through
$$K_{1}:=\left(-\infty, \frac{|z|^4}{s_1 + |z|^2}\right),\ K_i:=\left(\frac{|z|^4}{s_{i-1} + |z|^2}, \frac{|z|^4}{s_{i} + |z|^2}\right) \ (i=2,\ldots, n), \ K_{n+1}:=\left(\frac{|z|^4}{s_{n} + |z|^2}, \infty\right).$$
By the boundary values of $g$ on these intervals, we see that $g(x)$ has exactly one zero on intervals $K_i$ for $i=1,\ldots,n$, and has no zero on $K_{n+1}$. Since $g(x)=0$ is equivalent to a polynomial equation of order $n$, it has at most $n$ solutions. We conclude that all of its solutions are real. Obviously the zeros on the intervals $K_i$ are positive for $i=2,\ldots,n$. Now we study the zero on $K_{1}$. Observe that $g(0) = 1 - |z|^{-2} < 0$ (as $|z|^2\le 1-\tau$), the zero on $K_1$ is negative, call it $-t$. Moreover, we can verify that $g(-\tau^{-1})>0$ by (\ref{g_anotherform}), so $t<\tau^{-1}$. If $|z|^2\ge \tau/2$, then by the concavity of $g$ on the $K_1$, we get
\begin{equation}\label{eqn_estimate_t1}
 t \ge \frac{g(0)}{g'(0)} \ge \frac{|z|^4(1-|z|^2)}{s_1} \ge \frac{\tau^4}{4} .
 \end{equation}
In the case $|z|^2 \le \tau/2$, we have $|z|^2 - s_n \le -\tau/2$ and $g(|z|^2 - s_n) \le 0$ by (\ref{g_anotherform}).
Hence we have 
\begin{equation}\label{eqn_estimate_t2}
-t\le |z|^2 - s_n \le -\tau/2.
\end{equation}
Combining (\ref{eqn_estimate_t1}) and (\ref{eqn_estimate_t2}), we get that $c \tau^4 \le t \le \tau^{-1}$ for some constant $c>0$.

Now we return to the self-consistent equation (\ref{selfm_reduced}). The previous discussions show that
$$f(0 , i\sqrt{t})=0,\ t \ge c\tau^4.$$
It is easy to see that there exists constants $c_1,\tau'>0$ such that
\begin{equation}\label{eqn_app_denom}
\left|- (s_i + |z|^2) m^2+|z|^4+ \sqrt{w}\left( m^3  - |z|^2 m\right)\right| \ge c_1 \text{  for  } |m-i\sqrt{t}|\le \tau'.
\end{equation}
First we consider the case $|z|\ge \epsilon >0$. Expanding $f(\sqrt{w},m)$ around $(0, i\sqrt{t})$ and using (\ref{eqn_app_denom}), 
\begin{equation}
0 = \partial_{\sqrt{w}} f(0,i\sqrt{t}) \sqrt{w} + \partial_m f(0,i\sqrt{t}) (m - i\sqrt{t}) + o(\sqrt{w}) + o(m - i\sqrt{t}). \label{self_expand}
\end{equation}
By (\ref{eqn_partialm}),
\begin{equation}\label{eqn_partialw}
\partial _{\sqrt{w}} f(\sqrt{w}, m) = -1 -\frac{ m^2}{N}\sum\limits_{i = 1}^n {l_i } s _i \frac{\left( m^2  - |z|^2 \right)^2}{{\left[- (s_i + |z|^2) m^2+|z|^4+ \sqrt{w}\left( m^3  - |z|^2 m\right)\right]^2 }},
\end{equation}
and (\ref{eqn_app_denom}), we get $\left|\partial _{\sqrt{w}} f(0,i\sqrt{t}) \right| \le C$ and
\begin{align}
\partial_m f(0,i\sqrt{t}) = \frac{t}{N}\sum_{i=1}^n l_i s_i \frac{ 2s_i |z|^2 }{\left[(s_i + |z|^2)t+|z|^4\right]^2} \ge c_2 \label{partialm_t}
\end{align}
for some $c_2>0$. Using (\ref{partialm_t}), we get from (\ref{self_expand}) that
\begin{equation}\label{eqn_app_largez}
m-i\sqrt{t} = O(\sqrt{w}), \ \text{ if } |z|\ge \epsilon.
\end{equation}
In particular, this shows that $|m| \approx \Im \, m \sim 1$ as $w \to 0$.

Then assume that $|z|^2< \epsilon$, for sufficiently small $\epsilon$. From $g(-t)=0$ and (\ref{g_anotherform}), we get that
\begin{align}
 \frac{1}{N}\sum_{i=1}^n l_i  \frac{ t + |z|^2 - s_i }{(s_i + |z|^2)t+|z|^4}=0.\label{equationt}
\end{align}
From the leading order term, we get $t^{-1}= t_0^{-1} + O(|z|^2)$, where $t_0:=\left(N^{-1} \sum_i l_i /s_i\right)^{-1}$. Expanding (\ref{equationt}) to the first order term of $|z|^2$, we get
\begin{equation}
t=t_0 +  \left(\frac{t_0^2}{N}\sum_i\frac{l_i}{s_i^2} -2 \right)|z|^2 + O(|z|^4).\label{defn_t}
\end{equation}
Now we write equation (\ref{selfm_reduced}) as
\begin{equation}
F(\sqrt{w},m)=0,
\end{equation}
where $F(\sqrt{w} , m): = f(\sqrt{w},m)/m.$ Expanding $F$ around $(0, i\sqrt{t})$ and using (\ref{eqn_app_denom}), we get
\begin{align}
0 = &\partial_{\sqrt{w}} F(0, i\sqrt{t}) \sqrt{w} + \partial_m F(0, i\sqrt{t}) (m - i\sqrt{t}) + \partial_m \partial_{\sqrt{w}}F(0, i\sqrt{t}) (m - i\sqrt{t})\sqrt{w} \nonumber\\
&+\frac{1}{2}\partial^2_{\sqrt{w}} F(0, i\sqrt{t}) w + \frac{1}{2}\partial^2_m F(0, i\sqrt{t}) (m - i\sqrt{t})^2 + o(w,|m - i\sqrt{t}|^2,|m - i\sqrt{t}|\sqrt{w}). \label{eqn_app_order2}
\end{align}
We can calculate that (the partial derivatives of $F$ can be obtained using (\ref{eqn_partialm}) and (\ref{eqn_partialw}))
\begin{align}
& \partial_{m} F(\sqrt{w}, i\sqrt{t}) = - \frac{ 2i |z|^2 + 2\sqrt{wt_0}}{t_0^{3/2}} +o(|z|^2,\sqrt{w}), \label{partialm_smallmw}\\
& \partial _{\sqrt{w}} F(\sqrt{w}, i\sqrt{t}) = \left(i|z|^2  + 2\sqrt{wt_0} \right)\frac{\sqrt{t_0}}{N}\sum\limits_{j = 1}^n \frac{l_j}{s^2_j} + o(|z|^2,\sqrt{w}). \label{partialw_smallmw}
\end{align}
From (\ref{partialm_smallmw}) and (\ref{partialw_smallmw}), we get that
\begin{align*}
& \partial_m F(0, i\sqrt{t}) = - \frac{ 2i |z|^2 }{t_0^{3/2}} +o(|z|^2),\ \partial_{\sqrt{w}} F(0, i\sqrt{t}) = \frac{i|z|^2\sqrt{t_0}}{N}\sum\limits_{j = 1}^n \frac{l_j}{s^2_j} + o(|z|^2), \\
& \partial_m \partial_{\sqrt{w}}F(0, i\sqrt{t}) = - \frac{ 2 }{t_0} +O(|z|^2),\ \partial^2_{\sqrt{w}} F(0, i\sqrt{t})= \frac{2t_0}{N}\sum\limits_{j = 1}^n \frac{l_j}{s^2_j} + O(|z|^2), \ \partial^2_m F(0, i\sqrt{t}) = O(|z|^2).
\end{align*}
Plugging the above results into (\ref{eqn_app_order2}), we get that
\begin{align}
0 = &\left[\frac{i|z|^2\sqrt{t_0}+\sqrt{w}t_0}{N}\sum\limits_{j = 1}^n \frac{l_j}{s^2_j}+ o(|z|^2)\right] \sqrt{w} + \left[ -2 \frac{ i |z|^2 + \sqrt{wt_0}}{t_0^{3/2}}+o(|z|^2) \right] (m - i\sqrt{t}) \nonumber\\
&  + o(w,|m - i\sqrt{t}|^2,|m - i\sqrt{t}|\sqrt{w}).
\end{align}
Observing that $ \left|i|z|^2\sqrt{t_0}+\sqrt{w}t_0\right| \sim |z|^2 + \sqrt{|w|}$, we get
\begin{equation}\label{eqn_app_smallz}
m - i\sqrt{t} = \left[\frac{t_0^2 }{2N}\sum\limits_{j = 1}^n \frac{l_j}{s^2_j} +O(|w|^{1/2}+|z|^2)\right]\sqrt{w}, \ \text{ if } |z|< \epsilon.
\end{equation}

Combing (\ref{eqn_app_largez}) and (\ref{eqn_app_smallz}), we get that if $|z|^2<1-\tau$, $m = i\sqrt{t} +O(\sqrt{w}) $ when $w \to 0$. In particular, this shows that $|m| \approx \Im\, m \sim 1$ when $w\to 0$. Finally we conclude the proof of Lemma \ref{lemm_edge0} by using that $m_{1c}(w)=m_c (w) w^{-1/2} - 1$.
\end{proof}

To prove Proposition \ref{prop_rho1c}, we need the following lemma, which is a consequence of the edge regularity conditions (\ref{regular1}) and (\ref{regular2}).

\begin{lem} \label{lemm_edge_reg}
Suppose $e_k \ne 0$ is a regular edge. Then $|m_{1c}(w)-m_{1c}(e_k)| \sim |w-e_k|^{1/2}$ as $w \to e_k$ and $\min_{l\ne k} |e_{l}-e_k| \ge \delta$ for some constant $\delta>0$.
\end{lem}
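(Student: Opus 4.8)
The plan is to treat separately the square-root behaviour of $m_{1c}$ near a regular edge and the lower bound on the gap between consecutive edges, both following from a local analysis of the equation $f(\sqrt w, m) = 0$ around the critical point $m_c(e_k)$. Recall from Lemma \ref{lemm_rho} that for a nonzero edge $e_k$ there is a unique $m_c(e_k)$ with $f(\sqrt{e_k}, m_c(e_k)) = 0$ and $\partial_m f(\sqrt{e_k}, m_c(e_k)) = 0$. First I would observe that the regularity condition (\ref{regular1}) guarantees $m_c(e_k)$ stays a bounded distance away from all the poles $a_i(e_k), b_i(e_k), -c_i(e_k)$ of $f$, so that $f$ is analytic and has bounded derivatives (up to any fixed order) in a fixed neighbourhood of $(\sqrt{e_k}, m_c(e_k))$ in $(\sqrt w, m)$-space. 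Likewise, by Lemma \ref{lemm_tech_f} and the estimates (\ref{bounda})--(\ref{boundABC}), the coefficient $\partial_{\sqrt w} f(\sqrt{e_k}, m_c(e_k))$ is bounded and, more importantly, bounded \emph{away from zero} — this uses $e_k \sim 1$ (from Lemma \ref{lemm_rho}, $e_k \le C_1\tau^{-1}$, and $e_k$ nonzero regular implies $e_k \ge \epsilon(\tau)$ via the gap estimate below or directly), together with the fact that $\partial_{\sqrt w} f = -1 - (\text{something} \ge 0)$ from (\ref{eqn_partialw})-type formulas, so $|\partial_{\sqrt w} f| \ge 1$.

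Next I would Taylor-expand $f$ around $(\sqrt{e_k}, m_c(e_k))$: writing $\zeta = \sqrt w - \sqrt{e_k}$ and $\mu = m - m_c(e_k)$,
\begin{equation}
f(\sqrt w, m) = \partial_{\sqrt w} f \cdot \zeta + \tfrac12 \partial_m^2 f \cdot \mu^2 + O(|\zeta|^2 + |\zeta\mu| + |\mu|^3),
\end{equation}
where all derivatives are evaluated at $(\sqrt{e_k}, m_c(e_k))$, the linear term in $\mu$ being absent since $\partial_m f = 0$ there. By (\ref{regular2}) we have $|\partial_m^2 f| \ge \epsilon$, so solving $f = 0$ for $\mu$ gives $\mu^2 = -2(\partial_m f^{(0,1)\,-1})\cdots$, more precisely $\mu = (c + o(1))\sqrt{\zeta}$ for a constant $c = \sqrt{-2\partial_{\sqrt w} f / \partial_m^2 f} \ne 0$ with $|c| \sim 1$; this is a standard application of the implicit function theorem after the change of variable $\mu \mapsto \mu^2$, or of the Weierstrass/Puiseux argument, using that on $\mathbb C_+$ the correct branch is selected by Lemma \ref{lemm_density} and the continuity of the root (as in the proof of Lemma \ref{lemm_rho}). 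Since $\sqrt w - \sqrt{e_k} \sim w - e_k$ for $w$ near $e_k$ (as $e_k \sim 1$), this yields $|m_c(w) - m_c(e_k)| \sim |w - e_k|^{1/2}$, and then $|m_{1c}(w) - m_{1c}(e_k)| \sim |w - e_k|^{1/2}$ via the relation $m_{1c} = m_c w^{-1/2} - 1$ and again $w \sim 1$.

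For the gap estimate, suppose for contradiction there are edges $e_l, e_k$ with $|e_l - e_k|$ arbitrarily small. Both are zeros of the system (\ref{equationEm}), i.e.\ of the resultant-type polynomial in $\sqrt w$ whose roots are the branch points of $f$; this polynomial has bounded degree ($\le 6n$-ish from B\'ezout as in the proof of Lemma \ref{lemm_rho}). But near a regular $e_k$ the expansion above shows $f(\sqrt w, m) = 0$ has, for $w$ in a whole fixed one-sided neighbourhood of $e_k$, exactly one solution curve behaving like $m_c(e_k) + c\sqrt{\zeta}$ with $\Im > 0$ (or $\Im = 0$, depending on side), hence no \emph{other} branch point can occur in that neighbourhood — a branch point would require $\partial_m f = 0$, which the nondegeneracy $|\partial_m^2 f| \ge \epsilon$ and continuity exclude within distance $\sim 1$. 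Thus $\min_{l \ne k}|e_l - e_k| \ge \delta$ for some $\delta = \delta(\tau, \epsilon) > 0$. The main obstacle I expect is making the "no nearby branch point" argument fully rigorous: one must control $\partial_m f$ uniformly in a neighbourhood of $(\sqrt{e_k}, m_c(e_k))$ whose size is bounded below independently of $N$, which is exactly where the regularity conditions (\ref{regular1})--(\ref{regular2}) — guaranteeing both the distance-to-poles lower bound and the second-derivative lower bound — are essential, together with the a priori bounds on $m_{1,2c}$ and the pole locations from Lemmas \ref{lemm_m1_4case} and \ref{lemm_tech_f}.
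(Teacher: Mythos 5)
Your treatment of the square-root behaviour is correct and follows the paper's own strategy: Taylor-expand $f$ about $(\sqrt{e_k},m_c(e_k))$, note the linear term in $m$ vanishes by (\ref{equationEm2}), use (\ref{regular1}) plus the a priori pole estimates to bound all mixed derivatives, observe from (\ref{eqn_partialw}) that $\partial_{\sqrt w}f\le -1$ at real $(\sqrt{e_k},m_c(e_k))$ (so the coefficient of the linear-in-$\zeta$ term has modulus $\sim 1$), use (\ref{regular2}) to make the quadratic term nondegenerate, and invert. This is essentially identical to (\ref{expand_edge})--(\ref{exp_edge}) in the paper.

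However, your gap argument has a genuine logical gap. You rule out ``another branch point'' of the curve $f(\sqrt w,m)=0$ in a neighbourhood of $(\sqrt{e_k},m_c(e_k))$, using the nondegeneracy of $\partial_m^2 f$ there. But this only excludes branch points at $m$-values near $m_c(e_k)$. A nearby edge $e_l$ with $|e_l-e_k|$ small could a priori have $m_c(e_l)$ far from $m_c(e_k)$ -- nothing in your argument forbids a second solution of the system (\ref{equationEm}) with $\sqrt w$ close to $\sqrt{e_k}$ but $m$ far away. (You flag that you need to control $\partial_m f$ in a neighbourhood, but that neighbourhood in $m$ is precisely what you haven't pinned down.) You also slightly conflate ``branch points of $f=0$'' with ``edges of $\rho_{1c}$'': the former is a purely algebraic notion, whereas an edge is by definition a boundary point of $\mathrm{supp}\,\rho_{1c}$, i.e.\ a point where $\lim_{\eta\searrow 0}\Im\, m_c(E+i\eta)$ transitions between $0$ and positive, and only the distinguished Stieltjes-transform branch $m_c$ matters. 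The paper sidesteps both issues with a cleaner one-line argument that you almost wrote down but did not use as the conclusion: from (\ref{exp_edge2}), for real $w$ in a fixed (independent of $N$) interval $U=[e_k-\delta,e_k+\delta]$, the boundary value $m_c(w)$ is real on one side of $e_k$ and has strictly positive imaginary part on the other side. Hence within $U$, the interface between $\{\Im m_c=0\}$ and $\{\Im m_c>0\}$ occurs only at $e_k$, so $U$ contains no other edge. This avoids any discussion of ``other branches'' entirely.
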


\begin{proof}
Denote $m_k:=m_c(e_k)$ and let $w\to e_k$. Notice by Lemma \ref{lemm_rho}, if $e_k\ne 0$, we have
\begin{equation}\label{eqn_ek}
\epsilon\le e_k \le C \tau^{-1}.
\end{equation}
Then we expand $f$ around $(\sqrt{e_k},m_k)$ to get that
\begin{align}
0 = & \partial_{\sqrt{w}}f(\sqrt{e_k},m_k)(\sqrt{w}-\sqrt{e_k})+\frac{1}{2}\partial_{m}^2f(\sqrt{e_k},m_k)(m_c(w)-m_k)^2 \nonumber\\
& +O\left[|\sqrt{w}-\sqrt{e_k}|^2+|m_c(w)-m_k|^3+|\sqrt{w}-\sqrt{e_k}||m_c(w)-m_k|\right], \label{expand_edge}
\end{align}
where by (\ref{eqn_partialw}),
\begin{equation}\label{partialfw_edge}
\partial _{\sqrt{w}} f(\sqrt{e_k}, m_k) = -1 -\frac{m_k^2}{N}\sum\limits_{i = 1}^n l_i s_i \frac{\left(m_k^2  - |z|^2 \right)^2}{{e_k(m_k - a_i)^2(m_k - b_i)^2(m_k + c_i)^2 }} ,
\end{equation}
and by (\ref{PFD_f}),
\begin{equation}\label{partialfm_edge}
\partial_{m}^2f(\sqrt{e_k},m_k) =  \frac{2}{N}\sum_{i=1}^{n}l_i s_i \left[ \frac{A_i}{(m_k - a_i)^3} + \frac{B_i}{(m_k - b_i)^3} + \frac{C_i}{(m_k + c_i)^3} \right],
\end{equation}
Applying (\ref{bounda})-(\ref{boundABC}), (\ref{eqn_ek}) and the conditions (\ref{regular1})-(\ref{regular2}) to (\ref{partialfw_edge}) and (\ref{partialfm_edge}), we get that
\begin{equation}\label{eqn_coeff1}
1 \le \left|\partial _{\sqrt{w}} f(\sqrt{e_k}, m_k)\right| \le C_1, \ \epsilon\le \left|\partial_{m}^2f(\sqrt{e_k},m_k)\right| \le C_2
\end{equation}
for some $C_1,C_2>0$. Similarly, if $|w-e_k|\le \tau'$  and $|m_c(w) - m_k| \le \tau'$ for some sufficiently small $\tau'$, using the condition (\ref{regular1}) we can get that
\begin{equation}\label{eqn_coeff1}
\max\left\{\left|\partial_{m}^3f(\sqrt{w},m_c(w))\right|,\left|\partial_{\sqrt{w}}^2f(\sqrt{w},m_c(w))\right|,\left|\partial_{m}\partial_{\sqrt{w}}f(\sqrt{w},m_c(w))\right| \right\} \le C_3.
\end{equation}
Plug them into equation (\ref{expand_edge}), for $|w-e_k|\le \tau'$  and $|m_c(w) - m_k| \le \tau'$, we get $|m_c(w)-m_k| \sim |\sqrt{w}-\sqrt{e_k}|^{1/2}$ and
\begin{equation}
 - \partial_{\sqrt{w}}f(\sqrt{e_k},m_k)(\sqrt{w}-\sqrt{e_k})+O( |\sqrt{w}-\sqrt{e_k}|^{3/2})=\frac{1}{2}\partial_{m}^2f(\sqrt{e_k},m_k)(m_c(w)-m_k)^2 . \label{exp_edge}
 \end{equation}
By (\ref{eqn_ek}), we immediately get that $|\sqrt{w}-\sqrt{e_k}|\sim |w-e_k|$ and $|m_c(w) - m_k| \sim |m_{1c}(w)-m_{1c}(e_k)|$, which proves the first part of the lemma.
By (\ref{exp_edge}), if $w$ is real and $|w-e_k|\le \tau'$, we have that
\begin{equation}
m_c(w)-m_k= \left[\frac{- 2\partial_{\sqrt{w}}f(\sqrt{e_k},m_k)}{\partial_{m}^2f(\sqrt{e_k},m_k)} +O( |\sqrt{w}-\sqrt{e_k}|^{1/2})\right]^{1/2}\left(\sqrt{w}-\sqrt{e_k}\right)^{1/2}.\label{exp_edge2}
 \end{equation}
Thus on a sufficiently small interval $U=[e_k-\delta,e_k+\delta]$, $m_c(w)$ has positive imaginary part for $w$ on one side of $e_k$ and $m_c(w)$ is real for $w$ on the other side. Hence $U$ does not contain another edge. This shows that $\min_{l\ne k} |e_{l}-e_k| \ge \delta.$ \end{proof}

\begin{proof}[Proof of Proposition \ref{prop_rho1c}]

The properties of $\rho_{1c}$ have been proved in Lemmas \ref{lemm_rho}, \ref{lemm_edge0} and \ref{lemm_edge_reg}, and included in the Definition \ref{def_regular}.
Since $\text{supp}\, \rho_{2c} =\text{supp}\, \rho_{1c}$ by the discussions after Lemma \ref{lemm_density}, we immediately get property (i) for $\rho_{2c}$. The conclusion $\rho_{2c}$ being a probability measure is due to the definition of $m_2$ in (\ref{def_M}) and the fact that $m_{2c}$ is the almost sure limit of $m_2$.

The properties (ii) and (iv) for $\rho_{2c}$ can be easily obtained by plugging $m_{1c}$ into (\ref{eq_self1}). To prove the property (iii) for $\rho_{2c}$, we need to know the behavior of $\text{Im}\, m_{2c}(w)$ when $w \to e_j$ along the real line. By (\ref{eq_self1}), it suffices to prove that if $|x - e_j |\le \tau'$ for some small enough $\tau'>0$, then
$$\left| - w(1+m_{1c})^2 + |z|^2\right|=\left|m_c^2 - |z|^2 \right|\ge \epsilon$$
for some constant $\epsilon>0$. Suppose that $\left|m_{c}^2(w)-|z|^2\right|=o(1)$. Plugging $m_c$ into $\partial_m f(\sqrt{w}, m_c)$ in (\ref{eqn_partialm}), and using condition (\ref{regular1}) and Lemma \ref{lemm_edge_reg}, we get that
\begin{equation}
\partial_mf(\sqrt{w},m_c(w))= -1 + O(|m_c^2-|z|^2|).
\end{equation}
Again using condition (\ref{regular1}) and Lemma \ref{lemm_edge_reg}, we can bound $\partial_{\sqrt{w}} \partial_m f(\sqrt{w},m_c(w)) $ and $\partial^2_m f(\sqrt{w},m_c(w)) $ for $w$ near $e_j$.
Thus we shall have that
\begin{equation}
0 = \partial_m f(\sqrt{e_j},m_c(e_j)) = \partial_mf(\sqrt{w},m_c(w)) + O(|w-e_j|^{1/2}) = -1 + O(|m_c^2-|z|^2|+|w-e_j|^{1/2}).\label{temp3}
\end{equation}
This gives a contradiction. Thus we must have a lower bound for $\left|m_{c}^2-|z|^2\right|$. \end{proof}

\noindent{\it{Remark:}} Here we add a small remark on Example \ref{example2}. Given the assumptions in Example \ref{example2}, it is easy to see that $f$ can only take critical values on intervals $I_{-n}$, $I_0$, $I_n$ and $I_{2n}$, since $\max\{|a_i -a_{i-1}|,|b_i -b_{i-1}|,|c_i -c_{i-1}|\}\to 0$ in this case. Thus the number of connected components of $\text{supp}\, \rho_{1c}$ is independent of $n$, and all the edges and the bulk components are regular as in Example \ref{example1}.

\subsection{Proof of Lemmas \ref{lemm_m1_4case} and \ref{cor_basicestimate}}\label{subsection_append2}


We first prove Lemma \ref{lemm_m1_4case}. We consider the five cases separately.

\vspace{10pt}

{\noindent{\it{Case 1}}:}
For $w=E+i\eta \in \mathbf D_{k}^b(\zeta,\tau',N)$, we have
\begin{equation}\label{eqn_app_Imm1}
m_{1c}(w)=\int_{\mathbb R} \frac{\rho_{1c}(x)}{x-(E+i\eta)}dx, \ \ \text{Im}\, m_{1c}(w)=\int_{\mathbb R} \frac{\rho_{1c}(x,z)\eta}{(x-E)^2+\eta^2}dx.
\end{equation}
By the regularity condition of Definition \ref{def_regular} (ii), 
we get immediately $\Im \, m_{1c} \sim 1$. Since $\Im \, m_{1c} \le |1+m_{1c}| \le C$ by Proposition \ref{prop_roughbound}, we get $|1+m_{1c}| \sim 1$. Notice $wm_{1c}$ can be expressed as
$$wm_{1c}(w)=\int_{\mathbb R} \frac{w\rho_{1c}(x,z)}{x-w}dx=-\int_{\mathbb R} \rho_{1c}(x,z) dx + \int_{\mathbb R} \frac{x\rho_{c}(x,z)}{x-w}dx.$$
By the same argument as above and using the fact that $x\ge \tau'$ for $x \in [e_{2k}+\tau',e_{2k-1}-\tau']$, we get $$\text{Im}(wm_{1c}) = \text{Im} \int_{\mathbb R} \frac{x\rho_{1c}(x,z)}{x-w}dx \sim 1.$$
Since the imaginary parts of $-w$ and ${|z|^2}/{(1+m_{1c})}$ are both negative, we get
\begin{equation}\label{eqn_app_bulk1}
\text{Im} \left[-w(1+m_{1c}) + \frac{|z|^2}{1+m_{1c}}\right] \le -\text{Im}(wm_{1c}).
\end{equation}
Using the bounds for $m_{1c}$ and $\Im\, m_{1c}$ proved above, it is easy to see
\begin{equation}\label{eqn_app_bulk2}
\left|-w(1+m_{1c}) + \frac{|z|^2}{1+m_{1c}}\right| = O(1).
\end{equation}
Equations (\ref{eqn_app_bulk1}) and (\ref{eqn_app_bulk2}) together give that $\text{Im}\, m_{2c} \sim 1$ and $ | m_{2c} |\sim 1$. Similarly, we can also prove that
$$ wm_{2c}=\left[{- (1 + m_{1c})   + \frac{\left| z \right|^2 }{w(1+m_{1c})} }\right]^{-1} \in \mathbb C_+$$
and $\text{Im}(wm_{2c}) \sim 1$. Now (\ref{estimate2_bulk}) follows from
$$\text{Im}\left( {w + s_i wm_{2c} } - \frac{\left| z \right|^2}{1+m_{1c}}\right) \ge s_i \text{Im}(wm_{2c}) .$$


{\noindent{\it{Case 2}}:} For $w=E+i\eta \in \mathbf D^o(\zeta,\tau',N)$, using (\ref{eqn_app_Imm1}) and $\text{dist}(E,\text{supp} \, \rho_{1,2c})\ge \tau'$, we immediately get $\Im \, m_{1,2c} \sim \eta$. Now we prove the other estimates.

We first prove (\ref{estimate2_bulk}). If $\eta\sim 1$, the proof is exactly the same as in Case 1. Hence we assume $\eta \le c'$, where $c'\equiv c'(\tau,\tau')>0$ is sufficiently small. We separate it into two cases.

(i) Suppose $E\sim 1$. We shall prove that
\begin{equation}
\min_i \{|m_c(w) - a_i(w)|,|m_c(w) - b_i(w)|, |m_c(w)+c_i(w)|\}\ge \epsilon', \label{outer_regularity}
\end{equation}
for some constant $\epsilon'$. This leads immediately to (\ref{estimate2_bulk}) since
\begin{equation}
\left|{w\left( {1 + s_i \frac{1 + m_{1c}}{- w (1 + m_{1c})^2   + {\left| z \right|^2 } }} \right)(1+m_{1c}) - \left| z \right|^2 }\right| = \left|\frac{\sqrt{w}(m_c - a_i)(m_c - b_i)(m_c+c_i)}{- m_{c}^2   + {\left| z \right|^2 } }\right| .
\end{equation}
For $p_i = \sqrt{E}m^3 - (s_i + |z|^2)m^2 -\sqrt{E}|z|^2 m + |z|^4$, it is not hard to prove that its roots $a_i(E)$, $b_i(E)$ and $-c_i(E)$ decrease as $E$ increase. Since $E\notin \text{supp}\, \rho_{1c}$, we have $m_{1c}(E)\in \mathbb R$ and
$$\frac{dm_{1c}(E)}{dE}=\int_{\mathbb R} \frac{\rho_{1c}(x,z)}{(x-E)^2}dx\ge 0.$$
So $m_{1c}(E)$ (and hence $m_c(E)$) increases as $E$ increases. If $e_k$ is the smallest edge that is bigger than $E$, then for $a_i(E)$ bigger than $m_c(E)$, we have that
\begin{equation}\label{eqn_app_right}
a_i(E)-m_c(E)\ge a_i(e_k)-m_c(e_k) + \epsilon(\tau') \ge \epsilon(\tau'),
\end{equation}
by using $|E-e_k| \ge \tau'$ (see (\ref{def_domain_Do})). On the other hand, If $e_{k-1}$ is the largest edge value that is smaller than $E$, then for $a_i(E)$ smaller than $m_c(E)$, we have that
\begin{equation}\label{eqn_app_left}
m_c(E)-a_i(E) \ge m_c(e_{k-1})-a_i(e_{k-1}) + \epsilon(\tau') \ge \epsilon(\tau').
\end{equation}
Applying the same arguments to $b_i(E)$ and $-c_i(E)$, we get
 \begin{equation}\label{eqn_app_Ereg}
\min_i \{|m_c(E) - a_i(E)|,|m_c(E) - b_i(E)|, |m_c(E)+c_i(E)|\}\ge \epsilon
\end{equation}
for $E\in (e_{2k-1},e_{2k})$ for some $k$. Now we are only left with the case $E<e_{2L}$, the rightmost edge, when $|z|^2\ge 1+\tau$. In this case, we have seen that $0<m_c(E)<b_i(E)$ for all $i$ in the proof of Lemma \ref{lemm_edge0}. Thus we can use (\ref{eqn_app_right}) to get lower bounds for $|m_c(E)-a_i(E)|$ and $|m_c(E)-b_i(E)|$. Since $c_i(E)\sim 1$ in this case (e.g. by (\ref{boundc}) and using $E, |z|\sim 1 $), $|m_c(E)+c_i(E)|\ge \epsilon$ is trivial. Again we get the estimate (\ref{eqn_app_Ereg}).

Then we consider $w=E+i\eta$ with $\eta \le c'$. First it is easy to check that $a_i(E+i\eta)$, $b_i(E+i\eta)$ and $c_i(E+i\eta)$ are continuous in $\eta$. On the other hand for $m_{c}(E+i\eta)$, we have
\begin{equation}\label{derv_m1_bound}
\partial_w m_{1c}(w)=\int_{\mathbb R} \frac{\rho_{1c}(x,z)}{(x-w)^2}dx \le C
\end{equation}
by the condition $\text{dist}(E,\text{supp} \, \rho_{1c})\ge \tau'$. Thus we immediately get $|m_{c}(E+i\eta)-m_{c}(E)|=O(\eta)$. Hence as long as $c'$ is small enough, (\ref{outer_regularity}) is true, which further gives (\ref{estimate2_bulk}).

(ii) Suppose $w=E+i\eta \to 0$, in which case we must have $|z|^2\ge 1+\tau$ and $E<e_{2L}$. Using $|m_{1,2c}(w)|\sim 1$ by Proposition \ref{prop_roughbound}, we can calculate directly that
$$\left|{w\left( 1 + s_i m_{2c} \right)(1+m_{1c}) - \left| z \right|^2 }\right| =  \left||z|^2 + O(w)\right| \ge c.$$This concludes the proof of (\ref{estimate2_bulk}).

Then we show that $|1+m_{1c}|\sim 1$ for $w \in \mathbf D^o$ and $\eta \le c'$. We again divide it into two cases.
First suppose $|w|\sim 1$. If $|m_c|$ can be arbitrarily small, then by (\ref{estimate2_bulk}) we get that
\begin{align*}
f(\sqrt{w},m_c) = -\sqrt{w} + O(m_c) \ne 0,
\end{align*}
which gives a contradiction.
Then suppose $w=E+i\eta \to 0$ when $|z|^2\ge 1+\tau$ and $E<e_{2L}$. We have seen in the proof of Lemma \ref{lemm_edge0} that $$m_c(E) = \sqrt{E}\frac{|z|^2}{|z|^2-1} + o\left(\sqrt{E}\right) \Rightarrow 1+m_{1c}(E) = \frac{|z|^2}{|z|^2-1}+o(1).$$ Then using (\ref{derv_m1_bound}), we get $$\left|1+m_{1c}(E+i\eta)\right|=\left| \frac{|z|^2}{|z|^2-1}+o(1)+O(\eta)\right| \sim 1.$$
Finally we have $|m_{2c}|\sim 1$ for $w \in \mathbf D^o$ and $\eta \le c'$ by Proposition \ref{prop_roughbound}. 

\vspace{10pt}

{\noindent{\it{Case 3}}:} For regular edge $e_k\ne 0$, we always have $e_k\ge \epsilon$ for some $\epsilon>0$ by Lemma \ref{lemm_edge0}. Thus we always have $|w|\sim 1$ for $w=E+i\eta \in \mathbf D_k^e(\zeta,\tau',N)$ as long as $\tau'$ is sufficiently small. If $\eta\sim 1$, then $\sqrt{\kappa + \eta} \sim \eta/\sqrt{\kappa+\eta} \sim 1$ and the proof is exactly the same as in Case 1. Now we pick $\tau'$ small and consider the case $\eta \le \tau'$. By the regularity assumption (\ref{regular1}) and Lemma \ref{lemm_edge_reg}, we have
\begin{equation}
\min_{1\le i \le n} \{|m_c(w) - a_i(w)|,|m_c(w) - b_i(w)|,|m_c(w)+c_i(w)| \}\ge \epsilon/2
\end{equation}
uniformly in $w \in \{ w\in \mathbf D_k^e(\zeta,\tau',N): \kappa(w)+\eta(w) \le 2\tau' \}$, provided $\tau'$ is sufficiently small. The above bound implies (\ref{estimate2_bulk}). If $m_c(w)\to 0$, then using (\ref{estimate2_bulk}) we get from $f(\sqrt{w},m_c)=0$ that $-\sqrt{w}+O(m_c)=0,$ which gives a contradiction. Thus we must have $|1+m_{1c}|\sim |m_c| \sim 1$. To show $|m_{2c}|\sim 1$, we can use Proposition \ref{prop_roughbound}. 

We still need to prove the estimates for $\text{Im}\,m_{1,2c}$ when $\eta \le \tau'$. Recall the expansion (\ref{exp_edge}) around $e_k$ and equation (\ref{exp_edge2}). Notice both $\partial_{\sqrt{w}}f(\sqrt{e_k},m_k)$ and $\partial_{m}^2f(\sqrt{e_k},m_k)$ are real (as $e_k$ and $m_k$ are real). Suppose $k$ is odd, then $\text{Im}\, m_c(E)=0$ for $E\searrow e_k$ (i.e. $E\notin \text{supp} \rho_c$) and $\text{Im}\, m_c(E)>0$ for $E\nearrow e_k$ (i.e. $E\in \text{supp} \rho_c$). Thus (\ref{exp_edge2}) gives
$$m_c(w)-m_k= C_k(w) (w-e_k)^{1/2} + D_k(w),$$
with $C_k>0$, $C_k \sim 1$, $|D_k|=O(|w-e_k|)$ and $\Im\, D_k \sim \eta$. Then for $E\ge e_k$, we have
$$\text{Im}\, m_c(E+i\eta) \sim \Im(\kappa + i\eta)^{1/2} + O(\eta) \sim \frac{\eta}{\sqrt{\kappa+\eta}},$$
and for $E\le e_k$, we have
$$\text{Im}\, m_c(E+i\eta) \sim \Im(-\kappa + i\eta)^{1/2}+ O(\eta) \sim \sqrt{\kappa+\eta}.$$
If $k$ is even, the proof is the same except that in this case
$$m_c(w)-m_k= C_k(w) (e_k-w)^{1/2} + D_k(w).$$
For $m_{1c}(w)$ and $m_{2c}(w)$, we get the conclusion by noticing ${w}\approx {e_k}$ and
$$\text{Im}\, m_{1c}= \text{Im}\left(w^{-1/2} m_{c}\right) \sim \text{Im}\, m_c(w) , \ \ \text{Im}\, m_{2c}=\text{Im}\left[\frac{m_c}{\sqrt{w}(-m_c^2 + |z|^2)}\right] \sim \text{Im}\, m_c(w) .$$


{\noindent{\it{Case 4}}:} Again if $\eta \sim 1$, the proof is the same as in Case 1. If $|w|\le 2\tau'$ for small enough $\tau'$, in the proof of Lemma \ref{lemm_edge0}, we have seen that $m_{c} = i \sqrt{t} + O(\sqrt{w})$, which gives the first equation in (\ref{estimate12_0}). Plugging it into (\ref{eq_self1}), we get the second equation in (\ref{estimate12_0}). Taking the imaginary part, we obtain (\ref{estimate13_0}). Finally using (\ref{estimate12_0}), we get (\ref{estimate2_bulk}) easily.

\vspace{10pt}

{\noindent{\it{Case 5}}:}
For $w=E+i\eta \in \mathbf D_L(\zeta,N)$, the bounds for $m_{1,2}$ and $\Im\, m_{1,2}$ in (\ref{estimate1L}) follows from (\ref{eqn_app_Imm1}) directly. 

Finally we prove Lemma \ref{cor_basicestimate}. The estimates (\ref{estimate_Piw12}) and (\ref{estimate_PiwL}) follow immediately from (\ref{def_pi}), (\ref{estimate2_bulk}) and (\ref{estimate2_large}). For (\ref{estimate_PiImw}), we can write
\begin{align*}
\Pi_{\mathbf v\mathbf v} = \left\langle \mathbf v, \left( {\begin{array}{*{20}c}
   { U} & {0}  \\
   {0} & {U}  \\
   \end{array}} \right) \Pi_d\left( {\begin{array}{*{20}c}
   { U^\dag} & {0}  \\
   {0} & {U^\dag}  \\
   \end{array}} \right) \mathbf v \right\rangle = \left(\Pi_d \right)_{ {\mathbf u} {\mathbf u}} = \sum_{i=1}^N \left\langle u_{[i]}, \pi_{[i]c} u_{[i]}\right\rangle,
\end{align*}
where
$$ \mathbf u:= \left( {\begin{array}{*{20}c}
   {U^\dag} & {0}  \\
   {0} & {U^\dag}  \\
   \end{array}} \right)  \mathbf v, \ \ u_{[i]}:=  \left( {\begin{array}{*{20}c}
   {u_i}   \\
   {u_{\bar i}} \\
   \end{array}} \right).$$
To control $\Im\, \Pi_{\mathbf v\mathbf v}$, it is enough to bound $\left\langle u_{[i]}, \pi_{[i]c} u_{[i]}\right\rangle$ for each $i$. 

We first consider Cases 1-4 of Lemma \ref{lemm_m1_4case}. By the definition of $\pi_{[i]c}$ in (\ref{def_pi}), we get
\begin{align*}
\Im\, \pi_{ii,c} & = |u_i|^2\Im\left[ - w(1+ |d_i|^2m_{2c}) +\frac{|z|^2}{1+m_{1c}}\right]^{-1} \le \frac{C}{|w|} \Im\left[ w(1+ |d_i|^2m_{2c}) - \frac{|z|^2}{1+m_{1c}} \right]\\
& = \frac{C}{|w|} \left[(1+ |d_i|^2\Re\, m_{2c})\Im\, w  +|d_i|^2(\Re\, w) \Im\, m_{2c}+ \frac{|z|^2}{|1+m_{1c}|^2}\Im\, m_{1c}\right],
\end{align*}
where in the second step we use (\ref{estimate2_bulk}) and $|1+m_{1c}|\sim |w|^{-1/2}$. In the first three cases of Lemma \ref{lemm_m1_4case}, we have $|w|\sim 1$ and $\Im\, w = O(\Im\, m_{1c})$, which give that $\Im\, \pi_{ii,c} \le C \Im{(m_{1c}+m_{2c})}$. In case 4 of Lemma \ref{lemm_m1_4case}, we use $|\Im\, w| + |\Re\, w| + |1+m_{1c}|^{-2} = O(|w|)$ and $\Im\, m_{1,2c}\sim |w|^{-1/2}$ to get that $\Im\, \pi_{ii,c}  \le C \Im{(m_{1c}+m_{2c})}$. Similarly we have the bound $\Im\, \pi_{\bar i\bar i,c} \le C\Im{(m_{1c}+m_{2c})}$.
Finally we can estimate the following term using similar methods,
\begin{align*}
\Im \left(\bar u_{\bar i} u_i \pi_{\bar i i,c}+\bar u_{i} u_{\bar i} \pi_{ i \bar i,c} \right)& =2\Re \left(\bar u_i u_{\bar i} z\right)\Im\left\{w^{-1/2}\left[ w(1+ |d_i|^2m_{2c})(1+m_{1c}) - |z|^2\right]^{-1} \right\}\\
&\le C\Re \left(\bar u_i u_{\bar i} z\right)\Im(m_{1c}+m_{2c}) \le C\left(|u_i|^2 + |u_{\bar i}|^2\right) \Im(m_{1c}+m_{2c}).
\end{align*}
Combining the above estimates we get $ \Im\left\langle u_{[i]}, \pi_{[i]c} u_{[i]}\right\rangle \le C|u_{[i]}|^2\Im(m_{1c}+m_{2c})$, which implies (\ref{estimate_PiImw}).
For the Case 5 of Lemma \ref{lemm_m1_4case}, we use (\ref{estimate1L}) and (\ref{estimate_PiwL}) to get
$$\Im\left\langle u_{[i]}, \pi_{[i]c} u_{[i]}\right\rangle \le  |u_{[i]}|^2\|\pi_{[i]c}\|\le C|u_{[i]}|^2\Im(m_{1c}+m_{2c}).$$

\subsection{Proof of Lemma \ref{lemm_stability} and Lemma \ref{lemm_density}}\label{subsection_append3}


We first prove Lemma \ref{lemm_stability}. During the proof, we also use the following equivalent definition of the stability expressed in terms of $m=\sqrt{w}(1+m_1)$, $u=\sqrt{w}(1+u_1)$ and $f(\sqrt{w},m)$. Suppose the assumptions in Definition \ref{def_stability} holds. Let $w\in \mathbf D$ and suppose that for all $w'\in L(w)$ we have $\left|f(\sqrt{w}, u)\right| \le |w|^{1/2} \delta(w).$ Then
\begin{equation}
\left|u(w)-m_{c}(w)\right|\le \frac{C |w|^{1/2}\delta}{\sqrt{\kappa+\eta+\delta}}.\label{Stability2}
\end{equation}

{\noindent{\it{Case 1}}:}
We take over the notations in Definition \ref{def_stability} and abbreviate $R:=f(\sqrt{w}, u)$, so that $|R|\le |w|^{1/2}\delta$. Then we write the equation $f(\sqrt{w},u)-f(\sqrt{w},m_c)=R$ as
\begin{equation}
\alpha(u-m_c)^2+\beta(u-m_c)=R, \label{stab_bulk_fixed}
\end{equation}
where using (\ref{PFD_f}), $\alpha$ and $\beta$ can be expressed as
\begin{equation}
\alpha :=\frac{1}{N}\sum_{i=1}^{n}l_i s_i \left[ \frac{A_i}{(u-a_i)(m_c - a_i)^2} + \frac{B_i}{(u-b_i)(m_c - b_i)^2} + \frac{C_i}{(u+c_i)(m_c + c_i)^2} \right], \label{def_alpha}
\end{equation}
and
\begin{equation}
\beta :=1 - \frac{1}{N}\sum_{i=1}^{n}l_i s_i \left[ \frac{A_i}{(m_c - a_i)^2} + \frac{B_i}{(m_c - b_i)^2} + \frac{C_i}{(m_c + c_i)^2} \right] = \partial_m f(\sqrt{w},m_c).\label{def_beta}
\end{equation}
We shall prove that
\begin{equation}
 |\alpha|+|\partial_u \alpha| \le C, \ |\beta| \sim 1,
\label{estimate_alphabeta1}
\end{equation}
for $w\in \mathbf D_k^b$ and $u$ satisfying $|u-m_c|\le (\log N)^{-1/3}$. If $|u-m_c|\le (\log N)^{-1/3}$, we also have $\text{Im}\, u \sim 1$. By (\ref{estimate2_bulk}), 
\begin{equation}
\min_i \{|m_c-a_i|,|m_c-b_i|,|m_c+c_i|\} \ge \epsilon \label{nonsingular_m}
\end{equation}
for some $\epsilon>0$. Replacing the $m_c$ in (\ref{estimate2_bulk}) with $u$, we also get that
\begin{equation}
\min_i \{|u-a_i|,|u-b_i|,|u+c_i|\} \ge \epsilon'
\label{nonsingular_u}
\end{equation}
for some $\epsilon'>0$. Using (\ref{nonsingular_m}) and (\ref{nonsingular_u}), we get immediately that $|\alpha|+|\partial_u \alpha|+|\beta|\le C$. What remains is the proof of the lower bound $|\beta| \ge c$. If $\text{Im}\, w \ge \epsilon$ for some constant $\epsilon>0$, the lower bound follows from Lemma \ref{lowerbound_beta1} below. If $\text{Im}\, w \le \epsilon$ for a sufficiently small $\epsilon$, the lower bound follows from Lemma \ref{lowerbound_beta2} below. Now given the bound (\ref{estimate_alphabeta1}), it is easy to prove (\ref{Stability2}) with a fixed point argument. This proves the stability of (\ref{stab_Dm1})


\begin{lem}
Suppose that ${\rm{Im}}\, w \sim 1$ and $|m_c|\sim {\rm{Im}}\, m_c \sim 1$. Then $\left|\partial_m f(\sqrt{w},m_c) \right| \ge c $ for some constant $c >0$.
\label{lowerbound_beta1}
\end{lem}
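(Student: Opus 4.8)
The plan is to show that $\beta=\partial_m f(\sqrt w,m_c)$ cannot be close to $0$ by exploiting the fact that $m_{1c}$ (equivalently $m_c=\sqrt w(1+m_{1c})$) is the Stieltjes transform of the positive measure $\rho_{1c}$, together with the self-consistent equation. First I would recall the identity $\beta=\partial_m f(\sqrt w,m_c)$ from (\ref{def_beta}) and rewrite everything in terms of $m_{1c}$: using $m=\sqrt w(1+m_1)$ and the chain rule, $\partial_m f(\sqrt w,m_c)=0$ is equivalent to $\partial_{m_1}\Upsilon(w,m_{1c})=0$ where $\Upsilon$ is defined in (\ref{def_stabD}), so it suffices to bound $\partial_{m_1}\Upsilon(w,m_{1c})$ away from zero. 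The key structural fact is that $m_{1c}(w)=\int \rho_{1c}(x)/(x-w)\,dx$, hence for $w\in\mathbb C_+$,
\[
\partial_w m_{1c}(w)=\int_{\mathbb R}\frac{\rho_{1c}(x)}{(x-w)^2}\,dx,\qquad \operatorname{Im}\big[(x-w)^{-2}\big]=\frac{2(x-E)\eta}{|x-w|^4},
\]
which does not have a definite sign; the cleaner object is $\operatorname{Im}[w(x-w)^{-2}]$ or a direct comparison of $\partial_w m_{1c}$ with $\operatorname{Im} m_{1c}/\eta$. Concretely, Cauchy--Schwarz gives $|\partial_w m_{1c}|\le \int \rho_{1c}(x)/|x-w|^2\,dx=\operatorname{Im} m_{1c}/\eta$, and under the hypotheses $\operatorname{Im} m_{1c}\sim 1$, $\eta\sim 1$ this is $O(1)$; more importantly a matching lower bound $\int\rho_{1c}/|x-w|^2\,dx\ge c$ holds since $\operatorname{Im} m_{1c}\sim 1$ and $\eta\le C$.

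The core step is then to differentiate the self-consistent equation $\Upsilon(w,m_{1c}(w))=0$ in $w$. By the implicit function theorem,
\[
\partial_{m_1}\Upsilon(w,m_{1c})\cdot \partial_w m_{1c} = -\,\partial_w\Upsilon(w,m_{1c}),
\]
so $\beta$ (up to the nonvanishing factor $\sqrt w$ from the chain rule, using $|w|\sim 1$) is comparable to $-\partial_w\Upsilon/\partial_w m_{1c}$. Thus it remains to prove the two-sided estimate $|\partial_w\Upsilon(w,m_{1c})|\sim 1$ and to use $|\partial_w m_{1c}|\le C$; combined with the already-established lower bound $|\partial_w m_{1c}|\ge c$ (from $\operatorname{Im} m_{1c}\sim 1$), this yields $|\beta|=|\partial_{m_1}\Upsilon|\cdot|\sqrt w|\ge c'$. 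For the upper bound on $|\partial_w\Upsilon|$ one differentiates the explicit sum in (\ref{def_stabD}) and bounds each summand using (\ref{estimate2_bulk}) (all denominators are bounded below) together with $|m_{2c}|\sim 1$, $|1+m_{1c}|\sim 1$, $|z|^2\le\tau^{-1}$ from Lemma~\ref{lemm_m1_4case} Case~1; this is routine. For the lower bound on $|\partial_w\Upsilon|$, I would take imaginary parts: since $\Upsilon(w,m_{1c})=0$ and $m_{1c}$ is analytic with $\operatorname{Im} m_{1c}\sim 1$, a short computation of $\operatorname{Im}\big[\sum_i l_i s_i(1+m_{1c})/(w(1+s_im_{2c})(1+m_{1c})-|z|^2)\big]$, using $\operatorname{Im}(wm_{2c})\ge 0$ and $\operatorname{Im} m_{1c}\sim 1$ established in the proof of Case~1, shows this quantity is $\sim 1$ in absolute value, forcing the coefficient $\partial_w\Upsilon$ not to degenerate.

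The main obstacle I anticipate is the lower bound $|\partial_w\Upsilon(w,m_{1c})|\ge c$: unlike the upper bound it is not a mere triangle-inequality estimate, and one must genuinely use the sign information coming from $\rho_{1c}\ge 0$ and $\operatorname{Im}(wm_{2c})\ge 0$ (both recorded in the Case~1 argument of Lemma~\ref{lemm_m1_4case}) rather than bounding term by term, since individual terms could cancel. A clean way to package this is: from $\Upsilon(w,m_{1c})=0$ one has $m_{1c}=-\tfrac1N\sum_i l_i s_i g_i(w)$ with $g_i(w)=(1+m_{1c})[w(1+s_im_{2c})(1+m_{1c})-|z|^2]^{-1}$; differentiating and using that $\operatorname{Im} m_{1c}=-\tfrac1N\sum_i l_i s_i \operatorname{Im} g_i\sim1$ with each $\operatorname{Im} g_i$ of a controlled sign forces $\sum_i l_i s_i \partial_w g_i$ to have modulus $\sim1$, which is exactly $-\partial_w\Upsilon$ up to the benign $\partial_w$ acting through $m_{2c}$'s explicit dependence on $w$. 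Assembling these estimates gives $|\beta|\ge c$, completing the lemma.
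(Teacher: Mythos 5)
Your opening move matches the paper exactly: the paper likewise uses the Stieltjes-transform bound $|\partial m_c/\partial\sqrt{w}|\le C$ (valid because $\operatorname{Im} w\sim 1$ and $\operatorname{Im} m_{1c}\sim 1$ give $|\partial_w m_{1c}|\le\operatorname{Im} m_{1c}/\eta\le C$) to conclude that $|\partial_{\sqrt w}f|\le C|\partial_m f|$, so smallness of $a:=\partial_m f$ propagates to $b:=\partial_{\sqrt w}f$. But from that point the two arguments diverge, and yours has a genuine gap.

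The crux you identify correctly — one needs a lower bound on $|b|$ (equivalently on $|\partial_w\Upsilon|$), which is not a triangle-inequality estimate — is exactly where your argument is incomplete. The claim that ``$\operatorname{Im} m_{1c}=-\tfrac1N\sum_i l_i s_i\operatorname{Im} g_i\sim 1$ with each $\operatorname{Im} g_i$ of a controlled sign forces $\sum_i l_i s_i\partial_w g_i$ to have modulus $\sim 1$'' is a non-sequitur: positivity/controlled sign of the \emph{values} $\operatorname{Im} g_i$ carries no information about the \emph{derivatives} $\partial_w g_i$, which could in principle cancel. (Relatedly, your claimed lower bound $|\partial_w m_{1c}|\ge c$ does not follow from $\int\rho_{1c}/|x-w|^2\ge c$ either, since $\partial_w m_{1c}=\int\rho_{1c}(x)/(x-w)^2\,dx$ is a complex integral subject to cancellation — though this particular point is harmless to the rest of your outline, since only the upper bound is used.) What the paper does instead is exploit the fact that the self-consistent equation is rational in $(\sqrt w, m)$: assuming both $a$ and $b$ small, it derives the exact algebraic identity (\ref{contradiction}),
\[
\frac{(\sqrt{w}m_c-|z|^2)|z|^2}{m_c}\,b - \tfrac12(m_c^2-|z|^2)(m_ca-\sqrt{w}b) = \frac{(|z|^2-\sqrt{w}m_c)(m_c^2+|z|^2)}{m_c},
\]
(using $f(\sqrt w,m_c)=0$ in the derivation), whose left-hand side is then small while the right-hand side has modulus $\sim 1$ whenever $|m_c-i|z||\ge c'$ — contradiction. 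This concrete algebraic identity is the engine of the proof and has no counterpart in your sketch.

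Your proposal also misses the second, degenerate regime: when $m_c$ is close to $i|z|$ (so $m_c^2+|z|^2$ is small), the right-hand side of the identity above is small and the contradiction fails. The paper handles this by a separate computation of $\partial_{\sqrt w}f(\sqrt w, i|z|)$ via (\ref{frac_Li}), showing each summand has imaginary part of fixed sign and of order one, which lower-bounds $|b|$ directly in this regime, and then transfers to $m_c$ by a stability estimate. Your outline does not acknowledge that a case split of this kind is required. So the proposal is not a complete proof: the main lower bound is asserted rather than established, and a necessary case analysis is omitted.
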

\begin{proof}
Using (\ref{stj_rho1}), $m_c=\sqrt{w}(1+m_{1c})$ and the conditions ${\rm{Im}}\, w \sim 1$, ${\rm{Im}}\, m_c \sim 1$, we can get that
\begin{equation}
\left|\frac{\partial_{\sqrt{w}} f (\sqrt{w},m_c)}{\partial_m f (\sqrt{w},m_c)}\right|=\left|\frac{\partial m_c}{\partial \sqrt{w}}\right| \le C \Rightarrow \left|\partial_{\sqrt{w}} f (\sqrt{w},m_c)\right| \le C \left|\partial_{m} f (\sqrt{w},m_c) \right|, \label{compareab}
\end{equation}
for some constant $C>0$. 
Now we assume that $\left|\partial_m f(\sqrt{w},m_c) \right|$ can be arbitrarily small. Then $\left|\partial_{\sqrt{w}} f(\sqrt{w},m_c) \right|$ can also be arbitrarily small. Denote $a:=\partial_m f(\sqrt{w},m_c)$ and $b:=\partial_{\sqrt{w}} f(\sqrt{w},m_c)$. Using (\ref{eqn_partialm}) and (\ref{eqn_partialw}), we get that
\begin{equation}
a = \frac{\sqrt{w}}{m_c}-\frac{ m_c}{N}\sum_{i=1}^n l_i s_i \frac{ \sqrt{w}\left( m_c^2 - |z|^2\right)^2 +2s_i |z|^2 m_c}{\left[- (s_i + |z|^2) m_c^2+|z|^4+ \sqrt{w}\left( m_c^3  - |z|^2 m_c\right)\right]^2} \label{am}
\end{equation}
and
\begin{equation}
b = -1 -\frac{ m_c^2}{N}\sum\limits_{i = 1}^n l_i s _i \frac{\left( m_c^2  - |z|^2 \right)^2}{{\left[- (s_i + |z|^2) m_c^2+|z|^4+ \sqrt{w}\left( m_c^3  - |z|^2 m\right)\right]^2 }}.\label{bm}
\end{equation}
Using (\ref{am}) and (\ref{bm}), we can get that
\begin{equation}
\frac{(\sqrt{w}m_c-|z|^2)|z|^2}{m_c}b - \frac{1}{2}(m_c^2-|z|^2)(m_ca-\sqrt{w}b) = \frac{(|z|^2-\sqrt{w}m_c)(m_c^2+|z|^2)}{m_c},\label{contradiction}
\end{equation}
where we use the equation $f(\sqrt{w},m_c)=0$ in the derivation. By our assumption, the left-hand side of (\ref{contradiction}) can be arbitrarily small. For the right-hand side of (\ref{contradiction}), we have $\left|m_c\right| \sim 1$ and $|\sqrt{w}m_c-|z|^2|\sim 1$ (because $\text{Im}\, (\sqrt{w}m_c) = \text{Im}\, (w+wm_{1c}) \sim 1$). Thus if $|m_c-i|z||\ge c'$ for some constant $c' >0$, we have $|m^2+|z|^2|\sim 1$, and
$$\left|\frac{(\sqrt{w}m_c-|z|^2)|z|^2}{m_c}b - \frac{1}{2}(m_c^2-|z|^2)(m_ca-\sqrt{w}b)\right| \sim 1,$$
which gives a contradiction. Thus we must have a lower bound $\left|\partial_m f(\sqrt{w},m_c) \right| \ge c$ if $|m-i|z|| \ge c'$.

We still need to deal with the case where $|m_c-i|z|| \le c'$ for some sufficiently small $c'$. Notice $|z|\sim 1$ in this case. Then we have
\begin{align}
\frac{\partial f}{\partial \sqrt{w}} (\sqrt{w},i|z|) = -1 + \frac{ |z|^2 }{N}\sum\limits_{i = 1}^n l_i s _i \frac{4 |z|^4 }{{\left[(s_i + |z|^2) |z|^2+|z|^4 - 2 i\sqrt{w}|z|^3\right]^2 }}. \label{frac_Li}
\end{align}
Denote $L_i:=(s_i + |z|^2) |z|^2+|z|^4 - 2 i\sqrt{w}|z|^3$. Since $i\sqrt{w}=i(x+iy)=ix-y$ with $x,y>0$ and $x,y\sim 1$, we have $\text{Re}\, L_i > 0$, $\text{Im}\, L_i < 0 $ and $|\text{Re}\, L_i|, |\text{Im}\, L_i|\sim 1$. Furthermore, $\text{Im}\, L_i^2 < 0$ and $|\text{Im}\, L_i^2|\sim 1$. Thus each fraction $4|z|^4/L_i^2$ in (\ref{frac_Li}) has positive imaginary part and all the imaginary have order 1. Therefore
$$\left|\frac{\partial f}{\partial \sqrt{w}} (\sqrt{w},i|z|)\right| \ge \text{Im}\left[\frac{\partial f}{\partial \sqrt{w}} (\sqrt{w},i|z|) \right]\sim 1.$$
Then by (\ref{compareab}), we get that
$\left|\partial_m f(\sqrt{w},i|z|) \right| \ge c$
for some $c>0$. Using (\ref{estimate2_bulk}), it is easy to see that
$$\partial_m f(\sqrt{w},m_c)=\partial_m f(\sqrt{w},i|z|)+O(\left|m_c-i|z|\right|). $$
Thus in the case $|m_c -i|z|| \to 0$, we still can find $c>0$ such that
$\left|\partial_m f(\sqrt{w},m_c) \right| \ge c$.
\end{proof}

\begin{lem}\label{lowerbound_beta2}
Suppose that $w \in \mathbf D_k^b$ and ${\rm{Im}}\, w \le \epsilon$. Then for sufficiently small $\epsilon>0$, we have $\left|\partial_m f(\sqrt{w},m_c) \right| \sim 1$.
\end{lem}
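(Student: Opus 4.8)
The plan is to prove Lemma \ref{lowerbound_beta2} by a continuity/perturbation argument that reduces the claim to the already established estimate on the real axis. First I would recall that $\beta = \partial_m f(\sqrt{w},m_c)$ and that the boundary edge points $e_k$ of a regular bulk component $[e_{2k},e_{2k-1}]$ lie strictly inside the component's interior, so that for $w = E+i\eta \in \mathbf D_k^b$ with $E \in [e_{2k}+\tau', e_{2k-1}-\tau']$ we are bounded away from all edges. On the real axis, i.e. for $\eta = 0$ and $E$ in the interior of a bulk component, the density $\rho_{1c}(E) \sim 1$ by Proposition \ref{prop_rho1c}(ii), hence $\Im\, m_{1c}(E) \sim 1$ and $\Im\, m_c(E) \sim 1$; one also has $|m_c(E)|\sim 1$ by Proposition \ref{prop_roughbound} and $\Im\, w = 0$, which means we are NOT in the hypothesis regime of Lemma \ref{lowerbound_beta1}. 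So the first real step is to prove $|\partial_m f(\sqrt{E}, m_c(E))| \gtrsim 1$ directly for real interior $E$, which I expect to be the crux.

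For that crux I would argue as follows. Differentiating the defining relation $f(\sqrt{w}, m_c(w)) = 0$ in $w$ gives $\partial_{\sqrt{w}} f + (\partial_m f)(\partial_{\sqrt{w}} m_c) = 0$, and $\partial_{\sqrt{w}} m_c = (1+m_{1c}) + \sqrt{w}\,\partial_{\sqrt{w}} m_{1c}$, where $\partial_w m_{1c}(w) = \int \rho_{1c}(x)/(x-w)^2\,dx$. Since $E$ is in the interior of a bulk component where $\rho_{1c}\sim 1$, the quantity $\int \rho_{1c}(x)/(x-E)^2\,dx$ is a finite positive number of order $1$ (the integral converges because $\rho_{1c}$ is square-root regular at the relevant adjacent edges and bounded above); hence $|\partial_{\sqrt{w}} m_c(E)| \le C$. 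Combined with the uniform bound $|\partial_{\sqrt{w}} f| \le C$ from \eqref{eqn_partialw} and \eqref{estimate2_bulk}, this gives $|\partial_m f|\, |\partial_{\sqrt{w}} m_c| = |\partial_{\sqrt{w}} f|$; but this is not yet a lower bound. Instead I would use the imaginary part: since $\Im\, m_c(E) \sim 1$ and $m_c(E) = \sqrt{E}(1 + m_{1c}(E))$ with $m_{1c}$ a Stieltjes transform, the map $E \mapsto m_c(E)$ has $\Im\,\partial_E m_c(E) = \text{(something of order 1)}$ via $\Im\,\partial_E m_{1c}(E) = \int \rho_{1c}(x)\cdot 2\eta|x-E|\cdots$—more cleanly, $\partial_E m_{1c}(E)$ is real and positive on the interior, so I use the relation $\Im\, m_c$ and the fact that $\partial_m f$ controls how $\Im\, m_c$ responds to perturbations of $\sqrt{w}$ into the upper half plane. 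Concretely: take $w = E + i\eta$ with small $\eta>0$; then $\Im\, m_c(w) \sim 1$, and writing $m_c(w) - m_c(E) = -(\partial_{\sqrt{w}} f/\partial_m f)(\sqrt{w}-\sqrt{E}) + O(|\sqrt w - \sqrt E|^2)$ would force $|\partial_m f(\sqrt{E},m_c(E))| \gtrsim 1$, since otherwise $m_c$ would move by more than $O(1)$ under an $O(\eta)$ perturbation, contradicting $|\partial_{\sqrt{w}} m_c| = |\partial_{\sqrt{w}} f / \partial_m f|$ being forced large while we separately know (from the convergent integral representation) it is $O(1)$. This is precisely the contradiction mechanism, and I would phrase it as: $|\partial_{\sqrt{w}} m_c(E)| \le C$ and $|\partial_{\sqrt{w}} f(\sqrt{E},m_c(E))|$ is bounded below away from $0$ — the latter because $\partial_{\sqrt{w}} f = -1 - (\text{sum of terms})$ whose imaginary part, using $m_c(E)$ with $\Im\, m_c \sim 1$ and the non-vanishing denominators \eqref{estimate2_bulk}, I can sign-analyze exactly as in the last part of the proof of Lemma \ref{lowerbound_beta1} (the $L_i$ computation). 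Then $|\partial_m f| = |\partial_{\sqrt{w}} f| / |\partial_{\sqrt{w}} m_c| \ge c/C > 0$ at $\eta = 0$.

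Once the lower bound $|\partial_m f(\sqrt{E}, m_c(E))| \ge c$ is established for real interior $E$, the passage to $\eta \le \epsilon$ is routine continuity. By \eqref{estimate2_bulk} all the poles $a_i, b_i, -c_i$ of $f$ (as a function of $m$) stay a distance $\ge \epsilon$ from $m_c(w)$ uniformly for $w \in \mathbf D_k^b$, and by \eqref{derv_m1_bound}-type bounds $|m_c(E+i\eta) - m_c(E)| = O(\eta)$; since $\partial_m f$ is a smooth function of $(\sqrt{w}, m)$ on the region where these denominators are bounded below, $|\partial_m f(\sqrt{w}, m_c(w)) - \partial_m f(\sqrt{E}, m_c(E))| = O(\eta) \le O(\epsilon)$. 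Choosing $\epsilon$ small compared to $c$ yields $|\partial_m f(\sqrt{w}, m_c(w))| \ge c/2$ for all such $w$, and the upper bound $|\partial_m f| \le C$ comes directly from \eqref{PFD_f} together with \eqref{boundABC} and \eqref{estimate2_bulk}. This gives $|\partial_m f(\sqrt{w}, m_c)| \sim 1$, completing the proof. The main obstacle is the real-axis lower bound: one must rule out $\partial_m f(\sqrt{E}, m_c(E)) = o(1)$ in the bulk, and the cleanest route is the imaginary-part sign analysis of $\partial_{\sqrt{w}} f$ at $m_c(E)$ (mirroring the $|m_c - i|z||$ case of Lemma \ref{lowerbound_beta1}) combined with the a priori $O(1)$ bound on $\partial_{\sqrt{w}} m_c$ coming from the convergent Stieltjes integral $\int \rho_{1c}(x)(x-E)^{-2}dx$ on a regular bulk component.
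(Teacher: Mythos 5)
Your reduction of the claim to the boundary case $\eta = 0$ via smooth dependence is sound (and matches the last step of the paper's proof), and the upper bound $|\partial_m f| \le C$ is routine; the problem is the real-axis lower bound, and your route to it has a genuine gap. You want to deduce $|\partial_m f(\sqrt{E},m_c(E))|\ge c$ from the chain rule $\partial_{\sqrt w} f + (\partial_m f)\,\partial_{\sqrt w} m_c = 0$ together with $|\partial_{\sqrt w} f|\ge c$ and $|\partial_{\sqrt w} m_c| \le C$. The second ingredient, the \emph{a priori} bound $|\partial_{\sqrt w} m_c(E)| \le C$, is precisely what fails. Your justification that $\int \rho_{1c}(x)/(x-E)^2\,dx$ is ``a finite positive number of order~1'' is incorrect on two counts: (a) for $E$ in the interior of a regular bulk component one has $\rho_{1c}(E)\sim 1$, so the integrand behaves like $(x-E)^{-2}$ near $x=E$ and the integral as written is divergent — the singularity is at $x=E$, not at the edges, so ``square-root regularity at the adjacent edges'' is irrelevant here; and (b) the boundary-value $\partial_E m_{1c}(E+i0^+) = \int \rho_{1c}'(x)/(x-E-i0^+)\,dx$ (after integration by parts) has imaginary part $\pi\rho_{1c}'(E)$, so it is \emph{not} real and positive inside the support (that holds only outside), and its size is governed by $\rho_{1c}'$, which the regularity hypothesis (a lower bound on $\rho_{1c}$) does not control at all. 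With $\partial_{\sqrt w}m_c$ unbounded a priori, the contradiction mechanism is circular: bounding $\partial_{\sqrt w}m_c$ via the implicit function theorem is exactly equivalent to a lower bound on $\partial_m f$, which is the claim.

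The paper's proof does not use $\partial_{\sqrt w}m_c$ or $\partial_{\sqrt w}f$ at all. It takes the imaginary part of the self-consistent equation $f(\sqrt E,m_c(E))=0$ in the partial-fraction form \eqref{PFD_f}. Since for real $E$ the coefficients $A_i,B_i,C_i$ and poles $a_i,b_i,-c_i$ are real with the stated signs, and $\Im\,m_c(E)>0$, this produces the normalization identity
\begin{equation*}
\frac{1}{N}\sum_{i} l_i s_i\left(\frac{A_i}{|m_c-a_i|^2}+\frac{B_i}{|m_c-b_i|^2}+\frac{C_i}{|m_c+c_i|^2}\right)=1,
\end{equation*}
which is then subtracted from $\partial_m f = 1 - \sum(\cdots)/(\cdots)^2$ to rewrite $\partial_m f$ as a sum of terms $A_i|m_c-a_i|^{-2}(1-e^{-2i\theta_i})$ (etc.) whose real parts are each $\gtrsim 1$ because $\Im\,m_c(E)\sim 1$ forces the phases $\theta_i$ away from $0,\pi$; hence $\Re\,\partial_m f\gtrsim 1$. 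This entirely sidesteps the need to bound any $w$-derivative of $m_c$ and is the ingredient your proposal is missing. (Your secondary claim that $|\partial_{\sqrt w}f|\ge c$ can be obtained by ``mirroring the $L_i$ computation'' of Lemma \ref{lowerbound_beta1} is also under-justified — that computation is performed at the specific point $m=i|z|$ with $\Im\,w\sim 1$, neither of which holds here — but this is moot given the primary gap.)
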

\begin{proof}
By (\ref{estimate1_bulk}) and (\ref{estimate2_bulk}), if $|w|\sim 1$ and $\text{Im}\, m\sim 1$, we have $\partial_{\sqrt{w}} \partial_m f(w,m_c) =O(1)$ and $\partial^2_m f(w,m_c) =O(1)$. Denote $w=E+i\eta$.
Taking the imaginary part of the following equation
\begin{equation}
0=f(\sqrt{E},m_c(E)) = -\sqrt{E} + m_c + E^{-1/2} + \frac{1}{N}\sum_{i=1}^{n}l_i s_i \left( \frac{A_i}{m_c - a_i} + \frac{B_i}{m_c - b_i} + \frac{C_i}{m_c + c_i} \right),
\end{equation}
and noticing that $A_i,B_i,C_i$ and $a_i,b_i,c_i$ are all positive real numbers for real $E$, we get
\begin{equation}
\frac{1}{N}\sum_{i=1}^{n}l_i s_i \left( \frac{A_i}{|m_c - a_i|^2} + \frac{B_i}{|m_c - b_i|^2} + \frac{C_i}{|m_c + c_i|^2} \right)= 1.
\end{equation}
Using the above equation, we get
\begin{align}
& \partial_m f(\sqrt{E},m_c(E)) = 1 - \frac{1}{N}\sum_{i=1}^{n}l_i s_i \left[ \frac{A_i}{(m_c - a_i)^2} + \frac{B_i}{(m_c - b_i)^2} + \frac{C_i}{(m_c + c_i)^2} \right] \nonumber\\
& = \frac{1}{N}\sum_{i=1}^{n}l_i s_i \left[ \frac{A_i}{|m_c - a_i|^2} -  \frac{A_i}{(m_c - a_i)^2} + \frac{B_i}{|m_c - b_i|^2} - \frac{B_i}{(m_c - b_i)^2} + \frac{C_i}{|m_c + c_i|^2}- \frac{C_i}{(m_c + c_i)^2} \right]. \label{eqn_app_Ref}
\end{align}
We look at, for example, the term
$$\frac{A_i}{|m_c - a_i|^2} -  \frac{A_i}{(m_c - a_i)^2} = \frac{A_i}{|m_c - a_i|^2}(1-e^{-2i\theta_i}),$$
where $m_c - a_i :=|m_c - a_i|e^{i\theta_i}$. Using $\Im\, m_c \sim 1$, it is easy to see that $\text{Re} (1-e^{-2i\theta_i})\ge c'$ for some constant $c'>0$. Applying the same estimates to the $B,C$ terms in (\ref{eqn_app_Ref}), we get
\begin{align}
\left|\partial_m f(\sqrt{E},m_c(E))\right| \ge \text{Re} \left[\partial_m f(\sqrt{E},m_c(E))\right] \ge c \label{lower_fE}
\end{align}
for some constant $c>0$.

Now for $w=E+i\eta$ with $\eta \le \epsilon$, we can expand $\partial_m f(\sqrt{w},m_c(w))$ around $\partial_m f(\sqrt{E},m_c(E))$,
$$ \partial_m f(\sqrt{w},m_c(w)) =  \partial_m f(E,m_c(E)) + O(\eta),$$
where we use (\ref{estimate2_bulk}). Combing with (\ref{lower_fE}), we see that $\left| \partial_m f(w,m_c(w)) \right| \sim 1$ for small enough $\epsilon$.
\end{proof}


{\noindent{\it{Case 2}}:}
We mimic the argument in the proof of Case 1. We see that it suffices to prove $|\alpha|+|\partial_u \alpha| \le C$ and $|\beta|\sim 1$ for $\alpha$, $\beta$ defined in (\ref{def_alpha}) and (\ref{def_beta}) and $|u-m_c|\le (\log N)^{-1/3}$. Using (\ref{estimate2_bulk}), it is not hard to prove that $|\alpha|+|\partial_u \alpha|+|\beta|\le C$. What remains is the proof of the lower bound $|\beta|\ge c$. For the case $\text{Im}\, w\sim 1$, it follows from Lemma \ref{lowerbound_beta1}. If $w\to 0$ in the case $|z|^2\ge 1+\tau$, then $m_c(w)=O(\sqrt{w})\to 0$ by (\ref{estimate1_out}).
Thus we can use (\ref{eqn_partialm}) to get directly that
$$\partial_m f(\sqrt{w},m_c)=1- |z|^{-2} + O(\sqrt{w}) \ge c.$$
Finally, we are left with the case $E=\text{Re}\, w\sim 1$ and $\eta=\text{Im}\, w \to 0$. Using (\ref{stj_rho1}), $m_c=\sqrt{w}(1+m_{1c})$, $|w| \sim 1$ and $\text{dist}(E,\text{supp}\, \rho_{1c})\ge \tau'$, we can get that
$$\left|\frac{\partial_{\sqrt{w}} f (\sqrt{w},m_c)}{\partial_m f (\sqrt{w},m_c)}\right|=\left|\frac{\partial m_c}{\partial \sqrt{w}}\right| \le C $$
for some constant $C>0$. Thus it suffices to prove that $\left|\partial_{\sqrt{w}} f(\sqrt{w},m_c) \right|$ has a lower bound. Using (\ref{eqn_partialw}) and noticing that $m_c(E)\in \mathbb R$, we get
$$\partial _{\sqrt{w}} f(\sqrt{E}, m_c(E)) = -1 -\frac{ m_c^2}{N}\sum\limits_{i = 1}^n l_i s _i \frac{\left( m_c^2  - |z|^2 \right)^2}{{\left[- (s_i + |z|^2) m_c^2+|z|^4+ \sqrt{E}\left( m_c^3  - |z|^2 m_c\right)\right]^2 }} \le -1.$$
Expanding $\partial _{\sqrt{w}} f(\sqrt{w}, m_c(w))$ around $\partial _{\sqrt{w}} f(\sqrt{E}, m_c(E))$, using (\ref{estimate2_bulk}) and $|m_{c}(E+i\eta)-m_{c}(E)|\sim \eta$, we get for $\eta$ small
$$\left|\partial _{\sqrt{w}} f(\sqrt{w}, m_c)\right| \ge 1+O(\eta)\ge c.$$

\vspace{5pt}

{\noindent{\it{Case 3}}:}
The case $\text{Im}\, w\ge \tau'$ can be proved with the same method as in the proof of case 1. Hence we only consider the case $|w-e_k|\le 2\tau'$ in the following. Note that $|w| \sim 1$ in this case. Suppose 
\begin{equation}
|w-e_k|\le 2\tau',\ \ |u-m_c|\le (\log N)^{-1/3}. \label{induction_ass1}
\end{equation}
Then we claim that
\begin{equation}
|\alpha| \sim 1, \ \ |\beta| \sim \sqrt{\kappa+\eta} \label{induction_ass2}
\end{equation}
for small enough $\tau'$. Using (\ref{induction_ass1}), (\ref{estimate2_bulk}), (\ref{regular2}) and Lemma \ref{lemm_edge_reg}, we can get that
\begin{align*}
\alpha = \frac{1}{2}\partial_m^2 f(\sqrt{e_k},m_c(e_k)) + O(|w-e_k|^{1/2}+ (\log N)^{-1/3}) \sim 1.
\end{align*}
To prove the estimate for $\beta$, we use (\ref{equationEm2}), (\ref{estimate2_bulk}) and Lemma \ref{lemm_edge_reg}, to get
\begin{align}
\beta & = \int_{e_k}^w \frac{d} {dw'} \partial_m f(\sqrt{w'},m_c(w')) dw'  = \int_{e_k}^w \frac{ \partial_{\sqrt{w'}} \partial_m f(\sqrt{w'},m_c(w'))}{2\sqrt{w'}} dw' + \int_{e_k}^w \partial^2_m f(\sqrt{w'},m_c(w')) \frac{dm_c(w')}{dw'}dw' \nonumber\\
& = \int_{e_k}^w \frac{ \partial_{\sqrt{w}} \partial_m f(\sqrt{e_k},m_c(e_k)) + O(|w-e_k|^{1/2}) }{2\sqrt{w'}}dw' + \int_{m_c(e_k)}^{m_c(w)}\left[ \partial^2_m f(\sqrt{e_k},m_c(e_k)) + O(|w-e_k|^{1/2}) \right]dm \nonumber\\
& = \partial^2_m f(\sqrt{e_k},m_k)(m_c(w)-m_c(e_k)) + O(|w-e_k|).
\end{align}
Thus we conclude for small enough $\tau'$ that
$$|\beta| \sim |w-e_k|^{1/2} \sim \sqrt{\kappa+\eta}.$$

With the estimate (\ref{induction_ass2}), we now proceed exactly as in the proof of \cite[Lemma 4.5]{isotropic}, by solving the quadratic equation (\ref{stab_bulk_fixed}) for $u-m_c$ explicitly. We select the correct solution by a continuity argument using that (\ref{Stability2}) holds by assumption at $z+iN^{-10}$. The second assumption of (\ref{induction_ass1}) is obtained by continuity from the estimate on $|u-m_c|$ at the neighboring point $z+iN^{-10}$. We refer to \cite[Lemma 4.5]{isotropic} for the full details. This concludes the proof.

\vspace{10pt}

{\noindent{\it{Case 4}}:}
The case when $\Im \, w\ge \tau'$ can be proved using the same method as in the proof of Case 1. Now we are left with the case $|w|\le 2\tau'$ for some sufficiently small $\tau'$. First we assume $|z|\ge c >0$ for some small $c>0$. Then mimicking the argument in the proof of Case 1, we see that it suffices to prove $|\alpha|+|\partial_u \alpha|\le C$ and $|\beta|\sim 1$ when $|u-m_c|\le (\log N)^{-1/3}$. Using (\ref{estimate2_bulk}), it is not hard to prove that $|\alpha|+|\partial_u \alpha|+|\beta|\le C$. The lower bound $|\beta|\ge c$ can be obtained easily from (\ref{partialm_t}).

Then suppose $|z|^2< c$, but $|w|^{1/2}+|z|^2 \ge \epsilon$. According to (\ref{partialm_smallmw}) and using that $\left| i|z|^2 + \sqrt{wt_0} \right| \sim |w|^{1/2}+|z|^2$, we can verify that
\begin{align*}
\beta = \partial_{m} f(\sqrt{w}, m_c(w)) \sim |w|^{1/2}+|z|^2 \sim 1.
\end{align*}
It is also easy to verify that
$$\partial_m^2 f(\sqrt{w},m_c(w))=O(|w|^{1/2}+|z|^2),\ \ \partial_m^3 f(\sqrt{w},m_c(w))=O(|w|^{1/2}+|z|^2).$$
Hence if $|u-m_c|\le (\log N)^{-1/3}$, we have
\begin{align*}
\alpha & =\frac{1}{2}\partial_m^2 f(\sqrt{w},m_c(w)) + O\left(\partial_m^3 f(\sqrt{w},m_c(w)) (\log N)^{-1/3} \right)= O(|w|^{1/2}+|z|^2).
\end{align*}
With a fixed point argument, we get (\ref{Stability2}).

\vspace{10pt}

{\noindent{\it{Case 5}}:}
Again we following the arguments in the proof of Case 1. However, instead of $f(\sqrt{w},m)$, we shall study $\Upsilon(w,m_1) $ in (\ref{def_stabD}) directly. 
We take over the notations in Definition \ref{def_stability} and abbreviate $R:=\Upsilon(w, u_1)$, so that $|R|\le \delta$. Then we write the equation $\Upsilon(w,u_1)-\Upsilon(w,m_{1c})=R$ as
\begin{equation}
\alpha(u_1)(u_1-m_{1c})^2+\beta(u_1-m_{1c})=R, 
\end{equation}
where we use the same symbol as in (\ref{stab_bulk_fixed}) for notational convenience. As in Case 1, we have $\beta = \partial_{m_1} \Upsilon(w,m_{1c}),$
and we can evaluate that $|\alpha| + |\partial_{u_1}\alpha|\le C$ for $w\in \mathbf D_L$ and $u_1$ satisfying $|u_1-m_{1c}|\ll |m_{1c}|.$ Now to conclude (\ref{StabilityL}), it suffices to prove $|\beta| \sim 1$ for $w\in \mathbf D_L$. In fact using (\ref{def_stabD}), we obtain that
$$\beta = 1 +O\left( \eta^{-1}\right) \sim 1,$$
for $\eta \ge \zeta^{-1}$. This concludes the proof.

\begin{proof}[Proof of Lemma \ref{lemm_density}]
The fact that $\rho_{1c}$ has compact support follows from Lemma \ref{lemm_rho}; $\rho_{1c}$ being integrable follows from Lemma \ref{lemm_edge0}. Note that in proving Lemmas \ref{lemm_rho} and \ref{lemm_edge0}, we do not make the regularity assumptions in Definition \ref{def_regular}. It remains to show that for fixed $w\in \mathbb C_+$ and $|z|\ne 1$, there exists a unique $m_{1c}(w) \in \mathbb C_+$ satisfying equation (\ref{eq_self3}). This follows from the $\eta\sim 1$ case in the proof of {\it Case 1} in this section.
\end{proof}

\noindent{\it{Remark:}} The estimate (\ref{estimate2_bulk}) has been used repeatedly during the proof of Lemma \ref{lemm_stability}.
Here we remark that it also gives the stability of the regularity conditions in Definition \ref{def_regular} under perturbations of $|z|$ and $\rho_{\Sigma}$. For example, we define the shifted empirical spectral density
\begin{equation}
\rho_{\Sigma, t} : = \frac{1}{N\wedge M}\sum_{i=1}^{N\wedge M} \delta_{\sigma_i + t},
\end{equation}
and the associated $m_c(w,t)$ and function $f(\sqrt{w},m,t)$. Given a regular edge $e_k$, it satisfies that
$$f(\sqrt{e_k},m_k,t=0)=0,\ \ \partial_m f(\sqrt{e_k},m_k,t=0)=0.$$
where we denote $m_k:=m_c(e_k)$. We have the Jacobian
$$ J:=\det\left(\begin{matrix} \partial_{\sqrt{w}} f  &  \partial_m f \\
   \partial_{\sqrt{w}} \partial_m f &  \partial_m^2 f \\
\end{matrix}\right)_{(\sqrt{w},m,t)=(\sqrt{e_k},m_k,0)} = \partial_{\sqrt{w}} f (\sqrt{e_k},m_k,0) \partial_m^2 f(\sqrt{e_k},m_k,0).$$
By (\ref{eqn_partialw}), we have $ \left|\partial_{\sqrt{w}} f (\sqrt{e_k},m_k,0)\right|\ge 1$. Combining with (\ref{regular2}), we get
$|J|\ge \epsilon$. Using (\ref{estimate2_bulk}), we can verify that $\partial_t f(\sqrt{e_k},m_k,0)=O(1)$ and $\partial_t \partial_m f(\sqrt{e_k},m_k,0)=O(1)$. Thus if we regard $e_k$ and $m_k$ as functions of $t$, then $\partial_t m_k(t=0)=O(1)$ and $\partial_t e_k(t=0) = O(1)$ by the implicit function theorem. Then it is easy to verify 
\begin{align*}
& \partial_m^2 f(\sqrt{e_k(t)},m_c(e_k,t)) = \partial_m^2 f(\sqrt{e_k},m_c(e_k))+O(t), \\ 
& \left|m_c(e_k,t)-a_i(e_k,t)\right|= \left|m_c(e_k)-a_i(e_k)\right|+O(t),
\end{align*}
and the similar estimates for $ \left|m_c-b_i \right|$ and $ \left|m_c+c_i \right|$. 
Thus if Definition \ref{def_regular} (i) holds for some $\rho_{\Sigma}$, then it holds for all $\rho_{\Sigma,t}$ provided that $t$ is small enough.

Now given a regular bulk component $[e_{2k}, e_{2k-1}]$ and $E\in [e_{2k}+\tau', e_{2k-1}-\tau']$. Differentiating the equation $f(\sqrt{E},m_c(E,t),t)=0$ in $t$ yields $$\partial_t m_c(E,t)= -\frac{\partial_t f(\sqrt{E},m_c(E,t),t)}{\partial_m f(\sqrt{E},m_c(E,t),t)}.$$
By (\ref{estimate2_bulk}), we find that $\partial_t f(\sqrt{E},m_c(E),0)=O(1)$, while by (\ref{estimate_alphabeta1}), $|\partial_m f(\sqrt{E},m_c(E),0)|=\beta \sim 1$. Thus $\partial_t m_c(E,0) = O(1)$. A simple extension of this argument shows that $m_c(E,t) = m_c(E)+O(t)$ and hence $\Im\, m_c(E,t)$ is bounded from below by some $c'=c'(\tau,\tau')$. Thus we conclude that if Definition \ref{def_regular} (ii) holds for some $\rho_{\Sigma}$, then it holds for all $\rho_{\Sigma,t}$ with $t$ in some fixed small interval around zero. Obviously, the above arguments also work for the perturbation of $|z|$.

\section{Proof of Lemma \ref{fluc_aver}}
\label{appendix3}

Our proof of (\ref{flucaver_ZZ}) is an extension of \cite[Lemma 4.9]{isotropic}, \cite[Lemma 7.3]{local_circular} and \cite[Theorem 4.7]{Semicircle}. Here we only prove the bound for $\|[Z]\|$. The proof for $\|\langle Z\rangle \|$ is exactly the same.
For $i\in \mathcal I_1$, we define $P_i=\mathbb E_{[i]}$ and $Q_i=1-P_i$. Recall that $Z_{[i]}  =  Q_i{G^{-1}_{[ii]} }$, we need to prove that
\begin{equation*}
[Z] =\frac{1}{N} \sum_{i=1}^N \pi_{[i]}\left(Q_i G_{[ii]}^{ - 1}\right)\pi_{[i]} \prec  \left|w\right|^{-1/2} {\Phi _o^2 } ,
\end{equation*}
for $w\in \mathbf D$. For $J\subset \mathcal I$, we define $\pi_{[i]}^{[J]}$ by replacing $m_{1,2}$ in (\ref{def_pi_i}) with $m_{1,2}^{[J]}$ defined in (\ref{def_M2}). As in (\ref{better_estimate_m12}), we can prove that $ | {m_{1,2}^{[i]}  - m_{1,2} } | \prec \left|w\right|^{-1/2} {\Phi _o^2}$, which further gives that
\begin{equation*}
[Z] =\frac{1}{N} \sum_{i=1}^N \pi_{[i]}^{[i]}\left(Q_i G_{[ii]}^{ - 1}\right)\pi_{[i]}^{[i]} + O_\prec\left(\left|w\right|^{-1/2} {\Phi _o^2 }\right)= \frac{1}{N} \sum_{i=1}^N Q_i \left(\pi_{[i]}^{[i]}G_{[ii]}^{ - 1}\pi_{[i]}^{[i]}\right) + O_\prec\left(\left|w\right|^{-1/2} {\Phi _o^2 }\right).
\end{equation*}
Thus if we abbreviate $B_i  := |w|^{1/2}Q_i \left(\pi_{[i]}^{[i]}G_{[ii]}^{ - 1}\pi_{[i]}^{[i]}\right)$, it suffices to prove that $B:=N^{-1}\sum_i B_i \prec \Phi_o^2$.
We estimate $B$ by bounding the $p$-th moment of its norm by $\Phi_o^{2p}$ for $p=2n$ with $n\in \mathbb N$, i.e. $\mathbb E\|B\|^{p} \prec \Phi_o^{2p}.$
The lemma then follows from the Chebyshev's inequality. Using $\|KK^\dag \|=\|K\|^2$ for any square matrix $K$, we get that for $p=2n$,
$$\text{Tr}(BB^\dag)^{n} \ge \left\|BB^\dag\right\|^{n} = \left\|B\right\|^{2n}. $$
Thus it suffices to prove that
\begin{equation}\label{flucaver_proof}
\mathbb E \text{Tr}(BB^\dag)^{p/2} \prec \Phi_o^{2p}, \ \text{ for }p=2n.
\end{equation}
This estimate can be proved with the same method in \cite[Appendix B]{Semicircle}, with the only complication being that $\pi_{[i]}$ is random and depends on $i$. In principle, this can be handle by using (\ref{eq_res3}) and (\ref{eq_res4}) to put any indices $j,k,...\in \mathcal I_1$ (that we wish to include) into the superscripts of $\pi_{[i]}$. This leads to a minor modification of the proof in \cite[Appendix B]{Semicircle}. Here we describe the basic ideas of the proof, without writing down all the details.

The proof is based on a decomposition of the space of random variables using $P_s$ and $Q_s$. It is evident that $P_s$ and $Q_s$ are projections, $P_s+Q_s=1$ and all of these projections commute with each other. For a set $J\subset \mathcal I$, we denote $P_J:=\prod_{s\in J} P_s$ and $Q_J:=\prod_{s\in J} Q_s$. Let $p=2n$ and introduce the shorthand notation $\tilde B_{k_s}:=B_{k_s}$ for $s\le p$ odd and $\tilde B_{k_s}:=B^\dag_{k_s}$ for $s \le p$ even. Then we get
\begin{equation}
\mathbb E\text{Tr}(B B^\dag )^{p/2} = \frac{1}{N^p}\sum\limits_{k_1 ,k_2 , \ldots ,k_p } \mathbb E \text{Tr}\prod\limits_{s = 1}^p {\tilde B_{k_s } } = \frac{1}{{N^p }}\sum\limits_{k_1 ,k_2 , \ldots ,k_p } {\mathbb E \text{Tr} \prod\limits_{s = 1}^p {\left( {\prod\limits_{r = 1}^p {\left( {P_{k_r }  + Q_{k_r } } \right)} \tilde B_{k_s } } \right)}  }.
\end{equation}
Introducing the notations $\mathbf k=(k_1,k_2,\ldots, k_p)$ and $\{\mathbf k\}=\{k_1,k_2,\ldots,k_p\}$, we can write
\begin{equation}
\mathbb E \text{Tr}(B B^\dag )^{p/2} = \frac{1}{{N^p }}\sum\limits_{\mathbf k} {\sum\limits_{I_1 , \ldots ,I_p  \subset \left\{ \mathbf k \right\}} {\mathbb E\text{Tr}\prod\limits_{s = 1}^p {\left( {P_{I_s^c } Q_{I_s} \tilde B_{k_s } } \right)} } }. \label{fluc_pf1}
\end{equation}
Following \cite[Appendix B]{Semicircle}, we claim that to conclude (\ref{flucaver_proof}) it suffices to prove that for $k\in I$
\begin{equation}
\left\|Q_I B_k\right\|\prec \Phi_o^{|I|}. \label{claim}
\end{equation}
As in \cite[Appendix B]{Semicircle}, it is not hard to prove for $k\in I$,
\begin{equation}
|w|^{-1/2}\left\|Q_I G_{[kk]}^{-1}\right\|\prec \Phi_o^{|I|}. \label{claim_assist}
\end{equation}
Now we extend the proof to obtain the estimate (\ref{claim}). For the case $|I|=1$ (i.e. $I=\{k\}$),
$$\| B_k \|= |w|^{1/2} \| \pi_{[i]}^{[i]}Z_{[k]} \pi_{[i]}^{[i]}\| \le |w|^{-1/2} \|Z_{[k]}\| \prec \Phi_o,$$
where we can prove $\|Z_{[k]}\| \prec |w|^{1/2}\Phi_o$ by modifying the proof in Lemma \ref{Z_lemma}.
For the case $|I|\ge 2$, WLOG, we may assume $k=1$ and $I=\{1,\ldots, t\}$ with $t\ge 2$. It is enough to prove that
\begin{equation}\label{fluc_aver_claim}
\left|w\right|^{1/2}\left\| Q_t  \ldots Q_2 \pi_{[1]}^{[1]}G_{[11]}^{ - 1}\pi_{[1]}^{[1]} \right\| \prec \Phi _o^t.
\end{equation}
We take the $t=3$ as an example to describe the ideas for the proof of (\ref{fluc_aver_claim}). Using (\ref{eq_res3}), we get
\begin{equation}\label{expand_pi112}
\pi_{[1]}^{[1]} = \pi_{[1]}^{[12]} + |w|^{1/2} \epsilon_{11}^{[1]} \pi_{[1]}^{[12]}A_1\pi_{[1]}^{[12]} + |w|^{1/2} \epsilon_{\bar 1 \bar 1}^{[1]} \pi_{[1]}^{[12]}A_2\pi_{[1]}^{[12]} + \text{error}_{1,2},
\end{equation}
where $\epsilon_{11}^{[1]}$ and $\epsilon_{\bar 1 \bar 1}^{[1]}$ are entries of 
$$\epsilon_{[1]}^{[1]}:=|w|^{1/2}\left(\frac{G_{[22]}^{[1]}}{N}+\frac{1}{N}\sum_{ k\notin \{1,2\}} {G_{[k2]}^{[1]} \left(G^{[1]}_{[22]}\right)^{-1}G_{[2k]}^{[1]} }\right)\prec \Phi _o^2,$$ $A_{1,2}$ are deterministic matrices with operator norm $O(1)$, and $\|\text{error}_{1,2}\| \prec |w|^{-1/2}\Phi _o^4$. Then we get
\begin{align}
& \pi_{[1]}^{[1]}G_{[11]}^{ - 1}\pi_{[1]}^{[1]} =  \pi_{[1]}^{[12]}G_{[11]}^{-1}\pi_{[1]}^{[12]} + |w|^{1/2} \epsilon_{11}^{[1]} \pi_{[1]}^{[12]}A_1\pi_{[1]}^{[12]} G_{[11]}^{ - 1}\pi_{[1]}^{[12]}  + |w|^{1/2}  \epsilon_{\bar 1 \bar 1}^{[1]} \pi_{[1]}^{[12]}A_2\pi_{[1]}^{[12]} G_{[11]}^{ - 1}\pi_{[1]}^{[12]}  \nonumber\\
&\quad \quad + |w|^{1/2}  \pi_{[1]}^{[12]} G_{[11]}^{ - 1}\epsilon_{11}^{[1]} \pi_{[1]}^{[12]}A_1\pi_{[1]}^{[12]} + |w|^{1/2}  \pi_{[1]}^{[12]} G_{[11]}^{ - 1}\epsilon_{\bar 1 \bar 1}^{[1]} \pi_{[1]}^{[12]}A_2\pi_{[1]}^{[12]} + O_\prec(|w|^{-1/2}\Phi _o^4). \label{expand_case3}
 \end{align}

We first handle the $\pi_{[1]}^{[12]}G_{[11]}^{-1}\pi_{[1]}^{[12]}$ term. By (\ref{claim_assist})
$$Q_2 \pi_{[1]}^{[12]}G_{[11]}^{-1}\pi_{[1]}^{[12]}=\pi_{[1]}^{[12]} \left(Q_2 G_{[11]}^{-1}\right)\pi_{[1]}^{[12]} \prec |w|^{-1/2}\Phi_o^2.$$ 
For the remaining term, we first expand
$\pi_{[1]}^{[12]}=\pi_{[1]}^{[123]}+ O_\prec(|w|^{-1/2}\Phi_o^2)$
and use (\ref{claim_assist}) to get
$$Q_3 Q_2\pi_{[1]}^{[12]}G_{[11]}^{-1}\pi_{[1]}^{[12]} = \pi_{[1]}^{[123]}\left(Q_3 Q_2G_{[11]}^{-1}\right)\pi_{[1]}^{[123]} +O_\prec \left(|w|^{-1/2}\Phi_o^4\right) \prec |w|^{-1/2}\Phi_o^3.$$
Then we deal with the second terms in (\ref{expand_case3}). We first expand $\epsilon_{[1]}^{[1]}= e_{[1]}^{[3]} +O_\prec(\Phi _o^3),$ where
$$e_{[1]}^{[3]}:=|w|^{1/2}\left(\frac{G_{[22]}^{[13]}}{N} +\frac{1}{N}\sum_{ k\notin \{1,2,3\}} {G_{[k2]}^{[13]} \left(G^{[13]}_{[22]}\right)^{-1}G_{[2k]}^{[13]} }\right).$$
Using the similar arguments as above, we have
\begin{align*}
Q_3 |w|^{1/2} e_{11}^{[3]} \pi_{[1]}^{[12]}A_1\pi_{[1]}^{[12]} G_{[11]}^{ - 1}\pi_{[1]}^{[12]} & = |w|^{1/2} e_{11}^{[3]} \pi_{[1]}^{[123]}A_1\pi_{[1]}^{[123]} \left(Q_3 G_{[11]}^{ - 1} \right)\pi_{[1]}^{[123]} +O_\prec (|w|^{-1/2}\Phi_o^4) \\
& \prec |w|^{-1/2}\Phi_o^4.
\end{align*}
Thus we have
$$Q_2Q_3 |w|^{1/2} \epsilon_{11}^{[1]} \pi_{[1]}^{[12]}A_1\pi_{[1]}^{[12]} G_{[11]}^{ - 1}\pi_{[1]}^{[12]} \prec  |w|^{-1/2}\Phi_o^3. $$
Obviously this estimate works for the rest of the terms in (\ref{expand_case3}). This proves (\ref{fluc_aver_claim}) when $t=3$.

We can continue in this manner for a general $t$. At the $l$-th step, we expand the leading order terms using (\ref{eq_res3}) and (\ref{eq_res4}), and after applying $Q_l \ldots Q_3 Q_2$ on them, the number of $\Phi_o$ factors increases by one at each step by (\ref{claim_assist}). Trough induction we can prove (\ref{fluc_aver_claim}). In fact the expansions can be performed in a systematic way using the method in \cite[Appendix B]{Semicircle}, and we leave the details to the reader. Also
we remark that similar techniques are used in the proof in Section \ref{section_isotropiclaw}, and we choose to present the details there (in fact the proof here is much easier than the one in Section \ref{section_isotropiclaw}).

\end{appendix}


\end{document}